\documentclass[a4paper]{amsart}
\usepackage[utf8]{inputenc}
\usepackage[T1]{fontenc}
\usepackage{lmodern}
\usepackage{amssymb}
\usepackage{mathtools}
\usepackage[inline,shortlabels]{enumitem}

\usepackage{tikz}
\usetikzlibrary{matrix,arrows}
\tikzset{cd/.style=matrix of math nodes,row sep=2em,column sep=2em, text height=1.5ex, text depth=0.5ex}
\tikzset{cdar/.style=->,auto}
\tikzset{dar/.style={double,double equal sign distance,-implies}}
\tikzset{mid/.style={anchor=mid}} 
\tikzset{narrowfill/.style={inner sep=0pt, fill=white}}
\usepackage{tikz-cd}

\newcommand*{\meascor}[5][1]{\draw[cdar] (#2) --
  node[inner sep=#1pt] {\(\scriptstyle #3\)}
  node[inner sep=#1pt,swap] {\(\scriptstyle #4\)} (#5);}

\setlist[enumerate,1]{label=\textup{(\arabic*)}}
\setlist[enumerate,2]{label=\textup{(\alph*)}}

\usepackage{microtype}
\usepackage[pdftitle={A universal property for groupoid C*-algebras.  II.  Fell bundles},
pdfauthor={Alcides Buss, Rohit Holkar and Ralf Meyer},
pdfsubject={Mathematics}
]{hyperref}
\usepackage[lite]{amsrefs}

\renewcommand{\PrintDOI}[1]{\href{http://dx.doi.org/#1}{DOI #1}}

\BibSpec{book}{%
  +{}  {\PrintPrimary}                {transition}
  +{,} { \textit}                     {title}
  +{.} { }                            {part}
  +{:} { \textit}                     {subtitle}
  +{,} { \PrintEdition}               {edition}
  +{}  { \PrintEditorsB}              {editor}
  +{,} { \PrintTranslatorsC}          {translator}
  +{,} { \PrintContributions}         {contribution}
  +{,} { }                            {series}
  +{,} { \voltext}                    {volume}
  +{,} { }                            {publisher}
  +{,} { }                            {organization}
  +{,} { }                            {address}
  +{,} { \PrintDateB}                 {date}
  +{,} { }                            {status}
  +{}  { \parenthesize}               {language}
  +{}  { \PrintTranslation}           {translation}
  +{;} { \PrintReprint}               {reprint}
  +{.} { }                            {note}
  +{.} {}                             {transition}
  +{,} { \PrintDOI}                   {doi}
  +{}  {\SentenceSpace \PrintReviews} {review}
}

\numberwithin{equation}{section}

\theoremstyle{plain}
\newtheorem{theorem}[equation]{Theorem}
\newtheorem{lemma}[equation]{Lemma}
\newtheorem{proposition}[equation]{Proposition}
\newtheorem{deflem}[equation]{Definition and Lemma}
\newtheorem{corollary}[equation]{Corollary}

\theoremstyle{definition}
\newtheorem{definition}[equation]{Definition}

\theoremstyle{remark}
\newtheorem{remark}[equation]{Remark}
\newtheorem{example}[equation]{Example}

\DeclarePairedDelimiter{\abs}{\lvert}{\rvert}
\DeclarePairedDelimiter{\norm}{\lVert}{\rVert}
\DeclarePairedDelimiter{\ket}{\lvert}{\rangle}
\DeclarePairedDelimiter{\bra}{\langle}{\rvert}
\DeclarePairedDelimiterX{\braket}[2]{\langle}{\rangle}{#1\,\delimsize\vert\,\mathopen{}#2}
\DeclarePairedDelimiterX{\BRAKET}[2]{\langle}{\rangle}{\!\delimsize\langle#1\,\delimsize\vert\,\mathopen{}#2\delimsize\rangle\!}
\DeclarePairedDelimiterX{\setgiven}[2]{\{}{\}}{#1\,{:}\,\mathopen{}#2}

\newcommand*{\alb}{\hspace{0pt}} 

\newcommand*{\dual}[1]{\widehat{#1}}
\newcommand*{\Ideals}{\mathbb I}
\newcommand*{\Open}{\mathbb O}

\newcommand*{\Mult}{\mathcal M}
\newcommand*{\Q}{\mathcal Q}
\newcommand*{\U}{\mathcal U}
\newcommand*{\UM}{\mathcal U}
\newcommand*{\Lt}{\mathcal L}
\newcommand*{\Dom}{\mathcal D}

\newcommand{\idealin}{\mathrel{\triangleleft}} 

\newcommand{\mor}{\tau}
\newcommand*{\s}{s} 
\newcommand*{\rg}{r}
\newcommand*{\inv}{\mathrm{inv}}
\newcommand*{\ev}{\mathrm{ev}}

\newcommand{\C}{\mathbb C}
\newcommand{\N}{\mathbb N}
\newcommand{\Z}{\mathbb Z}
\newcommand{\T}{\mathbb T}

\newcommand*{\E}{\mathcal E}
\newcommand*{\F}{\mathcal F}
\newcommand*{\Hils}[1][H]{\mathcal{#1}}
\newcommand*{\Bun}[1][A]{\mathcal{#1}}
\newcommand*{\A}{\mathcal{A}}
\newcommand*{\FU}[1][A]{\mathsf{#1}}
\newcommand*{\SUF}[1][A]{#1}
\newcommand*{\B}{\mathcal{B}}
\newcommand*{\I}{\mathcal{I}}

\newcommand*{\univ}{\mathrm{univ}}
\newcommand*{\diff}{\mathrm d}
\newcommand*{\red}{\mathrm r}
\newcommand*{\Cont}{\mathrm C}
\newcommand*{\Contc}{\mathrm{C_c}}

\newcommand*{\Star}{$^*$\nobreakdash-\hspace{0pt}}
\newcommand*{\nb}{\nobreakdash}
\newcommand*{\Cst}{\mathrm C^*}
\newcommand*{\Cred}{\mathrm C^*_\mathrm r}

\newcommand*{\congto}{\xrightarrow\sim}
\newcommand*{\defeq}{\mathrel{\vcentcolon=}}

\newcommand{\Comp}{\mathbb K}
\newcommand{\Bound}{\mathbb B}

\newcommand{\blank}{{\llcorner\!\lrcorner}}

\newcommand*{\Id}{\mathrm{id}}

\newcommand*{\into}{\rightarrowtail}
\newcommand*{\injto}{\hookrightarrow}
\newcommand*{\onto}{\twoheadrightarrow}

\newcommand{\act}{\mathfrak{a}}
\newcommand{\actB}{\mathfrak{b}}

\DeclareMathOperator{\Endo}{End} 
\DeclareMathOperator{\supp}{supp}
\DeclareMathOperator{\Ad}{Ad}
\DeclareMathOperator{\pr}{pr}
\DeclareMathOperator{\Ind}{Ind} 
\DeclareMathOperator{\IM}{im}

\begin{document}
\title{A universal property for groupoid C*-algebras.  II.  Fell bundles}

\author{Alcides Buss} \email{alcides.buss@ufsc.br}

\author{Rohit Holkar} \email{rohit.d.holkar@gmail.com}

\address{Departamento de Matem\'atica\\
  Universidade Federal de Santa Catarina\\
  88.040-900 Florian\'opolis-SC\\
  Brazil}

\author{Ralf Meyer} \email{rmeyer2@uni-goettingen.de}
\address{Mathematisches Institut\\
  Georg-August Universität Göttingen\\
  Bunsenstraße 3--5\\
  37073 Göttingen\\
  Germany}

\keywords{groupoid; C*-algebra; representation; topological
  correspondence; Fell bundle} \subjclass[2010]{46L55, 22A22}
\thanks{Partially supported by CNPq and FAPESC (Brazil), by Humboldt Foundation (Germany) and by EU grant
  HORIZON-MSCA-SE-2021, Project 101086394.}

\begin{abstract}
  We define possibly unsaturated, upper semicontinuous Fell bundles
  over Hausdorff, locally compact groupoids and establish a
  universal property for representations of their full section
  \(\Cst\)\nb-algebras on Hilbert modules over arbitrary
  \(\Cst\)\nb-algebras.  Based on this, we prove that the full
  section \(\Cst\)\nb-algebra is functorial and exact, and we define a
  quasi-orbit space and a quasi-orbit map.  We deduce and extend Renault's
  Integration and Disintegration Theorems to general Fell bundles
  using our universal property.
\end{abstract}
\maketitle

\tableofcontents

\section{Introduction}
\label{sec:intro}

Let~\(G\) be a Hausdorff, locally compact groupoid with a Haar
system~\(\alpha\).  The groupoid \(\Cst\)\nb-algebra
\(\Cst(G,\alpha)\) is characterised
in~\cite{Buss-Holkar-Meyer:Universal} by a universal property.
It describes the representations of \(\Cst(G,\alpha)\) on Hilbert
modules over arbitrary \(\Cst\)\nb-algebras and thus determines
\(\Cst(G,\alpha)\) uniquely.
When combined with the representation theory of commutative
\(\Cst\)\nb-algebras, it implies the Integration and Disintegration
Theorems of Renault for representations of \(\Cst(G,\alpha)\) on a
separable Hilbert space.
Here we extend this universal property to section \(\Cst\)\nb-algebras
of Fell bundles over~\(G\).
In particular, our universal property covers crossed products for
actions and partial actions of~\(G\).
While the basic ideas are the same as
in~\cite{Buss-Holkar-Meyer:Universal}, the Fell bundles cause some new
difficulties.

Fell bundles over~\(G\) are defined in Section~\ref{sec:Fell_bundles}.
Our technical assumptions are as mild as conceivable at this time: we
allow upper semicontinuous fields of \(\Cst\)\nb-algebras and
unsaturated Fell bundles.  Thus our Fell bundles are more general
than in previous works.
Renault~\cite{Renault:Representations} only considers
continuous fields of \(\Cst\)\nb-algebras as opposed to upper
semicontinuous fields, and he
considers only twisted actions, not Fell bundles; in particular, the
Fell bundles associated to twisted actions are always saturated.
Yamagami~\cite{Yamagami:IdealStructure} does allow unsaturated Fell
bundles, but does not give details on the basic theory of Fell bundles
and their \(\Cst\)\nb-algebras.  Muhly and
Williams~\cite{Muhly-Williams:Equivalence.FellBundles} allow upper
semicontinuous fields throughout, but require Fell bundles to be separable and
saturated.  Since basic results of the theory are established in
different sources under slightly different assumptions, we develop the
theory of Fell bundles over groupoids from scratch here.

Our theory is not yet as general as possible because
we only consider Hausdorff groupoids.  The non-Hausdorff case adds
further complications.  These are of a different nature and will be
treated in a separate article.

A Fell bundle has Banach spaces~\((\A_g)_{g\in G}\) as fibres.  These
form an upper semicontinuous field of Banach spaces and carry
continuous families of involutions \(\A_g \to \A_{g^{-1}}\) and
multiplication maps
\[
  \A_g \times \A_h \to \A_{g h},\qquad (a,b)\mapsto a\cdot b,
\]
for \((g,h)\in G^2 \defeq G\times_{\s,G^0,\rg} G\); here, \(G^0\)
denotes the space of objects of~\(G\).  The multiplication maps
satisfy the standard algebraic properties of a Fell bundle and a
positivity condition for \(a^* \cdot a\) for all \(a\in \A_g\) and
\(g\in G\) (see Definition~\ref{def:Fell_bundle}).  The restriction
of~\(\A\) to~\(G^0\) is an upper
semicontinuous field of \(\Cst\)\nb-algebras on~\(G^0\), which we
denote by~\(\FU\).  Let \(\SUF \defeq \Cont_0(G^0,\FU)\) be the
\(\Cont_0(G^0)\)-\(\Cst\)-algebra of sections of~\(\FU\).  Since a
Hilbert bimodule is a partial Morita--Rieffel equivalence, we
interpret the Fell bundle~\(\A\) as a generalised partial action
of~\(G\) on~\(\SUF\).

A \emph{partial action} of~\(G\) on~\(\FU\) consists of isomorphisms
\(\theta_g\colon \Dom_{g^{-1}} \congto \Dom_g\) for certain ideals
\(\Dom_g \subseteq \FU_{\rg(g)}\).  These ideals must depend
continuously on~\(G\) in the sense that there is an ideal
\(\Dom\idealin \rg^*(\FU)\) with fibres
\(\Dom_g\idealin \FU_{\rg(g)}\); here~\(\rg^*(\FU)\) denotes the
pullback of~\(\FU\) to a \(\Cont_0(G)\)-\(\Cst\)-algebra along
\(\rg\colon G \to G^0\).  In addition, we require that
\(\Dom_{u(x)} = \FU_x\) for all \(x\in G^0\) and that
\(\theta_g\circ\theta_h\subseteq\theta_{g h}\) as partial maps.
This is the groupoid generalisation of a partial group action as
defined in~\cite{Exel:Partial_dynamical}.  An \emph{action} of~\(G\)
is a partial action where \(\Dom_g = \FU_{\rg(g)}\) for all
\(g\in G\).  Any partial action of~\(G\) defines a Fell bundle,
which is saturated if and only if the partial action is an action.

Section~\ref{sec:representations} defines representations of a Fell
bundle on a Hilbert module~\(\F\) over an arbitrary
\(\Cst\)\nb-algebra~\(D\).  As in~\cite{Buss-Holkar-Meyer:Universal},
this is based on \(\Cst\)\nb-correspondences associated to open
continuous surjections \(\rg,\s\colon G\onto G^0\) and
\(d_0,d_1,d_2\colon G^2\onto G\) and families of measures along
their fibres induced by the Haar system on~\(G\).
In the case of Fell bundles, these \(\Cst\)\nb-correspondences acquire
``coefficients'' related to the Fell bundle.
That is, instead of scalar-valued \(\Lt^2\)\nb-functions we consider
\(\Lt^2\)\nb-sections of various bundles.
A representation on a Hilbert module~\(\F\) has two pieces.
One piece is a representation of the \(\Cst\)\nb-algebra~\(\SUF\)
associated to the Fell bundle.
The other piece is a unitary operator between certain Hilbert
\(D\)\nb-modules that are built by tensoring~\(\F\) over~\(\SUF\) with
certain Hilbert \(\SUF\)\nb-modules.
This is subject to a condition that is formulated using the
space~\(G^2\) of composable pairs.
There are two new technical issues compared
to~\cite{Buss-Holkar-Meyer:Universal}.
Namely, adding coefficients to the \(\Cst\)\nb-correspondences and
restricting to appropriate Hilbert submodules to get a unitary operator
even if the action of~\(G\) on~\(\FU\) is only partial.

The continuous, compactly supported sections of a Fell bundle form a
\Star{}algebra \(\Contc(G,\A)\).  Section~\ref{sec:integrate}
integrates a representation of~\(\A\) on a Hilbert module~\(\F\) to
a representation of~\(\Contc(G,\A)\) on the same Hilbert
module~\(\F\).  By construction, this integrated representation is
bounded by a variant of the \(I\)\nb-norm.

Section~\ref{sec:disintegrate} disintegrates a representation
of~\(\Contc(G,\A)\) on~\(\F\) to a representation of~\(\A\)
on~\(\F\).  As in~\cite{Buss-Holkar-Meyer:Universal}, this also
works for representations by densely defined unbounded operators,
assuming only some weak continuity in the inductive limit topology
on~\(\Contc(G,\A)\).  Thus any densely defined representation
of~\(\Contc(G,\A)\) that is continuous in the inductive limit topology
extends to a representation by adjointable operators that is bounded
for the \(I\)\nb-norm on~\(\Contc(G,\A)\).

The integration and disintegration processes are inverse to each
other.  Thus there is a maximal \(\Cst\)\nb-norm on~\(\Contc(G,\A)\)
that is continuous in the inductive limit topology; and
this \(\Cst\)\nb-norm is already bounded by the \(I\)\nb-norm.  The
full section \(\Cst\)\nb-algebra \(\Cst(G,\A)\) of the Fell
bundle is the completion of~\(\Contc(G,\A)\) in this \(\Cst\)\nb-norm.
Our analysis shows that representations of~\(\Cst(G,\A)\) on a
Hilbert module~\(\F\) are equivalent to Fell-bundle
representations of~\(\A\)
on~\(\F\).  This is our universal property
for~\(\Cst(G,\A)\).  Since it covers representations on
arbitrary Hilbert modules, it determines~\(\Cst(G,\A)\)
uniquely up to isomorphism.

We also define the reduced section \(\Cst\)\nb-algebra \(\Cred(G,\A)\)
of a Fell bundle~\(\A\) over~\(G\), which is based on a ``regular
representation'' of the Fell bundle~\(\A\).
We show that the regular representation generates a norm
on~\(\Contc(G,\A)\) --~not just a seminorm.
Thus the canonical maps from~\(\Contc(G,\A)\) to \(\Cred(G,\A)\)
and~\(\Cst(G,\A)\) are injective.
This implies that the canonical \Star{}homomorphisms \(\SUF \to
\Mult(\Cst(G,\A))\) and \(\SUF \to \Mult(\Cred(G,\A))\) are injective.

In Section~\ref{sec:functorial_exact}, we prove several expected
results about section \(\Cst\)\nb-algebras of Fell bundles.
We describe the universal representation of the Fell bundle in
\(\Cst(G,\A)\).
We prove that both full and reduced crossed products are functorial
for \Star{}homomorphisms of Fell bundles --~defined suitably.
And we show that these functors preserve surjections and that the
reduced crossed product preserves injections.
We also prove that the full crossed product functor is exact.
For this, we study ideals in Fell bundles and relate them to invariant
ideals in~\(\SUF\).
We use split extensions to define homomorphisms from a Fell bundle to
the ``multipliers'' of another Fell bundle.
We show that these induce \Star{}homomorphisms \(\Cst(G,\A) \to
\Mult(\Cst(G,\B))\) and \(\Cred(G,\A) \to
\Mult(\Cred(G,\B))\).
In the end of Section~\ref{sec:functorial_exact}, we discuss
approximate units for the section \(\Cst\)\nb-algebras of general Fell
bundles.
Although our main theorems are proven without relying on
approximate units, we include this construction for future
convenience.
To the best of our knowledge, the existence of such approximate units
is currently only documented in the literature for saturated and
separable Fell bundles (see
\cite{Muhly-Williams:Equivalence.FellBundles}*{Proposition~5.1}).

In Section~\ref{sec:dual_groupoid_quasi-orbit}, we build a dual
action on the space~\(\dual{\SUF}\) of equivalence classes of
irreducible representations of~\(\SUF\).  We show that the open subsets
in~\(\dual{\SUF}\) invariant under the dual action correspond
precisely to the Fell ideals in~\(\A\).  We prove that the
``restriction'' of an ideal in \(\Cst(G,\A)\) to~\(\SUF\) is always
invariant in this sense.  Using this result and the machinery
developed in~\cite{Kwasniewski-Meyer:Stone_duality}, we define a
quasi-orbit space \(\dual{\SUF}/{\sim}\) and a quasi-orbit map
\[
  \dual{\Cst(G,\A)} \to \dual{\SUF}/{\sim}.
\]
Quasi-orbit spaces
are a key ingredient in the Effros--Hahn Conjecture (see
\cites{Effros-Hahn:Transformation_groups,
  GootmanRosenberg.StructureOfCrossedProducts}).
They are somewhat implicit in the groupoid version of this conjecture
in~\cite{Renault:Ideal_structure}, and interesting objects in their
own right (compare~\cite{GootmanLazar.DualityCrossedProduct}).

In Section~\ref{sec:Renault}, we study Fell-bundle representations
on a separable Hilbert space.
The \(\Cont_0(G^0)\)-\(\Cst\)-algebra structure on~\(\SUF\) allows us
to analyse a representation of~\(\SUF\) through a measurable field of
representations of the field~\(\FU\).
Based on this, we deduce a version of Renault's
Integration--Disintegration Theorem from our universal property.
We also bring in results of Ramsay in order to improve statements that
initially hold only almost everywhere so that they hold everywhere.

The last, rather short sections establish results that are specific to
certain classes of Fell bundles.  In
Section~\ref{sec:twisted_partial_special}, we consider Fell bundles
coming from twisted partial actions.  We show that measurable
Fell-bundle representations on a Hilbert space are equivalent to
measurable covariant representations, and we also decompose
representations on Hilbert modules as ``covariant representations''
provided the action is global and the twist is scalar-valued.

In Section~\ref{sec:Fell_line}, we consider Fell line bundles.  These
are often described through circle bundles.  We show that
representations of a Fell line bundle are equivalent to
representations of the corresponding circle bundle that are
homogeneous in a suitable sense.  We prove this both for the
measurable representations on a separable Hilbert space and for
Fell-bundle representations on Hilbert modules.

In Section~\ref{sec:Fell_over_groups}, we specialise to Fell bundles
over groups.  In this case, there is a rather obvious concept of a
continuous Fell-bundle representation on a Hilbert module.  We prove
that these continuous Fell-bundle representations are equivalent to
our Fell-bundle representations.  For representations on Hilbert space
and under separability assumptions, our results imply that a
measurable Fell-bundle representation is almost everywhere equal to a
continuous Fell-bundle representation.

Section~\ref{sec:trafo_groupoids} considers Fell bundles over
transformation groupoids \(X\rtimes G\).  Here a Fell bundle~\(\B\)
over~\(X\rtimes G\) induces a Fell bundle~\(\A\) over~\(G\) by
forgetting the fibrewise decomposition over~\(X\).  We show that the
Fell bundles~\(\B\) over \(X\rtimes G\) and~\(\A\) over~\(G\) have the
same Fell-bundle representations and thus have canonically isomorphic
section \(\Cst\)\nb-algebras.

\section{Upper semicontinuous fields of Banach spaces}
\label{sec:usc_fields}

The main new issue in this article compared
to~\cite{Buss-Holkar-Meyer:Universal} is that we must add fields of
\(\Cst\)\nb-algebras or \(\Cst\)\nb-correspondences as coefficients
to various constructions in~\cite{Buss-Holkar-Meyer:Universal}.  In
this section, we recall some definitions and results needed for
this.  We recall upper semicontinuous fields of Banach spaces,
\(\Cst\)\nb-algebras, \(\Cst\)\nb-correspondences, and Hilbert
bimodules.  We build a \(\Cst\)\nb-correspondence between the
\(\Cst\)\nb-algebras of \(\Cont_0\)\nb-sections of two upper
semicontinuous fields of \(\Cst\)\nb-algebras from a topological
correspondence decorated by an upper semicontinuous field of
\(\Cst\)\nb-correspondences, and we prove some lemmas about these
constructions.

Let~\(X\) be a locally compact space.
An \emph{upper semicontinuous field of Banach spaces} over~\(X\) is a
family of Banach spaces~\(\B_x\) for \(x\in X\) with a topology on the
total space \(\B = \bigsqcup_{x\in X} \B_x\) satisfying, among others,
the following properties
(see \cite{Doran-Fell:Representations} or
\cite{Williams:crossed-products}*{Appendix~C} for the full
definition): the addition and scalar multiplication are continuous
maps \(\B\times_X \B \to \B\) and \(\C\times\B \to \B\), respectively,
and the norm function \(\B\to[0,\infty)\) is upper semicontinuous.
Here \(\B\times_X \B\) denotes the fibre product over~\(X\),
consisting of all pairs \((a,b)\in \B\times \B\) with \(p(a)=p(b)\),
where \(p\colon \B\to X\) denotes the bundle projection.
We often describe the topology on~\(\B\) by giving the space
\(\Cont_0(X,\B)\) of continuous sections \(X \to \B\)
vanishing at infinity.
There is a unique topology on~\(\B\) with the specified continuous
sections by
\cite{BussExel:Fell.Bundle.and.Twisted.Groupoids}*{Proposition~2.4}.

An \emph{upper semicontinuous field of \(\Cst\)\nb-algebras}
over~\(X\) is a family of \(\Cst\)\nb-algebras~\(\B_x\) for
\(x\in X\) with a topology on \(\B \defeq \bigsqcup \B_x\) making it
an upper semicontinuous field of Banach spaces and making the
fibrewise multiplication and involution continuous maps
\(\B\times_X \B \to \B\) and \(\B\to \B\), respectively.
There is a bijection between isomorphism classes of upper
semicontinuous fields of \(\Cst\)\nb-algebras over~\(X\) and of
\(\Cont_0(X)\)-\(\Cst\)-algebras (see~\cite{Nilsen:Bundles}), mapping
a field~\(\B\) to its \(\Cst\)\nb-algebra of \(\Cont_0\)\nb-sections
\(\Cont_0(X,\B)\).

Let \(A\) and~\(B\) be \(\Cst\)\nb-algebras.
An \emph{\(A\)\nb-\(B\)-correspondence} is a Hilbert
\(B\)\nb-module~\(\E\) with a nondegenerate representation of~\(A\) by
adjointable operators, \(A\to \Bound(\E)\).
A \emph{Hilbert \(A\)-\(B\)\nb-bimodule} is a vector space~\(\E\) that
is both a right Hilbert \(B\)\nb-module and a left Hilbert
\(A\)\nb-module, such that the structures are compatible in the sense
that the actions of \(A\) and~\(B\) commute and \(\xi\cdot
\braket{\eta}{\zeta} = \BRAKET{\xi}{\eta}\cdot \zeta\) for all
\(\xi,\eta,\zeta\in \E\).
Here \(\braket{\cdot}{\cdot}\) and \(\BRAKET{\cdot}{\cdot}\) denote
the right \(B\)- and left \(A\)\nb-valued inner products,
respectively.
Such a Hilbert \(A\)-\(B\)-bimodule is, in particular, an
\(A\)\nb-\(B\)-correspondence; conversely, an
\(A\)\nb-\(B\)-correspondence comes from a Hilbert
\(A\)-\(B\)-bimodule if and only if the left action \(\varphi\colon A
\to \Bound(\E)\) restricts to an isomorphism \(\varphi|_I\colon I
\congto \Comp(\E)\) between some (twosided) ideal \(I\idealin A\) and
the ideal \(\Comp(\E)\) of compact operators in \(\Bound(\E)\) (see
\cite{Buss-Meyer:Actions_groupoids}*{Lemma~4.2}).
Then the left inner product on~\(\E\) is the composite of the standard
\(\Comp(\E)\)-valued inner product and~\(\varphi|_I^{-1}\).
This ideal~\(I\) and the left inner product on~\(\E\) are unique if
they exist.
Namely, \(I\) must be Katsura's ideal \(\varphi^{-1}(\Comp(\E)) \cap
\ker \varphi\), and \(\BRAKET{\xi}{\eta} =
\varphi|_I^{-1}\bigl(\ket{\xi}\bra{\eta}\bigr)\) for all
\(\xi,\eta\in\E\).
The left action \(A\to \Bound(\E)\) is the composition of the
canonical homomorphism \(A\to \Mult(I)\) with the isomorphism
\(\Mult(I)\congto \Mult(\Comp(\E))\cong \Bound(\E)\) induced
by~\(\varphi|_I\).

A \emph{correspondence isometry} between two
\(A\)\nb-\(B\)-correspondences \(\E\) and~\(\F\) is an
\(A\)\nb-\(B\)-bimodule homomorphism \(f\colon \E\to \F\) that
preserves the right inner product, that is,
\(\braket{f(\xi)}{f(\eta)}=\braket{\xi}{\eta}\) for all
\(\xi,\eta\in \E\).
Then~\(f\) is an isometry, but need not be
adjointable.
A \emph{Hilbert bimodule map} between two Hilbert
\(A\)\nb-\(B\)-bimodules \(\E\) and~\(\F\) is an
\(A\)\nb-\(B\)-bimodule homomorphism \(f\colon \E\to \F\) that
preserves both inner products, that is,
\(\braket{f(\xi)}{f(\eta)}=\braket{\xi}{\eta}\) and
\(\BRAKET{f(\xi)}{f(\eta)}=\BRAKET{\xi}{\eta}\) for all
\(\xi,\eta\in \E\).
Thus it is a correspondence isometry.
All correspondence isometries in our universal property are invertible
because we will restrict the codomains suitably.

Let \(\A=(\A_x)_{x\in X}\) and \(\B=(\B_x)_{x\in X}\) be two upper
semicontinuous fields of \(\Cst\)\nb-algebras over~\(X\).
An \emph{upper semicontinuous field of \(\Cst\)\nb-correspondences}
between them is a family of
\(\A_x\)\nb-\(\B_x\)-correspondences~\(\E_x\) with a topology on
\(\E = (\E_x)_{x\in X}\) that makes it an upper semicontinuous field
of Banach spaces such that the left and right multiplications and
the inner product are continuous maps \(\A\times_X \E \to \E\),
\(\E\times_X \B \to \E\) and \(\E \times_X \E \to \B\),
respectively.
We also ask the norms on the fibres~\(\E_x\) that make~\(\E\) an upper
semicontinuous field to induced by the inner products, that is,
\(\norm{\xi}_{\E_x} = \norm{\braket{\xi}{\xi}}_{\B_x}^{1/2}\) for
all \(x\in X\), \(\xi\in\E_x\).

An \emph{upper semicontinuous field of Hilbert bimodules} between
\(\A\) and~\(\B\) is a family of \(\A_x\)\nb-\(\B_x\)-Hilbert
bimodules with a topology that makes it an upper semicontinuous field
of \(\Cst\)\nb-correspondences and also makes the left inner product
continuous.  The norm is given both by the left and the right inner
product: if \(x\in X\), \(\xi\in\E_x\), then
\[
  \norm{\xi}_{\E_x} = \norm{\braket{\xi}{\xi}}_{\B_x}^{1/2} =
  \norm{\BRAKET{\xi}{\xi}}_{\A_x}^{1/2}.
\]

An important way to build new upper semicontinuous fields of Banach
spaces is the pull-back construction.  Let \(X\) and~\(Y\) be locally
compact spaces, \(f\colon X\to Y\) a continuous map, and
\(\B = (\B_y)_{y\in Y}\) an upper semicontinuous field of Banach
spaces over~\(Y\).  Then there is an upper semicontinuous field of
Banach spaces \(f^*(\B)\) over~\(X\) with fibre~\(\B_{f(x)}\) at
\(x\in X\) and with the fibre product topology on the total space
\(\bigsqcup_{x\in X} \B_{f(x)} = X\times_Y \bigsqcup_{y\in Y} \B_y\).
The space of \(\Cont_0\)\nb-sections of~\(f^*(\B)\) is the closed
linear span of sections of the form \(x\mapsto h(x) b(f(x))\) for
\(h\in \Cont_0(X)\) and \(b\in \Cont_0(Y,\B)\).

The pull-back of an upper semicontinuous field of
\(\Cst\)\nb-algebras is again an upper semicontinuous field of
\(\Cst\)\nb-algebras.
The pull-back of an upper semicontinuous field of
\(\Cst\)\nb-correspondences or Hilbert bimodules between two upper
semicontinuous fields of \(\Cst\)\nb-algebras \(\A\) and~\(\B\) is an upper
semicontinuous field of \(\Cst\)\nb-correspondences or Hilbert
bimodules between \(f^*(\A)\) and~\(f^*(\B)\), respectively.

\emph{In the following, we usually abbreviate ``upper semicontinuous
  field'' to ``field'', that is, all fields of Banach spaces,
  \(\Cst\)\nb-algebras or \(\Cst\)\nb-correspondences over locally
  compact spaces are tacitly assumed to be upper semicontinuous.}

We sometimes have to prove that two fields of Banach spaces are
isomorphic.  When we already have an isometric map between them, then
this is a fibrewise issue:

\begin{lemma}
  \label{lem:subfield_equality}
  Let \(\A\) and~\(\B\) be fields of Banach spaces over~\(X\) and let
  \(\psi\colon \A \to \B\) be a map between them that is fibrewise
  isometric.  In other words, let~\(\A\) be a subfield in the field of
  Banach spaces~\(\B\).  The map~\(\psi\) is an isomorphism of fields
  of Banach spaces if and only if~\(\psi_x\) has dense range for all
  \(x\in X\).
\end{lemma}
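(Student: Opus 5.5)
Throughout I read ``map between fields'' as a morphism of fields over~\(X\): a continuous map \(\psi\colon\A\to\B\) commuting with the bundle projections and restricting to linear maps \(\psi_x\colon\A_x\to\B_x\). Saying that~\(\psi\) is an isomorphism then means that~\(\psi\) is a bijection whose inverse is continuous.

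The forward implication is immediate. If~\(\psi\) is an isomorphism, each~\(\psi_x\) is surjective, so its range is dense. For the converse, assume every~\(\psi_x\) has dense range. A linear isometry between Banach spaces has closed range, because its domain is complete. Hence each~\(\psi_x\) is surjective, so a bijective isometry. Thus \(\psi\colon\A\to\B\) is a continuous bijection over~\(X\), and it remains only to show that \(\psi^{-1}\) is continuous. This is the one real point of the proof, since the topologies on the total spaces are not determined fibrewise.

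To show continuity of~\(\psi^{-1}\), I would use the standard criterion for convergence in an upper semicontinuous field of Banach spaces (\cite{Williams:crossed-products}*{Appendix~C}). Suppose \(b_i\to b\) in~\(\B\), with \(x_i=p(b_i)\) and \(x=p(b)\), and put \(a_i\defeq\psi^{-1}(b_i)\), \(a\defeq\psi^{-1}(b)\). The criterion says that \(a_i\to a\) in~\(\A\) if \(x_i\to x\) and, for every \(\varepsilon>0\), there is a continuous section~\(\sigma\) of~\(\A\) with \(\norm{\sigma(x)-a}<\varepsilon\) and eventually \(\norm{\sigma(x_i)-a_i}<\varepsilon\).

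Such sections exist for every point \(a\in\A_x\) in an upper semicontinuous field of Banach spaces; see \cite{Williams:crossed-products}*{Appendix~C} and \cite{BussExel:Fell.Bundle.and.Twisted.Groupoids}*{Proposition~2.4}. Choose a section~\(\sigma\) of~\(\A\) with \(\sigma(x)=a\). Then \(\psi\circ\sigma\) is a continuous section of~\(\B\). By continuity of subtraction in~\(\B\), we get \(b_i-\psi(\sigma(x_i))\to b-\psi(\sigma(x))=0_x\). Upper semicontinuity of the norm gives \(\limsup_i\norm{b_i-\psi(\sigma(x_i))}\le 0\). Since~\(\psi_{x_i}\) is isometric, \(\norm{a_i-\sigma(x_i)}=\norm{b_i-\psi(\sigma(x_i))}\to0\). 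Also \(x_i\to x\), because the projection is continuous. So the criterion applies and \(a_i\to a\), which proves that~\(\psi^{-1}\) is continuous.

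The main obstacle I expect is precisely this continuity of~\(\psi^{-1}\). It hinges on the section-approximation criterion for convergence, which is part of the standard theory in the cited appendix. I would invoke that criterion rather than reprove it.
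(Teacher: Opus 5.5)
Your proof is correct, but it takes a different route from the paper's. The paper never argues about continuity of $\psi^{-1}$ on total spaces. It uses that the topology of a field is determined by its continuous sections, so it only has to show that every $\Cont_0$-section $h$ of $\B$ equals $\psi\circ f$ for some $\Cont_0$-section $f$ of $\A$. It builds $f$ by approximation:
\begin{itemize}
\item for each $x$ it picks a section $f_x$ with $\norm{\psi(f_x(x))-h(x)}<\varepsilon$, which uses density of the range of $\psi_x$ and a section through a point;
\item it spreads this estimate to a neighbourhood of $x$ by upper semicontinuity;
\item it covers a compact set outside which $h$ is small and glues with a partition of unity, giving $f_\varepsilon$ with $\norm{\psi(f_\varepsilon)-h}_\infty<\varepsilon$;
\item the $f_\varepsilon$ form a Cauchy net whose limit is the preimage of $h$.
\end{itemize}
You instead upgrade density to fibrewise surjectivity via closed range first; the paper only remarks on this after the statement. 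You then prove continuity of $\psi^{-1}$ locally with the tube criterion from Williams' appendix, using one section through the given point.

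Your argument is shorter and purely local. It needs neither a partition of unity nor completeness of $\Cont_0(X,\A)$. The section-level statement follows as a corollary: for a $\Cont_0$-section $h$ of $\B$, the section $\psi^{-1}\circ h$ is continuous and has the same norm function. The paper's argument stays within its convention of describing a field by its $\Cont_0$-sections. It proves directly the statement about section spaces that is invoked later, for instance in Lemmas~\ref{lem:sections_tensor_product_of_fields} and~\ref{lem:functoriality-of-corr}. It also avoids the net-convergence criterion, at the price of relying on uniqueness of the topology with prescribed continuous sections. Both proofs use that every point of $\A$ lies on a continuous section.
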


Since~\(\psi\) is isometric, \(\psi_x\) is surjective when its range
is dense.
So the density condition in the lemma is weaker than surjectivity; we
use this weaker form in later applications.

\begin{proof}
  It is clear that~\(\psi_x\) has dense range if~\(\psi\) is an
  isomorphism.
  Conversely, assume this.  Since the topology of a field of Banach
  spaces is determined by the continuous sections, it is enough to
  prove that any \(\Cont_0\)\nb-section~\(h\) of~\(\B\) is the
  \(\psi\)\nb-image of a \(\Cont_0\)\nb-section of~\(\A\).
  We are going to find \(\Cont_0\)\nb-sections~\(f_\varepsilon\)
  of~\(\A\) for \(\varepsilon>0\) such that
  \(\norm{\psi(f_\varepsilon) - h}_\infty<\varepsilon\).
  Then the sections~\(f_\varepsilon\) form a Cauchy net converging
  towards a \(\psi\)\nb-preimage of~\(h\) because
  \[
    \norm{f_{\varepsilon_1}-f_{\varepsilon_2}}
    = \norm{\psi(f_{\varepsilon_1}) - h -(\psi(f_{\varepsilon_2})-h)}
    < 2 \max \{\varepsilon_1,\varepsilon_2\}.
  \]

  Fix \(\varepsilon>0\).
  For each \(x\in X\), there is a continuous section~\(f_x\) of~\(\A\)
  with \(\norm{\psi(f_x(x)) - h(x)}<\varepsilon\).
  Since the norm function on~\(\B\) is upper semicontinuous and
  \(\psi(f_x)-h\) is a continuous section, there is a
  neighbourhood~\(U_x\) of~\(x\) such that \(\norm{\psi(f_x(y)) -
    h(y)} < \varepsilon\) for all \(y\in U_x\).
  There is also a compact subset \(K\subseteq X\) such that
  \(\norm{h(y) - 0}<\varepsilon\) for \(y\notin K\).
  Cover~\(K\) by finitely many open subsets of the form~\(U_x\).
  Adding the complement of~\(K\) gives a finite cover of~\(X\).
  Using a partition of unity subordinate to this finite cover, we
  build a \(\Cont_0\)\nb-section~\(f_\varepsilon\) of~\(\A\) with
  \(\norm{\psi(f_\varepsilon(y)) - h(y)} < \varepsilon\) for all
  \(y\in X\).
\end{proof}

\begin{definition}
  \label{def:continuous_family_subspaces}
  Let~\(X\) be a locally compact space and let
  \(\B = (\B_x)_{x\in X}\) be a field of Banach spaces over~\(X\).
  A family of closed subspaces \(\A_x \subseteq \B_x\) is
  \emph{continuous} if for each \(x\in X\), \(a\in \A_x\), there is a
  continuous section~\(f\) of~\(\B\) with \(f(x) = a\) and
  \(f(y)\in \A_y\) for all \(y\in X\).
\end{definition}

\begin{lemma}
  \label{lem:subbundle_from_continuous_family}
  Let \(\A_x\subseteq \B_x\) for \(x\in X\) be a continuous family
  of subspaces.  Then there is a unique field~\(\A\) of Banach
  spaces with fibres~\(\A_x\) such that a continuous section
  of~\(\A\) is the same as a continuous section of~\(\B\) with
  values in~\(\A\).  The implied topology on~\(\A\) is the subspace
  topology from~\(\B\).
\end{lemma}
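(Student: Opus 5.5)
The plan is to give \(\A \defeq \bigsqcup_{x\in X} \A_x\) the subspace topology from~\(\B\) and check the axioms of an upper semicontinuous field of Banach spaces directly; uniqueness and the description of continuous sections come afterwards. Most axioms are inherited from~\(\B\) by restriction. The projection \(\A\to X\) is continuous, and addition \(\A\times_X\A\to\A\) and scalar multiplication \(\C\times\A\to\A\) are continuous because they are restrictions of the continuous operations on~\(\B\) and land in~\(\A\), each \(\A_x\) being a linear subspace. The norm function \(\A\to[0,\infty)\) is upper semicontinuous as the restriction of an upper semicontinuous function. The remaining axiom is the one requiring that a net \(a_i\in\A\) with \(\norm{a_i}\to0\) and \(p(a_i)\to x\) converges to \(0_x\). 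This also passes to the subspace topology, since \(0_x\in\A_x\).

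The axiom that really uses the hypothesis is that each point has enough continuous sections through it, so that the field is determined by its sections. This is exactly what continuity of the family in Definition~\ref{def:continuous_family_subspaces} supplies. For each \(a\in\A_x\) there is a continuous section~\(f\) of~\(\B\) with \(f(x)=a\) and values in~\(\A\). Since \(\A\) carries the subspace topology, \(f\) is a continuous section of~\(\A\). Multiplying by a function in \(\Contc(X)\) that equals~\(1\) near~\(x\) gives a \(\Cont_0\)\nb-section through~\(a\).

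Next I will show that the continuous sections of~\(\A\) are exactly the continuous sections of~\(\B\) with values in~\(\A\). One inclusion holds because the inclusion \(\A\to\B\) is continuous. Conversely, a map \(X\to\A\) whose composite with the inclusion into~\(\B\) is continuous is continuous for the subspace topology. Uniqueness then follows from the fact that the topology of a field of Banach spaces is determined by its continuous sections (\cite{BussExel:Fell.Bundle.and.Twisted.Groupoids}*{Proposition~2.4}). Any field structure on the fibres~\(\A_x\) with this set of continuous sections therefore has the same topology as the one constructed here, so it is the subspace topology.

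The main obstacle is making sure that the subspace topology really is the topology generated by these sections, that is, that the basic open sets \(\setgiven{a\in\A}{p(a)\in U,\ \norm{a-f(p(a))}<\varepsilon}\) for sections~\(f\) of~\(\A\) form a basis of the subspace topology. One direction is immediate, because these sets are intersections with~\(\A\) of the corresponding basic open sets of~\(\B\). For the other direction, I would use the existence of sections of~\(\A\) through every point of~\(\A\), together with upper semicontinuity of the norm as in the proof of Lemma~\ref{lem:subfield_equality}. Any neighbourhood in~\(\B\) of a point \(a\in\A_x\) contains such a tube around a section~\(f\) of~\(\B\). Replacing~\(f\) by a section of~\(\A\) through~\(a\) and shrinking the open set~\(U\) and the tolerance~\(\varepsilon\) yields an \(\A\)\nb-tube contained in that neighbourhood. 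Hence the two topologies agree.
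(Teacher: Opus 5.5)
Your argument is correct. Since the paper leaves this proof to the reader, yours is presumably the intended argument: take the subspace topology; inherit from \(\B\) the operations, the upper semicontinuity of the norm and the axiom on nets with \(\norm{a_i}\to0\); get sections through every point from Definition~\ref{def:continuous_family_subspaces}; and deduce uniqueness because continuous sections determine the topology.

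The one axiom you never state is that the projection \(p\colon \A\to X\) is open. This is part of the full definition of an upper semicontinuous field that the paper refers to. It is also exactly the axiom that needs the continuity hypothesis. For example, take \(\B = X\times\C\), \(\A_x=\C\) at a single non-isolated point and \(\A_y=0\) elsewhere: the subspace topology satisfies all the other axioms, but \(p\) is not open.

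Openness follows at once from your sections. Let \(W\subseteq\B\) be open and \(a\in W\cap\A_x\). Take a section \(f\) of~\(\B\) with values in~\(\A\) and \(f(x)=a\). Then \(f^{-1}(W)\) is a neighbourhood of~\(x\) contained in \(p(W\cap\A)\).

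Your last paragraph is correct but not needed once you cite the uniqueness result. It does, however, give a self-contained description of the subspace topology through tubes around sections of~\(\A\).
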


\begin{proof}
  The proof is left to the reader.
\end{proof}

\begin{lemma}
  \label{lem:subbundle_with_dense_set}
  Let~\(X\) be a locally compact space, let \(\B = (\B_x)_{x\in X}\)
  be a field of Banach spaces over~\(X\), and let
  \(\A_x \subseteq \B_x\) be closed subspaces such that for each
  \(x\in X\), the set of \(f(x) \in \A_x\) for continuous
  sections~\(f\) of~\(\B\) with \(f(y)\in \A_y\) for all \(y\in X\) is
  dense in~\(\A_x\).  Then the field of subspaces~\((\A_x)_{x\in X}\) is
  continuous.
\end{lemma}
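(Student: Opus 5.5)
We have to show: given \(x\in X\) and \(a\in\A_x\), there is a continuous section \(f\) of~\(\B\) with \(f(x)=a\) and \(f(y)\in\A_y\) for all \(y\in X\). Call a section \emph{admissible} if it is a continuous section of~\(\B\) with values in~\((\A_y)_{y\in X}\). The set \(S\) of admissible sections is a linear subspace, and it is closed under multiplication by functions in \(\Cont_b(X)\). The plan is to approximate~\(a\) by values of admissible sections and sum a rapidly convergent series of corrections. This mirrors the Cauchy-net argument in the proof of Lemma~\ref{lem:subfield_equality}.

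Fix \(x\) and \(a\in\A_x\). Inductively choose admissible sections \(f_n\) for \(n\ge 1\) such that
\[
  \norm{a - \textstyle\sum_{k=1}^n f_k(x)} < 2^{-n}.
\]
This is possible by the density hypothesis. Take \(f_1\in S\) with \(\norm{a-f_1(x)}<1/2\). Given \(f_1,\dots,f_n\), the vector \(a-\sum_{k\le n} f_k(x)\) lies in~\(\A_x\), so density gives \(f_{n+1}\in S\) with \(f_{n+1}(x)\) within \(2^{-n-1}\) of it. Then \(\norm{f_{n+1}(x)} < 2^{-n}+2^{-n-1} < 2^{1-n}\) for \(n\ge1\).

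The main point is to make the series \(\sum f_n\) converge uniformly; so far only its values at~\(x\) are controlled. Since \(y\mapsto\norm{f_n(y)}\) is upper semicontinuous, there is an open neighbourhood \(U_n\) of~\(x\) with \(\norm{f_n(y)}<2^{1-n}\) for \(y\in U_n\) (for \(n\ge2\)). Choose \(\varphi_n\in\Contc(X)\) with \(0\le\varphi_n\le1\), \(\supp\varphi_n\subseteq U_n\) and \(\varphi_n(x)=1\), by Urysohn's Lemma for locally compact spaces. Replace \(f_n\) by \(g_n\defeq\varphi_n f_n\) for \(n\ge2\), and let \(g_1\defeq\varphi_1 f_1\) with any such cutoff~\(\varphi_1\). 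Each \(g_n\) is still admissible because \(\A_y\) is a subspace, \(g_n(x)=f_n(x)\), and \(\norm{g_n}_\infty\le 2^{1-n}\) for \(n\ge2\). Hence \(f\defeq\sum_n g_n\) converges uniformly.

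A uniform limit of continuous sections of a field of Banach spaces is continuous; this is a standard axiom or property of such fields, see \cite{Williams:crossed-products}*{Appendix~C}. So \(f\) is a continuous section of~\(\B\). For each \(y\), \(f(y)=\lim_n\sum_{k\le n} g_k(y)\) lies in~\(\A_y\) because \(\A_y\) is closed. Finally \(f(x)=\sum_n f_n(x)=a\) by the choice of the~\(f_n\). This shows continuity of the family in the sense of Definition~\ref{def:continuous_family_subspaces}.
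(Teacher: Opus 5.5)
Your argument is correct and is essentially the paper's proof. The paper takes admissible \(f_n\) with \(\norm{f_n(x)-a}<2^{-n}\), cuts off the differences \(f_{n+1}-f_n\) by functions \(h_n\) that equal~\(1\) at~\(x\) and are supported where these differences are small, and sums \(f_1+\sum_n h_n(f_{n+1}-f_n)\); this is your series \(\sum_n g_n\) with your partial sums \(\sum_{k\le n} f_k\) playing the role of the paper's~\(f_n\).

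There is one harmless index slip. Your estimate gives \(\norm{f_{n+1}(x)}<2^{1-n}\), that is, \(\norm{f_n(x)}<3\cdot 2^{-n}\), and not \(\norm{f_n(x)}<2^{1-n}\). So you should choose \(U_n\) with \(\norm{f_n(y)}<2^{2-n}\) instead. These bounds are still summable, and nothing else changes.
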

\begin{proof}
  Let \(a\in \A_x\).
  We must build a continuous section~\(f\) as in
  Definition~\ref{def:continuous_family_subspaces}.
  By assumption, there are continuous sections~\(f_n\) of~\(\B\) with
  \(f_n(y) \in\A_y\) for all \(y\in X\) and with \(\norm{f_n(x) -
    a}<2^{-n}\) for all \(n\in\N\).
  Then the set of \(y\in X\) with \(\norm{f_n(y) - f_{n+1}(y)} <
  2^{-n-1}\) is an open neighbourhood of~\(x\).
  There are \(h_n\in\Cont_0(X)\) that are supported in this open
  neighbourhood with \(\norm{h_n}_\infty=1\) and \(h_n(x)=1\) for all
  \(n\in\N\).
  Then the series \(f_1(y) + \sum_{n=1}^\infty h_n(y) (f_{n+1}(y)-f_n(y))\) converges
  uniformly towards a continuous section~\(f\) of~\(\B\) with the
  required properties, that is, \(f(y) \in\A_y\) for all \(y\in X\)
  and \(f(x) = f_1(x) + (a - f_1(x)) = a\).
\end{proof}

\begin{remark}
  \label{rem:continuous_family_of_ideals}
  Let~\(X\) be a locally compact space and let
  \(\B = (\B_x)_{x\in X}\) be a field of \(\Cst\)\nb-algebras
  over~\(X\).  A continuous family of ideals in~\(\B\) is equivalent
  to an ideal in \(\Cont_0(X,\B)\).  In one direction, an ideal
  \(J\idealin \Cont_0(X,\B)\) gives a continuous family of ideals by
  \(I_x \defeq \ev_x(J)\) for all \(x\in X\).  Here
  \(\ev_x\colon \Cont_0(X,\B) \to \B_x\) denotes the evaluation map.
  The ideals~\(I_x\) determine~\(J\) uniquely.  In the other
  direction, the space of \(\Cont_0\)\nb-sections of a continuous
  family of ideals is an ideal in \(\Cont_0(X,\B)\).  These two
  constructions are inverse to each other.  (See also
  Corollary~\ref{cor:hereditary_subbundle_subalgebra} for a more
  general statement involving hereditary Fell subbundles.)
\end{remark}

Let \(\E\colon A\to B\) and \(\F\colon B\to D\) be
\(\Cst\)\nb-correspondences.  Their composite is the (balanced) tensor
product \(\E\otimes_B\F\)
(see~\cite{Lance:Hilbert_modules}*{Chapter~4}).  This is the
completion of the algebraic (balanced) tensor product
\(\E\odot_B\F\) with respect to the \(D\)\nb-valued inner product
\[
  \braket{\xi_1\otimes\eta_1}{\xi_2\otimes \eta_2}\defeq
  \braket{\eta_1}{\varphi(\braket{\xi_1}{\xi_2})\eta_2},
\]
where the homomorphism \(\varphi\colon B\to \Bound(\F)\)
gives the left \(B\)\nb-module structure of~\(\F\).  There is a
similar operation for fields of \(\Cst\)\nb-correspondences.  Let
\(\A\), \(\B\) and~\(\Bun[D]\) be fields of \(\Cst\)\nb-algebras
over~\(X\) and let \(\E\) and~\(\F\) be fields of
\(\Cst\)\nb-correspondences between them.  Then there is a unique
field of \(\Cst\)\nb-correspondences over~\(X\) between \(\A\)
and~\(\Bun[D]\) whose space of \(\Cont_0\)\nb-sections is the closed
linear span of sections of the form \(\xi\otimes \eta\) for
\(\xi\in\Cont_0(X,\E)\), \(\eta\in\Cont_0(X,\F)\).  We denote this
field of \(\Cst\)\nb-correspondences by \(\E \otimes_\B \F\).

\begin{lemma}
  \label{lem:sections_tensor_product_of_fields}
  There is a canonical isomorphism
  \begin{equation*}
    \Cont_0(X,\E) \otimes_{\Cont_0(X,\B)} \Cont_0(X,\F)
    \congto \Cont_0(X,\E \otimes_\B \F),\quad
    \xi\otimes \eta\mapsto \bigl[x\mapsto \xi(x) \otimes \eta(x)\bigr],
  \end{equation*}
  of \(\Cont_0(X,\A)\)-\(\Cont_0(X,\Bun[D])\)-correspondences.
\end{lemma}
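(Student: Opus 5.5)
We prove the lemma by first building the map on the algebraic balanced tensor product, showing it preserves inner products (hence is isometric and extends to the completion), and then showing its range is dense.

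\emph{Well-definedness and isometry.} For \(\xi\in\Cont_0(X,\E)\) and \(\eta\in\Cont_0(X,\F)\), the function \(x\mapsto \xi(x)\otimes\eta(x)\) is by definition one of the generating \(\Cont_0\)\nb-sections of \(\E\otimes_\B\F\). So we get a linear map on the algebraic tensor product \(\Cont_0(X,\E)\odot\Cont_0(X,\F)\) into \(\Cont_0(X,\E\otimes_\B\F)\). It is balanced over \(\Cont_0(X,\B)\), because \((\xi b)(x)\otimes \eta(x) = \xi(x)\otimes b(x)\eta(x)\) pointwise, and it is a bimodule map for the actions of \(\Cont_0(X,\A)\) and \(\Cont_0(X,\Bun[D])\). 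Next we check that inner products are preserved. The inner product on sections of a field of correspondences is computed pointwise, and the left action of \(\Cont_0(X,\B)\) on \(\Cont_0(X,\F)\) is pointwise. Hence
\[
  \braket{\xi_1\otimes\eta_1}{\xi_2\otimes\eta_2}(x)
  = \braket{\eta_1(x)}{\braket{\xi_1(x)}{\xi_2(x)}\eta_2(x)}
  = \braket{\xi_1(x)\otimes\eta_1(x)}{\xi_2(x)\otimes\eta_2(x)}.
\]
By sesquilinearity this extends to finite sums. So the map preserves the \(\Cont_0(X,\Bun[D])\)-valued inner products, kills the null vectors of the semi-inner product, and extends to an isometry on the completed tensor product.

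\emph{Surjectivity.} An isometry has closed range, so it suffices to show the range is dense. The range contains all sections \(x\mapsto \xi(x)\otimes\eta(x)\). By the defining property of the field \(\E\otimes_\B\F\), the closed linear span of these is all of \(\Cont_0(X,\E\otimes_\B\F)\). Hence the map is surjective and therefore a unitary of correspondences.

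\emph{Main obstacle.} The real content lies in the existence of the field \(\E\otimes_\B\F\) with the stated space of sections, which the paper asserts just before the lemma. If one has to justify it, the plan is as follows. Take fibres \(\E_x\otimes_{\B_x}\F_x\). Show that the sections \(\sum \xi_i\otimes\eta_i\) have upper semicontinuous norms: the norm of such a sum at \(x\) is the norm of a continuous section of \(\Bun[D]\), namely a finite sum of inner products, so it is upper semicontinuous. These sections are dense in each fibre, since \(\ev_x\) maps \(\Cont_0(X,\E)\) and \(\Cont_0(X,\F)\) onto dense subspaces of \(\E_x\) and \(\F_x\). The section space is also closed under multiplication by \(\Cont_0(X)\). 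The standard construction of a field of Banach spaces from such a family of sections, as in \cite{BussExel:Fell.Bundle.and.Twisted.Groupoids}*{Proposition~2.4}, then gives the topology. Continuity of the module actions and inner product then follows by checking them on the generating sections.
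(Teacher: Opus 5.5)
Your proposal is correct and follows the paper's proof: you check that the map is a balanced bimodule map, that it preserves the \(\Cont_0(X,\Bun[D])\)-valued inner products by a pointwise computation, and that its range is dense by the defining property of \(\E\otimes_\B\F\). Where the paper cites Lemma~\ref{lem:subfield_equality} to conclude, you observe directly that an isometry with dense range is surjective, which is all that is needed here. Your extra sketch of why the field \(\E\otimes_\B\F\) exists goes beyond what the paper proves at this point, since the paper simply asserts that field just before the lemma.
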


\begin{proof}
  The map above is a \(\Cont_0(X,\A)\)-\(\Cont_0(X,\Bun[D])\)-bimodule
  map and isometric for the \(\Cont_0(X,\Bun[D])\)-valued inner
  products.
  Its image is dense by definition.
  So it is an isomorphism of
  \(\Cont_0(X,\A)\)-\(\Cont_0(X,\Bun[D])\)-correspondences by
  Lemma~\ref{lem:subfield_equality}.
  
\end{proof}

Let \(X\) and~\(Y\) be locally compact spaces and let
\(f\colon X \to Y\) be a continuous map.  Let~\(\lambda\) be a
continuous family of measures along the fibres of~\(f\).  (The
map~\(f\) must be open if~\(\lambda\) has full support.)  This gives
rise to a \(\Cont_0(X)\)-\(\Cont_0(Y)\)\nb-correspondence
\(\Lt^2(X,f,\lambda)\) as in
\cite{Buss-Holkar-Meyer:Universal}*{Definition~2.1}.  We now
decorate this construction with upper semicontinuous fields.  Let
\(\A\) and~\(\B\) be fields of \(\Cst\)\nb-algebras over \(X\)
and~\(Y\), respectively.  Let \(\Cont_0(X,\A)\) and
\(\Cont_0(Y,\B)\) denote their spaces of continuous sections
vanishing at~\(\infty\).  To build a \(\Cst\)\nb-correspondence
between \(\Cont_0(X,\A)\) and \(\Cont_0(Y,\B)\), we use a field of
\(\Cst\)\nb-correspondences \(\E = \bigsqcup_{x\in X} \E_x\) between
\(\A\) and~\(f^*\B\); equivalently, each~\(\E_x\) is an
\(\A_x\)-\(\B_{f(x)}\)-correspondence, and the multiplications and
inner products are continuous maps \(\A \times_X \E \to \E\),
\(\E \times_Y \B \to \E\), and \(\E \times_X \E \to \B\), where we
turn~\(\E\) into a space over~\(Y\) using~\(f\) to form the fibre
product over~\(Y\).  Let \(\Contc(X,\E)\) be the space of continuous sections
of~\(\E\) with compact support.  Equip \(\Contc(X,\E)\) with the
\(\Cont_0(X,\A)\)-\(\Cont_0(Y,\B)\)-bimodule structure
\[
  (\varphi\cdot \xi\cdot \psi)(x) \defeq \varphi(x)\cdot \xi(x)\cdot
  \psi(f(x))
\]
for \(\varphi\in\Cont_0(X,\A)\), \(\xi\in\Contc(X,\E)\),
\(\psi\in\Cont_0(Y,\B)\) and the \(\Cont_0(Y,\B)\)-valued inner
product
\[
  \braket{\xi_1}{\xi_2}(y) \defeq \int_X {}\braket{\xi_1(x)}{\xi_2(x)}
  \,\diff\lambda_y(x)
\]
for \(\xi_1,\xi_2\in \Contc(X,\mathcal{E})\) and \(y\in Y\).  The
continuity of the inner product follows from the following general
fact:

\begin{lemma}
  \label{lem:continuity_integral}
  Let \(f\colon X\to Y\) be a continuous map with a family of
  measures~\(\lambda\) along its fibres and let~\(\B\) be an upper
  semicontinuous field of Banach spaces over~\(Y\).  Then the
  integration map
  \(\lambda(\varphi)(y) \defeq \int_X \varphi(x) \,\diff\lambda_y(x)\)
  maps \(\Contc(X,f^*(\B))\) into \(\Contc(Y,\B)\).
\end{lemma}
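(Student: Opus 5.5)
The plan is to reduce to the dense subspace of elementary sections, use the scalar case for those, and pass to the general case by uniform approximation. Recall that "continuous family of measures along the fibres" means: for every \(h\in\Contc(X)\), the function \(y\mapsto\int_X h\,\diff\lambda_y\) belongs to \(\Contc(Y)\). In addition, each \(\lambda_y\) is a Radon measure supported in \(f^{-1}(y)\). The integral in the statement is fibrewise: for \(x\in f^{-1}(y)\), \(\varphi(x)\in\B_{f(x)}=\B_y\), so \(\lambda(\varphi)(y)\) is a Banach-space-valued integral in the single Banach space~\(\B_y\). It is well defined as a Bochner integral, or more cheaply as a limit of Riemann-type sums, because \(x\mapsto\varphi(x)\) restricted to the fibre is norm-continuous with compact support in~\(\B_y\).

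\textbf{Elementary sections.} The sections \(x\mapsto h(x)b(f(x))\) with \(h\in\Contc(X)\) and \(b\in\Cont_0(Y,\B)\) lie in \(\Contc(X,f^*\B)\). For such a section,
\[
  \lambda(h\cdot b\circ f)(y)=\Bigl(\int_X h\,\diff\lambda_y\Bigr)\, b(y)=\lambda(h)(y)\,b(y).
\]
Here \(\lambda(h)\in\Contc(Y)\) by the continuity of~\(\lambda\). So the right-hand side is a continuous section of~\(\B\) with compact support in \(f(\supp h)\). By linearity, the same holds for finite sums of elementary sections.

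\textbf{Approximation.} Fix \(\varphi\in\Contc(X,f^*\B)\) with support in a compact set \(K\), and choose an open set \(V\supseteq K\) with compact closure. I claim that for every \(\varepsilon>0\) there is a finite sum \(\psi\) of elementary sections, supported in \(\overline V\), with \(\norm{\varphi-\psi}_\infty<\varepsilon\). This follows from the description of \(\Cont_0(X,f^*\B)\) as the closed linear span of elementary sections, after multiplying by a cutoff function \(\chi\in\Contc(X)\) with \(\chi|_K=1\), \(0\le\chi\le1\) and \(\supp\chi\subseteq\overline V\); the cutoff keeps \(\psi\) elementary and does not increase the error. Alternatively, it can be done directly with a partition of unity, exactly as in the proof of Lemma~\ref{lem:subfield_equality}: this uses upper semicontinuity of the norm of the continuous sections \(\varphi-h\cdot b\circ f\) and the fact that each \(\varphi(x)\) extends to a continuous section of~\(\B\).

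Next, pick \(g\in\Contc(X)\) with \(0\le g\le1\) and \(g=1\) on \(\overline V\). Then
\[
  \norm{\lambda(\varphi)(y)-\lambda(\psi)(y)}\le\varepsilon\,\lambda(g)(y)\le\varepsilon\,\norm{\lambda(g)}_\infty .
\]
The constant \(\norm{\lambda(g)}_\infty\) is finite because \(\lambda(g)\in\Contc(Y)\). So \(\lambda(\varphi)\) is a uniform limit of continuous sections of~\(\B\), all supported in the fixed compact set \(f(\overline V)\).

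\textbf{Main obstacle: closedness.} The step I expect to be the crux is checking that a uniform limit of continuous sections of an upper semicontinuous field is again continuous. In \(\Cont_0(Y,\B)\), which is a Banach space under the sup norm, this is the statement that continuity with respect to the bundle topology survives uniform limits. It follows from the standard criterion (see \cite{Williams:crossed-products}*{Appendix~C}): a section is continuous if it can be approximated uniformly near each point by continuous sections. That criterion is exactly where upper semicontinuity of the norm enters. With this closedness in hand, \(\lambda(\varphi)\) is continuous and supported in \(f(\overline V)\), so \(\lambda(\varphi)\in\Contc(Y,\B)\).
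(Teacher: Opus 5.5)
Your proof is correct and follows the same route as the paper. You compute \(\lambda\) on elementary sections \(h\cdot(b\circ f)\), extend by linearity, and pass to limits using that such sections are dense by the definition of the pullback topology on \(f^*\B\). You also supply details the paper leaves implicit: the cutoff that keeps all supports in a fixed compact set, so that the bound \(\varepsilon\,\lambda(g)\) gives uniform convergence of the integrals, and the fact that uniform limits of continuous sections of an upper semicontinuous field remain continuous.
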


\begin{proof}
  If \(\varphi(x) = h(x) b(f(x))\) for \(h\in\Contc(X)\) and \(b\in
  \Contc(Y,\B)\), then \(\lambda(\varphi)(y) = \bigl(\int_X h(x)
  \,\diff\lambda_y(x)\bigr) \cdot b(y)\) is a continuous section with
  compact support.
  This remains true for linear combinations of sections of this form,
  and also for norm limits of such linear combinations.
  Any continuous section of~\(f^*(\B)\) is of this form because of the
  definition of the topology on~\(f^*(\B)\).
\end{proof}

The bimodule structure and inner product on \(\Contc(X,\E)\) satisfy
the algebraic conditions for a \(\Cst\)\nb-correspondence, and the
inner product is positive.  So it defines a norm on
\(\Contc(X,\E)\).  Its completion is a \(\Cst\)\nb-correspondence
between \(\Cont_0(X,\A)\) and \(\Cont_0(Y,\B)\).  We denote it by
\(\Lt^2(X,\E,f,\lambda)\), and we write
\[
  \Cont_0(X,\A) \xrightarrow[\lambda, \E]{f} \Cont_0(Y,\B)
\]
in commuting diagrams to keep them readable.

Let \(\A\), \(\B\) and~\(\Bun[D]\) be fields of \(\Cst\)\nb-algebras
over locally compact spaces \(X\), \(Y\) and~\(Z\).
Let \(f\colon X \to Y\) and \(g\colon Y \to Z\) be continuous maps
with families of measures \(\lambda\) and~\(\mu\) along their fibres
and fields of \(\Cst\)\nb-correspondences \(\E\) and~\(\F\) over \(X\)
and~\(Y\), respectively.
Then the composite integration map
\[
  \mu\circ\lambda\colon \Contc(X)\to\Contc(Y)\to\Contc(Z),\qquad
  (\mu\circ\lambda)(\varphi)(z) = \int_Y \int_X
  \varphi(x)\,\diff\lambda_y(x)\,\diff\mu_z(y),
\]
describes a continuous family of measures \(\mu\circ\lambda\) along
\(g\circ f\).
Recall that \(\E \otimes_{f^*(\B)} f^*(\F)\) is the unique field of
\(\A\)-\((gf)^*\Bun[D]\)-correspondences over~\(X\) with fibres \(\E_x
\otimes_{\B_{f(x)}} \F_{f(x)}\) and \(\Cont_0\)\nb-sections isomorphic
to \(\Cont_0(X,\E) \otimes_{\Cont_0(X,f^*(\B))} \Cont_0(X,f^*(\F))\).

\begin{lemma}
  \label{lem:compose_corr_from_measure-family}
  With the notation above, the map
  \[
    \gamma\colon \Contc(X,\E)\odot \Contc(Y,\F) \to \Contc(X,\E
    \otimes_{f^*(\B)} f^*(\F)),\quad \gamma(\varphi\otimes\psi)(x)
    \defeq \varphi(x)\otimes \psi(f(x)),
  \]
  extends uniquely to an isomorphism of
  \(\Cont_0(X,\A)\)-\(\Cont_0(Z,\Bun[D])\)-correspondences
  \[
    \Lt^2(X,\E,f,\lambda) \otimes_{\Cont_0(Y,\B)} \Lt^2(Y,\F,g,\mu)
    \cong \Lt^2(X, \E \otimes_{f^*(\B)}f^*(\F),g\circ f,\mu\circ
    \lambda).
  \]
\end{lemma}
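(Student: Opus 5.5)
The plan follows the standard three-step pattern: check that \(\gamma\) is a bimodule map, check that it preserves inner products, and check that its range is dense.

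\emph{Bimodule map.} First I check that \(\gamma\) is compatible with the bimodule structures. Left multiplication by \(\Cont_0(X,\A)\) acts pointwise on the first tensor factor on both sides. Right multiplication by \(\Cont_0(Z,\Bun[D])\) acts pointwise through \(g\circ f\) on both sides. The map \(\gamma\) is also \(\Cont_0(Y,\B)\)-balanced: if \(b\in\Cont_0(Y,\B)\), then \(\gamma(\varphi b\otimes\psi)(x)=\varphi(x)b(f(x))\otimes\psi(f(x))=\gamma(\varphi\otimes b\psi)(x)\) in the balanced fibre tensor product. So \(\gamma\) descends to the algebraic balanced tensor product.

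\emph{Inner products.} Next I compare the \(\Cont_0(Z,\Bun[D])\)-valued inner products at \(z\in Z\). On the left-hand side,
\[
  \braket{\varphi_1\otimes\psi_1}{\varphi_2\otimes\psi_2}(z)
  = \int_Y \braket[\big]{\psi_1(y)}{\braket{\varphi_1}{\varphi_2}(y)\cdot\psi_2(y)}\,\diff\mu_z(y),
\]
where \(\braket{\varphi_1}{\varphi_2}(y)=\int_X\braket{\varphi_1(x)}{\varphi_2(x)}\,\diff\lambda_y(x)\). Fibrewise the inner product \(\braket{\psi_1(y)}{b\,\psi_2(y)}\) is linear and norm-continuous in \(b\in\B_y\). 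Hence I can move it inside the \(\lambda_y\)-integral. This is legitimate because the integral of a continuous compactly supported section of a Banach field is a norm limit of Riemann-type sums, or one can reduce to elementary sections \(h(x)b(f(x))\) as in the proof of Lemma~\ref{lem:continuity_integral}. The result is
\[
  \int_Y\int_X \braket[\big]{\varphi_1(x)\otimes\psi_1(f(x))}{\varphi_2(x)\otimes\psi_2(f(x))}\,\diff\lambda_y(x)\,\diff\mu_z(y),
\]
which is exactly the inner product of \(\gamma(\varphi_1\otimes\psi_1)\) and \(\gamma(\varphi_2\otimes\psi_2)\) in \(\Lt^2(X,\E\otimes_{f^*(\B)}f^*(\F),g\circ f,\mu\circ\lambda)\). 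So \(\gamma\) is isometric and extends to a correspondence isometry on the completion. The balanced tensor product of the two completions is the completion of \(\Contc(X,\E)\odot\Contc(Y,\F)\) because both factors are dense.

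\emph{Dense range.} It remains to show that the image of \(\gamma\) is dense in the \(\Lt^2\)-norm; I expect this to be the main obstacle. The \(\Lt^2\)-norm of a section \(\eta\) supported in a fixed compact set \(K\subseteq X\) is bounded by \(C_K\norm{\eta}_\infty\), with \(C_K=\sup_z (\mu\circ\lambda)_z(K)^{1/2}\), which is finite by continuity of the measure family. Thus it suffices to approximate every \(\eta\in\Contc(X,\E\otimes_{f^*(\B)}f^*(\F))\) uniformly, with supports in a fixed compact set, by finite sums of the form \(x\mapsto\varphi(x)\otimes\psi(f(x))\). By definition of the tensor field and Lemma~\ref{lem:sections_tensor_product_of_fields}, sections of the form \(\xi\otimes\zeta\) with \(\xi\in\Cont_0(X,\E)\) and \(\zeta\in\Cont_0(X,f^*\F)\) span a dense subspace in the supremum norm. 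Since \(f^*\F\) is a pullback field, its sections are in turn approximated by sums of \(h(x)\psi(f(x))\) with \(h\in\Contc(X)\) and \(\psi\in\Contc(Y,\F)\). Absorbing \(h\) into \(\xi\) gives the required form. To keep supports inside a fixed compact set, I multiply throughout by a cutoff function \(\chi\in\Contc(X)\) with \(\chi=1\) on \(\supp\eta\); this still yields uniform approximation, now with support in \(\supp\chi\).

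\emph{Conclusion.} An isometric bimodule map with dense range between Hilbert modules is an isomorphism of correspondences. Uniqueness of the extension follows from density of the algebraic tensor product in the left-hand side.
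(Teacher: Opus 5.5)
Your proposal is correct and takes the same route as the paper: you show that \(\gamma\) is a bimodule map preserving the \(\Cont_0(Z,\Bun[D])\)-valued inner products, and you get dense range from the fact that sections \(x\mapsto h(x)\psi(f(x))\) span a dense subspace of the sections of \(f^*(\F)\), since \(\gamma(h\varphi\otimes\psi)(x)=\varphi(x)\otimes h(x)\psi(f(x))\). You make explicit two points the paper leaves implicit: moving the fibrewise inner product inside the \(\lambda_y\)-integral, and bounding the \(\Lt^2\)-norm by the supremum norm on a fixed compact support via a cutoff function.
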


\begin{proof}
  The proof is exactly the same as for
  \cite{Buss-Holkar-Meyer:Universal}*{Lemma~2.3}.
  Easy computations show that the map~\(\gamma\) is a
  \(\Cont_0(X,\A)\)-\(\Cont_0(Z,\Bun[D])\)-bimodule homomorphism that
  preserves the \(\Cont_0(Z,\Bun[D])\)-valued inner products.
  The topology on \(f^*(\F)\) is defined so that sections of the form
  \(\psi(g(x))h(x)\) with \(g\in \Contc(Y,\F)\) and \(h\in \Contc(X)\)
  span a dense subspace of \(\Contc(X,f^*(\F))\) with respect to the
  inductive limit topology.
  Since \(\gamma(h\cdot \varphi\otimes\psi)(x)=h(x)\varphi(x)\otimes
  \psi(g(x))=\varphi(x)\otimes h(x)\psi(g(x))\) and sections of the
  form \(h(x) \psi(g(x))\) span a dense subspace of
  \(\Contc(X,f^*\F)\), it follows that~\(\gamma\) has dense image.
  Thus the isometry~\(\gamma\) is unitary.
\end{proof}

In our brief notation, the following diagram of
\(\Cst\)\nb-correspondences commutes up to the canonical
isomorphism~\(\gamma\) of
Lemma~\ref{lem:compose_corr_from_measure-family}:
\begin{equation}
  \label{eq:compose_measure_family}
  \begin{tikzpicture}[baseline=(current bounding box.west)]
    \matrix (m) [cd,column sep=4em, row sep=2.5em]
    {\Cont_0(X,\A)& \Cont_0(Y,\B)\\
      &\Cont_0(Z,\Bun[D]).\\
    };
    \meascor{m-1-1}{f}{\lambda,\E}{m-1-2}
    \meascor{m-1-2}{g}{\mu,\mathcal{F}}{m-2-2}
    \meascor{m-1-1}{g\circ f}{\mu\circ
      \lambda,\mathcal{E}\otimes_{f^*(\B)}f^*(\mathcal{F})}{m-2-2}
  \end{tikzpicture}
\end{equation}

The regular representation was defined
in~\cite{Buss-Holkar-Meyer:Universal} using topological
correspondences.
These generalise the construction above by adding a map backwards that
affects the left action.
For the same purpose, we will use topological correspondences
decorated with fields of \(\Cst\)\nb-correspondences.
As before, let \(X\) and~\(Y\) be locally compact spaces and let
\(\A\) and~\(\B\) be fields of \(\Cst\)\nb-algebras over \(X\)
and~\(Y\), respectively.
Let \(\Cont_0(X,\A)\) and \(\Cont_0(Y,\B)\) denote their spaces of
continuous sections vanishing at~\(\infty\).
Let~\(V\) be a third locally compact space equipped with continuous
maps \(b\colon V\to X\) and \(f\colon V\to Y\).
Let~\(\lambda\) be a continuous family of measures along the fibres
of~\(f\).
Let \(\E= (\E_v)_{v\in V}\) be a field of \(\Cst\)\nb-correspondences
between \(b^*(\A)\) and~\(f^*(\B)\); thus each~\(\E_v\) is an
\(\A_{b(v)}\)-\(\B_{f(v)}\)-correspondence.
Let \(\Contc(V,\E)\) be the space of continuous sections of~\(\E\)
with compact support.
Equip \(\Contc(V,\E)\) with the
\(\Cont_0(X,\A)\)-\(\Cont_0(Y,\B)\)-bimodule structure
\[
  (\varphi\cdot \xi\cdot \psi)(v) \defeq \varphi(b(v))\cdot
  \xi(v)\cdot \psi(f(v))
\]
for \(\varphi\in\Cont_0(X,\A)\), \(\xi\in\Contc(V,\E)\),
\(\psi\in\Cont_0(Y,\B)\) and the \(\Cont_0(Y,\B)\)-valued inner
product
\[
  \braket{\xi_1}{\xi_2}(y)
  \defeq \int_X {}\braket{\xi_1(x)}{\xi_2(x)} \,\diff\lambda_y(x).
\]
This is a pre-Hilbert \(\Cont_0(Y,\B)\)-module.
Let \(b^*\Lt^2(V,\E,f,\lambda)\) be its completion to a
\(\Cont_0(X,\A)\)-\(\Cont_0(Y,\B)\)-correspondence.

\begin{lemma}[compare
  \cite{Buss-Holkar-Meyer:Universal}*{Proposition~2.6}]
  \label{lem:functoriality-of-corr}
  Let \(X\), \(Y\) and~\(Z\) be locally compact spaces with fields
  of \(\Cst\)\nb-algebras \(\A\), \(\B\) and \(\Bun[D]\),
  respectively.
  Let
  \[
    \begin{tikzpicture}
      \matrix (m) [cd] { &(V,\E)
        &&(W,\F)&\\
        (X,\A)&&(Y,\B)&&(Z,\Bun[D])\\
      };
      \meascor{m-1-2}{f_V}{\lambda}{m-2-3}
      \meascor{m-1-4}{f_W}{\mu}{m-2-5}
      \meascor{m-1-2}{}{b_V}{m-2-1}
      \meascor{m-1-4}{}{b_W}{m-2-3}
    \end{tikzpicture}
  \]
  be decorated topological correspondences from \((X,\A)\) to
  \((Y,\B)\) and from \((Y,\B)\) to \((Z,\Bun[D])\), respectively.
  Let \(\pr_1\colon V\times_Y W \to V\) and
  \(\pr_2\colon V\times_Y W \to W\) be the coordinate projections and
  define \(b\colon V\times_Y W \to X\) and
  \(f\colon V\times_Y W \to Z\) by \(b(v,w) \defeq b_V(v)\) and
  \(f(v,w) \defeq f_W(w)\).
  Then
  \[
    \E \otimes_{\B} \F \defeq \pr_1^*(\E)
    \otimes_{(f_V\circ\pr_1)^*\B} \pr_2^*(\F)
  \]
  is a field of \(\Cst\)\nb-correspondences over \(V\times_Y W\)
  with fibres \(\E_v \otimes_{\B_{f_V(v)}} \F_w\) at
  \((v,w) \in V\times_Y W\).
  There is a continuous family of measures \(\lambda\times_Y\mu\)
  along~\(f\) defined by
  \[
    (\lambda\times_Y \mu) \psi(z)
    \defeq \int_W \int_V \psi(v,w) \,\diff\lambda_{b_W(w)}(v)
    \,\diff \mu_z(w).
  \]
  The canonical map
  \begin{align*}
    \gamma\colon\Contc(V,\E)\odot\Contc(W,\F)
    &\to \Contc(V\times_{f_V,Y,b_W}W, \E \otimes_{\B} \F), \\
    \gamma(\xi\otimes \eta)(v,w)
    &\defeq \xi(v)\otimes\eta(w),
  \end{align*}
  extends to an isomorphism of
  \(\Cont_0(X,\A)\)-\(\Cont_0(Z,\Bun[D])\)-correspondences
  \begin{multline*}
    b_V^* \Lt^2(V,\E,f_V,\lambda)\otimes_{\Cont_0(Y,\B)}
    b_W^* \Lt^2(W,\F,f_W,\mu) \\
    \congto b^* \Lt^2(V\times_{f_V,Y,b_W}W, \E \otimes_{\B}
    \F,f,\lambda\times_Y\mu).
  \end{multline*}
\end{lemma}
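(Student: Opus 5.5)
We follow the proof of Lemma~\ref{lem:compose_corr_from_measure-family}, which is the special case with \(b_V\) and \(b_W\) identity maps. The argument has three parts: check that the ingredients are well defined, show that \(\gamma\) is an isometric bimodule map, and show that its image is dense.

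First, the ingredients. The field \(\E\otimes_\B\F\) is the tensor product of fields of \(\Cst\)\nb-correspondences over \(V\times_Y W\) described before Lemma~\ref{lem:sections_tensor_product_of_fields}. It is applied to the pull-backs \(\pr_1^*(\E)\), which is a field between \((b_V\pr_1)^*\A\) and \((f_V\pr_1)^*\B\), and \(\pr_2^*(\F)\), which is a field between \((b_W\pr_2)^*\B = (f_V\pr_1)^*\B\) and \((f_W\pr_2)^*\Bun[D]\). Its fibres are therefore as claimed.

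Next, \(\lambda\times_Y\mu\) is a continuous family of measures along~\(f\). For \(\psi\in\Contc(V\times_Y W)\), the inner integral \(w\mapsto\int_V\psi(v,w)\,\diff\lambda_{b_W(w)}(v)\) is a continuous, compactly supported function on~\(W\). This is the pull-back statement from \cite{Buss-Holkar-Meyer:Universal}*{Proposition~2.6}: view \(V\times_Y W\to W\) as the pull-back of~\(f_V\) along~\(b_W\), carrying the pulled-back family \(b_W^*\lambda\). Integrating against~\(\mu\) then gives a continuous function on~\(Z\). The same reasoning with coefficients uses Lemma~\ref{lem:continuity_integral} twice, which shows that the inner products on \(\Contc(V\times_Y W,\E\otimes_\B\F)\) land in \(\Contc(Z,\Bun[D])\).

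Second, \(\gamma\) is an isometric bimodule map. The left action of~\(\varphi\) is through \(b_V\) on both sides, and the right action of~\(\psi\) is through~\(f_W\), so \(\gamma\) is clearly a bimodule map. For the inner products, we compute
\[
  \braket{\xi_1\otimes\eta_1}{\xi_2\otimes\eta_2}(z)
  = \int_W \Bigl\langle \eta_1(w) \,\Big|\, \Bigl(\int_V
  \braket{\xi_1(v)}{\xi_2(v)}\,\diff\lambda_{b_W(w)}(v)\Bigr)\cdot
  \eta_2(w)\Bigr\rangle \,\diff\mu_z(w).
\]
Here the left \(\Cont_0(Y,\B)\)-action on \(b_W^*\Lt^2\) evaluates at~\(b_W(w)\). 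The integral can be pulled out of the \(\B_{b_W(w)}\)-linear expression \(b\mapsto\braket{\eta_1(w)}{b\,\eta_2(w)}\), because this map is bounded and linear, so it commutes with integrals of continuous compactly supported functions. The result is \(\int_W\int_V\braket{\xi_1(v)\otimes\eta_1(w)}{\xi_2(v)\otimes\eta_2(w)}\,\diff\lambda\,\diff\mu\), which is the inner product of \(\gamma(\xi_1\otimes\eta_1)\) and \(\gamma(\xi_2\otimes\eta_2)\). So \(\gamma\) preserves inner products and extends to a correspondence isometry from the balanced tensor product, which is complete, into the right-hand side.

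Third, density of the image; I expect this to be the main point requiring care. By definition of the tensor-product field, sections of the form \(h\cdot(\pr_1^*\xi\otimes\pr_2^*\eta)\) span a subspace of \(\Contc(V\times_Y W,\E\otimes_\B\F)\) that is dense in the inductive limit topology. Here \(h\in\Contc(V\times_Y W)\), \(\xi\in\Contc(V,\E)\) and \(\eta\in\Contc(W,\F)\), and the step uses the descriptions of continuous sections of pull-backs and of tensor products together with partitions of unity. Since \(V\times_Y W\) is closed in \(V\times W\), Stone--Weierstrass lets us approximate~\(h\) uniformly, with supports inside a fixed compact set, by sums of products \(h_1(v)h_2(w)\). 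The section \(h_1(v)h_2(w)\,\xi(v)\otimes\eta(w)\) equals \(\gamma\bigl((h_1\xi)\otimes(h_2\eta)\bigr)\).

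It remains to pass from the inductive limit topology to the Hilbert module norm. Convergence in the inductive limit topology, with supports in a fixed compact set, implies convergence in the \(\Lt^2\)-norm, because the measures \((\lambda\times_Y\mu)_z\) of a fixed compact set are uniformly bounded in~\(z\). Hence the image of~\(\gamma\) is dense, and the isometry~\(\gamma\) is unitary, which is the claimed isomorphism.
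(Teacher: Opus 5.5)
Your proposal is correct and follows essentially the same route as the paper, which simply asserts that \(\gamma\) is a bimodule map that preserves the right inner products and has dense range, hence is unitary, citing Lemma~\ref{lem:subfield_equality}. You fill in the computations that the paper leaves as ``simple'', and you prove density of the range directly, via Stone--Weierstrass and the uniform bound on \((\lambda\times_Y\mu)_z\) of compact sets, instead of appealing to Lemma~\ref{lem:subfield_equality}.
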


\begin{proof}
  Simple computations show that~\(\gamma\) is a bimodule map and an
  isometry for the right inner product.  Since it also has dense
  range, it is unitary by Lemma~\ref{lem:subfield_equality}.
\end{proof}

We shall later need that certain Hilbert modules built from
\(\Lt^2\)\nb-sections are full.
This will follow from the following lemma:

\begin{lemma}
  \label{lem:fullness-Hilbert-modules-fields}
  Let \(X\) and~\(Y\) be locally compact Hausdorff spaces,
  \(f\colon X\to Y\) a continuous map, \(\B=(\B_y)_{y\in Y}\) a
  field of \(\Cst\)\nb-algebras over~\(Y\), \(\E=(\E_x)_{x\in X}\) a
  field of Hilbert modules over~\(f^*\B\), and
  \(\lambda=(\lambda_y)_{y\in Y}\) a continuous family of measures
  along~\(f\) with full support.  The Hilbert
  \(\Cont_0(Y,\B)\)-module \(\Lt^2(X,\E,f,\lambda)\) is full if, for
  every \(y\in Y\), the linear span of
  \(\setgiven{\braket{\E_x}{\E_x}}{x\in f^{-1}(y)}\) is dense
  in~\(\B_y\).
\end{lemma}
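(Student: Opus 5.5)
The plan is to show that the closed linear span \(J\) of the inner products \(\braket{\xi_1}{\xi_2}\) with \(\xi_1,\xi_2\in\Contc(X,\E)\) is all of \(\Cont_0(Y,\B)\).

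First, \(J\) is a closed twosided ideal in \(\Cont_0(Y,\B)\). It is also a \(\Cont_0(Y)\)\nb-submodule, because \(\braket{\xi_1}{\xi_2\cdot (h\circ f)} = h\cdot\braket{\xi_1}{\xi_2}\) for \(h\in\Cont_0(Y)\). By Remark~\ref{rem:continuous_family_of_ideals}, \(J\) is the space of \(\Cont_0\)\nb-sections of the continuous family of ideals \(I_y\defeq\ev_y(J)\idealin\B_y\). Hence it suffices to prove \(I_y=\B_y\) for every \(y\in Y\). Alternatively, Lemma~\ref{lem:subfield_equality} can be applied directly to the inclusion of the subfield \((I_y)\) into~\(\B\). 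Since \(I_y\) is closed, it is enough to show that \(I_y\) is dense, that is, that \(I_y\) contains \(\braket{e_1}{e_2}\) up to arbitrary precision for all \(x\in f^{-1}(y)\) and \(e_1,e_2\in\E_x\).

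For the approximation, fix \(x\in f^{-1}(y)\), \(e_1,e_2\in\E_x\) and \(\varepsilon>0\). Choose continuous sections \(\xi_i\in\Contc(X,\E)\) with \(\xi_i(x)=e_i\); these exist because fields of Banach spaces have enough continuous sections. The function \(x'\mapsto\braket{\xi_1(x')}{\xi_2(x')}\) is a continuous section of \(f^*\B\), and its value at~\(x\) is \(\braket{e_1}{e_2}\). The norm on \(f^*\B\) is upper semicontinuous, so there is an open neighbourhood~\(U\) of~\(x\) on which
\[
  \norm{\braket{\xi_1(x')}{\xi_2(x')}-\braket{e_1}{e_2}}<\varepsilon ,
\]
where both terms are read in \(\B_{f(x')}\). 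More precisely, we compare with a continuous section~\(b\) of~\(\B\) satisfying \(b(y)=\braket{e_1}{e_2}\), using the pulled-back section \(b\circ f\).

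Now pick \(h\in\Contc(U)\) with \(h\ge0\) and \(h(x)>0\). Because~\(\lambda_y\) has full support, \(c\defeq\int h\,\diff\lambda_y>0\). Then
\[
  \ev_y\braket{\xi_1}{h\xi_2}
  = \int h(x')\braket{\xi_1(x')}{\xi_2(x')}\,\diff\lambda_y(x')
\]
lies within \(c\varepsilon\) of \(c\,\braket{e_1}{e_2}\). Dividing by~\(c\) shows that \(\braket{e_1}{e_2}\) lies in the closure of~\(I_y\), hence in~\(I_y\). Since \(I_y\) is a closed linear subspace, it contains the closed linear span of all such inner products, which is~\(\B_y\) by hypothesis. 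Thus \(I_y=\B_y\) for every~\(y\), and so \(J=\Cont_0(Y,\B)\).

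The main obstacle is the localisation step: replacing the fibrewise inner product at a single point~\(x\) by an integral over the fibre \(f^{-1}(y)\). This is exactly where the full support of~\(\lambda_y\) and the upper semicontinuity of the norm are needed. One must also check that comparing elements of different fibres \(\B_{f(x')}\) is legitimate, which is done through the pulled-back continuous section \(b\circ f\).
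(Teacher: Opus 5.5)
Your proof is correct and is essentially the paper's argument: both reduce fullness to the fibrewise statement \(\ev_y(J)=\B_y\) for every \(y\in Y\), and then localise at a point \(x\in f^{-1}(y)\) with a bump function, using the upper semicontinuity of the norm and the full support of~\(\lambda_y\). The differences are minor. You make the reduction through the ideal of inner products and Remark~\ref{rem:continuous_family_of_ideals}, whereas the paper identifies \(\Lt^2(X,\E,f,\lambda)\) with the sections of the field \(\bigl(\Lt^2(f^{-1}(y),\E,\lambda_y)\bigr)_{y\in Y}\) and reduces to a point. You also treat \(\braket{e_1}{e_2}\) directly, comparing across fibres via \(b\circ f\), while the paper treats \(\braket{\xi_0}{\xi_0}\) and then polarises.
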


\begin{proof}
  Recall that \(\Lt^2(X,\E,f,\lambda)\) is the completion of
  \(\Contc(X,\E)\) in the norm associated to the
  \(\Cont_0(Y,\B)\)\nb-valued inner product
  \[
    \braket{\xi}{\eta}(y) \defeq \int_X {}\braket{\xi(x)}{\eta(x)}
    \,\diff\lambda_y(x).
  \]
  The Hilbert \(\Cont_0(Y,\B)\)\nb-module \(\Lt^2(X,\E,f,\lambda)\)
  is isomorphic to the space of \(\Cont_0\)-sections of the field of
  Hilbert \(\B_y\)\nb-modules \(\Lt^2(f^{-1}(y),\E,\lambda_y)\).
  Lemma~\ref{lem:subfield_equality} implies that
  \(\Lt^2(X,\E,f,\lambda)\) is full if and only if
  \(\Lt^2(f^{-1}(y),\E,\lambda_y)\) is full as a Hilbert
  \(\B_y\)\nb-module for every \(y\in Y\).
  So we only need to prove the lemma in the case where~\(Y\) is a
  point.
  In other words, we must check that \(\Lt^2(X,\E,\lambda)\) is full
  as a Hilbert \(B\)\nb-module if~\(B\) is a \(\Cst\)\nb-algebra,
  \(\lambda\) a (Radon) measure on~\(X\) with full support, and
  \(\E=(\E_x)_{x\in X}\) a field of Hilbert \(B\)\nb-modules
  over~\(X\) such that the linear span of
  \(\setgiven{\braket{\E_x}{\E_x}}{x\in X}\) is dense in~\(B\).
  We fix \(x_0\in X\) and \(\xi_0\in \E_{x_0}\) and take a section
  \(\xi\in \Contc(X,\E)\) such that \(\xi(x_0)=\xi_0\).
  If \(\varepsilon>0\), the set \(U\defeq \setgiven{x\in X}
  {\norm{\braket{\xi(x)}{\xi(x)}-\braket{\xi_0}{\xi_0}}<\varepsilon}\)
  is an open neighbourhood of~\(x_0\) in~\(X\) because~\(\E\) is an
  upper semicontinuous field of Hilbert \(B\)\nb-modules.
  There is \(\psi\in \Contc(X)\) with \(\supp(\psi)\subseteq U\) and
  \(\int {}\abs{\psi(x)}^2\,\diff\lambda(x)=1\).
  Then
  \begin{multline*}
    \norm{\braket{\psi\cdot \xi}{\psi\cdot \xi}(x)-\braket{\xi_0}{\xi_0}}\\
    =\norm*{\int
      {}\abs{\psi(x)}^2\braket{\xi(x)}{\xi(x)}\,\diff\lambda(x) - \int
      {}\abs{\psi(x)}^2 \braket{\xi_0}{\xi_0} \,\diff\lambda(x)}\\
    \le \int {}\abs{\psi(x)}^2\norm{\braket{\xi(x)}{\xi(x)} -
      \braket{\xi_0}{\xi_0}} \,\diff\lambda(x) < \varepsilon.
  \end{multline*}
  This implies that \(\braket{\xi_0}{\xi_0}_B\) for
  \(\xi_0\in \E_{x_0}\) belongs to the ideal \(I\subseteq B\)
  generated by the \(B\)\nb-valued inner product on
  \(\Lt^2(X,\E,\lambda)\).  The polarisation identity now implies
  \(\braket{\E_{x_0}}{\E_{x_0}}\subseteq I\).  Then \(I=B\) by
  assumption.
\end{proof}

\section{Fell bundles over groupoids}
\label{sec:Fell_bundles}

Fell bundles over groups have been introduced in
\cites{Fell:induced, Doran-Fell:Representations} and were
generalised to groupoids in \cites{Yamagami:IdealStructure,
  Kumjian:Fell_bundles}.
They are treated in depth
in~\cite{Muhly-Williams:Equivalence.FellBundles}.
Unlike the Fell bundles
in~\cite{Muhly-Williams:Equivalence.FellBundles}, ours need not be
saturated, their fibres need not be separable, and~\(G\) need not be
second countable.
Let~\(G\) be a locally compact, Hausdorff groupoid.
As usual, we also write~\(G\) for its arrow space.
Let~\(G^0\) be its object space, \(\s,\rg\colon G\rightrightarrows
G^0\) the source and range maps, and \(u\colon G^0 \to G\) the unit
map.
Throughout the paper, we identify units \(x\in G^0\) with their
identity arrows \(u(x)\in G\), and we often write \(x\) instead of
\(u(x)\) when no confusion can arise.
Let
\begin{align}
  \label{eq:def_G2}
  G^2 &\defeq G \times_{\s,G^0,\rg} G
        = \setgiven{(g,h)\in G}{\s(g) = \rg(h)},\\
  \label{eq:def_G3}
  G^3 &\defeq G \times_{\s,G^0,\rg} G \times_{\s,G^0,\rg} G
        = \setgiven{(g,h,k)\in G}{\s(g) = \rg(h),\ \s(h) = \rg(k)}.
\end{align}
We shall also need the three maps \(d_0,d_1,d_2\colon G^2\to G\)
defined by
\[
  d_0(g,h) = h,\qquad d_1(g,h) = g\cdot h,\qquad d_2(g,h) = g
\]
for \(g,h\in G\) with \(\s(g)=\rg(h)\), and the composite maps
\begin{equation}
  \label{eq:vertex_maps}
  v_0 = \rg\circ d_1 = \rg\circ d_2,\qquad
  v_1 = \rg\circ d_0 = \s\circ d_2,\qquad
  v_2 = \s\circ d_0 = \s\circ d_1.
\end{equation}
These maps are illustrated in Figure~\ref{fig:triangle_arrows}.
\begin{figure}[htbp]
  \begin{tikzpicture}[xscale=1.8]
    \node (v1) at (90:1) {\(v_1(g,h)\)};
    \node (v0) at (210:1) {\(v_0(g,h)\)};
    \node (v2) at (330:1) {\(v_2(g,h)\)};
    \draw[cdar] (v1) -- node[swap] {\(\scriptstyle d_2(g,h)=g\)} (v0);
    \draw[cdar] (v2) -- node {\(\scriptstyle d_1(g,h)=gh\)} (v0);
    \draw[cdar] (v2) -- node[swap] {\(\scriptstyle d_0(g,h)=h\)} (v1);
  \end{tikzpicture}
  \caption{A pair \((g,h)\in G^2\) of composable arrows generates a
    commutative triangle of arrows in~\(G\).  We number the edges so
    that the one opposite the vertex~\(v_i(g,h)\) is~\(d_i(g,h)\)
    for \(i=0,1,2\).}
  \label{fig:triangle_arrows}
\end{figure}
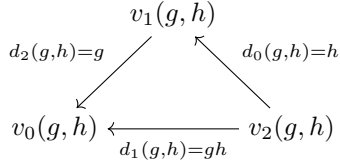

\begin{definition}
  \label{def:Fell_bundle}
  A \emph{Fell bundle over~\(G\)} is an upper semicontinuous field of
  Banach spaces \(\A=(\A_g)_{g\in G}\) with bilinear maps
  \(\A_g \times \A_h \to \A_{g\cdot h}\), \((a,b)\mapsto a\cdot b\),
  for \((g,h)\in G^2\) (\emph{multiplication}) and conjugate-linear maps
  \(\A_g \to \A_{g^{-1}}\), \(a\mapsto a^*\), for \(g\in G\)
  (\emph{involution}) with the following properties:
  \begin{enumerate}
  \item \label{def:Fell_bundle_1}%
    \((a\cdot b)\cdot c = a\cdot (b\cdot c)\) for all
    \((g,h,k)\in G^3\), \(a\in \A_g\), \(b\in \A_h\), \(c\in \A_k\);
  \item \label{def:Fell_bundle_2}%
    \((a\cdot b)^* = b^* \cdot a^*\) for all \((g,h)\in G^2\),
    \(a\in \A_g\), \(b\in \A_h\);
  \item \label{def:Fell_bundle_3}%
    \((a^*)^* =a\) for all \(g\in G\), \(a\in \A_g\);
  \item \label{def:Fell_bundle_4}%
    \(\norm{a\cdot b} \le \norm{a}\cdot \norm{b}\) for all
    \((g,h)\in G^2\), \(a\in \A_g\), \(b\in \A_h\);
  \item \label{def:Fell_bundle_5}%
    \(\norm{a^* a} = \norm{a}^2\) for all \(g\in G\), \(a\in \A_g\);
  \item \label{def:Fell_bundle_6}%
    for each \(g\in G\), \(a\in \A_g\), there is \(b\in \A_{u(\s(g))}\)
    with \(b^* b = a^* a\);
  \item \label{def:Fell_bundle_7}%
    the multiplication and involution are continuous as maps
    \(\A\times_{G^0} \A \to \A\) and \(\A \to \A\), respectively,
    where
    \[
      \A\times_{G^0} \A \defeq \setgiven{(a,b)\in \A_g\times \A_h}
      {g,h\in G,\ \s(g)=\rg(h)} \subseteq \A \times \A.
    \]
  \end{enumerate}
  Let \(\A_g \A_h \subseteq \A_{g h}\) denote the closed linear span
  of products \(a\cdot b\) for \(a\in \A_g\), \(b\in \A_h\).  The Fell
  bundle is called \emph{saturated} if \(\A_g \A_h = \A_{g h}\) for
  all \((g,h)\in G^2\).
\end{definition}

Conditions~\ref{def:Fell_bundle_1}--\ref{def:Fell_bundle_5} for
\(g=h=k=u(x)\) with \(x\in G^0\) say that~\(\A_{u(x)}\) with its
multiplication, involution and norm is a \(\Cst\)\nb-algebra.
Thus the pullback~\(u^*\A\) of~\(\A\) for the inclusion map \(u\colon
G^0\hookrightarrow G\), that is, the restriction of~\(\A\) to~\(G^0\),
becomes an upper semicontinuous field of \(\Cst\)\nb-algebras
over~\(G^0\).
Throughout the article, we denote this field by \(\FU\defeq u^*\A\),
its fibres by \(\FU_x\defeq \A_{u(x)}\) for \(x\in G^0\), and we let
\(\SUF\defeq \Cont_0(G^0,\FU)\).

If \(g\in G\), then we equip~\(\A_g\) with the products
\(\FU_{\rg(g)} \times \A_g \to \A_g\) and
\(\A_g \times \FU_{\s(g)} \to \A_g\) and the left and right inner
products \(\BRAKET{a}{b} \defeq a \cdot b^*\) and
\(\braket{a}{b} \defeq a^* \cdot b\).
This makes it a Hilbert \(\FU_{\rg(g)}\)-\(\FU_{\s(g)}\)-bimodule.
The right inner product is positive by~\ref{def:Fell_bundle_6}, and
the left one because \(\BRAKET{a}{b} = \braket{a^*}{b^*}\).
Condition~\ref{def:Fell_bundle_3} implies that the involution
\(^*\colon \A_g\to\A_{g^{-1}}\) for \(g\in G\) is bijective.
Therefore, we also write~\(\A_g^*\) for~\(\A_{g^{-1}}\).
The definition also implies \(\norm{a^*} = \norm{a}\) for all
\(a\in\A_g\), \(g\in G\).
The closed linear spans of products
\[
  \A_g \A_{g^{-1}} = \A_g \A_g^* = \BRAKET{\A_g}{\A_g},\qquad
  \A_{g^{-1}} \A_g = \A_g^* \A_g = \braket{\A_g}{\A_g}
\]
are closed ideals in \(\FU_{\rg(g)}\) and~\(\FU_{\s(g)}\),
respectively.
By construction, \(\A_g\) is an imprimitivity bimodule between these
two ideals.

The product \(\A_g \times \A_h \to \A_{g h}\) for \((g,h)\in G^2\)
induces a Hilbert bimodule map
\begin{equation}
  \label{eq:multiplication_isometries}
  m_{g,h}\colon \A_g \otimes_{\FU_{\s(g)}} \A_h \hookrightarrow \A_{g h}.
\end{equation}
Its range is equal to \(\A_g \A_h \subseteq \A_{g h}\).  This is a
Hilbert subbimodule of~\(\A_{g h}\).

We define a field of \(\Cst\)\nb-algebras~\(\A\A^*\) over~\(G\) with
fibres~\(\A_g\A_g^*\).  First, there is a field of Hilbert bimodules
\(\pr_1^*(\A)\otimes_{\FU} \pr_2^*(\A)\) over
\(G\times_{\s,G^0,\rg}G\).  Its fibre at \((g,h)\in G^2\) with
\(\s(g) = \rg(h) = y\) is \(\A_g \otimes_{\FU_y} \A_h\), and its
\(\Cont_0\)\nb-sections are generated by
\((g,h) \mapsto f_1(g) \otimes f_2(h)\) for \(\Cont_0\)\nb-sections
\(f_1,f_2\) of~\(\A\).  Let~\(\A\A^*\) denote the pullback of
\(\pr_1^*(\A)\otimes_{\FU} \pr_2^*(\A)\) along the map
\(G\to G\times_{\s,G^0,\rg}G\), \(g\mapsto (g,g^{-1})\).  The fibre
of this pullback at \(g\in G\) is
\(\A_g\A^*_g \cong \A_g \otimes_{\FU_y} \A_g^*\).  The
multiplication in the Fell bundle maps~\(\A_g\A_g^*\) isometrically onto a closed
ideal in~\(\FU_{\rg(g)}\).  This is a continuous family of
ideals~\(\A\A^*\) in~\(\rg^*(\FU)\) as in
Definition~\ref{def:continuous_family_subspaces} by
Lemma~\ref{lem:subbundle_with_dense_set}.  In particular,
\(\A \A^*\) becomes a field of \(\Cst\)\nb-algebras over~\(G\).  By
construction, its restriction to~\(G^0\) is equal to
\(\A|_{G^0}=\FU\).  Similarly, there is a continuous family of
ideals~\(\A^*\A\) in~\(\s^*(\FU)\) with fibres
\(\A_g^* \otimes_{\FU_{\rg(g)}} \A_g \cong \A_g^*\A_g \subseteq
\FU_{\s(g)}\).

\begin{remark}
  \label{rem:multiply_fibres}
  Let \(n\in\N\) and let \((g_1,\dotsc,g_n)\) be a chain of
  composable arrows.
  Then we often tacitly identify the Hilbert bimodule tensor product
  \[
    \A_{g_1} \A_{g_2} \dotsm \A_{g_n}
    \defeq \A_{g_1} \otimes_{\FU_{\s(g_1)}} \A_{g_2}
    \otimes_{\FU_{\s(g_2)}} \dotsb \otimes_{\FU_{\s(g_{n-1})}} \A_{g_n}
  \]
  with its image under the iterated multiplication map
  to~\(\A_{g_1\cdot g_2\dotsm g_n}\).
  The iterated multiplication map is an isometry of Hilbert bimodules.
  It does not depend on how we put parentheses, that is, it is
  associative.
  Let~\(G^n\) be the space of composable \(n\)\nb-chains in~\(G\).
  The Hilbert bimodules \(\A_{g_1} \A_{g_2} \dotsm \A_{g_n}\) and
  \(\A_{g_1 \cdot g_2\dotsm g_n}\) are the fibres of continuous
  fields of Hilbert bimodules over~\(G^n\), and the iterated
  multiplication map identifies the former with a subfield of the
  latter.  We may also write~\(\A_{g_j^{-1}}^*\) instead
  of~\(\A_{g_j}\) in such chains.  Often, there are relations among
  the elements \(g_1,\dotsc,g_n\).  Namely, we pull the bundles
  above back along a map \(G^m \to G^n\) for some \(m\in\N\) that is
  defined using the groupoid structure.  For instance, the next
  lemma will consider the products \(\A_{g^{-1}}\A_{g h}\)
  and~\(\A_g \A_g^* \A_g\) for \((g,h)\in G^2\).
\end{remark}

\begin{lemma}
  \label{lem:equality_of_products}
  Let~\(\A\) be a Fell bundle over~\(G\).
  \begin{enumerate}
  \item \label{lem:equality_of_products_1}%
    If \(g\in G\), then
    \[
      \A_g \A_{g^{-1}} \A_g = \A_g,\qquad \FU_{\rg(g)} \A_g = \A_g =
      \A_g \FU_{\s(g)}.
    \]
  \item \label{lem:equality_of_products_2}%
    If \((g,h)\in G^2\), then
    \[
      \A_{g^{-1}} \A_{g h} = \A_{g^{-1}} \A_g \A_h,\qquad \A_{g h}
      \A_{h^{-1}} = \A_g \A_h \A_{h^{-1}}
    \]
    as subspaces of \(\A_h\) and~\(\A_g\), respectively.

  \item \label{lem:equality_of_products_3}%
    The Fell bundle~\(\A\) is saturated if and only if
    \(\A_g\A_{g^{-1}}=\FU_{\rg(g)}\) for all \(g\in G\), if and only
    if \(\A_{g^{-1}}\A_g=\FU_{\s(g)}\) for all \(g\in G\).
  \end{enumerate}
  These equalities are the restrictions to the fibres of
  isomorphisms between suitable continuous fields of Hilbert
  bimodules.
\end{lemma}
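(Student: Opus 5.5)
The plan is to work fibrewise with Hilbert bimodule theory, using that each \(\A_g\) is an imprimitivity bimodule between the ideals \(\A_g\A_g^*\idealin\FU_{\rg(g)}\) and \(\A_g^*\A_g\idealin\FU_{\s(g)}\). All products are identified with closed subspaces via the iterated multiplication maps of Remark~\ref{rem:multiply_fibres}.

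For~\ref{lem:equality_of_products_1}, the first equality is the standard fact that \(\E\braket{\E}{\E}=\E\) for any Hilbert module. Take an approximate unit \((e_\lambda)\) of the ideal \(\A_g^*\A_g\). Then
\[
  \norm{a-ae_\lambda}^2=\norm{(1-e_\lambda)a^*a(1-e_\lambda)}\to0,
\]
computed in the unitisation of \(\FU_{\s(g)}\). Hence \(a\in\overline{a\,\A_g^*\A_g}\subseteq\A_g\A_{g^{-1}}\A_g\). The reverse inclusion holds because the product lands in \(\A_g\). Since \(\A_g^*\A_g\subseteq\FU_{\s(g)}\), this also gives \(\A_g\FU_{\s(g)}=\A_g\). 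Using the left inner product and \(\A_g\A_g^*\subseteq\FU_{\rg(g)}\) gives \(\FU_{\rg(g)}\A_g=\A_g\).

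For~\ref{lem:equality_of_products_2}, the inclusion \(\A_{g^{-1}}\A_g\A_h\subseteq\A_{g^{-1}}\A_{gh}\) is immediate from associativity and \(\A_g\A_h\subseteq\A_{gh}\). For the converse, I would insert part~\ref{lem:equality_of_products_1}:
\[
  \A_{g^{-1}}\A_{gh}=\A_{g^{-1}}\A_g\A_{g^{-1}}\A_{gh}\subseteq\A_{g^{-1}}\A_g\A_h.
\]
This uses \(\A_{g^{-1}}=\A_{g^{-1}}\A_g\A_{g^{-1}}\), which is part~\ref{lem:equality_of_products_1} applied to \(g^{-1}\), together with \(\A_{g^{-1}}\A_{gh}\subseteq\A_h\). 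The second equality follows by applying the involution, or by the symmetric argument using \(\A_{h^{-1}}\A_h\A_{h^{-1}}=\A_{h^{-1}}\).

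For~\ref{lem:equality_of_products_3}, suppose first that \(\A\) is saturated. Then \(\A_g\A_{g^{-1}}=\A_{\rg(g)}=\FU_{\rg(g)}\). Conversely, suppose \(\A_g\A_{g^{-1}}=\FU_{\rg(g)}\) for all \(g\). For \((g,h)\in G^2\), part~\ref{lem:equality_of_products_1} and the hypothesis applied at \(g\) give
\[
  \A_{gh}=\FU_{\rg(g)}\A_{gh}=\A_g\A_{g^{-1}}\A_{gh}\subseteq\A_g\A_h,
\]
so \(\A\) is saturated. The condition \(\A_g\A_{g^{-1}}=\FU_{\rg(g)}\) for all \(g\) is the same as \(\A_{g^{-1}}\A_g=\FU_{\s(g)}\) for all \(g\), by replacing \(g\) with \(g^{-1}\). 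This settles the three-way equivalence.

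The step needing the most care is the final sentence of the lemma about fields. Each product, such as \(\A_{g^{-1}}\A_g\A_h\) pulled back to \(G^2\), is a field of Hilbert bimodules. The iterated multiplication is a fibrewise isometric continuous map into the corresponding field, for instance the pullback \(d_0^*\A\). The fibrewise inclusions just proved are equalities of images, so the identity map between the two subfields is fibrewise isometric and surjective. By Lemma~\ref{lem:subfield_equality}, these subfields then coincide as fields of Banach spaces. The only real obstacle is making sure each product field is defined as a subfield of the same ambient field, so that Lemma~\ref{lem:subfield_equality} applies. This follows from continuity of multiplication together with Lemma~\ref{lem:subbundle_with_dense_set}, exactly as in the construction of \(\A\A^*\).
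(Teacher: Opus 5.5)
Your proposal is correct and follows the paper's proof essentially step for step. You use nondegeneracy of \(\A_g\) over \(\braket{\A_g}{\A_g}\) for (1), insert \(\A_{g^{-1}}=\A_{g^{-1}}\A_g\A_{g^{-1}}\) for (2), and apply Lemma~\ref{lem:subfield_equality} to turn the fibrewise equalities into isomorphisms of fields. In (3) you absorb \(\FU_{\rg(g)}=\A_g\A_{g^{-1}}\) on the left of \(\A_{gh}\), whereas the paper absorbs \(\FU_{\s(h)}=\A_{h^{-1}}\A_h\) on the right, which is just the mirror image of the same argument.
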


\begin{proof}
  We prove~\ref{lem:equality_of_products_1}.  Any Hilbert
  module~\(\E\) is nondegenerate as a module
  over~\(\braket{\E}{\E}\).  Applied to~\(\A_g\), this says that
  \(\A_g = \A_g \braket{\A_g}{\A_g} = \A_g \A_{g^{-1}} \A_g\).  Since
  \(\FU_{\rg(g)} \supseteq \A_g \A_{g^{-1}}\) and
  \(\FU_{\s(g)} \supseteq \A_{g^{-1}} \A_g\), this implies
  \(\FU_{\rg(g)} \A_g = \A_g\) and \(\A_g \FU_{\s(g)} = \A_g\).

  Next, we prove~\ref{lem:equality_of_products_2}.  We use
  \(\A_g \A_h \subseteq \A_{g h}\) and
  \(\A_{g^{-1}} \A_{g h} \subseteq \A_h\)
  and~\ref{lem:equality_of_products_1} to prove
  \[
    \A_{g^{-1}}\A_g\A_h \subseteq \A_{g^{-1}} \A_{gh} = \A_{g^{-1}}
    \A_g \A_{g^{-1}} \A_{gh} \subseteq \A_{g^{-1}} \A_g \A_h.
  \]
  Then \(\A_{g^{-1}} \A_{g h} = \A_{g^{-1}} \A_g \A_h\).  This
  implies \(\A_{gh} \A_{h^{-1}}=\A_g\A_h\A_{h^{-1}}\) by taking
  adjoints and inverting \(g\) and~\(h\).  The Hilbert bimodules
  \(\A_g\), \(\FU_{\rg(g)} \A_g\), \(\A_g \FU_{\s(g)}\), and
  \(\A_g \A_{g^{-1}} \A_g\) are the fibres of the fields of Hilbert
  \(\rg^*\FU\)-\(\s^*\FU\)-bimodules
  \[
    \A,\quad \rg^*\FU \otimes_{\rg^*\FU} \A,\quad \A \otimes_{\s^*\FU}
    \s^*\FU,\quad \A \otimes_{\rg^*\FU} \inv^*\A\otimes_{\s^*\FU} \A,
  \]
  respectively.  Here~\(\inv\) denotes the inversion map.  The
  multiplication in the Fell bundle gives Hilbert bimodule maps
  between these fields.  We have shown above that these are
  isomorphisms fibrewise.  Then they are isomorphisms of fields by
  Lemma~\ref{lem:subfield_equality}.  In fact, this may also
  be shown directly by applying the proof that
  \(\A_g = \FU_{\rg(g)} \A_g = \A_g \FU_{\s(g)} = \A_g \A_{g^{-1}}
  \A_g\) to the fields instead of their fibres.  We wrote the argument
  down fibrewise to simplify the notation.  Similarly, the map
  \[
    (\inv\circ d_2)^*\A \otimes_{v_0^*\FU} d_2^*\A \otimes_{v_1^*\FU}
    d_0^* \A \to (\inv\circ d_2)^*\A \otimes_{v_0^*\FU} d_1^*\A
  \]
  induced by the product in the Fell bundle is an isomorphism of
  fields of Hilbert bimodules over~\(G^2\) because it is an
  isomorphism between the fibres \(\A_{g^{-1}} \A_g \A_h\) and
  \(\A_{g^{-1}} \A_{g h}\) of these fields.

  Finally, we prove~\ref{lem:equality_of_products_3}.  If~\(\A\) is
  saturated, then \(\A_g\A_{g^{-1}}=\A_{gg^{-1}}=\FU_{\rg(g)}\) for
  all \(g\in G\).  Using the inversion in~\(G\), this is
  equivalent to \(\A_{g^{-1}}\A_g=\FU_{\s(g)}\) for all \(g\in G\).
  Conversely, assume that this is true.  Then
  \ref{lem:equality_of_products_1}
  and~\ref{lem:equality_of_products_2} imply
  \[
    \A_{gh}
    = \A_{gh} \FU_{\s(gh)}
    = \A_{gh} \FU_{\s(h)}
    = \A_{gh}\A_{h^{-1}} \A_h
    = \A_g \A_h \A_{h^{-1}} \A_h
    = \A_g \A_h
  \]
  for all \((g,h)\in G^2\).  Thus~\(\A\) is saturated.
\end{proof}

\subsection{Full section C*-algebras}
\label{sec:full_section_cstar-algebra}

Let~\(G\) be a locally compact, Hausdorff groupoid and let~\(\A\) be
a Fell bundle over~\(G\).  Let \(\alpha = (\alpha^x)_{x\in G^0}\) be
a left invariant Haar system on~\(G\) and let
\(\tilde\alpha = (\tilde\alpha_x)_{x\in G^0}\) be the corresponding
right invariant Haar system, defined by
\(\tilde\alpha_x(S) \defeq \alpha^x(S^{-1})\) for
\(S\subseteq G_x\).  As in~\cite{Buss-Holkar-Meyer:Universal}, we
define continuous families of measures~\(\lambda_i\) along the
maps~\(d_i\) for \(i=0,1,2\) in Figure~\ref{fig:triangle_arrows} by
\begin{align}
  \label{eq:lambda_definition0}
  (\lambda_0 f)(h)
  &= \int_{G_{\rg(h)}} f(g,h) \,\diff\tilde{\alpha}_{\rg(h)}(g),\\
  \label{eq:lambda_definition1}
  (\lambda_1 f)(k)
  &= \int_{G^{\rg(k)}} f(g,g^{-1}k) \,\diff\alpha^{\rg(k)}(g)
    = \int_{G_{\s(k)}} f(kh^{-1},h) \,\diff\tilde{\alpha}_{\s(k)}(h),\\
  \label{eq:lambda_definition2}
  (\lambda_2 f)(g)
  &= \int_{G^{\s(g)}} f(g,h) \,\diff\alpha^{\s(g)}(h),
\end{align}
and continuous families of measures~\(\mu_i\) along the maps~\(v_i\)
by
\begin{align}
  \label{eq:compose_integration_maps0}
  \mu_0
  &\defeq \alpha\circ \lambda_1
    = \alpha\circ \lambda_2,\\
  \label{eq:compose_integration_maps1}
  \mu_1
  &\defeq \alpha\circ \lambda_0
    = \tilde{\alpha}\circ \lambda_2,\\
  \label{eq:compose_integration_maps2}
  \mu_2
  &\defeq \tilde{\alpha}\circ \lambda_0
    = \tilde{\alpha}\circ \lambda_1.
\end{align}
Here we identify a continuous family of measures with the
integration map that it defines.
Equation~\eqref{eq:compose_integration_maps2} is equivalent to the
following equality of integrals for all \(f\in \Contc(G^2)\),
\(x\in G^0\):
\begin{equation}
  \label{eq:compose_integration_nontrivial}
  \int\displaylimits_{G_x} \int\displaylimits_{G_{\rg(h)}}
  f(g,h) \,\diff \tilde\alpha_{\rg(h)}(g) \,\diff\tilde\alpha_x(h)
  = \int\displaylimits_{G_x} \int\displaylimits_{G^{\rg(k)}}
  f(g,g^{-1} k) \,\diff \alpha^{\rg(k)}(g) \,\diff\tilde\alpha_x(k).
\end{equation}

We use the algebraic operations of a Fell bundle~\(\A\) to turn the
space \(\Contc(G,\A)\) of compactly supported continuous sections
\(G\to \A\) into a \Star{}algebra.  The involution is defined by
\(\xi^*(g)\defeq \xi(g^{-1})^*\).  The multiplication is the
convolution defined by
\[
  (\xi*\eta)(g)\defeq \int_{G} \xi(h)\eta(h^{-1}g)
  \,\diff\alpha^{\rg(g)}(h).
\]
To see that this is well defined, we first realise that
\((h,k)\mapsto \xi(h)\eta(k)\) is a continuous, compactly supported
section of the field of Hilbert bimodules~\(d_1^*(\A)\).  The integral
above applies the integration map~\(\lambda_1\) to this section and
thus gives an element of \(\Contc(G,\A)\) by
Lemma~\ref{lem:continuity_integral}.  Standard computations show that
\(\Contc(G,\A)\) with these operations is a \Star{}algebra.  The proof
that the convolution is associative is the same as in the scalar case
of groupoids without Fell bundles, except that we also use the
associativity of the underlying Fell bundles.  For the sake of
completeness we add this computation.  Let
\(\xi,\eta,\zeta\in \Contc(G,\A)\).  Then
\begin{align*}
  (\xi*\eta)*\zeta(g)
  &=\int_G {}\left(\int_G
    \xi(k)\eta(k^{-1}h)\,\diff\alpha^{\rg(h)}(k)\right)
    \zeta(h^{-1}g)\,\diff\alpha^{\rg(g)}(h)\\
  &=\int_G \int_G {}(\xi(k)\eta(k^{-1}h))\zeta(h^{-1}g)
    \,\diff\alpha^{\rg(g)}(k)\,\diff\alpha^{\rg(g)}(h)\\
  &=\int_G \int_G {}(\xi(k)\eta(k^{-1}h))\zeta(h^{-1}g)
    \,\diff\alpha^{\rg(k)}(h)\,\diff\alpha^{\rg(g)}(k)\\
  &=\int_G \int_G \xi(k)(\eta(h)\zeta(h^{-1}k^{-1}g))
    \,\diff\alpha^{\s(k)}(h)\,\diff\alpha^{\rg(g)}(k)\\
  &=\int_G \xi(k)(\eta*\zeta)(k^{-1}g)\,\diff\alpha^{\rg(g)}(k)
    = \xi*(\eta*\zeta)(g).
\end{align*}
In the step from line~3 to line~4, we substituted~\(k h\) for~\(h\),
which is possible because~\(\alpha\) is invariant under left
multiplication.

Let \(\xi\in\Contc(G,\A)\).  Since~\(\A\) is an upper semicontinuous
field, the function
\[
  \abs{\xi}\colon G\to \C,\qquad g\mapsto \norm{\xi(g)},
\]
is upper semicontinuous and hence Borel.  For a compactly supported
Borel function \(f\colon G\to \C\), we define
\(\alpha(f)\colon G^0\to \C\) by
\(\alpha(f)(x)\defeq \int_G f(g)\,\diff\alpha^x(g)\).  Now
\[
  \norm{\xi}_I\defeq \max \left\{\norm{\alpha(\abs{\xi})}_\infty,
    \norm{\tilde\alpha(\abs{\xi})}_\infty\right\}
\]
defines a norm on~\(\Contc(G,\A)\).
Well known computations show that this norm turns \(\Contc(G,\A)\)
into a normed \Star{}algebra, that is, \(\norm{\xi*\eta}_I \le
\norm{\xi}_I\cdot \norm{\eta}_I\) and \(\norm{\xi}_I=\norm{\xi^*}_I\)
(see \cite{Renault:Groupoid_Cstar}*{Proposition~1.4}).
Therefore, the completion of \(\Contc(G,\A)\) with respect to
\(\norm{\cdot}_I\) is a Banach \Star{}algebra, denoted \(L^1_I(\A)\)
or \(L^1_I(G,\A)\).

\begin{definition}
  \label{def:full_section_Cstar-algebra}
  The \emph{full section \(\Cst\)\nb-algebra}
  \(\Cst(G,\A)=\Cst(G,\A,\alpha)\) of~\(\A\) is the enveloping
  \(\Cst\)\nb-algebra of~\(L^1_I(\A)\).
\end{definition}

Equivalently, \(\Cst(\A)\) is the (Hausdorff) completion
of~\(\Contc(G,\A)\) in the maximal \(\Cst\)\nb-seminorm
on~\(\Contc(G,\A)\) with \(\norm{\xi}\le \norm{\xi}_I\) for all
\(\xi\in\Contc(G,\A)\).  This seminorm is, in fact, a norm (see
Proposition~\ref{pro:section_algebra_has_faithful_rep}).  Our main
results imply that this norm is also the maximal \(\Cst\)\nb-seminorm that
is continuous in the inductive limit topology.

\subsection{Twisted partial actions and Fell bundles}
\label{sec:actions_partial_twisted}

Fell bundles over groupoids contain other kinds of
generalised actions of groupoids as special cases.  The most basic
notion is that of ordinary (global) actions by automorphisms.  These
have been introduced by Le Gall~\cite{LeGall:KK_groupoid}.  He
defines a \emph{continuous action} of~\(G\) as a
\(\Cont_0(G^0)\)-\(\Cst\)-algebra~\(\SUF\) with an isomorphism of
\(\Cont_0(G)\)-\(\Cst\)-algebras
\(\act\colon \s^* \SUF \congto \rg^* \SUF\) such that
\(\act_{g h} = \act_g\act_h\) for all \((g,h)\in G^2\); we use unusual
symbols for actions because \(\alpha\) and~\(\beta\) usually denote
Haar systems in this article.
We turn the \(\Cont_0(G^0)\)-\(\Cst\)-algebra~\(\SUF\) into an upper
semicontinuous field of \(\Cst\)\nb-algebras
\(\FU= (\FU_x)_{x\in G^0}\) as usual (see~\cite{Nilsen:Bundles}).
Namely, \(\FU_x \defeq \SUF/(\Cont_0(X\setminus\{x\})\cdot \SUF)\) and
\(\SUF = \Cont_0(G^0,\FU)\).  Let \(\A\defeq \rg^*\FU\) be the
pullback along the range map.  This is a field of \(\Cst\)\nb-algebras
over~\(G\) with fibres~\(\FU_{\rg(g)}\).  To distinguish elements
of~\(\A_g\) and~\(\FU_{\rg(g)}\), we write \(a\delta_g\in \A_g\) for
\(a\in \FU_{\rg(g)}\).  The product and involution on~\(\A\) are
defined by
\[
  (a\delta_g)\cdot (b\delta_h) \defeq a \cdot \act_g(b)
  \,\delta_{gh}, \qquad (a\delta_g)^* \defeq
  \act_{g^{-1}}(a^*)\,\delta_{g^{-1}}
\]
for \((g,h)\in G^2\), \(a\in \A_g = \FU_{\rg(g)}\), and
\(b\in \A_h= \FU_{\rg(h)} = \FU_{\s(g)}\).  It is easy to check that
this is a saturated Fell bundle over~\(G\).

\begin{definition}
  \label{def:continuous_twisted_action}
  A \emph{continuous twisted action} of~\(G\) is an upper
  semicontinuous field of \(\Cst\)\nb-algebras~\(\FU\) over~\(G^0\)
  with an isomorphism of fields of \(\Cst\)\nb-algebras
  \(\act\colon \s^* \FU \congto \rg^* \FU\) and with unitary
  multipliers \(w(g,h)\in \UM(\FU_{\rg(g)})\) for \((g,h)\in G^2\)
  such that \(w(g,u(\s(g)))=1\) and \(w(u(\rg(g)),g)=1\) for all
  \(g\in G\) and
  \begin{alignat}{2}
    \label{eq:twisted_action_1}
    \act_g\act_h &= \Ad(w(g,h))\cdot \act_{g h}
    &\qquad&\text{for all }(g,h)\in G^2,\\
    \label{eq:twisted_action_2}
    \act_g(w(h,k)) w(g,h\cdot k) &= w(g,h)\cdot w(g\cdot h,k)
    &\qquad&\text{for all }(g,h,k)\in G^3;
  \end{alignat}
  here \(G^2\) and~\(G^3\) are as in \eqref{eq:def_G2}
  and~\eqref{eq:def_G3}.
  In addition, we require that pointwise multiplication by \(w(g,h)\)
  preserves continuity for sections of the field \(v_0^*(\FU)\)
  over~\(G^2\).
\end{definition}

Since multiplication by~\(w(g,h)\) maps continuous sections to
continuous sections and is fibrewise bijective, it is bijective on
sections globally by Lemma~\ref{lem:subfield_equality}.
Therefore, pointwise multiplication by~\(w(g,h)^*\) also preserves
continuity.
So we may briefly call~\(w\) \emph{strictly continuous}.

The conditions above imply \(\act_{u(x)} = \Id_{\A_x}\) for all \(x\in
G^0\): first, \eqref{eq:twisted_action_1} and the normalisation
\(w(u(x),u(x))=1\) imply \(\act_{u(x)}\act_{u(x)} = \act_{u(x)}\),
then we use that~\(\act_{u(x)}\) is invertible.

A continuous twisted action as above also yields a Fell bundle.  As
for an ordinary action, we let \(\A\defeq \rg^*\FU\), viewed as a
field of Banach spaces over~\(G\).  We define the product
\(\A_g \times \A_h \to \A_{g h} = \FU_{\s(h)}\) and the involution
\(\A_g \to \A_{g^{-1}}\) by
\begin{equation}
  \label{eq:A2_for_twisted_action}
  (a\delta_g)\cdot (b\delta_h)
  \defeq a\cdot \act_g(b) \cdot w(g,h) \,\delta_{g h},
  \qquad
  (a\delta_g)^*
  \defeq \act_{g}^{-1}(a^*) \cdot w(g^{-1},g)^* \,\delta_{g^{-1}}
\end{equation}
for \((g,h)\in G^2\), \(a\in \A_g\), \(b\in \A_h\).  Notice
that~\(\act_g^{-1}\) may differ from~\(\act_{g^{-1}}\).  They are
related by
\begin{equation}
  \label{eq:act_inverse_with_w}
  \act_{g^{-1}}(a)=w(g^{-1},g)\act_g^{-1}(a)w(g^{-1},g)^*.
\end{equation}
This follows from~\eqref{eq:twisted_action_1}.
The formulas~\eqref{eq:A2_for_twisted_action} are exactly as in the case
of twisted actions of groups (see~\cite{Exel:TwistedPartialActions}).
The same arguments as in~\cite{Exel:TwistedPartialActions} show that
this defines a saturated Fell bundle over~\(G\).
We will prove a more general result in
Proposition~\ref{pro:FellBundle-twisted-partial}.

\begin{remark}
  Let~\(G\) be a locally compact group.  Then it is common to study
  measurable twisted actions (see, for instance,
  \cite{Busby-Smith:Representations_twisted_group}).  Exel and Laca
  show in~\cite{Exel-Laca:Continuous_Fell} that measurable twisted
  actions of~\(G\) also give \emph{continuous} Fell bundles
  over~\(G\).  We are not aware of a generalisation of this to
  groupoids.  It is clear that mere measurability is not enough
  because we need continuity of the bundle on~\(G^0\).
\end{remark}

\begin{definition}
  \label{def:twisted-partial-action-groupoid}
  A \emph{twisted partial action} of a locally compact groupoid~\(G\)
  has the following data:
  \begin{enumerate}
  \item \label{def:twisted-partial-action-groupoid_1}%
    an (upper semicontinuous) field of \(\Cst\)\nb-algebras
    \(\FU=(\FU_x)_{x\in G^0}\) over~\(G^0\);
  \item \label{def:twisted-partial-action-groupoid_2}%
    ideals \(\Bun[D]_g\idealin \FU_{\rg(g)}\) for all \(g\in G\);
  \item \label{def:twisted-partial-action-groupoid_3}%
    isomorphisms
    \(\act_g\colon \Bun[D]_{g^{-1}}\congto \Bun[D]_{g}\) for all
    \(g\in G\);
  \item \label{def:twisted-partial-action-groupoid_4}%
    unitary multipliers \(w(g,h)\) of \(\Bun[D]_g\cap \Bun[D]_{gh}\)
    for all \((g,h)\in G^2\).
  \end{enumerate}
  We require the following conditions:
  \begin{enumerate}[resume]
  \item \label{def:twisted-partial-action-groupoid_5}%
    \(\Bun[D]_{u(x)}= \FU_x\) and \(\act_{u(x)}=\Id_{\FU_x}\) for all
    \(x\in G^0\), and \(w(g,u(\s(g)))=1\) and \(w(u(\rg(g)),g)=1\) for all
    \(g\in G\);
  \item \label{def:twisted-partial-action-groupoid_6}%
    \(\act_g(\Bun[D]_{g^{-1}}\cap \Bun[D]_h)\subseteq \Bun[D]_{gh}\)
    for all \((g,h)\in G^2\);
  \item \label{def:twisted-partial-action-groupoid_7}%
    \(\act_g(\act_h(a))=w(g,h)\act_{gh}(a)w(g,h)^*\) for
    \((g,h)\in G^2\) and
    \(a\in \act_{h^{-1}}(\Bun[D]_{g^{-1}}\cap \Bun[D]_h)\);
  \item \label{def:twisted-partial-action-groupoid_8}%
    \(\act_g(a\cdot w(h,k))\cdot w(g,hk) = \act_g(a)\cdot
    w(g,h)\cdot w(gh,k)\) for all \((g,h,k)\in G^3\) and
    \(a\in \Bun[D]_{g^{-1}}\cap \Bun[D]_h\cap \Bun[D]_{hk}\); for
    this formula to make sense, we restrict multipliers of ideals to
    smaller ideals, so as to turn \(w(h,k) \in \UM(\Bun[D]_h\cap
    \Bun[D]_{h k})\) into a multiplier of \(\Bun[D]_{g^{-1}}\cap
    \Bun[D]_h\cap \Bun[D]_{hk}\) and \(w(g,h k)\), \(w(g,h)\)
    and~\(w(g h,k)\) into multipliers of \(\Bun[D]_g\cap \Bun[D]_{g
      h}\cap \Bun[D]_{g h k}\);
  \item \label{def:twisted-partial-action-groupoid_9}%
    the ideals \(\Bun[D]_g\idealin \FU_{\rg(g)}\) form a continuous
    family of ideals~\(\Bun[D]\) in~\(\rg^*(\FU)\);
  \item \label{def:twisted-partial-action-groupoid_10}%
    the isomorphisms~\(\act_g\) depend continuously on~\(g\), that
    is, they combine to an isomorphism
    \(\act\colon \inv^*\Bun[D]\congto \Bun[D]\);
  \item \label{def:twisted-partial-action-groupoid_11}%
    the unitaries \(w(g,h)\) depend continuously on
    \((g,h)\in G^2\), that is, they form a unitary multiplier of
    \(\Cont_0(G^2,J)\), where~\(J\) is the continuous family of
    ideals \(\Bun[D]_g\cap \Bun[D]_{gh}\) in~\(\rg^*(\FU)\).
  \end{enumerate}
  A \emph{partial action} is defined as a twisted partial action where all
  unitaries~\(w(g,h)\) are the unit multiplier of the appropriate
  ideal.
  A \emph{twisted action} is a twisted partial action with
  \(\Bun[D]_g=\FU_{\rg(g)}\) for all \(g\in G\).
  Twisted partial
  actions with \(w(g,h)=1\) for all \((g,h)\in G^2\) and
  \(\Bun[D]_g=\FU_{\rg(g)}\) for all \(g\in G\) are ordinary actions
  of~\(G\).
\end{definition}

Partial actions of discrete groups were introduced by McClanahan in
\cite{McClanahan:K-theory-partial}.
Exel extended this definition to locally compact groups and added
twists in~\cite{Exel:TwistedPartialActions}.
Some algebraists have considered partial actions of discrete groupoids
on rings (see
\cites{Bagio:Partial,Bagio-Paques:Partial,Bagio-Pinedo:Globalization}).
Anantharaman-Delaroche~\cite{AnantharamanDelaroch:Partial_actions}
defined partial actions of topological groupoids and assigned Fell
bundles to them.
The definitions in \cites{Exel:TwistedPartialActions,
  AnantharamanDelaroch:Partial_actions} are special cases of ours.

\begin{lemma}
  \label{lem:partial_action_domains}
  The conditions above imply~\eqref{eq:act_inverse_with_w} as an
  equality of partial maps, and \(\act_g(\Bun[D]_{g^{-1}}\cap
  \Bun[D]_h) = \Bun[D]_g\cap \Bun[D]_{gh}\) for all \((g,h)\in G^2\).
\end{lemma}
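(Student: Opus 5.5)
The plan is to first prove the domain equality and then deduce \eqref{eq:act_inverse_with_w} from it, using only conditions \ref{def:twisted-partial-action-groupoid_5}--\ref{def:twisted-partial-action-groupoid_7}.

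For the domain equality, condition~\ref{def:twisted-partial-action-groupoid_6} already gives \(\act_g(\Bun[D]_{g^{-1}}\cap\Bun[D]_h)\subseteq\Bun[D]_{gh}\). Since \(\act_g\) takes values in~\(\Bun[D]_g\), we get the inclusion \(\subseteq\Bun[D]_g\cap\Bun[D]_{gh}\). For the reverse inclusion, I will apply condition~\ref{def:twisted-partial-action-groupoid_6} to the composable pair \((g^{-1},gh)\). This yields
\[
  \act_{g^{-1}}(\Bun[D]_g\cap\Bun[D]_{gh})\subseteq\Bun[D]_{h}.
\]
Moreover \(\act_{g^{-1}}\) maps into \(\Bun[D]_{g^{-1}}\), so \(\act_{g^{-1}}(\Bun[D]_g\cap\Bun[D]_{gh})\subseteq\Bun[D]_{g^{-1}}\cap\Bun[D]_h\).

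To compare \(\act_{g^{-1}}\) with \(\act_g^{-1}\), I use condition~\ref{def:twisted-partial-action-groupoid_7} for the pair \((g^{-1},g)\) together with the normalisations in~\ref{def:twisted-partial-action-groupoid_5}. Here \(\act_{g^{-1}g}=\act_{u(\s(g))}=\Id\), and condition~\ref{def:twisted-partial-action-groupoid_7} applies to \(a\in\act_g^{-1}(\Bun[D]_{g}\cap\Bun[D]_g)=\Bun[D]_{g^{-1}}\), that is, to all of \(\act_g^{-1}\)'s range. It gives
\[
  \act_{g^{-1}}(\act_g(a))=w(g^{-1},g)\,a\,w(g^{-1},g)^*\quad\text{for all } a\in\Bun[D]_{g^{-1}}.
\]
Here \(w(g^{-1},g)\) is a unitary multiplier of \(\Bun[D]_{g^{-1}}\cap\Bun[D]_{u(\s(g))}=\Bun[D]_{g^{-1}}\). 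Substituting \(a=\act_g^{-1}(b)\) for \(b\in\Bun[D]_g\) gives exactly \eqref{eq:act_inverse_with_w} as an equality of partial maps on the common domain~\(\Bun[D]_g\).

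With this, the reverse inclusion follows. If \(b\in\Bun[D]_g\cap\Bun[D]_{gh}\), then \(\act_g^{-1}(b)=w^*\act_{g^{-1}}(b)w\) with \(w=w(g^{-1},g)\). The element \(\act_{g^{-1}}(b)\) lies in the ideal \(\Bun[D]_{g^{-1}}\cap\Bun[D]_h\), and conjugating by a multiplier of~\(\Bun[D]_{g^{-1}}\) keeps it in \(\Bun[D]_{g^{-1}}\cap\Bun[D]_h\), because ideals of an ideal are ideals. Hence \(b\in\act_g(\Bun[D]_{g^{-1}}\cap\Bun[D]_h)\).

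The main subtle point is this last step: we need conjugation by a multiplier of \(\Bun[D]_{g^{-1}}\) to preserve the intersection ideal. This holds because \(\Bun[D]_{g^{-1}}\cap\Bun[D]_h=\Bun[D]_{g^{-1}}\Bun[D]_h\) is an ideal in~\(\Bun[D]_{g^{-1}}\), and multipliers of a \(\Cst\)-algebra preserve its ideals.
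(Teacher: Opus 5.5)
Your proof is correct and follows the paper's route. Condition~\ref{def:twisted-partial-action-groupoid_7} for \((g^{-1},g)\), together with the normalisations, gives \eqref{eq:act_inverse_with_w}; condition~\ref{def:twisted-partial-action-groupoid_6} gives \(\subseteq\); and the same inclusion for the pair \((g^{-1},gh)\), combined with \eqref{eq:act_inverse_with_w}, gives \(\supseteq\). You also make explicit a step the paper leaves implicit in its equality \(\Bun[D]_g\cap\Bun[D]_{gh}=\act_g\bigl(\act_{g^{-1}}(\Bun[D]_g\cap\Bun[D]_{gh})\bigr)\), namely that conjugation by the unitary multiplier \(w(g^{-1},g)\) of \(\Bun[D]_{g^{-1}}\) preserves its ideals. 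One small slip: condition~\ref{def:twisted-partial-action-groupoid_7} for \((g^{-1},g)\) is stated on \(\act_{g^{-1}}(\Bun[D]_g)\), not \(\act_g^{-1}(\Bun[D]_g)\), but both sets equal \(\Bun[D]_{g^{-1}}\), so nothing changes.
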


\begin{proof}
  The two partial maps in~\eqref{eq:act_inverse_with_w} have the same
  domain and codomain by~\ref{def:twisted-partial-action-groupoid_3},
  and~\ref{def:twisted-partial-action-groupoid_7} implies that they
  are inverse to each other.
  The inclusion \(\act_g(\Bun[D]_{g^{-1}}\cap \Bun[D]_h) \subseteq
  \Bun[D]_g\cap \Bun[D]_{gh}\) follows from
  \ref{def:twisted-partial-action-groupoid_3}
  and~\ref{def:twisted-partial-action-groupoid_6}.
  This inclusion for~\(g^{-1}\) and~\eqref{eq:act_inverse_with_w}
  imply the reverse inclusion: \(\Bun[D]_g\cap \Bun[D]_{gh} =
  \act_g\bigl(\act_{g^{-1}}(\Bun[D]_g\cap \Bun[D]_{gh})\bigr)
  \subseteq \act_g(\Bun[D]_{g^{-1}}\cap \Bun[D]_h)\).
\end{proof}

\begin{lemma}
  \label{lem:cocycle_inverse_adjoint}
  For a twisted partial action of~\(G\) and any \(g\in G\),
  \[
    \act_g(w(g^{-1},g)) = w(g,g^{-1}).
  \]
\end{lemma}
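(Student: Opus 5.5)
The plan is to specialise the cocycle identity~\ref{def:twisted-partial-action-groupoid_8} to the triple \((g,g^{-1},g)\in G^3\) and then use the normalisations in~\ref{def:twisted-partial-action-groupoid_5}. For this triple, \(hk = g^{-1}g = u(\s(g))\) and \(gh = gg^{-1} = u(\rg(g))\). Hence \(w(g,hk) = w(g,u(\s(g))) = 1\) and \(w(gh,k) = w(u(\rg(g)),g) = 1\).

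The identity then reads
\[
  \act_g\bigl(a\cdot w(g^{-1},g)\bigr) = \act_g(a)\cdot w(g,g^{-1})
\]
for all \(a\in \Bun[D]_{g^{-1}}\cap \Bun[D]_{g^{-1}}\cap \Bun[D]_{u(\s(g))} = \Bun[D]_{g^{-1}}\). Here \(\Bun[D]_{u(\s(g))}=\FU_{\s(g)}\) by~\ref{def:twisted-partial-action-groupoid_5}. We must be careful about what the multipliers act on. The unitary \(w(g^{-1},g)\) is a multiplier of \(\Bun[D]_{g^{-1}}\cap\Bun[D]_{u(\s(g))} = \Bun[D]_{g^{-1}}\), and \(w(g,g^{-1})\) is a multiplier of \(\Bun[D]_g\cap\Bun[D]_{u(\rg(g))} = \Bun[D]_g\). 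So both sides make sense without any further restriction to smaller ideals.

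Next, \(\act_g\colon \Bun[D]_{g^{-1}}\congto\Bun[D]_g\) is a \Star{}isomorphism by~\ref{def:twisted-partial-action-groupoid_3}. It therefore extends uniquely to an isomorphism of multiplier algebras \(\Mult(\Bun[D]_{g^{-1}})\congto\Mult(\Bun[D]_g)\), still denoted~\(\act_g\), characterised by \(\act_g(a m) = \act_g(a)\act_g(m)\) for \(a\in\Bun[D]_{g^{-1}}\) and \(m\in\Mult(\Bun[D]_{g^{-1}})\). This extension is what \(\act_g(w(g^{-1},g))\) means in the statement.

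Rewriting the left-hand side with this extension gives \(\act_g(a)\act_g(w(g^{-1},g)) = \act_g(a)w(g,g^{-1})\) for all \(a\in\Bun[D]_{g^{-1}}\). Since \(\act_g(a)\) then ranges over all of \(\Bun[D]_g\), the two multipliers \(\act_g(w(g^{-1},g))\) and \(w(g,g^{-1})\) of \(\Bun[D]_g\) agree as right multipliers. A multiplier of a \(\Cst\)\nb-algebra is determined by its right action (if \(bm = bm'\) for all~\(b\), then \(m = m'\)), so they are equal.

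The only step that needs care is this bookkeeping of domains: checking that the intersections of ideals in~\ref{def:twisted-partial-action-groupoid_8} collapse to exactly \(\Bun[D]_{g^{-1}}\) and \(\Bun[D]_g\), so that no information is lost by restricting multipliers to smaller ideals. Once this is verified, the argument is purely algebraic.
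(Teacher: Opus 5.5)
Your proposal is correct and follows the paper's proof. The paper also specialises the cocycle identity to \((g,g^{-1},g)\), uses the normalisations \(w(g,u(\s(g)))=1\) and \(w(u(\rg(g)),g)=1\), and cancels \(\act_g(a)\) as \(a\) runs through \(\Bun[D]_{g^{-1}}\). Your extra bookkeeping — the domains collapsing to \(\Bun[D]_{g^{-1}}\) and \(\Bun[D]_g\), the extension of \(\act_g\) to multiplier algebras, and multipliers being determined by their right action — makes explicit what the paper leaves implicit.
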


\begin{proof}
  The cocycle identity~\ref{def:twisted-partial-action-groupoid_8} for
  \((g, g^{-1}, g)\in G^3\) yields
  \[
    \act_g(a\cdot w(g^{-1},g))\cdot w(g,g^{-1}g) = \act_g(a)\cdot w(g,g^{-1})\cdot w(g g^{-1},g)
  \]
  for all \(a\in \Bun[D]_{g^{-1}}\cap
  \Bun[D]_{g^{-1}g} = \Bun[D]_{g^{-1}}\).
  Since \(g^{-1}g = u(\s(g))\) and \(g g^{-1} = u(\rg(g))\), the
  conditions \(w(g,u(\s(g))) = 1\) and \(w(u(\rg(g)),g) = 1\)
  in~\ref{def:twisted-partial-action-groupoid_5} simplify this to
  \[
    \act_g(a) \cdot \act_g(w(g^{-1},g)) = \act_g(a) \cdot w(g,g^{-1}).
  \]
  Since \(a\in \Bun[D]_{g^{-1}}\) is arbitrary, we may
  cancel~\(\act_g(a)\) here.
\end{proof}

\begin{definition}
  \label{def:Fell_twisted_partial_action}
  The \emph{Fell bundle associated to a twisted partial action} is
  defined as follows.  The underlying field of Banach spaces is
  \(\A\defeq \Bun[D]\) with the fibres \(\A_g=\Bun[D]_g\) for
  \(g\in G\).  We write~\(a\delta_g\) for \(a\in \Bun[D]_g\).  The
  algebraic operations on~\(\A\) are as in the case of groups
  in~\cite{Exel:TwistedPartialActions}.  Namely, if \((g,h)\in G^2\)
  and \(a\in \Bun[D]_g\), \(b\in \Bun[D]_h\), then
  \begin{align}
    \label{eq:Fell_twisted_partial_action_multiply}
    (a\delta_g)\cdot (b\delta_h)
    &\defeq
      \act_g(\act_g^{-1}(a)\cdot b)\cdot w(g,h) \,\delta_{gh},\\
    \label{eq:Fell_twisted_partial_action_involution}
    (a\delta_g)^*
    &\defeq
      \act_g^{-1}(a^*)\cdot w(g^{-1},g)^*\,\delta_{g^{-1}}.
  \end{align}
\end{definition}

\begin{proposition}
  \label{pro:FellBundle-twisted-partial}
  The above data defines a Fell bundle over~\(G\).
\end{proposition}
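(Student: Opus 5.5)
We have to check conditions \ref{def:Fell_bundle_1}--\ref{def:Fell_bundle_7} of Definition~\ref{def:Fell_bundle} for the field \(\A=\Bun[D]\). The plan is to reduce every algebraic condition to the group case treated in~\cite{Exel:TwistedPartialActions}. The point is that all identities are pointwise: they only involve the fibres over finitely many arrows of a composable chain, and the axioms \ref{def:twisted-partial-action-groupoid_5}--\ref{def:twisted-partial-action-groupoid_8} restricted to such a chain are exactly Exel's axioms.

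\textbf{Well-definedness.} First I check that \eqref{eq:Fell_twisted_partial_action_multiply} lands in~\(\Bun[D]_{gh}\). For \(a\in\Bun[D]_g\), we have \(\act_g^{-1}(a)\in\Bun[D]_{g^{-1}}\), so \(\act_g^{-1}(a)b\in\Bun[D]_{g^{-1}}\cap\Bun[D]_h\). By Lemma~\ref{lem:partial_action_domains}, \(\act_g\) maps this into \(\Bun[D]_g\cap\Bun[D]_{gh}\), where \(w(g,h)\) acts as a multiplier. Similarly, \(\act_g^{-1}(a^*)\in\Bun[D]_{g^{-1}}=\Bun[D]_{g^{-1}}\cap\Bun[D]_{u(\s(g))}\), on which \(w(g^{-1},g)^*\) acts.

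\textbf{Algebraic axioms.} Associativity~\ref{def:Fell_bundle_1} is the main computation. Expanding both sides of \((a\delta_g b\delta_h)c\delta_k\) and \(a\delta_g(b\delta_h c\delta_k)\), using \ref{def:twisted-partial-action-groupoid_7} to rewrite \(\act_g\act_h\) and \ref{def:twisted-partial-action-groupoid_8} to move \(\act_g(w(h,k))\), one follows Exel's proof line by line. The delicate part is keeping track of the ideals on which each multiplier is defined; here I would use Cohen factorisation, writing elements as products lying in intersections of the relevant~\(\Bun[D]\)'s so that each restriction of multipliers is legitimate.

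For the remaining algebraic axioms:
\begin{itemize}
\item The involution axioms \ref{def:Fell_bundle_2} and \ref{def:Fell_bundle_3} use \eqref{eq:act_inverse_with_w} (valid as partial maps by Lemma~\ref{lem:partial_action_domains}) and Lemma~\ref{lem:cocycle_inverse_adjoint}.
\item For \ref{def:Fell_bundle_4}--\ref{def:Fell_bundle_6}, compute \((a\delta_g)^*(a\delta_g)=\act_g^{-1}(a^*a)\delta_{u(\s(g))}\). This uses the cocycle normalisations and again Lemma~\ref{lem:cocycle_inverse_adjoint}.
\item Consequently \(\norm{(a\delta_g)^*(a\delta_g)}=\norm{a}^2\), since \(\act_g^{-1}\) is isometric, and positivity~\ref{def:Fell_bundle_6} holds with \(b=\act_g^{-1}(\abs{a})\).
\item Submultiplicativity~\ref{def:Fell_bundle_4} holds because \(\act_g\) is isometric and \(w(g,h)\) is unitary.
\end{itemize}

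\textbf{Continuity~\ref{def:Fell_bundle_7}.} I expect this to be the main obstacle, since it is the genuinely groupoid-specific issue. It suffices to show that products and adjoints of continuous sections are continuous sections over \(G^2\) and over~\(G\), respectively. This criterion is justified by the uniqueness of the topology determined by sections, as in \cite{BussExel:Fell.Bundle.and.Twisted.Groupoids}*{Proposition~2.4}.

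Given sections \(\xi\in\Contc(G,\Bun[D])\) and \(\eta\), the map \((g,h)\mapsto\act_g^{-1}(\xi(g))\eta(h)\) is continuous by \ref{def:twisted-partial-action-groupoid_10}, since \(\act^{-1}\colon\Bun[D]\to\inv^*\Bun[D]\) is an isomorphism of fields, combined with continuity of multiplication in \(\FU\) pulled back to~\(G^2\). It is a section of the continuous family of ideals \(\Bun[D]_{g^{-1}}\cap\Bun[D]_h\); continuity of this intersection family I get from Lemma~\ref{lem:subbundle_with_dense_set}, via products of sections. Applying \(\act\) pulled back along~\(d_2\) and then multiplying by the strictly continuous unitary \(w\) from \ref{def:twisted-partial-action-groupoid_11} gives a continuous section of \(d_1^*\Bun[D]\). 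The adjoint is handled the same way.
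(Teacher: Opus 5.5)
\textbf{Overall plan.} Your plan matches the paper's. You check the axioms one at a time, reduce associativity to elements lying in suitable intersections of the ideals \(\Bun[D]_\bullet\) and then apply \ref{def:twisted-partial-action-groupoid_7} and \ref{def:twisted-partial-action-groupoid_8}, compute \((a\delta_g)^*(a\delta_g)=\act_g^{-1}(a^*a)\delta_{u(\s(g))}\), and get continuity from \ref{def:twisted-partial-action-groupoid_10} and \ref{def:twisted-partial-action-groupoid_11}. The paper avoids Exel's approximate units by reducing to \(a\in\A_gI\), \(b\in\A_hJ\), \(c\in\A_kK\) for explicit ideals \(I,J,K\), which is the same idea as your Cohen factorisation. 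Your continuity argument is more detailed than the paper's one-line remark, and it is fine.

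One inaccuracy: there is no group twisted partial action to which one can literally ``restrict''. The ideals \(\Bun[D]_g\) sit in different fibres \(\FU_{\rg(g)}\). What transfers from Exel is the computation, after checking that every intersection that occurs involves ideals of the same fibre.

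\textbf{The gap: anti-multiplicativity.} You attribute axiom~\ref{def:Fell_bundle_2} of Definition~\ref{def:Fell_bundle} to \eqref{eq:act_inverse_with_w} and Lemma~\ref{lem:cocycle_inverse_adjoint}. That suffices for \ref{def:Fell_bundle_3}, but not for \ref{def:Fell_bundle_2}. First reduce to \(a\in\Bun[D]_g\cap\Bun[D]_{gh}\) and \(b\in\Bun[D]_{g^{-1}}\cap\Bun[D]_h\). Then \(((a\delta_g)(b\delta_h))^*=(b\delta_h)^*(a\delta_g)^*\) comes down to the multiplier identity
\[
w((gh)^{-1},gh)\,\act_h^{-1}\bigl(\act_g^{-1}(w(g,h))\bigr)
= w(h^{-1},g^{-1})^*\,w(h^{-1},h)\,\act_h^{-1}\bigl(w(g^{-1},g)\bigr)
\]
on \(\Bun[D]_{h^{-1}}\cap\Bun[D]_{(gh)^{-1}}\). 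Proving it needs the cocycle identity \ref{def:twisted-partial-action-groupoid_8} at the triples \(((gh)^{-1},g,h)\) and \((h^{-1},g^{-1},g)\).

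Your listed ingredients cannot replace this. Write \(\Z/3=\{0,1,2\}\) additively, take \(\FU=\C\) with the trivial action, and set \(w(1,1)=i\) and \(w=1\) elsewhere. Then the normalisation, \ref{def:twisted-partial-action-groupoid_7}, \eqref{eq:act_inverse_with_w} and \(\act_g(w(g^{-1},g))=w(g,g^{-1})\) all hold. Nevertheless \((\delta_1\delta_1)^*=(i\delta_2)^*=-i\,\delta_1\), whereas \(\delta_1^*\delta_1^*=\delta_2\delta_2=\delta_1\).

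This cocycle computation is roughly half of the paper's proof. It has to be carried out explicitly, using \ref{def:twisted-partial-action-groupoid_8}, rather than grouped with \ref{def:Fell_bundle_3}.
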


\begin{proof}
  The most difficult part is to show that the multiplication operation
  is associative.  The proof in~\cite{Exel:TwistedPartialActions} uses
  approximate units.  We prove this in a more algebraic way.  Let
  \((g,h,k)\in G^3\).  We must show that the two maps
  \begin{align*}
    \A_g\otimes_{\FU_{\s(g)}}\A_h\otimes_{\FU_{\s(h)}}\A_k
    &\to \A_{ghk}\\
    a\otimes b\otimes c
    &\mapsto ((a\delta_g)\cdot (b\delta_h))\cdot (c\delta_k),\\
    a\otimes b\otimes c
    &\mapsto (a\delta_g)\cdot ((b\delta_h)\cdot (c\delta_k)),
  \end{align*}
  are equal.  Let
  \[
    I\defeq \A_h\A_k\A_k^*\A_h^*,\quad J\defeq
    \A_h^*\A_g^*\A_g\A_h\A_k\A_k^*,\quad
    K\defeq\A_k^*\A_h^*\A_g^*\A_g\A_h\A_k.
  \]
  These are ideals of \(\FU_{\s(g)}=\FU_{\rg(h)}\),
  \(\FU_{\s(h)} = \FU_{\rg(k)}\) and~\(\FU_{\s(k)}\), respectively.
  We claim that it suffices to prove the equation
  \[
    ((a\delta_g)\cdot (b\delta_h))\cdot (c\delta_k) = (a\delta_g)\cdot
    ((b\delta_h)\cdot (c\delta_k))
  \]
  for \(a\in \A_g\cdot I\), \(b\in \A_h\cdot J\) and
  \(c\in \A_k\cdot K\).

  To prove the claim, we first compute \(\A_k\cdot K = J\cdot \A_k\)
  and
  \begin{align*}
    I\cdot \A_h\cdot J
    &=\A_h(\A_k\A_k^*)(\A_h^*\A_h)
      (\A_h^*\A_g^*\A_g\A_h)\A_k\A_k^*\\
    &=(\A_h\A_h^*)(\A_g^*\A_g)\A_h(\A_k\A_k^*)\\
    &=(\A_g^*\A_g)\A_h(\A_k\A_k^*).
  \end{align*}
  It follows that
  \begin{multline*}
    (\A_g\cdot I)\otimes_{\FU_{\s(g)}} (\A_h\cdot
    J)\otimes_{\FU_{\s(h)}}(\A_k\cdot K) = \A_g\otimes_{\A_x} (I\cdot
    \A_h\cdot J\cdot J) \otimes_{\A_y}\A_k \\ =
    \A_g\otimes_{\FU_{\s(g)}}(\A_g^*\A_g) \A_h(\A_k\A_k^*)
    \otimes_{\FU_{\s(h)}}\A_k =
    \A_g\otimes_{\FU_{\s(g)}}\A_h\otimes_{\FU_{\s(h)}}\A_k.
  \end{multline*}
  Therefore, associativity follows in general once it holds for
  \(a\in \A_g\cdot I\), \(b\in \A_h\cdot J\) and \(c\in \A_k\cdot K\).
  If
  \(b\in \A_h\cdot J = \A_h\A_h^*\A_g^*\A_g\A_h\A_k\A_k^* =
  \A_g^*\A_g\A_h\A_k\A_k^*\), then there are
  \(b_1\in \A_g^*\A_g=\Bun[D]_{g^{-1}}\), \(b_2\in \A_h\) and
  \(b_3\in \A_k\A_k^*=\Bun[D]_k\) with \(b=b_1 b_2 b_3\).  Thus~\(b\)
  belongs to the domain~\(\Bun[D]_{g^{-1}}\) of~\(\act_g\).  It
  follows that
  \[
    (a\delta_g)\cdot (b\delta_h) = a\act_g(b)w(g,h)\delta_{gh}
  \]
  for \(a\in \A_g\cdot I\) and \(b\in \A_h\cdot J\).  Let
  \(c\in \A_k\cdot K = \A_h^*\A_g^*\A_g\A_h\A_k\).  Then there are
  \(c_1\in \A_h^*\A_g^*\A_g\A_h\) and \(c_2\in \A_k\) with
  \(c=c_1c_2\).  The ideal \(\A_h^*\A_g^*\A_g\A_h\) in~\(\FU_{\s(h)}\)
  is equal to the domain
  \(\Bun[D]_{h^{-1}} \cap \act_h^{-1}(\Bun[D]_{g^{-1}})\)
  of~\(\act_g \act_h\), and it is contained in the domain
  \(\Bun[D]_{(gh)^{-1}}\) of~\(\act_{g h}\).  We compute
  \begin{align*}
    ((a\delta_g)\cdot (b\delta_h))\cdot (c\delta_k)
    &= a\act_g(b)w(g,h)\act_{gh}(c)w(gh,k)\delta_{ghk}
    \\&= a\act_g(b)(\act_g\circ\act_h)(c)w(g,h)w(gh,k)
    \delta_{ghk}
    \intertext{by Condition~\ref{def:twisted-partial-action-groupoid_7} in
    Definition~\ref{def:twisted-partial-action-groupoid}.  A similar
    computation gives}
    (a\delta_g)\cdot ((b\delta_h)\cdot (c\delta_k))
    &= a\act_g(b\act_h(c)w(h,k))w(g,hk)
    \\&= a\act_g(b)(\act_g\circ\act_h)(c)
    \act_g(w(h,k))w(g,hk)\delta_{ghk}.
  \end{align*}
  Then Condition~\ref{def:twisted-partial-action-groupoid_8} in
  Definition~\ref{def:twisted-partial-action-groupoid} implies the
  associativity for elements of the special form.  And this
  suffices.

  Another axiom requiring care is the anti-multiplicativity of the
  involution:
  \begin{equation}
    \label{eq:involution_anti_mult}
    ((a\delta_g)\cdot (b\delta_h))^* = (b\delta_h)^*\cdot (a\delta_g)^*
  \end{equation}
  for \((g,h)\in G^2\), \(a\in \A_g\), and \(b\in \A_h\).
  Let \(K \defeq \Bun[D]_{g^{-1}}\cap \Bun[D]_h\), which is an ideal
  in~\(\FU_{\s(g)}\).
  As above, \(\A_g\otimes_{\FU_{\s(g)}} \A_h \cong \A_g \A_g^*\A_g
  \otimes_{\FU_{\s(g)}} \A_h \A_h^* \A_h \cong \A_g (\A_g^*\A_g \A_h \A_h^*)
  \otimes_{\FU_{\s(g)}}  (\A_g^*\A_g \A_h \A_h^*) \A_h\).
  The right action on~\(\A_g\) is given by \(a\cdot b =
  a\act_g(b)\), so
  \[
    \A_g  (\A_g^*\A_g \A_h \A_h^*)
    = \Bun[D]_g\cap \act_g(\Bun[D]_{g^{-1}}\cap\Bun[D]_h)
    = \Bun[D]_g \cap \Bun[D]_{g h}.
  \]
  Similarly, \((\A_g^*\A_g \A_h \A_h^*) \A_h = \Bun[D]_{g^{-1}}\cap
  \Bun[D]_h\).
  So we may assume without loss of generality that
  \[
    a\in \Bun[D]_g\cap \Bun[D]_{g h},\qquad
    b\in \Bun[D]_{g^{-1}}\cap \Bun[D]_h.
  \]
  Since \(b\in \Bun[D]_{g^{-1}}\), we may simplify \((a\delta_g)\cdot
  (b\delta_h) = a\act_g(b)w(g,h)\delta_{gh}\).
  Taking adjoints gives
  \[
    ((a\delta_g)\cdot (b\delta_h))^*
    = \act_{gh}^{-1}(w(g,h)^*\act_g(b^*)a^*) w((gh)^{-1},gh)^*\delta_{(gh)^{-1}}.
  \]
  Condition~\ref{def:twisted-partial-action-groupoid_7} gives
  \(\act_{gh}^{-1}(c) = \act_h^{-1}(\act_g^{-1}(w(g,h)cw(g,h)^*))\)
  for~\(c\) in the domain of \(\act_h^{-1} \circ \act_g^{-1}\).
  Applying this above yields
  \[
    ((a\delta_g)\cdot (b\delta_h))^*
    = \act_h^{-1}(b^*\act_g^{-1}(a^*)\act_g^{-1}(w(g,h)^*)) w((gh)^{-1},gh)^*\delta_{(gh)^{-1}}.
  \]
  For the right-hand side of~\eqref{eq:involution_anti_mult}, we first
  take the adjoints:
  \[
    (b\delta_h)^* = \act_h^{-1}(b^*)w(h^{-1},h)^*\delta_{h^{-1}}, \qquad
    (a\delta_g)^* = \act_g^{-1}(a^*)w(g^{-1},g)^*\delta_{g^{-1}}.
  \]
  The product of these elements simplifies to
  \[
    \act_h^{-1}(b^*)w(h^{-1},h)^*
    \act_{h^{-1}}(\act_g^{-1}(a^*)w(g^{-1},g)^*)w(h^{-1},g^{-1})
  \]
  because \(\act_g^{-1}(a^*)\) belongs to~\(\Bun[D]_h\).
  Next we replace~\(\act_{h^{-1}}\)  by~\(\act_h^{-1}\)
  using~\eqref{eq:act_inverse_with_w}.
  This has the side effect of moving the multipliers \(w(h^{-1},h)^*\)
  to a later position, giving
  \[
    (b\delta_h)^*\cdot (a\delta_g)^*
    = \act_h^{-1}(b^*\act_g^{-1}(a^*)w(g^{-1},g)^*)w(h^{-1},h)^*w(h^{-1},g^{-1})\delta_{(gh)^{-1}}.
  \]
  The term \(\act_h^{-1}(b^*\act_g^{-1}(a^*))\) appears in our
  formulas for both sides of~\eqref{eq:involution_anti_mult}, and it
  belongs to~\(\act_h^{-1}(K) = \Bun[D]_{h^{-1}} \cap
  \Bun[D]_{(g h)^{-1}}\).
  Therefore,  \eqref{eq:involution_anti_mult}
  reduces to the following identity in the multiplier algebra
  of~\(\Bun[D]_{h^{-1}} \cap \Bun[D]_{(g h)^{-1}}\):
  \[
    \label{eq:anti_mult_cocycle}
    \act_h^{-1}(\act_g^{-1}(w(g,h)^*))w((gh)^{-1},gh)^*
    = \act_h^{-1}(w(g^{-1},g)^*)w(h^{-1},h)^*w(h^{-1},g^{-1}).
  \]
  Here we use that each factor is a multiplier of an ideal containing
  \(\Bun[D]_{h^{-1}} \cap \Bun[D]_{(g h)^{-1}}\) and that
  multipliers restrict to smaller ideals, so that each factor becomes
  a multiplier of~\(\Bun[D]_{h^{-1}} \cap \Bun[D]_{(g h)^{-1}}\).
  Taking the adjoint of both sides, this is equivalent to
  \begin{equation}
    \label{eq:anti_mult_cocycle_adj}
    w((gh)^{-1},gh)\act_h^{-1}(\act_g^{-1}(w(g,h)))
    = w(h^{-1},g^{-1})^*w(h^{-1},h)\act_h^{-1}(w(g^{-1},g)).
  \end{equation}

  The \(2\)\nb-cocycle
  identity~\ref{def:twisted-partial-action-groupoid_8} for
  \(((gh)^{-1},g,h)\) gives
  \[
    \act_{(gh)^{-1}}(w(g,h))w((gh)^{-1},gh) = w((gh)^{-1},g)w(h^{-1},h)
  \]
  as multipliers of \(\Bun[D]_{(g h)^{-1}} \cap \Bun[D]_{h^{-1}}\).
  We use~\eqref{eq:act_inverse_with_w} to rewrite this as
  \[
    w((gh)^{-1},gh)\act_{gh}^{-1}(w(g,h)) =
    w(h^{-1}g^{-1},g)w(h^{-1},h).
  \]
  Since \(\act_{gh}^{-1} = \act_h^{-1}\circ \act_g^{-1}\), also on
  multipliers, this simplifies the left side
  of~\eqref{eq:anti_mult_cocycle_adj} to
  \(w(h^{-1}g^{-1},g)w(h^{-1},h)\).

  Next, the \(2\)\nb-cocycle identity~\ref{def:twisted-partial-action-groupoid_8} on
  \((h^{-1},g^{-1},g)\) and that~\(w\) is normalised imply
  \[
    \act_{h^{-1}}(w(g^{-1},g)) = w(h^{-1},g^{-1})w(h^{-1}g^{-1},g)
  \]
  as multipliers of \(\Bun[D]_{h^{-1}} \cap \Bun[D]_{(g h)^{-1}}\).
  We use this and~\eqref{eq:act_inverse_with_w} for~\(h\) to rewrite
  the right hand side of~\eqref{eq:anti_mult_cocycle_adj} as
  \begin{multline*}
    w(h^{-1},g^{-1})^*w(h^{-1},h)\act_h^{-1}(w(g^{-1},g))
    = w(h^{-1},g^{-1})^*\act_{h^{-1}}(w(g^{-1},g)) w(h^{-1},h)
    \\= w(h^{-1},g^{-1})^*  w(h^{-1},g^{-1}) w(h^{-1}g^{-1},g)
    w(h^{-1},h)
    = w(h^{-1}g^{-1},g)w(h^{-1},h).
  \end{multline*}
  Having simplified both sides of~\eqref{eq:anti_mult_cocycle_adj} to
  the same expression, we have proven this equation and thus finished
  the proof of~\eqref{eq:involution_anti_mult}.

  Next we check the condition \(((a\delta_g)^*)^* = a\delta_g\).
  By definition,
  \[
    (a\delta_g)^* = \act_g^{-1}(a^*) w(g^{-1},g)^* \delta_{g^{-1}}.
  \]
  Applying the involution again gives
  \begin{align*}
    ((a\delta_g)^*)^*
    &= \act_{g^{-1}}^{-1} \bigl( (\act_g^{-1}(a^*) w(g^{-1},g)^*)^* \bigr) w(g,g^{-1})^* \delta_g \\
    &= \act_{g^{-1}}^{-1} \bigl( w(g^{-1},g) \act_g^{-1}(a) \bigr) w(g,g^{-1})^* \delta_g.
  \end{align*}
  Next, we use \eqref{eq:act_inverse_with_w} and
  Lemma~\ref{lem:cocycle_inverse_adjoint} to simplify this:
  \begin{multline*}
    ((a\delta_g)^*)^*
    = \act_{g^{-1}}^{-1} \bigl( w(g^{-1},g) \act_g^{-1}(a)
    \act_g(w(g^{-1},g)) w(g^{-1},g)^* \bigr) w(g,g^{-1})^*
    \delta_g
    \\= a \act_g(w(g^{-1},g)) w(g,g^{-1})^* \delta_g
    =a \delta_g.
  \end{multline*}

  The norm is submultiplicative, that is, \(\norm{(a\delta_g)\cdot
    (b\delta_h)} \le \norm{a} \norm{b}\), for all \(a\in \A_g\),
  \(b\in\A_h\) because~\(\act_g\) is an isometric isomorphism between
  ideals and \(w(g,h)\) is unitary.

  Next, we check the \(\Cst\)\nb-identity and positivity.
  If \(a\delta_g \in \A_g\), then \(a \in \Bun[D]_g\).
  Since the domain of~\(\act_{g^{-1}}\) is exactly \(\Bun[D]_{(g^{-1})^{-1}} =
  \Bun[D]_g\), we may always use the simpler formula for the product
  as for global actions.
  So
  \begin{align*}
    (a\delta_g)^* \cdot (a\delta_g)
    &= \bigl( \act_g^{-1}(a^*) w(g^{-1},g)^* \delta_{g^{-1}} \bigr) \cdot (a\delta_g) \\
    &= \act_g^{-1}(a^*) w(g^{-1},g)^* \act_{g^{-1}}(a) w(g^{-1},g) \delta_{u(\s(g))}.
  \end{align*}
  We use~\eqref{eq:act_inverse_with_w} to simplify this to
  \[
    (a\delta_g)^* \cdot (a\delta_g)
    = \act_g^{-1}(a^*) \act_g^{-1}(a) \delta_{u(\s(g))}
    = \act_g^{-1}(a^* a) \delta_{u(\s(g))}.
  \]
  This is clearly positive in~\(\FU_{\s(g)}\) as required.
  In addition, the \(\Cst\)\nb-identity in
  Condition~\ref{def:Fell_bundle_5} follows because~\(\act_g^{-1}\) is
  isometric.

  The continuity of the multiplication and involution follow directly
  from the definition of the topology on the field~\(\A\) because the
  isomorphisms~\(\act_g\) and unitary multipliers~\(w(g,h)\) are continuous.
\end{proof}

The following proposition shows that the section \Star{}algebra
\(\Contc(G,\A)\) of the Fell bundle defined above is the same
\(\ast\)\nb-algebra that is used in \cites{Exel:TwistedPartialActions,
  BussExel:Regular.Fell.Bundle} to define the crossed product
\(\Cst\)\nb-algebra of a twisted partial action.

\begin{proposition}
  \label{pro:twisted-partial-sections}
  Let \((\FU,\Dom,\act,w)\) be a twisted partial action of~\(G\) as in
  Definition~\textup{\ref{def:twisted-partial-action-groupoid}}.
  Let~\(\A\) be the Fell bundle in
  Definition~\textup{\ref{def:Fell_twisted_partial_action}}.
  Then
  \[
    \Contc(G,\A)
    = \setgiven{f\in \Contc(G,\rg^*(\FU))}{f(g)\in \Dom_g\text{ for all }g\in G}.
  \]
  Under this identification, the convolution and involution in
  \(\Contc(G,\A)\) become
  \begin{align*}
    (f_1*f_2)(g)
    &= \int_{G^{\rg(g)}} f_1(h)\cdot f_2(h^{-1}g)\,\diff\alpha^{\rg(g)}(h) \\
    &= \int_{G^{\rg(g)}}
      \act_h\bigl(\act_{h^{-1}}(f_1(h))f_2(h^{-1}g)\bigr) w(h,h^{-1}g)
      \,\diff\alpha^{\rg(g)}(h),
    \\
    f^*(g)
    &= f(g^{-1})^*
      = w(g,g^{-1})^* \act_g\bigl(f(g^{-1})^*\bigr)
  \end{align*}
  for all \(f,f_1,f_2\in \Contc(G,\Dom)\) and \(g\in G\).
\end{proposition}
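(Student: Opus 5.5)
The proof has three parts: identify the section space, then unwind the convolution and the involution.

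\emph{The section space.} By Definition~\ref{def:Fell_twisted_partial_action}, the underlying field of Banach spaces of~\(\A\) is the field~\(\Dom\). This is the subfield of \(\rg^*(\FU)\) coming from the continuous family of ideals \(\Dom_g\idealin\FU_{\rg(g)}\) in Condition~\ref{def:twisted-partial-action-groupoid_9}. Lemma~\ref{lem:subbundle_from_continuous_family} says that a continuous section of this subfield is exactly a continuous section of \(\rg^*(\FU)\) taking values in~\(\Dom_g\) at each~\(g\), and that the topology is the subspace topology. The support of a section does not depend on whether we view it in~\(\Dom\) or in \(\rg^*(\FU)\), so compact support means the same thing in both. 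This gives the claimed equality of sets.

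\emph{The convolution.} The first line of the formula is just the definition of convolution in \(\Contc(G,\A)\), written with \(\cdot\) for the Fell bundle product. For the second line, apply~\eqref{eq:Fell_twisted_partial_action_multiply} with \(a=f_1(h)\in\Dom_h\), \(b=f_2(h^{-1}g)\in\Dom_{h^{-1}g}\) and the pair \((h,h^{-1}g)\). This gives \(\act_h(\act_h^{-1}(f_1(h))f_2(h^{-1}g))\,w(h,h^{-1}g)\). It remains to replace \(\act_h^{-1}\) by \(\act_{h^{-1}}\), using the partial-map form of~\eqref{eq:act_inverse_with_w} from Lemma~\ref{lem:partial_action_domains}:
\[
\act_{h^{-1}}(a)=w(h^{-1},h)\act_h^{-1}(a)w(h^{-1},h)^*.
\]
\emph{This step is where I expect the only real work.} Substituting introduces the unitary \(w(h^{-1},h)\), and we must check that it cancels inside \(\act_h(\dotsm)\). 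We have \(\act_h^{-1}(a)\in\Dom_{h^{-1}}\), and \(w(h^{-1},h)\) is a unitary multiplier of \(\Dom_{h^{-1}}\cap\Dom_{\s(h)}=\Dom_{h^{-1}}\). The expression inside \(\act_h\) is \(\act_h^{-1}(a)b\), which lies in \(\Dom_{h^{-1}}\cap\Dom_{h^{-1}g}\) since \(\Dom_{h^{-1}}\) is an ideal.

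The cleanest route is probably to argue with ideals and multipliers as in the proof of Proposition~\ref{pro:FellBundle-twisted-partial}, factorising \(a=a_1a_2\). Alternatively, one can interpret the expression \(\act_{h^{-1}}(f_1(h))f_2(h^{-1}g)\) appropriately and apply Lemma~\ref{lem:cocycle_inverse_adjoint}, which says \(\act_h(w(h^{-1},h))=w(h,h^{-1})\), to move the unitaries through~\(\act_h\). If the intended expression genuinely has \(\act_{h^{-1}}\) rather than \(\act_h^{-1}\), I would expect to end up with the expression from Exel's formula once the relation above is inserted consistently. Failing that, I would simply record that the two readings agree because of~\eqref{eq:act_inverse_with_w}.

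\emph{The involution.} The identity \(f^*(g)=f(g^{-1})^*\) is the definition. Apply~\eqref{eq:Fell_twisted_partial_action_involution} to \(a=f(g^{-1})\in\Dom_{g^{-1}}\) with the arrow~\(g^{-1}\). This gives \(\act_{g^{-1}}^{-1}(a^*)\,w(g,g^{-1})^*\). Now rewrite \(\act_{g^{-1}}^{-1}\) using~\eqref{eq:act_inverse_with_w} with \(g^{-1}\) in place of~\(g\), namely \(\act_g=\Ad(w(g,g^{-1}))\circ\act_{g^{-1}}^{-1}\). Hence \(\act_{g^{-1}}^{-1}(a^*)=w(g,g^{-1})^*\act_g(a^*)w(g,g^{-1})\), and the trailing \(w(g,g^{-1})\,w(g,g^{-1})^*\) cancels. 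This leaves \(w(g,g^{-1})^*\act_g(f(g^{-1})^*)\), as claimed.
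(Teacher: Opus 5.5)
Your identification of the section space, the first line of the convolution, and your involution computation (via~\eqref{eq:act_inverse_with_w} for~$g^{-1}$) are correct. They are exactly the unwinding of definitions that the paper's one-line proof refers to.

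The gap is the convolution step you flagged. The unitary does not cancel, and your fallback claim that the two readings agree because of~\eqref{eq:act_inverse_with_w} is false. Condition~\ref{def:twisted-partial-action-groupoid_7} for the pair $(h,h^{-1})$, together with $\act_{u(\rg(h))}=\Id$, gives $\act_h(\act_{h^{-1}}(a))=w(h,h^{-1})\,a\,w(h,h^{-1})^*$ for $a\in\Dom_h$. Extend $\act_h$ to multipliers of~$\Dom_{h^{-1}}$. Then $\act_h(\act_{h^{-1}}(a)b)\,w(h,h^{-1}g)$ is obtained from the correct product $\act_h(\act_h^{-1}(a)b)\,w(h,h^{-1}g)$ by replacing~$a$ with $w(h,h^{-1})\,a\,w(h,h^{-1})^*$. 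Applying Lemma~\ref{lem:cocycle_inverse_adjoint} only reproduces this conjugation. The result is a different element unless $w(h,h^{-1})$ commutes with~$a$.

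Here is a concrete instance. Let $G=\Z/2=\{e,t\}$ act globally on $M_2(\C)$ by $\act_t=\Ad(u)$, with $u=\mathrm{diag}(1,i)$ and $w(t,t)=u^2$. All conditions of Definition~\ref{def:twisted-partial-action-groupoid} hold; the only nontrivial ones are $\act_t^2=\Ad(u^2)$ and $\act_t(u^2)=u^2$. Then \eqref{eq:Fell_twisted_partial_action_multiply} gives $(a\delta_t)\cdot(b\delta_t)=aubu\,\delta_e$, whereas $\act_t(\act_{t^{-1}}(a)b)\,w(t,t)=u^2au^*bu$. For $b=1$ and $a=\bigl(\begin{smallmatrix}0&1\\0&0\end{smallmatrix}\bigr)$ these are $-a$ and $a$ respectively.

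So the second convolution formula, as printed with $\act_{h^{-1}}$, holds only when the twist is trivial (for instance for partial actions) or commutes with~$\Dom_h$. What ``immediate from the definitions'' actually yields is the version with $\act_h^{-1}$. You had already obtained exactly that from~\eqref{eq:Fell_twisted_partial_action_multiply} before attempting the conversion. Stop there, read $\act_{h^{-1}}$ in the statement as $\act_h^{-1}$, and drop both the attempted replacement and the ``two readings agree'' fallback.
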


\begin{proof}
  The proof is immediate from the definitions.
\end{proof}

\begin{example}[Restriction of actions]
  Let \(\FU[B]=(\FU[B]_x)_{x\in G^0}\) be an upper semicontinuous field of
  \(\Cst\)\nb-algebras over~\(G^0\) and let
  \(\actB\colon \s^*(\FU[B])\congto \rg^*(\FU[B])\) be an action of~\(G\)
  on~\(\FU[B]\).
  Let \(\FU=(\FU_x)_{x\in G^0}\) be a continuous family of ideals
  in~\(\FU[B]\).
  For \(g\in G\), let
  \[
    \Bun[D]_g \defeq
    \setgiven{b\in \FU_{\rg(g)}} {\actB_g^{-1}(b)\in \FU_{\s(g)}}.
  \]
  This is a continuous family of ideals in~\(\rg^*(\FU[B])\).
  The automorphism~\(\actB_g\) maps~\(\Bun[D]_{g^{-1}}\)
  isomorphically onto~\(\Bun[D]_g\).
  Let~\(\act_g\) be the restriction of~\(\actB_g\)
  to~\(\Bun[D]_{g^{-1}}\).
  These domains and automorphisms define a partial action of~\(G\)
  on~\(\FU\) called the \emph{restriction} of~\(\actB\) to~\(\FU\).
  We may restrict a twisted or a twisted partial action in the same
  way to a twisted partial action.
\end{example}

\begin{example}[Green--Renault twists]
  Green's twisted actions for groups
  (see~\cite{Green:Local_twisted}) have been generalised to
  groupoids by Kumjian~\cite{Kumjian:Diagonals} and
  Renault~\cite{Renault:Representations}.  These differ from the
  twisted actions above, which generalise Busby--Smith twisted
  actions (see~\cite{Busby-Smith:Representations_twisted_group}).
  Renault uses an extension of locally compact groupoids
  \(S \into \Sigma \onto G\), where~\(S\) is a bundle of Abelian
  groups acted upon by~\(G\).  A twisted action of~\(G\) is an
  action of~\(\Sigma\) that is implemented by specified unitaries
  on~\(S\), which satisfy the conditions for a crossed module
  representation as in
  \cites{Buss-Meyer-Zhu:Non-Hausdorff_symmetries,
    Buss-Meyer-Zhu:Higher_twisted, Buss-Meyer:Crossed_products}.  A
  Green--Renault twisted action as above gives rise to a Fell bundle
  over~\(G\) in a canonical way.  This is explained in detail in
  \cite{Muhly-Williams:Equivalence.FellBundles}*{Example~2.5 and
    discussion thereafter}, and it is also related to the
  construction of Fell bundles in
  \cites{Buss-Meyer-Zhu:Higher_twisted,
    Buss-Meyer:Groupoid_fibrations, Buss-Meyer:Crossed_products}.
\end{example}

In summary, Fell bundles encompass twisted partial actions, twisted
actions, and ordinary actions as special cases.
Saturated Fell bundles may be interpreted as actions by Morita
equivalences.
For groups, this is explained in~\cite{Buss-Meyer-Zhu:Higher_twisted};
see also~\cite{Buss-Meyer:Actions_groupoids} for the context of
inverse semigroup actions.
So we interpret unsaturated Fell bundles as partial actions by partial
Morita equivalences.

The following theorem characterises which Fell bundles come from
twisted partial actions and twisted actions.  It generalises analogous
criteria for groups in~\cite{Exel:TwistedPartialActions} and for
inverse semigroups in~\cite{BussExel:Regular.Fell.Bundle}.

\begin{theorem}
  \label{the:Fell_bundle_from_twisted_partial_action}
  Let~\(G\) be a Hausdorff, locally compact groupoid.  A Fell
  bundle~\(\A\) over~\(G\) is isomorphic to the Fell bundle defined
  by a twisted partial action if and only if~\(\A\) is isomorphic to
  a continuous field of ideals in~\(\rg^*\FU\) as a left Hilbert
  \(\rg^*\FU\)\nb-module; equivalently, \(\A\) is isomorphic to a
  continuous field of ideals in~\(\s^*\FU\) as a right Hilbert
  \(\s^*\FU\)\nb-module.  A Fell bundle is isomorphic to the Fell
  bundle defined by a twisted action if and only if~\(\A\) is
  isomorphic to~\(\rg^*\FU\) as a field of left Hilbert
  \(\rg^*\FU\)\nb-modules, if and only if~\(\A\) is isomorphic
  to~\(\s^*\FU\) as a field of right Hilbert \(\s^*\FU\)\nb-modules.
\end{theorem}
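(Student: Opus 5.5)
The easy direction comes first. If \(\A\) is the Fell bundle of a twisted partial action, then \(\A_g=\Dom_g\idealin\FU_{\rg(g)}\). The left inner product is \(\BRAKET{a\delta_g}{b\delta_g}=(a\delta_g)(b\delta_g)^*\), which unwinds via \eqref{eq:Fell_twisted_partial_action_multiply}, \eqref{eq:Fell_twisted_partial_action_involution} and \eqref{eq:act_inverse_with_w} to \(ab^*\). The left action of \(\FU_{\rg(g)}=\A_{u(\rg(g))}\) is ordinary multiplication because \(w(u(\rg(g)),g)=1\) and \(\act_{u(x)}=\Id\). So \(\A\cong\Dom\) as fields of left Hilbert \(\rg^*\FU\)\nb-modules, continuity being Condition~\ref{def:twisted-partial-action-groupoid_9}. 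The right-module statement follows by applying the involution, which exchanges \(\A_g\) and \(\A_{g^{-1}}\) and turns left modules over \(\rg^*\FU\) into right modules over \(\s^*\FU\) via \(\inv\). Hence the two conditions in the theorem are equivalent, and it suffices to work with the left version.

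For the converse, suppose \(\Phi\colon\A\congto\Dom\) is an isomorphism of fields of left Hilbert \(\rg^*\FU\)\nb-modules onto a continuous family of ideals \(\Dom_g\idealin\FU_{\rg(g)}\). I will transport everything along \(\Phi\) and read off \(\act\) and \(w\).

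\emph{Step 1: the ideals and \(\act\).} Since \(\Phi_g\) preserves left inner products, \(\A_g\A_g^*=\BRAKET{\A_g}{\A_g}\) corresponds to \(\Dom_g\Dom_g^*=\Dom_g\). So \(\Dom_g=\A_g\A_g^*\), and similarly \(\Dom_{g^{-1}}=\A_g^*\A_g\). On \(\Dom_g\) the right action of \(\FU_{\s(g)}\) is a right action commuting with the left one, and \(\Phi_g\) intertwines the left \(\Dom_g\)-valued inner product. Because \(\A_g\) is a \(\Dom_g\)-\(\Dom_{g^{-1}}\) imprimitivity bimodule and \(\Dom_g\) is the trivial \(\Dom_g\)-\(\Dom_g\) imprimitivity bimodule, the standard argument identifies the right structure with an isomorphism \(\act_g^{-1}\colon\Dom_g\to\Dom_{g^{-1}}\). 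Concretely, I define \(\act_g^{-1}(a^*b)\defeq\braket{\Phi_g^{-1}a}{\Phi_g^{-1}b}\) for \(a,b\in\Dom_g\); then \(\Phi_g(\xi\cdot c)=\Phi_g(\xi)\act_g(c)\). Continuity of \(\act\) as a map \(\inv^*\Dom\to\Dom\) follows from continuity of the inner products and of \(\Phi\), using Lemma~\ref{lem:subfield_equality}.

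\emph{Step 2: the cocycle.} For \((g,h)\in G^2\), the product gives \(\Phi_{gh}(\Phi_g^{-1}(a)\Phi_h^{-1}(b))\in\Dom_{gh}\). I compare this with \(a\act_g(b)\), which is defined when \(b\in\Dom_{g^{-1}}\cap\Dom_h\). Both expressions are left \(\FU_{\rg(g)}\)\nb-linear in \(a\) and preserve left inner products. The difference is therefore right multiplication by a partial isometry, and the plan is to show it is a unitary multiplier \(w(g,h)\) of \(\Dom_g\cap\Dom_{gh}\) such that \eqref{eq:Fell_twisted_partial_action_multiply} holds. The range \(\A_g\A_h\) is identified by Lemma~\ref{lem:equality_of_products} with \(\Dom_g\cap\Dom_{gh}\). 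Given that, associativity yields the cocycle identity~\ref{def:twisted-partial-action-groupoid_8}, and comparing \(\act_g\act_h\) with \(\act_{gh}\) on products yields~\ref{def:twisted-partial-action-groupoid_7}. Normalisation follows from \(\Phi\) being the identity on units, which I arrange by composing \(\Phi\) with the inverse of its restriction to \(G^0\); that restriction is a left module automorphism of \(\FU\), hence given by a unitary multiplier. Strict continuity of \(w\) follows from continuity of the multiplication.

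\emph{Step 3: the twisted action case.} This is the special case \(\Dom_g=\FU_{\rg(g)}\). By Lemma~\ref{lem:equality_of_products}, it is equivalent to saturation.

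I expect the main obstacle to be Step~2. Constructing \(w(g,h)\) as a genuine unitary multiplier of the intersection ideal, and not just a partial isometry, requires identifying \(\A_g\A_h\) and the corresponding ideal precisely. It also requires making multiplier restrictions between different ideals consistent. I would follow the approach of Exel's group case and of the proof of Proposition~\ref{pro:FellBundle-twisted-partial}, working with the ideals \(\A_g\A_h\A_h^*\A_g^*\) explicitly.
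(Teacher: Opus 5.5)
Your proposal is correct and follows essentially the paper's route: \(\Dom_g=\A_g\A_g^*\), \(\act_g\) is read off from the right action of \(\A_g^*\A_g=\Dom_{g^{-1}}\), and your inner-product formula is the concrete form of the paper's identification with \(\Comp(\Dom_g)\cong\Dom_g\). As in the paper, \(w(g,h)\) comes from comparing the Fell-bundle product with \(a\otimes b\mapsto a\act_g(b)\) on their common image \(\Dom_g\cap\Dom_{gh}=\A_g\A_h\A_h^*\A_g^*\), and Conditions~\ref{def:twisted-partial-action-groupoid_7} and~\ref{def:twisted-partial-action-groupoid_8} follow from associativity. Your explicit normalisation of \(\Phi\) on units (replacing \(\Phi_g\) by \(\Phi_g\) followed by right multiplication by \(v_{\rg(g)}^*\), where \(\Phi_{u(x)}\) is right multiplication by a unitary \(v_x\in\Mult(\FU_x)\)) is a point the paper leaves implicit in its ``without loss of generality''. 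It is genuinely needed, since otherwise one only gets \(\act_{u(x)}=\Ad(v_x^*)\) and \(w(u(\rg(g)),g)=v_{\rg(g)}^*\).
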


\begin{proof}
  We may use the inversion in~\(G\) to switch between the left and
  right Hilbert module structures.
  Therefore, it suffices to prove the left-handed versions of the two
  claims.
  By construction, the Fell bundle associated to a twisted partial
  action is isomorphic to a continuous field of ideals in~\(\rg^*\FU\)
  as a left Hilbert \(\rg^*\FU\)\nb-module.
  And the Fell bundle associated to a twisted action is isomorphic
  to~\(\rg^*\FU\) as a left Hilbert \(\rg^*\FU\)\nb-module.
  Conversely, let~\(\A\) be a Fell bundle that is isomorphic to a
  continuous field of ideals~\((\Bun[D]_g)_{g\in G}\) in~\(\rg^*\FU\)
  as a field of left Hilbert \(\rg^*\FU\)\nb-modules.
  Without loss of generality, we assume \(\A_g = \Bun[D]_g\).
  Then \(\Bun[D]_g = \A_g \A_g^* = \A_g \A_{g^{-1}}\) as ideals
  in~\(\A_g\).
  Since~\(\A_g\) is a Hilbert bimodule, the right multiplication
  action of~\(\FU_{\s(g)}\) on~\(\A_g\) restricts to an isomorphism
  from the ideal \(\braket{\A_g}{\A_g} = \A_{g^{-1}} \A_g =
  \Bun[D]_{g^{-1}}\) onto the \(\Cst\)\nb-algebra of compact operators
  on the left Hilbert \(\FU_{\rg(g)}\)-module~\(\Bun[D]_g\).
  The latter is canonically isomorphic to~\(\Bun[D]_g\).
  This gives an isomorphism \(\act_g\colon \Bun[D]_{g^{-1}} \congto
  \Bun[D]_g\).
  These isomorphisms depend continuously on~\(g\).
  By Lemma~\ref{lem:subfield_equality}, they form an isomorphism of
  continuous families of ideals \(\act\colon \inv^*(\Bun[D]) \congto
  \Bun[D]\).

  Let \((g,h)\in G^2\).  There are canonical isomorphisms
  \begin{multline*}
    \A_g \otimes_{\FU_{\s(g)}} \A_h = \A_g \A_g^* \A_g
    \otimes_{\FU_{\s(g)}} \A_h \\\cong \A_g \otimes_{\FU_{\s(g)}}
    \A_g^* \A_g \A_h \cong \Bun[D]_g \cdot
    \act_g(\Bun[D]_{g^{-1}}\cdot \Bun[D]_h)
    = \act_g(\Bun[D]_{g^{-1}} \cap \Bun[D]_h).
  \end{multline*}
  They map \(a\otimes b\) with \(a\in \Bun[D]_g\), \(b\in
  \Bun[D]_{g^{-1}}\cdot \Bun[D]_h\) to \(a\cdot \act_g(b)\).
  The product~\(m_{g,h}\) in the Fell bundle gives another Hilbert
  bimodule map from \(\A_g \otimes_{\FU_{\s(g)}} \A_h\) to \(\A_{g h}
  = \Bun[D]_{g h}\).
  Two ideals that are isomorphic as left Hilbert modules are equal
  because the ideal is also the range ideal of the left scalar
  product.
  So the two maps above have the same image.
  Hence \(\act_g(\Bun[D]_{g^{-1}} \cap \Bun[D]_h) \subseteq \Bun[D]_{g
    h}\).
  As in Lemma~\ref{lem:partial_action_domains}, it follows that the
  common image is \(\Bun[D]_g\cap \Bun[D]_{g h}\).
  Since the left Hilbert module structure is the obvious one, there
  must be a unitary multiplier~\(w(g,h)\) of \(\Bun[D]_g\cap
  \Bun[D]_{g h}\) with \(m_{g,h}(a\otimes b) = a\cdot \act_g(b) \cdot
  w(g,h)\) for all \(a\in \Bun[D]_g\), \(b\in \Bun[D]_{g^{-1}}\cap
  \Bun[D]_h\).
  This gives the data of a twisted partial action.
  The proof above shows that the ideals~\(\Bun[D]_g\) form a
  continuous family of ideals, that~\((\act_g)\) is an isomorphism
  \(\inv^* \Bun[D] \congto \Bun[D]\), and that right multiplication
  by~\(w(g,h)\) for \((g,h)\in G^2\) preserves continuity of sections.

  We now verify the algebraic conditions for a twisted partial action
  in Definition~\ref{def:twisted-partial-action-groupoid}.
  They follow from the associativity of the multiplication in the Fell
  bundle.
  The computations are similar to those in
  \cites{Exel:TwistedPartialActions, BussExel:Regular.Fell.Bundle} for
  Fell bundles over groups and inverse semigroups.

  For \(x\in G^0\), \(\A_{u(x)} = \FU_x\) implies \(\Bun[D]_{u(x)} =
  \A_{u(x)} \A_{u(x)}^* = \FU_x\).
  Since~\(\A_{u(x)}\) is the trivial Hilbert bimodule~\(\FU_x\), its
  right multiplication coincides with the usual one, giving
  \(\act_{u(x)} = \Id_{\FU_x}\).
  Condition~\ref{def:twisted-partial-action-groupoid_6} says that
  \(\act_g(\Bun[D]_{g^{-1}} \cap \Bun[D]_h) \subseteq \Bun[D]_{g h}\),
  which was already shown above.

  We denote the Fell-bundle multiplication as~\(\circ\) and express it
  in terms of \(\act\) and~\(w\).
  The definition of \(w(g,h)\) through the isomorphism~\(m_{g,h}\)
  implies
  \begin{equation}
    \label{eq:Fell_prod_w}
    x \circ y = x \cdot \act_g(y) \cdot w(g,h)
  \end{equation}
  for all \(x\in \A_g\) and \(y\in \Bun[D]_{g^{-1}} \cap \Bun[D]_h
  \subseteq \A_h\); here the product on the right is in \(\A_{gh}
  \cong \Bun[D]_{gh}\).
  Similarly, the identification of the right action of~\(\FU_{\s(g)}\)
  on~\(\A_g\) with the action by compact operators gives \(x \circ a = x
  \cdot \act_g(a)\) for \(x\in \A_g\) and \(a\in \Bun[D]_{g^{-1}}\).
  This gives the normalisation condition
  in~\ref{def:twisted-partial-action-groupoid_5} in
  Definition~\ref{def:twisted-partial-action-groupoid} and finishes
  the proof of~\ref{def:twisted-partial-action-groupoid_5}.

  To check condition~\ref{def:twisted-partial-action-groupoid_7}, let
  \((g,h)\in G^2\).
  Let \(x \in \A_g = \Bun[D]_g\), \(y \in \Bun[D]_{g^{-1}}\cap
  \Bun[D]_h \subseteq \A_h\), and let \(a \in \Bun[D]_{h^{-1}} \cap
  \Bun[D]_{(g h)^{-1}}  =
  \act_{h^{-1}}(\Bun[D]_{g^{-1}}\cap \Bun[D]_h)\), viewed as an element
  of~\(\FU_{\s(h)}\).
  Since the Fell bundle is associative, we may expand \((x\circ y)\circ a
  = x\circ (y\circ a)\) in two ways.
  First, since \(a\in \Bun[D]_{h^{-1}} \cap \Bun[D]_{(g h)^{-1}}\) and \(y\in
  \Bun[D]_h \cap \Bun[D]_{g^{-1}}\), \eqref{eq:Fell_prod_w} implies
  \[
    (x\circ y)\circ a
    = (x\circ y) \cdot \act_{gh}(a)
    = x \cdot \act_g(y) \cdot w(g,h) \cdot \act_{gh}(a).
  \]
  Similarly, we compute
  \[
    x\circ (y\circ a)
    = x\circ (y \cdot \act_h(a))
    = x \cdot \act_g(y \cdot \act_h(a)) \cdot w(g,h)
    = x \cdot \act_g(y) \cdot \act_g(\act_h(a)) \cdot w(g,h).
  \]
  Equating both sides gives
  \[
    x \cdot \act_g(y) \cdot w(g,h) \cdot \act_{gh}(a)
    = x \cdot \act_g(y) \cdot \act_g(\act_h(a)) \cdot w(g,h).
  \]
  As \(x\) and~\(y\) vary, the elements \(x\cdot \act_g(y)\) span the
  ideal \(\act_g(\Bun[D]_{g^{-1}}\cap \Bun[D]_h) =
  \Bun[D]_g \cap \Bun[D]_{g h}\).
  Thus, \(w(g,h) \cdot \act_{gh}(a) = \act_g(\act_h(a)) \cdot w(g,h)\)
  as multipliers of \(\Bun[D]_g \cap \Bun[D]_{gh}\).
  Multiplying by \(w(g,h)^*\) on the right yields
  condition~\ref{def:twisted-partial-action-groupoid_7} in
  Definition~\ref{def:twisted-partial-action-groupoid}.

  To prove condition~\ref{def:twisted-partial-action-groupoid_8}, let
  \((g,h,k)\in G^3\).
  Let \(x\in \A_g\), \(y \in \Bun[D]_{g^{-1}} \cap \Bun[D]_h \subseteq
  \A_h\), and take \(z \in \Bun[D]_{h^{-1}} \cap \Bun[D]_{(gh)^{-1}}
  \cap \Bun[D]_k \subseteq \A_k\).
  By Lemma~\ref{lem:partial_action_domains}, \(\act_h\) restricts to a
  bijection from \(\Bun[D]_{h^{-1}} \cap \Bun[D]_{(gh)^{-1}} \cap
  \Bun[D]_k\) to \(\Bun[D]_{g^{-1}} \cap \Bun[D]_h \cap
  \Bun[D]_{hk}\).

  Thus, as \(y\) and~\(z\) vary, the elements \(a \defeq y \cdot
  \act_h(z)\) span the ideal \(\Bun[D]_{g^{-1}} \cap \Bun[D]_h \cap
  \Bun[D]_{hk}\).
  We compute \((x\circ y)\circ z\) and \(x\circ (y\circ z)\).
  Our assumptions on \(x,y,z\) imply that all relevant products are
  computed through~\eqref{eq:Fell_prod_w}.
  On the one hand,
  \[
    (x\circ y)\circ z
    = x \cdot \act_g(y) \cdot w(g,h) \cdot \act_{gh}(z) \cdot w(gh,k)
    = x \cdot \act_g(y) \cdot \act_g(\act_h(z)) \cdot w(g,h) \cdot w(gh,k),
  \]
  where the second equation uses
  Condition~\ref{def:twisted-partial-action-groupoid_7} for \(z \in
  \Bun[D]_{h^{-1}} \cap \Bun[D]_{(gh)^{-1}}\).
  On the other hand,
  \[
    x\circ (y\circ z)
    = x \cdot \act_g(y \cdot \act_h(z) \cdot w(h,k)) \cdot w(g,hk)
    = x \cdot \act_g(y) \cdot \act_g(\act_h(z)) \cdot \act_g(w(h,k)) \cdot w(g,hk).
  \]
  We may cancel~\(x\) because these elements span~\(\Bun[D]_g\), and
  find
  \[
    \act_g(a) \cdot w(g,h) \cdot w(gh,k)
    = \act_g(a) \cdot \act_g(w(h,k)) \cdot w(g,hk)
    = \act_g(a \cdot w(h,k)) \cdot w(g,hk).
  \]
  Since~\(a\) spans \(\Bun[D]_{g^{-1}} \cap \Bun[D]_h \cap
  \Bun[D]_{hk}\), this proves
  condition~\ref{def:twisted-partial-action-groupoid_8}.
\end{proof}

\section{Representations of Fell bundles on Hilbert modules}
\label{sec:representations}

Let~\(G\) be a locally compact groupoid with Haar system~\(\alpha\),
and let~\(\A\) be a Fell bundle over~\(G\).
Let~\(D\) be a \(\Cst\)\nb-algebra and let~\(\F\) be a Hilbert
\(D\)\nb-module.
We are going to define representations of~\(\A\) on~\(\F\),
generalising the definition in~\cite{Buss-Holkar-Meyer:Universal}
without Fell bundles.
Adding the coefficient bundles requires some careful choices in
various definitions, especially when the Fell bundle fails to be
saturated.
It also requires extra steps in most proofs, which check new aspects
related to the bundles that have been added.

A representation of the groupoid~\(G\) on~\(\F\) is defined
in~\cite{Buss-Holkar-Meyer:Universal} as a pair \((\varphi,U)\),
where~\(\varphi\) is a nondegenerate \Star{}homomorphism
\(\Cont_0(G^0) \to \Bound(\F)\) and~\(U\) is a unitary operator
\(\Lt^2(G,\s,\tilde\alpha) \otimes_\varphi \F \to \Lt^2(G,\rg,\alpha)
\otimes_\varphi \F\) that intertwines the left actions of
\(\Cont_0(G)\); in other words, \(\varphi\) makes~\(\F\) into a
\(\Cont_0(G)\)\nb-\(D\)-correspondence and~\(U\) is an isomorphism of
\(\Cont_0(G)\)\nb-\(D\)-correspondences.  It is required, in addition,
that two isomorphisms of \(\Cont_0(G^2)\)\nb-\(D\)-correspondences
\(\Lt^2(G^2,v_2,\mu_0) \otimes_\varphi \F \rightrightarrows
\Lt^2(G^2,v_0,\mu_0) \otimes_\varphi \F\) that are built from~\(U\)
are equal.  We now add the Fell bundle~\(\A\) to this definition.

The natural replacement for \(\Cont_0(G^0)\) is
\(\Cont_0(G^0, \FU)\), the \(\Cst\)\nb-algebra of
\(\Cont_0\)\nb-sections of~\(\FU\).
So now~\(\varphi\) is a nondegenerate \Star{}homomorphism
\[
  \varphi\colon \Cont_0(G^0,\FU) \to \Bound(\F)
\]
that makes~\(\F\) a \(\Cont_0(G^0,\FU)\)\nb-\(D\)-correspondence.

We replace \(\Lt^2(G,\s,\tilde\alpha)\) and \(\Lt^2(G,\rg,\alpha)\)
by \(\Lt^2(G,\A,\s,\tilde\alpha)\) and
\(\Lt^2(G,\A\A^*,\rg,\alpha)\), respectively.  Here~\(\A\) is viewed as a field of
\(\A\A^*\)-\(\s^*\FU\)-correspondences and~\(\A\A^*\) as a field of
\(\A\A^*\)-\(\rg^*\FU\)-correspondences; in particular, each
fibre~\(\A_g\) of~\(\A\) is an
\(\A_g\A_g^*\)-\(\FU_{\s(g)}\)-correspondence and each
fibre~\(\A_g \A_g^*\) of~\(\A\A^*\) is an
\(\A_g\A^*_{g}\)-\(\FU_{\rg(g)}\)-correspondence.  The
correspondences \(\Lt^2(G,\A,\s,\tilde\alpha)\) and
\(\Lt^2(G,\A\A^*,\rg,\alpha)\) are defined in
Section~\ref{sec:usc_fields}.  Recall that
\(\Lt^2(G,\A,\s,\tilde\alpha)\) is the completion of
\(\Contc(G,\A)\) in the norm associated to the
\(\Cont_0(G^0,\FU)\)-valued inner product
\[
  \braket{\xi}{\eta}(x)\defeq \int_G
  \xi(g)^*\eta(g)\,\diff\tilde\alpha_x(g).
\]
Thus, \(U\) in the definition of the representation, becomes a
unitary
\[
  U\colon \Lt^2(G,\A,\s,\tilde\alpha) \otimes_\varphi \F \to
  \Lt^2(G,\A\A^*,\rg,\alpha) \otimes_\varphi \F
\]
that intertwines the left actions of \(\Cont_0(G,\A\A^*)\).
In the correspondence bicategory, this says that~\(U\) is an
invertible \(2\)\nb-arrow between arrows \(\Cont_0(G,\A\A^*)
\leftarrow D\) that makes the following diagram commute:
\begin{equation}
  \label{eq:U_diagram}
  \begin{tikzpicture}[baseline=(current bounding box.west)]
    \matrix (m) [cd,column sep=4em, row sep=2.5em] {
      \Cont_0(G,\A\A^*)&\Cont_0(G^0,\FU)\\
      \Cont_0(G^0,\FU)&D\\
    };
    \meascor{m-1-1}{\s}{\tilde{\alpha},\A}{m-1-2}
    \meascor{m-1-1}{\rg}{\alpha,\A\A^*}{m-2-1}
    \meascor{m-2-1}{\F}{}{m-2-2}
    \meascor{m-1-2}{\F}{}{m-2-2}
    \draw[dar,mid] (m-1-2) -- node[narrowfill] {\(\scriptstyle U\)}
    (m-2-1);
  \end{tikzpicture}
\end{equation}

The Hilbert modules \(\Lt^2(G,\A,\s,\tilde\alpha)\) and
\(\Lt^2(G,\A\A^*,\rg,\alpha)\) also carry natural left multiplication
actions of the larger \(\Cst\)\nb-algebra \(\Cont_0(G,\rg^*\FU)\).
Asking~\(U\) to intertwine the left actions of \(\Cont_0(G,\rg^*\FU)\)
or \(\Cont_0(G,\A\A^*)\) makes no difference:

\begin{lemma}
  \label{lem:nondegenerate_over_AA-star}
  Both \(\Lt^2(G,\A,\s,\tilde\alpha)\) and
  \(\Lt^2(G,\A\A^*,\rg,\alpha)\) are nondegenerate as left modules over
  \(\Cont_0(G,\A\A^*)\).  A unitary operator~\(U\) as above that
  intertwines the left actions of \(\Cont_0(G,\A\A^*)\) also
  intertwines the left actions of \(\Cont_0(G,\rg^*\FU)\).
\end{lemma}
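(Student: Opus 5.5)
The proof splits into two parts: nondegeneracy, which reduces to fibrewise facts proved earlier, and the intertwining claim, which follows from nondegeneracy by a short algebraic argument.

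\emph{Nondegeneracy.} The left action of \(\Cont_0(G,\A\A^*)\) on \(\Contc(G,\A)\) is pointwise, \((\varphi\cdot\xi)(g)=\varphi(g)\xi(g)\). By Lemma~\ref{lem:equality_of_products}\ref{lem:equality_of_products_1}, \(\A_g\A_g^*\A_g=\A_g\) for every \(g\in G\). More precisely, multiplication gives an isomorphism of fields \(\A\A^*\otimes_{\rg^*\FU}\A\cong\A\), as stated at the end of that lemma. Hence sections of the form \(g\mapsto\varphi(g)\xi(g)\) with \(\varphi\in\Cont_0(G,\A\A^*)\) and \(\xi\in\Contc(G,\A)\) have dense span in \(\Cont_0(G,\A)\) for the supremum norm; this also follows from Lemma~\ref{lem:sections_tensor_product_of_fields}.

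To pass from the supremum norm to the \(\Lt^2\)-norm, I cut off by compact supports. Choose \(\chi\in\Contc(G)\) with \(\chi=1\) on \(\supp\xi\) and replace \(\varphi\) by \(\chi\varphi\). Then the approximants all have support in a fixed compact set \(K\). On sections supported in \(K\), the \(\Lt^2\)-norm is bounded by \(\sup_x\tilde\alpha_x(K)^{1/2}\) times the supremum norm, and this supremum is finite because the Haar system is continuous. So these sections are dense in \(\Lt^2(G,\A,\s,\tilde\alpha)\).

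The same argument works for \(\Lt^2(G,\A\A^*,\rg,\alpha)\). Here the fibre \(\A_g\A_g^*\) is a \(\Cst\)-algebra acting on itself by left multiplication, so \((\A_g\A_g^*)(\A_g\A_g^*)=\A_g\A_g^*\). At the level of fields, \(\Cont_0(G,\A\A^*)\) is a \(\Cst\)-algebra, and an approximate unit of it acts nondegenerately on itself. The sup-norm approximation then transfers to \(\Lt^2\) exactly as before.

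\emph{Intertwining \(\Cont_0(G,\rg^*\FU)\).} Here \(\A\A^*\) is a continuous family of ideals in \(\rg^*\FU\), so \(\Cont_0(G,\A\A^*)\) is an ideal in \(\Cont_0(G,\rg^*\FU)\) by Remark~\ref{rem:continuous_family_of_ideals}. The actions of \(\Cont_0(G,\rg^*\FU)\) on both modules are compatible with this ideal, so \(\psi\varphi\in\Cont_0(G,\A\A^*)\) acts as the composite. Interior tensor products with \(\F\) keep these left actions, and both \(\Lt^2\)-modules tensored with \(\F\) stay nondegenerate over \(\Cont_0(G,\A\A^*)\).

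Now let \(\psi\in\Cont_0(G,\rg^*\FU)\), \(\varphi\in\Cont_0(G,\A\A^*)\) and \(\zeta\) in \(\Lt^2(G,\A,\s,\tilde\alpha)\otimes_\varphi\F\). Then
\[
U(\psi\varphi\zeta)=\psi\varphi\,U\zeta=\psi\,U(\varphi\zeta),
\]
where the first equality holds because \(\psi\varphi\in\Cont_0(G,\A\A^*)\) and \(U\) intertwines that algebra, and the second uses the same intertwining for \(\varphi\). Vectors of the form \(\varphi\zeta\) span a dense subspace by nondegeneracy, and both sides are bounded in the vector. Hence \(U\psi=\psi U\).

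The main obstacle is the density transfer from sections to \(\Lt^2\) for an upper semicontinuous field. I handle it by uniform approximation on a fixed compact support, which controls the \(\Lt^2\)-norm through the Haar system bound above.
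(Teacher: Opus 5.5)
Your proof is correct and follows the paper's route. Nondegeneracy comes from the field isomorphism \(\A\A^*\otimes_{\rg^*\FU}\A\cong\A\) behind \(\A_g\A_g^*\A_g=\A_g\) (Lemma~\ref{lem:equality_of_products}), passed to \(\Lt^2\), where you spell out the sup-norm-to-\(\Lt^2\) density step that the paper leaves implicit. The second claim is the uniqueness of extending a nondegenerate action of the ideal \(\Cont_0(G,\A\A^*)\) to \(\Cont_0(G,\rg^*\FU)\), which your computation \(U(\psi\varphi\zeta)=\psi\,U(\varphi\zeta)\) simply unpacks.

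There is one small slip in the cutoff. The function \(\chi\), with \(0\le\chi\le1\), should equal \(1\) on the support of the target section being approximated, and you should multiply the whole approximant by it. Taking \(\chi=1\) on \(\supp\xi\) of a factor changes nothing and does not give a common compact support.
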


\begin{proof}
  The multiplication map \(\Cont_0(G,\A\A^*)
  \otimes_{\Cont_0(G,\rg^*\FU)} \Cont_0(G,\A) \to \Cont_0(G,\A)\) is a
  fibrewise isomorphism by Lemma~\ref{lem:equality_of_products}, and
  then it is an isomorphism of fields of Banach spaces by
  Lemma~\ref{lem:subfield_equality}.
  So the multiplication map \(\Cont_0(G,\A\A^*)
  \otimes_{\Cont_0(G,\rg^*\FU)} \Lt^2(G,\A,\s,\tilde\alpha) \to
  \Lt^2(G,\A,\s,\tilde\alpha)\) has dense range.
  It is also isometric, hence unitary.
  Thus \(\Lt^2(G,\A,\s,\tilde\alpha)\) is a nondegenerate left module
  over \(\Cont_0(G,\A\A^*)\).
  The same argument works for \(\Lt^2(G,\A\A^*,\rg,\alpha)\).
  A nondegenerate representation~\(\varrho\) of an ideal~\(I\) in a
  \(\Cst\)\nb-algebra~\(B\) extends \emph{uniquely} to a
  representation~\(\varrho'\) of~\(B\).
  If a unitary~\(U\) is an intertwiner for~\(\varrho\), then
  \(\varrho'\) and \(\Ad_U \circ \varrho'\) are two extensions
  of~\(\varrho\) to~\(B\).
  Hence they are equal.
  This means that~\(U\) is also an intertwiner for~\(\varrho'\).
  This implies the second claim.
\end{proof}

Next we must carry over the condition \(d_1^*(U) = d_2^*(U) d_0^*(U)\)
in \cite{Buss-Holkar-Meyer:Universal}*{Equation~(3.19)} that singles
out when a pair \((\varphi,U)\) is a representation.  Here
\(d_j^*(U)\) for \(j=0,1,2\) are isomorphisms of
\(\Cont_0(G^2),D\)-correspondences that are built from~\(U\) and
various canonical isomorphisms of correspondences by ``pasting'' them
in the correspondence bicategory.  These pasting diagrams are shown in
\cite{Buss-Holkar-Meyer:Universal}*{Figure~3}.  We will only need two
pasting diagrams, which we are going to explain in great detail.  Thus
the general meaning of pasting diagrams in bicategories as explained
in~\cite{Johnson-Yau:2-Dim} will not be needed here.

To extend the pasting diagrams in~\cite{Buss-Holkar-Meyer:Universal}
to possibly nonsaturated Fell bundles, we must decorate the vertices
and edges in the diagram in
\cite{Buss-Holkar-Meyer:Universal}*{Figure~3} with appropriate fields
of \(\Cst\)\nb-algebras and \(\Cst\)\nb-correspondences.  These fields
are dictated once we fix the field of \(\Cst\)\nb-algebras
over~\(G^2\) to be the field~\(\Bun[I]\) with fibre
\(\A_g \A_h \A_h^* \A_g^*\) at \((g,h)\in G^2\); this is the range
ideal
\(\BRAKET{\A_g \otimes_{\FU_{\s(g)}} \A_h}{\A_g \otimes_{\FU_{\s(g)}}
  \A_h}\) of the composite Hilbert bimodule
\(\A_g \otimes_{\FU_{\s(g)}} \A_h\).
It contains the range ideals of both \(\A_g\) and~\(\A_{g h}\) by
Lemma~\ref{lem:equality_of_products}.
To be more precise,
\[
  \Bun[I] \defeq d_2^*\A \otimes_{v_1^*\FU} d_0^*\A \otimes_{v_2^*\FU}
  (d_0^*\A)^* \otimes_{v_1^*\FU} (d_2^*\A)^*.
\]

The multiplication in the Fell bundle defines an injective map
\(\Bun[I] \hookrightarrow v_0^*\FU\).
If~\(\A\) is saturated, then this is also surjective and hence an
isomorphism, otherwise this is a proper subbundle.
The subfield~\(\Bun[I]\) of
\(v_0^*\FU\) is already determined by its fibres
\(\A_g \A_h \A_h^* \A_g^*\) because of
Lemma~\ref{lem:subfield_equality}.
The space of \(\Cont_0\)\nb-sections of~\(\Bun[I]\) is the closed
linear span of products \(\varphi_1 \cdot \varphi_2 \cdot
\varphi_3^\dagger \cdot \varphi_4^\dagger\); here~\(\varphi_j\) for
\(j=1,2,3,4\) are \(\Cont_0\)\nb-sections of \(d_2^*\A\), \(d_0^*\A\),
\(d_0^*\A\), and \(d_2^*\A\), respectively; and~\(\dagger\) denotes
the pointwise involution, \(\varphi^\dagger(g,h) \defeq
\varphi(g,h)^*\) for all \((g,h)\in G^2\).

The decorated form of \cite{Buss-Holkar-Meyer:Universal}*{Figure~3} in
our context is shown in Figure~\ref{fig:U_coherence}.
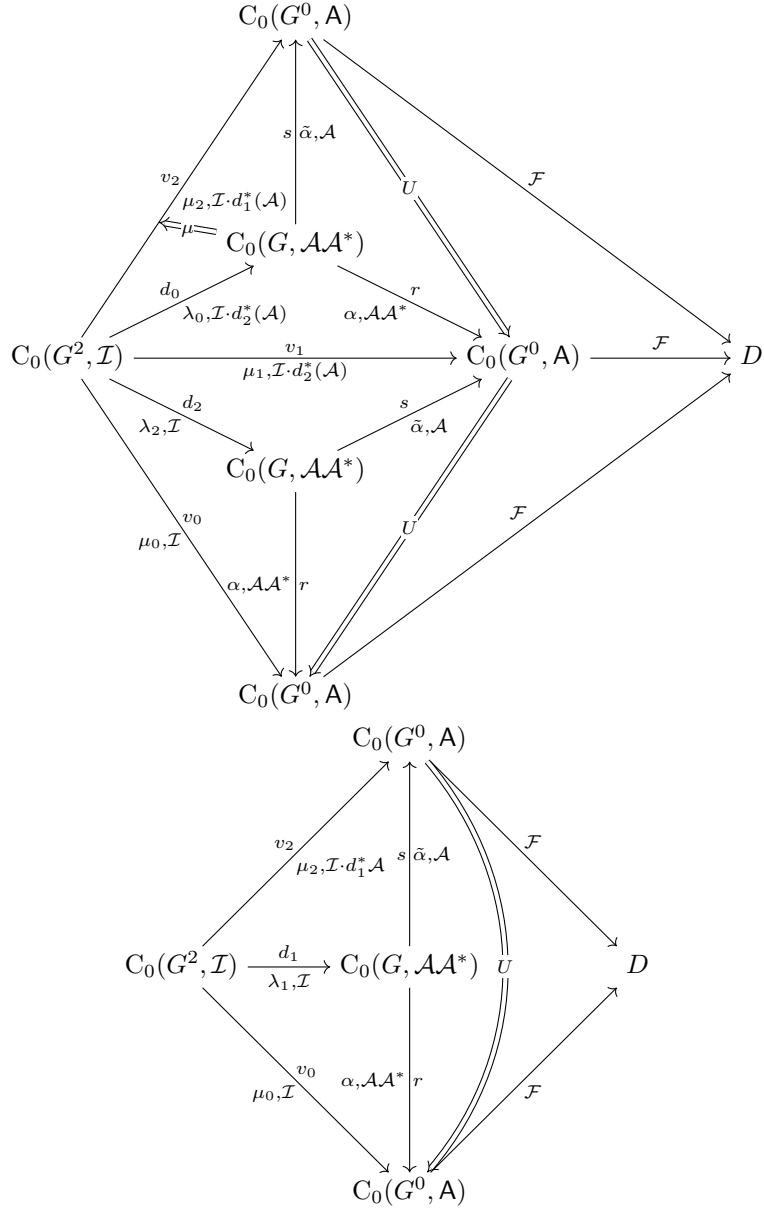
\begin{figure}[htbp]
  \begin{tikzpicture}[scale=3]
    \node (G2) at (0,1.5) {\(\Cont_0(G^2,\Bun[I])\)}; \node (G0a) at
    (1,0) {\(\Cont_0(G^0,\FU)\)}; \node (G1b) at (1,1)
    {\(\Cont_0(G,\A\A^*)\)}; \node (G1c) at (1,2)
    {\(\Cont_0(G,\A\A^*)\)}; \node (G0d) at (1,3)
    {\(\Cont_0(G^0,\FU)\)}; \node (G0e) at (2,1.5)
    {\(\Cont_0(G^0,\FU)\)}; \node (D) at (3,1.5) {\(D\)};
    \meascor[0]{G2}{v_0}{\mu_0,\Bun[I]}{G0a}
    \meascor[0]{G2}{d_2}{\lambda_2,\Bun[I]}{G1b}
    \meascor[0]{G2}{d_0}{\lambda_0,\Bun[I]\cdot d_2^*(\A)}{G1c}
    \meascor[0]{G2}{v_2}{\mu_2,\Bun[I]\cdot d_1^*(\A)}{G0d}
    \meascor[0]{G2}{v_1}{\mu_1,\Bun[I]\cdot d_2^*(\A)}{G0e}
    \meascor{G1b}{\rg}{\alpha,\A\A^*}{G0a}
    \meascor[0]{G1b}{\s}{\tilde{\alpha},\A}{G0e}
    \meascor{G1c}{\s}{\tilde{\alpha},\A}{G0d}
    \meascor[0]{G1c}{\rg}{\alpha,\A\A^*}{G0e}
    \meascor[0]{G0a}{\F}{}{D} \meascor[0]{G0d}{\F}{}{D}
    \meascor{G0e}{\F}{}{D} \draw[dar,mid] (G0e) -- node[narrowfill]
    {\(\scriptstyle U\)} (G0a); \draw[dar,mid] (G0d) --
    node[narrowfill] {\(\scriptstyle U\)} (G0e); \draw[dar,mid] (G1c)
    to node[narrowfill] {\(\scriptstyle \mu\)} (0.4,2.1);
  \end{tikzpicture}\\
  \begin{tikzpicture}[scale=3]
    \node (G2) at (0,1) {\(\Cont_0(G^2,\Bun[I])\)}; \node (G0a) at
    (1,0) {\(\Cont_0(G^0,\FU)\)}; \node (G1b) at (1,1)
    {\(\Cont_0(G,\A\A^*)\)}; \node (G0c) at (1,2)
    {\(\Cont_0(G^0,\FU)\)}; \node (D) at (2,1) {\(D\)};
    \meascor[0]{G2}{v_0}{\mu_0,\Bun[I]}{G0a}
    \meascor{G2}{d_1}{\lambda_1,\Bun[I]}{G1b}
    \meascor[0]{G2}{v_2}{\mu_2,\Bun[I]\cdot d_1^*\A}{G0c}
    \meascor{G1b}{\rg}{\alpha,\A\A^*}{G0a}
    \meascor{G1b}{\s}{\tilde{\alpha},\A}{G0c}
    \meascor[0]{G0a}{}{\F}{D}
    \meascor[0]{G0c}{\F}{}{D}
    \draw[dar,mid]
    (G0c) to[bend left=40] node[narrowfill] {\(\scriptstyle U\)}
    (G0a);
  \end{tikzpicture}
  \caption{Two parallel isomorphisms of correspondences built
    from~\(U\).  Each triangle or quadrilateral means an isomorphism of
    \(\Cst\)\nb-correspondences.  The three quadrilaterals
    containing~\(U\) are copies of~\eqref{eq:U_diagram}.  The
    triangles without label mean the canonical isomorphisms of
    \(\Cst\)\nb-correspondences in~\eqref{eq:compose_measure_family}.
    The triangle marked~\(\mu\) also involves the multiplication map
    \(\mu\colon d_2^*\A \otimes_{v_1^*\FU} d_0^*\A \hookrightarrow
    d_1^*\A\) that acts on fibres as
    in~\eqref{eq:multiplication_isometries}.}
  \label{fig:U_coherence}
\end{figure}
This figure encodes complex information in a single diagram.
Each vertex is a \(\Cst\)\nb-algebra.
In addition to~\(D\), we use the \(\Cst\)\nb-algebras of
\(\Cont_0\)\nb-sections of the fields of \(\Cst\)\nb-algebras \(\FU\),
\(\A\A^*\) and~\(\Bun[I]\) over \(G^0\), \(G\) and~\(G^2\); their
fibres at \(x \in G^0\), \(g\in G\), and \((g,h)\in G^2\) are \(\FU_x
\defeq \A_{u(x)}\), \(\A_g \A_g^*\), and \(\A_g \A_h \A_h^* \A_g^*\),
respectively.
Each arrow describes a \(\Cst\)\nb-correspondence.
The arrows pointing to~\(D\) are~\(\F\), viewed as a
\(\Cst\)\nb-correspondence \(\Cont_0(G^0,\FU) \to D\)
through~\(\varphi\).
The other \(\Cst\)\nb-correspondences are associated to continuous
maps decorated by continuous families of measures along their fibres
and by fields of \(\Cst\)\nb-correspondences.
At this point, the reader should check that the decorating fields are
fields of \(\Cst\)\nb-correspondences over the appropriate fields of
\(\Cst\)\nb-algebras.
For instance, \(\Bun[I]\cdot d_1^*(\A)\) is the field whose fibre at
\((g,h)\in G^2\) is \(\A_g \A_h \A_h^* \A_g^* \A_{g h}\), which is a full left Hilbert
module over~\(\Bun[I]_{g,h}\) and a right Hilbert module over
\(\FU_{\s(h)} = (v_2^* \FU)_{(g,h)}\).
And \(\Bun[I]\cdot d_2^*(\A)\) is the field whose fibre at \((g,h)\in
G^2\) is \(\A_g \A_h \A_h^* \A_g^* \A_g\), which is a full left
Hilbert module over~\(\Bun[I]_{g,h}\) and a right Hilbert module over
\(\FU_{\s(g)} = (v_1^* \FU)_{(g,h)}\).
Thus each arrow in the diagram exists.
To treat the maps~\(v_j\) for \(j=0,1,2\) simultaneously, it is useful
to number the fields of Hilbert bimodules by which the maps~\(v_j\) in
Figure~\ref{fig:U_coherence} are decorated:
\begin{equation}
  \label{eq:E_012}
  \E_0 \defeq \Bun[I],\qquad
  \E_1 \defeq \Bun[I] \cdot d_2^*(\A),\qquad
  \E_2 \defeq \Bun[I] \cdot d_1^*(\A).
\end{equation}
The fields of Hilbert bimodules in some of the arrows simplify by
Lemma~\ref{lem:equality_of_products}.
Namely, \(\A_g \A_h \A_h^* \A_g^* \A_{g h} = \A_g \A_h\) and \(\A_g
\A_h \A_h^* \A_g^* \A_g = \A_g \A_h \A_h^*\) for \((g,h)\in G^2\).
Then Lemma~\ref{lem:subfield_equality} implies
\[
  \E_2 = \Bun[I] \cdot d_1^*(\A) \cong d_2^*(\A) \cdot d_0^*(\A),\qquad
  \E_1 = \Bun[I] \cdot d_2^*(\A) \cong d_2^*(\A) \cdot d_0^*(\A) \cdot
  d_0^*(\A)^*.
\]
This helps to prove that the squares and triangles in the diagram
commute up to isomorphisms of correspondences.
Each square in the diagrams is decorated by~\(U\) and illustrates
that~\(U\) is an isomorphism between the two composite
\(\Cst\)\nb-correspondences that make up the boundary of the square.
Each unmarked triangle in the diagram illustrates one of the canonical isomorphisms of
\(\Cst\)\nb-correspondences from
Lemma~\ref{lem:compose_corr_from_measure-family}, in the shorthand
notation in~\eqref{eq:compose_measure_family}.
The reader should check that the maps, measure families and fields of
\(\Cst\)\nb-correspondences compose as indicated.
This is obvious for the maps (see Figure~\ref{fig:triangle_arrows}) and
for the fields of \(\Cst\)\nb-correspondences; for the measure
families, it is checked in
\cite{Buss-Holkar-Meyer:Universal}*{Equations (3.5)--(3.10)}.
The triangle marked~\(\mu\) is of the same type as the other
triangles, but also uses a nontrivial isomorphism of fields of
\(\Cst\)\nb-correspondences, which acts on the fibres by the
restriction of the multiplication maps \(\A_g \otimes_{\FU_{\s(g)}}
\A_h \hookrightarrow \A_{g h}\).
The multiplication on the left with the ideal \(\Bun[I]_{(g,h)} = \A_g
\A_h \A_h^* \A_g^*\) makes this Hilbert bimodule map an isomorphism.
Finally, we may combine the isomorphisms of correspondences that
correspond to the triangles and squares in the diagrams by horizontal
and vertical products in the correspondence bicategory.
In this way, the second diagram in Figure~\ref{fig:U_coherence}
describes an isomorphism of correspondences between the composite
correspondences in the top and bottom of the diagram,
\begin{equation}
  \label{eq:domain_codomain_U_coherence}
  \Lt^2(G^2,\E_2,v_2,\mu_2) \otimes_\varphi \F
  \congto
  \Lt^2(G^2,\E_0,v_0,\mu_0) \otimes_\varphi \F.
\end{equation}
We denote this isomorphism by~\(d_1^*(U)\) because it generalises the
correspondence isomorphism~\(d_1^*(U)\)
in~\cite{Buss-Holkar-Meyer:Universal}.
More concretely, it is given as the composition of unitary
isomorphisms in Figure~\ref{fig:def_d1U}.
\begin{figure}[htbp]
  \begin{tikzpicture}[baseline=(current bounding box.west)]
    \node (m-1-1) at (0,3.5)
    {\(\Lt^2(G^2,\E_2,v_2,\mu_2)\otimes_{\varphi}\F\)}; \node (m-1-2) at
    (5,2.5)
    {\(\Lt^2(G^2,\Bun[I],d_1,\lambda_1)\otimes_{\Cont_0(G,\A\A^*)}
      \Lt^2(G,\A,\s,\tilde\alpha)\otimes_{\varphi}\F\)}; \node (m-2-1)
    at (0,0) {\(\Lt^2(G^2,\E_0,v_0,\mu_0) \otimes_\varphi \F\)};
    \node (m-2-2) at (5,1)
    {\(\Lt^2(G^2,\Bun[I],d_1,\lambda_1)\otimes_{\Cont_0(G,\A\A^*)}
      \Lt^2(G,\A\A^*,\rg,\alpha)\otimes_{\varphi}\F\)};
    \meascor{m-1-1}{\gamma_1^{-1}\otimes\Id}{}{m-1-2}
    \meascor{m-1-1}{}{d_1^*(U)}{m-2-1}
    \meascor{m-2-2}{\gamma_2\otimes\Id}{}{m-2-1}
    \meascor{m-1-2}{\Id\otimes U}{}{m-2-2}
  \end{tikzpicture}
  \caption{The unitary \(d_1^*(U)\) as the composite of
    \(\Id \otimes U\) with two canonical isomorphisms \(\gamma_1\)
    and~\(\gamma_2\) from
    Lemma~\ref{lem:compose_corr_from_measure-family}.}
  \label{fig:def_d1U}
\end{figure}
Another isomorphism of correspondences as
in~\eqref{eq:domain_codomain_U_coherence} is described by the first
diagram in Figure~\ref{fig:U_coherence}.  It is useful to split this
as the composition of two isomorphisms of correspondences
\begin{align*}
  d_0^*(U)\colon \Lt^2(G^2,\E_2,v_2,\mu_2)
  \otimes_\varphi \F
  &\congto
    \Lt^2(G^2,\E_1,v_1,\mu_1) \otimes_\varphi
    \F,\\
  d_2^*(U)\colon  \Lt^2(G^2,\E_1,v_1,\mu_1)
  \otimes_\varphi \F
  &\congto
    \Lt^2(G^2,\E_0,v_0,\mu_0) \otimes_\varphi \F.
\end{align*}
These unitaries are composites of \(\Id \otimes U\) with canonical
maps as in Figure~\ref{fig:U_coherence}.
They generalise the unitaries \(d_0^*(U)\) and \(d_2^*(U)\)
in~\cite{Buss-Holkar-Meyer:Universal} to Fell bundles.

\begin{definition}
  \label{def:representation_Fell}
  Let~\(D\) be a \(\Cst\)\nb-algebra and let~\(\F\) be a Hilbert
  \(D\)\nb-module.  Let~\(G\) be a locally compact Hausdorff groupoid
  with Haar system~\(\alpha\), and let~\(\A\) be a Fell bundle
  over~\(G\).  A \emph{representation of~\((G,\A,\alpha)\) on~\(\F\)}
  is a pair \((\varphi,U)\) consisting of a nondegenerate
  \Star{}homomorphism
  \(\varphi\colon \Cont_0(G^0,\FU) \to \Bound(\F)\), which
  turns~\(\F\) into a \(\Cont_0(G^0,\FU)\)-\(D\)-correspondence, and
  an isomorphism of \(\Cont_0(G,\A\A^*)\)-\(D\)-correspondences
  \[
    U\colon \Lt^2(G,\A,\s,\tilde\alpha) \otimes_\varphi \F \congto
    \Lt^2(G,\A\A^*,\rg,\alpha) \otimes_\varphi \F,
  \]
  such that the isomorphisms~\(d_j^*(U)\) built in
  Figure~\ref{fig:U_coherence} satisfy
  \(d_2^*(U) d_0^*(U) = d_1^*(U)\).  That is, the following diagram of
  isomorphisms of \(\Cont_0(G^2,\Bun[I])\)-\(D\)-correspondences
  commutes:
  \[
    \begin{tikzpicture}[baseline=(current bounding box.west)]
      \matrix (m) [cd,column sep=4em, row sep=2.5em] {
        \Lt^2(G^2,\E_2,v_2,\mu_2) \otimes_\varphi
        \F &
        \Lt^2(G^2,\E_1,v_1,\mu_1) \otimes_\varphi \F\\
        &\Lt^2(G^2,\E_0,v_0,\mu_0) \otimes_\varphi \F.\\
      }; \draw[cdar] (m-1-1) -- node {\(\scriptstyle d_0^*(U)\)}
      (m-1-2); \draw[cdar] (m-1-1) -- node[swap]
      {\(\scriptstyle d_1^*(U)\)} (m-2-2); \draw[cdar] (m-1-2) -- node
      {\(\scriptstyle d_2^*(U)\)} (m-2-2);
    \end{tikzpicture}
  \]
\end{definition}

The point of this definition is the following universal property,
which is analogous to
\cite{Buss-Holkar-Meyer:Universal}*{Theorem~3.23}:

\begin{theorem}
  \label{the:universal_groupoid_Haus}
  Let~\(G\) be a locally compact Hausdorff groupoid with Haar
  system~\(\alpha\), and let~\(\A\) be a Fell bundle over~\(G\).
  Let~\(D\) be a \(\Cst\)\nb-algebra and~\(\F\) a Hilbert \(D\)\nb-module.
  There is a bijection between nondegenerate representations of the
  \(\Cst\)\nb-algebra \(\Cst(G,\A)\) on~\(\F\) and representations of
  \((G,\A,\alpha)\) on~\(\F\).
  This bijection is natural in the following two ways:
  \begin{enumerate}
  \item \label{en:universal_groupoid_Haus1}%
    Let \(\F_1\) and~\(\F_2\) be two Hilbert \(D\)\nb-modules and let
    \(V\colon \F_1\injto\F_2\) be an isometry.  Then it intertwines two
    representations of \(\Cst(G,\A)\) on \(\F_1\) and~\(\F_2\)
    if and only if it intertwines the corresponding representations of
    \((G,\A,\alpha)\).
  \item \label{en:universal_groupoid_Haus2}%
    Let~\(\E\) be a \(\Cst\)\nb-correspondence from~\(D\) to a
    \(\Cst\)\nb-algebra~\(D'\).  A representation of \(\Cst(G,\A)\)
    or of \((G,\A,\alpha)\) on~\(\F\) induces a representation of the
    same type on \(\F\otimes_D\E\).  The bijections between the
    two types of representations on \(\F\) and \(\F\otimes_D\E\)
    are compatible with these induction processes.
  \end{enumerate}
  This characterises \(\Cst(G,\A)\) uniquely up to canonical
  isomorphism.
\end{theorem}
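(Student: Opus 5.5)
We prove the theorem by constructing two mutually inverse processes, \emph{integration} and \emph{disintegration}, following the strategy of \cite{Buss-Holkar-Meyer:Universal}*{Theorem~3.23}, and then checking naturality.

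\emph{Integration.} Start with a representation \((\varphi,U)\) of \((G,\A,\alpha)\) on~\(\F\). For \(\xi\in\Contc(G,\A)\), define an operator on~\(\F\) as a composite
\[
  \F \to \Lt^2(G,\A,\s,\tilde\alpha)\otimes_\varphi\F
  \xrightarrow{U} \Lt^2(G,\A\A^*,\rg,\alpha)\otimes_\varphi\F \to \F.
\]
The first map is a ``creation'' operator associated to a section built from~\(\xi\), with a square-root splitting of~\(\abs{\xi}\) so that each half is controlled by \(\norm{\alpha(\abs{\xi})}_\infty\) or \(\norm{\tilde\alpha(\abs{\xi})}_\infty\). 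The last map is the adjoint of such an operator, using that \(\A\A^*\) sits inside \(\rg^*\FU\), which acts on~\(\F\) after integrating along~\(\rg\).

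Concretely, we realise \(\pi(\xi)\) as \(T_{\xi_2}^* U T_{\xi_1}\) for a factorisation \(\xi=\xi_2^\dagger\cdot\xi_1\) pointwise, or more cleanly through the bounded maps \(\Lt^2(G,\A,\s,\tilde\alpha)\otimes\F\to\F\) given by integration. Cauchy--Schwarz then gives \(\norm{\pi(\xi)}\le\norm{\xi}_I\). We verify \(\pi(\xi^*)=\pi(\xi)^*\) using that \(U\) is unitary and the inversion swaps \(\alpha,\tilde\alpha\). We verify \(\pi(\xi*\eta)=\pi(\xi)\pi(\eta)\) by rewriting both sides as operators through \(\Lt^2(G^2,\E_j,v_j,\mu_j)\otimes_\varphi\F\) via Lemma~\ref{lem:compose_corr_from_measure-family}. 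The product side involves \(d_2^*(U)d_0^*(U)\), the convolution side involves \(d_1^*(U)\) together with the multiplication~\(\mu\), and the cocycle condition in Definition~\ref{def:representation_Fell} equates them. Nondegeneracy of~\(\pi\) comes from nondegeneracy of~\(\varphi\) and Lemma~\ref{lem:nondegenerate_over_AA-star}. Thus \(\pi\) extends to \(L^1_I\) and hence to \(\Cst(G,\A)\).

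\emph{Disintegration.} Given a nondegenerate representation \(\pi\) of \(\Cst(G,\A)\) on~\(\F\), we first get \(\varphi\) on \(\SUF=\Cont_0(G^0,\FU)\). We use the action of \(\SUF\) by multipliers of \(\Contc(G,\A)\), \((f\cdot\xi)(g)=f(\rg(g))\xi(g)\), extended to \(\F\) via \(\pi(f)\pi(\xi)\zeta\defeq\pi(f\cdot\xi)\zeta\). Well-definedness and boundedness follow from an approximate-unit-free estimate: write \(\norm{\sum\pi(f\xi_i)\zeta_i}^2\) as inner products and use \(\xi^*(f^*f)\xi\le\norm{f}^2\xi^*\xi\) in a \(\Cst\)-completion.

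We define \(U\) on the dense subspace \(\Contc(G,\A)\odot\F\) by \(U(\xi\otimes\pi(\eta)\zeta)\defeq[g\mapsto\xi(g)\cdot\eta(\cdot)\dots]\), mirroring the formula that integration inverts. Concretely, \(\Lt^2(G,\A,\s,\tilde\alpha)\otimes_\varphi\F\) is identified with the completion of \(\Contc(G,\A)\odot\Contc(G,\A)\odot\F\) via convolution-type inner products. Checking that \(U\) is isometric reduces to an identity of integrals computed with \eqref{eq:compose_integration_nontrivial}, plus positivity of \(\pi\). Surjectivity of \(U\) uses density and Lemma~\ref{lem:nondegenerate_over_AA-star}, and the unsaturated case is handled because the codomain is decorated by \(\A\A^*\). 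The cocycle identity \(d_2^*(U)d_0^*(U)=d_1^*(U)\) again reduces to associativity of convolution.

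\emph{Bijectivity and naturality.} We check that integrating the disintegration returns~\(\pi\) on \(\Contc(G,\A)\), which is dense. Conversely, disintegrating the integration returns \((\varphi,U)\): \(\varphi\) is recovered via the multiplier action, and \(U\) is determined on a total set. Naturality (1) holds because every construction is by formulas in \(\varphi,U,\pi\) that commute with isometric intertwiners; note that \(V\) intertwining \(U\)'s means \((\Id\otimes V)U_1=U_2(\Id\otimes V)\). Naturality (2) holds since all Hilbert modules are of the form \(X\otimes_\varphi\F\) and tensoring with~\(\E\) is functorial. Uniqueness of \(\Cst(G,\A)\) is then the standard Yoneda argument: apply the bijection to \(\F=B\) for a candidate algebra~\(B\) and use naturality to obtain mutually inverse homomorphisms.

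\emph{Main obstacle.} The main obstacle is the disintegration step, specifically showing that \(U\) is well defined, isometric and \emph{surjective} onto the \(\A\A^*\)-decorated module when \(\A\) is unsaturated, and that \(\varphi\) is bounded without an available approximate unit. I would first try to prove isometry by expressing both inner products as \(\braket{\zeta}{\pi(\eta_1^**\,\xi_1^**\xi_2*\eta_2)\zeta'}\)-type expressions, and surjectivity by showing that the range contains \(\Cont_0(G,\A\A^*)\cdot(\ldots)\), using Lemma~\ref{lem:equality_of_products} fibrewise together with Lemma~\ref{lem:subfield_equality}.
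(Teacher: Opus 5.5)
Your overall architecture matches the paper's: integrate by \((T^\rg_{f_1})^*UT^\s_{f_2}\) with a square-root factorisation, disintegrate via the multiplier action of \(\SUF\) and a unitary modelled on the reparametrisation \(F(g,k)=\xi(g)\eta(g^{-1}k)\) from the regular representation, and show the two processes are mutually inverse. However, one step of your integration fails as described.

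You propose to check \(\pi(\xi^*)=\pi(\xi)^*\) using only that \(U\) is unitary and that inversion exchanges \(\alpha\) and~\(\tilde\alpha\). This cannot work. The operator \(\pi(\xi)^*\) is built from~\(U^*\), while \(\pi(\xi^*)\) is built from~\(U\), and only the representation condition relates the two. Already for \(G=\Z\) with the trivial line bundle and \(\F=\C\), any family \((u_n)_{n\in\Z}\) in~\(\T\) gives a unitary~\(U\) on \(\ell^2(\Z)\) commuting with \(\Cont_0(\Z)\). Its integrated form is \(f\mapsto\sum_n f(n)u_n\), which is \(*\)-preserving only if \(u_{-n}=\overline{u_n}\) for all~\(n\). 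That follows from the cocycle condition \(u_mu_n=u_{m+n}\), but not from unitarity.

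The paper therefore uses the representation condition in the equivalent form \(d_2^*(U)^*d_1^*(U)=d_0^*(U)\) to prove the single identity \(L(f_1)^*L(f_2)=L(f_1^**f_2)\). Together with density of \(L(\Contc(G,\A))\F\), this gives multiplicativity and compatibility with the involution at once. Your comparison of \(d_2^*(U)d_0^*(U)\) with \(d_1^*(U)\) can yield multiplicativity, but you still need the adjoint form, or an equivalent argument, for the involution.

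Several further steps need more than your sketch indicates.
\begin{enumerate}
\item When you rewrite both sides through \(\Lt^2(G^2,\E_j,v_j,\mu_j)\otimes_\varphi\F\), the natural sections \(d_2^*(f_1)\) and \(d_1^*(f_2)\) are not compactly supported on~\(G^2\). For unsaturated~\(\A\) they also do not take values in \(\Bun[I]\cdot d_2^*(\A)\) and \(\Bun[I]\cdot d_1^*(\A)\). The paper cuts them down by a quasi-central approximate unit of \(\Contc(G^2,\Bun[I])\) and passes to a norm limit (Lemma~\ref{lem:approximate_proof_representation_condition}).
\item Nondegeneracy of~\(\pi\) requires fullness of \(\Lt^2(G,\A\A^*,\rg,\alpha)\) (Lemma~\ref{lem:Fell_bundle_section_full}). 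Lemma~\ref{lem:nondegenerate_over_AA-star} concerns the left action and does not give this.
\item In the disintegration, the reparametrisation does not send elementary tensors \(\xi\otimes\eta\) to elementary tensors. So a completion of \(\Contc(G,\A)\odot\Contc(G,\A)\odot\F\) does not suffice to define~\(U\). You must extend \(f_0\otimes f_1\otimes\zeta\mapsto f_0\otimes\pi(f_1)\zeta\) to \(\Contc\)-sections of \(\A\otimes_{\FU}\A\) over \(G\times_{\s,G^0,\rg}G\) and of \(\A\A^*\otimes_{\FU}\A\) over \(G\times_{\rg,G^0,\rg}G\). This uses continuity in the inductive limit topology (Lemma~\ref{lem:technical_disintegration_lemma}).
\item Once those extensions exist, isometry follows from~\eqref{eq:compose_integration_nontrivial}. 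Surjectivity follows from the dense range of the extended maps and bijectivity of the reparametrisation on \(\Contc\)-sections; the latter is where Lemma~\ref{lem:equality_of_products} handles the unsaturated case.
\item The inner products to compare are \(\braket{\zeta}{\pi(\eta_1^**(\braket{\xi_1}{\xi_2}\dagger\eta_2))\zeta'}\), involving the \(\SUF\)-valued \(\Lt^2\)-inner product. They are not of the form \(\pi(\eta_1^**\xi_1^**\xi_2*\eta_2)\).
\end{enumerate}
Finally, the paper disintegrates arbitrary pre-representations, which additionally shows that inductive-limit continuity implies \(I\)-norm boundedness. Restricting to representations of \(\Cst(G,\A)\), as you do, is enough for the theorem itself.
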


We will prove this in Sections \ref{sec:integrate}
and~\ref{sec:disintegrate}.

\section{Integration of representations}
\label{sec:integrate}

Let \((G,\alpha)\) be a locally compact, Hausdorff groupoid with Haar
system and let~\(\A\) be a Fell bundle over~\(G\).  Let~\(D\) be a
\(\Cst\)\nb-algebra, \(\F\) a Hilbert \(D\)\nb-module,
and~\((\varphi,U)\) a representation of the Fell bundle~\(\A\)
on~\(\F\).  We are going to ``integrate'' this representation to a
nondegenerate \Star{}homomorphism
\(L\colon \Contc(G,\A)\to \Bound(\F)\) bounded in the \(I\)\nb-norm.
We follow the construction in
\cite{Buss-Holkar-Meyer:Universal}*{Section~4}.

Let \(f\in \Contc(G,\A)\).  Below, we will decompose
\(f(g)=f_1(g)^*\cdot f_2(g)\) with \(f_1\in \Contc(G,\A\A^*)\) and
\(f_2\in \Contc(G,\A)\).  We briefly write this as
\(f = f_1^\dagger \cdot f_2\), where
\(f_1^\dagger(g) \defeq f_1(g)^*\) is the involution in the
\(\Cst\)\nb-algebra \(\Cont_0(G,\A\A^*)\); our notation avoids confusion
with the involution \(f^*(g) \defeq f(g^{-1})^*\) in the convolution
algebra \(\Contc(G,\A)\).  We may view \(f_1\) and~\(f_2\) as elements
of \(\Lt^2(G,\A\A^*,\rg,\alpha)\) and \(\Lt^2(G,\A,\s,\tilde\alpha)\),
respectively.  These elements give creation operators
\begin{alignat*}{2}
  T^\rg_{f_1}\colon \F &\to \Lt^2(G,\A\A^*,\rg,\alpha) \otimes_\varphi
  \F,&\qquad
  \xi &\mapsto f_1 \otimes \xi,\\
  T^\s_{f_2}\colon \F &\to \Lt^2(G,\A,\s,\tilde\alpha) \otimes_\varphi
  \F,&\qquad \xi &\mapsto f_2 \otimes \xi.
\end{alignat*}
We shall use many similar creation operators in the following.  The
general pattern is as follows.  Let~\(\E\) be a Hilbert module over
a \(\Cst\)\nb-algebra~\(B\), let~\(\F\) be a
\(B\)\nb-\(D\)-correspondence, and let \(x\in\E\).  Then there is
an adjointable operator
\[
  T_x\colon \F\to \E\otimes_B \F,\qquad y\mapsto x\otimes y.
\]
Its adjoint~\(T_x^*\) maps \(z\otimes y \mapsto \braket{x}{z}\cdot y\)
for \(z\in\E\), \(y\in\F\).  When we write~\(T^h_\eta\), then~\(h\)
is one of the continuous maps \(\s,\rg,d_j,v_j\) in
Figure~\ref{fig:U_coherence}, \(\eta\) is a \(\Contc\)\nb-section of
the continuous field of \(\Cst\)\nb-correspondences by which~\(h\) in
that diagram is decorated, and~\(T^h_\eta\) means the creation
operator for the resulting \(\Cst\)\nb-correspondence of
\(\Lt^2\)\nb-sections.

We are going to define
\begin{equation}
  \label{eq:def-integration-FB}
  L(f)\defeq (T^\rg_{f_1})^*U T^\s_{f_2}
\end{equation}
as in \cite{Buss-Holkar-Meyer:Universal}*{Equation~(4.4)}.
The following two lemmas show that the factorisation \(f = f_1^\dagger
\cdot f_2\) exists, that the operator \(L(f)\in\Bound(\F)\)
in~\eqref{eq:def-integration-FB} does not depend on it, and that
\(\norm{L(f)} \le \norm{f}_I\) (compare
\cite{Buss-Holkar-Meyer:Universal}*{Lemma~4.3}).

\begin{lemma}
  \label{lem:factor_f}
  Let \(f\in \Contc(G,\A)\).  There are \(f_1\in \Contc(G,\A\A^*)\)
  and \(f_2\in \Contc(G,\A)\) with \(f=f_1^\dagger\cdot f_2\) and
  \(\norm{f(g)}=\norm{f_1(g)}^2=\norm{f_2(g)}^2\) for all \(g\in G\).
\end{lemma}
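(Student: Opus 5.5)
We will use a polar decomposition in the Hilbert bimodule fibres, applied globally to the section~\(f\). Consider the section \(f f^\dagger\colon g\mapsto f(g)f(g)^* = \BRAKET{f(g)}{f(g)}\). It is a continuous, compactly supported section of~\(\A\A^*\): it is the pullback along \(g\mapsto(g,g^{-1})\) of the section \((g,h)\mapsto f(g)\otimes f(h^{-1})^*\) of \(\pr_1^*\A\otimes_{\FU}\pr_2^*\A\), composed with the multiplication. Thus \(f f^\dagger\) is a positive element of the \(\Cst\)\nb-algebra \(\Cont_0(G,\A\A^*)\) with compact support.

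Define \(f_1\defeq (f f^\dagger)^{1/4}\), computed by continuous functional calculus in \(\Cont_0(G,\A\A^*)\). Functional calculus commutes with the fibre evaluation maps, so \(f_1(g)=(f(g)f(g)^*)^{1/4}\). Since functional calculus by a function vanishing at~\(0\) preserves the support, \(f_1\in\Contc(G,\A\A^*)\), and
\[
  \norm{f_1(g)}^2=\norm{f(g)f(g)^*}^{1/2}=\norm{f(g)}.
\]
Because \(f_1\) is positive, \(f_1^\dagger=f_1\).

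Next we set \(f_2\defeq \lim_{n\to\infty} (f_1^2+1/n)^{-1/2}\cdot f\)\, or more cleanly \(f_2\defeq h(ff^\dagger)\cdot f\) with \(h(t)=t^{-1/4}\) suitably interpreted. Each approximant \(h_n(ff^\dagger)\cdot f\) with \(h_n(t)=(t+1/n)^{-1/4}\) is a continuous compactly supported section of~\(\A\), because the left action of \(\Cont_0(G,\A\A^*)\), or its unitisation, preserves \(\Contc(G,\A)\). The key estimate is the standard fact for a Hilbert module element~\(x\): the elements \(u_n(xx^*)x\) converge in norm whenever \(u_n(t)\) converges to \(t^{-1/4}\) in the sense that \(\sqrt t\,u_n(t)\) converges uniformly on the spectrum. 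This holds because
\[
  \norm{(u_n-u_m)(xx^*)x}^2=\norm{(u_n-u_m)^2(xx^*)\,xx^*}\le\sup_t t\,\abs{u_n(t)-u_m(t)}^2,
\]
and here \(t(t+1/n)^{-1/2}\to t^{1/2}\) uniformly on \([0,\norm{f}_\infty^2]\), so it is Cauchy uniformly in~\(g\). The constant is uniform because \(\norm{f(g)}\) is bounded. Hence the sequence converges uniformly to a section~\(f_2\), which is continuous with support in \(\supp f\).

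Finally we verify the identities fibrewise. We have \(f_1^\dagger f_2=\lim (ff^\dagger)^{1/4}(ff^\dagger+1/n)^{-1/4}f=f\) by the same estimate with the uniformly convergent functions \(t^{1/4}(t+1/n)^{-1/4}t^{1/2}\to t^{1/2}\). For the norm, \(\norm{f_2(g)}^2=\norm{f_2(g)f_2(g)^*}\). Moreover \(f_2(g)f_2(g)^*=\lim k_n(f(g)f(g)^*)\) with \(k_n(t)=t(t+1/n)^{-1/2}\to t^{1/2}\), so \(f_2(g)f_2(g)^*=(f(g)f(g)^*)^{1/2}\). This has norm \(\norm{f(g)}\), as required.

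The main point needing care is the uniform convergence defining~\(f_2\) as a continuous section, which the displayed estimate handles.
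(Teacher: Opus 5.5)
Your proposal is correct and is essentially the paper's proof. Both take \(f_1=(ff^\dagger)^{1/4}\) and \(f_2=\lim_n(1/n+ff^\dagger)^{-1/4}f\) in the left Hilbert \(\Cont_0(G,\A\A^*)\)-module \(\Cont_0(G,\A)\). The difference is only in the justification: the paper cites Davidson's factorisation lemma transported to Hilbert modules via the linking algebra, whereas you check the convergence and the two identities directly with the uniform estimate \(\norm{v(xx^*)x}^2\le\sup_t t\abs{v(t)}^2\).
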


\begin{proof}
  Any element~\(x\) of a \(\Cst\)\nb-algebra~\(B\) may be written as
  \(x=bc\) with \(b=(xx^*)^{1/4}\) and
  \(c\defeq \lim_{n\to\infty} {}(1/n+xx^{*})^{-1/4}x\), and then
  \(\norm{c}^2=\norm{x}=\norm{b}^2\) (see
  \cite{Davidson:Cstar_example}*{Lemma~I.5.2}).  Working in the
  linking \(\Cst\)\nb-algebra of a left Hilbert
  \(B\)\nb-module~\(\E\), it follows that any \(\xi\in\E\) may be
  written as \(\xi=b\cdot \eta\) with
  \(b=\braket{\xi}{\xi}^{1/4}\in B\) and
  \(\eta \defeq \lim_n {}(\frac{1}{n}+\braket{\xi}{\xi})^{-1/4}\cdot
  \xi\in \E\), and then \(\norm{\eta}^2=\norm{\xi}=\norm{b}^2\).  We
  apply this to the left \(\Cont_0(G,\A\A^*)\)\nb-Hilbert module
  \(\Cont_0(G,\A)\) to write \(f=f_1^\dagger\cdot f_2\) with
  \(f_1(g)\defeq (f(g)\cdot f(g)^*)^{1/4} = f_1^\dagger(g)\) and
  \[
    f_2(g)\defeq \lim_{n\to \infty} {}\Bigl(\frac{1}{n}+f(g)\cdot
    f(g)^*\Bigr)^{-1/4}\cdot f(g).
  \]
  These factors have the desired norm.
\end{proof}

\begin{definition}
  \label{def:Cont_KpartialK}
  Let \(K\subseteq G\) be compact and let~\(\A\) be a field of
  Banach spaces over~\(G\).  We let
  \(\Cont_0(K\setminus \partial K,\A) \subseteq \Contc(G,\A)\) be
  the subspace of those continuous sections of~\(\A\) over~\(G\) that
  are supported on~\(K\).  Equivalently,
  \(\Cont_0(K\setminus \partial K,\A)\) consists of continuous
  sections of~\(\A\) over~\(K\) that vanish on the boundary of~\(K\)
  in~\(G\).
\end{definition}

\begin{lemma}
  \label{lem:Lf_well-defined}
  There is an approximate unit \((e_n)_{n\in N}\) of the
  \(\Cst\)\nb-algebra \(\Cont_0(G,\A\A^*)\) that is contained in
  \(\Contc(G,\A\A^*)\).
  If \(f_1\in \Contc(G,\A\A^*)\) and \(f_2\in \Contc(G,\A)\), then
  \[
    T_{f_1}^* U T_{f_2} = \lim T_{e_n}^* U T_{f_1^\dagger \cdot f_2}.
  \]
  The operator \(L(f)\in \Bound(\F)\) in~\eqref{eq:def-integration-FB}
  is well defined and depends linearly on~\(f\).
  It is bounded and \(\norm{L(f)}\le \norm{f}_I\).
\end{lemma}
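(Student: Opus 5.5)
Three things must be shown: an approximate unit of the required kind exists, the operator \(T_{f_1}^* U T_{f_2}\) depends only on the product \(f_1^\dagger\cdot f_2\), and \(\norm{L(f)}\le\norm{f}_I\).

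\emph{Approximate unit.} Sections in \(\Contc(G,\A\A^*)\) form a dense \Star{}subalgebra of \(\Cont_0(G,\A\A^*)\), since we may multiply by cutoff functions in \(\Contc(G)\). Taking a bounded approximate unit of the dense ideal generated by \(\Contc\)-sections gives what we need. Concretely, let \((u_\lambda)\) be any approximate unit of \(\Cont_0(G,\A\A^*)\). Pick \(\chi_K\in\Contc(G)\) with \(0\le\chi_K\le1\) and \(\chi_K=1\) on a compact set \(K\). The elements \(e_{\lambda,K}\defeq\chi_K u_\lambda\chi_K\) lie in \(\Contc(G,\A\A^*)\), have norm at most \(1\), and are positive. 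Along the product directed set they still act as an approximate unit on every element of \(\Cont_0(G,\A\A^*)\), because each such element is approximated by one with compact support.

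\emph{Independence of the factorisation.} Since \(U\) intertwines the left \(\Cont_0(G,\A\A^*)\)-actions (Lemma~\ref{lem:nondegenerate_over_AA-star}), for \(a\in\Cont_0(G,\A\A^*)\) and \(\eta\in\Lt^2(G,\A,\s,\tilde\alpha)\) we have \(U T^\s_{a\cdot\eta}=U(a\otimes1)T^\s_\eta=(a\otimes1)UT^\s_\eta\). For \(f_1\in\Lt^2(G,\A\A^*,\rg,\alpha)\) we also have \((a\otimes 1)^* T^\rg_{f_1}= T^\rg_{a^*f_1}\). Hence
\[
  T_{e_n}^* U T_{f_1^\dagger\cdot f_2}
  = T_{e_n}^*(f_1^\dagger\otimes1)UT_{f_2}
  = T_{f_1 e_n}^* U T_{f_2}.
\]
In the middle step we use that left multiplication by \(f_1^\dagger\in\Cont_0(G,\A\A^*)\) is pointwise. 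The left action on \(\Lt^2(G,\A\A^*,\rg,\alpha)\) is pointwise multiplication, so \(f_1^\dagger\)-adjoint times \(e_n\) equals \(f_1 e_n\), and these are pointwise products in the \(\Cst\)-algebra \(\Contc(G,\A\A^*)\). Now \(f_1e_n\to f_1\) in \(\Cont_0(G,\A\A^*)\) with supports inside the fixed compact set \(\supp f_1\). Convergence in the \(\Lt^2\)-norm follows because \(\norm{\xi}_{\Lt^2}^2\le\sup_x\alpha^x(\supp f_1)\,\norm{\xi}_\infty^2\), which is finite by continuity of the Haar system. So \(T_{f_1e_n}\to T_{f_1}\) in norm, which proves the displayed limit formula. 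The right side depends only on \(f=f_1^\dagger\cdot f_2\), so \(L(f)\) is well defined. It is linear because the right side is linear in \(f_1^\dagger\cdot f_2\), and Lemma~\ref{lem:factor_f} provides a factorisation of any \(f\).

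\emph{Norm bound.} Use the factorisation of Lemma~\ref{lem:factor_f} with \(\norm{f_1(g)}^2=\norm{f_2(g)}^2=\norm{f(g)}\). Then
\[
  \norm{L(f)}\le\norm{T^\rg_{f_1}}\,\norm{T^\s_{f_2}}
  \le\norm{\braket{f_1}{f_1}}^{1/2}\norm{\braket{f_2}{f_2}}^{1/2}.
\]
For the first factor, \(\braket{f_1}{f_1}(x)=\int f_1(g)^*f_1(g)\,\diff\alpha^x(g)\) has norm at most \(\int\norm{f(g)}\,\diff\alpha^x(g)\le\norm{\alpha(\abs f)}_\infty\). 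Similarly \(\norm{\braket{f_2}{f_2}}\le\norm{\tilde\alpha(\abs f)}_\infty\), using \(\norm{\braket{\xi}{\xi}}\le\sup_x\int\norm{\xi(g)}^2\,\diff\tilde\alpha_x(g)\). The geometric mean of the two suprema is at most their maximum, so \(\norm{L(f)}\le\norm f_I\).

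The main obstacle is the middle step. We must justify carefully that \(U\) commutes with the adjointable operators \(a\otimes1\), and that \(T_{e_n}^*(f_1^\dagger\otimes 1)=T^*_{f_1e_n}\) holds on \(\Lt^2(G,\A\A^*,\rg,\alpha)\otimes_\varphi\F\). This requires checking that the left action of \(\Cont_0(G,\A\A^*)\) on \(\Lt^2(G,\A\A^*,\rg,\alpha)\) is by pointwise multiplication with adjoint given by \(\dagger\).
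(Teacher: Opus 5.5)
Your proposal is correct and follows the paper's proof. It uses the same three ingredients: that \(U\) commutes with the left action of \(\Cont_0(G,\A\A^*)\), that \(f_1e_n\to f_1\) uniformly with support in \(\supp f_1\) so that \(T^\rg_{f_1e_n}\to T^\rg_{f_1}\) in norm, and the factorisation of Lemma~\ref{lem:factor_f} for the \(I\)\nb-norm bound. The only harmless difference is that you build the approximate unit explicitly as \(\chi_K u_\lambda\chi_K\) and check it along the product directed set, whereas the paper assembles approximate units of the algebras \(\Cont_0(K\setminus\partial K,\A\A^*)\) over compact \(K\subseteq G\).
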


\begin{proof}
  Since \(\Cont_0(K\setminus \partial K,\A\A^*)\) is a
  \(\Cst\)\nb-algebra, it has an approximate unit.
  Letting~\(K\) run through the directed set of compact subsets
  of~\(G\) gives a net~\((e_n)\) in \(\Contc(G,\A\A^*)\) that is an
  approximate unit in \(\Cont_0(G,\A\A^*)\).
  Let~\(M_\psi\) for a function \(\psi\in \Contc(G,\A\A^*)\) denote
  the operator of pointwise multiplication by~\(\psi\) on
  \(\Lt^2(G,\A\A^*,\rg,\alpha)\otimes_\varphi\F\) or
  \(\Lt^2(G,\A,\s,\tilde\alpha)\otimes_\varphi\F\).
  Since~\(U\) is an isomorphism of correspondences, \(M_\psi
  U=UM_\psi\).
  The relations
  \[
    M_\psi T^\rg_{f_1} = T^\rg_{\psi\cdot f_1},\qquad M_\psi
    T^\s_{f_2} = T^\s_{\psi\cdot f_2}
  \]
  are obvious.  Let \(K\subseteq G\) be the compact support
  of~\(f_1\).  Then \(\lim f_1 e_n = f_1\) in the norm topology on
  \(\Cont_0(K\setminus \partial K,\A\A^*)\).  The map
  \(f_1 \mapsto T^\rg_{f_1}\) is bounded on the latter Banach space.
  Thus \(\lim {}\norm{T^\rg_{f_1 e_n} - T^\rg_{f_1}}=0\).  Now we
  compute
  \begin{multline*}
    (T^\rg_{f_1})^* U T^\s_{f_2} = \lim {} (T^\rg_{f_1 e_n})^* U
    T^\s_{f_2} = \lim {} (M_{f_1} T^\rg_{e_n})^* U T^\s_{f_2} \\= \lim
    {} (T^\rg_{e_n})^* M_{f_1^\dagger} U T^\s_{f_2} = \lim
    {}(T^\rg_{e_n})^* U M_{f_1^\dagger} T^\s_{f_2} = \lim
    {}(T^\rg_{e_n})^* UT^\s_{f_1^\dagger\cdot f_2}.
  \end{multline*}
  This implies that~\(L(f)\) is well defined and depends linearly
  on~\(f\).  Now choose \(f_1,f_2\) with
  \(\norm{f_1(g)}^2=\norm{f_2(g)}^2=\norm{f(g)}\) for all \(g\in G\)
  as in Lemma~\ref{lem:factor_f}.  Since~\(U\) is unitary, the
  definition of~\(L(f)\) using this factorisation of~\(f\) implies the
  estimate
  \[
    \norm{L(f)}\le \norm{f_1}_{\Lt^2(G,\A\A^*,\rg,\alpha)} \cdot
    \norm{f_2}_{\Lt^2(G,\A,\s,\tilde\alpha)} \le \norm{f}_I.\qedhere
  \]
\end{proof}

\begin{remark}
  The proof above shows the following sharper norm estimate
  for~\(L(f)\):
  \[
    \norm{L(f)}^2 \le \sup_{x\in G^0} \norm*{ \int_{G} {}
      (f(g)f(g)^*)^{1/2} \,\diff\alpha^x(g)} \cdot \sup_{x\in G^0}
    \norm*{ \int_{G} {}(f(g)^*f(g))^{1/2} \,\diff\tilde\alpha_x(g)}.
  \]
  Here we use that \(f_1(g)^*f_1(g)=(f(g)f(g)^*)^{1/2}\) and
  \[
    f_2(g)^*f_2(g) =\lim_{n\to\infty} f(g)^*(1/n+f(g)f(g)^*)^{-1/2}f(g)
    =(f(g)^*f(g))^{1/2}.
  \]
  If~\(h\) is a continuous function on the spectrum of~\(aa^*\),
  then \(a^* h(a a^*) a = (h\cdot x)(a^*\cdot a)\) because this
  holds for all polynomials.
\end{remark}

\begin{lemma}
  \label{lem:Fell_bundle_section_full}
  The Hilbert \(\Cont_0(G^0,\FU)\)-modules
  \(\Lt^2(G,\A,\s,\tilde\alpha)\) and \(\Lt^2(G,\A\A^*,\rg,\alpha)\)
  are full.
\end{lemma}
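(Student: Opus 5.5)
The plan is to apply Lemma~\ref{lem:fullness-Hilbert-modules-fields} to each module. Its hypotheses ask for a field of Hilbert modules over the pullback of \(\FU\) and a continuous family of measures with full support. Both are available: Haar systems have full support, so \(\tilde\alpha\) along~\(\s\) and \(\alpha\) along~\(\rg\) qualify. We regard \(\A\) as a field of Hilbert \(\s^*\FU\)\nb-modules over~\(G\), and \(\A\A^*\) as a field of Hilbert \(\rg^*\FU\)\nb-modules over~\(G\). Fullness then reduces to a density condition on the fibres over each unit \(x\in G^0\).

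For \(\Lt^2(G,\A,\s,\tilde\alpha)\), we must show that the linear span of \(\braket{\A_g}{\A_g} = \A_g^*\A_g\) for \(g\in \s^{-1}(x)\) is dense in~\(\FU_x\). The key step is to take the unit arrow \(g = u(x)\), which lies in \(\s^{-1}(x)\). For it, \(\A_{u(x)}^*\A_{u(x)} = \FU_x\FU_x = \FU_x\), because a \(\Cst\)\nb-algebra equals the closed span of its products; alternatively, this follows from Lemma~\ref{lem:equality_of_products}\ref{lem:equality_of_products_1} applied to \(g=u(x)\). So already a single fibre gives the whole algebra, and the density condition holds.

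For \(\Lt^2(G,\A\A^*,\rg,\alpha)\), the fibre at~\(g\) is the ideal \(\A_g\A_g^*\idealin\FU_{\rg(g)}\), with right inner product \(\braket{a}{b} = a^*b\). We must show that the span of \((\A_g\A_g^*)^*(\A_g\A_g^*)\) over \(g\in\rg^{-1}(x)\) is dense in~\(\FU_x\). Again we take \(g=u(x)\in\rg^{-1}(x)\). Here \(\A_{u(x)}\A_{u(x)}^* = \FU_x\), so the inner products span \(\FU_x\FU_x = \FU_x\).

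There is no real obstacle. The only point needing care is to confirm that the Hilbert module structures on \(\A\) and \(\A\A^*\) used to define these \(\Lt^2\)\nb-modules really are fields of Hilbert modules over \(\s^*\FU\) and \(\rg^*\FU\) in the sense of Lemma~\ref{lem:fullness-Hilbert-modules-fields}. This was arranged in Section~\ref{sec:Fell_bundles}: \(\A\A^*\) is a continuous family of ideals in \(\rg^*\FU\), and \(\A\) is a field of Hilbert bimodules with right coefficients in \(\s^*\FU\).
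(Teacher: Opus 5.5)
Your proposal is correct and follows the paper's proof: both apply Lemma~\ref{lem:fullness-Hilbert-modules-fields} to the fields \(\A\) and \(\A\A^*\). Both then observe that over each unit \(x\) the fibre at the unit arrow \(u(x)\) is \(\FU_x\) itself, so its inner products already span a dense subspace of \(\FU_x\). You also state explicitly that Haar systems have full support, which the paper leaves implicit.
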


\begin{proof}
  This follows from Lemma~\ref{lem:fullness-Hilbert-modules-fields}
  because \(\Lt^2(G,\A,\s,\tilde\alpha)\) and
  \(\Lt^2(G,\A\A^*,\rg,\alpha)\) are completions
  of spaces of sections of upper semicontinuous fields of Hilbert
  modules over~\(G\) whose restrictions to~\(G^0\) are isomorphic
  to~\(\FU\).
\end{proof}

\begin{lemma}[compare \cite{Buss-Holkar-Meyer:Universal}*{Lemma~4.5}]
  \label{lem:Lf_nondegenerate}
  The closed linear span of~\(L(f)\xi\) for \(f\in\Contc(G,\A\A^*)\)
  and \(\xi\in\F\) is equal to~\(\F\).
\end{lemma}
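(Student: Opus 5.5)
The plan is to reduce the claim to three facts: fullness of the Hilbert modules, unitarity of~\(U\), and nondegeneracy of~\(\varphi\). Here \(f\in\Contc(G,\A\A^*)\subseteq\Contc(G,\A)\), since \(\A_g\A_g^*\subseteq\FU_{\rg(g)}\); presumably the intended class is \(\Contc(G,\A)\), and the argument below works for that. I would use the definition~\eqref{eq:def-integration-FB} through the formula of Lemma~\ref{lem:Lf_well-defined}: \(L(f_1^\dagger\cdot f_2) = (T^\rg_{f_1})^*UT^\s_{f_2}\) for all \(f_1\in\Contc(G,\A\A^*)\) and \(f_2\in\Contc(G,\A)\), and also for \(f_2\in\Contc(G,\A\A^*)\) if one insists on the narrower class. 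The products \(f_1^\dagger\cdot f_2\) belong to the relevant section space, and \(L\) is linear and \(I\)\nb-norm bounded. Hence the closed span of \(L(f)\xi\) contains the closed span of
\[
  (T^\rg_{f_1})^*UT^\s_{f_2}\xi .
\]
Let \(\Lt^2_\s\) and \(\Lt^2_\rg\) denote the two \(\Lt^2\)\nb-modules.

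First step: the vectors \(T^\s_{f_2}\xi = f_2\otimes\xi\) span a dense subspace of \(\Lt^2_\s\otimes_\varphi\F\). This holds because \(\Contc\)\nb-sections are dense in \(\Lt^2_\s\). If only \(f_2\in\Contc(G,\A\A^*)\) is allowed, I would use instead that \(\Contc(G,\A\A^*)\cdot\Contc(G,\A)\) is dense in \(\Lt^2(G,\A,\s,\tilde\alpha)\), by Lemma~\ref{lem:nondegenerate_over_AA-star}, and absorb the left factor into \(f_1\). Since \(U\) is unitary, the vectors \(UT^\s_{f_2}\xi\) are then dense in \(\Lt^2_\rg\otimes_\varphi\F\).

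Second step: the operators \((T^\rg_{f_1})^*\) applied to elementary tensors \(f_3\otimes\eta\), with \(f_3\in\Contc(G,\A\A^*)\) and \(\eta\in\F\), give
\[
  (T^\rg_{f_1})^*(f_3\otimes\eta)=\varphi(\braket{f_1}{f_3})\eta .
\]
By continuity, the closed span of the vectors \((T^\rg_{f_1})^*\zeta\), for \(\zeta\) in a dense subset, is the closed span of \(\varphi(\braket{f_1}{f_3})\eta\). Lemma~\ref{lem:Fell_bundle_section_full} says that the inner products \(\braket{f_1}{f_3}\) span a dense subspace of \(\Cont_0(G^0,\FU)\), using density of \(\Contc\) in \(\Lt^2_\rg\). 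Since \(\varphi\) is nondegenerate, this closed span is all of~\(\F\).

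The main point to handle carefully is passing from the dense set of \(\zeta\)'s to all of \(\Lt^2_\rg\otimes\F\) uniformly in~\(f_1\). This is fine because for fixed \(f_1\) the operator \((T^\rg_{f_1})^*\) is bounded, and the closed linear span of its image over a dense subset equals the closure of its full image. Combining the two steps gives that the closed span of \(L(f)\xi\) equals~\(\F\).
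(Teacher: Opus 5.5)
Your argument is correct for the intended class \(f\in\Contc(G,\A)\) and is essentially the paper's proof: density of the vectors \(T^\s_{f_2}\xi\), unitarity of~\(U\), and then fullness of \(\Lt^2(G,\A\A^*,\rg,\alpha)\) (Lemma~\ref{lem:Fell_bundle_section_full}) together with nondegeneracy of~\(\varphi\). Drop the side remarks about a ``narrower class'', though: \(\Contc(G,\A\A^*)\) is not contained in \(\Contc(G,\A)\), because the fibre \(\A_g\A_g^*\) lies in \(\FU_{\rg(g)}\) and not in~\(\A_g\), so \(L\) is not defined on it and the \(\Contc(G,\A\A^*)\) in the statement is just a typo for \(\Contc(G,\A)\).
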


\begin{proof}
  Because of~\eqref{eq:def-integration-FB}, we must show that the
  linear span of \((T^\rg_{f_1})^* U T^\s_{f_2}\eta\) for
  \(f_1\in\Contc(G,\A\A^*)\), \(f_2\in\Contc(G,\A)\), \(\eta\in \F\)
  is dense in~\(\F\).  By construction, \(\Contc(G,\A)\) is dense in
  \(\Lt^2(G,\A,\s,\tilde{\alpha})\) and \(\Contc(G,\A\A^*)\) is
  dense in \(\Lt^2(G,\A\A^*,\rg,\alpha)\).  So the linear span of
  \(T^\s_{f_2}\eta\) for \(f_2\in\Contc(G,\A)\), \(\eta\in \F\) is
  dense in \(\Lt^2(G,\A,\s,\tilde{\alpha}) \otimes_\varphi \F\).
  Since~\(U\) is unitary, the linear span of \(U T^\s_{f_2}\eta\)
  for such \(f_2\) and~\(\eta\) is dense in
  \(\Lt^2(G,\A\A^*,\rg,\alpha)\otimes_\varphi \F\).  Then the linear
  span of \((T^\rg_{f_1})^* U T^\s_{f_2}\eta\) for
  \(f_1\in\Contc(G,\A\A^*)\), \(f_2\in\Contc(G,\A)\), \(\eta\in \F\)
  is dense in
  \(\varphi(\braket{\Lt^2(G,\A\A^*,\rg,\alpha)}{\Lt^2(G,\A\A^*,\rg,\alpha)})
  \F\).  This is equal to~\(\F\) because
  \(\Lt^2(G,\A\A^*,\rg,\alpha)\) is full by
  Lemma~\ref{lem:Fell_bundle_section_full} and~\(\varphi\) is
  nondegenerate.
\end{proof}

\begin{proposition}[compare
  \cite{Buss-Holkar-Meyer:Universal}*{Proposition~4.6}]
  \label{pro:Lf_star-representation}
  The map~\(L\) is a nondegenerate \Star{}homomorphism from the
  convolution algebra \(\Contc(G,\A)\) to \(\Bound(\F)\).
\end{proposition}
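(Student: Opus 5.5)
Linearity is already in Lemma~\ref{lem:Lf_well-defined} and nondegeneracy in Lemma~\ref{lem:Lf_nondegenerate}, since \(\Contc(G,\A\A^*)\) sits inside \(\Contc(G,\A)\) via the unit space. So two things remain: \(L(f^*)=L(f)^*\) and \(L(f*g)=L(f)L(g)\). The plan follows \cite{Buss-Holkar-Meyer:Universal}*{Proposition~4.6}, adding only the bookkeeping for the coefficient bundles.

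\emph{Involution.} Factor \(f=f_1^\dagger\cdot f_2\) as in Lemma~\ref{lem:factor_f}. Then \(f^*(g)=f_2(g^{-1})^*f_1(g^{-1})\), so \(f^*\) factors with the roles of the two factors swapped and composed with inversion. The inversion \(g\mapsto g^{-1}\) interchanges \(\rg,\alpha\) with \(\s,\tilde\alpha\). The difficulty is that \(U\) is not symmetric: its domain is \(\Lt^2(G,\A,\s,\tilde\alpha)\otimes\F\) and its codomain is \(\Lt^2(G,\A\A^*,\rg,\alpha)\otimes\F\). So we cannot simply take adjoints. Instead, the plan is to derive \(U^*\) from the cocycle identity \(d_1^*(U)=d_2^*(U)d_0^*(U)\), specialised along the diagonal-type pairs \((g,g^{-1})\). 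In the scalar case, \(U^*\) is expressed through \(U\) and the inversion map in this way. For Fell bundles we will use Lemma~\ref{lem:nondegenerate_over_AA-star} and Lemma~\ref{lem:equality_of_products} to identify \(\A_g\A_{g^{-1}}\A_g=\A_g\), which converts the inverted \(\A^*\)-coefficients into the correct fields. Concretely, we compute \(\braket{\eta}{L(f^*)\xi}\) and \(\braket{L(f)\eta}{\xi}\) by writing both as inner products of creation operators on \(\Lt^2(G^2,\E_j,v_j,\mu_j)\otimes\F\), and then compare them using the triangle identities.

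\emph{Multiplicativity.} Write \(L(f)L(h)=(T^\rg_{f_1})^*UT^\s_{f_2}(T^\rg_{h_1})^*UT^\s_{h_2}\). Next, express the middle product \(T^\s_{f_2}(T^\rg_{h_1})^*\) on \(\F\) as an operator that passes through \(\Lt^2(G^2,\cdot)\otimes\F\). The canonical isomorphisms \(\gamma\) of Lemmas~\ref{lem:compose_corr_from_measure-family} and~\ref{lem:functoriality-of-corr} give
\[
  \Lt^2(G^2,\E_1,d_0,\lambda_0)\otimes\Lt^2(G,\A\A^*,\rg,\alpha)\otimes\F \cong \Lt^2(G^2,\E_1,v_1,\mu_1)\otimes\F ,
\]
and similarly for the other edges of Figure~\ref{fig:U_coherence}. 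Under these identifications, \(L(f)L(h)=T_{\Phi}^*\,d_2^*(U)\,d_0^*(U)\,T_{\Psi}\) for suitable \(\Contc\)-sections \(\Psi\) of \(\E_2\) over \(v_2\) and \(\Phi\) of \(\E_0\) over \(v_0\). Here \(\Psi(g,k)\) is built from \(f_2\) and \(h_2\) using the multiplication \(\mu\), and \(\Phi\) is built from \(f_1\) and \(h_1\).

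Using the representation condition, we replace \(d_2^*(U)d_0^*(U)\) by \(d_1^*(U)\). Unwinding Figure~\ref{fig:def_d1U} then gives \(T_\Phi^* d_1^*(U)T_\Psi=\int\) (via \(\lambda_1\)) of \((T^\rg_{\cdot})^*UT^\s_{\cdot}\), which is \(L\) applied to the \(\lambda_1\)-integral of \((g,k)\mapsto f(g)h(k)\), that is, \(L(f*h)\). To justify each manipulation, we reduce to elementary tensors of the form \(\psi(g)\,a(g)\otimes b(k)\) and extend by linearity and continuity, using the bound \(\norm{L(f)}\le\norm{f}_I\) together with density of such sections in the inductive limit topology.

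\emph{Main obstacle.} The key difficulty is showing that the composite creation operator really is \(T_\Psi\) with \(\Psi\) a section of \(\E_2=\Bun[I]\cdot d_1^*\A\), and not merely of \(d_1^*\A\). This matters because in the unsaturated case \(U\) acts only on the submodules cut out by \(\Bun[I]\). Our first approach is to insert approximate units \(e_n\) of \(\Contc(G,\A\A^*)\) as in Lemma~\ref{lem:Lf_well-defined}, so that the intermediate factors land in \(\A_g\A_k\A_k^*\A_g^*\). We then invoke Lemma~\ref{lem:equality_of_products} to identify \(\A_g\A_k\A_k^*\A_g^*\A_{gk}=\A_g\A_k\) inside the image of \(\mu\).
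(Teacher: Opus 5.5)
\textbf{Gap: the involution step.} Multiplicativity alone does not give \(L(f^*)=L(f)^*\), and the mechanism you propose for it does not work. The pairs \((g,g^{-1})\) form the set \(d_1^{-1}(u(G^0))\subseteq G^2\). This set is null for the measure families \(\mu_j\) unless the measures \(\alpha^x\) have atoms at the units; for instance, for any non-discrete group it is the graph of inversion, a Haar-null set. The unitaries \(d_j^*(U)\) exist only as operators on spaces of \(\Lt^2\)-sections over \(G^2\), so they cannot be ``specialised'' to that set. Your fallback, comparing \(\braket{\eta}{L(f^*)\xi}\) with \(\braket{L(f)\eta}{\xi}\) ``using the triangle identities'', supplies no relation between \(U^*\) and \(U\) composed with inversion, because \(d_2^*(U)d_0^*(U)=d_1^*(U)\) in the form you use it contains no adjoint.

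The missing idea is to use the cocycle identity in the form \(d_2^*(U)^*d_1^*(U)=d_0^*(U)\), so that \(U^*\) enters. Sandwich it between \(T^{v_1}_{\eta_1}\) and \(T^{v_2}_{\eta_2}\) as in Lemmas~\ref{lem:whiskered_d0U} and~\ref{lem:approximate_proof_representation_condition}. This gives \(L(f_1)^*L(f_2)=L(f_1^* * f_2)\) for all \(f_1,f_2\), and nondegeneracy then gives everything. For \(a,b,f\in\Contc(G,\A)\) and \(\xi,\eta\in\F\), both \(\braket{L(a)\xi}{L(f^*)^*L(b)\eta}\) and \(\braket{L(a)\xi}{L(f)L(b)\eta}\) equal \(\braket{\xi}{L(a^* * f * b)\eta}\). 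Hence \(L(f^*)^*=L(f)\) by Lemma~\ref{lem:Lf_nondegenerate}, and then \(L(f)L(b)=L(f^*)^*L(b)=L(f*b)\). This is the paper's route, and it makes a separate multiplicativity argument unnecessary.

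\textbf{The multiplicativity part.} Your computation is sound in outline; it mirrors the paper's, sandwiching \(d_2^*(U)d_0^*(U)=d_1^*(U)\) between \(T^{v_0}\) and \(T^{v_2}\). However, \(\Phi\) and \(\Psi\) cannot be built from \((f_1,h_1)\) and \((f_2,h_2)\) as you say, because \(\Phi^\dagger\cdot\Psi\) must equal \(f_1(g)^*f_2(g)h_1(k)^*h_2(k)\) in exactly this order. What works is the following.
\begin{enumerate}
\item Factor the middle section \((g,k)\mapsto f_2(g)h_1(k)^*\) of \(\E_1\), whose fibre is \(\A_g\A_k\A_k^*\), as \(\phi^\dagger\cdot\psi\) with \(\phi\in\Contc(G^2,\Bun[I])\) and \(\psi\in\Contc(G^2,\E_1)\), using the trick of Lemma~\ref{lem:factor_f}.
\item Then \(T^\s_{f_2}(T^\rg_{h_1})^*=(T^{d_2}_\phi)^*\,T^{d_0}_\psi\), up to the canonical isomorphisms of Figure~\ref{fig:U_coherence}.
\item Put \(\Phi(g,k)=\phi(g,k)f_1(g)\) and \(\Psi(g,k)=\psi(g,k)h_2(k)\). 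Then \(\lambda_1(\Phi^\dagger\cdot\Psi)=f*h\), with no approximate units needed.
\end{enumerate}

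\textbf{Nondegeneracy.} \(\Contc(G,\A\A^*)\) is not a subspace of \(\Contc(G,\A)\), so your stated justification is wrong. The proof of Lemma~\ref{lem:Lf_nondegenerate} actually shows that the vectors \(L(f_1^\dagger\cdot f_2)\xi\), that is, \(L(f)\xi\) with \(f\in\Contc(G,\A)\), span a dense subspace of \(\F\). That is what you need.
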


The proposition follows as in the proof of
\cite{Buss-Holkar-Meyer:Universal}*{Proposition~4.6} once we show
\begin{equation}
  \label{eq:Lf_star-rep}
  L(f_1)^*L(f_2) = L(f_1^* * f_2)
\end{equation}
for all \(f_1,f_2\in \Contc(G,\A)\).  This will take a while and
will be accomplished after
Lemma~\ref{lem:approximate_proof_representation_condition}.  We
introduce some more creation operators.  Recall the fields of
Hilbert bimodules~\(\E_j\) for \(j=0,1,2\) over~\(G^2\) defined
in~\eqref{eq:E_012}.  Let
\[
  \eta_0\in \Contc(G^2,\E_0),\qquad
  \eta_1\in \Contc(G^2,\E_1),\qquad
  \eta_2\in \Contc(G^2,\E_2).
\]
These define adjointable creation operators
\begin{alignat*}{2}
  T^{v_0}_{\eta_0}\colon \F &\to \Lt^2(G^2, \E_0,v_0,\mu_0)
  \otimes_\varphi \F,&\qquad
  \xi &\mapsto \eta_0 \otimes \xi,\\
  T^{v_1}_{\eta_1}\colon \F &\to \Lt^2(G^2, \E_1,v_1,\mu_1)
  \otimes_\varphi \F,&\qquad
  \xi &\mapsto \eta_1 \otimes \xi,\\
  T^{v_2}_{\eta_2}\colon \F &\to \Lt^2(G^2, \E_2,v_2,\mu_1)
  \otimes_\varphi \F,&\qquad \xi &\mapsto \eta_2 \otimes \xi.
\end{alignat*}
Then \((T^{v_1}_{\eta_1})^* d_0^*(U) T^{v_2}_{\eta_2}\),
\((T^{v_0}_{\eta_0})^* d_1^*(U) T^{v_2}_{\eta_2}\) and
\((T^{v_0}_{\eta_0})^* d_2^*(U) T^{v_1}_{\eta_1}\) are adjointable
operators on~\(\F\).  The following lemma relates these operators to
the
map~\(L\):

\begin{lemma}
  \label{lem:whiskered_d0U}
  In the situation above,
  \[
    \eta_1^\dagger \cdot \eta_2 \in \Contc(G^2,d_0^*(\A)),\qquad
    \eta_0^\dagger \cdot \eta_2 \in \Contc(G^2,d_1^*(\A)),\quad
    \eta_0^\dagger \cdot \eta_1 \in \Contc(G^2,d_2^*(\A)).
  \]
  Then
  \(\lambda_0(\eta_1^\dagger \cdot \eta_2)\),
  \(\lambda_1(\eta_0^\dagger \cdot \eta_2)\), and
  \(\lambda_2(\eta_0^\dagger \cdot \eta_1)\) are well-defined elements
  of \(\Contc(G,\A)\) by Lemma~\textup{\ref{lem:continuity_integral}},
  and
  \begin{align*}
    (T^{v_1}_{\eta_1})^* d_0^*(U) T^{v_2}_{\eta_2}
    &= L\bigl(\lambda_0(\eta_1^\dagger \cdot \eta_2)\bigr),\\
    (T^{v_0}_{\eta_0})^* d_1^*(U) T^{v_2}_{\eta_2}
    &= L\bigl(\lambda_1(\eta_0^\dagger \cdot \eta_2)\bigr),\\
    (T^{v_0}_{\eta_0})^* d_2^*(U) T^{v_1}_{\eta_1}
    &= L\bigl(\lambda_2(\eta_0^\dagger \cdot \eta_1)\bigr).
  \end{align*}
\end{lemma}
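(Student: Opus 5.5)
The plan is to treat each of the three identities in the same way, taking the case of \(d_1^*(U)\) as the model, since it is the simplest: by Figure~\ref{fig:def_d1U}, \(d_1^*(U)\) is the composite \((\gamma_2\otimes\Id)(\Id\otimes U)(\gamma_1^{-1}\otimes\Id)\). The membership claims come first. At \((g,h)\in G^2\), the fibre of \(\E_0^\dagger\cdot\E_2\) is
\[
  \A_g\A_h\A_h^*\A_g^*\cdot \A_g\A_h\A_h^*\A_g^*\A_{gh}\subseteq \A_{gh}.
\]
Similarly, \(\E_1^\dagger\cdot \E_2\) lands in \(\A_g^*\A_g\A_h\subseteq\A_h\), and \(\E_0^\dagger\cdot\E_1\) lands in \(\A_g\). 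Continuity and compact support follow because multiplication and involution are continuous (Definition~\ref{def:Fell_bundle}\ref{def:Fell_bundle_7}). Lemma~\ref{lem:continuity_integral} then shows that the \(\lambda_i\)-integrals lie in \(\Contc(G,\A)\).

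For the \(d_1\) formula, both sides are bounded and depend continuously on \(\eta_0\) and \(\eta_2\) in the inductive limit topology. The left side does so by the Hilbert module norm; the right side does so by \(\norm{L(f)}\le\norm{f}_I\) from Lemma~\ref{lem:Lf_well-defined}. Both sides are also sesquilinear, so it suffices to check the formula on elementary sections that span dense subspaces. Following Lemma~\ref{lem:compose_corr_from_measure-family}, I take \(\eta_2(g,h)=\psi(g,h)\, a(g,h)\, f_2(gh)\) and \(\eta_0(g,h)=\chi(g,h)\, b(g,h)\, f_1(gh)\). Here \(\psi,\chi\in\Contc(G^2,\Bun[I])\); more simply, \(\psi\) and \(\chi\) are scalar functions times sections of \(\Bun[I]\). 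Also \(f_2\in\Contc(G,\A)\) and \(f_1\in\Contc(G,\A\A^*)\). Then \(\gamma_1^{-1}(\eta_2\otimes\xi)=\psi'\otimes f_2\otimes\xi\) for an appropriate \(\psi'\in\Contc(G^2,\Bun[I])\) viewed in \(\Lt^2(G^2,\Bun[I],d_1,\lambda_1)\), and likewise for \(\eta_0\). The computation becomes
\[
  (T^{v_0}_{\eta_0})^*d_1^*(U)T^{v_2}_{\eta_2}
  = (T^\rg_{f_1})^*\, M_{\lambda_1(\chi'^\dagger\psi')}\, U\, T^\s_{f_2}.
\]
Here \(\braket{\chi'}{\psi'}=\lambda_1(\chi'^\dagger\psi')\in\Contc(G,\A\A^*)\) is the \(\Cont_0(G,\A\A^*)\)-valued inner product, and \(U\) commutes with the multiplication operators \(M\) by the proof of Lemma~\ref{lem:Lf_well-defined}. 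Moving the multiplier onto \(f_2\) and using \(L(f_1^\dagger\cdot f)=(T^\rg_{f_1})^*UT^\s_f\) yields \(L(f_1^\dagger\lambda_1(\chi'^\dagger\psi')f_2)\). Since \(f_1\) and \(f_2\) depend only on \(gh=d_1(g,h)\), they can be pulled inside the \(\lambda_1\)-integral, and this equals \(L(\lambda_1(\eta_0^\dagger\cdot\eta_2))\).

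The \(d_0\) and \(d_2\) cases follow the same scheme, but their definitions via the upper pasting diagram in Figure~\ref{fig:U_coherence} involve the bicategorical isomorphisms of Lemma~\ref{lem:functoriality-of-corr} and, for \(d_0^*(U)\), the multiplication triangle \(\mu\). In each case I will write \(d_j^*(U)\) as \(\Id\otimes U\) conjugated by canonical unitaries. These go through \(\Lt^2(G^2,\cdot,d_j,\lambda_j)\otimes\Lt^2(G,\A,\s,\tilde\alpha)\otimes_\varphi\F\) and its \(\rg\)-counterpart for \(j=0,2\); for \(j=0\) the map \(\mu\) inserts \(d_2^*\A\otimes d_0^*\A\to d_1^*\A\). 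Testing again on elementary tensors of sections pulled back along \(d_j\) reduces each formula to the same inner-product computation, with \(\lambda_j\) in place of \(\lambda_1\). The main obstacle is the \(d_0^*(U)\) case. There one must check carefully that the canonical isomorphism, together with \(\mu\), carries an elementary section \(\eta_2\) of \(\E_2\cong d_2^*\A\cdot d_0^*\A\) to the expected elementary tensor. One must also check that the products of the Fell bundle coefficients, such as \(\eta_1(g,h)^*\eta_2(g,h)\in\A_g^*\A_g\A_h\), match the \(\Cont_0(G,\A\A^*)\)-valued inner products. I will handle this by first proving it fibrewise with associativity (Remark~\ref{rem:multiply_fibres}), and then extending by density using Lemma~\ref{lem:subfield_equality}.
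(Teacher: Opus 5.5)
Your proposal is correct and follows essentially the paper's route: realise \(d_j^*(U)\) as \(\Id\otimes U\) conjugated by the canonical isomorphisms of Lemma~\ref{lem:compose_corr_from_measure-family}, factor sections so that the creation operators split as \(T^{d_j}_{\theta}T^{\s}_{f}\) or \(T^{d_j}_{\theta}T^{\rg}_{f}\), and reduce to \(L(f_1^\dagger\cdot f_2)=(T^\rg_{f_1})^*UT^\s_{f_2}\) from Lemma~\ref{lem:Lf_well-defined}. The only difference is bookkeeping: the paper works out the \(d_0\) case and factors only the section that defines the annihilation operator (\(\eta_1=\eta_3\cdot d_0^*(\eta_4)\)), and it computes \((T^{d_0}_{\eta_3})^*T^{v_2}_{\eta_2}=T^\s_{\lambda_0(\eta_3^\dagger\eta_2)}\) directly for arbitrary \(\eta_2\), which saves you the density argument in the second variable.
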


\begin{proof}
  By definition, \(\eta^\dagger_1 \cdot \eta_2\) is a section of the
  field \(d_2^*(\A)^* \cdot \Bun[I]\cdot d_1^*(\A)\).  Since
  \(\Bun[I]\cdot d_1^*(\A) = \Bun[I]\cdot d_2^*(\A) d_0^*(\A)\), this
  field is contained in
  \(d_2^*(\A)^* \cdot d_2^*(\A) \cdot d_0^*(\A) \subseteq d_0^*(\A)\).
  Then \(\lambda_0(\eta_1^\dagger \cdot \eta_2)\) is a well-defined
  element of \(\Contc(G,\A)\) by Lemma~\ref{lem:continuity_integral}.
  Explicitly,
  \[
    \lambda_0(\eta_1^\dagger \cdot \eta_2)(h) \defeq \int_{G^2} {}
    (\eta_1^\dagger \cdot \eta_2)(g,h) \,\diff \lambda_0^h(g,h) =
    \int_{G} \eta_1(g,h)^* \cdot \eta_2(g,h) \,\diff
    \tilde\alpha_{\rg(h)}(g).
  \]
  It is trivial that
  \(\eta_0^\dagger \cdot \eta_2 \in \Contc(G^2,d_1^*(\A))\) and
  \(\eta_0^\dagger \cdot \eta_1 \in \Contc(G^2,d_2^*(\A))\).  Thus
  \(\lambda_1(\eta_0^\dagger \cdot \eta_2)\) and
  \(\lambda_2(\eta_0^\dagger \cdot \eta_1)\) are well-defined
  elements of \(\Contc(G,\A)\).  The domain and codomain
  \(\Lt^2(G^2,\E_2,v_2,\mu_2) \otimes_\varphi \F\) and
  \(\Lt^2(G^2,\E_1,v_1,\mu_1) \otimes_\varphi \F\) of \(d_0^*(U)\)
  are isomorphic to
  \begin{gather*}
    \Lt^2(G^2,\Bun[I] \cdot d_2^*(\A),d_0,\lambda_0)
    \otimes_{\Cont_0(G,\A\A^*)} \Lt^2(G,\A,\s,\tilde\alpha)
    \otimes_\varphi \F,\\
    \Lt^2(G^2,\Bun[I] \cdot d_2^*(\A),d_0,\lambda_0)
    \otimes_{\Cont_0(G,\A\A^*)} \Lt^2(G,\A,\rg,\alpha) \otimes_\varphi
    \F,
  \end{gather*}
  respectively, as in Figure~\ref{fig:U_coherence}.  We have defined
  \(d_0^*(U)\) as the composite of these isomorphisms with the
  operator
  \(\Id_{\Lt^2(G^2,\Bun[I] \cdot d_2^*(\A),d_0,\lambda_0)}\otimes
  U\) between these two tensor products
  (compare Figure~\ref{fig:def_d1U}).  Since
  \[
    \bigl(\Bun[I]\cdot d_2^*(\A)\bigr)_{g,h} = \A_g \A_h \A_h^* \A_g^*
    \A_g = \A_g \A_h \A_h^*,
  \]
  it follows that \(\Bun[I]\cdot d_2^*(\A)\) is a field of Hilbert
  modules over \(d_0^*(\A\A^*)\).  Therefore, linear combinations of
  products \(\eta_3 \cdot d_0^*(\eta_4)\) for
  \(\eta_3\in \Contc(G^2,\Bun[I]\cdot d_2^*(\A))\),
  \(\eta_4\in \Contc(G,\A\A^*)\) are dense in
  \(\Contc(G^2,\Bun[I]\cdot d_2^*(\A))\) in the inductive limit
  topology.  So it is no loss of generality to assume that~\(\eta_1\)
  has this form.  Then
  \(T^{v_1}_{\eta_1} = T^{d_0}_{\eta_3} T^\rg_{\eta_4}\).
  Here~\(T^\rg_{\eta_4}\) maps~\(\F\) to
  \(\Lt^2(G,\A\A^*,\rg,\alpha) \otimes_\varphi \F\)
  and~\(T^{d_0}_{\eta_3}\) maps this on to
  \begin{multline*}
    \Lt^2(G^2,\Bun[I] \cdot d_2^*(\A),d_0,\lambda_0)
    \otimes_{\Cont_0(G,\A\A^*)} \Lt^2(G,\A\A^*,\rg,\alpha)
    \otimes_\varphi \F \\\cong \Lt^2(G^2,\E_1,v_1,\mu_1)
    \otimes_\varphi \F.
  \end{multline*}
  If~\(\eta_1\) has this form, then
  \[
    (T^{v_1}_{\eta_1})^* d_0^*(U) = (T^\rg_{\eta_4})^*
    (T^{d_0}_{\eta_3})^* d_0^*(U) = (T^\rg_{\eta_4})^* U
    (T^{d_0}_{\eta_3})^*.
  \]
  As an operator from \(\Lt^2(G^2,\E_2,v_2,\mu_2)\)
  to \(\Lt^2(G,\A,\s,\tilde\alpha)\), the annihilation
  operator~\((T^{d_0}_{\eta_3})^*\) acts by
  \[
    (T^{d_0}_{\eta_3})^*(\psi)(h) = \int_{G^2} \eta_3(g,h)^* \psi(g,h)
    \,\diff\lambda_0^h(g,h) = \lambda_0\bigl( \eta_3^\dagger \cdot
    \psi\bigr)(h).
  \]
  Thus
  \((T^{d_0}_{\eta_3})^* T^{v_2}_{\eta_2} =
  T^\s_{\lambda_0(\eta_3^\dagger \eta_2)}\) as an operator from~\(\F\)
  to \(\Lt^2(G,\A,\s,\tilde\alpha) \otimes_\varphi \F\).  Then
  \begin{multline*}
    (T^{v_1}_{\eta_1})^* d_0^*(U) (T^{v_2}_{\eta_2}) =
    (T^\rg_{\eta_4})^* U (T^{d_0}_{\eta_3})^* T^{v_2}_{\eta_2} =
    (T^\rg_{\eta_4})^* U T^\s_{\lambda_0(\eta_3^\dagger \eta_2)} \\=
    L\bigl(\eta_4^\dagger \cdot \lambda_0(\eta_3^\dagger \eta_2)\bigr)
    = L\bigl(\lambda_0(\eta_1^\dagger \cdot \eta_2)\bigr).
  \end{multline*}
  The other composites
  \((T^{v_0}_{\eta_0})^* d_1^*(U) T^{v_2}_{\eta_2}\) and
  \((T^{v_0}_{\eta_0})^* d_2^*(U) T^{v_1}_{\eta_1}\) are similar.
\end{proof}

Since \((\varphi,U)\) is a representation, it satisfies
\(d_2^*(U)^* d_1^*(U) = d_0^*(U)\).  This implies
\begin{equation}
  \label{eq:integration_representation_to_check}
  (T^{v_1}_{\eta_1})^* d_2^*(U)^* d_1^*(U) T^{v_2}_{\eta_2}
  = (T^{v_1}_{\eta_1})^* d_0^*(U) T^{v_2}_{\eta_2}
\end{equation}
for all \(\eta_1,\eta_2\) as above.  Now we fix
\(f_1,f_2\in\Contc(G,\A)\).  Assume first that there are
\(\eta_1,\eta_2\) as above with
\(\eta_1^\dagger \eta_2 = d_2^*(f_1^\dagger) d_1^*(f_2)\), that is,
\((\eta_1^\dagger \eta_2)(g,h) = f_1(g)^* f_2(g\cdot h)\).  Then
\[
  (T^{v_1}_{\eta_1})^* d_0^*(U) T^{v_2}_{\eta_2} =
  L(\lambda_0(\eta_1^\dagger \eta_2)) =
  L\bigl(\lambda_0(d_2^*(f_1^\dagger) \cdot d_1^*(f_2))\bigr)
\]
with
\begin{align*}
  \lambda_0(d_2^*(f_1^\dagger) d_1^*(f_2))(h)
  &= \int_{G^2} f_1(g)^* f_2(g\cdot h) \,\diff \lambda_0^h(g,h)
  \\&= \int_{G} f_1(g^{-1})^* f_2(g^{-1}\cdot h)
  \,\diff \alpha^{\rg(h)}(g)
  = (f_1^* * f_2)(h)
\end{align*}
for all \(h\in G\); here we have used the definition of~\(\lambda_0\)
in~\eqref{eq:lambda_definition0} and
substituted~\(g^{-1}\) for~\(g\).  Unfortunately, we cannot just take
\(\eta_1 =d_2^*( f_1)\) and \(\eta_2 = d_1^*(f_2)\) because \(\eta_1\)
and~\(\eta_2\) must be compactly supported sections and take values in
\(\Bun[I]\cdot d_2^*(\A)\) and \(\Bun[I]\cdot d_1^*(\A)\),
respectively.  The following lemma avoids this issue:

\begin{lemma}
  \label{lem:approximate_proof_representation_condition}
  Let \((e_n)_{n\in N}\) be an approximate unit in
  \(\Contc(G^2,\Bun[I])\) that is quasi-central for
  \(\Contc(G^2,v_0^*\FU)\).  Let \(f_1,f_2\in\Contc(G,\A)\),
  \(\eta_{1,n} \defeq e_n d_2^*(f_1)\), and
  \(\eta_{2,n} \defeq e_n d_1^*(f_2)\).  Then
  \begin{align}
    \label{eq:approximate_proof_representation_condition_1}
    \lim {}(T^{v_1}_{\eta_{1,n}})^* d_0^*(U) T^{v_2}_{\eta_{2,n}}
    &= L(f_1^* * f_2),\\
    \label{eq:approximate_proof_representation_condition_2}
    \lim {}(T^{v_1}_{\eta_{1,n}})^* d_2^*(U)^* d_1^*(U)
    T^{v_2}_{\eta_{2,n}} &= L(f_1)^* L(f_2)
  \end{align}
  with norm convergence.
\end{lemma}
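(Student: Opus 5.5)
Both limits will be obtained by rewriting the left-hand sides through Lemma~\ref{lem:whiskered_d0U} and explicit annihilation/creation computations. The limits are then controlled with the norm estimate \(\norm{L(f)}\le\norm{f}_I\) from Lemma~\ref{lem:Lf_well-defined}.

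For~\eqref{eq:approximate_proof_representation_condition_1}, Lemma~\ref{lem:whiskered_d0U} gives
\[
(T^{v_1}_{\eta_{1,n}})^* d_0^*(U) T^{v_2}_{\eta_{2,n}} = L\bigl(\lambda_0(\eta_{1,n}^\dagger\eta_{2,n})\bigr),
\qquad
\eta_{1,n}^\dagger\eta_{2,n}(g,h) = f_1(g)^* e_n(g,h)^* e_n(g,h) f_2(gh).
\]
By the computation preceding the lemma, \(L(f_1^* * f_2) = L\bigl(\lambda_0(d_2^*(f_1^\dagger)d_1^*(f_2))\bigr)\). So it suffices to show that \(\lambda_0(\eta_{1,n}^\dagger\eta_{2,n}) \to \lambda_0(d_2^*(f_1^\dagger)d_1^*(f_2))\) in \(I\)\nb-norm.

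All these sections are supported in the fixed compact set \(K\defeq \{(g,h): g\in\supp f_1,\ gh\in\supp f_2\}\). It is therefore enough to prove uniform convergence on~\(K\) of
\[
f_1(g)^* e_n^*e_n f_2(gh) \to f_1(g)^* f_2(gh)
\]
in \(d_0^*\A\). Uniform convergence on a fixed compact set gives \(I\)\nb-norm convergence after applying~\(\lambda_0\), because the measures \(\lambda_0\) of \(K\)-fibres are uniformly bounded.

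The section \((g,h)\mapsto f_2(gh)\) lies in \(\Cont_0(G^2,d_1^*\A)\), and its fibre at \((g,h)\) lies in \(\A_{gh}\). However, \(f_1(g)^*f_2(gh)\in \A_g^*\A_{gh}=\A_g^*\A_g\A_h\) by Lemma~\ref{lem:equality_of_products}. So only the part \(\A_g\A_h\subseteq \A_{gh}\) matters, and I would rewrite \(f_1(g)^* x = f_1(g)^*\, p\, x\) with \(p\) ranging over \(\Bun[I]_{(g,h)}\)-type factors. Concretely, \(d_2^*(f_1)\) is a section of \(\Bun[I]\cdot d_2^*\A\) up to approximation, since \(\A_g\A_h\A_h^*\A_g^*\A_g=\A_g\A_h\A_h^*\) is only a subspace of~\(\A_g\).

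This is where I expect the main obstacle: \(e_n\) is an approximate unit of \(\Bun[I]\), not of \(v_0^*\FU\). Hence \(e_n^*e_n f_2(gh)\) need not converge to \(f_2(gh)\). What converges is \(e_n^* e_n\, y\to y\) for \(y\in \Cont_0(G^2,\E_2)\), where \(\E_2\cong d_2^*\A\cdot d_0^*\A\) is a left Hilbert module over \(\Bun[I]\). Nondegeneracy of \(\E_2\) over \(\Cont_0(G^2,\Bun[I])\) follows as in Lemma~\ref{lem:nondegenerate_over_AA-star} via Lemma~\ref{lem:subfield_equality}.

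I would handle this by factoring \(f_1^\dagger\) as in Lemma~\ref{lem:factor_f}, writing \(f_1(g)^* = a(g)^* b(g)\) with \(b\in\Contc(G,\A\A^*)\). Then \(f_1(g)^*e_n^*e_n f_2(gh)\) becomes \(a(g)^*\, e_n^*e_n\, b(g) f_2(gh)\), using quasi-centrality of \((e_n)\) for \(\Contc(G^2,v_0^*\FU)\) to move \(b\) past \(e_n^*e_n\) up to an error tending to~\(0\). The remaining question is whether \(b(g)f_2(gh)\) lies in \(\E_2\); in general \(\A_g\A_g^*\A_{gh}\ne\A_g\A_h\). To fix this I would further factor \(f_2\) through \(\A_{gh}\A_{h}^{*}\A_h\), or alternatively insert an approximate unit of \(d_0^*(\A^*\A)\) on the right. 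Ideally this shows that the relevant products automatically land in \(\E_2\) after multiplying by \(f_1(g)^*\), since \(\A_g^*\A_{gh}=\A_g^*\A_g\A_h\). The convergence would then hold uniformly on~\(K\) because \((e_n)\) is an approximate unit for the \(\Cst\)-algebra of sections over a compact neighbourhood of~\(K\).

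For~\eqref{eq:approximate_proof_representation_condition_2}, I would insert the definition of \(d_1^*(U)\) from Figure~\ref{fig:def_d1U} and the analogous factorisation of \(d_2^*(U)\). The creation operator \(T^{v_2}_{\eta_{2,n}}\) factors as \(T^{d_1}_{e_n}T^\s_{f_2}\) up to \(\gamma_1\), and likewise \(T^{v_1}_{\eta_{1,n}}\) factors through \(T^{d_2}_{e_n}T^\rg_{\cdot}\) composed with \(U\). Then \(d_1^*(U)T^{v_2}_{\eta_{2,n}} = \gamma_2(T^{d_1}_{e_n}\otimes 1)U T^\s_{f_2}\) and \(d_2^*(U)T^{v_1}_{\eta_{1,n}} = \gamma_2(T^{d_1}_{e_n}\otimes 1)\cdots\), where the left factor involves \(f_1\) via \(U T^\s_{f_1}\). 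The product therefore collapses to
\[
(UT^\s_{f_1})^*\,(T^{d_1}_{e_n})^*T^{d_1}_{e_n}\,(UT^\s_{f_2}),
\]
up to the canonical identifications. Since \((T^{d_1}_{e_n})^*T^{d_1}_{e_n}\) is multiplication by \(\lambda_1(e_n^*e_n)\in\Cont_0(G,\A\A^*)\), which commutes with~\(U\), this becomes \((T^\s_{f_1})^*\, M_{\lambda_1(e_n^*e_n)}\, T^\s_{f_2}\). Strictly, one should first reduce to \(f_i = c_i^\dagger f_i'\) with \(c_i\in\Contc(G,\A\A^*)\) to write \(L(f_i)=(T^\rg_{c_i})^*UT^\s_{f_i'}\). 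Then \(M_{\lambda_1(e_n^*e_n)}\) converges to~\(1\) on the relevant compactly supported vectors because \(\lambda_1(e_n^*e_n)\) forms an approximate unit of \(\Cont_0(G,\A\A^*)\) on compacta. The limit is \((T^\rg_{c_1})^*U\,\cdots\), which equals \(L(f_1)^*L(f_2)\).
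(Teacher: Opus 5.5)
There is a genuine gap in your proof of \eqref{eq:approximate_proof_representation_condition_2}. Your first part is sound once one false claim is corrected, as explained at the end.

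\textbf{Second part.} The two creation operators factor through \emph{different} face maps: \(T^{v_2}_{\eta_{2,n}} = T^{d_1}_{e_n}T^\s_{f_2}\), but \(T^{v_1}_{\eta_{1,n}} = T^{d_2}_{e_n}T^\s_{f_1}\). This is because \(d_2^*(U)\) is \(\Id\otimes U\) on \(\Lt^2(G^2,\Bun[I],d_2,\lambda_2)\otimes_{\Cont_0(G,\A\A^*)}\Lt^2(G,\A,\s,\tilde\alpha)\otimes_\varphi\F\). Commuting \(U\) past the creation operators gives
\[
  (T^\s_{f_1})^*\, U^*\, (T^{d_2}_{e_n})^*\, T^{d_1}_{e_n}\, U\, T^\s_{f_2},
\]
so the middle operator is \((T^{d_2}_{e_n})^*T^{d_1}_{e_n}\), not \((T^{d_1}_{e_n})^*T^{d_1}_{e_n}\).

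Your operator \((T^{d_1}_{e_n})^*T^{d_1}_{e_n}\) is the left action of \(\lambda_1(e_n^\dagger e_n)\). This has no reason to tend to~\(1\): it integrates \(e_n^\dagger e_n\) over entire fibres of~\(d_1\), and for the trivial bundle over a noncompact group it blows up. Even your formal limit \((T^\s_{f_1})^*T^\s_{f_2}=\varphi(\braket{f_1}{f_2})\) is only the action of an element of \(\Cont_0(G^0,\FU)\). For the trivial bundle over a group it is a scalar multiple of the identity, so it is not \(L(f_1)^*L(f_2)\) in general. Your last sentence therefore does not follow.

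\textbf{The missing idea.} Factor both functions as in Lemma~\ref{lem:factor_f}: \(f_1=f_3\cdot f_4\) and \(f_2=f_5\cdot f_6\) with \(f_3,f_5\in\Contc(G,\A\A^*)\) and \(f_4,f_6\in\Contc(G,\A)\). Move \(f_3\) and~\(f_5\) through~\(U\) as multiplication operators. Then compute the sandwiched operator \(M_{f_3}^*(T^{d_2}_{e_n})^*T^{d_1}_{e_n}M_{f_5}\) explicitly. It is the integral operator
\[
  \psi\mapsto\Bigl[g\mapsto \int_G f_3(g)^*(e_n^\dagger e_n)(g,h)\,f_5(gh)\,\psi(gh)\,\diff\alpha^{\s(g)}(h)\Bigr].
\]
Its kernels have uniform compact support. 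They converge uniformly to \(f_3(g)^*f_5(gh)\) by the argument of the first part, using \(\Bun[I]_{(g,h)}=\A_g\A_g^*\cdot\A_{gh}\A_{gh}^*\) and quasi-centrality. Hence these operators converge in norm to \(\psi\mapsto f_3(g)^*\int_G f_5(k)\psi(k)\,\diff\alpha^{\rg(g)}(k)\), which is \(T^\rg_{f_3^\dagger}(T^\rg_{f_5^\dagger})^*\). The limit is then \((T^\s_{f_4})^*U^*T^\rg_{f_3^\dagger}(T^\rg_{f_5^\dagger})^*UT^\s_{f_6}=L(f_1)^*L(f_2)\). The product of two integrated operators comes from this rank-one-type limit, not from something converging to the identity.

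\textbf{First part.} This works, but the obstruction you flag does not exist. By Lemma~\ref{lem:equality_of_products},
\(\A_g\A_g^*\A_{gh}=\A_g\A_g^*\A_g\A_h=\A_g\A_h\), which is the fibre of~\(\E_2\). So your claim \(\A_g\A_g^*\A_{gh}\neq\A_g\A_h\) is false. Consequently \((g,h)\mapsto b(g)f_2(gh)\) is already a compactly supported section of~\(\E_2\). Nondegeneracy of \(\Cont_0(G^2,\E_2)\) over \(\Cont_0(G^2,\Bun[I])\) then gives the uniform convergence, with no further factorisation of~\(f_2\) needed. The paper instead factors both \(f_1\) and~\(f_2\) and uses that \(\Bun[I]\) is the product of the ideals \(d_2^*(\A\A^*)\) and \(d_1^*(\A\A^*)\); either route works here.
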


\begin{proof}
  The asserted equations are meaningful and
  Lemma~\ref{lem:whiskered_d0U} applies because
  \(\eta_{1,n} \in \Contc(G^2,\Bun[I]\cdot d_2^*(\A))\) and
  \(\eta_{2,n} \in \Contc(G^2,\Bun[I]\cdot d_1^*(\A))\) for all
  \(n\in N\).  Thus
  \((T^{v_1}_{\eta_{1,n}})^* d_0^*(U) T^{v_2}_{\eta_{2,n}} =
  L(\lambda_0(\eta_{1,n}^\dagger \cdot \eta_{2,n}))\).  A
  computation as above shows that
  \[
    \lambda_0(\eta_{1,n}^\dagger \cdot \eta_{2,n})(h) = \int_{G}
    f_1(g)^* \cdot (e_n^\dagger\cdot e_n)(g,h) \cdot f_2(g h)
    \,\diff\tilde\alpha_{\rg(h)}(g).
  \]
  Here \((e_n^\dagger \cdot e_n)_{n\in N}\) is again a quasi-central
  approximate unit in \(\Contc(G^2,\Bun[I])\).  We claim that the net
  of sections
  \(f_1(g)^* \cdot (e_n^\dagger\cdot e_n)(g,h) \cdot f_2(g h)\)
  converges in the supremum norm towards \(f_1(g)^* \cdot f_2(g h)\).
  Since \(f_1,f_2\) are compactly supported, there is a compact subset
  \(K\subseteq G^2\) so that \(f_1(g)\) or \(f_2(g h)\)
  vanishes for \((g,h)\notin K\).
  Therefore, the convergence in supremum norm is a convergence in the
  inductive limit topology.
  And then \(\lim {}(T^{v_1}_{\eta_{1,n}})^* d_0^*(U)
  T^{v_2}_{\eta_{2,n}} = L(f_1^* * f_2)\) follows.
  Lemma~\ref{lem:factor_f} allows to write \(f_1 = f_3 \cdot f_4\) and
  \(f_2 = f_5\cdot f_6\) with \(f_4,f_6\in \Contc(G,\A)\) and \(f_3,
  f_5 \in \Contc(G,\A\A^*)\).
  It suffices to prove that \(f_3(g)^* \cdot (e_n^\dagger\cdot
  e_n)(g,h) \cdot f_5(g h)\) converges uniformly towards \(f_3(g)^*
  \cdot f_5(g h)\).
  Now \(f_3(g)^*\) and \(f_5(g)^*\) are compactly supported continuous
  sections of the fields of ideals \(d_2^*(\A\A^*)\) and
  \(d_1^*(\A\A^*)\) in the field of \(\Cst\)\nb-algebras \(\rg^*\FU\),
  and \(\Bun[I] = d_2^*(\A\A^*)\cdot d_1^*(\A\A^*)\) is the product of
  these two ideals.
  The desired convergence follows from the following elementary
  general fact: let~\(A\) be a \(\Cst\)\nb-algebra, let \(I,J\subseteq
  A\) be two ideals, and let \((e_n)_{n\in N}\) be a quasi-central
  approximate unit for \(I\cdot J = I\cap J\).
  Then \(\lim x\cdot e_n \cdot y=\lim x\cdot y\cdot e_n = x\cdot y\)
  for all \(x\in I\), \(y\in J\).

  This finishes the proof
  of~\eqref{eq:approximate_proof_representation_condition_1}.  We turn
  to~\eqref{eq:approximate_proof_representation_condition_2}.  We keep
  using the factorisations \(f_1 = f_3 \cdot f_4\) and
  \(f_2 = f_5\cdot f_6\) and compute
  \begin{multline*}
    (T^{v_1}_{e_n d_2^*(f_1)})^* d_2^*(U)^* d_1^*(U) T^{v_2}_{e_n
      d_1^*(f_2)}
    = (T^\s_{f_1})^* (T^{d_2}_{e_n})^* d_2^*(U)^* d_1^*(U)
    T^{d_1}_{e_n} T^\s_{f_2}
    \\= (T^\s_{f_3 \cdot f_4})^* U^*   (T^{d_2}_{e_n})^*
    T^{d_1}_{e_n} U T^\s_{f_5\cdot f_6}
    = (T^\s_{f_4})^* U^* M_{f_3}^* (T^{d_2}_{e_n})^* T^{d_1}_{e_n}
    M_{f_5} U T^\s_{f_6}.
  \end{multline*}
  The operator \(M_{f_3}^* (T^{d_2}_{e_n})^* T^{d_1}_{e_n} M_{f_5}\)
  in the middle is the exterior tensor product with \(\Id_{\F}\) of an
  operator on \(\Lt^2(G,\A\A^*,\rg,\alpha)\), which we also denote by
  the same name.  If
  \(\psi\in \Contc(G,\A) \subseteq \Lt^2(G,\A,\rg,\alpha)\), then
  \begin{multline*}
    M_{f_3}^* (T^{d_2}_{e_n})^* T^{d_1}_{e_n} M_{f_5} \psi(g)
    = f_3(g)^* \cdot \lambda_2( e_n^\dagger \cdot e_n\cdot
    d_1^*(f_5\cdot \psi))(g)
    \\= \int_{G^2} f_3(g)^* (e_n^\dagger\cdot e_n)(g,h)
    f_5(g\cdot h) \psi(g\cdot h) \,\diff\lambda_2^g(g,h).
  \end{multline*}
  An argument as above shows that the sections
  \((g,h)\mapsto f_3(g)^* (e_n^\dagger\cdot e_n)(g,h) f_5(g\cdot h)\)
  have a uniform compact support and converge in supremum norm towards
  the section \((g,h)\mapsto f_3(g)^* f_5(g\cdot h)\).  The resulting
  operator on \(\Lt^2(G,\A,\rg,\alpha)\) maps~\(\psi\) to the function
  \begin{multline*}
    g\mapsto \int_{G^2} f_3(g)^* f_5(g\cdot h) \psi(g\cdot h)
    \,\diff\lambda_2^g(g,h)
    = \int_{G} f_3(g)^* f_5(g\cdot h) \psi(g\cdot h) \,\diff\alpha^{\s(g)}(h)
    \\= f_3(g)^* \cdot \int_{G} f_5(k) \psi(k) \,\diff\alpha^{\rg(g)}(k).
  \end{multline*}
  As a result,
  \[
    \lim M_{f_3}^* (T^{d_2}_{e_n})^* T^{d_1}_{e_n} M_{f_5} =
    T^\rg_{f_3^\dagger} (T^\rg_{f_5^\dagger})^*.
  \]
  This implies
  \begin{multline*}
    \lim {}(T^{v_1}_{e_n d_2^*(f_1)})^* d_2^*(U)^* d_1^*(U)
    T^{v_2}_{e_n d_1^*(f_2)} = (T^\s_{f_4})^* U^* T^\rg_{f_3^\dagger}
    (T^\rg_{f_5^\dagger})^* U T^\s_{f_6} \\= ((T^\rg_{f_3^\dagger})^*
    U T^\s_{f_4})^* (T^\rg_{f_5^\dagger})^* U T^\s_{f_6} = L(f_3\cdot
    f_4)^* L(f_5 \cdot f_6).
  \end{multline*}
  This computation finishes the proof of the lemma.
\end{proof}

Lemma~\ref{lem:approximate_proof_representation_condition}
and~\eqref{eq:integration_representation_to_check}
imply~\eqref{eq:Lf_star-rep}.  And this finishes the proof of
Proposition~\ref{pro:Lf_star-representation}.  As a result, any
representation of a Fell bundle~\(\A\) integrates to an \(I\)\nb-norm
bounded, nondegenerate representation of the \Star{}algebra
\(\Contc(G,\A)\).  This extends uniquely to a nondegenerate
representation of \(\Cst(G,\A)\) by the universal property.

\subsection{The regular representation}
\label{sec:regular_rep}

We are going to define a ``regular'' representation of the Fell
bundle~\(\A\) and show that it integrates to the usual regular
representation, namely, the action of \(\Contc(G,\A)\) on
\(\Lt^2(G,\A,\s,\tilde\alpha)\) by convolution.  As a consequence, the
latter representation of \(\Contc(G,\A)\) is bounded in the
\(I\)\nb-norm.  The constructions below are the same as
in~\cite{Buss-Holkar-Meyer:Universal}.  We merely add appropriate
continuous fields of \(\Cst\)\nb-algebras and
\(\Cst\)\nb-correspondences everywhere.  We construct the regular
representation on dense subspaces of continuous sections of compact
support.  In this form, the construction will reappear in the proof
of the disintegration theorem in Section~\ref{sec:disintegrate}.

The regular representation \((\varphi,U)\) of~\(\A\) is defined on
the Hilbert \(\Cont_0(G^0,\FU)\)-module
\(\F\defeq \Lt^2(G,\A,\s,\tilde\alpha)\).  Defining~\(\varphi\)
on~\(\F\) amounts to making it a
\(\Cont_0(G^0,\FU)\)-\(\Cont_0(G^0,\FU)\)-correspondence.  We simply
take the \(\Cst\)\nb-correspondence
\(\rg^* \Lt^2(G,\A,\s,\tilde\alpha)\) that is defined by the
decorated topological correspondence
\[
  \begin{tikzpicture}
    \matrix (m) [cd] {
      &(G, \A)\\
      (G^0,\FU)&&(G^0,\FU).\\
    };
    \meascor{m-1-2}{\s}{\tilde\alpha}{m-2-3}
    \meascor{m-1-2}{}{\rg}{m-2-1}
  \end{tikzpicture}
\]
In other words, if \(f\in\Cont_0(G^0,\FU)\), \(\xi\in \Contc(G,\A)\),
then
\begin{equation}\label{eq:left-reg-rep-multi}
(\varphi(f)\xi)(g) \defeq f(\rg(g))\cdot \xi(g)
\end{equation}
for all
\(g\in G\); this extends to a nondegenerate representation~\(\varphi\)
of \(\Cont_0(G^0,\FU)\) by adjointable operators on
\(\Lt^2(G,\A,\s,\tilde\alpha)\).  The operator~\(U\) should be an
isomorphism of
\(\Cont_0(G,\A\A^*)\)-\(\Cont_0(G^0,\FU)\)-correspondences
\begin{multline*}
  \Lt^2(G,\A,\s,\tilde\alpha) \otimes_{\Cont_0(G^0,\FU)}
  \rg^* \Lt^2(G,\A,\s,\tilde\alpha)
  \\\congto \Lt^2(G,\A\A^*,\rg,\alpha) \otimes_{\Cont_0(G^0,\FU)}
  \rg^* \Lt^2(G,\A,\s,\tilde\alpha).
\end{multline*}
And then the operators~\(d_j^*(U)\) for \(j=0,1,2\) are isomorphisms of
\(\Cont_0(G^2,\Bun[I])\)-\(\Cont_0(G^0,\FU)\)-correspondences
\begin{multline}
  \label{eq:dj_U_domain_codomain}
  \Lt^2(G^2,\E_k,v_k,\mu_k) \otimes_{\Cont_0(G^0,\FU)}
  \rg^*\Lt^2(G,\A,\s,\tilde\alpha) \\\congto
  \Lt^2(G^2,\E_l,v_l,\mu_l) \otimes_{\Cont_0(G^0,\FU)}
  \rg^*\Lt^2(G,\A,\s,\tilde\alpha);
\end{multline}
here \(k,l\) are such that \(k>l\) and \(\{j,k,l\} = \{0,1,2\}\), and
\(\E_0 = \Bun[I]\), \(\E_1 = \Bun[I] \cdot d_2^*(\A)\) and
\(\E_2 = \Bun[I] \cdot d_1^*(\A)\) as in Figure~\ref{fig:U_coherence}.
All five tensor products above are again associated to topological
correspondences decorated by fields of \(\Cst\)\nb-correspondences by
Lemma~\ref{lem:functoriality-of-corr}, which also describes the
underlying topological correspondences.  We will get the
isomorphism~\(U\) from an isomorphism between the underlying
topological correspondences.  Then the induced isomorphisms
\(d_j^*(U)\) are also induced by isomorphisms of their underlying
topological correspondences, and the equation
\(d_2^*(U) d_0^*(U) = d_1^*(U)\) holds already on the level of
isomorphisms of topological correspondences.  We use
Lemma~\ref{lem:functoriality-of-corr} to describe
\(\Lt^2(G,\A,\s,\tilde\alpha) \otimes_\varphi \rg^*
\Lt^2(G,\A,\s,\tilde\alpha)\) and
\(\Lt^2(G,\A\A^*,\rg,\alpha) \otimes_\varphi \rg^*
\Lt^2(G,\A,\s,\tilde\alpha)\) through topological correspondences.
After some simplification, we get the two topological correspondences
in the top and bottom row of the following diagram:
\begin{equation}
  \label{eq:regular_rep_as_iso_of_top_corr}
  \begin{tikzpicture}[baseline=(current bounding box.west)]
    \matrix (m) [cd] {
      &(G\times_{\s,G^0,\rg} G, \A \otimes_{\FU} \A)\\
      (G,\A\A^*)&&(G^0,\FU)\\
      &(G\times_{\rg,G^0,\rg} G, \A\A^* \otimes_{\FU} \A)\\
    };
    \draw[->,dashed] (m-1-2) -- node {\(\scriptstyle U\)} (m-3-2);
    \meascor{m-1-2}{{\s\circ
        \pr_2}}{{\tilde\alpha\times_{G^0}\tilde\alpha}}{m-2-3}
    \meascor{m-1-2}{}{\pr_1}{m-2-1}
    \meascor{m-3-2}{\s\circ \pr_2}
    {\tilde\alpha\times_{G^0}\alpha}{m-2-3}
    \meascor{m-3-2}{}{\pr_1}{m-2-1}
  \end{tikzpicture}
\end{equation}
Here \(\A \otimes_{\FU} \A\) and \(\A\A^* \otimes_{\FU} \A\) are the
fields of \(\Cst\)\nb-correspondences with fibres
\(\A_g\otimes_{\FU_{\s(g)}} \A_h\) at
\((g,h) \in G\times_{\s,G^0,\rg} G\) and
\(\A_g\A_g^*\otimes_{\FU_{\rg(g)}} \A_k\) at
\((g,k) \in G\times_{\rg,G^0,\rg} G\), respectively.  The fibre
product \(G\times_{\s,G^0,\rg} G\) is equal to~\(G^2\), and
\(G\times_{\rg,G^0,\rg} G\) is homeomorphic to~\(G^2\) through the
homeomorphism
\begin{equation}
  \label{eq:Upsilon}
  \Upsilon\colon G^2 = G\times_{\s,G^0,\rg} G \congto
  G\times_{\rg,G^0,\rg} G,\qquad
  (g,h)\mapsto (g,g\cdot h),
\end{equation}
in \cite{Buss-Holkar-Meyer:Universal}*{Equation~(3.22)}.  This
homeomorphism intertwines the maps to \(G\) and~\(G^0\) in the two
topological correspondences
in~\eqref{eq:regular_rep_as_iso_of_top_corr}, which are
\(\pr_1=d_2\) and
\(\s\circ \pr_2=v_2\) on~\(G^2\).  And it is also exactly compatible
with the measure families on the map to~\(G^0\), which are the
equivalent ways of defining the measure family~\(\mu_2\) along the
map~\(v_2\); these computations are done already
in~\cite{Buss-Holkar-Meyer:Universal} to build the regular
representation of~\(G\) without Fell bundle.  In addition, we now need
an isomorphism between the fields of
\(\Bun[I]\)-\(v_2^*\FU\)-correspondences \(\A \otimes_{\FU} \A\) and
\(\Upsilon^*(\A\A^* \otimes_{\FU} \A)\) over~\(G^2\).  Fibrewise, this
isomorphism~\(\upsilon\) is given by
Lemma~\ref{lem:equality_of_products}, by the multiplication on fibres,
\[
  \upsilon_{g,h}\colon \A_g\otimes_{\FU_{\s(g)}} \A_h \congto \A_g
  \A_g^*\otimes_{\FU_{\rg(g)}} \A_{g h};
\]
since the multiplication in the Fell bundle is continuous, these maps
on the fibres form a continuous map between the relevant fields of
\(\Cst\)\nb-correspondences.  Now
Lemma~\ref{lem:subfield_equality} implies that~\(\upsilon\) is
an isomorphism of fields of \(\Cst\)\nb-correspondences.  Let~\(U\) be
the isomorphism of
\(\Cont_0(G,\A\A^*)\)-\(\Cont_0(G^0,\FU)\)-\(\Cst\)\nb-correspondences
induced by~\(\upsilon\).  This finishes the construction
of~\(U\) as an isomorphism of decorated topological correspondences.
We remark for later use that, by construction, \(U\) is the unique
unitary extension of a
\(\Cont_0(G,\A\A^*)\)-\(\Cont_0(G^0,\FU)\)-bimodule isomorphism
\begin{equation}
  \label{eq:regular_U_on_dense_subspaces}
  U_0\colon \Contc(G\times_{\s,G^0,\rg} G, \A \otimes_{\FU} \A)
  \congto \Contc(G\times_{\rg,G^0,\rg} G, \A\A^* \otimes_{\FU} \A).
\end{equation}

Next we check the identity \(d_2^*(U) d_0^*(U) = d_1^*(U)\).  Let
\(j,k,l\) be as in~\eqref{eq:dj_U_domain_codomain}.  The domain
of~\(d_j^*(U)\) is associated to the topological correspondence
\[
  (G^2\times_{v_k,G^0,\rg} G, \E_k \otimes_{\FU} \A,
  \pr_1,\s\circ\pr_2,\tilde\alpha\circ\mu_k)
\]
from \((G^2,\Bun[I])\) to \((G^0,\FU)\), and the codomain has the same
form with \(l\) instead of~\(k\); here we use the triangles for the
composition \(v_k = \s\circ d_j\) and \(v_l = \rg\circ d_j\) in
Figure~\ref{fig:U_coherence} to simplify the measure families and
fields of \(\Cst\)\nb-correspondences to \(\E_k\) and~\(\E_l\)
as in~\eqref{eq:dj_U_domain_codomain}.  The isomorphisms \(d_j^*(U)\)
come from isomorphisms of decorated topological correspondences.
Their underlying homeomorphisms are
\begin{alignat*}{2}
  d_1^*(\Upsilon)&\colon G^2\times_{v_2,G^0,\rg} G\congto
  G^2\times_{v_0,G^0,\rg} G,&\qquad
  (g,h,l)&\mapsto (g,h,ghl),\\
  d_0^*(\Upsilon)&\colon G^2\times_{v_2,G^0,\rg} G\congto
  G^2\times_{v_1,G^0,\rg} G,&\qquad
  (g,h,l)&\mapsto (g,h,hl),\\
  d_2^*(\Upsilon)&\colon G^2\times_{v_1,G^0,\rg} G\congto
  G^2\times_{v_0,G^0,\rg} G,&\qquad (g,h,l)&\mapsto (g,h,gl),
\end{alignat*}
as in~\cite{Buss-Holkar-Meyer:Universal} after Equation~(3.22).  These
homeomorphisms intertwine the maps to \(G^2\) and~\(G^0\) and exactly
preserve the measure families along the fibres of the map to~\(G^0\)
because~\(U\) preserves the measure families exactly.  Each
\(d_j^*(U)\) also contains an isomorphism between the fields of
\(\Cst\)\nb-correspondences by which its domain and codomain are
decorated.  The restriction of this isomorphism for~\(d_j^*(U)\) to
the fibre at \((g,h,l) \in G^2 \times_{v_k,G^0,\rg} G\) is an
isomorphism
\begin{align*}
  d_1^*(\upsilon)_{g,h,l}\colon \Bun[I]_{g,h}\cdot \A_{g h}
  \otimes_{\FU_{\s(h)}} \A_l
  &\congto \Bun[I]_{g,h} \otimes_{\FU_{\rg(g)}} \A_{g h l},\\
  d_0^*(\upsilon)_{g,h,l}\colon \Bun[I]_{g,h}\cdot \A_{g h}
  \otimes_{\FU_{\s(h)}} \A_l
  &\congto \Bun[I]_{g,h} \cdot \A_g \otimes_{\FU_{\s(g)}} \A_{h l},\\
  d_2^*(\upsilon)_{g,h,l}\colon \Bun[I]_{g,h}\cdot \A_{g\phantom{h}}
  \otimes_{\FU_{\s(g)}} \A_l &\congto \Bun[I]_{g,h}
  \otimes_{\FU_{\rg(g)}} \A_{g l}.
\end{align*}
Each of the isomorphisms \(d_j^*(\upsilon)_{g,h,l}\) is a composite of
a multiplication map in the Fell bundle with the inverse of a
multiplication map in the Fell bundle; the factor~\(\Bun[I]_{g,h}\)
restricts the multiplication maps to suitable Hilbert subbimodules on
which multiplication is an isomorphism even if the Fell
bundle is not saturated.

The associativity of the multiplication in~\(G\) implies
\(d_2^*(\Upsilon) d_0^*(\Upsilon) = d_1^*(\Upsilon)\).
The associativity of the multiplication in~\(\A\) implies
\(d_2^*(\upsilon)_{g,h,h l} d_0^*(\upsilon)_{g,h,l} =
d_1^*(\upsilon)_{g,h,l}\).
These computations together give \(d_2^*(U) d_0^*(U) = d_1^*(U)\).
Thus the pair \((\varphi,U)\) is a representation of~\(\A\) on \(\F =
\Lt^2(G,\A,\s,\tilde\alpha)\).
We remark for later use that the proof also shows the identity
\(d_2^*(U_0) d_0^*(U_0) = d_1^*(U_0)\) for the operators
\[
  d_j^*(U_0) \colon \Contc(G^2\times_{v_k,G^0,\rg} G,\E_k
  \otimes_{\FU} \A) \congto \Contc(G^2\times_{v_l,G^0,\rg} G,\E_l
  \otimes_{\FU} \A)
\]
that we get by restricting \(d_j^*(U)\) to \(\Contc\)\nb-sections.

\begin{proposition}
  \label{lem:integrated_regular}
  The nondegenerate \Star{}homomorphism
  \[
    \Lambda\colon \Contc(G,\A)\to \Bound(\Lt^2(G,\A,\s,\tilde\alpha))
  \]
  that integrates the representation of~\(\A\) on
  \(\Lt^2(G,\A,\s,\tilde\alpha)\) constructed above is the ``usual''
  regular representation given by \(\Lambda(f)\xi = f*\xi\) for all
  \(f,\xi\in \Contc(G,\A)\).
\end{proposition}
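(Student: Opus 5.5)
We compute \(L(f)\) from~\eqref{eq:def-integration-FB} directly on dense subspaces of compactly supported sections. By Lemma~\ref{lem:Lf_well-defined}, \(\Lambda(f) = (T^\rg_{f_1})^* U T^\s_{f_2}\) for any factorisation \(f = f_1^\dagger\cdot f_2\) with \(f_1\in\Contc(G,\A\A^*)\) and \(f_2\in\Contc(G,\A)\), for instance the one of Lemma~\ref{lem:factor_f}. Both \(\Lambda(f)\) and \(\xi\mapsto f*\xi\) are adjointable, hence bounded, operators on \(\F = \Lt^2(G,\A,\s,\tilde\alpha)\). For convolution, boundedness follows a posteriori once the two operators agree on a dense subspace, or directly from an \(I\)\nb-norm estimate. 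So it suffices to check \(\Lambda(f)\xi = f*\xi\) for \(\xi\in\Contc(G,\A)\), or even for \(\xi\) in a dense subset of \(\Contc(G,\A)\).

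\emph{Step~1.} We identify the creation operator \(T^\s_{f_2}\) concretely. Under Lemma~\ref{lem:functoriality-of-corr}, the tensor product \(\Lt^2(G,\A,\s,\tilde\alpha)\otimes_\varphi \rg^*\Lt^2(G,\A,\s,\tilde\alpha)\) is the completion of \(\Contc(G^2,\A\otimes_\FU\A)\), with \(\gamma(f_2\otimes\xi)(g,h) = f_2(g)\otimes\xi(h)\). So \(T^\s_{f_2}\xi\) is the section \((g,h)\mapsto f_2(g)\otimes\xi(h)\).

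\emph{Step~2.} Applying \(U_0\) from~\eqref{eq:regular_U_on_dense_subspaces}, which is induced by \(\Upsilon\) and~\(\upsilon\), gives the section of \(\A\A^*\otimes_\FU\A\) over \(G\times_{\rg,G^0,\rg}G\) determined by
\[
  (g,k)\mapsto \upsilon_{g,g^{-1}k}\bigl(f_2(g)\otimes\xi(g^{-1}k)\bigr),
\]
where \(\upsilon\) is the multiplication isomorphism of Lemma~\ref{lem:equality_of_products}. The one point requiring care is how \(\upsilon\) behaves under pairing with \(\A_g\A_g^*\). For \(c\in\A_g\A_g^*\), the image of \(a\otimes b\) pairs with \(c\otimes\cdot\) to give \(c^*\cdot a\cdot b\in\A_{gh}\). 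Concretely, if \(a\otimes b\mapsto \sum c_i\otimes d_i\) with \(\sum c_i d_i = ab\), then \(\sum c^*c_i d_i = c^*ab\).

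\emph{Step~3.} We apply the annihilation operator \((T^\rg_{f_1})^*\). It maps a section \(\Psi\) of \(\A\A^*\otimes_\FU\A\) over \(G\times_{\rg,G^0,\rg}G\) to
\[
  k\mapsto \int_G f_1(g)^*\cdot\Psi(g,k)\,\diff\alpha^{\rg(k)}(g),
\]
where \(f_1(g)^*\) acts through the pairing just described. This yields
\[
  \bigl(\Lambda(f)\xi\bigr)(k) = \int_G f_1(g)^* f_2(g)\,\xi(g^{-1}k)\,\diff\alpha^{\rg(k)}(g)
  = \int_G f(g)\,\xi(g^{-1}k)\,\diff\alpha^{\rg(k)}(g) = (f*\xi)(k).
\]
Which measure is used here is fixed by the measure family \(\tilde\alpha\times_{G^0}\alpha\) in~\eqref{eq:regular_rep_as_iso_of_top_corr}. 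Integrating the outer variable \(k\) against~\(\tilde\alpha\) for the \(\s\circ\pr_2\) direction leaves precisely the inner integral over \(\alpha^{\rg(k)}\).

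The main obstacle is bookkeeping rather than ideas. One must track the identifications of Lemma~\ref{lem:functoriality-of-corr} and of~\eqref{eq:regular_rep_as_iso_of_top_corr} precisely enough that the annihilation operator is the claimed fibrewise integral, and that the pairing of \(f_1(g)^*\) with \(\upsilon(f_2(g)\otimes\xi)\) is exactly Fell-bundle multiplication. To settle this, I would first verify the inner product formula for elementary tensors \(f_1\otimes\eta\) with \(\eta\in\Contc(G,\A)\). I would then obtain the general annihilation formula by density, using continuity of the integral from Lemma~\ref{lem:continuity_integral}.
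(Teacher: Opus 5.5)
Your proposal is correct and follows the paper's proof essentially step for step. You factor \(f=f_1^\dagger\cdot f_2\) via Lemma~\ref{lem:factor_f} and identify \(T^\s_{f_2}\xi\) with \((g,h)\mapsto f_2(g)\otimes\xi(h)\). Applying \(U\), built from \(\Upsilon\) and the multiplication isomorphism \(\upsilon\), gives \((g,k)\mapsto f_2(g)\cdot\xi(g^{-1}k)\), and the annihilation operator \((T^\rg_{f_1})^*\) then integrates against \(\alpha^{\rg(k)}\) to yield \(f*\xi\). Your extra remarks on how \(f_1(g)^*\) pairs with the \(\A_g\A_g^*\)-factor, and on which measure the annihilation operator uses, only make explicit what the paper leaves implicit.
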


\begin{proof}
  Let \(f,\xi\in \Contc(G,\A)\).  Factor~\(f\) as a pointwise product
  \(f_1\cdot f_2\) with \(f_1\in \Contc(G,\A\A^*)\),
  \(f_2\in \Contc(G,\A)\) as in Lemma~\ref{lem:factor_f}.  Then
  \[
    \Lambda(f)\xi = (T^\rg_{f_1^\dagger})^* U T^\s_{f_2}\xi =
    (T^\rg_{f_1^\dagger})^* U (f_2 \otimes \xi)
  \]
  Here \(f_2\otimes \xi\) is the section
  \(G^2 \to d_2^*(\A) \otimes_{\A\A^*} d_0^*(\A)\),
  \((g,h)\mapsto f_2(g) \otimes \xi(h)\).  The unitary~\(U\) maps this
  to the section \(G\times_{\rg,\rg} G \to \A\A^* \cdot \pr_2^*(\A)\),
  \((g, k) \mapsto f_2(g)\cdot \xi(g^{-1} k)\).  Applying
  \((T^\rg_{f_1^\dagger})^*\) to this gives the section
  \[
    k\mapsto \int_{G} f_1^\dagger(g)^* \cdot f_2(g)\cdot \xi(g^{-1} k)
    \,\diff\alpha^{\rg(k)}(g) = \int_{G} f(g)\cdot \xi(g^{-1} k)
    \,\diff\alpha^{\rg(k)}(g) = (f*\xi)(k)
  \]
  of~\(\A\) over~\(G\).  Thus \(\Lambda(f)(\xi) = f*\xi\).
\end{proof}

\begin{proposition}
  \label{pro:section_algebra_has_faithful_rep}
  The regular representation~\(\Lambda\) of \(\Contc(G,\A)\) is
  \(I\)\nb-norm bounded and injective.
  Thus the maximal \(I\)\nb-bounded \(\Cst\)\nb-seminorm on
  \(\Contc(G,\A)\) is a \(\Cst\)\nb-norm.
\end{proposition}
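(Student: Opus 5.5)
Boundedness comes for free. By Proposition~\ref{lem:integrated_regular}, \(\Lambda\) is the integrated form of the regular representation \((\varphi,U)\) of~\(\A\) on \(\Lt^2(G,\A,\s,\tilde\alpha)\). Lemma~\ref{lem:Lf_well-defined} then gives \(\norm{\Lambda(f)}\le\norm{f}_I\) for all \(f\in\Contc(G,\A)\). Consequently the maximal \(I\)\nb-bounded \(\Cst\)\nb-seminorm dominates \(f\mapsto\norm{\Lambda(f)}\). Once \(\Lambda\) is shown injective, this seminorm is a norm, since a seminorm dominating a norm is itself a norm. So the real task is injectivity.

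For injectivity, I fix \(f\in\Contc(G,\A)\) with \(f*\xi=0\) for all \(\xi\in\Contc(G,\A)\) and aim to show \(f(g_0)=0\) for every \(g_0\in G\). Set \(x\defeq\s(g_0)\). The plan is to choose \(\xi\) as an approximate delta section concentrated near the unit~\(x\). Concretely, take \(a\in\FU_x\) and a continuous section \(\zeta\) of~\(\A\) with \(\zeta(x)=a\). Multiply it by a nonnegative scalar function \(\psi_U\in\Contc(G)\) supported in a small neighbourhood~\(U\) of~\(u(x)\) in~\(G\), normalised so that \(\int\psi_U(k)\,\diff\tilde\alpha_{x'}(k)\) is close to~\(1\) for \(x'\) near~\(x\). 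Normalising along the source fibres for~\(\tilde\alpha\) is the appropriate choice here; one can also use \(\alpha\)-normalisation via the substitution \(h\mapsto g_0h\).

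Then I evaluate
\[
  (f*\xi)(g)=\int f(h)\xi(h^{-1}g)\,\diff\alpha^{\rg(g)}(h)
  =\int f(gk^{-1})\xi(k)\,\diff\tilde\alpha_{\s(g)}(k).
\]
The aim is to show that this is close to \(f(g)\cdot a\) when~\(U\) is small. The section \((g,k)\mapsto f(gk^{-1})\zeta(k)-f(g)\zeta(\s(g))\) is continuous on the relevant fibre product and vanishes for \(k\) a unit. By upper semicontinuity of the norm and compactness of the supports, its norm is uniformly small for \(k\) in a small enough~\(U\); this is the same kind of estimate as in the proof of Lemma~\ref{lem:subfield_equality}. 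This step is the main technical obstacle. The bundle is only upper semicontinuous, so I must work with norms of continuous sections, not pointwise limits of the form \(f(h)\to f(g)\), and I must use that \(u(G^0)\) is closed and that neighbourhoods of \(u(x)\) can be chosen uniformly over a compact set.

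The estimate gives \(f(g)\zeta(\s(g))\approx 0\) uniformly near~\(g_0\), and hence \(f(g_0)a=0\) for every \(a\in\FU_x\). Finally, \(\A_{g_0}\FU_{\s(g_0)}=\A_{g_0}\) by Lemma~\ref{lem:equality_of_products}, and a right Hilbert module is nondegenerate over its coefficient algebra. Taking \(a\) from an approximate unit of~\(\FU_x\) therefore yields \(f(g_0)=0\). As \(g_0\) was arbitrary, \(f=0\), so \(\Lambda\) is injective.
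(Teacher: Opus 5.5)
Your proof is correct, but for injectivity it takes a different route from the paper. The boundedness part and the deduction that the maximal \(I\)-bounded \(\Cst\)-seminorm is a norm agree with the paper.

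The paper tests \(\Lambda(f)=0\) against the single vector \(\xi=f^*\) and evaluates at a unit \(x\in G^0\). This gives \((f*f^*)(x)=\int_G f(g)f(g)^*\,\diff\alpha^x(g)=0\) in \(\FU_x\). The integrand is a norm-continuous map \(G^x\to\FU_x\) with positive values, and \(\alpha^x\) has full support. Hence \(f\) vanishes on \(G^x\) for every \(x\). So the paper uses positivity in the fibres and needs no approximation at all.

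Your approximate-delta argument does not use positivity. Instead it recovers \(f(g_0)a\) as a limit of \((f*\xi_U)(g_0)\), which is closer in spirit to the approximate units of Section~\ref{sec:approx-units}. The cost is the approximation step, but that step is easier than you make it. You only need the single identity \((f*\xi_U)(g_0)=0\). The integrand \(k\mapsto f(g_0k^{-1})\zeta(k)\) is a continuous map \(G_{\s(g_0)}\to\A\) with values in the single fibre \(\A_{g_0}\). It is therefore norm continuous, because the relative topology on a fibre of an upper semicontinuous Banach bundle is the norm topology. Take \(\psi_U\ge0\) with \(\int\psi_U\,\diff\tilde\alpha_{\s(g_0)}=1\), which is possible by full support of \(\tilde\alpha_{\s(g_0)}\). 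Then you get \(\norm{f(g_0)a}\le\varepsilon\) directly. No uniformity in \(g\) near \(g_0\), no compactness or tube-lemma argument, and no closedness of \(u(G^0)\) is needed.

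Both arguments tacitly use that a section in \(\Contc(G,\A)\) with zero norm in \(\Lt^2(G,\A,\s,\tilde\alpha)\) vanishes pointwise. This again follows from the full support of the Haar system.
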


\begin{proof}
  Lemma~\ref{lem:Lf_well-defined} implies the estimate
  \(\Lambda(f) \le \norm{f}_I\) for all \(f\in \Contc(G,\A)\).  If
  \(f\in \Contc(G,\A)\) and \(\Lambda(f)=0\), then \(f*\xi=0\) for all
  \(\xi\in \Contc(G,\A)\).  Then
  \[
    0=(f*f^*)(x)=\int_{G} f(g)f(g)^* \,\diff\alpha^x(g)
  \]
  for all \(x\in G^0\).  Since the measures~\(\alpha^x\) in a Haar
  system have full support, this implies \(f=0\).
\end{proof}

The regular representation defines the reduced \(\Cst\)\nb-algebra
of~\(\A\):

\begin{definition}
  \label{def:regular_section_Cstar-algebra}
  The \emph{reduced section \(\Cst\)\nb-algebra}
  \(\Cred(G,\A)\) of~\(\A\) is the image of~\(\Cst(\A)\) in
  the regular representation.  Equivalently, it is the completion of
  \(\Contc(G,\A)\) for the norm
  \(\norm{f}_\red\defeq \norm{\Lambda(f)}\).
\end{definition}

\begin{remark}
  \label{rem:induce_reps}
  Let \(\varrho\colon \Cont_0(G^0,\FU) \to \Bound(\E)\) be a
  representation on a Hilbert module over another
  \(\Cst\)\nb-algebra~\(D\) or a Hilbert space.
  Then the regular representation on \(\Lt^2(G,\A,\s,\tilde\alpha)\)
  induces a representation
  \[
    \Ind \varrho\colon \Cred(G,\A) \to
    \Bound\bigl(\Lt^2(G,\A,\s,\tilde\alpha) \otimes_{\Cont_0(G^0,\FU)}
    \E\bigr),
    \qquad
    f \mapsto \Lambda(f) \otimes \Id_{\E}.
  \]
  If~\(\varrho\) is faithful, then \(\Ind \varrho\) is faithful
  because the homomorphism
  \[
    \Bound(\Lt^2(G,\A,\s,\tilde\alpha)) \to
    \Bound(\Lt^2(G,\A,\s,\tilde\alpha) \otimes_{\Cont_0(G^0,\FU)} \E),
    \qquad
    T\mapsto T\otimes \Id_{\E},
  \]
  is faithful.
  More generally, let~\(I\) be a set whose elements are
  representations \(\varrho_i\colon \Cont_0(G^0,\FU) \to
  \Bound(\E_i)\) on certain Hilbert modules~\(\E_i\).
  Assume that this set of representations is faithful in the sense
  that \(\bigcap_{i\in I} \ker \varrho_i = \{0\}\).
  Then the family of induced representations \(\Ind \varrho_i\) is
  faithful in the same sense.
  This is because \(\setgiven{\varrho_i}{i\in I}\) is faithful if and
  only if \(\bigoplus_{i\in I} \varrho_i\) is faithful.
  In particular, the family of restriction maps \(\Cont_0(G,\FU) \to
  \FU_x\) for \(x\in G^0\) is faithful.
  It follows that the reduced norm defining \(\Cred(G,\A)\) is
  equal to a supremum over~\(G^0\), where the norm at~\(x\) is the
  norm in the regular representation of \(\Cst(G,\A)\) in the Hilbert
  \(\FU_x\)-module \(\Lt^2(G_x,\A|_{G_x},\tilde\alpha_x)\); this is
  the fibre of \(\Lt^2(G,\A,\s,\tilde\alpha)\) at~\(x\).
  We could also use the family of all irreducible representations
  of~\(\SUF\) or, equivalently, of the fibres~\(\A_x\) for \(x\in X\).
  If the field of \(\Cst\)\nb-algebras~\(\A\) is continuous and
  \(X_0\subseteq X\) is a dense subset, then the family of restriction
  maps for \(x\in X_0\) is faithful as well.
  Thus the reduced norm is also equal to a supremum over
  \(X_0\subseteq X\).
  These and similar alternative descriptions of the reduced
  \(\Cst\)\nb-norm are used in the literature for saturated Fell bundles
  (see \cites{Exel:noncomm.cartan,
    Sims-Williams:Equivalence_reduced_groupoid, Moutuou-Tu:Equivalence}).
\end{remark}

\section{Disintegration of representations}
\label{sec:disintegrate}

We are going to ``disintegrate'' a representation
\(L\colon \Contc(G,\A)\to\Bound(\F)\) to a representation
\((\varphi,U)\) of \((G,\A,\alpha)\).
This disintegration still works for some densely defined ``unbounded
representations'', showing that all such representations are in fact
bounded.
This works exactly as in~\cite{Buss-Holkar-Meyer:Universal}, and the
required notion of ``unbounded representation'' is a literal
translation of \cite{Buss-Holkar-Meyer:Universal}*{Definition~5.1}.
Recall the subspaces \(\Cont_0(K\setminus \partial K,\A)\) of
\(\Contc(G,\A)\) defined in Definition~\ref{def:Cont_KpartialK}.

\begin{definition}
  \label{def:pre-representation}
  Let~\(D\) be a \(\Cst\)\nb-algebra and let~\(\F\) be a Hilbert
  \(D\)\nb-module.  A \emph{pre-\alb{}representation}
  of~\(\Contc(G,\A)\) on~\(\F\) consists of a vector space~\(\F_0\),
  a linear map \(\iota\colon \F_0\to\F\) with dense image, and a
  bilinear map \(L\colon \Contc(G,\A)\times \F_0 \to \F\),
  \((f,\xi)\mapsto L(f)\xi\), such that
  \begin{enumerate}
  \item if \(\xi, \eta\in \F_0\) and \(K\subseteq G\) is compact, then
    the map \(\Cont_0(K\setminus \partial K,\A)\to D\),
    \(f\mapsto \braket{\iota(\xi)}{L(f)\eta}_D\), is bounded in norm;
    roughly speaking, \(L\) is continuous for the inductive limit
    topology on \(\Contc(G,\A)\) and a suitable weak topology on the
    space of linear maps \(\F_0\to\F\);
  \item
    \(\braket{L(f_1)\xi}{L(f_2)\eta}_D =
    \braket{\iota(\xi)}{L(f_1^**f_2)\eta}_D\) for
    \(\xi,\eta\in \F_0\), \(f_1,f_2\in \Contc(G,\A)\);
  \item the linear span of \(L(f)\xi\) for \(f\in\Contc(G,\A)\),
    \(\xi\in \F_0\) is dense in~\(\F\).
  \end{enumerate}
\end{definition}

Our first goal is to define a nondegenerate representation
\(\varphi\colon \Contc(G^0,\FU) \to \Bound(\F)\) from a
pre-representation of \(\Contc(G,\A)\).  Let~\(B\) be the
\(\Cst\)\nb-algebra of all bounded sections of the field of multiplier
algebras \((\Mult(\FU_x))_{x\in G^0}\) with the property that pointwise
multiplication with them maps \(\Cont_0(G^0,\FU)\) again to
\(\Cont_0(G^0,\FU)\).  In fact, \(B\) is naturally isomorphic to the
multiplier algebra of \(\Cont_0(G^0,\FU)\); but we shall not use this
fact.  The \(\Cst\)\nb-algebra~\(B\) is unital and contains
\(\Cont_0(G^0,\FU)\).  It acts on \(\Contc(G,\A)\) by multipliers
through
\[
  (b \dagger f)(g) \defeq b(\rg(g))\cdot f(g),\qquad (f\ddagger b)(g)
  \defeq f(g) \cdot b(\s(g))
\]
for all \(b\in B\), \(f\in \Contc(G,\A)\), \(g\in G\).  Easy
computations show that these multiplication maps are bilinear and
satisfy \((b\dagger f)^* = f^*\ddagger b^*\) and
\[
  (f_1\ddagger b)*f_2 = f_1 * (b\dagger f_2),\qquad (b_1\cdot
  b_2)\dagger f = b_1 \dagger (b_2 \dagger f).
\]
for all \(f,f_1,f_2\in \Contc(G,\A)\), \(b_1,b_2,b\in B\).
The properties \(b\dagger(f_1*f_2) = (b\dagger f_1)*f_2\), \(f\ddagger
(b_1\cdot b_2) = (f \ddagger b_1) \ddagger b_2\) and \((f_1*f_2)
\dagger b = f_1*(f_2\ddagger b)\) follow from this, and they are also
easy to check directly.

\begin{lemma}[compare \cite{Buss-Holkar-Meyer:Universal}*{Lemma~5.2}]
  \label{lem:extension-multiplier}
  Let \((L,\iota,\F_0)\) be a pre-representation of \(\Contc(G,\A)\)
  on a Hilbert \(D\)\nb-module~\(\F\).
  There is a unique unital \Star{}homomorphism \(\varphi\colon B \to
  \Bound(\F)\) such that \(\varphi(b) L(a)\xi = L(b\dagger a)\xi\) for
  all \(b\in B\), \(a\in \Contc(G,\A)\), \(\xi\in \F_0\).
  Its restriction to \(\Cont_0(G^0,\FU)\) is nondegenerate.
\end{lemma}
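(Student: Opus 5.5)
The plan is to define \(\varphi(b)\) on the dense subspace spanned by the vectors \(L(a)\xi\) via the forced formula \(\varphi(b)\bigl(\sum_i L(a_i)\xi_i\bigr) \defeq \sum_i L(b\dagger a_i)\xi_i\), and then show that this is well defined and bounded by \(\norm{b}\). Both facts follow from a single estimate. For finite sums \(\zeta = \sum_i L(a_i)\xi_i\) and \(b\in B\), the algebraic identities \((b\dagger f)^* = f^*\ddagger b^*\) and \((f_1\ddagger b)*f_2 = f_1*(b\dagger f_2)\), together with condition~(2) of Definition~\ref{def:pre-representation}, give
\[
  \Bigl\langle \sum_i L(b\dagger a_i)\xi_i \,\Big|\, \sum_j L(b\dagger a_j)\xi_j\Bigr\rangle
  = \sum_{i,j} \braket{\iota(\xi_i)}{L(a_i^* * ((b^*b)\dagger a_j))\xi_j}.
\]
So it suffices to show that the sesquilinear form \(\beta_c(\zeta,\zeta)\defeq \sum_{i,j}\braket{\iota(\xi_i)}{L(a_i^**(c\dagger a_j))\xi_j}\) satisfies \(\beta_c \le \norm{c}\,\beta_1\) as elements of~\(D\) for positive \(c\in B\). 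I would write \(\norm{c}-c = d^*d\) with \(d\in B\), which is possible because \(B\) is a unital \(\Cst\)\nb-algebra. Then
\[
  \norm{c}\beta_1(\zeta,\zeta) - \beta_c(\zeta,\zeta)
  = \Bigl\langle \sum_i L(d\dagger a_i)\xi_i \,\Big|\, \sum_j L(d\dagger a_j)\xi_j\Bigr\rangle \ge 0,
\]
again by condition~(2). This yields \(\norm{\sum_i L(b\dagger a_i)\xi_i}\le\norm{b}\,\norm{\zeta}\). In particular, if \(\zeta=0\) then the image is zero, so \(\varphi(b)\) is well defined on the span, and it extends by continuity to all of~\(\F\) by condition~(3).

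Next I would check the algebraic properties. Multiplicativity follows from \((b_1b_2)\dagger a = b_1\dagger(b_2\dagger a)\), and unitality from \(1\dagger a = a\). For the \(*\)\nb-property, the same computation through condition~(2) gives
\[
  \braket{L(a_1)\xi_1}{\varphi(b)L(a_2)\xi_2} = \braket{\iota(\xi_1)}{L(a_1^* * (b\dagger a_2))\xi_2} = \braket{\varphi(b^*)L(a_1)\xi_1}{L(a_2)\xi_2}.
\]
Hence \(\varphi(b)\) is adjointable with adjoint \(\varphi(b^*)\) on the dense span, and therefore everywhere. Uniqueness is immediate, because the vectors \(L(a)\xi\) span a dense subspace by condition~(3).

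The main obstacle is nondegeneracy of the restriction to \(\Cont_0(G^0,\FU)\). My approach is to take an approximate unit \((u_\lambda)\) of \(\Cont_0(G^0,\FU)\) and show that \(\varphi(u_\lambda)L(a)\xi = L(u_\lambda\dagger a)\xi \to L(a)\xi\). Let \(K\) be a compact set containing \(\supp(a)\) in its interior. Then \(u_\lambda\dagger a\to a\) uniformly on~\(K\) with support in~\(K\), because \(\FU_{\rg(g)}\A_g=\A_g\) by Lemma~\ref{lem:equality_of_products}, and this pointwise convergence is uniform on compacta by a partition of unity argument as in Lemma~\ref{lem:subfield_equality}. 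Condition~(1) only gives weak continuity, so I would instead estimate the norm directly:
\[
  \norm{L(a - u_\lambda\dagger a)\xi}^2 = \norm{\braket{\iota(\xi)}{L(e_\lambda^* * e_\lambda)\xi}},\qquad e_\lambda \defeq a-u_\lambda\dagger a.
\]
Here \(e_\lambda^* * e_\lambda\) is supported in the fixed compact set \(K^{-1}K\), and it tends to zero in supremum norm there. By condition~(1), the map \(f\mapsto\braket{\iota(\xi)}{L(f)\xi}\) is bounded on \(\Cont_0(K'\setminus\partial K',\A)\) for a compact \(K'\) whose interior contains \(K^{-1}K\). Therefore the displayed quantity tends to zero. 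Since the vectors \(L(a)\xi\) are dense, \(\varphi\) restricted to \(\Cont_0(G^0,\FU)\) is nondegenerate.
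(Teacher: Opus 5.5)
Your proposal is correct and follows the paper's proof. You define \(\varphi(b)\) on the span of the vectors \(L(a)\xi\), get well-definedness and \(\norm{\varphi(b)}\le\norm{b}\) from the same positivity trick (your \(d=(\norm{b^*b}-b^*b)^{1/2}\) is the paper's \(b_2\)), and check the algebraic properties and uniqueness by density. For nondegeneracy you are more explicit than the paper: it only asserts that the closed spans of \(\varphi(\Cont_0(G^0,\FU))L(\Contc(G,\A))\F_0\) and \(L(\Contc(G,\A))\F_0\) agree, whereas you justify the needed norm convergence \(L(u_\lambda\dagger a)\xi\to L(a)\xi\) by combining condition~(2) with the boundedness in condition~(1).
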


\begin{proof}
  Fix \(b\in B\).  Let \(\F_{00}\subseteq \F\) be the linear span
  of~\(L(a)\xi\) for \(a\in \Contc(G,\A)\), \(\xi\in\F_0\).  So a
  typical element of~\(\F_{00}\) is of the form
  \(\zeta \defeq \sum_{j=1}^n L(a_j)\xi_j\) for some \(n\in\N\),
  \(a_1,\dotsc,a_n \in \Contc(G,\A)\) and
  \(\xi_1,\dotsc,\xi_n\in \F_0\).  The subspace~\(\F_{00}\) is dense
  in~\(\F\) by assumption.  We show that there is a linear map
  \(\varphi(b)\colon \F_{00} \to \F_{00}\) with
  \(\norm{\varphi(b)} \le \norm{b}\) and
  \(\varphi(b)L(a)\xi = L(b\dagger a)\xi\) for all
  \(a\in \Contc(G,\A)\), \(\xi\in\F_0\).  Let
  \(b_2 \defeq (\norm{b}^2\cdot 1 - b^* b)^{1/2} \in B\).  Thus
  \(b^*b + b_2^* b_2 = \norm{b}^2\cdot 1\).  For
  \(\zeta = \sum_{j=1}^n L(a_j) \xi_j\) as above, put
  \(\zeta_2 \defeq \sum_{j=1}^n L(b\dagger a_j) \xi_j\) and
  \(\zeta_3 \defeq \sum_{j=1}^n L(b_2\dagger a_j) \xi_j\).  Then
  \begin{multline*}
    \braket{\zeta_2}{\zeta_2}
    \le \braket{\zeta_2}{\zeta_2} + \braket{\zeta_3}{\zeta_3}
    \\= \sum_{j,k=1}^n {}
    \braket[\big]{\iota(\xi_j)} {L\bigl((b\dagger a_j)^* * (b\dagger
      a_k)\bigr) \xi_k}
    + \braket[\big]{\iota(\xi_j)}
    {L\bigl((b_2\dagger a_j)^** (b_2\dagger a_k)\bigr) \xi_k}
    \\= \sum_{j,k=1}^n{} \norm{b}^2 \cdot
    \braket[\big]{\iota(\xi_j)} {L(a_j^* a_k) \xi_k}
    = \norm{b}^2\braket{\zeta}{\zeta}.
  \end{multline*}
  So \(\zeta=0\) implies \(\zeta_2=0\).  Thus, there is a well-defined
  linear map~\(\varphi(b)\) with \(\varphi(b)\zeta = \zeta_2\), that
  is,
  \(\varphi(b)(\sum_{j=1}^n L(a_j) \xi_j)= \sum_{j=1}^n L(b\dagger
  a_j) \xi_j\).  Clearly, this is the only linear map on~\(\F_{00}\)
  with \(\varphi(b)L(a)\xi = L(b\dagger a)\xi\) for all
  \(a\in \Contc(G,\A)\), \(\xi\in\F_0\).  In addition to this, the
  computation above implies that \(\norm{\varphi(b)} \le \norm{b}\).
  Since~\(\F_{00}\) is dense in~\(\F\), \(\varphi(b)\) extends
  uniquely to a bounded linear map on~\(\F\).  A direct computation
  shows that the map \(\varphi\colon B \to \Endo(\F_{00})\) is a unital
  algebra homomorphism and that
  \(\braket{\varphi(b)\zeta_1}{\zeta_2} =
  \braket{\zeta_1}{\varphi(b^*)\zeta_2}\) for all
  \(\zeta_1,\zeta_2\in \F_{00}\).  So the unique extension
  of~\(\varphi(b)\) to~\(\F\) defines a unital \Star{}homomorphism
  \(B\to\Bound(\F)\).

  The \(\Cont_0(G^0,\FU)\)-module structure~\(\dagger\) on
  \(\Contc(G,\A)\) is nondegenerate because each fibre~\(\A_g\)
  of~\(\A\) is a nondegenerate \(\FU_{\rg(g)}\)\nb-module (compare
  Lemma~\ref{lem:subfield_equality}).  Since
  \(\varphi(b) L(a)\xi = L(b\dagger a)\xi\), it follows that
  \(\varphi(\Cont_0(G,\FU)) L(\Contc(G,\A))\F_0\) and
  \(L(\Contc(G,\A))\F_0\) have the same closed linear span in~\(\F\).
  Since \(L(\Contc(G,\A))\F_0\) is dense in~\(\F\), it follows that
  the restriction of~\(\varphi\) to \(\Cont_0(G^0,\FU)\) is still
  nondegenerate.
\end{proof}

We used the extension of~\(\varphi\) to~\(B\) to prove that it is
bounded.  In the following, we restrict~\(\varphi\) to
\(\Cont_0(G^0,\FU)\).
This is the first ingredient in the Fell-bundle representation
associated to~\(L\).
The unitary~\(U\) needs some more preparation.

Let~\(X\) be a locally compact Hausdorff space with a continuous
map \(p\colon X\to G^0\) with a continuous family of
measures~\(\lambda\) along its fibres.
Let \(\E=(\E_x)_{x\in X}\) be a field of right Hilbert
\(p^*\FU\)-modules.
So \(\Lt^2(X,\E,p,\lambda)\) is a Hilbert module over
\(\Cont_0(G^0,\FU)\).
The representation~\(\varphi\) defined above provides a Hilbert
\(D\)\nb-module \(\Lt^2(X,\E,p,\lambda)\otimes_\varphi\F\).
Let \(\E\otimes_{p^*\FU}\A\) be the ``composite'' field of Hilbert
modules over \(X\times_{p,G^0,\rg} G\) with fibre \(\E_x
\otimes_{\FU_{p(x)}} \A_g\) at \((x,g)\in X\times_{p,G^0,\rg} G\) as
in Lemma~\textup{\ref{lem:compose_corr_from_measure-family}}.
Inner products from this field take values in the field \((\s\circ
\pr_2)^*\FU\) with fibre \(\A_{\s(g)}\) at \((x,g)\in
X\times_{p,G^0,\rg} G\), where \(\pr_2\colon X\times_{p,G^0,\rg} G
\to G\) denotes the coordinate projection.

\begin{lemma}[compare \cite{Buss-Holkar-Meyer:Universal}*{Lemma~5.3}]
  \label{lem:technical_disintegration_lemma}
  In the above situation, the map
  \[
    \tau\colon \Contc(X,\E)\odot \Contc(G,\A)\odot \F_0\to
    \Contc(X,\E)\odot \F,\qquad f_0\otimes f_1\otimes \xi\mapsto
    f_0\otimes L(f_1)\xi,
  \]
  extends uniquely to a linear map with dense range
  \[
    \bar\tau\colon
    \Contc(X\times_{p,G^0,\rg}G,\E\otimes_{p^*\FU}\A)\odot \F_0 \to
    \Lt^2(X,\E,p,\lambda)\otimes_\varphi\F
  \]
  that is continuous in the inductive limit topology on
  \(\Contc(X\times_{p,G^0,\rg}G,\E\otimes_{p^*\FU}\A)\) for all fixed
  \(\xi\in \F_0\).  If
  \(F_1,F_2\in \Contc(X\times_{p,G^0,\rg}G,\E\otimes_{p^*\FU}\A)\) and
  \(\xi_1,\xi_2\in \F_0\), then
  \begin{equation}
    \label{eq:innprod_dense_in_HilmF0}
    \braket{\bar\tau(F_1\otimes\xi_1)}{\bar\tau(F_2\otimes\xi_2)}
    = \braket{\iota(\xi_1)}{L(\braket{F_1}{F_2})\xi_2}
  \end{equation}
  with \(\braket{F_1}{F_2}\in \Contc(G,\A)\) defined by
  \begin{align}
    \label{eq:innprod_dense_in_HilmF}
    \braket{F_1}{F_2}(g)
    &\defeq \iint \braket{F_1(x,h^{-1})}{F_2(x,h^{-1}g)}
      \,\diff\lambda_{\s(h)}(x) \,\diff\alpha^{\rg(g)}(h)\\ \notag
    &= \iint \braket{F_1(x,h)}{F_2(x,hg)}
      \,\diff\lambda_{\rg(h)}(x) \,\diff\tilde\alpha_{\rg(g)}(h).
  \end{align}
\end{lemma}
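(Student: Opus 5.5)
We follow the proof of \cite{Buss-Holkar-Meyer:Universal}*{Lemma~5.3}, adding the coefficient fields. The plan has four steps. First, we prove the inner product formula~\eqref{eq:innprod_dense_in_HilmF0} for elementary tensors. Second, we derive continuity and uniqueness from it. Third, we construct \(\bar\tau\) on the dense subspace spanned by elementary tensors. Fourth, we check that the range is dense.

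For \(F_i = f_0^i\otimes f_1^i\), with \(f_0^i\in\Contc(X,\E)\) and \(f_1^i\in\Contc(G,\A)\), we view \(F_i\) as the section \((x,g)\mapsto f_0^i(x)\otimes f_1^i(g)\). The inner product in \(\Lt^2(X,\E,p,\lambda)\otimes_\varphi\F\) of \(\tau(F_1\otimes\xi_1)\) and \(\tau(F_2\otimes\xi_2)\) is
\[
  \braket{L(f_1^1)\xi_1}{\varphi(\braket{f_0^1}{f_0^2})L(f_1^2)\xi_2}.
\]
We put \(b\defeq\braket{f_0^1}{f_0^2}\in\Contc(G^0,\FU)\). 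Lemma~\ref{lem:extension-multiplier} rewrites \(\varphi(b)L(f_1^2)\xi_2\) as \(L(b\dagger f_1^2)\xi_2\). Condition~(2) of a pre-representation then turns the whole expression into \(\braket{\iota(\xi_1)}{L((f_1^1)^* * (b\dagger f_1^2))\xi_2}\). Unwinding the convolution gives
\[
  ((f_1^1)^**(b\dagger f_1^2))(g)
  = \int f_1^1(h^{-1})^*\, b(\s(h))\, f_1^2(h^{-1}g)\,\diff\alpha^{\rg(g)}(h),
\]
with \(b(\s(h))=\int\braket{f_0^1(x)}{f_0^2(x)}\,\diff\lambda_{\s(h)}(x)\). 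This is exactly the first formula in~\eqref{eq:innprod_dense_in_HilmF}, because \(\braket{e\otimes a}{e'\otimes a'}=a^*\braket{e}{e'}a'\) in \(\E\otimes_{p^*\FU}\A\). The second formula follows from the substitution \(h\mapsto h^{-1}\), which converts \(\alpha\) into \(\tilde\alpha\). Both sides of~\eqref{eq:innprod_dense_in_HilmF} are sesquilinear, so the formula extends to finite sums of elementary tensors.

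Next we use the fact that \(F\mapsto\braket{F_1}{F}\) is continuous from the inductive limit topology on \(\Contc(X\times_{p,G^0,\rg}G,\E\otimes_{p^*\FU}\A)\) to the inductive limit topology on \(\Contc(G,\A)\). This holds because the integral is over a compact set depending only on the supports, with integrand bounded by sup norms. Condition~(1) then shows that \(\norm{\tau(\sum F\otimes\xi)}^2\) is controlled by sup norms on a fixed compact set. Concretely, we apply~\eqref{eq:innprod_dense_in_HilmF0} with \(F_1=F_2=F-F'\) and bound \(\norm{\braket{\iota(\xi)}{L(\cdot)\xi}}\) on \(\Cont_0(K\setminus\partial K,\A)\).

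Linear combinations of elementary sections \(f_0\otimes f_1\) are dense in the inductive limit topology. This follows from the definition of the composite field together with a partition-of-unity argument as in Lemma~\ref{lem:compose_corr_from_measure-family}, cutting down to a fixed compact set. Hence \(\tau\) is Cauchy on approximating nets for each fixed \(\xi\). It therefore extends uniquely and continuously to \(\bar\tau\), and~\eqref{eq:innprod_dense_in_HilmF0} persists in the limit by continuity. Well-definedness is subtle because \(\tau\) is defined on an algebraic tensor product with \(\F_0\). We handle it via the inner product formula: a sum that vanishes in the source has vanishing norm, computed as a sum of terms of the form~\eqref{eq:innprod_dense_in_HilmF0}.

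For the density of the range, the range contains \(f_0\otimes L(f_1)\xi\). By condition~(3) the vectors \(L(f_1)\xi\) span a dense subspace of \(\F\), and \(\Contc(X,\E)\) is dense in \(\Lt^2(X,\E,p,\lambda)\). Hence these vectors span a dense subspace of the tensor product. We expect the main obstacle to be the density and continuity bookkeeping in the inductive limit topology. In particular, we need the elementary sections to approximate with supports in a fixed compact set, so that condition~(1) applies uniformly.
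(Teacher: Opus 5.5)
Your proposal is correct and follows essentially the same route as the paper. You compute the inner product on elementary tensors via Lemma~\ref{lem:extension-multiplier} and condition~(2), use density of elementary sections with fixed compact supports together with condition~(1) to extend by continuity, and get dense range from condition~(3). You also check explicitly, via the sesquilinear form in~\eqref{eq:innprod_dense_in_HilmF0}, that \(\tau\) factors through the image of \(\Contc(X,\E)\odot\Contc(G,\A)\) in the section space; the paper passes over this point silently, and your argument for it is correct.
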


\begin{proof}
  The proof is a routine translation of the proof of
  \cite{Buss-Holkar-Meyer:Universal}*{Lemma~5.3}.  The linear span of
  \(L(f)\xi\) for \(f\in \Contc(G,\A)\), \(\xi\in\F_0\) is dense
  in~\(\F\) by assumption.  This implies that the image of~\(\tau\) is
  dense in \(\Lt^2(X,\E,p,\lambda)\otimes_\varphi\F\).  There is a
  field \(\E\otimes_{p^*\FU}\A\) of Hilbert
  \((\s\circ\pr_2)^*\FU\)-modules with fibres
  \(\E_x \otimes_{\FU_{\rg(g)}} \A_g\) at \(x\in X\), \(g\in G\) with
  \(p(x) = \rg(g)\) as a Hilbert \(\FU_{\s(g)}\)-module, and such that
  the space of continuous sections of \(\E\otimes_{p^*\FU}\A\) is the
  closed linear span of the sections of the form
  \((x,g)\mapsto \eta_1(x)\otimes \eta_2(g)\) for
  \(\eta_1 \in \Cont_0(X,\E)\), \(\eta_2\in \Cont_0(G,\A)\).
  Restricting to fixed compact subsets of \(X\) and~\(G\), we see that
  \(\Contc(X,\E)\odot \Contc(G,\A)\) is dense in
  \(\Contc(X\times_{p,G^0,\rg} G,\E\otimes_{p^*\FU}\A)\) in the
  inductive limit topology.  This already explains the uniqueness of
  the extension, if it exists.

  Let \(f_1,f_3\in \Contc(X,\E)\), \(f_2,f_4\in \Contc(G,\A)\).
  Define sections
  \[
    F_1\colon (x,k)\mapsto f_1(x)\otimes f_2(k) \quad\text{and}\quad
    F_2\colon (x,k)\mapsto f_3(x)\otimes f_4(k)
  \]
  in \(\Contc(X\times_{p,G^0,\rg} G,\E\otimes_{p^*\FU}\A)\).  Then
  \begin{multline*}
    \braket{\tau(f_1\otimes f_2\otimes\xi_1)} {\tau(f_3\otimes
      f_4\otimes \xi_2)}
    = \braket{f_1\otimes L(f_2)\xi_1}{f_3\otimes L(f_4)\xi_2}
    \\= \braket{L(f_2)\xi_1}{\varphi(\braket{f_1}{f_3})L(f_4)\xi_2}
    = \braket{\xi_1}{L(f_2^** (\braket{f_1}{f_3} \dagger f_4))\xi_2},
  \end{multline*}
  where~\(\varphi\) is the action of \(\Cont_0(G, \rg^*\FU)\) on
  \(\mathcal{L}^2(G,\A,s,\tilde{\alpha})\).  And
  \begin{multline*}
    (f_2^** (\braket{f_1}{f_3} \dagger f_4))(g)
    = \int_G f_2^*(h) (\braket{f_1}{f_3} \dagger f_4)(h^{-1}g)
    \,\diff\alpha^{\rg(g)}(h)\\
    = \int_G\int_X f_2(h^{-1})^* \braket{f_1(x)}{f_3(x)} f_4(h^{-1}g)
    \,\diff\lambda_{\rg(h^{-1}g)}(x) \,\diff\alpha^{\rg(g)}(h)\\
    = \int_G\int_X {} \braket{F_1(x,h^{-1})}{F_2(x,h^{-1}g)}
    \,\diff\lambda_{\s(h)}(x) \,\diff\alpha^{\rg(g)}(h).
  \end{multline*}
  This proves~\eqref{eq:innprod_dense_in_HilmF0} for~\(\tau\)
  instead of~\(\bar\tau\) and for \(F_1,F_2\) in the image of
  \(\Contc(X,\E)\odot \Contc(G,\A)\) in
  \(\Contc(X\times_{p,G^0,\rg} G,\E\otimes_{p^*\FU}\A)\).  Now let
  \(K\subseteq X\) and \(L\subseteq G\) be compact.  Let \((F_n)\)
  be a sequence in
  \(\Cont_0(K\setminus \partial K,\E)\odot \Cont_0(L\setminus
  \partial L,\A)\) that is Cauchy in the supremum norm on
  \(\Cont(K\times_{p,G^0,\rg} L,\E\otimes_{p^*\FU}\A)\).  The same
  estimates as in the proof of
  \cite{Buss-Holkar-Meyer:Universal}*{Lemma~5.3} show that~\(\tau\)
  maps \((F_n \otimes \xi)_{n\in\N}\) for \(\xi\in\F_0\) to a Cauchy
  sequence in \(\Lt^2(X,\E,p,\lambda)\otimes_\varphi\F\); here the
  continuity assumption in Definition~\ref{def:pre-representation}
  is used.  Since the image of \(\Contc(X,\E)\odot \Contc(G,\A)\) in
  \(\Contc(X\times_{p,G^0,\rg} G,\E\otimes_{p^*\FU}\A)\) is dense
  and since both sides in~\eqref{eq:innprod_dense_in_HilmF0} are
  continuous in the inductive limit topology on
  \(\Contc(X\times_{p,G^0,\rg} G,\E\otimes_{p^*\FU}\A)\), it follows
  that~\(\tau\) extends uniquely to~\(\bar\tau\) and that the
  extension still verifies~\eqref{eq:innprod_dense_in_HilmF0}.
\end{proof}

Along with the unitary in~\eqref{eq:regular_U_on_dense_subspaces}
built in the regular representation in
Section~\ref{sec:regular_rep}, the lemma above will help us in
constructing the required unitary~\(U\) in the disintegrated
representation.  Now as in~\cite{Buss-Holkar-Meyer:Universal}, we
apply Lemma~\ref{lem:technical_disintegration_lemma} to two cases,
namely, when \((X,\E,p,\lambda)\) is \((G,\A,\s,\tilde\alpha)\) or
\((G,\A\A^*,\rg,\alpha)\).  This gives maps
\begin{align*}
  \bar\tau_\s\colon
  \Contc(G\times_{\s,G^0,\rg}G,\A\otimes_{p^*\FU} \A) \odot \F_0
  &\to \Lt^2(G,\A,\s,\tilde\alpha) \otimes_\varphi\F,\\
  \bar\tau_\rg\colon
  \Contc(G\times_{\rg,G^0,\rg}G,\A\A^* \otimes_{p^*\FU} \A) \odot \F_0
  &\to \Lt^2(G,\A\A^*,\rg,\alpha) \otimes_\varphi\F.
\end{align*}
The isomorphism~\(U_0\) in~\eqref{eq:regular_U_on_dense_subspaces}
induces a map
\begin{multline*}
  U_0\otimes \Id_{\F_0}\colon
  \Contc(G\times_{\s,G^0,\rg}G,\A\otimes_{p^*\FU} \A) \odot \F_0
  \\\congto \Contc(G\times_{\rg,G^0,\rg}G,\A\A^* \otimes_{p^*\FU} \A)
  \odot \F_0.
\end{multline*}
We are going to extend it to the unitary~\(U\) needed for a
Fell-bundle representation.

\begin{lemma}
  \label{lem:isometric-D-inner-products}
  The operator \(U_0\otimes \Id_{\F_0}\) is isometric for the
  \(D\)\nb-valued inner products on
  \(\Lt^2(G,\A,\s,\tilde\alpha)\otimes_\varphi\F\) and
  \(\Lt^2(G,\A\A^*,\rg,\alpha) \otimes_\varphi\F\).
\end{lemma}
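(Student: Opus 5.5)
The plan is to use formula \eqref{eq:innprod_dense_in_HilmF0} from Lemma~\ref{lem:technical_disintegration_lemma} on both sides. The inner products of elements of the form \(\bar\tau_\s(F\otimes\xi)\) and \(\bar\tau_\rg(F'\otimes\xi)\) are then expressed through \(L\) applied to sections \(\braket{F_1}{F_2}\in\Contc(G,\A)\). Concretely, for \(F_1,F_2\in \Contc(G\times_{\s,G^0,\rg}G,\A\otimes_{\FU}\A)\) and \(\xi_1,\xi_2\in\F_0\), I want to show
\[
  \braket{\bar\tau_\rg(U_0F_1\otimes\xi_1)}{\bar\tau_\rg(U_0F_2\otimes\xi_2)}
  = \braket{\bar\tau_\s(F_1\otimes\xi_1)}{\bar\tau_\s(F_2\otimes\xi_2)}.
\]
By \eqref{eq:innprod_dense_in_HilmF0}, both sides have the form \(\braket{\iota(\xi_1)}{L(\cdot)\xi_2}\). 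It therefore suffices to prove the pointwise identity of sections
\[
  \braket{U_0F_1}{U_0F_2}_\rg = \braket{F_1}{F_2}_\s
\]
in \(\Contc(G,\A)\), where the subscripts indicate which instance of \eqref{eq:innprod_dense_in_HilmF} is used, namely \((X,\E,p,\lambda)=(G,\A\A^*,\rg,\alpha)\) or \((G,\A,\s,\tilde\alpha)\). This identity no longer involves~\(L\), so it is a computation about the regular representation alone.

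To verify it, I would use that \(U_0\) is induced by the homeomorphism \(\Upsilon(k,l)=(k,kl)\) from~\eqref{eq:Upsilon} together with the fibrewise multiplication isomorphism~\(\upsilon\). By density and continuity in the inductive limit topology, it is enough to take elementary tensors \(F_i(k,l)=a_i(k)\otimes b_i(l)\). Then \(U_0F_i(k,m)=a_i(k)\cdot b_i(k^{-1}m)\), read in \(\A_k\A_k^*\otimes\A_m\) via \(\upsilon\), and the inner product of \(\upsilon\)-images equals \(b_1(l)^*a_1(k)^*a_2(k)b_2(l)\) as an element of \(\FU_{\s(l)}\)-valued products. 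Writing out \eqref{eq:innprod_dense_in_HilmF} in its second form gives
\[
  \braket{F_1}{F_2}_\s(g)=\iint b_1(h)^*a_1(x)^*a_2(x)\,b_2(hg)\,\diff\tilde\alpha_{\rg(h)}(x)\,\diff\tilde\alpha_{\rg(g)}(h).
\]
The other side, after substituting \(m=xh\), becomes
\[
  \iint \braket{a_1(x)b_1(x^{-1}m)}{a_2(x)b_2(x^{-1}mg)}\,\diff\alpha^{\rg(m)}(x)\,\diff\tilde\alpha_{\rg(g)}(m).
\]
The two integrals are matched by the change of variables \((x,h)\mapsto(x,xh)\), which is exactly the measure-preserving property of~\(\Upsilon\) for the families \(\tilde\alpha\times_{G^0}\tilde\alpha\) and \(\tilde\alpha\times_{G^0}\alpha\) over~\(v_2\). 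This property was recorded when constructing~\(U\) in Section~\ref{sec:regular_rep} and is equivalent to~\eqref{eq:compose_integration_nontrivial}.

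The main obstacle is the bookkeeping in this last step. The \(\FU\)-valued inner product on \(\A\A^*\otimes_{\FU}\A\) at \((k,m)\) must agree with that on \(\A\otimes_{\FU}\A\) at \(\Upsilon^{-1}(k,m)\) after applying~\(\upsilon\); this holds because \(\upsilon\) is a Hilbert bimodule isomorphism by Lemma~\ref{lem:equality_of_products}. In addition, the two variable conventions in \eqref{eq:innprod_dense_in_HilmF} must be aligned under the substitution. Once the elementary-tensor case is done, sesquilinearity and the inductive-limit continuity from Lemma~\ref{lem:technical_disintegration_lemma} extend the identity to all of \(\Contc(G\times_{\s,G^0,\rg}G,\A\otimes_{\FU}\A)\odot\F_0\). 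This proves that \(U_0\otimes\Id_{\F_0}\) is isometric.
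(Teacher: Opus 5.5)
Your proposal is correct and follows the paper's proof. Like the paper, you use \eqref{eq:innprod_dense_in_HilmF0} to reduce to the equality of the two sections \eqref{eq:inner-prod-s} and \eqref{eq:inner-prod-r} in \(\Contc(G,\A)\), and you get that equality from the substitution \((x,h)\mapsto(x,xh)\), that is, from \eqref{eq:compose_integration_nontrivial}; your detour through elementary tensors \(a_i\otimes b_i\) plus density is harmless but unnecessary, since the same change of variables applies directly to general \(F_1,F_2\).
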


\begin{proof}
  The isomorphism~\(U_0\) in~\eqref{eq:regular_U_on_dense_subspaces} is
  given by the formula \(U_0(F)(g,h) \defeq F(g,g^{-1}h)\) for
  \(F\in \Contc(G\times_{\s,G^0,\rg}G,\A\otimes_{p^*\FU}\A)\) and
  \((g,h)\in G\times_{\rg,G^0,\rg}G\).  To see that
  \(U_0\otimes\Id_{\F_0}\) extends to an isometry as in the statement,
  it is enough to check
  \begin{equation}
    \label{eq:isometric-relation}
    \braket{\bar\tau_\s(F_1\otimes\xi_1)}{\bar\tau_\s(F_2\otimes\xi_2)}
    = \braket{\bar\tau_\rg(U_0(F_1)\otimes\xi_1)}{\bar\tau_\rg(U_0(F_2)\otimes\xi_2)}
  \end{equation}
  for all
  \(F_1,F_2\in \Contc(G\times_{\s,G^0,\rg}G,\A\otimes_{p^*\FU}\A)\) and
  \(\xi_1,\xi_2\in \F_0\) because then the map
  \(\bar\tau_\s(F\otimes\xi)\mapsto \bar\tau_\rg(F\otimes\xi)\) is well
  defined and isometric and therefore gives an isometry as in
  the statement.  Equation~\eqref{eq:innprod_dense_in_HilmF0} implies
  \begin{align*}
    \braket{\bar\tau_\s(F_1\otimes\xi_1)}{\bar\tau_\s(F_2\otimes\xi_2)}
    &= \braket{\iota(\xi_1)}{L(\braket{F_1}{F_2}_\s)\xi_2},\\
    \braket{\bar\tau_\rg(U_0(F_1)\otimes\xi_1)}{\bar\tau_\rg(U_0(F_2)\otimes\xi_2)}
    &=\braket{\iota(\xi_1)}{L(\braket{F_1}{F_2}_\rg)\xi_2},
  \end{align*}
  where
  \begin{align}
    \label{eq:inner-prod-s}
    \braket{F_1}{F_2}_\s(g)
    &= \int_G\int_G F_1(x,h)^*F_2(x,h g)
      \,\diff\tilde\alpha_{\rg(h)}(x)
      \,\diff\tilde\alpha_{\rg(g)}(h),\\
    \label{eq:inner-prod-r}
    \braket{F_1}{F_2}_\rg(g)
    &= \int_G\int_G F_1(x,x^{-1}k)^*F_2(x,x^{-1}k g)
      \,\diff\alpha^{\rg(k)}(x)
      \,\diff\tilde\alpha_{\rg(g)}(k).
  \end{align}
  These two integrals are equal
  by~\eqref{eq:compose_integration_nontrivial} (compare
  \cite{Buss-Holkar-Meyer:Universal}*{Equation~(3.11) on page~351}).
  This yields~\eqref{eq:isometric-relation} and finishes the proof.
\end{proof}

Since the maps \(\bar\tau_\rg\) and~\(\bar\tau_\s\) have dense range,
it follows that \(U_0\otimes \Id_{\F_0}\) extends uniquely to a
unitary operator
\[
  U\colon \Lt^2(G,\A,\s,\tilde\alpha)\otimes_\varphi\F\congto
  \Lt^2(G,\A\A^*,\rg,\alpha) \otimes_\varphi\F.
\]
This intertwines the left \(\Cont_0(G,\A\A^*)\)-actions
because~\(U_0\) is a bimodule map.

\begin{lemma}
  \label{lem:disintegration_rep}
  The pair \((\varphi,U)\) associated to a pre-representation
  of~\(\Contc(G,\A)\) is a representation of \((G,\A,\alpha)\).
\end{lemma}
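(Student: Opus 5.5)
We already know that \(\varphi\) is a nondegenerate representation of \(\Cont_0(G^0,\FU)\) (Lemma~\ref{lem:extension-multiplier}), and that \(U\) is a unitary intertwining the left \(\Cont_0(G,\A\A^*)\)-actions. So the only thing to prove is the coherence condition \(d_2^*(U)\,d_0^*(U) = d_1^*(U)\). The plan is to transport the identity \(d_2^*(U_0)\,d_0^*(U_0) = d_1^*(U_0)\) from the regular representation (noted after the construction in Section~\ref{sec:regular_rep}) to \(\F\). The transport uses dense maps of the same kind as \(\bar\tau_\s\) and \(\bar\tau_\rg\), but now over \(G^2\).

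\textbf{Step 1: dense maps over \(G^2\).} Apply Lemma~\ref{lem:technical_disintegration_lemma} three times, with \((X,\E,p,\lambda) = (G^2,\E_k,v_k,\mu_k)\) for \(k=0,1,2\). This gives linear maps
\[
  \bar\tau_k\colon \Contc(G^2\times_{v_k,G^0,\rg}G,\E_k\otimes_{\FU}\A)\odot\F_0 \to \Lt^2(G^2,\E_k,v_k,\mu_k)\otimes_\varphi\F.
\]
Each \(\bar\tau_k\) has dense range and satisfies the inner product formula~\eqref{eq:innprod_dense_in_HilmF0}.

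\textbf{Step 2: intertwining.} For each \(j\), with \(k,l\) as in~\eqref{eq:dj_U_domain_codomain}, show that
\[
  d_j^*(U)\circ\bar\tau_k = \bar\tau_l\circ\bigl(d_j^*(U_0)\otimes\Id_{\F_0}\bigr).
\]
Granting this, we get
\[
  d_2^*(U)\,d_0^*(U)\,\bar\tau_2 = \bar\tau_0\bigl(d_2^*(U_0)\,d_0^*(U_0)\otimes\Id\bigr) = \bar\tau_0\bigl(d_1^*(U_0)\otimes\Id\bigr) = d_1^*(U)\,\bar\tau_2.
\]
Since \(\bar\tau_2\) has dense range and all operators involved are bounded, this gives \(d_2^*(U)\,d_0^*(U) = d_1^*(U)\).

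\textbf{Step 3: proving the intertwining, the main obstacle.} Recall that \(d_j^*(U)\) is built as \(\Id\otimes U\) sandwiched between the canonical isomorphisms \(\gamma\) of Lemma~\ref{lem:compose_corr_from_measure-family}. It is defined on \(\Lt^2(G^2,\E',d_j,\lambda_j)\otimes_{\Cont_0(G,\A\A^*)}(\ldots)\otimes_\varphi\F\), where \(\E'\) is the appropriate decorating field. Both sides of the intertwining identity are continuous in the inductive limit topology for fixed \(\xi\in\F_0\); for the left side this is the continuity statement in Lemma~\ref{lem:technical_disintegration_lemma}. It therefore suffices to check the identity on elementary tensors \(\eta\otimes F\otimes\xi\), where \(\eta\in\Contc(G^2,\E')\) and \(F\in\Contc(G\times_{\s,G^0,\rg}G,\A\otimes\A)\) (respectively the \(\rg\)-version). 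These elementary tensors are dense in the inductive limit topology by the same restriction-to-compact-sets argument as in the proof of Lemma~\ref{lem:technical_disintegration_lemma}. On such an element we have
\[
  \bar\tau_k\bigl(\gamma(\eta\otimes F)\otimes\xi\bigr) = \eta\otimes\bar\tau_\s(F\otimes\xi),
\]
up to the identification \(\gamma\). Applying \(\Id\otimes U\) and using \(U\bar\tau_\s = \bar\tau_\rg(U_0\otimes\Id)\) gives \(\eta\otimes\bar\tau_\rg(U_0F\otimes\xi)\). This in turn equals \(\bar\tau_l\bigl(d_j^*(U_0)(\gamma(\eta\otimes F))\otimes\xi\bigr)\), because \(d_j^*(U_0)\) is by definition \(\Id\otimes U_0\) transported along the same canonical maps on \(\Contc\)-sections.

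The delicate points all lie in this step. First, one has to verify the compatibility \(\bar\tau_k(\gamma(\eta\otimes F)\otimes\xi) = \eta\otimes\bar\tau_\s(F\otimes\xi)\), which comes down to checking that both sides have the same inner products using~\eqref{eq:innprod_dense_in_HilmF0}. Second, one must track the fields \(\Bun[I]\cdot d_2^*(\A)\) and related fields, so that the elementary tensors genuinely have dense span in the unsaturated case. Here Lemma~\ref{lem:equality_of_products} and Lemma~\ref{lem:subfield_equality} should supply the needed fibrewise equalities. Third, for \(d_0^*\) and \(d_2^*\) one needs the triangle marked \(\mu\) in Figure~\ref{fig:U_coherence}, which also acts on \(\Contc\)-sections by Fell-bundle multiplication, so it commutes with the \(\bar\tau\)'s in the same way.
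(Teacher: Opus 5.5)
Your proposal is correct and follows essentially the same route as the paper: you apply Lemma~\ref{lem:technical_disintegration_lemma} to \((G^2,\E_k,v_k,\mu_k)\), identify \(d_j^*(U)\) as the unique unitary extension of \(d_j^*(U_0)\otimes\Id_{\F_0}\) by checking it on a dense subspace of elementary tensors, and then transport the identity \(d_2^*(U_0)\,d_0^*(U_0)=d_1^*(U_0)\) from the regular representation. Your Step~3 spells out the verification on elementary tensors that the paper compresses into ``by the definition of the map~\(\tau\)'': you factor through \(d_j\) and use \(U\bar\tau_\s=\bar\tau_\rg(U_0\otimes\Id)\).
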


\begin{proof}
  Apply Lemma~\ref{lem:technical_disintegration_lemma} to the cases
  \((G^2,\E_k,v_k,\mu_k)\) with~\(\E_k\) as
  in~\eqref{eq:dj_U_domain_codomain}.  This provides well-defined
  linear maps with dense range
  \[
    \Contc(G^2 \times_{v_k,G^0,\rg} G,\E_k \otimes_{\FU} \A) \odot
    \F_0 \to \Lt^2(G^2,\E_k,v_k,\mu_k) \otimes_\varphi \F.
  \]
  We claim that the operator~\(d_j^*(U)\) is the unique unitary
  extension of the operator \(d_j^*(U_0) \otimes \Id_{\F_0}\)
  with~\(U_0\) as in~\eqref{eq:dj_U_domain_codomain}.  On the subspace
  \(\Contc(G^2,\E_k) \odot \Contc(G,\A) \odot \F_0\), this is true
  because of the definition of the map~\(\tau\) in
  Lemma~\ref{lem:technical_disintegration_lemma}.  Since this subspace
  is dense, the equation holds everywhere.  So the identity
  \(d_2^*(U_0) d_0^*(U_0) = d_1^*(U_0)\) shown during the construction
  of the regular representation implies that
  \[
    (d_2^*(U_0) \otimes \Id_{\F_0}) (d_0^*(U_0) \otimes \Id_{\F_0}) =
    d_1^*(U_0) \otimes \Id_{\F_0}.
  \]
  This implies \(d_2^*(U) d_0^*(U) = d_1^*(U)\).
\end{proof}

The following is the Fell-bundle analogue of
\cite{Buss-Holkar-Meyer:Universal}*{Proposition~6.1}.

\begin{proposition}
  \label{pro:back_and_forth_from_L}
  Let \((L,\F_0,\iota)\) be a pre-representation of
  \(\Contc(G,\A)\) on a Hilbert module~\(\F\).
  Define the Fell-bundle representation~\((\varphi,U)\) as above, and
  let~\(L'\) be the representation of \(\Contc(G,\A)\) associated
  to~\((\varphi,U)\) as in Section~\textup{\ref{sec:integrate}}.
  Let \(f,f_2\in\Contc(G,\A)\), \(\xi\in\F_0\).
  Then \(L'(f)(\iota(\xi)) = L(f)(\xi)\) and \(L'(f)(L(f_2)\xi) =
  L(f*f_2)(\xi)\).
\end{proposition}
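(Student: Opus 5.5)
The plan is to prove the second identity \(L'(f)(L(f_2)\xi) = L(f*f_2)\xi\) first and then deduce the first from it. The tool is Lemma~\ref{lem:technical_disintegration_lemma}, together with the description of~\(U\) as the extension of \(U_0\otimes\Id_{\F_0}\).

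\textbf{Step 1: reduce to factorised sections.} Factor \(f = f_1^\dagger\cdot f_3\) with \(f_1\in\Contc(G,\A\A^*)\) and \(f_3\in\Contc(G,\A)\), as in Lemma~\ref{lem:factor_f}. By~\eqref{eq:def-integration-FB},
\[
  L'(f)L(f_2)\xi = (T^\rg_{f_1})^*\, U\, T^\s_{f_3} L(f_2)\xi .
\]
By the definition of~\(\tau_\s\),
\[
  T^\s_{f_3}L(f_2)\xi = f_3\otimes L(f_2)\xi = \bar\tau_\s(F\otimes\xi),
  \qquad F(g,h)\defeq f_3(g)\otimes f_2(h).
\]
Since \(U\) extends \(U_0\otimes\Id_{\F_0}\) along \(\bar\tau_\s\) and \(\bar\tau_\rg\), we get
\[
  U T^\s_{f_3}L(f_2)\xi = \bar\tau_\rg(U_0(F)\otimes\xi),
  \qquad U_0(F)(g,k)=f_3(g)\otimes f_2(g^{-1}k),
\]
which is identified with \(f_3(g)f_2(g^{-1}k)\) in \(\A_g\A_g^*\otimes\A_k\).

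\textbf{Step 2: compute the annihilation operator.} The key claim is that for \(F'\in\Contc(G\times_{\rg,G^0,\rg}G,\A\A^*\otimes\A)\),
\[
  (T^\rg_{f_1})^*\bar\tau_\rg(F'\otimes\xi) = L(\Phi)\xi,
  \qquad \Phi(k)\defeq\int_G f_1(g)^*\cdot F'(g,k)\,\diff\alpha^{\rg(k)}(g).
\]
For elementary tensors \(F'=\eta_1\otimes\eta_2\) this is immediate. In that case
\[
  (T^\rg_{f_1})^*(\eta_1\otimes L(\eta_2)\xi) = \varphi(\braket{f_1}{\eta_1})L(\eta_2)\xi = L(\braket{f_1}{\eta_1}\dagger\eta_2)\xi
\]
by Lemma~\ref{lem:extension-multiplier}, and \(\braket{f_1}{\eta_1}\dagger\eta_2\) is exactly~\(\Phi\). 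The general case then follows by density in the inductive limit topology. Both sides are continuous for fixed~\(\xi\): the left side by Lemma~\ref{lem:technical_disintegration_lemma}, and the right side by the boundedness condition in Definition~\ref{def:pre-representation}, used weakly against~\(\iota(\F_0)\) and then upgraded by density.

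Applying the claim to \(F'=U_0(F)\) gives
\[
  \Phi(k)=\int_G f(g)f_2(g^{-1}k)\,\diff\alpha^{\rg(k)}(g)=(f*f_2)(k),
\]
which proves the second identity.

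\textbf{Step 3: the first identity.} I expect this to be the main obstacle, because \(\iota(\xi)\) need not lie in the span of the \(L(f_2)\eta\). The plan is to use an approximate unit \((u_i)\) of \(\Contc(G,\A)\) in the inductive limit topology, so that \(f*u_i\to f\) in that topology. Take the pairing with \(\zeta = L(a)\eta\):
\[
  \braket{L(a)\eta}{L(f*u_i)\xi} = \braket{\iota(\eta)}{L(a^* * f*u_i)\xi}.
\]
By the continuity in Definition~\ref{def:pre-representation}, this converges to \(\braket{L(a)\eta}{L(f)\xi}\).

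On the other hand, \(L(f*u_i)\xi=L'(f)L(u_i)\xi\) by Step 2, and \(L'(f)\) is bounded. So it suffices to have \(L(u_i)\xi\to\iota(\xi)\) weakly on the dense subspace \(\F_{00}\), together with a norm bound. Weak convergence holds because
\[
  \braket{L(a)\eta}{L(u_i)\xi}=\braket{\iota(\eta)}{L(a^**u_i)\xi}\to\braket{\iota(\eta)}{L(a^*)\xi}=\braket{L(a)\eta}{\iota(\xi)}.
\]
The last equality is itself condition (2) of Definition~\ref{def:pre-representation} in disguise, and needs checking via adjoints on~\(\F_{00}\).

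For the norm bound, write
\[
  \norm{L(u_i)\xi}^2=\norm{\braket{\iota(\xi)}{L(u_i^**u_i)\xi}},
\]
which should stay bounded by the same continuity estimate if \(u_i^**u_i\) is supported in a fixed compact set. I would choose the~\(u_i\) of the form \(\lambda_1\)-integrals of approximate units, supported near~\(G^0\), to secure this. If that bound is hard, an alternative is to pair \(L'(f)\iota(\xi)\) directly with~\(\F_{00}\) using \(L'(f)^*=L'(f^*)\) and Step 2:
\[
  \braket{L(a)\eta}{L'(f)\iota(\xi)}=\braket{L(f^**a)\eta}{\iota(\xi)}=\braket{\iota(\eta)}{L(a^**f)\xi}=\braket{L(a)\eta}{L(f)\xi},
\]
which avoids approximate units entirely. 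I would try this second route first.
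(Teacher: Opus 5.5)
Your proposal is correct and follows the paper's proof. It uses the same factorisation of~\(f\) and the same Lemma~\ref{lem:technical_disintegration_lemma} together with the fact that \(U\) extends \(U_0\otimes\Id_{\F_0}\). Your second route in Step~3 is precisely the paper's adjoint argument via \(L'(f)^*=L'(f^*)\); your middle equality holds because both sides equal \(\braket{L(a)\eta}{L(f)\xi}\) by condition~(2) of Definition~\ref{def:pre-representation}. The only variation is in Step~2: you evaluate \((T^\rg_{f_1})^*\) on the range of \(\bar\tau_\rg\) directly, using Lemma~\ref{lem:extension-multiplier} and inductive-limit continuity, whereas the paper pairs with \(L(f_4)\xi_2\) and reads off \(f_4^* * f * f_2\) from~\eqref{eq:innprod_dense_in_HilmF0}.

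You should discard the approximate-unit route. It is not needed, and the norm bound you propose there does not follow from condition~(1). That condition only controls sections of bounded supremum norm supported in a fixed compact set, whereas approximate units for convolution are in general unbounded in the supremum norm.
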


\begin{proof}
  We first check \(L'(f)(L(f_2)\xi) = L(f*f_2)(\xi)\).  This is
  equivalent to
  \[
    \braket{L(f_4)\xi_2}{L'(f)(L(f_2)\xi)} =
    \braket{L(f_4)\xi_2}{L(f*f_2)(\xi)}
  \]
  for all \(f_4\in \Contc(G,\A)\), \(\xi_2\in\F_0\) because the
  linear span of~\(L(f_4)\xi_2\) is dense in~\(\F\).  Factor
  \(f = f_3^\dagger \cdot f_1\) with \(f_3\in\Contc(G,\A\A^*)\) and
  \(f_1\in\Contc(G,\A)\) as in Lemma~\ref{lem:factor_f}.  The
  integrated form~\(L'\) of~\((\varphi,U)\) is defined by
  \(L'\bigl(f_3^\dagger\cdot f_1\bigr) \defeq (T^\rg_{f_3})^* U
  T^\s_{f_1}\).  So
  \begin{multline*}
    \braket{L(f_4)\xi_2}{L'(f)(L(f_2)\xi)}
    = \braket{T^\rg_{f_3}  L(f_4)\xi_2}{U T^\s_{f_1} L(f_2)\xi}
    \\= \braket{\tau_\rg(f_3 \otimes f_4\otimes \xi_2)}
    {U \tau_\s(f_1 \otimes f_2 \otimes\xi)}
  \end{multline*}
  with the maps \(\tau_\s\) and~\(\tau_\rg\) from
  Lemma~\ref{lem:technical_disintegration_lemma}.
  By construction, \(U\) maps \(\tau_\s(f_1 \otimes f_2 \otimes\xi)\)
  to \(\bar\tau_\rg(F\otimes \xi)\) with \(F(g,k) \defeq f_1(g) \cdot
  f_2(g^{-1} k)\) for \((g,k)\in G \times_{\rg,\rg} G\); here we have
  reparametrised \(f_1 \otimes f_2\) with the inverse of the
  homeomorphism~\(\Upsilon\) in~\eqref{eq:Upsilon}.
  So Lemma~\ref{lem:technical_disintegration_lemma} implies
  \[
    \braket[\big]{\tau_\rg(f_3 \otimes f_4\otimes \xi_2)}
    {U \tau_\s(f_1 \otimes f_2 \otimes\xi)} =
    \braket[\big]{\iota\xi_2}{L(\braket{f_3 \otimes f_4}{F})\xi}
  \]
  with
  \begin{align*}
    \braket{f_3 \otimes f_4}{F}(g)
    &\defeq
      \int_G\int_G (f_3 \otimes f_4)^\dagger(x,h^{-1}) F(x,h^{-1} g)
      \,\diff\alpha^{\rg(h)}(x) \,\diff\alpha^{\rg(g)}(h)
    \\&= \int_G\int_G f_4(h^{-1})^* f_3(x)^* f_1(x) f_2(x^{-1} h^{-1} g)
    \,\diff\alpha^{\rg(h)}(x)\,\diff\alpha^{\rg(g)}(h).
  \end{align*}
  Now \(f_4(h^{-1})^* = f_4^*(h)\) and \(f_3(x)^* f_1(x) = f(x)\).
  The integral over~\(x\) gives the convolution \((f*f_2)(h^{-1} g)\).
  Then the integral over~\(h\) gives \((f_4^**f*f_2)(g)\).
  So
  \[
    \braket{L(f_4)\xi_2}{L'(f)(L(f_2)\xi)} =
    \braket{\iota\xi_2}{L(f_4^* *(f*f_2))(\xi)} =
    \braket{L(f_4)\xi_2}{L(f*f_2)(\xi)}.
  \]
  This finishes the proof that \(L'(f)(L(f_2)\xi) = L(f*f_2)(\xi)\).

  Since~\(L'\) is a \Star{}homomorphism to \(\Bound(\F)\), we
  further compute
  \begin{multline*}
    \braket{L'(f)\iota(\xi_1)}{L(f_2)\xi_2} =
    \braket{\iota(\xi_1)}{L'(f^*)L(f_2)\xi_2} \\=
    \braket{\iota(\xi_1)}{L(f^* * f_2)\xi_2} =
    \braket{L(f)\xi_1}{L(f_2)\xi_2}
  \end{multline*}
  for all \(f,f_2\in\Contc(G,\A)\), \(\xi_1,\xi_2\in\F_0\).
  Since vectors of the form \(L(f_2)\xi_2\) span a dense subspace
  in~\(\F\), this implies \(L'(f)\iota(\xi_1) = L(f)\xi_1\).
\end{proof}

\begin{corollary}
  \label{cor:prerep}
  For any pre-representation~\((L,\F_0,\iota)\) of
  \(\Contc(G,\A)\) there is a representation
  \(L'\colon \Contc(G,\A)\to\Bound(\F)\) that is bounded with
  respect to the \(I\)\nb-norm and satisfies
  \(L'(f)(\iota(\xi)) = L(f)(\xi)\) and
  \(L'(f)(L(f_2)\xi) = L(f*f_2)(\xi)\) for all
  \(f,f_2\in\Contc(G,\A)\), \(\xi\in\F_0\).  In particular, a
  representation of \(\Contc(G,\A)\) is \(I\)\nb-norm bounded if and
  only if it is continuous in the inductive limit topology.
\end{corollary}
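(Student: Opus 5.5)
The plan is to assemble the corollary from the disintegration construction and Proposition~\ref{pro:back_and_forth_from_L}. Given a pre-representation \((L,\F_0,\iota)\), Lemma~\ref{lem:extension-multiplier} yields the nondegenerate \Star{}homomorphism \(\varphi\colon \Cont_0(G^0,\FU)\to\Bound(\F)\). Lemma~\ref{lem:isometric-D-inner-products} together with the density of the ranges of \(\bar\tau_\s\) and~\(\bar\tau_\rg\) from Lemma~\ref{lem:technical_disintegration_lemma} yields the unitary~\(U\). Lemma~\ref{lem:disintegration_rep} says that \((\varphi,U)\) is a representation of \((G,\A,\alpha)\). Let~\(L'\) be its integrated form from Section~\ref{sec:integrate}.

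By Lemma~\ref{lem:Lf_well-defined}, \(\norm{L'(f)}\le\norm{f}_I\), and Proposition~\ref{pro:Lf_star-representation} shows that~\(L'\) is a nondegenerate \Star{}homomorphism into \(\Bound(\F)\). The two identities \(L'(f)(\iota(\xi)) = L(f)(\xi)\) and \(L'(f)(L(f_2)\xi)=L(f*f_2)(\xi)\) are exactly the content of Proposition~\ref{pro:back_and_forth_from_L}. This settles the first assertion.

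For the final equivalence, we treat the two directions separately.

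\emph{\(I\)\nb-norm bounded implies continuous.} Let \(f\in\Cont_0(K\setminus\partial K,\A)\). Then \(\norm{f}_I\le \norm{f}_\infty\cdot\max\{\sup_x \alpha^x(K),\sup_x\tilde\alpha_x(K)\}\). This constant is finite because a continuous Haar system gives continuous functions \(x\mapsto\alpha^x(\psi)\) for a compactly supported \(\psi\ge 1_K\), and these are bounded. Hence an \(I\)\nb-norm bounded representation is norm bounded on each \(\Cont_0(K\setminus\partial K,\A)\), which is continuity in the inductive limit topology.

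\emph{Continuous implies \(I\)\nb-norm bounded.} Take a representation \(L\colon\Contc(G,\A)\to\Bound(\F)\) that is continuous in the inductive limit topology; the case of interest is a nondegenerate one, so the span of \(L(f)\xi\) is dense. View it as a pre-representation with \(\F_0=\F\) and \(\iota=\Id\). Condition~(1) of Definition~\ref{def:pre-representation} follows from continuity, and (2), (3) hold because~\(L\) is a \Star{}representation and nondegenerate. The first part then gives \(L'=L\), since \(L'(f)\xi = L(f)\xi\) for all~\(\xi\). So~\(L\) is \(I\)\nb-norm bounded.

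The main subtlety lies in this second direction. If ``representation'' allows degenerate ones, we would restrict to the essential subspace, the closed span of \(L(\Contc(G,\A))\F\), and observe that~\(L\) vanishes on its orthogonal complement when that is complemented. I expect this bookkeeping, and not any analytic estimate, to be the only point needing care.
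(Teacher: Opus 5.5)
Your proposal is correct and is essentially the paper's argument: the corollary is read off by integrating the disintegrated pair \((\varphi,U)\) from Lemma~\ref{lem:disintegration_rep}, with Lemma~\ref{lem:Lf_well-defined} giving the \(I\)\nb-norm bound and Proposition~\ref{pro:back_and_forth_from_L} giving both identities, while the ``in particular'' follows by treating a continuous nondegenerate representation as a pre-representation with \(\F_0=\F\) and \(\iota=\Id\) (as the paper also does in Corollary~\ref{cor:continuity-rep}). For degenerate representations you need no complementability: restriction to \(\F_{\mathrm{ess}}\defeq\overline{\operatorname{span}}\,L(\Contc(G,\A))\F\) is an injective, hence isometric, \Star{}homomorphism on the \(\Cst\)\nb-algebra generated by \(L(\Contc(G,\A))\), because \(T|_{\F_{\mathrm{ess}}}=0\) forces \(TT^*=0\).
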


\begin{lemma}
  \label{lem:U_from_L}
  Let \((\varphi,U)\) be a representation of~\((G,\A,\alpha)\) on a
  Hilbert module~\(\F\).
  Its integral \(L\colon \Contc(G,\A) \to \Bound(\E)\) uniquely
  determines \((\varphi,U)\).
\end{lemma}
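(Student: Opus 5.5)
The plan is to show that \(L\) first determines \(\varphi\), and then \(\varphi\) together with \(L\) determines \(U\).

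\textbf{Recovering \(\varphi\).} Let \(L\) be the integrated form of \((\varphi,U)\). I first claim that
\[
  \varphi(b)L(f)=L(b\dagger f)
  \qquad\text{for all } b\in\Cont_0(G^0,\FU),\ f\in\Contc(G,\A).
\]
To see this, factor \(f=f_1^\dagger\cdot f_2\) as in Lemma~\ref{lem:factor_f}. Then
\(\varphi(b)(T^\rg_{f_1})^*=(T^\rg_{f_1\cdot (b^*\circ\rg)})^*\), because the right action of \(\Cont_0(G^0,\FU)\) on \(\Lt^2(G,\A\A^*,\rg,\alpha)\) is pointwise multiplication by \(b\circ\rg\). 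Also \((f_1 (b^*\circ \rg))^\dagger f_2=(b\circ\rg) f_1^\dagger f_2=b\dagger f\). Since \(L\) is nondegenerate by Lemma~\ref{lem:Lf_nondegenerate}, the vectors \(L(f)\xi\) span a dense subspace of \(\F\). Hence \(\varphi(b)\) is determined on a dense subspace, and therefore everywhere, by \(L\).

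\textbf{Recovering \(U\).} With \(\varphi\) fixed, it suffices to compute \(U\) on the dense set of vectors
\[
  T^\s_{f_2}L(f')\xi=f_2\otimes L(f')\xi,
  \qquad f_2,f'\in\Contc(G,\A),\ \xi\in\F .
\]
These span a dense subspace because \(\Contc(G,\A)\) is dense in \(\Lt^2(G,\A,\s,\tilde\alpha)\) and the \(L(f')\xi\) are dense in \(\F\). I then pair \(U(f_2\otimes L(f')\xi)\) against vectors \(f_1\otimes L(f'')\eta\) with \(f_1\in\Contc(G,\A\A^*)\), which are dense in the codomain. This gives
\[
  \braket{f_1\otimes L(f'')\eta}{U(f_2\otimes L(f')\xi)}
  =\braket{L(f'')\eta}{(T^\rg_{f_1})^*UT^\s_{f_2}L(f')\xi}
  =\braket{L(f'')\eta}{L(f_1^\dagger f_2)L(f')\xi},
\]
which is expressed entirely in terms of \(L\). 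All these inner products are therefore determined by \(L\), and so \(U\) is determined on a dense subspace. Since \(U\) is bounded, \(U\) is determined.

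\textbf{Main obstacle.} The only delicate point is the density of the two spanning sets. In the domain it follows from the definition of the balanced tensor product together with the density of \(\Contc\)-sections in the \(\Lt^2\)-modules. In the codomain we need that sections of \(\Contc(G,\A\A^*)\) are dense in \(\Lt^2(G,\A\A^*,\rg,\alpha)\), which holds by construction. Both are routine.
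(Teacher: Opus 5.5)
Your proof is correct and follows the paper's argument. You recover \(\varphi\) from the identity \(\varphi(b)L(f)=L(b\dagger f)\) and the density of the vectors \(L(f)\xi\) (Lemma~\ref{lem:Lf_nondegenerate}), and then \(U\) from the inner products \(\braket{f_1\otimes\xi_1}{U(f_2\otimes\xi_2)}=\braket{\xi_1}{L(f_1^\dagger f_2)\xi_2}\). You go slightly further than the paper by verifying \(\varphi(b)L(f)=L(b\dagger f)\) through the creation operators, whereas the paper only asserts it. Your restriction to vectors of the form \(L(f')\xi\) in the second step is harmless but unnecessary: once \(\varphi\) is fixed, arbitrary \(\xi_1,\xi_2\in\F\) already suffice.
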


\begin{proof}
  Vectors of the form \(L(f)\xi\) for \(f\in\Contc(G,\A)\) and
  \(\xi\in\F\) are linearly dense in~\(\F\) by
  Lemma~\ref{lem:Lf_nondegenerate}.
  The definition of~\(\varphi\) implies that \(\varphi(\eta) (L(f)\xi)
  = L(\eta \dagger f)\xi\) for all \(\eta\in \Cont_0(G^0,\FU)\).
  Since the linear span of \(L(f)\xi\) is dense in~\(\F\), the
  map~\(L\) uniquely determines~\(\varphi\).

  If \(f_1\in\Contc(G,\A\A^*)\), \(f_2\in\Contc(G,\A)\) and
  \(\xi_1,\xi_2\in\F\), then
  \[
    \braket{f_1 \otimes \xi_1}{U(f_2\otimes \xi_2)} =
    \braket{\xi_1}{(T^\rg_{f_1})^* U T^\s_{f_2}(\xi_2)} =
    \braket{\xi_1}{L(f_1^\dagger\cdot f_2)\xi_2}.
  \]
  Since elements \(f_1 \otimes \xi_1\) and \(f_2 \otimes \xi_2\) as
  above are linearly dense in
  \(\Lt^2(G,\A,\rg,\alpha) \otimes_\varphi \F\) and
  \(\Lt^2(G,\A,\s,\tilde\alpha) \otimes_\varphi \F\), respectively,
  the inner products above determine~\(U\) uniquely.
\end{proof}

Now the proof of Theorem~\ref{the:universal_groupoid_Haus} is finished
exactly as in~\cite{Buss-Holkar-Meyer:Universal}.  We have integrated
a representation of \((G,\A,\alpha)\) to a representation of
\(\Contc(G,\A)\) that is bounded in the \(I\)\nb-norm; hence it
extends uniquely to a representation of the \(\Cst\)\nb-algebra
\(\Cst(G,\A)\).  Conversely, we have disintegrated a
representation of \(\Cst(G,\A)\) to a representation of
\((G,\A,\alpha)\).  Proposition~\ref{pro:back_and_forth_from_L} shows
that integration after disintegration is the identity map on
representations of \(\Cst(G,\A)\).  And
Lemma~\ref{lem:U_from_L} shows that integration is injective.  So
disintegration and integration are inverse to each other.
The naturality properties in Theorem~\ref{the:universal_groupoid_Haus}
are rather evident; it is left to the reader to transfer this part of
the proof of Theorem~3.23 in~\cite{Buss-Holkar-Meyer:Universal}.

We are going to derive several consequences of our results in the
following sections.  Here is an immediate one:

\begin{corollary}
  \label{cor:continuity-rep}
  Let~\(\A\) be a Fell bundle over a locally compact groupoid with
  Haar system \((G,\alpha)\) and let~\(D\) be a \(\Cst\)\nb-algebra.
  A \Star{}homomorphism \(\varphi\colon \Contc(G,\A)\to D\) is
  continuous in the inductive limit topology if and only if it is
  bounded in the \(I\)\nb-norm.  In that case it extends uniquely to a
  \Star{}homomorphism \(\Cst(G,\A) \to D\).
\end{corollary}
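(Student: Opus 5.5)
The plan is to reduce both directions to Corollary~\ref{cor:prerep} and the universal property of \(\Cst(G,\A)\) as the enveloping \(\Cst\)\nb-algebra of \(L^1_I(G,\A)\). First I replace the \(\Cst\)\nb-algebra~\(D\) by a faithful nondegenerate representation on a Hilbert space, or simply view~\(D\) as a Hilbert module over itself, so that \(\Bound(D)=\Mult(D)\supseteq D\). Thus a \Star{}homomorphism \(\varphi\colon \Contc(G,\A)\to D\) becomes a \Star{}homomorphism into \(\Bound(\F)\) with \(\F\defeq \overline{\varphi(\Contc(G,\A))D}\), the closed linear span of \(\varphi(f)d\); this is a Hilbert \(D\)\nb-submodule (a closed right ideal) of~\(D\) on which the restricted representation is nondegenerate. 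Since~\(\varphi\) vanishes on the orthogonal complement in a suitable sense, namely \(\varphi(f)\) maps~\(D\) into~\(\F\) and \(\norm{\varphi(f)}=\norm{\varphi(f)|_\F}\) by the \(\Cst\)\nb-identity \(\norm{\varphi(f)}^2=\norm{\varphi(f)^*\varphi(f)}\) with \(\varphi(f^*)\varphi(f)D\subseteq\F\), boundedness can be checked on~\(\F\).

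Now assume \(\varphi\) is continuous in the inductive limit topology. I would take \(\F_0\defeq\F\), \(\iota\) the identity, and \(L(f)\xi\defeq\varphi(f)\xi\). Condition~(1) of Definition~\ref{def:pre-representation} holds because \(f\mapsto \braket{\xi}{\varphi(f)\eta}\) restricted to \(\Cont_0(K\setminus\partial K,\A)\) is the composite of the continuous linear map~\(\varphi\) on this Banach space (continuity in the inductive limit topology means exactly continuity on each such subspace) with bounded maps. Condition~(2) is the \Star{}homomorphism property, and~(3) is nondegeneracy on~\(\F\). Corollary~\ref{cor:prerep} then yields an \(I\)\nb-norm bounded representation~\(L'\) with \(L'(f)\xi=\varphi(f)\xi\) for \(\xi\in\F\), so \(\norm{\varphi(f)}\le\norm{f}_I\). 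The main point needing care is exactly this passage from~\(D\) to the nondegenerate module~\(\F\) and back; I expect no real obstacle there.

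Conversely, if \(\varphi\) is \(I\)\nb-norm bounded, it is continuous in the inductive limit topology because \(\norm{f}_I\le \norm{f}_\infty\cdot\sup_{x}\max\{\alpha^x(K),\tilde\alpha_x(K)\}\) for \(f\in\Cont_0(K\setminus\partial K,\A)\), and these suprema are finite for compact~\(K\) by continuity of the Haar system. Finally, an \(I\)\nb-bounded \Star{}homomorphism extends to \(L^1_I(G,\A)\) and then, by Definition~\ref{def:full_section_Cstar-algebra}, uniquely to \(\Cst(G,\A)\to D\); uniqueness follows from density of \(\Contc(G,\A)\) in \(\Cst(G,\A)\).
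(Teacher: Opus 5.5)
Your proposal is correct and follows essentially the paper's route. Both arguments prove the nontrivial direction by turning \(\varphi\) into a nondegenerate pre-representation and invoking Corollary~\ref{cor:prerep}; the easy direction and the extension through \(L^1_I(G,\A)\) to \(\Cst(G,\A)\) are the same in both.

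The one difference is the Hilbert module you represent on. You use the closed right ideal \(\overline{\varphi(\Contc(G,\A))D}\) of~\(D\), which costs you two small steps: an approximate-unit argument for nondegeneracy, and the norm comparison \(\norm{\varphi(f)}=\norm{\varphi(f)|_\F}\). The paper instead co-restricts to the \(\Cst\)\nb-subalgebra \(D'=\overline{\varphi(\Contc(G,\A))}\), viewed as a Hilbert module over itself, where both of these are automatic.
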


\begin{proof}
  By definition, the homomorphism \(\varphi\) extends to
  \(\Cst(G,\A)\) if and only if it is bounded in the \(I\)\nb-norm;
  such an extension, if it exists, is necessarily unique.
  If \(\varphi\) is bounded in the \(I\)\nb-norm, then it is
  continuous in the inductive limit topology.

  Conversely, suppose that \(\varphi\) is continuous in the inductive
  limit topology.
  Then the formula \(\norm{f} \defeq \norm{\varphi(f)}_D\) defines a
  \(\Cst\)\nb-seminorm on~\(\Contc(G,\A)\).
  Let~\(D'\) denote the Hausdorff completion of \(\Contc(G,\A)\) with
  respect to this seminorm.
  By construction, \(\varphi\) descends to an isometric
  \Star{}homomorphism \(D' \to D\), identifying~\(D'\) with the
  \(\Cst\)\nb-subalgebra \(\overline{\varphi(\Contc(G,\A))}\subseteq D\)
  generated by the image of~\(\varphi\).

  The canonical map \(\Contc(G,\A) \to D'\) --- namely, the
  co-restriction of~\(\varphi\) --- is nondegenerate (this is
  precisely why we replaced~\(D\) with~\(D'\)), and hence defines a
  pre-representation of \(\Contc(G,\A)\) on~\(D'\).
  As such, it is bounded in the \(I\)\nb-norm by
  Corollary~\ref{cor:prerep}, and this bound is inherited
  by~\(\varphi\).
\end{proof}

\section{Functoriality of full and reduced section
  \texorpdfstring{$\Cst$-}{C*-}algebras}
\label{sec:functorial_exact}

In this section, we prove some rather basic results about section
\(\Cst\)\nb-algebras of Fell bundles over groupoids.  Throughout
this section, let \((G,\alpha)\) be a locally compact groupoid with
Haar system and let~\(\A\) be a Fell bundle over~\(G\).  Let
\(\FU\defeq \A|_{G^0}\) be the restriction of~\(\A\) to units.  This
is a field of \(\Cst\)\nb-algebras over~\(G^0\).  Let
\(\SUF\defeq\Cont_0(G^0,\FU)\) be the corresponding
\(\Cont_0(G^0)\)-\(\Cst\)-algebra.

We are going to describe a universal Fell-bundle representation on
\(\Cst(G,\A)\).  It contains a canonical \Star{}homomorphism
\(\SUF \to \Mult(\Cst(G,\A))\).  We show that the latter is
injective.  Then we study homomorphisms of Fell bundles.  These
induce homomorphisms of full and reduced section
\(\Cst\)\nb-algebras.  It is easy to see that a surjection of Fell
bundles induces a surjection on full and reduced section
\(\Cst\)\nb-algebras and that an injection of Fell bundles induces
an injection of \emph{reduced} section \(\Cst\)\nb-algebras.  An
injection of Fell bundles need not induce an injective
\Star{}homomorphism on the full section \(\Cst\)\nb-algebras: this
already fails for crossed products by group actions.  We define
hereditary and ideal Fell subbundles and show that \(\Cst(G,\I)\) is naturally
isomorphic to a \(\Cst\)-ideal  in \(\Cst(G,\A)\)
if~\(\I\) is an ideal Fell subbundle in~\(\A\) and the quotient corresponds to the
quotient Fell bundle~\(\A/\I\).  In other words, the canonical maps
\[
  \Cst(G,\I) \to \Cst(G,\A) \to \Cst(G,\A/\I)
\]
form an extension of \(\Cst\)\nb-algebras.

\subsection{The universal representation}
\label{sec:universal_rep}

The second part of the universal property,
Theorem~\ref{the:universal_groupoid_Haus}.\ref{en:universal_groupoid_Haus2},
implies that the bijection between representations of the Fell bundle
and its section \(\Cst\)\nb-algebra is induced by a universal
representation, which we are now going to describe.
Briefly, it is the Fell-bundle representation on \(B\defeq
\Cst(G,\A,\alpha)\) that corresponds to the \emph{identity
  representation} of~\(B\) on itself
under the bijection in Theorem~\ref{the:universal_groupoid_Haus}.
In the following, we usually omit~\(\alpha\) from our notation and
write \(B=\Cst(G,\A)\).
The identity map of~\(B\) is a nondegenerate representation of~\(B\)
on itself, where we view~\(B\) as a Hilbert module over itself.
We call the disintegrated form of the identity representation the
\emph{universal representation} of the Fell bundle~\((\A, G)\).
It is a pair
\[
  \iota\colon \SUF\to\Mult(B),\qquad
  U_\univ\colon \Lt^2(G,\A,\s,\tilde\alpha) \otimes_\iota B
  \congto \Lt^2(G,\A\A^*,\rg,\alpha) \otimes_\iota B,
\]
where~\(\iota\) is a nondegenerate \Star{}homomorphism
and~\(U_\univ\) is a unitary operator.
The action of the \(\Cst\)\nb-algebra~\(B\) on the Hilbert
\(B\)\nb-module~\(B\) in the identity representation \(B\to B\) is the
left multiplication in the \(\Cst\)\nb-algebra~\(B\).
Therefore, in the disintegration of the identity representation
of~\(B\), Lemma~\ref{lem:extension-multiplier} implies
\begin{equation}
  \label{eq:uni-rep-multi}
  \iota(f)(g)(x) = f(\rg(x)) g(x)
\end{equation}
for \(f\in \Cont_0(G^0, \FU[B])\) and \(g\in \Contc(G,\A)\).

The proof of disintegration shows that the unitary~\(U_\univ\) is the
unique extension of the map~\(U_0\)
in~\eqref{eq:regular_U_on_dense_subspaces}.

By the naturality of the universal property, the universal
representation~\((\iota, U_\univ)\) helps in computing the
disintegrations of representations of~\(B\).
Suppose that~\(D\) is a
\(\Cst\)\nb-algebra, and a nondegenerate representation
\(L\colon B\to \Bound(\F)\) of~\(B\) on a Hilbert
\(D\)\nb-module~\(\F\) is given.
Then~\(L\) extends uniquely to \(\Mult(B)\), and allows us to define a
composite nondegenerate \Star{}homomorphism
\(L\circ \iota\colon \SUF \to \Bound(\F)\).
Next we define an isomorphism of correspondences~\(U\) as the
composite in the following diagram:
\[
  \begin{tikzcd}[column sep=huge]
    \Lt^2(G,\A,\s,\tilde\alpha) \otimes_{L\circ \iota} \F
    \ar[r, dashed, "U"]
    \ar[d, "\cong", "\mathrm{canonical}"']
    & \Lt^2(G,\A\A^*,\rg,\alpha) \otimes_{L\circ \iota} \F
    \ar[d, "\cong"', "\mathrm{canonical}"]\\
    (\Lt^2(G,\A,\s,\tilde\alpha) \otimes_\iota B) \otimes_L \F
    \ar[r, "\cong"', "U_\univ \otimes \Id_{\F}"]
    &
    (\Lt^2(G,\A\A^*,\rg,\alpha) \otimes_\iota B) \otimes_L \F
  \end{tikzcd}
\]
The vertical isomorphisms use that the composition of
\(\Cst\)\nb-correspondences is associative and unital, so that 
\(B \otimes_L \F \cong \F\).
The naturality properties in Theorem~\ref{the:universal_groupoid_Haus}
imply that the pair \((L\circ \iota,U)\) is the disintegration
of~\(L\) to a Fell-bundle representation.
In this way, the universal Fell-bundle representation
\((\iota,U_\univ)\) on~\(B\) determines the bijection between
representations of \(B= \Cst(G, \A, \alpha)\) and of~\((\A, G,
\alpha)\).

\begin{proposition}
  \label{pro:coefficients_embed}
  Let \((G,\alpha)\) be a locally compact groupoid with Haar system
  and let~\(\A\) be a Fell bundle over~\(G\).
  Let \(B\defeq
  \Cst(G,\A)\).
  The canonical \Star{}homomorphism \(\iota\colon \SUF \to \Mult(B)\)
  and the following composite map are injective:
  \[
    \SUF\to\Mult(\Cst(G,\A)) \to \Mult(\Cred(G,\A)).
  \]
\end{proposition}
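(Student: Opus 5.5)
Since the composite \(\SUF\to\Mult(\Cst(G,\A))\to\Mult(\Cred(G,\A))\) factors through~\(\iota\), it suffices to prove that the composite is injective. The map \(\Mult(\Cst(G,\A))\to\Mult(\Cred(G,\A))\) is the strictly continuous extension of the quotient map \(\Cst(G,\A)\onto\Cred(G,\A)\). Hence the image of \(f\in\SUF\) in \(\Mult(\Cred(G,\A))\) acts on the dense subspace \(\Contc(G,\A)\subseteq\Cred(G,\A)\) by the formula~\eqref{eq:uni-rep-multi}, namely \(g\mapsto f\dagger g\), where \((f\dagger g)(x)=f(\rg(x))g(x)\). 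This uses that the quotient map is the identity on \(\Contc(G,\A)\) and is compatible with the left multiplier action. Moreover, \(\Contc(G,\A)\) embeds injectively into \(\Cred(G,\A)\) by Proposition~\ref{pro:section_algebra_has_faithful_rep}.

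Suppose \(f\in\SUF\) maps to zero in \(\Mult(\Cred(G,\A))\). Then \(f\dagger g=0\) in \(\Cred(G,\A)\) for all \(g\in\Contc(G,\A)\). Since \(f\dagger g\in\Contc(G,\A)\) and the reduced norm is a norm on \(\Contc(G,\A)\), this gives \(f(\rg(x))\cdot g(x)=0\) for all \(x\in G\) and all \(g\in\Contc(G,\A)\). Restricting to units \(x=u(y)\), \(y\in G^0\), yields \(f(y)\cdot g(u(y))=0\) for all \(g\).

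It remains to see that the values \(g(u(y))\) of sections \(g\in\Contc(G,\A)\) exhaust \(\FU_y=\A_{u(y)}\). Every element of a fibre lies on some continuous section of~\(\A\), since an upper semicontinuous Banach bundle over a locally compact space has enough sections. Multiplying by a compactly supported scalar function equal to~\(1\) at~\(u(y)\) makes the section compactly supported. So \(f(y)b=0\) for all \(b\in\FU_y\), and thus \(f(y)=0\) because \(\FU_y\) is a \(\Cst\)-algebra (take \(b=f(y)^*\)). As \(y\) was arbitrary, \(f=0\).

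The only delicate point is the first step: identifying the image of~\(f\) in \(\Mult(\Cred(G,\A))\) with the pointwise action \(g\mapsto f\dagger g\) on \(\Contc(G,\A)\). Alternatively, one can read this directly off the regular representation. By Proposition~\ref{lem:integrated_regular} and Lemma~\ref{lem:U_from_L}, the \(\varphi\)-part of the disintegration of~\(\Lambda\) is the representation~\eqref{eq:left-reg-rep-multi} on \(\Lt^2(G,\A,\s,\tilde\alpha)\). Its faithfulness follows by the same fibrewise argument, since \(\Contc(G,\A)\) is dense in \(\Lt^2\) and injects into it: a section with \(\int g^*g\,\diff\tilde\alpha=0\) vanishes, because the measures have full support.
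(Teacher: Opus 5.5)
Your proof is correct, and it differs from the paper's only in how the image of \(f\) in \(\Mult(\Cred(G,\A))\) is identified and tested.

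The paper uses the naturality in Theorem~\ref{the:universal_groupoid_Haus}: the composite \(\SUF\to\Mult(\Cred(G,\A))\), followed by the regular representation, is the \(\varphi\)-part of the disintegration of \(\Lambda\). That is the pointwise action \eqref{eq:left-reg-rep-multi} on \(\Lt^2(G,\A,\s,\tilde\alpha)\), and the paper reads off faithfulness there. This is exactly your closing alternative.

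Your main route stays inside the \(\Cst\)-algebras instead. You combine the explicit formula \eqref{eq:uni-rep-multi} with the strictly continuous extension of the quotient map \(\Cst(G,\A)\onto\Cred(G,\A)\) to multipliers, which gives \(\bar\pi(\iota(f))\pi(g)=\pi(f\dagger g)\). Then Proposition~\ref{pro:section_algebra_has_faithful_rep}, the injectivity of \(\Contc(G,\A)\to\Cred(G,\A)\), takes you from \(\pi(f\dagger g)=0\) to pointwise vanishing of \(f\dagger g\).

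This avoids invoking the disintegration and naturality at this point. The cost is that it rests on Proposition~\ref{pro:section_algebra_has_faithful_rep}, whose proof itself uses the regular representation and the full support of the Haar system. So both routes ultimately depend on the same fact.

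Your write-up also spells out the final fibrewise step, which the paper leaves implicit: there are enough sections through each point of \(\FU_y\), and \(f(y)f(y)^*=0\) forces \(f(y)=0\).
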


\begin{proof}
  Let \(f\in \SUF = \Cont_0(G^0,\FU)\) be such that \(\iota(f)(g)(x) =
  f(\rg(x)) g(x) = 0\) for all \(g\in \Contc(G,\A)\).
  We claim that \(f=0\), so that~\(\iota\) is injective.
  The naturality of the universal property  in
  Theorem~\ref{the:universal_groupoid_Haus} shows that the composite
  of~\(\iota\) with the quotient homomorphism to \(\Cred(G,\A)\)
  is the representation~\(\varphi\) of~\(\SUF\) in the disintegration
  \((\varphi,U)\) of the regular representation.
  The construction of the regular representation in
  Section~\ref{sec:regular_rep} defines~\(\varphi\) as the
  representation of \(\SUF\) on the Hilbert \(\SUF\)\nb-module
  \(\Lt^2(G,\A,s,\tilde{\alpha})\) defined by \(\varphi(f)\xi(g) =
  f(\rg(g)) \cdot \xi(g)\) for \(f\in \SUF\) and \(\xi \in
  \Lt^2(G,\A,s,\tilde{\alpha})\), see
  Equation~\eqref{eq:left-reg-rep-multi}.
  Now \(\varphi(f)\xi (x) = 0\) for
  \(\xi \in \Contc(G,\A) \subseteq \Lt^2(G,\A,s,\tilde{\alpha})\)
  implies \(f=0\).
  This proves the second injectivity claimed in the proposition.
\end{proof}

\subsection{Functoriality}
\label{sec:functoriality}

We now fix a locally compact Hausdorff groupoid~\(G\) with a Haar system and study
morphisms between Fell bundles over~\(G\).
Such morphisms are given by fibrewise linear maps that are compatible
with the multiplication, involution, and the underlying groupoid
structure.

Throughout this section, we fix a Fell bundle~\(\A\) over~\(G\).  To discuss
functoriality, we consider a second Fell bundle~\(\B\) over~\(G\).  We write
\(\FU[B] \defeq \B|_{G^0}\) for the restriction of~\(\B\) to the unit space and
\(\SUF[B] \defeq \Cont_0(G^0,\FU[B])\).

\begin{definition}
  \label{def:morphisms-Fell-bundles}
  A \emph{homomorphism of Fell bundles} \(\A\to\B\) is a continuous
  family of linear maps \(\mor_g\colon \A_g\to \B_g\) for all \(g\in
  G\) such that
  \[
    \mor_{g^{-1}}(a^*) = \mor_g(a)^*
    \quad\text{and}\quad
    \mor_g(a)\cdot \mor_h(b) = \mor_{g h}(a\cdot b)
  \]
  for all \((g,h) \in G^2\), \(a\in \A_g\), \(b\in\A_h\).
  Here ``continuity'' means that
  \(g\mapsto \mor_g(\xi(g))\) is a continuous section of~\(\B\)
  whenever~\(\xi\) is a continuous section of~\(\A\).
  The homomorphism is called \emph{injective} or \emph{surjective} if
  all maps~\(\mor_g\) are injective or surjective, respectively.
\end{definition}

\begin{remark}
  \label{rem:tau_injective}
  Let \(\mor=(\mor_g)_{g\in G}\) be a homomorphism of Fell bundles.
  If \(x\in G^0\), then \(\mor_{u(x)}\colon \FU_x \to \FU[B]_x\) is a
  \Star{}homomorphism, hence norm-contractive.
  Then~\(\mor_g\) is norm-contractive for all \(g\in G\) because
  \(\norm{a} = \norm{a^* a}^{1/2}\) for \(a\in \A_g\).
  The same idea shows that~\((\mor_g)_{g\in G}\) is injective if and
  only if the \Star{}homomorphisms~\(\mor_{u(x)}\) are injective for
  all \(x\in G^0\).
  In this case, each~\(\mor_g\) is isometric.
\end{remark}

\begin{proposition}
  \label{pro:functorial}
  Let \(\mor=(\mor_g)_{g\in G}\colon \A \to \B\) be a Fell-bundle homomorphism.
  Then there is a natural commuting diagram of \Star{}homomorphisms
  \[
    \begin{tikzcd}[column sep=large]
      \Cst(G,\A) \arrow{r}{\Cst(G,\mor)}
      \arrow[d,->>, "\mathrm{canonical}"'] &
      \Cst(G,\B)
      \arrow[->>]{d}{\mathrm{canonical}}\\
      \Cred(G,\A) \arrow{r}{\Cred(G,\mor)} &
      \Cred(G,\B).
    \end{tikzcd}
  \]
  The vertical maps are the canonical quotient maps.
  If~\(\mor\) is surjective, then so are \(\Cst(G,\mor)\) and
  \(\Cred(G,\mor)\).
  If~\(\mor\) is injective, then so is \(\Cred(G,\mor)\).
\end{proposition}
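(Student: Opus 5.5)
The plan is to define everything on the dense \Star{}subalgebras and then extend. First, \(\mor\) induces a map \(\mor_*\colon \Contc(G,\A)\to\Contc(G,\B)\), \(\mor_*(\xi)(g)\defeq\mor_g(\xi(g))\). This is a continuous compactly supported section by the continuity assumption in Definition~\ref{def:morphisms-Fell-bundles}. Because \(\mor\) is fibrewise linear, multiplicative and \Star{}preserving, and because integration commutes with the bounded linear maps \(\mor_g\), the map \(\mor_*\) is a \Star{}homomorphism for convolution. Remark~\ref{rem:tau_injective} shows that each \(\mor_g\) is contractive, so \(\norm{\mor_*(\xi)}_I\le\norm{\xi}_I\) and \(\mor_*\) is continuous in the inductive limit topology. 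Composing with \(\Contc(G,\B)\to\Cst(G,\B)\) gives a \Star{}homomorphism that is bounded for the \(I\)\nb-norm. It therefore extends to \(\Cst(G,\mor)\) by Definition~\ref{def:full_section_Cstar-algebra}, or alternatively by Corollary~\ref{cor:continuity-rep}.

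For the reduced algebras, I would build a correspondence isometry \(V\colon\Lt^2(G,\A,\s,\tilde\alpha)\otimes_{\mor_{u}}\SUF[B]\to\Lt^2(G,\B,\s,\tilde\alpha)\), given by \(\xi\otimes b\mapsto\mor_*(\xi)\cdot b\). Here \(\mor_u\colon\SUF\to\SUF[B]\) is the induced map on unit sections. The inner products match because \(\mor_{u(x)}(a^*a')=\mor_g(a)^*\mor_g(a')\), which makes \(V\) an isometry. Moreover, \(V(\Lambda_\A(f)\otimes 1)=\Lambda_\B(\mor_*f)V\) holds by Proposition~\ref{lem:integrated_regular}, because \(\mor_*(f*\xi)=\mor_*f*\mor_*\xi\).

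The step I expect to be the main obstacle is that \(V\) need not be surjective when \(\mor\) is not surjective. Its range is a closed submodule that is invariant under \(\Lambda_\B(\mor_*\Contc(G,\A))\), but it need not be complemented. Even so, the norm estimate \(\norm{\Lambda_\B(\mor_*f)}\le\norm{\Lambda_\A(f)}\) is not clear from this alone. I would instead argue fibrewise using Remark~\ref{rem:induce_reps}: compute both reduced norms as suprema over \(x\in G^0\) and over irreducible representations \(\pi\) of \(\FU[B]_x\). The representation \(\Ind(\pi)\) of \(\mor_*f\) on \(\Lt^2(G_x,\B)\otimes_\pi H\) is then compared with \(\Ind(\pi\circ\mor_{u(x)})\) of \(f\) on \(\Lt^2(G_x,\A)\otimes H\). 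The latter embeds isometrically via \(V\), and it is the former restricted to an invariant subspace.

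To get the reverse inequality, I would show that the span of \(\B_g\cdot\mor(\A_h)\)-type vectors reduces everything. The cleanest route is to check that \(\Lambda_\B(\mor_*f)\) vanishes on the orthogonal complement of the range of \(V\). Indeed, \(\Lambda_\B(\mor_*f)\eta(k)=\int\mor(f(g))\eta(g^{-1}k)\), and for this to be determined by the range of \(V\) one needs \(\mor(\A_g)\B_{g^{-1}k}\subseteq\mor(\A_g)\mor(\A_g^*\A_g)\B\), which uses \(\A_g=\A_g\A_g^*\A_g\) from Lemma~\ref{lem:equality_of_products}. Granting this, the restriction gives \(\norm{\Lambda_\B(\mor_*f)}=\norm{\Lambda_\A(f)\otimes 1}\le\norm{\Lambda_\A(f)}\). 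This produces \(\Cred(G,\mor)\), and commutativity of the square holds on \(\Contc\) by construction.

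Surjectivity: if every \(\mor_g\) is onto, I would show that \(\mor_*(\Contc(G,\A))\) is dense in \(\Contc(G,\B)\) for the inductive limit topology. For each \(b\in\B_g\) there is a continuous section of \(\A\) mapping onto a section through \(b\), and a partition of unity argument as in Lemma~\ref{lem:subfield_equality} then gives density. Hence both induced maps have dense range, and \Star{}homomorphisms between \(\Cst\)\nb-algebras have closed range, so they are surjective.

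Injectivity: if \(\mor\) is injective, it is fibrewise isometric by Remark~\ref{rem:tau_injective}, so \(\mor_u\) is injective. In the fibrewise comparison above I would then use a faithful family of representations of \(\SUF[B]\), whose pullbacks along \(\mor_u\) form a faithful family for \(\SUF\). Remark~\ref{rem:induce_reps} then shows that \(\sup\norm{\Ind(\pi\circ\mor_u)(f)}=\norm{f}_\red\). Combined with the equality \(\norm{\Lambda_\B(\mor_*f)\otimes1}=\norm{(\Lambda_\A(f)\otimes1)}\) on the range of \(V\), this yields an isometric, hence injective, \(\Cred(G,\mor)\).
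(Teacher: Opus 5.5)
Your route is the paper's: \(\Cst(G,\mor)\) comes from \(I\)-norm continuity of \(\mor_*\), and surjectivity from dense range plus closed range. For the reduced algebras you compare \(\Lambda_\B\circ\mor_*\) with an induced regular representation of \(\A\) through the isometry \(V\); the paper does the same after tensoring with a faithful representation \(\varrho\) of \(\SUF[B]\). These parts are fine. So is your lower bound for injectivity, because \(\mathrm{range}(V)\) is invariant.

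The gap is at the step you flagged yourself: \(\Lambda_\B(\mor_*f)\) need \emph{not} vanish on \(\mathrm{range}(V)^\perp\). Take \(G=\Z/2=\{e,t\}\) with counting measure, \(\B\) the trivial line bundle, \(\A_e=\C\), \(\A_t=0\), \(\mor_e=\Id\), \(\mor_t=0\). This is even an injective Fell-bundle homomorphism. Here \(\mathrm{range}(V)=\C\delta_e\subseteq\ell^2(\Z/2)\), but \(\Lambda_\B(\mor_*\delta_e)\) is the identity of \(\ell^2(\Z/2)\).

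Your inclusion, which comes from \(\A_g=\A_g\A_g^*\A_g\), only shows that \(\Lambda_\B(\mor_*f)\) vanishes off the essential subspace of the left action of \(\mor_u(\SUF)\). That subspace has fibre \(\overline{\mor(\FU_{\rg(h)})\B_h}\) at \(h\). The fibre of \(\mathrm{range}(V)\) at \(h\) is \(\overline{\mor(\A_h)\FU[B]_{\s(h)}}\), which can be strictly smaller: in the example it is \(0\) versus \(\C\) at \(h=t\). The inclusion you actually need is \(\mor(\A_g)\B_{g^{-1}k}\subseteq\overline{\mor(\A_k)\FU[B]_{\s(k)}}\). It holds if \(\A\) is saturated, since then \(\A_g=\A_k\A_{k^{-1}g}\), or if \(\mor\) is surjective, but not in general. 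An invariant subspace gives only a lower bound. So \(\norm{\Lambda_\B(\mor_*f)}\le\norm{\Lambda_\A(f)}\), i.e.\ the existence of \(\Cred(G,\mor)\), remains unproved for nonsaturated \(\A\).

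The paper's proof contains exactly this step. Its claim ``by direct inspection'' that \(\tilde\omega=\tilde\varrho\circ\Contc(G,\mor)\), with \(\omega\) extended by \(0\) off the image of the isometry, fails in the example above. As written, that proof is therefore complete only when the inclusion above holds. The statement itself survives the example, since \(\C\to\Cred(\Z/2)\) is isometric.

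To repair the argument you must also control \(\mathrm{range}(V)^\perp\). Fix \(x\in G^0\), a representation \(\pi\) of \(\FU[B]_x\), and \(k\in G_x\). Multiplication \(\A_g\otimes\overline{\mor(\FU_{\rg(k)})\B_k}\to\B_{gk}\), reindexed by \(g\mapsto gk\) using right invariance of \(\tilde\alpha\), embeds a regular representation of \(\A\) induced from a representation of \(\FU_{\rg(k)}\) isometrically and equivariantly into \(\Lt^2(G_x,\B)\otimes_\pi\Hils\). Your \(V\) is the case \(k=u(x)\). If \(G\) is étale, these translates span the subspace on which \(\Lambda_\B\circ\mor_*\) lives, so it factors through \(\Cred(G,\A)\). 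Alternatively, for étale \(G\) you can use the faithful conditional expectations onto \(\SUF\) and \(\SUF[B]\), as for Fell bundles over discrete groups. For nondiscrete fibres the translation argument needs additional measure-theoretic work.
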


\begin{proof}
  The induced map
  \[
    \Contc(G,\mor) \colon \Contc(G,\A) \to \Contc(G,\B),
    \qquad
    \Contc(G,\mor)(h)(g) \defeq \Contc(G,\mor)(h(g)),
  \]
  is a \Star{}homomorphism and is continuous for the inductive limit topology.
  By Corollary~\ref{cor:continuity-rep}, \(\Contc(G,\mor)\) extends uniquely
  to a \Star{}homomorphism \(\Cst(G,\A) \to \Cst(G,\B)\), which we
  denote by \(\Cst(G,\mor)\).

  If~\(\mor\) is surjective, then so is \(\Contc(G,\mor)\colon \Contc(G,\A)
  \to \Contc(G,\B)\); this may be seen as a consequence of
  Lemma~\ref{lem:subfield_equality}.
  Since the range of \(\Cst(G,\mor)\) is always closed, this implies that
  \(\Cst(G,\mor)\) is surjective.
  Consequently, \(\Cred(G,\mor)\) is also surjective.
  Let \(\varrho\colon \Cont_0(G^0,\FU[B]) \to \Bound(\Hils)\) be a representation
  on a Hilbert space~\(\Hils\).
  We form an induced representation~\(\tilde\varrho\) of
  \(\Cred(G,\B)\) on \(\Lt^2(G,\B,\s,\tilde\alpha)\otimes_{\SUF[B]}
  \Hils\) by tensoring \(\Lt^2(G,\B,\s,\tilde\alpha)\)
  over~\(\Cont_0(G^0,\FU[B])\) with~\(\varrho\).
  This representation is faithful if~\(\varrho\) is faithful by
  Remark~\ref{rem:induce_reps}.

  The homomorphism~\(\mor\) also induces a \Star{}homomorphism
  \(\mor_*\colon \SUF \to \SUF[B]\).
  The map
  \[
    \Contc(G,\mor)\odot \Id_{\Hils}\colon
    \Contc(G,\A)\odot \Hils \to \Contc(G,\B)\odot \Hils
  \]
  extends to an isometry
  \[
    \Lt^2(G,\A,\s,\tilde\alpha)
    \otimes_{\SUF,\varrho\circ\mor_*} \Hils
    \hookrightarrow
    \Lt^2(G,\B,\s,\tilde\alpha)
    \otimes_{\SUF[B],\varrho} \Hils,
  \]
  because it preserves inner products and \(\Contc(G,\A)\) is dense in
  \(\Lt^2(G,\A,\s,\tilde\alpha)\).

  The representation \(\varrho\circ\mor_*\) of~\(\SUF\) induces a
  representation
  \[
    \omega\colon \Cred(G,\A)
    \to
    \Bound\bigl(
      \Lt^2(G,\A,\s,\tilde\alpha)
      \otimes_{\SUF,\varrho\circ\mor_*} \Hils
    \bigr).
  \]
  Extend~\(\omega\) by~\(0\) on the orthogonal complement to get a
  representation~\(\tilde\omega\) on the Hilbert space
  \(\Lt^2(G,\B,\s,\tilde\alpha)\otimes_{\SUF[B],\varrho} \Hils\).
  A direct inspection shows that \(\tilde\omega=\tilde\varrho \circ
  \Contc(G,\mor)\) on \(\Contc(G,\A)\).
  Since~\(\tilde\varrho\) is faithful and~\(\tilde\omega\) extends to
  \(\Cred(G,\A)\), the map \(\Cst(G,\mor)\) descends to a
  well-defined \Star{}homomorphism \(\Cred(G,\A) \to
  \Cred(G,\B)\).

  If~\(\mor\) is injective, then \(\varrho\circ\mor_*\) is injective
  on~\(\SUF\).
  Hence the induced representation of \(\Cred(G,\A)\) is faithful
  by Remark~\ref{rem:induce_reps}.
  Therefore, \(\Cred(G,\mor)\) is injective.
\end{proof}

\begin{remark}
  The assignments \(\A \mapsto \Cst(G,\A)\) and \(\A \mapsto
  \Cred(G,\A)\) define functors from the category of Fell bundles
  over~\(G\) with homomorphisms as in
  Definition~\ref{def:morphisms-Fell-bundles} to the category of
  \(\Cst\)\nb-algebras.

  Later we shall show that if~\(\mor_{u(x)}\) is nondegenerate for all
  \(x\in G^0\), then so are \(\Cst(G,\mor)\) and \(\Cred(G,\mor)\);
  this will be proved in the more general context of multiplier
  morphisms between Fell bundles, see
  Section~\ref{sec:multiplier_split_exact}.
\end{remark}

\subsection{Hereditary Fell subbundles and exactness for Fell ideals}
\label{sec:hereditary}

\begin{definition}
  \label{def:Fell_subbundle}
  Let~\(\B\) be a Fell bundle over~\(G\).
  A \emph{Fell subbundle} is a continuous family of Banach subspaces
  \(\A_g \subseteq \B_g\) as in
  Definition~\ref{def:continuous_family_subspaces} that satisfies
  \(\A_g^* \subseteq \A_{g^{-1}}\) and \(\A_g\cdot \A_h \subseteq
  \A_{g h}\) for all \((g,h)\in G^2\).
  A Fell subbundle \(\A\subseteq \B\) is \emph{hereditary} if \(\A_g
  \B_h \A_k \subseteq \A_{g h k}\) for all \((g,h,k) \in G^3\);
  here~\(G^3\) is defined in~\eqref{eq:def_G3}.
  It is a \emph{Fell ideal} if \(\A_g \B_h \subseteq \A_{g h}\) and
  \(\B_g \A_h \subseteq \A_{g h}\) for all \(g,h\in G\).
\end{definition}

The two conditions \(\A_g \B_h \subseteq \A_{g h}\) and \(\B_g \A_h
\subseteq \A_{g h}\) for all \(g,h\in G\) are equivalent for a Fell
subbundle because \(\A_g^* = \A_{g^{-1}}\) for all \(g\in G\).

\begin{remark}
  An injective Fell-bundle homomorphism \(i\colon \A \to \B\) is the
  same as an isomorphism of Fell bundles onto a Fell subbundle
  \(i(\A) \subseteq \B\).
  Fell ideals are hereditary Fell subbundles.
\end{remark}

By Lemma~\ref{lem:subbundle_from_continuous_family}, a Fell
subbundle is a Fell bundle in its own right, such that the
continuous sections of~\(\A\) are exactly those continuous sections
\(f\colon G\to \B\) with \(f(g)\in\A_g\) for all \(g\in G\).  It is
well known that any closed two-sided ideal in a \(\Cst\)\nb-algebra
is a \Star{}ideal.  This is proven using approximate units.  The
same argument shows that the condition \(\A_g^* = \A_{g^{-1}}\) is
redundant for Fell ideals:

\begin{lemma}
  \label{lem:Fell_ideal_star}
  Let \(\I_g\subseteq \A_g\) be a continuous family of Banach
  subspaces.  Assume
  \[
    \I_g \cdot \A_h \subseteq \I_{g\cdot h},\qquad
    \A_g \cdot \I_h \subseteq \I_{g\cdot h}
  \]
  for all \((g,h)\in G^2\).  Then \(\I_g^*=\I_{g^{-1}}\) for all
  \(g\in G\).  This makes~\(\I\) a Fell ideal.
\end{lemma}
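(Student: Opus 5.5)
Fix \(g\in G\) and \(a\in\I_g\). The plan is to mimic the proof that closed two-sided ideals in \(\Cst\)\nb-algebras are self-adjoint, working inside the Hilbert bimodule~\(\A_g\). It suffices to show \(a^*\in\I_{g^{-1}}\): this gives \(\I_g^*\subseteq\I_{g^{-1}}\) for all~\(g\), and applying it to~\(g^{-1}\) together with \((a^*)^*=a\) yields equality.

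First I would consider the element \(c\defeq a^*a\in\FU_{\s(g)}\). The involution maps \(\A_g\) into \(\A_{g^{-1}}\), so \(a^*\in\A_{g^{-1}}\). By the hypothesis \(\A_{g^{-1}}\cdot\I_g\subseteq\I_{g^{-1}g}=\I_{u(\s(g))}\), the element \(c\) lies in \(\I_{u(\s(g))}\). The subspace \(J\defeq\I_{u(\s(g))}\) is closed in the \(\Cst\)\nb-algebra~\(\FU_{\s(g)}\). By the hypotheses with \(g=h=u(\s(g))\), it is a two-sided ideal, and hence self-adjoint by the classical \(\Cst\)\nb-result. Consequently \(f(c)\in J\) for every continuous function~\(f\) on the spectrum of~\(c\) with \(f(0)=0\), because \(J\) is a closed ideal and so also a \(\Cst\)\nb-subalgebra.

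Next I would use the standard approximation \(a^* = \lim_n f_n(a^*a)\,a^*\) in \(\A_{g^{-1}}\), where \(f_n(t)\defeq t(1/n+t)^{-1}\). The norm estimate comes from the \(\Cst\)\nb-identity in the Fell bundle:
\[
  \norm{a-a f_n(a^*a)}^2=\norm{(1-f_n(c))\,c\,(1-f_n(c))},
\]
which tends to~\(0\) by functional calculus, since \(t(1-f_n(t))^2\le 1/(4n)\). Taking adjoints gives \(\norm{a^*-f_n(c)a^*}\to0\).

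Finally, \(f_n(c)\in J=\I_{u(\s(g))}\) and \(a^*\in\A_{g^{-1}}\), so the hypothesis \(\I_{u(\s(g))}\cdot\A_{g^{-1}}\subseteq\I_{g^{-1}}\) shows \(f_n(c)a^*\in\I_{g^{-1}}\). Since \(\I_{g^{-1}}\) is closed, \(a^*\in\I_{g^{-1}}\). The continuity of the family is given by assumption, and the conditions \(\I_g\cdot\A_h\subseteq\I_{gh}\) imply \(\I_g\I_h\subseteq\I_{gh}\); together with self-adjointness this makes~\(\I\) a Fell subbundle that is a Fell ideal.

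The only delicate point is justifying the displayed norm computation. This uses only axioms \ref{def:Fell_bundle_4}--\ref{def:Fell_bundle_5} and associativity for products \(\A_g\times\FU_{\s(g)}\times\FU_{\s(g)}\). Alternatively, one can cite the Hilbert-module fact \(\xi=\lim \xi\,f_n(\braket{\xi}{\xi})\) used in the proof of Lemma~\ref{lem:factor_f}.
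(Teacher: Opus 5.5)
Your proof is correct and follows essentially the paper's route: in both, \(a^*\) is a limit of elements \(e\,a^*\) with \(e\) in the closed ideal \(\I_{u\s(g)}\idealin\FU_{\s(g)}\) (which contains \(a^*a\)), and the inclusion \(\I_{u\s(g)}\cdot\A_{g^{-1}}\subseteq\I_{g^{-1}}\) together with closedness of \(\I_{g^{-1}}\) finishes the argument. The only difference is that the paper uses an abstract approximate unit of \(\I_{u\s(g)}\) and quotes nondegeneracy of \(\I_g\) as a Hilbert \(\I_{u\s(g)}\)\nb-module, whereas you make this step explicit with \(f_n(a^*a)\) and the \(\Cst\)\nb-identity estimate.
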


\begin{proof}
  Let \(g\in G\).  By assumption,
  \(\I_{u\s(g)}\subseteq \FU_{\s(g)}\) is a closed ideal.  Then it
  is a \(\Cst\)\nb-subalgebra.  So~\(\I_{u\s(g)}\) has an
  approximate unit~\((e_i)\).  Since \(\I_g\subseteq \A_g\) is a
  right \(\FU_{\s(g)}\)-submodule and
  \(\I_g^*\I_g\subseteq \A_{g^{-1}}\I_g\subseteq \I_{u\s(g)}\), it
  follows that~\(\I_g\) is a Hilbert \(\I_{u\s(g)}\)-module.  This
  makes it nondegenerate as a right \(\I_{u\s(g)}\)-module.  That
  is, \(\lim x\cdot e_i= x\) for all \(x\in \I_g\).  Then
  \(x^* = \lim e_i x^*\) as well, and this belongs to
  \(\I_{u\s(g)} \A_{g^{-1}} \subseteq \I_{g^{-1}}\).  This proves
  that~\(\I_g^*\) is contained in~\(\I_{g^{-1}}\).  Replacing~\(g\)
  by~\(g^{-1}\) and taking adjoints gives the reverse inclusion.  So
  \(\I_g^*=\I_{g^{-1}}\).
\end{proof}

Recall that \(\FU[B]\defeq \B|_{G^0}\) and \(\SUF[B]\defeq
\Cont_0(G^0,\FU[B])\).

\begin{proposition}
  \label{pro:hereditary_Fell_subbundle}
  Let~\(\B\) be a Fell bundle over~\(G\).  Let
  \(\FU_x \subseteq \FU[B]_x\) for \(x\in G^0\) be a continuous family
  of hereditary subalgebras.  Let \(\A_g\subseteq \B_g\) be
  \(\FU_{\rg(g)} \B_g \FU_{\s(g)}\), the closed linear span of
  products \(a_1 b_2 a_3\) with \(a_1\in \FU_{\rg(g)}\),
  \(b_2\in \B_g\), \(a_3\in \FU_{\s(g)}\).  Then \((\A_g)_{g\in G}\)
  is a hereditary Fell subbundle of~\(\B\).  Conversely, any
  hereditary Fell subbundle is of this form for a unique continuous
  family of hereditary subalgebras \(\FU_x \subseteq \FU[B]_x\) for
  \(x\in G^0\).  So there is a natural bijection between hereditary
  Fell subbundles in~\(\B\) and continuous families of hereditary
  subalgebras in~\(\FU[B]\).
\end{proposition}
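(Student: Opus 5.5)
The plan is to split the argument into three parts: (a) the construction \(\FU\mapsto\A\) yields a hereditary Fell subbundle, (b) every hereditary Fell subbundle arises this way, and (c) the two assignments are inverse to each other. Throughout, the basic tool is that a hereditary subalgebra \(H\subseteq C\) of a \(\Cst\)\nb-algebra satisfies \(HCH=H\) (closed linear span), together with the Hilbert bimodule calculus of Lemma~\ref{lem:equality_of_products}.

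For (a), first check the algebraic conditions fibrewise. Involution: \((\FU_{\rg(g)}\B_g\FU_{\s(g)})^*=\FU_{\s(g)}\B_{g^{-1}}\FU_{\rg(g)}=\A_{g^{-1}}\) because hereditary subalgebras are self-adjoint. Multiplication: \(\A_g\A_h\subseteq\FU_{\rg(g)}\B_g\FU_{\s(g)}\B_h\FU_{\s(h)}\subseteq\FU_{\rg(g)}\B_{gh}\FU_{\s(h)}=\A_{gh}\). The hereditary property is the same computation: \(\A_g\B_h\A_k\subseteq\FU_{\rg(g)}\B_{ghk}\FU_{\s(k)}\). At units, \(\A_{u(x)}=\FU_x\FU[B]_x\FU_x=\FU_x\), using hereditariness.

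The step I expect to be the main obstacle in (a) is continuity of the family \((\A_g)\) in the sense of Definition~\ref{def:continuous_family_subspaces}. Here I would use Lemma~\ref{lem:subbundle_with_dense_set}. Given \(a_1b_2a_3\) with \(a_1\in\FU_{\rg(g)}\), \(b_2\in\B_g\), \(a_3\in\FU_{\s(g)}\), choose \(\Cont_0\)\nb-sections \(\alpha_1,\alpha_3\) of the field \(\FU\); these exist because the family \(\FU\) is continuous. Also choose a continuous section \(\beta\) of~\(\B\). Then \(h\mapsto\alpha_1(\rg(h))\beta(h)\alpha_3(\s(h))\) is a continuous section of~\(\B\) with values in~\(\A\), by continuity of the multiplication, and it passes through the given product. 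Finite sums of such elements are dense in~\(\A_g\), so the lemma applies.

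For (b), start from a hereditary Fell subbundle \(\A\subseteq\B\) and set \(\FU_x\defeq\A_{u(x)}\). This is a continuous family of subalgebras, obtained by restricting a continuous family along \(u\). It is hereditary because \(\A_x\FU[B]_x\A_x\subseteq\A_x\) is the case \(g=h=k=u(x)\) of the hereditary condition, and \(aCa\subseteq A\) characterises hereditary subalgebras. Then \(\FU_{\rg(g)}\B_g\FU_{\s(g)}\subseteq\A_g\), again by the hereditary condition. For the reverse inclusion, use the Hilbert bimodule structure of~\(\A_g\) as in Lemma~\ref{lem:equality_of_products}: \(\A_g=\A_g\A_g^*\A_g\A_g^*\A_g\). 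Here \(\A_g\A_g^*\subseteq\A_{u(\rg(g))}=\FU_{\rg(g)}\) and \(\A_g^*\A_g\subseteq\FU_{\s(g)}\), so \(\A_g\subseteq\FU_{\rg(g)}\A_g\FU_{\s(g)}\subseteq\FU_{\rg(g)}\B_g\FU_{\s(g)}\).

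For (c), uniqueness and the bijection follow because the two constructions are mutually inverse. Restricting the construction of (a) to units returns \(\FU\), as shown in (a), and (b) shows that constructing from the restriction returns~\(\A\).
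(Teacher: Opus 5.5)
Your proposal is correct and follows the paper's proof essentially step by step. It uses the same fibrewise inclusions for involution, multiplication and heredity, the identity \(\FU_x\FU[B]_x\FU_x=\FU_x\) for uniqueness, and the nondegeneracy \(\A_g=\FU_{\rg(g)}\A_g\FU_{\s(g)}\) for the converse, which you derive from \(\A_g=\A_g\A_g^*\A_g\). The one addition is that you prove, via Lemma~\ref{lem:subbundle_with_dense_set}, the continuity of \((\A_g)_{g\in G}\), which the paper only describes as easy to see.
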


\begin{proof}
  It is easy to see that the family of Banach subspaces
  \((\FU_{\rg(g)} \B_g \FU_{\s(g)})_{g\in G}\) in~\(\B\) is
  continuous.  We check
  \begin{align*}
    (\FU_{\rg(g)} \B_g \FU_{\s(g)})^*
    &= \FU_{\rg(g^{-1})} \B_{g^{-1}} \FU_{\s(g^{-1})},\\
    (\FU_{\rg(g)} \B_g \FU_{\s(g)})\cdot
    (\FU_{\rg(h)} \B_h \FU_{\s(h)})
    &\subseteq \FU_{\rg(g)} \B_{g h} \FU_{\s(h)},\\
    (\FU_{\rg(g)} \B_g \FU_{\s(g)})\cdot \B_h \cdot
    (\FU_{\rg(k)} \B_k \FU_{\s(k)})
    &\subseteq \FU_{\rg(g)} \B_{g h k} \FU_{\s(k)}
  \end{align*}
  for all \((g,h,k)\in G^3\).  This says that
  \((\FU_{\rg(g)} \B_g \FU_{\s(g)})_{g\in G}\) is a hereditary Fell
  subbundle of~\(\B\).  Since each~\(\FU_x\) is hereditary
  in~\(\FU[B]_x\), it follows that \(\FU_x \FU[B]_x \FU_x\) is
  equal to~\(\FU_x\).  Thus
  \((\FU_{\rg(g)} \B_g \FU_{\s(g)})_{g\in G}\) determines
  \((\FU_x)_{x\in G^0}\).

  Now let \((\A_g)_{g\in G}\) be any hereditary Fell subbundle
  of~\(\B\).
  If \(x\in G^0\), then \(\FU_x \defeq \A_{u(x)}\) is a hereditary
  subalgebra in~\(\FU[B]_x\).
  Since~\((\A_g)_{g\in G}\) is a Fell bundle in its own right,
  \(\A_g\) is nondegenerate as a bimodule over \(\FU_{\rg(g)}\)
  and~\(\FU_{\s(g)}\).  That is,
  \(\A_g = \FU_{\rg(g)} \A_g \FU_{\s(g)} \subseteq \FU_{\rg(g)} \B_g
  \FU_{\s(g)}\).  The converse inclusion
  \(\FU_{\rg(g)} \B_g \FU_{\s(g)} \subseteq \A_g\) follows from the
  definition of a hereditary Fell subbundle.  Thus
  \(\A_g =\FU_{\rg(g)} \B_g \FU_{\s(g)}\) for all \(g\in G\).
\end{proof}

\begin{corollary}
  \label{cor:hereditary_subbundle_subalgebra}
  Hereditary Fell subbundles of~\(\B\) are in bijection with
  hereditary \(\Cst\)\nb-subalgebras of~\(\SUF[B]\).
\end{corollary}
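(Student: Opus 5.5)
The plan is to obtain the corollary by composing two bijections. Proposition~\ref{pro:hereditary_Fell_subbundle} already identifies hereditary Fell subbundles of~\(\B\) with continuous families of hereditary subalgebras \(\FU_x\subseteq\FU[B]_x\), \(x\in G^0\). It therefore remains to identify such continuous families with hereditary \(\Cst\)\nb-subalgebras of \(\SUF[B]=\Cont_0(G^0,\FU[B])\). This parallels Remark~\ref{rem:continuous_family_of_ideals}, with ideals replaced by hereditary subalgebras.

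\emph{From families to subalgebras.} Let \((\FU_x)\) be a continuous family of hereditary subalgebras. By Lemma~\ref{lem:subbundle_from_continuous_family} there is a subfield~\(\FU\), and I take \(A\defeq\Cont_0(G^0,\FU)\), the \(\Cont_0\)\nb-sections of~\(\FU[B]\) with values in~\(\FU_x\) at each~\(x\). This is plainly a closed \Star{}subalgebra. To see that it is hereditary, I use the criterion \(A\,\SUF[B]\,A\subseteq A\), which holds pointwise because \(\FU_x\FU[B]_x\FU_x\subseteq\FU_x\).

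\emph{From subalgebras to families.} Conversely, given a hereditary \(A\subseteq\SUF[B]\), I put \(\FU_x\defeq\ev_x(A)\). This is the image of a \(\Cst\)\nb-algebra under a \Star{}homomorphism, so it is closed. I show it is hereditary through \(\FU_x\FU[B]_x\FU_x\subseteq\FU_x\): elements of \(\FU[B]_x\) lift to sections, and \(a b a'\in A\) for \(a,a'\in A\) and \(b\in\SUF[B]\). The family is continuous by Definition~\ref{def:continuous_family_subspaces}, since each element of~\(\FU_x\) is the value of a section in~\(A\).

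\emph{The two maps are inverse.} Starting from a family, \(\ev_x(\Cont_0(G^0,\FU))=\FU_x\) by continuity of the family. The main point is the other direction: I must show that \(A=\{f\in\SUF[B] : f(x)\in\ev_x(A)\ \forall x\}\). The inclusion \(A\subseteq\) is clear, and \(A\) is automatically a \(\Cont_0(G^0)\)\nb-submodule. Indeed, by Cohen factorisation \(f=a_1 f' a_2\) with \(a_i\in A\), and then \(h f = a_1(hf')a_2\in A\) for \(h\in\Cont_0(G^0)\) by heredity.

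For the reverse inclusion, take~\(f\) with \(f(x)\in\ev_x(A)\) for all~\(x\). Fix \(\varepsilon>0\). For each~\(x\) choose \(a_x\in A\) with \(a_x(x)=f(x)\). Upper semicontinuity of the norm gives a neighbourhood of~\(x\) on which \(\norm{a_x-f}<\varepsilon\). Gluing these with a partition of unity subordinate to a finite cover of a compact set outside which \(\norm{f}<\varepsilon\), exactly as in the proof of Lemma~\ref{lem:subfield_equality}, yields an element of~\(A\) within~\(\varepsilon\) of~\(f\); the gluing stays in~\(A\) because \(A\) is a \(\Cont_0(G^0)\)\nb-module. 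Closedness of~\(A\) then gives \(f\in A\). This module-plus-gluing step is the only real obstacle, and it is handled by the Cohen factorisation argument above.
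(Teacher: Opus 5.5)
Your proof is correct and follows the paper's route. You compose Proposition~\ref{pro:hereditary_Fell_subbundle} with an identification of continuous families of hereditary subalgebras and hereditary \(\Cst\)\nb-subalgebras of \(\SUF[B]\), and the key step is that a hereditary subalgebra is automatically a \(\Cont_0(G^0)\)\nb-submodule. Beyond the paper, you spell out via the partition-of-unity argument of Lemma~\ref{lem:subfield_equality} why such a submodule equals the algebra of sections with values in \(\ev_x(A)\), where the paper simply appeals to the correspondence between \(\Cont_0(G^0)\)-\(\Cst\)-algebras and fields.
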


\begin{proof}
  Proposition~\ref{pro:hereditary_Fell_subbundle} describes a
  bijection between hereditary Fell subbundles of~\(\B\) and
  continuous families of hereditary \(\Cst\)\nb-subalgebras
  \(\FU_x \subseteq \FU[B]_x\) for \(x\in G^0\).  Then the
  subalgebra of~\(\SUF\) consisting of those sections with values
  in~\((\FU_x)\) is a hereditary subalgebra of~\(\SUF[B]\).
  Conversely, let \(D\subseteq \SUF[B]\) be hereditary.  Then
  \(\Cont_0(G^0)\cdot D = \Cont_0(G^0)\cdot D\cdot D = D\cdot
  \Cont_0(G^0)\cdot D \subseteq D\) because~\(D\) is hereditary and
  \(\Cont_0(G^0)\) is central.  Thus~\(D\) is also a
  \(\Cont_0(G^0)\)-\(\Cst\)-algebra.  It follows that there is a
  continuous family of \(\Cst\)\nb-subalgebras
  \(\FU_x \subseteq \FU[B]_x\) for \(x\in G^0\) such that
  \(D = \Cont_0(G^0, (\FU_x)_{x\in G^0})\).  Since~\(D\) is
  hereditary, it follows easily that each fibre~\(\FU_x\) of~\(D\)
  is hereditary in~\(\FU[B]_x\).
\end{proof}

\begin{remark}
  \label{rem:hered-subalg}
  If \(\A\subseteq \B\) is a hereditary subbundle, then
  \(\tilde\A\defeq \Cont_0(G,\A\A^*)\) is a hereditary \(\Cst\)\nb-subalgebra in
  \(\tilde\B\defeq \Cont_0(G,\B\B^*)\).
\end{remark}

\begin{proposition}
  \label{pro:Fell_ideals}
  Let \(\FU_x \subseteq \FU[B]_x\) for \(x\in G^0\) be a continuous
  family of hereditary subalgebras.  The corresponding hereditary
  Fell subbundle is a Fell ideal if and only if
  \(\FU_{\rg(g)} \B_g = \B_g \FU_{\s(g)}\) for all \(g\in G\).  Then
  each \(\FU_x\subseteq \FU[B]_x\) is an ideal.
\end{proposition}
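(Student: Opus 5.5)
We identify the hereditary Fell subbundle with \(\A_g = \FU_{\rg(g)}\B_g\FU_{\s(g)}\) via Proposition~\ref{pro:hereditary_Fell_subbundle}, and prove the two implications separately, working fibrewise throughout. The basic tool is the nondegeneracy statement \(\FU[B]_{\rg(g)}\B_g = \B_g = \B_g\FU[B]_{\s(g)}\) from Lemma~\ref{lem:equality_of_products}, together with the fact that closed linear spans of products are associative.

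First assume \(\A\) is a Fell ideal. For \(x\in G^0\), the ideal condition with \(g=h=u(x)\) gives \(\FU_x\FU[B]_x\subseteq\FU_x\) and \(\FU[B]_x\FU_x\subseteq\FU_x\). Hence \(\FU_x\) is a closed two-sided ideal in \(\FU[B]_x\), which is the last assertion of the proposition. Now let \(g\in G\). Taking \(h=u(\s(g))\) and then \(u(\rg(g))\) in \(\A_g\B_h\subseteq\A_{gh}\) and \(\B_g\A_h\subseteq\A_{gh}\) gives \(\B_g\FU_{\s(g)}\subseteq\A_g\) and \(\FU_{\rg(g)}\B_g\subseteq\A_g\). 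Combining these with \(\A_g=\FU_{\rg(g)}\B_g\FU_{\s(g)}\) and with \(\FU_x\FU_x=\FU_x\), we get
\[
  \B_g\FU_{\s(g)}\subseteq\A_g=\FU_{\rg(g)}\B_g\FU_{\s(g)}\subseteq\FU_{\rg(g)}\B_g ,
\]
and symmetrically \(\FU_{\rg(g)}\B_g\subseteq\A_g\subseteq\B_g\FU_{\s(g)}\). Therefore \(\FU_{\rg(g)}\B_g=\B_g\FU_{\s(g)}\).

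Conversely, assume \(\FU_{\rg(g)}\B_g=\B_g\FU_{\s(g)}\) for all \(g\in G\). As a preliminary step, apply this to \(g=u(x)\) to get \(\FU_x\FU[B]_x=\FU[B]_x\FU_x\). The set \(\FU_x\FU[B]_x\) is a closed right ideal, and by the equality it is also a left ideal, hence a two-sided ideal. It contains \(\FU_x\) by nondegeneracy. Since \(\FU_x\) is hereditary, it is contained in \(\FU_x\FU[B]_x\FU_x\), and this equals \(\FU_x\). Thus \(\FU_x\) is an ideal. Next we show \(\A_g=\FU_{\rg(g)}\B_g\). Indeed, \(\A_g=\FU_{\rg(g)}\B_g\FU_{\s(g)}=\FU_{\rg(g)}\FU_{\rg(g)}\B_g=\FU_{\rg(g)}\B_g\), and likewise \(\A_g=\B_g\FU_{\s(g)}\). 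Now let \((g,h)\in G^2\). Then
\[
  \A_g\B_h=\FU_{\rg(g)}\B_g\B_h\subseteq\FU_{\rg(gh)}\B_{gh}=\A_{gh},
  \qquad
  \B_g\A_h=\B_g\B_h\FU_{\s(h)}\subseteq\B_{gh}\FU_{\s(gh)}=\A_{gh}.
\]
So \(\A\) is a Fell ideal; Lemma~\ref{lem:Fell_ideal_star}, or directly Proposition~\ref{pro:hereditary_Fell_subbundle}, supplies the \(^*\)-condition.

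The step needing the most care is the one showing that \(\FU_x\) is an ideal in the converse direction. There we use that an ideal generated by a hereditary subalgebra, when compressed back by that subalgebra, returns the hereditary subalgebra itself. All other manipulations are identities of closed linear spans of products in Hilbert bimodules, justified by Remark~\ref{rem:multiply_fibres}.
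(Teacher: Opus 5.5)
Your proof is correct and follows the paper's argument. In both directions you use the same chains of inclusions, \(\FU_{\rg(g)}\B_g=\A_{u(\rg(g))}\B_g\subseteq\A_g=\FU_{\rg(g)}\B_g\FU_{\s(g)}\subseteq\FU_{\rg(g)}\B_g\) and its mirror image, and conversely \(\A_g=\FU_{\rg(g)}\B_g=\B_g\FU_{\s(g)}\), which makes \(\A\) a two-sided ideal. The preliminary step in your converse direction, showing that \(\FU_x\) is an ideal, is unnecessary, since this follows from the Fell ideal property exactly as in your first direction. As worded it is also muddled: the inclusion \(\FU_x\FU[B]_x\subseteq\FU_x\FU[B]_x\FU_x\) comes from \(\FU_x\FU[B]_x=\FU[B]_x\FU_x=\FU[B]_x\FU_x\FU_x=\FU_x\FU[B]_x\FU_x\), not from heredity, which only gives the final equality with \(\FU_x\).
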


\begin{proof}
  Assume first that \(\FU_{\rg(g)} \B_g = \B_g \FU_{\s(g)}\) for all
  \(g\in G\).  Then
  \[
    \FU_{\rg(g)} \B_g \FU_{\s(g)}
    = \FU_{\rg(g)} \B_g = \B_g \FU_{\s(g)}.
  \]
  This implies that \((\FU_{\rg(g)} \B_g \FU_{\s(g)})_{g\in G}\) is both a left
  and a right ideal in~\(\B\), and this makes it a Fell ideal.
  Conversely, if the subspaces
  \(\A_g \defeq \FU_{\rg(g)} \B_g \FU_{\s(g)}\) form a Fell ideal,
  then
  \[
    \FU_{\rg(g)} \B_g = \A_{u(\rg(g))} \B_g
    \subseteq \A_g
    = \FU_{\rg(g)} \B_g \FU_{\s(g)}
    \subseteq \FU_{\rg(g)} \B_g \FU[B]_{\s(g)}
    = \FU_{\rg(g)} \B_g.
  \]
  So \(\A_g = \FU_{\rg(g)} \B_g\).
  A symmetric argument shows \(\A_g = \B_g \FU_{\s(g)}\).
\end{proof}

\begin{definition}
  \label{def:invariant_ideal}
  Let~\(\B\) be a Fell bundle over~\(G\) and let
  \(\FU = (\FU_x)_{x\in G^0}\) be a continuous family of ideals
  in \(\FU[B]=\B|_{G^0}\).
  We call~\(\FU\) \emph{\(\B\)\nb-invariant} if \(\FU_{\rg(g)} \B_g =
  \B_g \FU_{\s(g)}\) for all \(g\in G\).
 \end{definition}

 \begin{lemma}
   \label{lem:invariant_ideal}
   A continuous family of ideals \(\FU =  (\FU_x)_{x\in G^0}\) is
   \(\B\)\nb-invariant if and only if
   \(\B_g\FU_{\s(g)}\B_{g^{-1}}\subseteq \FU_{\rg(g)}\) for all \(g\in
   G\).
 \end{lemma}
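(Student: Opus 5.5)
The plan is to prove both implications fibrewise, using only the Hilbert bimodule structure of each~\(\B_g\) and the facts from Lemma~\ref{lem:equality_of_products}.
Throughout, products of subspaces mean closed linear spans.
Since each \(\FU_x\) is a closed ideal in \(\FU[B]_x\), we have \(\FU_x\FU[B]_x = \FU_x\) and \(\FU_x \FU_x = \FU_x\).

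First, assume \(\FU\) is \(\B\)\nb-invariant, that is, \(\FU_{\rg(g)} \B_g = \B_g \FU_{\s(g)}\).
Then
\[
  \B_g \FU_{\s(g)} \B_{g^{-1}} = \FU_{\rg(g)} \B_g \B_{g^{-1}} \subseteq \FU_{\rg(g)} \FU[B]_{\rg(g)} = \FU_{\rg(g)},
\]
because \(\B_g\B_{g^{-1}} \subseteq \FU[B]_{\rg(g)}\) and \(\FU_{\rg(g)}\) is an ideal.

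Conversely, assume \(\B_g\FU_{\s(g)}\B_{g^{-1}}\subseteq \FU_{\rg(g)}\) for all \(g\in G\).
Applying this to \(g^{-1}\) also gives \(\B_{g^{-1}}\FU_{\rg(g)}\B_g\subseteq \FU_{\s(g)}\).
To get the inclusion \(\B_g\FU_{\s(g)} \subseteq \FU_{\rg(g)}\B_g\), I would use that \(\FU_{\s(g)}\) is a \(\Cst\)\nb-algebra, so \(\FU_{\s(g)} = \FU_{\s(g)}\FU_{\s(g)}\), together with Lemma~\ref{lem:equality_of_products}, which gives \(\B_g = \B_g\B_{g^{-1}}\B_g\).
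Together these yield
\[
  \B_g \FU_{\s(g)} = \B_g \FU_{\s(g)} \FU_{\s(g)} \subseteq \B_g\B_{g^{-1}}\B_g\FU_{\s(g)}.
\]
This placement of \(\B_{g^{-1}}\B_g\) is not yet of the right form, so I would instead use the Hilbert module nondegeneracy: the right Hilbert \(\FU[B]_{\s(g)}\)\nb-module \(\B_g\FU_{\s(g)}\) equals \((\B_g\FU_{\s(g)})\langle \B_g\FU_{\s(g)},\B_g\FU_{\s(g)}\rangle\).
Rewriting this via the identity \(\xi\braket{\eta}{\zeta} = \BRAKET{\xi}{\eta}\zeta\) gives
\[
  \B_g\FU_{\s(g)} = \B_g\FU_{\s(g)}\FU_{\s(g)}\B_{g^{-1}}\B_g \subseteq (\B_g\FU_{\s(g)}\B_{g^{-1}})\B_g \subseteq \FU_{\rg(g)}\B_g.
\]
The symmetric argument, with the roles of left and right exchanged (or applying the involution and the hypothesis for~\(g^{-1}\)), gives \(\FU_{\rg(g)}\B_g \subseteq \B_g\FU_{\s(g)}\).
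Hence the two sides are equal.

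The main point requiring care is the step \(\B_g\FU_{\s(g)} = \B_g\FU_{\s(g)}\B_{g^{-1}}\B_g\).
I expect it to follow cleanly from \(\xi = \lim \xi\braket{\xi}{\xi}^{1/n}\)-type nondegeneracy, or from Lemma~\ref{lem:equality_of_products} applied to the Hilbert bimodule \(\B_g\FU_{\s(g)}\), whose right inner products lie in \(\FU_{\s(g)}\B_{g^{-1}}\B_g\FU_{\s(g)}\).
No continuity issues arise, because the statement is purely fibrewise.
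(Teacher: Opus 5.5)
Your argument is correct and is essentially the paper's: the same forward computation, and for the converse the same chain \(\B_g\FU_{\s(g)}\subseteq\B_g\FU_{\s(g)}\B_{g^{-1}}\B_g\subseteq\FU_{\rg(g)}\B_g\), followed by taking adjoints of the case~\(g^{-1}\). The paper states the first inclusion without comment. Your nondegeneracy argument for the Hilbert module \(\B_g\FU_{\s(g)}\) justifies it, and so would your abandoned first attempt, because the ideals \(\B_{g^{-1}}\B_g\) and \(\FU_{\s(g)}\) of \(\FU[B]_{\s(g)}\) commute.
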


 \begin{proof}
   If \(\FU_{\rg(g)} \B_g = \B_g \FU_{\s(g)}\), then \(\B_g
   \FU_{\s(g)} \B_{g^{-1}} = \FU_{\rg(g)} \B_g \B_{g^{-1}} \subseteq
   \FU_{\rg(g)}\).
   Conversely, assume \(\B_g \FU_{\s(g)} \B_{g^{-1}} \subseteq
   \FU_{\rg(g)}\).
   Then \(\B_g \FU_{\s(g)} \subseteq \B_g \FU_{\s(g)} \B_{g^{-1}} \B_g
   \subseteq \FU_{\rg(g)} \B_g\).
   Taking the adjoint of this relation for~\(g^{-1}\) gives the
   reverse inclusion as well.
 \end{proof}

 \begin{corollary}
   \label{cor:Fell_ideal_family_ideals}
   A continuous family of ideals~\(\FU\) is \(\B\)\nb-invariant if and only
   if the hereditary subbundle in~\(\B\) that corresponds to~\(\FU\)
   is a Fell ideal in~\(\B\).
   As a consequence, Fell ideals in~\(\B\) are in bijection with
   \(\B\)\nb-invariant ideals in \(B= \Cont_0(G^0,\FU[B])\), which are in
   turn in bijection with continuous families of \(\B\)\nb-invariant ideals
   of~\(\FU[B]\).
\end{corollary}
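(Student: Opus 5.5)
The first claim is a direct combination of two earlier results. By Proposition~\ref{pro:hereditary_Fell_subbundle}, the continuous family of ideals \(\FU=(\FU_x)_{x\in G^0}\) (ideals are hereditary subalgebras) corresponds to the hereditary Fell subbundle \(\A_g \defeq \FU_{\rg(g)}\B_g\FU_{\s(g)}\). Proposition~\ref{pro:Fell_ideals} says that this subbundle is a Fell ideal if and only if \(\FU_{\rg(g)}\B_g = \B_g\FU_{\s(g)}\) for all \(g\in G\). By Definition~\ref{def:invariant_ideal}, this condition is exactly \(\B\)\nb-invariance of~\(\FU\). So the first sentence needs no further work.

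For the bijections, I start from the fact that every Fell ideal is a hereditary Fell subbundle. By Proposition~\ref{pro:hereditary_Fell_subbundle}, it is therefore of the form \(\FU_{\rg(g)}\B_g\FU_{\s(g)}\) for a unique continuous family of hereditary subalgebras \(\FU_x=\A_{u(x)}\). Proposition~\ref{pro:Fell_ideals} then says that each \(\FU_x\) is an ideal in \(\FU[B]_x\). By the first part, this family is \(\B\)\nb-invariant. Conversely, a \(\B\)\nb-invariant continuous family of ideals gives a Fell ideal. These two assignments are inverse to each other because the bijection in Proposition~\ref{pro:hereditary_Fell_subbundle} is one. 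This gives the bijection between Fell ideals and \(\B\)\nb-invariant continuous families of ideals in~\(\FU[B]\).

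Next, Remark~\ref{rem:continuous_family_of_ideals} identifies continuous families of ideals in~\(\FU[B]\) with ideals \(J\idealin\Cont_0(G^0,\FU[B])\) via \(J=\Cont_0(G^0,\FU)\) and \(\FU_x=\ev_x(J)\). I define an ideal \(J\) of \(\Cont_0(G^0,\FU[B])\) to be \(\B\)\nb-invariant when its associated family of fibres is \(\B\)\nb-invariant; read this way, the remaining bijection is a transport of definitions. The only point needing care is making sure this reading of ``\(\B\)\nb-invariant ideal in~\(B\)'' matches the intended notion, since the statement itself seems to leave it implicit. If one instead wants a condition formulated globally on sections, I would show it is equivalent fibrewise using Lemma~\ref{lem:invariant_ideal} together with Lemma~\ref{lem:subfield_equality}. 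I expect this translation between the global and fibrewise formulations to be the only step requiring any real argument, and it is mild.
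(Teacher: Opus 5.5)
Your proposal is correct and is essentially how the paper obtains the corollary. The paper gives no separate proof: the statement is meant to follow directly from Proposition~\ref{pro:hereditary_Fell_subbundle}, Proposition~\ref{pro:Fell_ideals}, Definition~\ref{def:invariant_ideal} and Remark~\ref{rem:continuous_family_of_ideals}, exactly as you combine them. Your reading of ``\(\B\)\nb-invariant ideal in \(B\)'' through its continuous family of fibres is the paper's own convention (it says ``by definition'' in the proof of Lemma~\ref{lem:invariant_ideal_in_dual}), so the optional global-versus-fibrewise translation you mention is not needed.
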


\begin{example}
  Assume~\(\B\) is the Fell bundle associated to an
  ordinary action of~\(G\) by a continuous family of isomorphisms
  \(\beta_g\colon \FU[B]_{\s(g)}\congto \FU[B]_{\rg(g)}\) for \(g\in
  G\).
  Then a continuous family over the units \(\FU\subseteq \FU[B]\) is
  \(\B\)\nb-invariant if and only if it is invariant in the usual sense of
  actions, that is, \(\beta_g(\FU_{\s(g)})\subseteq \FU_{\rg(g)}\) for all
  \(g\in G\); this follows quickly from the definition of the product
  in~\(\B\).
  As a consequence, Fell ideals of~\(\B\) correspond to invariant
  ideals in the usual sense.
  If \(\FU\subseteq \FU[B]\) is any continuous family of ideals, then
  the associated hereditary Fell subbundle is the one associated to
  the partial action, given by restricting the action~\(\beta\)
  to~\(\FU\).
\end{example}

\begin{example}
  \label{exa:ideal_from_invariant_open}
  Let \(U\subseteq G^0\) be an open \(G\)\nb-invariant subset, that is,
  if \(g\in G\) and \(\s(g)\in U\), then \(\rg(g)\in U\).
  Then the reduction
  \[
    G_U \defeq \s^{-1}(U) = \rg^{-1}(U)
  \]
  is an open subgroupoid of~\(G\).
  Consider the trivial one-dimensional Fell bundle
  \(\B^{\mathrm{triv}}\) over~\(G\), and the trivial one-dimensional
  Fell bundle \(\B^{\mathrm{triv}}_U\) over~\(G_U\).
  Extending sections by zero outside~\(G_U\) identifies
  \(\B^{\mathrm{triv}}_U\) with a Fell subbundle \(\A\subseteq
  \B^{\mathrm{triv}}\), where
  \[
    \A_g =
    \begin{cases}
      \B^{\mathrm{triv}}_g \cong \C & \text{if } g\in G_U,\\
      \{0\} & \text{if } g\notin G_U.
    \end{cases}
  \]
  Then \(\A\) is a Fell ideal in~\(\B^{\mathrm{triv}}\).
  Indeed, the corresponding continuous family \(\FU = (\FU_x)_{x\in
    G^0}\) of ideals in the trivial bundle \(\FU[B]^{\mathrm{triv}}\cong
  G^0\times\C\) is given by \(\FU_x \cong \C\) for \(x\in U\) and
  \(\FU_x = \{0\}\) for \(x\notin U\), and \(G\)-invariance of \(U\) is
  precisely the condition
  \(\FU_{\rg(g)}\B^{\mathrm{triv}}_g=\B^{\mathrm{triv}}_g\FU_{\s(g)}\)
  from Definition~\ref{def:invariant_ideal}.

  Conversely, every Fell ideal \(\A\subseteq \B^{\mathrm{triv}}\)
  is of this form: since each fibre \(\B^{\mathrm{triv}}_g\cong\C\),
  one has \(\A_g\in\{0,\C\}\), so \(H=\setgiven{g\in G}{\A_g=\C}\) is open and
  \(U=H\cap G^0\) is open.
  The Fell-bundle axioms imply that~\(H\) is a subgroupoid, and the
  ideal conditions force \(\s(g),\rg(g)\in U\) for \(g\in H\), hence
  \(H\subseteq G_U\).
  Conversely, if \(g\in G_U\), then \(\A_{\rg(g)}=\C\) and the ideal
  property gives \(\A_g=\C\), so \(G_U\subseteq H\).
  Thus \(H=G_U\), and \(\A\) is as above.

  Therefore, Fell ideals of the trivial bundle~\(\B^{\mathrm{triv}}\)
  are in bijection with open invariant subsets of~\(G^0\).
\end{example}

\begin{proposition}
  \label{pro:Fell_ideal_inclusion_sections}
  Let~\(\B\) be a Fell bundle and let~\(\A\) be an ideal Fell
  subbundle of~\(\B\).
  Then the
  \Star{}homomorphisms \(\Cst(G,\A) \to \Cst(G,\B)\) and
  \(\Cred(G,\A) \to \Cred(G,\B)\) induced by the inclusion
  \(\A\subseteq \B\) are isomorphisms onto \(\Cst\)\nb-ideals
  of\/ \(\Cst(G,\B)\) and \(\Cred(G,\B)\), respectively.
  More precisely, the range is the \(\Cst\)\nb-ideal generated by
  \(\Cont_0(G^0,\A|_{G^0})\), in either \(\Cst(G,\B)\) or
  \(\Cred(G,\B)\).
\end{proposition}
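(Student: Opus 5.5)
The plan has three parts. Work first in $\Cst(G,\B)$ with $J\subseteq\Cst(G,\B)$ the closure of the image of $\Contc(G,\A)$. Injectivity of $\Cst(G,\A)\to\Cst(G,\B)$ will come from extending representations of $\A$ to representations of $\B$, using the universal property (Theorem~\ref{the:universal_groupoid_Haus}) on both sides. The reduced statement will come from the faithful induced representations of Remark~\ref{rem:induce_reps}. The identification of the range is then a short density argument.

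\emph{Step 1: $J$ is an ideal equal to the ideal generated by $\Cont_0(G^0,\FU)$, where $\FU=\A|_{G^0}$.} Since $\A_g\B_h\subseteq\A_{gh}$ and $\B_g\A_h\subseteq\A_{gh}$, convolution gives $\Contc(G,\B)*\Contc(G,\A)\subseteq\Contc(G,\A)$ and $\Contc(G,\A)*\Contc(G,\B)\subseteq\Contc(G,\A)$. Lemma~\ref{lem:continuity_integral} ensures the convolution of continuous sections is again a continuous section, and it is valued in $\A$ pointwise. Hence $J$ is a closed two-sided ideal. By Proposition~\ref{pro:Fell_ideals}, $\A_g=\FU_{\rg(g)}\B_g$. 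So $\Contc(G,\A)$ is the inductive-limit closure of sections $f\dagger\xi$ with $f\in\Cont_0(G^0,\FU)$ and $\xi\in\Contc(G,\B)$; this uses Lemma~\ref{lem:subfield_equality}. These equal $\iota(f)\xi$ in $\Mult(\Cst(G,\B))$ by~\eqref{eq:uni-rep-multi}, so $J$ is the ideal generated by $\iota(\Cont_0(G^0,\FU))$. The same argument applies in $\Cred(G,\B)$.

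\emph{Step 2: isometry of $\Contc(G,\A)\to\Cst(G,\B)$ (main obstacle).} Given a nondegenerate representation $L$ of $\Contc(G,\A)$ on $\F$, I must find a representation of $\Contc(G,\B)$ restricting to it. First try: let $\F'$ be the completion of $\F_0\defeq\Contc(G,\B)\odot\F$ under the $D$-valued form
\[
\braket{b_1\otimes\xi_1}{b_2\otimes\xi_2}\defeq\braket{\xi_1}{L(b_1^**b_2)\xi_2}.
\]
This is well defined because $b_1^**b_2$ lies in $\Contc(G,\A)$ when $b_1,b_2$ are sections of $\A$. For general $b_i$ I insert the approximate unit of $\Cont_0(G^0,\FU)$ and let $\F_0$ consist of elements $(f\dagger b)\otimes\xi$; this is where the technical work lies. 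Positivity follows by factoring through $\Contc(G,\A)$, much as in the proof of Lemma~\ref{lem:extension-multiplier}. Then $\Contc(G,\B)$ acts on $\F'$ by left convolution. The cleaner route is Corollary~\ref{cor:prerep}: exhibit this as a pre-representation of $\Contc(G,\B)$, with continuity in the inductive limit topology inherited from $I$-boundedness of $L$. That yields a $\Cst(G,\B)$-representation. Then $\F$ embeds isometrically into $\F'$ via $L(a)\xi\mapsto a\otimes\xi$, the embedding intertwines the $\A$-actions, and $\Cont_0(G^0,\FU)$-nondegeneracy makes it onto the relevant part. Consequently $\norm{L(a)}\le\norm{a}_{\Cst(G,\B)}$. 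Taking $L$ faithful on $\Cst(G,\A)$ proves the map is isometric, hence an isomorphism onto $J$.

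\emph{Step 3: reduced case.} Injectivity of $\Cred(G,\A)\to\Cred(G,\B)$ is already in Proposition~\ref{pro:functorial}. The range is the closure of $\Contc(G,\A)$, which is the ideal generated by $\Cont_0(G^0,\FU)$ by the Step~1 argument applied in $\Cred(G,\B)$.
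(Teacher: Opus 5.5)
Your argument is correct. Steps~1 and~3 are essentially what the paper does: it identifies the range via \(\SUF\cdot\Contc(G,\B)=\Contc(G,\B)\cdot\SUF=\Contc(G,\A)\), and it takes reduced injectivity from Proposition~\ref{pro:functorial}.

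For the key step, injectivity of \(\Cst(G,\A)\to\Cst(G,\B)\), you take a genuinely different route. The paper extends \emph{Fell-bundle} representations. Given \((\varphi,U)\) for \(\A\) on \(\F\), it extends \(\varphi\) from the ideal \(\Cont_0(G^0,\A|_{G^0})\) to \(\bar\varphi\) on \(\Cont_0(G^0,\B|_{G^0})\). It then uses \(\A_g=\B_g\FU_{\s(g)}\) to identify \(\Lt^2(G,\A,\s,\tilde\alpha)\otimes_\varphi\F\cong\Lt^2(G,\B,\s,\tilde\alpha)\otimes_{\bar\varphi}\F\), and similarly for \(\A\A^*\) versus \(\B\B^*\) and for the correspondences over \(G^2\). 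In this way the same \(U\) gives a representation \((\bar\varphi,\bar U)\) of \(\B\) whose integrated form restricts to that of \((\varphi,U)\).

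You instead stay at the level of the convolution algebra. Since \(\Contc(G,\A)\) is a two-sided \Star{}ideal in \(\Contc(G,\B)\), the rule \(L(a)\xi\mapsto L(b*a)\xi\) gives a densely defined action of \(\Contc(G,\B)\). The only real issue, boundedness, is delegated to the automatic-boundedness result Corollary~\ref{cor:prerep}. This avoids all fibrewise identifications and the transfer of the coherence condition \(d_2^*(U)d_0^*(U)=d_1^*(U)\) over \(G^2\), which the paper treats only briefly. The price is that you rely on the full disintegration machinery. The paper's route, in turn, exhibits the extended representation explicitly in Fell-bundle terms.

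Your Step~2 is more roundabout than needed. The form on \(\Contc(G,\B)\odot\F\) is not defined, and the space \(\F'\) and the approximate-unit manipulations can be dropped. Take \(\F_0\defeq\Contc(G,\A)\odot\F\), \(\iota(a\otimes\xi)\defeq L(a)\xi\in\F\), and \(L_\B(b)(a\otimes\xi)\defeq L(b*a)\xi\). The conditions of Definition~\ref{def:pre-representation} then check directly:
\begin{enumerate}
\item Condition~(1) follows from \(\norm{b*a}_I\le\norm{b}_I\norm{a}_I\), the bound of \(\norm{b}_I\) by a multiple of \(\norm{b}_\infty\) on \(\Cont_0(K\setminus\partial K,\B)\), and the \(I\)\nb-boundedness of \(L\).
\item Condition~(2) is the identity \(\braket{L(b_1*a_1)\xi_1}{L(b_2*a_2)\xi_2}=\braket{L(a_1)\xi_1}{L(b_1^**b_2*a_2)\xi_2}\), valid because \(b_1^**b_2*a_2\in\Contc(G,\A)\).
\item Condition~(3), and density of \(\iota(\F_0)\), follow from nondegeneracy of \(L\) because \(\Contc(G,\A)\subseteq\Contc(G,\B)\).
\end{enumerate}
The resulting \(L'\) satisfies \(L'(b)L(a)\xi=L(b*a)\xi\). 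Hence \(L'|_{\Contc(G,\A)}=L\) and \(\norm{L(a)}\le\norm{a}_{\Cst(G,\B)}\).
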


\begin{proof}
  Let \(\SUF\defeq \Cont_0(G^0,\A|_{G^0})\subseteq \SUF[B]\).
  Since \(\A\) is an ideal Fell subbundle of~\(\B\), we have
  \(\A_g=\FU_{\rg(g)}\B_g=\B_g\FU_{\s(g)}\) for all \(g\in G\).
  Hence Lemma~\ref{lem:subfield_equality} implies
  \[
    \SUF\Contc(G,\B)=\Contc(G,\B)\SUF=\Contc(G,\A).
  \]
  Therefore, the range of the induced \Star{}homomorphism
  \(\Cst(G,\A)\to \Cst(G,\B)\) is the closed two-sided \(\Cst\)\nb-ideal of
  \(\Cst(G,\B)\) generated by \(\SUF\subseteq \Mult(\Cst(G,\B))\).
  The same argument applies to the reduced section algebras, so the range of
  \(\Cred(G,\A)\to \Cred(G,\B)\) is the \(\Cst\)\nb-ideal generated by
  \(\SUF\) in~\(\Cred(G,\B)\).

  Proposition~\ref{pro:functorial} already implies that the induced map
  \(\Cred(G,\A)\to \Cred(G,\B)\) is injective.  It remains to prove that
  \(\Cst(G,\A)\to \Cst(G,\B)\) is injective.

  Let \(\iota\colon \Cst(G,\A)\to \Cst(G,\B)\) be the
  \Star{}homomorphism induced  by the inclusion \(\A\hookrightarrow
  \B\).
  To prove that~\(\iota\) is isometric, let \((\varphi,U)\) be a
  representation of the Fell bundle~\(\A\) on a Hilbert module~\(\F\)
  over some \(\Cst\)\nb-algebra~\(D\), and let \(L_{\varphi,U}\) be its
  integrated form.

  Since \(\SUF\) is an ideal in~\(\SUF[B]\) and \(\varphi\colon
  \SUF\to \Bound(\F)\) is nondegenerate, \(\varphi\) extends uniquely
  to a nondegenerate representation \(\bar\varphi\colon \SUF[B]\to
  \Bound(\F)\).
  Explicitly, if \(b\in \SUF[B]\), then we define \(\bar\varphi(b)\) by
  \[
    \bar\varphi(b)\bigl(\varphi(a)\xi\bigr)\defeq \varphi(ba)\xi
    \qquad (a\in \SUF,\ \xi\in \F).
  \]
  Notice that \(\varphi(\SUF)\F=\F\) because~\(\varphi\) is
  nondegenerate.

  The inclusion \(\Contc(G,\A)\hookrightarrow \Contc(G,\B)\) extends to an
  isometric inclusion of Hilbert \(\SUF\)-modules
  \(\Lt^2(G,\A,\s,\tilde\alpha)\hookrightarrow
  \Lt^2(G,\B,\s,\tilde\alpha)\).
  Moreover, there is a canonical isomorphism of Hilbert \(D\)-modules
  \[
    \Lt^2(G,\A,\s,\tilde\alpha)\otimes_\varphi \F
    \cong
    \Lt^2(G,\B,\s,\tilde\alpha)\otimes_{\bar\varphi} \F,
  \]
  characterised on elementary tensors by
  \[
    f\otimes \varphi(a)\xi \longmapsto (f\cdot a)\otimes \xi
    \qquad (f\in \Contc(G,\A),\ a\in \SUF,\ \xi\in \F),
  \]
  where \(f\cdot a\in \Contc(G,\A)\subseteq \Contc(G,\B)\) denotes the
  right action of~\(\SUF\) on \(\Contc(G,\B)\).

  Under this identification, the unitary
  \[
    U\colon \Lt^2(G,\A,\s,\tilde\alpha)\otimes_\varphi \F
    \xrightarrow{\ \sim\ }
    \Lt^2(G,\A\A^*,\rg,\alpha)\otimes_\varphi \F
  \]
  may be viewed as a unitary
  \[
    \bar U\colon \Lt^2(G,\B,\s,\tilde\alpha)\otimes_{\bar\varphi} \F
    \xrightarrow{\sim}
    \Lt^2(G,\B\B^*,\rg,\alpha)\otimes_{\bar\varphi} \F.
  \]
  Similarly, the representation identity for \((\varphi,U)\) implies the corresponding
  identity for \((\bar\varphi,\bar U)\).
  Hence \((\bar\varphi,\bar U)\) is a representation of~\(\B\)
  on~\(\F\) extending \((\varphi,U)\).

  Let \(L_{\bar\varphi,\bar U}\) be the integrated form.  By construction,
  \(L_{\bar\varphi,\bar U}(\xi)=L_{\varphi,U}(\xi)\) for all
  \(\xi\in \Contc(G,\A)\subseteq \Contc(G,\B)\).  Therefore, for such \(\xi\),
  \[
    \norm{L_{\varphi,U}(\xi)}
    \le \norm{L_{\bar\varphi,\bar U}(\xi)}
    \le \norm{\iota(\xi)}_{\Cst(G,\B)}.
  \]
  Taking the supremum over all representations \((\varphi,U)\) of~\(\A\) yields
  \(\norm{\xi}_{\Cst(G,\A)}\le \norm{\iota(\xi)}_{\Cst(G,\B)}\).
  Together with the opposite inequality above, this shows that
  \(\iota\) is isometric on \(\Contc(G,\A)\), and hence injective.
\end{proof}

\begin{remark}
  We strongly believe that Proposition~\ref{pro:Fell_ideal_inclusion_sections}
  remains true for arbitrary hereditary Fell subbundles.
  In the special case of Fell bundles over discrete groups, an
  analogous result is already known; see
  \cite{Exel:Partial_dynamical}*{Theorem 21.13}.
  However, the proof technique used above for ideal Fell subbundles
  does not extend easily to arbitrary hereditary Fell subbundles over
  groupoids.
  Although related methods can be applied to certain `invariant'
  hereditary subbundles, developing the necessary technical framework
  would substantially complicate the exposition.
  For this reason, we restrict ourselves to ideal Fell subbundles and
  leave the general hereditary case for future work.
\end{remark}

\begin{deflem}
  \label{def:quotient_Fell_bundle}
  Let \(\I\subseteq \A\) be a Fell ideal.
  There is a unique Fell-bundle structure on the quotients
  \(\B_g\defeq\A_g/\I_g\) such that the quotient maps \(\A_g \to
  \B_g\) form a surjective Fell-bundle homomorphism.
  We call this the \emph{quotient Fell bundle} of~\(\A\) by~\(\I\).
\end{deflem}

\begin{proof}
  By assumption, \(\I = (\I_g)_{g\in G}\) is a continuous
  family of closed subspaces of the Banach spaces~\(\A_g\).
  For each \(g\in G\), form the quotient \(\B_g\defeq\A_g/\I_g\).
  Since the quotient maps \(\A_g\to\B_g\) and elements of~\(\B_g\)
  lift to elements with almost the same norm, the family
  \((\B_g)_{g\in G}\) becomes an upper semicontinuous Banach bundle
  over~\(G\) as well, such that the quotient maps assemble to a
  continuous open surjection \(\A\to\B\).

  The Fell-bundle structure on~\(\B\) is defined fibrewise by
  \[
    [a_g]\cdot[b_h]\defeq[a_gb_h], \qquad
    [a_g]^*\defeq [a_g^*],
  \]
  for \(a_g\in\A_g\), \(b_h\in\A_h\).
  This is well defined because~\(\I\) is a Fell ideal.
  Thie multiplication and involution on~\(\B\) are continuous because
  they are so on~\(\A\) and the map \(\A\to\B\) is a quotient map.
  The Fell-bundle axioms for~\(\B\) are inherited directly from those
  of~\(\A\).
\end{proof}

This construction is standard in the literature.
For saturated and separable Fell bundles, a closely related quotient
construction is carried out in
\cites{Ionescu-Williams:Remarks_ideal_structure,
  KQW:Rieffel-correspondence}.
We do not need to assume the bundles to be saturated or separable.

\begin{proposition}
  \label{pro:full_Cstar_exact}
  Let~\(\A\) be a Fell bundle over~\(G\), let \(\I\idealin \A\) be a
  Fell ideal, and \(\B \defeq \A/\I\).  The canonical maps
  \(\I\into \A\onto \B\) induce a \(\Cst\)\nb-algebra extension
  \[
    \Cst(\I)\into \Cst(\A)\onto \Cst(\B).
  \]
\end{proposition}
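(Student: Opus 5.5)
Three things have to be checked. First, \(\Cst(\I)\to\Cst(\A)\) is injective with image an ideal; this is Proposition~\ref{pro:Fell_ideal_inclusion_sections}, and the image is the ideal \(J\) generated by \(\Contc(G,\I)\), equivalently by \(\Cont_0(G^0,\I|_{G^0})\). Second, \(\Cst(\A)\to\Cst(\B)\) is surjective; this is Proposition~\ref{pro:functorial}, because the quotient map is a surjective Fell-bundle homomorphism. Third, the kernel of \(q\colon\Cst(\A)\to\Cst(\B)\) equals \(J\). The inclusion \(J\subseteq\ker q\) is clear, since \(q\) annihilates \(\Contc(G,\I)\) and \(J\) is the closed span of it. So the substance is the reverse inclusion: we must show that the induced map \(\bar q\colon \Cst(\A)/J\to\Cst(\B)\) is injective.

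For the reverse inclusion I would build an inverse \(\Cst(\B)\to\Cst(\A)/J\) using the universal property in the form of Corollary~\ref{cor:continuity-rep}. Let \(\pi\colon\Cst(\A)\to\Cst(\A)/J\) be the quotient map. The composite \(\pi\circ\Contc(G,\A)\) vanishes on \(\Contc(G,\I)\). We have \(\Contc(G,\B)=\Contc(G,\A)/\Contc(G,\I)\), because sections of \(\B\) lift to sections of \(\A\); this follows from surjectivity as in the proof of Proposition~\ref{pro:functorial} via Lemma~\ref{lem:subfield_equality}. We also need the kernel of \(\Contc(G,\A)\to\Contc(G,\B)\) to be exactly \(\Contc(G,\I)\). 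This holds because a continuous section of \(\A\) with values in \(\I\) is a continuous section of \(\I\) by Lemma~\ref{lem:subbundle_from_continuous_family}, and it has compact support. Hence \(\pi\) descends to a \Star{}homomorphism \(\sigma\colon \Contc(G,\B)\to\Cst(\A)/J\).

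By Corollary~\ref{cor:continuity-rep}, \(\sigma\) extends to \(\Cst(\B)\) as soon as it is continuous in the inductive limit topology. This is the step I expect to be the main obstacle, and I would handle it through norms. Given \(f\in\Cont_0(K\setminus\partial K,\B)\) and \(\varepsilon>0\), I want a lift \(\tilde f\in\Contc(G,\A)\) supported in a fixed compact neighbourhood \(K'\) of \(K\), with \(\norm{\tilde f}_\infty\le\norm{f}_\infty+\varepsilon\). Such a lift gives \(\norm{\sigma(f)}\le\norm{\tilde f}_I\le C_{K'}(\norm f_\infty+\varepsilon)\), which is the required continuity.

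To produce this lift, I would use the fibrewise fact that elements of \(\A_g/\I_g\) lift with almost the same norm. These local lifts are patched with a partition of unity, exactly as in the proof of Lemma~\ref{lem:subfield_equality}, and then multiplied by a cutoff function equal to \(1\) on \(K\) and supported in \(K'\). Upper semicontinuity of the norm keeps the local estimates valid on neighbourhoods.

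Alternatively, if the norm control turns out to be awkward, one can use only linearity: an inductive-limit-convergent net in \(\Contc(G,\B)\) lifts to a convergent net by the same patching argument. Either way, \(\sigma\) extends to \(\bar\sigma\colon\Cst(\B)\to\Cst(\A)/J\). Then \(\bar\sigma\circ\bar q\) is the identity on the dense image of \(\Contc(G,\A)\), hence the identity on \(\Cst(\A)/J\). So \(\bar q\) is injective and \(\ker q=J\), which completes the exactness.
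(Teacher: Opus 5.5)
Your proof is correct, but you build the inverse map \(\Cst(\B)\to\Cst(\A)/J\) differently from the paper.

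\emph{The paper's route.} It stays in the Fell-bundle representation picture. It disintegrates the quotient map \(\Cst(G,\A)\to D\defeq\Cst(G,\A)/J\) to a representation \((\psi,V)\) of \(\A\) and notes that \(\psi\) kills \(\Cont_0(G^0,\I|_{G^0})\). It then uses the canonical identifications \(\E\otimes_A\F\cong \E/\E I\otimes_{A/I}\F\) for the \(\Lt^2\)-correspondences over \(G\) and \(G^2\), which turn \((\psi,V)\) into a Fell-bundle representation of \(\B\) on \(D\). Integrating this gives a map through which the quotient map factors.

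\emph{Your route.} You define the descended \Star{}homomorphism \(\sigma\colon\Contc(G,\B)\to D\) directly and invoke automatic continuity, Corollary~\ref{cor:continuity-rep}. The only additional input is an analytic lifting lemma: sections of \(\B\) supported in \(K\) lift to \(\Contc(G,\A)\) with support in a fixed compact \(K'\) and sup-norm control. Combined with \(\norm{\tilde f}_I\le C_{K'}\norm{\tilde f}_\infty\), this gives continuity.

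\emph{What each buys.} Your argument needs no check that the identifications of the correspondences respect the coherence condition \(d_2^*(V)\,d_0^*(V)=d_1^*(V)\). The paper's argument needs no estimates. It also shows more structurally that representations of \(\A\) whose coefficient part vanishes on \(\I|_{G^0}\) are exactly the representations of \(\B\). Both routes ultimately rest on the disintegration theorem.

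\emph{One detail to fix in the lifting step.} Write \(\kappa\colon\A\to\B\) for the fibrewise quotient maps. Patching local lifts with a partition of unity only gives an approximate lift \(a\) with
\(\norm{a}_\infty\le\norm{f}_\infty+\delta\) and \(\norm{\kappa\circ a-f}_\infty<\delta\).
Since \(\kappa\) is not isometric, the Cauchy-net argument of Lemma~\ref{lem:subfield_equality} does not transfer verbatim. Instead, iterate: lift the remainder \(f-\kappa\circ a\) approximately with \(\delta_n=2^{-n}\varepsilon\), and repeat. This produces an exact lift as a uniformly convergent series, with norm at most \(\norm{f}_\infty+2\varepsilon\). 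Apply the cutoff \(\chi\) only at the end, using \(\kappa\circ(\chi\tilde f)=\chi f=f\). With this routine adjustment the proof is complete.
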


\begin{proof}
  Let \(\FU[I] \defeq \I|_{G^0}\) and
  \(\SUF[I] \defeq \Cont_0(G^0,\FU[I])\) be the continuous family of
  ideals and the ideal in~\(\SUF\) that correspond to the Fell
  ideal~\(\I\) by Corollary~\ref{cor:Fell_ideal_family_ideals}.
  Proposition~\ref{pro:Fell_ideal_inclusion_sections} shows that
  the inclusion map \(\I\into \A\) induces an injective map
  \(\iota\colon \Cst(G,\I)\to\Cst(G,\A)\).
  Its image is an ideal
  because \(\Contc(G,\I)\) is an ideal in \(\Contc(G,\A)\).
  Proposition~\ref{pro:functorial} shows that the quotient map
  \(\A \onto \B\) induces a surjective \Star{}homomorphism
  \(\pi\colon \Cst(G,\A) \onto \Cst(G,\B)\).
  The kernel of~\(\pi\)
  contains \(\iota(\Cst(G,\I))\) because \(\Contc(G,\I)\) is dense
  in \(\Cst(G,\I)\) and the composite map
  \(\Contc(G,\I) \to \Contc(G,\A) \to \Contc(G,\B)\) vanishes.
  It remains to prove, conversely, that \(\ker \pi\) is contained in
  \(\iota(\Cst(G,\I))\).
  For this, we are going to build a Fell-bundle representation
  of~\(\B\) on
  \[
    D\defeq \Cst(G,\A)/\iota(\Cst(G,\I)).
  \]
  Let \(Q\colon \Cst(G,\A)\onto D\) be the quotient homomorphism.
  It corresponds to a Fell-bundle representation \((\psi,V)\)
  of~\(\A\) on~\(D\) by our universal property.  Here~\(\psi\) is a
  (nondegenerate) \Star{}homomorphism
  \(\psi\colon \SUF\to \Mult(D)\) and~\(V\) is a unitary operator
  \[
    V\colon \Lt^2(G,\A,\s,\tilde\alpha)\otimes_\psi D
    \congto \Lt^2(G,\A\A^*,\rg,\alpha)\otimes_\psi D,
  \]
  where~\(D\) is viewed as a Hilbert \(D\)\nb-module.  By
  naturality, \(\psi\) is the composite of the canonical
  \Star{}homomorphism \(\SUF\to \Mult(\Cst(G,\A))\) in
  Proposition~\ref{pro:coefficients_embed} with~\(Q\).  This clearly
  vanishes on the ideal \(\SUF[I]\idealin \SUF\).  So it factors
  through a nondegenerate \Star{}homomorphism
  \[
    \tilde\psi\colon \SUF/\SUF[I]\cong \SUF[B] \to \Mult(D).
  \]

  In general, let \(A\) and~\(D\) be \(\Cst\)\nb-algebras,
  \(I\idealin A\), let~\(\E\) be a Hilbert \(A\)\nb-module, and
  let~\(\F\) be an \(A/I,D\)-correspondence.
  Then we may consider~\(\F\) as an \(A,D\)-correspondence~\(\F'\) and
  define \(\E \otimes_A \F'\).
  Or we may take the quotient \(\E/\E I\), which is a Hilbert
  \(A/I\)-module, and form \(\E/\E I \otimes_{A/I} \F\).
  These two tensor products are canonically isomorphic because \(\E/\E
  I \cong \E \otimes_A A/I\) and \(\F' \cong A/I \otimes_{A/I} \F\),
  where we treat~\(A/I\) as an \(A,A/I\)-correspondence.
  In our case, this gives us canonical isomorphisms
  \begin{align*}
    \Lt^2(G,\A,\s,\tilde\alpha)\otimes_{\SUF} D
    &\cong \Lt^2(G,\B,\s,\tilde\alpha)\otimes_{\SUF[B]} D,\\
    \Lt^2(G,\A\A^*,\rg,\alpha)\otimes_{\SUF} D
    &\cong \Lt^2(G,\B\B^*,\rg,\alpha)\otimes_{\SUF[B]} D,\\
    \Lt^2(G^2,\E_k(\A),v_k,\mu_k)\otimes_{\SUF} D
    &\cong \Lt^2(G^2,\E_k(\B),v_k,\mu_k)\otimes_{\SUF[B]} D.
  \end{align*}
  Here \(\E_k(\A)\) and \(\E_k(\B)\) are the fields of
  correspondences that appear in the definition of a representation
  for the Fell bundles \(\A\) and~\(\B\), respectively.  We used
  that~\(\I\) is a Fell ideal to identify \(\A/\A I \cong \B\),
  \(\A\A^*/ \A\A^* I \cong \B\B^*\), and
  \(\E_k(\A)/\E_k(\A) I\cong \E_k(\B)\) for \(k=0,1,2\).  After
  these identifications, the same unitary~\(V\)
  completes~\(\tilde\psi\) to a Fell-bundle representation
  \((\tilde\psi,\tilde V)\) of~\(\B\) on~\(D\).  The conditions for
  being a Fell-bundle representation of~\(\B\) are equivalent to the
  corresponding conditions as a representation of~\(\A\).

  Let \(\Psi\colon \Cst(G,\B) \to \Mult(D)\) be the representation
  that integrates \((\tilde\psi,\tilde V)\).  The explicit formulas
  for integrating a Fell-bundle representation show that
  \(\Psi\circ\pi\) is the representation of \(\Cst(G,\A)\) that
  integrates \((\psi,V)\).  In other words, \(\Psi\circ\pi\) is the
  quotient map \(\Cst(G,\A) \to D\).  Since this quotient map
  factors through~\(\pi\), it follows that
  \(\ker \pi\subseteq \ker Q =\iota(\Cst(G,\I))\) as desired.
\end{proof}

The above extension generalises the classical case for saturated and
separable Fell bundles, proved by Ionescu--Williams
\cite{Ionescu-Williams:Remarks_ideal_structure}; see also
\cite{KQW:Rieffel-correspondence}, where the same
quotient construction is used to study Fell-bundle equivalence.
Our proof works without assuming the bundles to be saturated or
separable.

In contrast, the sequence
\[
  \Cred(G,\I)
  \into \Cred(G,\A)
  \onto \Cred(G,\A/\I)
\]
may fail to be exact.
This problem already occurs for ordinary actions of discrete groups,
which leads to the definition of an \emph{exact} group.
We will prove later that this sequence is always exact if the
underlying extension \(\I \into \A \onto \A/\I\) splits (see
Proposition~\ref{pro:split_exact}).

\subsection{Multiplier morphisms of Fell bundles}
\label{sec:multiplier_split_exact}

We are going to define the multipler bundle of a Fell bundle.
We prove that a \Star{}homomorphism from a Fell bundle to the
multiplier bundle of another Fell bundle induces homomorphisms to
multipliers for the full and reduced section \(\Cst\)\nb-algebras.

\begin{definition}
  \label{def:multiplier_g}
  Let~\(\A\) be a Fell bundle over~\(G\) and \(g\in G\).  A
  \emph{multiplier of degree~\(g\)} of~\(\A\) is a family of maps
  \(m_h\colon \A_h \to \A_{g h}\) for all \(h\in G^{\s(g)}\) for
  which there is a family of maps
  \(m_k^*\colon \A_k \to \A_{g^{-1} k}\) for all \(k\in G^{\rg(g)}\)
  such that \(a_1^* \cdot m_h(a_2) = m_k^*(a_1)^* \cdot a_2\) for
  all \(a_1 \in \A_k\), \(a_2\in \A_h\), \(k\in G^{\rg(g)}\),
  \(h\in G^{\s(g)}\).
\end{definition}

\begin{lemma}
  \label{lem:multiplier_g}
  Let~\((m_h)_{h\in G^{\s(g)}}\) be a multiplier of degree~\(g\).
  The maps~\(m_h\) for \(h\in G^{\s(g)}\) are linear and uniformly
  bounded with \(\norm{m_h} \le \norm{m_{u\s(h)}}\) for all
  \(h\in G^{\s(g)}\).
  They satisfy \(m_h(a_1)\cdot a_2 = m_{h\cdot k}(a_1\cdot a_2)\)
  whenever \((h,k)\in G^2\), \(a_1\in \A_h\), \(a_2\in \A_k\).
  The family of maps \((m_k^*)_{k\in G^{\rg(g)}}\) is unique, and it
  is a multiplier of degree~\(g^{-1}\) with \((m_k^*)^* = (m_h)\).
\end{lemma}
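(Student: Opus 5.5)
The plan is to imitate the classical argument that adjointable maps on Hilbert modules are automatically linear and bounded. The tool throughout is that the pairings \(a_1^*\cdot a_2\) for \(a_1\in\A_k\), \(a_2\in\A_{gh}\) separate points. I will use this in the following form: if \(x\in\A_{gh}\) satisfies \(a_1^*\cdot x=0\) for all \(a_1\in\A_{gh}\), then \(x^*x=0\), hence \(x=0\) by the \(\Cst\)\nb-identity in Definition~\ref{def:Fell_bundle}.

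\textbf{Linearity and uniqueness of \(m^*\).} Fix \(h\in G^{\s(g)}\). For \(a_2,b_2\in\A_h\), scalars \(\lambda,\mu\), and any \(a_1\in\A_{gh}\) (note \(k\defeq gh\in G^{\rg(g)}\)), compute
\[
  a_1^*\cdot m_h(\lambda a_2+\mu b_2) = m_k^*(a_1)^*\cdot(\lambda a_2+\mu b_2).
\]
The right-hand side equals \(a_1^*\cdot(\lambda m_h(a_2)+\mu m_h(b_2))\). Separation then gives linearity of \(m_h\).

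Uniqueness of \(m^*\) is shown the same way. If \(m'^*\) is another adjoint family, then \((m_k^*(a_1)-m'^*_k(a_1))^*\cdot a_2=0\) for all \(a_2\in\A_h\), \(h=g^{-1}k\). Taking \(a_2\) to be that difference itself gives zero.

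Symmetry follows by applying \(^*\) to the defining relation: \(m_h(a_2)^*\cdot a_1 = a_2^*\cdot m_k^*(a_1)\). This says exactly that \((m_k^*)\) is a multiplier of degree \(g^{-1}\), with adjoint \((m_h)\). In particular each \(m_k^*\) is linear as well.

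\textbf{Module property.} Let \((h,k)\in G^2\), \(a_1\in\A_h\), \(a_2\in\A_k\), and test against \(c\in\A_{ghk}\):
\[
  c^*\cdot m_{hk}(a_1a_2) = m^*_{ghk}(c)^*\cdot a_1a_2 = \bigl(m^*_{ghk}(c)^*\cdot a_1\bigr)\cdot a_2 = (c^*\cdot m_h(a_1))\cdot a_2 .
\]
The last expression equals \(c^*\cdot(m_h(a_1)a_2)\) by associativity. Here \(m^*_{ghk}(c)\in\A_{hk}\), and the middle step applies the defining relation at degree \(h\). Separation in \(\A_{ghk}\) gives \(m_{hk}(a_1a_2)=m_h(a_1)a_2\).

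\textbf{Boundedness.} This is the main obstacle, since no continuity is assumed; I would adapt the closed graph argument.

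First I show \(m_x\) is bounded on the \(\Cst\)\nb-algebra \(\FU_x\), where \(x\defeq\s(g)\). Suppose \(a_n\to a\) in \(\FU_x\) and \(m_x(a_n)\to y\) in \(\A_g\). For every \(c\in\A_g\),
\[
  c^*\cdot y = \lim c^*\cdot m_x(a_n) = \lim m^*_g(c)^*\cdot a_n = m^*_g(c)^*\cdot a = c^*\cdot m_x(a).
\]
So \(y=m_x(a)\), the graph is closed, and the closed graph theorem between the Banach spaces \(\FU_x\) and \(\A_g\) shows \(m_x\) is bounded.

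Next take \(a\in\A_h\) with \(\s(g)=\rg(h)\). Write \(a=b\cdot c\) with \(b\in\FU_{\rg(h)}\), \(c\in\A_h\), and \(\norm b=\norm c=\norm a^{1/2}\), as in the proof of Lemma~\ref{lem:factor_f}. The module property gives \(m_h(a)=m_{\rg(h)}(b)\cdot c\). Hence
\[
  \norm{m_h(a)}\le\norm{m_{u\s(g)}}\,\norm a ,
\]
using that \(\rg(h)=\s(g)\) and that multiplication in the Fell bundle is submultiplicative. (The statement's \(m_{u\s(h)}\) I read as the unit component at \(\s(g)=\rg(h)\).) This gives the uniform bound.
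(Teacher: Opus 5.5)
Your proof is correct and follows essentially the same route as the paper. You use separation via \(x^*x\) for linearity and for uniqueness of \((m_k^*)\), the closed graph theorem for boundedness, the adjoint relation plus associativity for the module property, factorisation through the unit fibre \(\FU_{\s(g)}\) for the uniform bound, and adjoints of the defining relation for the symmetry. The only small deviation is in the uniform bound: you use the exact factorisation \(a=b\cdot c\) with \(\norm{b}\,\norm{c}=\norm{a}\) from the proof of Lemma~\ref{lem:factor_f}, where the paper uses Cohen--Hewitt with a \((1+\varepsilon)\) loss, so you get \(\norm{m_h}\le\norm{m_{u\s(g)}}\) directly. Your reading of \(m_{u\s(h)}\) as \(m_{u\s(g)}=m_{u\rg(h)}\) is the intended one.
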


\begin{proof}
  This is analogous to a result about adjointable operators on
  Hilbert modules.  Since the inner product on~\(\A_{g h}\) is
  nondegenerate, \(m_h(a_2)\) for \(a_2\in \A_h\) is determined by
  \(a_1^* m_h(a_2) = m_{g h}^*(a_1)^* a_2\) for all
  \(a_1\in \A_{g h}\).  And this depends linearly on~\(a_2\).
  Thus~\(m_h\) is linear.
  It also follows that the graph of~\(m_h\) is closed, and this
  implies that each~\(m_h\) is bounded.
  If \((g,h,k)\in G^3\), \(a_1\in \A_h\),
  \(a_2\in \A_k\),   \(a_3\in \A_k\), and \(b\in \A_{g h k}\), then
  \(b^* m_h(a_2) a_3 = m_{g h k}^*(b)^* a_2 a_3 = b^* m_{h k}(a_2
  a_3)\).  Then \(m_h(a_1)\cdot a_2 = m_{h\cdot k}(a_1\cdot a_2)\).
  Since
  \(\FU_{\s(g)}\cdot \A_h = \A_h\) for all \(h\in G^{\s(g)}\), the
  Cohen--Hewitt Factorisation Theorem allows us to write any
  \(a\in \A_h\) as \(a=a_1\cdot a_2\) with \(a_1\in \A_{u\s(g)}\),
  \(a_2\in \A_h\), and
  \(\norm{a_1}\cdot \norm{a_2} \le \norm{a}(1+\varepsilon)\) for any given
  \(\varepsilon>0\).  Then
  \(\norm{m_h(a)} \le \norm{m_{u\s(g)}(a_1)}\cdot \norm{a_2}\).
  This implies \(\norm{m_h} \le (1+\varepsilon) \norm{m_{u\s(h)}}\).
  Since \(\varepsilon>0\) is arbitrary, this implies
  \(\norm{m_h} \le \norm{m_{u\s(h)}}\) as desired.  If
  \(k\in G^{\rg(g)}\), then \(m_k^*(a_1)\) for \(a_1\in \A_k\) is
  determined by \(m_k^*(a_1)^* a_2 = a_1^* m_{g^{-1} k}(a_2)\) for
  all \(a_1\in \A_{g^{-1} k}\).  Thus~\(m_k^*\) is determined
  uniquely.  Taking the adjoint of the relation that links~\((m_h)\)
  and~\((m_k^*)\) shows that~\((m_k^*)\) is also a multiplier, with
  the maps~\(m_h\) giving the adjoints.
\end{proof}

\begin{remark}
  \label{rem:multiplier_saturated}
  We briefly compare our definition of a multiplier  of a Fell bundle
  with another candidate, which only works in the saturated case.
  Let \((m_h)_{h\in G_{\s(g)}}\) be a multiplier of the Fell
  bundle~\(\A\) of degree \(g\in G\).
  Lemma~\ref{lem:multiplier_g} implies that \(m_{u\s(g)}\colon
  \FU_{\s(g)} \to \A_g\) is an adjointable operator of right Hilbert
  \(\FU_{\s(g)}\)-modules.
  Conversely, let \(T\colon \FU_{\s(g)} \to \A_g\) be such an
  adjointable operator.
  If \(h\in G_{\s(g)}\), then~\(T\) induces an adjointable operator
  \(T\otimes_{\FU_{\s(g)}} \Id_{\A_h}\colon \A_h \cong \FU_{\s(g)}
  \otimes_{\FU_{\s(g)}} \A_h \to \A_g \otimes_{\FU_{\s(g)}} \A_h\).
  The multiplication in~\(\A\) gives an isometry \(\A_g
  \otimes_{\FU_{\s(g)}} \A_h \injto \A_{g h}\).
  If~\(\A\) is saturated, then this is unitary, hence adjointable.
  So~\(T\) is part of a family of adjointable operators
  \(\A_h \to \A_{g h}\) for all \(h\in G_{\s(g)}\).
  This is a multiplier of the Fell bundle.
  So we may identify multipliers of degree~\(g\) with
  \(\Bound(\FU_{\s(g)}, \A_g)\) if~\(\A\) is saturated.

  If, however, the Fell bundle is not saturated, then the
  multiplication maps may fail to be adjointable.
  Then we do not know whether the linear maps \(\A_h \to \A_{g h}\)
  induced by~\(T\) are adjointable.
  If, however, \(g = u(x)\) for \(x\in G^0\) is a unit, then the
  multiplication map is unitary even if the Fell bundle is not
  saturated.
  So in this case, multipliers are just elements of
  \(\Bound(\FU_x,\FU_x) \cong \Mult(\FU_x)\).
  That is, a multiplier of degree \(u(x)\) is equivalent to a
  multiplier of the \(\Cst\)\nb-algebra~\(\FU_x\).
\end{remark}

\begin{definition}
  \label{def:multipliers_structure}
  Let~\(\Mult(\A)_g\) be the set of multipliers of degree~\(g\).
  Let \(m = (m_h)_{h\in G_{\s(g)}}\) and
  \(l = (l_h)_{h\in G_{\s(g)}}\) be multipliers of degree~\(g\) and
  \(\lambda\in\C\).  Then
  \(m + \lambda l = (m_h + \lambda l_h)_{h\in G_{\s(g)}}\) is a
  multiplier of degree~\(g\) as well.  We equip~\(\Mult(\A)_g\) with
  the norm
  \[
    \norm{m} \defeq \sup_{h\in G_{\s(g)}} \norm{m_h}
    = \norm{m_{u\s(g)}}.
  \]
  Let \((g,h)\in G^2\) and let \(x = (x_k)_{k\in G_{\s(h)}}\) be a
  multiplier of degree~\(h\).  Then we let
  \[
    m\cdot x \defeq (m_{h k}\circ x_k\colon
    \A_k \to \A_{h k} \to \A_{g h k})_{k \in G_{\s(h)}}.
  \]
\end{definition}

\begin{lemma}
  \label{lem:multiplier_bundle}
  The above structure makes each \(\Mult(\A)_g\) a Banach space.
  The map \((m_h)\mapsto (m_k^*)\) defines a conjugate-linear
  isometry \(\Mult(\A_g) \to \Mult(\A_{g^{-1}})\).  The
  multiplication above defines a bilinear map
  \(\Mult(\A)_g \times \Mult(\A)_h \to \Mult(\A)_{g h}\).  It
  satisfies
  \begin{itemize}
  \item multiplication is associative;
  \item \((a\cdot b)^* = b^* \cdot a^*\) for all \((g,h)\in G^2\),
    \(a\in \Mult(\A)_g\), \(b\in \Mult(\A)_h\);
  \item \((a^*)^* =a\) for all \(g\in G\), \(a\in \Mult(\A)_g\);
  \item \(\norm{a\cdot b} \le \norm{a}\cdot \norm{b}\) for all
    \((g,h)\in G^2\), \(a\in \Mult(\A)_g\), \(b\in \Mult(\A)_h\);
  \item \(\norm{a^* a} = \norm{a}^2\) for all \(g\in G\),
    \(a\in \Mult(\A)_g\);
  \item for each \(g\in G\), \(a\in \Mult(\A)_g\), there is
    \(b\in \Mult(\A)_{u\s(g)}\) with \(b^* b = a^* a\);
  \item for each \(g\in G\), \(a\in\A_g\), the family of maps
    \(m_h(b) \defeq a\cdot b\) for \(h\in G_{\s(g)}\), \(b\in \A_h\)
    is a multiplier.  This defines isometric linear maps
    \(\A_g \to \Mult(\A)_g\) that are multiplicative and preserve
    the involutions.
  \end{itemize}
\end{lemma}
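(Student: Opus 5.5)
The plan is to treat \(\Mult(\A)_g\) the way one treats adjointable operators between Hilbert modules. Each structural claim is checked using the defining adjoint relation \(a_1^*\cdot m_h(a_2) = m_k^*(a_1)^*\cdot a_2\), the nondegeneracy of the inner products on the fibres \(\A_{gh}\), and Lemma~\ref{lem:multiplier_g}.

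\emph{Banach space structure.} Linearity of the structure is clear, since the adjoint of \(m+\lambda l\) is \(m^*+\bar\lambda l^*\). Lemma~\ref{lem:multiplier_g} shows that the norm is finite and equals \(\norm{m_{u\s(g)}}\). For completeness, a Cauchy sequence \((m^{(n)})\) converges uniformly in each \(h\), and so does the sequence of adjoints. The adjoint relation passes to the limit, so the limit is again a multiplier.

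\emph{Involution.} Lemma~\ref{lem:multiplier_g} already shows that \(m^*\) is a multiplier of degree \(g^{-1}\), that \((m^*)^*=m\), and that \(m\mapsto m^*\) is conjugate-linear. The isometry follows from the \(\Cst\)\nb-identity below in the usual way, or directly: \(\norm{m_h(a_2)}^2=\norm{a_2^*\,m^*(m_h(a_2))}\le\norm{m^*}\,\norm{m}\,\norm{a_2}^2\), which gives \(\norm{m}\le\norm{m^*}\), and symmetry gives equality.

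\emph{Products and the algebraic identities.} The product \(m\cdot x\) is a multiplier with adjoint \(x^*\cdot m^*\). Indeed, \(a_1^*\, m(x(a_2)) = m^*(a_1)^*\, x(a_2) = x^*(m^*(a_1))^*\, a_2\). This proves \((m\cdot x)^*=x^*\cdot m^*\). Associativity holds because the product is composition of maps, and \(\norm{m\cdot x}\le\norm{m}\,\norm{x}\) is immediate.

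\emph{\(\Cst\)\nb-identity.} Since \(\norm{m^*m}\le\norm{m}^2\) by the two previous points, it remains to prove the reverse inequality. For \(a\in\FU_{\s(g)}\) we have \(\norm{m(a)}^2=\norm{a^*\,(m^*m)(a)}\le\norm{m^*m}\,\norm{a}^2\), so \(\norm{m}^2\le\norm{m^*m}\).

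\emph{Positivity.} By Remark~\ref{rem:multiplier_saturated}, \(\Mult(\A)_{u\s(g)}\cong\Mult(\FU_{\s(g)})\) as \(\Cst\)\nb-algebras, so it suffices to show that \(m^*m\) is positive there. For every \(a\in\FU_{\s(g)}\) we get \(a^*(m^*m)(a)=m(a)^*m(a)\ge0\), which is positive by the Fell-bundle axiom~\ref{def:Fell_bundle_6}. Positivity of these compressions implies that \(m^*m\) is positive in the multiplier algebra, and we then take \(b=(m^*m)^{1/2}\).

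\emph{Embedding of \(\A\).} Left multiplication by \(a\in\A_g\) has adjoint given by left multiplication by \(a^*\), by associativity and axiom~\ref{def:Fell_bundle_2}. This map is linear and multiplicative, and it preserves the involutions. It is isometric because \(\norm{a\cdot e_i}\to\norm{a}\) for an approximate unit \((e_i)\) of \(\FU_{\s(g)}\), which works by Lemma~\ref{lem:equality_of_products}.

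The main obstacle is the identification of degree-\(u(x)\) multipliers with \(\Mult(\FU_x)\), which is needed in the positivity step. One must check that a multiplier of degree \(u(x)\) is determined by its component at the unit \(u(x)\), using \(m_h(a_1a_2)=m_{u(x)}(a_1)a_2\) and \(\FU_x\A_h=\A_h\). One must also check that the multiplication and involution defined above agree with those of \(\Mult(\FU_x)\).
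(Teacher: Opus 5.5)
Your proposal is correct and follows essentially the paper's route. You treat multipliers of degree~\(g\) like adjointable operators between Hilbert modules to get the Banach space structure, the isometric involution and the \(\Cst\)\nb-identity, and you get positivity of \(m^*m\) from the identification \(\Mult(\A)_{u\s(g)}\cong\Mult(\FU_{\s(g)})\) in Remark~\ref{rem:multiplier_saturated}; you simply write out the details that the paper's proof leaves to the reader.
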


\begin{proof}
  The algebraic properties of the norm, multiplication, and
  involution are mostly obvious.  The positivity of~\(a^* a\) for
  \(a\in \Mult(\A)_g\) follows because
  \(\Mult(\A)_{\s(g)} = \Mult(\FU_{\s(g)})\) and the operator
  \(a^*_{g^{-1}} a_g\colon \FU_{\s(g)} \to \A_g \to \FU_{\s(g)}\) is
  indeed positive.  The proof that \(\Mult(\A)_g\) is a Banach space
  and that the norms on these spaces satisfy the
  \(\Cst\)\nb-condition is like the proof of the analogous
  statements for the \(\Cst\)\nb-algebra of adjointable operators on
  a Hilbert module.
\end{proof}

Thus \(\Mult(\A) \defeq (\Mult(\A)_g)_{g\in G}\) for \(g\in G\) with
the norms, involutions, and multiplication maps above carries most
of the structure of a Fell bundle.  What is missing is a topology on
this bundle.  The most reasonable topology should be an analogue of
the strict topology on a multiplier algebra.  We will not study this
topology here in itself.  Our main purpose is to prove that a
``multiplier morphism''
\(\A \to \Mult(\B)\) for two Fell bundles \(\A\)
and~\(\B\) induces \Star{}homomorphisms
\[
   \Cst(G,\A) \to \Mult(\Cst(G,\B)),\qquad
   \Cred(G,\A) \to \Mult(\Cred(G,\B)).
\]
Here continuity is understood as follows:

\begin{definition}
  \label{def:multiplier_homomorphism}
  Let \(\A\) and~\(\B\) be Fell bundles.  A \emph{multiplier
    homomorphism} \(\mor\colon \A \to \Mult(\B)\) is a family of
  linear maps \(\mor_g\colon \A_g \to \Mult(\B)_g\) with the
  following properties:
  \begin{itemize}
  \item \(\mor_g(a)^* = \mor_{g^{-1}}(a^*)\) for all \(g\in
    G\), \(a\in \A_g\);
  \item \(\mor_g(a_1)\cdot \mor_h(a_2)=\mor_{gh}(a_1\cdot a_2)\) for all \((g,h)\in G^2\),
    \(a_1 \in \A_g\), \(a_2 \in \A_h\);
  \item the family of bilinear maps
    \(\mu_{g, h}\colon \A_g \times \B_h \to \B_{g h}\),
    \((a,b) \to \mor_g(a)_h(b)\), for \((g,h)\in G^2\) is
    continuous, that is, it maps continuous sections to continuous
    sections; here \(\mor_g(a)_h\colon \B_h \to \B_{g h}\) is the
    component at~\(h\) of
    \(\mor_g(a)\in \Mult(\B)_g\).
  \end{itemize}
\end{definition}

More explicitly, a multiplier homomorphism is a continuous family of
bilinear maps
\[
  \mu_{g,h}\colon \A_g \times \B_h \to \B_{g h}, \qquad
  (a,b) \mapsto a\cdot b \defeq \mor_g(a)_h(b),
\]
that satisfies \(b_1^* (a^*\cdot b_2) = (a\cdot b_1)^* b_2\) for all
\(a\in\A_g\), \(b_2\in \B_h\), \(b_1\in \B_{g h}\),
\((g,h)\in G^2\), and
\(a_1 \cdot (a_2\cdot b) = (a_1\cdot a_2)\cdot b\) for all
\(a_1\in \A_g\), \(a_2\in \A_h\), \(b\in\B_k\), \((g,h,k)\in G^3\).
The first property says both that each \(\mor_g(a)\) is a
multiplier and that \(\mor_g(a)^* = \mor_{g^{-1}}(a^*)\).

\begin{lemma}
  \label{lem:multiplier-fiber-products}
  Write a multiplier homomorphism \(\A\to\Mult(\B)\) via multiplication
  maps \(\A_{g}\times\B_h\to \B_{gh}\), \((a,b)\mapsto a\cdot b\), for
  \((g,h)\in G^2\).
  Then
  \[
    \A_g\cdot\B_h=\A_g\A_g^*\cdot\B_{gh}
    \qquad\text{for all }(g,h)\in G^2.
  \]
  Here products of subspaces mean their closed linear spans.
  If~\(\A\) is saturated and \(\mor_x\colon \FU_x\to \Mult(\FU[B]_x)\) is
  nondegenerate for all \(x\in G^0\), then
  \[
    \A_g\cdot\B_h=\B_{gh}
    \qquad\text{for all }(g,h)\in G^2.
  \]
\end{lemma}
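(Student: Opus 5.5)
Both equalities are between closed linear spans inside \(\B_{gh}\), so I argue purely algebraically with the multiplier relations: \(a_1\cdot(a_2\cdot b) = (a_1 a_2)\cdot b\) for \(a_1\in\A_g\), \(a_2\in\A_{g'}\), \(b\in\B_h\), and the adjoint relation \(b_1^*(a^*\cdot b_2) = (a\cdot b_1)^* b_2\). The space \(\A_g\A_g^*\cdot\B_{gh}\) makes sense because \(\A_g\A_g^*\subseteq\FU_{\rg(g)}\), and elements of degree \(\rg(g)\) act on \(\B_{gh}\), since \((\rg(g), gh)\in G^2\).

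First, \(\A_g\cdot\B_h\subseteq\A_g\A_g^*\cdot\B_{gh}\). By Lemma~\ref{lem:equality_of_products}, \(\A_g = \A_g\A_g^*\A_g\), so every \(a\in\A_g\) is a norm limit of finite sums of products \(a_1 a_2^* a_3\). Each map \(b\mapsto a\cdot b\) is bounded with \(\norm{a\cdot b}\le\norm{\mor_g(a)}\norm{b}\le\norm{a}\norm{b}\), because \(\mor\) is contractive as in Remark~\ref{rem:tau_injective} applied to multipliers. Hence \(\A_g\cdot\B_h\) lies in the closed span of \((a_1a_2^*)\cdot(a_3\cdot b)\), using associativity of the multiplier action. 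Here \(a_3\cdot b\in\B_{gh}\) and \(a_1a_2^*\in\A_g\A_g^*\), which gives the inclusion.

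Conversely, \(\A_g\A_g^*\cdot\B_{gh}\subseteq\A_g\cdot\B_h\). Take \(a_1a_2^*\cdot c\) with \(c\in\B_{gh}\). By associativity this equals \(a_1\cdot(a_2^*\cdot c)\), and \(a_2^*\cdot c\in\B_{g^{-1}gh}=\B_h\). So the product lies in \(\A_g\cdot\B_h\). Taking closed spans and using continuity of the action gives equality.

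For the second claim, suppose \(\A\) is saturated. Then \(\A_g\A_g^* = \FU_{\rg(g)}\) by Lemma~\ref{lem:equality_of_products}\ref{lem:equality_of_products_3}, so the first claim gives \(\A_g\cdot\B_h = \FU_{\rg(g)}\cdot\B_{gh}\). The action of \(\FU_{\rg(g)}\) on \(\B_{gh}\) factors through \(\mor_{\rg(g)}\colon\FU_{\rg(g)}\to\Mult(\FU[B]_{\rg(g)})\) followed by the left action of \(\Mult(\FU[B]_{\rg(g)})\) on \(\B_{gh}\); this uses Remark~\ref{rem:multiplier_saturated} to identify degree-unit multipliers with \(\Mult(\FU[B]_x)\). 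Because \(\mor_{\rg(g)}\) is nondegenerate, \(\mor(\FU_{\rg(g)})\FU[B]_{\rg(g)} = \FU[B]_{\rg(g)}\). Therefore
\[
  \FU_{\rg(g)}\cdot\B_{gh} = \FU_{\rg(g)}\cdot(\FU[B]_{\rg(g)}\B_{gh}) = (\mor(\FU_{\rg(g)})\FU[B]_{\rg(g)})\B_{gh} = \FU[B]_{\rg(g)}\B_{gh} = \B_{gh},
\]
where the last equality is Lemma~\ref{lem:equality_of_products}\ref{lem:equality_of_products_1} for \(\B\).

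The main obstacle is the compatibility used in that chain: for \(m\) a degree-\(\rg(g)\) multiplier of \(\B\), \(d\in\FU[B]_{\rg(g)}\) and \(b\in\B_{gh}\), one needs \(m\cdot(d b) = (m_{\rg(g)}(d))\,b\). This follows from the relation \(m_{h'}(a_1)\cdot a_2 = m_{h'k}(a_1a_2)\) in Lemma~\ref{lem:multiplier_g}, applied with \(h' = \rg(g)\) and \(k = gh\). Everything else is bookkeeping of closed spans.
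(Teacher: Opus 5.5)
Your proof is correct and follows the paper's argument. The first claim comes from the same inclusion chain \(\A_g\cdot\B_h=\A_g\A_g^*\A_g\cdot\B_h\subseteq\A_g\A_g^*\cdot\B_{gh}\subseteq\A_g\cdot\B_h\) via associativity. The second comes from saturation, nondegeneracy of \(\mor_{\rg(g)}\), and \(\FU[B]_{\rg(g)}\B_{gh}=\B_{gh}\). You also spell out two points the paper uses tacitly: the boundedness of the action needed to pass to closed spans, and the compatibility \(m\cdot(d\,b)=m_{\rg(g)}(d)\,b\) from Lemma~\ref{lem:multiplier_g}.
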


\begin{proof}
  Since \(\A_g\A_g^*\A_g=\A_g\), we compute
  \[
    \A_g\cdot\B_h
    = \A_g\A_g^*\A_g\cdot \B_h
    \subseteq \A_g\A_g^*\cdot\B_{gh}
    \subseteq \A_g\cdot \B_h.
  \]
  So all these spaces are equal, giving the first assertion
  of the lemma.
  To prove the second assertion, assume that~\(\A\) is saturated
  and~\(\mor_x\) is nondegenerate for all \(x\in G^0\).
  Then \(\A_g\A_g^*=\FU_{\rg(g)}\) and therefore
  \[
    \A_g\cdot\B_h
    = \FU_{\rg(g)}\cdot \B_{gh}
    = \FU_{\rg(g)} \FU[B]_{\rg(g)}\B_{gh}
    = \FU[B]_{\rg(g)}\B_{gh}
    =\B_{gh}.\qedhere
  \]
\end{proof}

\begin{proposition}[Integrated form of a nondegenerate multiplier morphism]
  \label{pro:integrated-multiplier-morphism}
  Let \(\mor\colon \A\to\Mult(\B)\) be a nondegenerate multiplier
  homomorphism between Fell bundles over~\(G\), written multiplicatively as
  \((a,b)\mapsto a\cdot b\) for \(a\in \A_g\), \(b\in \B_h\) with
  \(\s(g)=\rg(h)\).  Then there is a unique nondegenerate
  \Star{}homomorphism
  \[
    \Cst(G,\mor)\colon \Cst(G,\A)\to \Mult(\Cst(G,\B))
  \]
  such that for all \(f\in\Contc(G,\A)\), \(\xi\in\Contc(G,\B)\) and \(g\in G\),
  \begin{equation}\label{eq:convolution-map}
    (\Cst(G,\mor)(f)\xi)(g)
    = (f*\xi)(g)
    \defeq \int_G f(h)\cdot\xi(h^{-1}g)\,\diff\alpha^{\rg(g)}(h).
  \end{equation}
\end{proposition}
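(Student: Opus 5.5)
The plan is to apply the universal property (Theorem~\ref{the:universal_groupoid_Haus}) in its disintegration form, Corollary~\ref{cor:prerep}. We want a representation of \(\Cst(G,\A)\) on the Hilbert module \(\F\defeq\Cst(G,\B)\) over \(D\defeq\Cst(G,\B)\). So we build a pre-representation of \(\Contc(G,\A)\) on \(\F\) as in Definition~\ref{def:pre-representation}, with the following data:
\begin{itemize}
\item \(\F_0\defeq\Contc(G,\B)\);
\item \(\iota\) the canonical dense inclusion \(\Contc(G,\B)\to\Cst(G,\B)\), which is injective by Proposition~\ref{pro:section_algebra_has_faithful_rep};
\item \(L(f)\xi\defeq f*\xi\) as in~\eqref{eq:convolution-map}.
\end{itemize}
First one checks that \(f*\xi\in\Contc(G,\B)\). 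The map \((h,k)\mapsto f(h)\cdot\xi(k)\) is a compactly supported continuous section of \(d_1^*\B\) over~\(G^2\), by the continuity condition in Definition~\ref{def:multiplier_homomorphism}. Applying \(\lambda_1\) and Lemma~\ref{lem:continuity_integral} then gives \(f*\xi\in\Contc(G,\B)\).

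Next, the algebraic identities. Associativity \(f_1*(f_2*\xi)=(f_1*f_2)*\xi\) follows by the same computation as for associativity of convolution in \(\Contc(G,\A)\), now using \(a_1\cdot(a_2\cdot b)=(a_1a_2)\cdot b\). The adjoint relation \(\braket{f*\xi}{\eta}_{\Cst(G,\B)}=\braket{\xi}{f^**\eta}\), that is \((f*\xi)^**\eta=\xi^**(f^**\eta)\) in \(\Contc(G,\B)\), follows from \(b_1^*(a^*\cdot b_2)=(a\cdot b_1)^*b_2\) by a Fubini computation with left invariance of~\(\alpha\). Combining the two gives condition~(2): \(\braket{L(f_1)\xi}{L(f_2)\eta}=\braket{\iota\xi}{L(f_1^**f_2)\eta}\).

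Condition~(1) is the continuity of \(f\mapsto\xi^**(f*\eta)\) on \(\Cont_0(K\setminus\partial K,\A)\) into \(\Cst(G,\B)\). Here \(\norm{\cdot}_{\Cst}\le\norm{\cdot}_I\), so it suffices to see that, for fixed compactly supported \(\xi,\eta\), the supremum bound on \(f\) controls \(\xi^**(f*\eta)\) in the inductive limit topology. This holds because the multiplier maps are contractive (Lemma~\ref{lem:multiplier_g} and Remark~\ref{rem:tau_injective}), so \(\norm{f(h)\cdot b}\le\norm{f(h)}\norm{b}\), and the supports stay in a fixed compact set.

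The main obstacle is condition~(3), density of the span of \(f*\xi\) in \(\Cst(G,\B)\); this is where nondegeneracy enters. I would first show that the span of the sections \((h,k)\mapsto f(h)\cdot\xi(k)\) is dense in \(\Contc(G^2,d_1^*\B)\) in the inductive limit topology. By Lemma~\ref{lem:subfield_equality} this reduces to a fibrewise statement, namely \(\A_h\cdot\B_k\) dense in \(\B_{hk}\). Lemma~\ref{lem:multiplier-fiber-products} gives this in the saturated case. In general I would instead use only \(h\) near units: nondegeneracy of \(\mor_x\) gives \(\FU_x\cdot\B_k=\B_k\). I would then take \(f\) running through an approximate unit concentrated near~\(G^0\), with \(f(h)\in\A_h\) approximating the identity of \(\FU_{\rg(h)}\) in \(\Mult(\FU[B])\). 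The goal is that \(f*\xi\to\xi\) in the inductive limit topology, and hence in \(\Cst(G,\B)\), for every \(\xi\in\Contc(G,\B)\).

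The construction of such an approximate unit is the technical point I expect to be hardest. It would combine bump functions in~\(h\) supported near \(G^0\) and normalised with respect to~\(\alpha\), an approximate unit of \(\SUF\), and an upper-semicontinuity argument as in Lemma~\ref{lem:fullness-Hilbert-modules-fields}.

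Given the pre-representation, Corollary~\ref{cor:prerep} yields an \(I\)\nb-bounded nondegenerate representation \(L'\) of \(\Contc(G,\A)\) on \(\Cst(G,\B)\) by adjointable operators, agreeing with \(f*\xi\) on \(\F_0\). It extends to \(\Cst(G,\A)\to\Bound(\Cst(G,\B))=\Mult(\Cst(G,\B))\), and this is \(\Cst(G,\mor)\). Uniqueness is clear, because~\eqref{eq:convolution-map} fixes the map on the dense subspaces \(\Contc(G,\A)\) and \(\Contc(G,\B)\).
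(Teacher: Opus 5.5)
Your proof is correct, but it runs the universal property in the opposite direction from the paper.

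The paper builds the Fell-bundle representation \((\varphi,U)\) of \(\A\) on \(\Cst(G,\B)\) by hand. Here \(\varphi\) is \(\mor\) on the unit fibres followed by \(\Cont_0(G^0,\FU[B])\to\Mult(\Cst(G,\B))\). The unitary \(U\) is induced by the fibrewise unitaries \(\A_g\otimes_{\FU_{\s(g)}}\B_h\cong\A_g\cdot\B_h=\A_g\A_g^*\cdot\B_{gh}\cong\A_g\A_g^*\otimes_{\FU_{\rg(g)}}\B_{gh}\) from Lemma~\ref{lem:multiplier-fiber-products}, transported along \(\Upsilon\). The paper then integrates \((\varphi,U)\) and computes that the integrated form is convolution. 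You instead write down the convolution pre-representation and let Corollary~\ref{cor:prerep} produce \(U\) and the \(I\)-norm bound.

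What your route buys is economy. It needs only the two algebraic identities of the mixed convolution, which are immediate from the multiplier axioms. It also needs a continuity estimate; contractivity of \(\mor_g\) there comes from the \(\Cst\)-identity in \(\Mult(\B)_g\) (Lemma~\ref{lem:multiplier_bundle}) rather than literally from Remark~\ref{rem:tau_injective}. It needs no fibrewise identification and no coherence check.

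The price is condition~(3). In the paper nondegeneracy is free: \(\varphi\) is nondegenerate because each \(\mor_x\) is, and integrated forms are automatically nondegenerate (Lemma~\ref{lem:Lf_nondegenerate}). You must prove density by hand. The approximate unit you describe is exactly Proposition~\ref{pro:nondegeneracy-Cc-level}, which the paper proves later without using the present result, so there is no circularity.

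You can also get~(3) more cheaply, without any approximate unit:
\begin{itemize}
\item Take \(a\in\Contc(G^0,\FU)\) and a compactly supported extension \(\tilde a\) of \(a\) to \(G\).
\item Take normalised bumps \(c_n\ge0\) shrinking to \(G^0\), with \(\int c_n\,\diff\alpha^x=1\) on the relevant compact set.
\item An upper-semicontinuity argument gives \((c_n\tilde a)*\xi\to a(\rg(\cdot))\cdot\xi\) in the inductive limit topology.
\item These limits span a dense subspace because each \(\mor_x\) is nondegenerate, combined with Lemma~\ref{lem:subfield_equality}.
\end{itemize}
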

\begin{proof}
  We construct~\(\Cst(G,\mor)\) as the integrated form of a
  Fell-bundle representation \((\varphi,U)\).
  Let \(\FU\defeq \A|_{G^0}\) and \(\FU[B]\defeq \B|_{G^0}\) be the
  restrictions of \(\A\) and \(\B\) to the unit space, and set
  \[
    \SUF\defeq \Cont_0(G^0,\FU),
    \qquad
    \SUF[B]\defeq \Cont_0(G^0,\FU[B]).
  \]
  Restrict \(\mor\) to the unit fibres and compose with the canonical
  nondegenerate homomorphism \(B\to\Mult(\Cst(G,\B))\).
  This gives a \Star{}homomorphism
  \[
    \mor^0\colon \SUF\to \Mult(\Cst(G,\B)).
  \]
  Since \(\mor\) is nondegenerate on each unit fibre, \(\mor^0\) is
  nondegenerate.
  We view \(\Cst(G,\B)\) as a Hilbert \(\Cst(G,\B)\)\nb-module over itself
  and regard \(\mor^0\) as a nondegenerate representation
  \[
    \varphi\colon \SUF\to \Bound(\Cst(G,\B))=\Mult(\Cst(G,\B)).
  \]
  On the dense subspace \(\Contc(G,\B)\subseteq \Cst(G,\B)\), this
  representation is given by the usual formula
  \begin{equation}
    \label{eq:formula-for-varphi}
    (\varphi(f)\eta)(g) = f(\rg(g))\cdot\eta(g),
    \qquad f\in \SUF,\ \eta\in\Contc(G,\B),\ g\in G,
  \end{equation}
  coming from the fibrewise action on~\(\FU[B]\).

  Next we construct the unitary~\(U\).
  Consider the field \(\A\otimes_{\FU}\B\) over \(G^{(2)}\defeq
  G\times_{\s,\rg}G\) with fibre \(\A_g\otimes_{\FU_{\s(g)}}\B_h\) at
  \((g,h)\in G^{(2)}\), and the field \(\A\A^*\otimes_{\FU}\B\) over
  \(G\times_{\rg,\rg}G\) with fibre
  \(\A_g\A_g^*\otimes_{\FU_{\rg(g)}}\B_k\) at $(g,k)\in
  G\times_{\rg,\rg}G$.

  By Lemma~\ref{lem:multiplier-fiber-products}, for each
  \((g,h)\in G^{(2)}\) we have fibrewise isomorphisms of correspondences
  \[
    \A_g\otimes_{\FU_{\s(g)}}\B_h
    \cong \A_g\cdot\B_h
    = \A_g\A_g^*\cdot\B_{gh}
    \cong \A_g\A_g^*\otimes_{\FU_{\rg(g)}}\B_{gh}.
  \]
  These canonical isomorphisms are the fibres of an isomorphism of
  upper semicontinuous fields of correspondences.
  The latter lifts the canonical homeomorphism
  \[
    \Upsilon\colon G^{(2)}\congto G\times_{\rg,\rg}G,
    \qquad
    \Upsilon(g,h)\defeq (g,gh),
  \]
  to an isomorphism of fields of correspondences
  \begin{equation}
    \label{eq:iso-fields-multipliers}
    \A\otimes_{\FU}\B \cong \A\A^*\otimes_{\FU}\B.
  \end{equation}
  On compactly supported sections this yields a
  \(\Contc(G,\A\A^*)\)-\(\Cont_0(G^0,\FU)\)-\alb{}bimodule isomorphism
  \begin{equation}
    \label{eq:unitary-from-morphism}
    U_0\colon
    \Contc(G^{(2)},\A\otimes_{\FU}\B)
    \congto
    \Contc(G\times_{\rg,\rg}G,\A\A^*\otimes_{\FU}\B),
  \end{equation}
  which, under the above identifications, is explicitly given by
  \[
    (U_0\zeta)(g,k) = \zeta(g,g^{-1}k),
    \qquad
    \zeta\in\Contc(G^{(2)},\A\otimes_{\FU}\B),\ (g,k)\in G\times_{\rg,\rg}G.
  \]

  As in Section~\ref{sec:universal_rep}, we may view
  \(\Contc(G^{(2)},\A\otimes_{\FU}\B)\) as a dense subspace of
  \[
    \Lt^2(G,\A,\s,\tilde\alpha)\otimes_\varphi \Cst(G,\B)
  \]
  and \(\Contc(G\times_{\rg,\rg}G,\A\A^*\otimes_{\FU}\B)\) as a dense
  subspace of
  \[
    \Lt^2(G,\A\A^*,\rg,\alpha)\otimes_\varphi \Cst(G,\B),
  \]
  with the \(\Cst(G,\B)\)\nb-valued inner products computed as in
  Section~\ref{sec:disintegrate}, see
  \eqref{eq:inner-prod-s} and~\eqref{eq:inner-prod-r}.
  The same inner-product computation as in
  Lemma~\ref{lem:isometric-D-inner-products} shows that~\(U_0\) is
  isometric for these inner products, hence extends uniquely by
  continuity to a unitary
  \[
    U\colon
    \Lt^2(G,\A,\s,\tilde\alpha)\otimes_\varphi\Cst(G,\B)
    \congto
    \Lt^2(G,\A\A^*,\rg,\alpha)\otimes_\varphi\Cst(G,\B).
  \]

  The pair \((\varphi,U)\) is a Fell-bundle representation of~\(\A\) on the
  Hilbert module \(\F\defeq \Cst(G,\B)\) by the same verification as in
  Lemma~\ref{lem:disintegration_rep}.
  Let
  \[
    L\colon \Contc(G,\A)\to \Bound(\F)=\Mult(\Cst(G,\B))
  \]
  denote its integrated form as in Section~\ref{sec:integrate}.
  If \(f\in\Contc(G,\A)\) is written as \(f=f_1^\dagger\cdot f_2\) with
  \(f_1\in\Contc(G,\A\A^*)\), \(f_2\in\Contc(G,\A)\) as in
  Lemma~\ref{lem:factor_f}, then by definition
  \[
    L(f) = (T^\rg_{f_1})^* U T^\s_{f_2},
  \]
  where \(T^\s_{f_2}\) and~\(T^\rg_{f_1}\) are the creation operators
  associated to \(f_2\) and~\(f_1\), viewed as sections of the
  corresponding fields.

  We now compute \(L(f)\xi\) for \(\xi\in\Contc(G,\B)\subseteq\F\).
  For \((g,h)\in G^{(2)}\) with \(\s(g)=\rg(h)\), the definition of the creation
  operator~\(T^\s_{f_2}\) gives
  \[
    (T^\s_{f_2}\xi)(g,h)
    = [\,f_2(g)\otimes\xi(h)\,]
    \in \A_g\otimes_{\FU_{\s(g)}}\B_h.
  \]
  Under the canonical identifications above, this may be viewed as an element of
  \(\A_g\A_g^*\otimes_{\FU_{\rg(g)}}\B_{gh}\) and hence of the fibre
  \((\A\A^*\otimes_{\FU}\B)_{(g,gh)}\).
  Therefore, the explicit formula for~\(U_0\) shows that
  \[
    (U\,T^\s_{f_2}\xi)(g,k)
    = f_2(g)\cdot\xi(g^{-1}k)
    \in \A_g\A_g^*\cdot\B_k
  \]
  for all \((g,k)\in G\times_{\rg,\rg}G\).
  
  Next, \(T^\rg_{f_1}\) is the creation operator attached to the field
  \(\A\A^*\otimes_{\FU}\B\) over \(G\times_{\rg,\rg}G\), viewed as a
  correspondence via~\(\rg\).  Its adjoint is given by the usual formula for
  creation operators: if
  \(\zeta\in\Contc(G\times_{\rg,\rg}G,\A\A^*\otimes_{\FU}\B)\), then
  \[
    (T^\rg_{f_1})^*\zeta(g)
    = \int_G f_1(h)^*\cdot\zeta(h,g)\,\diff\alpha^{\rg(g)}(h),
    \qquad g\in G.
  \]
  Applying this to \(\zeta = U\,T^\s_{f_2}\xi\) and remembering that 
  \((U T^\s_{f_2}\xi)(h,g) = f_2(h)\cdot\xi(h^{-1}g)\), we obtain
  \begin{align*}
    (L(f)\xi)(g)
    &= (T^\rg_{f_1})^*U T^\s_{f_2}\xi(g) \\
    &= \int_G f_1(h)^*\cdot
      \bigl(U T^\s_{f_2}\xi\bigr)(h,g)\,\diff\alpha^{\rg(g)}(h) \\
    &= \int_G f_1(h)^*\cdot
      \bigl(f_2(h)\cdot\xi(h^{-1}g)\bigr)\,\diff\alpha^{\rg(g)}(h) \\
    &= \int_G \bigl(f_1(h)^* f_2(h)\bigr)\cdot\xi(h^{-1}g)\,\diff\alpha^{\rg(g)}(h).
  \end{align*}
  The factorisation \(f=f_1^\dagger\cdot f_2\) implies
  \(f(h)=f_1(h)^*f_2(h)\) for all \(h\in G\).
  Hence
  \[
    (L(f)\xi)(g)
    = \int_G f(h)\cdot\xi(h^{-1}g)\,\diff\alpha^{\rg(g)}(h)
    = (f*\xi)(g),
  \]
  which is exactly the convolution formula~\eqref{eq:convolution-map} on the
  dense subspace \(\Contc(G,\B)\subseteq\F\).

  By the integration procedure (Section~\ref{sec:integrate}), the
  representation \((\varphi,U)\) integrates to a unique nondegenerate
  \Star{}homomorphism
  \[
    \Cst(G,\mor)\colon \Cst(G,\A)\to\Bound(\F)=\Mult(\Cst(G,\B)),
  \]
  whose restriction to \(\Contc(G,\A)\) is \(L\).
  Since \(\Contc(G,\B)\) is dense
  in~\(\F\), the above computation shows that \(\Cst(G,\mor)\) is characterised on
  \(\Contc(G,\B)\) by the convolution
  formula~\eqref{eq:convolution-map}.
  This finishes the proof.
\end{proof}

\begin{remark}[Degenerate multiplier morphisms]
  \label{rem:degenerate-multiplier}
  Let \(\mor\colon \A\to\Mult(\B)\) be a possibly degenerate
  multiplier morphism.
  Then the restriction of~\(\mor\) to unit fibres still induces a
  \Star{}homomorphism
  \[
    \mor^0 \colon \SUF=\Cont_0(G^0,\FU)\to \Mult(\Cst(G,\B)),
  \]
  but~\(\mor^0\) need not be nondegenerate.
  In this case, we form the standard \emph{nondegenerate part}
  \[
    \F \defeq \mor^0(\SUF)\,\Cst(G,\B) = \SUF\cdot \Cst(G,\B),
  \]
  a right ideal of \(\Cst(G,\B)\), and restrict~\(\mor^0\) to obtain a
  nondegenerate homomorphism
  \[
    \varphi\colon \SUF\to \Bound(\F).
  \]

  Exactly the same construction of the unitary~\(U\) as in
  Proposition~\ref{pro:integrated-multiplier-morphism} applies to~\(\varphi\):
  the fields \(\A\otimes_{\FU}\B\) and \(\A\A^*\otimes_{\FU}\B\), the
  identification~\eqref{eq:iso-fields-multipliers}, and the fibrewise
  formula \((U_0\zeta)(g,k)=\zeta(g,g^{-1}k)\) make sense independently of
  nondegeneracy.
  Thus we again obtain a Fell-bundle representation \((\varphi,U)\)
  on~\(\F\), whose integrated form
  \[
    L \colon \Contc(G,\A)\to \Bound(\F)
  \]
  is given by the same convolution formula
  \[
    (L(f)\xi)(g)=\int_G f(h)\cdot \xi(h^{-1}g)\,\diff\alpha^{\rg(g)}(h).
  \]

  Using the canonical isomorphism
  \[
    \Lt^2(G,\A,\s,\tilde\alpha)\otimes_\varphi \Cst(G,\B)
    \cong
    \Lt^2(G,\A,\s,\tilde\alpha)\otimes_\varphi \F,
  \]
  the same calculation as in
  Proposition~\ref{pro:integrated-multiplier-morphism}
  shows that
  \[
    \Cst(G,\mor)\colon \Cst(G,\A)\to \Mult(\Cst(G,\B))
  \]
  exists for every (possibly degenerate) multiplier morphism
  \(\mor\colon \A\to \Mult(\B)\) and is still given by the convolution
  formula~\eqref{eq:convolution-map}.

  In Section~\ref{sec:Multiplier-as-split-extensions} we shall give an
  alternative existence proof using split extensions, which applies to
  all multiplier morphisms without appealing to the disintegration
  theory and arrives at the same integrated map \(\Cst(G,\mor)\).
\end{remark}

\subsection{Multiplier homomorphisms as split extensions}
\label{sec:Multiplier-as-split-extensions}

We are going to reinterpret multiplier homomorphisms as split
extensions of Fell bundles.
This will allow us to generalise some
results from ordinary Fell-bundle homomorphisms to multiplier
homomorphisms.
We first recall the analogous results for homomorphisms
\(A\to\Mult(B)\) between two \(\Cst\)\nb-algebras.

\begin{definition}
 Let \(A\) and~\(B\) be \(\Cst\)\nb-algebras.
 A \emph{split extension} of \(A\) by~\(B\) is an extension
  \(B \into E \overset{p}\onto A\) of \(\Cst\)\nb-algebras with a
  \Star{}homomorphism \(\sigma\colon A\to E\) such that \(p\circ
  \sigma = \Id_A\).
  A \emph{split extension} of Fell bundles is defined similarly,
  replacing \(\Cst\)\nb-algebras and \Star{}homomorphisms by Fell
  bundles and Fell-bundle homomorphisms.
\end{definition}

The following result is well known (see~\cite{Pedersen:Extensions}),
but we add the proof here for convenience of the reader and also
because we are going to use a similar idea for Fell bundles.

\begin{lemma}
  \label{lem:multiplier_morphism_as_split_extension}
  Let \(B\) and~\(A\) be \(\Cst\)\nb-algebras.
  There is a natural bijection between equivalence classes of split
  extensions of~\(A\) by~\(B\) and \Star{}homomorphisms \(A\to
  \Mult(B)\).
\end{lemma}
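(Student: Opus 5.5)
I would construct maps in both directions and then check that they are mutually inverse up to the natural equivalence. Two split extensions \((E,p,\sigma)\) and \((E',p',\sigma')\) of~\(A\) by~\(B\) count as equivalent if there is a \Star{}isomorphism \(E\congto E'\) that restricts to the identity on~\(B\) and intertwines both the quotient maps and the splittings.

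\emph{From a split extension to a homomorphism.} Let \(B\into E \overset{p}\onto A\) with splitting~\(\sigma\) be given. Since \(B\) is an ideal in~\(E\), there is a canonical \Star{}homomorphism \(\theta\colon E\to\Mult(B)\), \(\theta(e)b\defeq eb\). It is the unique extension of the identity on~\(B\) to~\(E\) that is compatible with multiplication. Then \(\theta\circ\sigma\colon A\to\Mult(B)\) is the desired homomorphism. This map is constant on equivalence classes, because an equivalence fixes~\(B\) pointwise and intertwines the splittings.

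\emph{From a homomorphism to a split extension.} Let \(\pi\colon A\to\Mult(B)\) be given. Let \(E\defeq\setgiven{(a,m)\in A\oplus\Mult(B)}{m-\pi(a)\in B}\), which is the pullback of \(A\to\Mult(B)/B\) along \(\Mult(B)\onto\Mult(B)/B\). Equivalently, \(E=\setgiven{(a,\pi(a)+b)}{a\in A,\ b\in B}\). This is a \(\Cst\)\nb-subalgebra of \(A\oplus\Mult(B)\), and it is closed because it is a pullback. We embed \(B\) by \(b\mapsto(0,b)\), which gives an ideal since \(B\) is an ideal in~\(\Mult(B)\). We take \(p(a,m)\defeq a\) and \(\sigma(a)\defeq(a,\pi(a))\). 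Exactness is immediate: \(\ker p=\{0\}\oplus B\), and \(p\) is surjective because~\(\sigma\) is a section. The composite \(\theta\circ\sigma\) maps \(a\) to left multiplication by \((a,\pi(a))\) on \((0,b)\), which is \((0,\pi(a)b)\). So this construction returns~\(\pi\).

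\emph{The other composite.} Start with an arbitrary split extension \((E,p,\sigma)\) and set \(\pi\defeq\theta\circ\sigma\). I would map \(E\) to the model extension by \(\Phi(e)\defeq(p(e),\theta(e))\). The element \(e-\sigma(p(e))\) lies in~\(B\) by exactness, so \(\theta(e)-\pi(p(e))=\theta\bigl(e-\sigma p(e)\bigr)\in B\). This identity shows that \(\Phi\) lands in the model algebra. It also shows that \(\Phi\) is surjective, since \((a,\pi(a)+b)=\Phi(\sigma(a)+b)\).

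The step requiring the most care is injectivity of~\(\Phi\). Suppose \(p(e)=0\) and \(\theta(e)=0\). Then \(e\in B\) and \(eB=0\), so \(e=0\) because \(B\) has an approximate unit. Finally \(\Phi\) restricts to the identity on~\(B\) and intertwines the quotient maps and the splittings, so it is an equivalence of split extensions. Naturality in \(A\) and~\(B\) under the obvious morphisms is routine.
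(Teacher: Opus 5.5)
Your proof is correct and follows the paper's argument. The paper uses the same pullback algebra inside \(A\oplus\Mult(B)\) with splitting \(a\mapsto(a,\tau(a))\), the same homomorphism \(\tau=R\circ\sigma\), and the same comparison of an arbitrary split extension with this model.

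The one difference is the direction of the comparison map. Your \(\Phi(e)=(p(e),\theta(e))\) is the inverse of the paper's map \((a,\tau(a)+b)\mapsto\sigma(a)+b\). Yours is obviously a \Star{}homomorphism but needs your approximate-unit argument for injectivity. The paper's map is obviously bijective but needs a short, unwritten check of multiplicativity.
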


\begin{proof}
  Let a split extension as above be given with splitting section
  \(\sigma\colon A\to E\).
  Then \(\tau\colon A\to\Mult(B)\) defined by
  \(\tau\defeq R\circ \sigma\) is a \Star{}homomorphism, where
  \(R\colon E\to \Mult(B)\), \(R(x)b\defeq x\cdot b\), is the
  canonical (restriction) homomorphism obtained from~\(B\) viewed as
  an ideal of~\(E\).
  Conversely, let \(\tau\colon A\to\Mult(B)\) be a
  \Star{}homomorphism.
  Let
  \[
    E \defeq \setgiven{(a,m) \in A \oplus \Mult(B)}{\tau(a)-m \in B}.
  \]
  This is a \(\Cst\)\nb-subalgebra of \(A \oplus \Mult(B)\).
  The map \(B\to E\), \(b\mapsto (0,b)\), identifies~\(B\) with an
  ideal in~\(E\), and the map \(E\to A\), \((a,b)\mapsto a\),
  identifies the quotient of~\(E\) by that ideal with~\(A\).
  The quotient map splits by the map \(A \to E\), \(a\mapsto
  (a,\tau(a))\).
  Thus we get a split extension.
  The homomorphism \(A\to\Mult(B)\) induced by it is the given
  homomorphism~\(\tau\).
  Conversely, let us start with an extension \(B \into E \onto A\)
  with a section \(\sigma\colon A\to E\).
  Define~\(\tau\) as above, and then turn~\(\tau\) back into a split
  extension \(B\into E_\tau \onto A\).
  Then \(E_\tau \to E\), \((a,b) \mapsto \sigma(a) + b\), is an
  isomorphism \(E_\tau \cong E\) that induces identity maps on \(A\)
  and~\(B\) and is also compatible with the natural sections \(A\to
  E\) and \(A\to E_\tau\).
  Thus our two split extensions are equivalent.
\end{proof}

\begin{remark}
  \label{rem:essential-split-ext}
  Notice that the ideal~\(B\) is essential in the split extension \(B
  \into E \onto A\) if and only if the associated \Star{}homomorphism
  \(\tau\colon A \to \Mult(B)\) satisfies \(\tau^{-1}(B) = 0\), that is,
  if the composition \(\eta\defeq q\circ\tau\colon A\to \Q(B)\defeq
  \Mult(B)/B\) is injective, where \(q\colon \Mult(B)\onto \Q(B)\)
  denotes the quotient map.
\end{remark}

\begin{proposition}
  \label{pro:multiplier_hom_to_split_ext}
  Let \(\B\) and~\(\A\) be two Fell bundles over~\(G\).  There is a
  natural bijection between equivalence classes of split Fell-bundle
  extensions
  \begin{equation}
    \label{eq:split_Fell_bundle_ext}
    \begin{tikzcd}
      \B \ar[r, >->, "\iota"] &
      \E \ar[r, ->>, "\pi"] &
      \A \ar[l, bend left, "\sigma"]
    \end{tikzcd}
  \end{equation}
  and multiplier homomorphisms \(\A\to \Mult(\B)\).
\end{proposition}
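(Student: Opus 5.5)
I would mimic the proof of Lemma~\ref{lem:multiplier_morphism_as_split_extension} fibrewise, constructing the two maps between split extensions and multiplier homomorphisms and then checking that they are mutually inverse up to equivalence.

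\emph{From extensions to multiplier homomorphisms.} Given a split extension~\eqref{eq:split_Fell_bundle_ext}, identify~\(\B\) with the Fell ideal \(\iota(\B)\subseteq\E\). For \(a\in\A_g\) and \(h\in G^{\s(g)}\), define \(\mor_g(a)_h(b)\defeq \sigma_g(a)\cdot\iota_h(b)\in\iota(\B_{gh})\). This lands in~\(\B\) because~\(\B\) is a Fell ideal. Its adjoint family is \(b\mapsto\sigma_{g^{-1}}(a^*)\cdot b\). The relation \(b_1^*(\sigma(a)b_2)=(\sigma(a^*)b_1)^*b_2\) is associativity together with \((xy)^*=y^*x^*\) in~\(\E\). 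Multiplicativity and \(*\)-compatibility of~\(\mor\) follow from those of~\(\sigma\) and associativity. Continuity of \((a,b)\mapsto\mor_g(a)_h(b)\) on sections follows from continuity of~\(\sigma\), of~\(\iota\), and of the multiplication in~\(\E\). Equivalent extensions, that is, isomorphisms \(\E\cong\E'\) compatible with \(\iota,\pi,\sigma\), clearly give the same~\(\mor\).

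\emph{From multiplier homomorphisms to extensions.} Given \(\mor\colon\A\to\Mult(\B)\), set \(\E_g\defeq\A_g\oplus\B_g\) as Banach spaces. The intended picture is \(\E_g=\setgiven{(a,m)\in\A_g\oplus\Mult(\B)_g}{\mor_g(a)-m\in\B_g}\), which is isomorphic to \(\A_g\oplus\B_g\) via \((a,b)\mapsto(a,\mor_g(a)+b)\). Define the operations
\[
(a_1,b_1)\cdot(a_2,b_2)\defeq\bigl(a_1a_2,\ a_1\cdot b_2+(a_2^*\cdot b_1^*)^*+b_1b_2\bigr),\qquad (a,b)^*\defeq(a^*,b^*).
\]
Algebraically, \(\E_g\) embeds into \(\A_g\oplus\Mult(\B)_g\) by \((a,b)\mapsto(a,\mor_g(a)+b)\). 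This embedding is compatible with the above operations and with those of Lemma~\ref{lem:multiplier_bundle}. Hence associativity, the involution rules, and the \(\Cst\)-identity with the norm \(\norm{(a,b)}\defeq\max\{\norm{a},\norm{\mor_g(a)+b}\}\) are all inherited from \(\A\oplus\Mult(\B)\). Positivity (condition~\ref{def:Fell_bundle_6}) is checked in the unit fibre \(\E_{u\s(g)}\), which is the \(\Cst\)-algebra \(\FU_x\oplus_{\mor}\FU[B]_x\) of Lemma~\ref{lem:multiplier_morphism_as_split_extension}. There \(x^*x\) is positive, so it equals \(c^*c\) for some~\(c\).

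\emph{Main obstacle: topology.} I would declare the continuous sections of~\(\E\) to be the pairs \((f,\xi)\) with \(f\) a continuous section of~\(\A\) and \(\xi\) a continuous section of~\(\B\), and then prove that this defines an upper semicontinuous Banach bundle with respect to the new norm. The point is the upper semicontinuity of \(g\mapsto\norm{\mor_g(f(g))+\xi(g)}\), which is a sup over multiplier norms. I would write it as
\[
\norm{(f,\xi)(g)}^2=\norm{(f,\xi)(g)^*(f,\xi)(g)}_{\E_{u\s(g)}},
\]
which reduces the question to the unit space. On the unit space, \(\FU_x\oplus_\mor\FU[B]_x\) are the fibres of the \(\Cont_0(G^0)\)-\(\Cst\)-algebra \(\SUF\oplus_{\mor^0}\SUF[B]\) (a central \(\Cont_0(G^0)\)-action is clear). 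Its fibrewise norms are upper semicontinuous by~\cite{Nilsen:Bundles}, once one checks that the fibre at~\(x\) of this algebra is indeed \(\FU_x\oplus_\mor\FU[B]_x\). Continuity of multiplication and involution then follows from the continuity assumption in Definition~\ref{def:multiplier_homomorphism} applied to the formula above, and from the continuity of the operations in \(\A\) and~\(\B\).

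\emph{Completing the bijection.} The maps \(\iota(b)=(0,b)\), \(\pi(a,b)=a\) and \(\sigma(a)=(a,0)\) give a split extension. Its associated multiplier homomorphism is \(\sigma(a)\cdot(0,b)=(0,a\cdot b)\), which recovers~\(\mor\). Conversely, starting from~\eqref{eq:split_Fell_bundle_ext}, the map \((a,b)\mapsto\sigma(a)+\iota(b)\) is a fibrewise isomorphism of Fell bundles compatible with \(\iota\), \(\pi\) and~\(\sigma\). It is continuous by construction, and it is an isomorphism of bundles by Lemma~\ref{lem:subfield_equality}, since it is isometric (both norms are \(\Cst\)-norms on the same \(*\)-algebraic data) and surjective. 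This shows that the two constructions are inverse up to equivalence, and naturality is immediate.
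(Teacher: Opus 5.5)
Your proposal is correct and follows the paper's proof. The paper likewise obtains the multiplier homomorphism as \(b\mapsto\sigma(a)\cdot b\). Conversely, it realises \(\E_g\) as \(\setgiven{(a,m)\in\A_g\oplus\Mult(\B)_g}{\mor_g(a)-m\in\B_g}\), transports the topology from \(\A_g\oplus\B_g\), and uses the Fell ideal \(0\oplus\B\) with splitting \(a\mapsto(a,\mor_g(a))\).

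The only real difference is in the topology. You explicitly verify upper semicontinuity of the new norm by reducing, via \(\norm{x}^2=\norm{x^*x}\), to the \(\Cont_0(G^0)\)-\(\Cst\)\nb-algebra \(\SUF\oplus_{\mor^0}\SUF[B]\), whereas the paper subsumes this under ``routine computations''.
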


\begin{proof}
  First, assume that a split Fell-bundle extension is given.
  Then the continuous family of multiplication maps
  \(\A_g \times \B_h \to \A_{g h}\), \((a,b)\mapsto \sigma(a)\cdot
  b\), with the multiplication in~\(\E\) defines a multiplier
  homomorphism \(\A\to\Mult(\B)\).
  Conversely, let \(\varphi\colon \A\to\Mult(\B)\) be a multiplier
  homomorphism.
  Then we define
  \[
    \E_g \defeq \setgiven{(a,m)\in \A_g \oplus \Mult(\B)_g}
    {\varphi_g(a)-m \in \B_g}.
  \]
  These subspaces are closed in norm and closed under the
  multiplication and involution in \(\A \oplus \Mult(\B)\).
  We give~\(\E\) the topology defined by the Banach space isomorphisms
   \(\E_g \to \A_g \oplus \B_g\), \((a,b)\mapsto (a,\varphi_g(a)-b)\).
   Routine computations show that this makes \(\E \defeq (\E_g)_{g\in
     G}\) a Fell bundle over~\(G\).
   The subspaces \(0 \oplus \B_g\) form a Fell ideal that is naturally
   isomorphic to~\(\B\), and the resulting quotient Fell bundle is
   isomorphic to~\(\A\) by the projection \((a,b)\mapsto a\).
   The resulting Fell-bundle extension \(\B \into \E \onto \A\) splits
   by the Fell-bundle homomorphism \(\A\to\E\), \(a\mapsto
   (a,\varphi_g(a))\).
   Thus a multiplier homomorphism determines a split Fell-bundle
   extension.
   The two constructions above are easily seen to be inverse to each
   other up to a canonical isomorphism of split Fell-bundle extensions.
\end{proof}

Proposition~\ref{pro:multiplier_hom_to_split_ext} implies that a
multiplier homomorphism also produces right multiplication maps that
satisfy all associativity conditions for products with three factors
from \(\A\) or~\(\B\), namely,
\(a\cdot (b_1\cdot b_2) = (a\cdot b_1)\cdot b_2\),
\(b_1\cdot (a\cdot b_2) = (b_1\cdot a)\cdot b_2\),
\(b_1\cdot (b_2\cdot a) = (b_1\cdot b_2)\cdot a\),
\(a_1\cdot (a_2\cdot b) = (a_1\cdot a_2)\cdot b\),
\(a_1\cdot (b\cdot a_2) = (a_1\cdot b)\cdot a_2\),
\(b\cdot (a_1\cdot a_2) = (b\cdot a_1)\cdot a_2\), if
\(a,a_j\in \A\), \(b,b_j\in\B\) are composable.
All this follows by computing in the split extension~\(\E\) defined by
the multiplier homomorphism.
The right multiplication maps \(\B_g \times \A_h \to \B_{g h}\) may be
defined directly by
\[
  b\cdot a\defeq (a^*\cdot b^*)^*
\]
for \(b\in \B_g\), \(a\in \A_h\).

\begin{proposition}
  \label{pro:split_exact}
  Let~\eqref{eq:split_Fell_bundle_ext} be a split Fell-bundle
  extension.  Then there is a commuting diagram of
  \Star{}homomorphisms whose rows are split extensions
  \begin{equation}
    \label{eq:split_extension_functorial}
    \begin{tikzcd}
      \Cst(G,\B) \ar[r, >->, "\iota_*"] \ar[d, ->>] &
      \Cst(G,\E) \ar[r, ->>, "\pi_*"]  \ar[d, ->>]&
      \Cst(G,\A) \ar[l, start anchor=west, end anchor=east,
      shift left, bend left, "\sigma_*"]  \ar[d, ->>]\\
      \Cred(G,\B) \ar[r, >->, "\iota_*"] &
      \Cred(G,\E) \ar[r, ->>, "\pi_*"] &
      \Cred(G,\A). \ar[l, start anchor=west, end anchor=east,
      shift left, bend left, "\sigma_*"]
    \end{tikzcd}
  \end{equation}
\end{proposition}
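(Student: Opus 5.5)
Both rows are obtained by applying the functors of Proposition~\ref{pro:functorial} to the split extension~\eqref{eq:split_Fell_bundle_ext}. Since \(\pi\circ\sigma=\Id_\A\) as Fell-bundle homomorphisms, functoriality gives \(\pi_*\circ\sigma_*=\Id\) on both \(\Cst(G,\A)\) and \(\Cred(G,\A)\). In particular \(\pi_*\) is surjective in both rows; this also follows from Proposition~\ref{pro:functorial} because \(\pi\) is surjective. The vertical maps are the canonical quotient maps, and the squares commute by the naturality statement in Proposition~\ref{pro:functorial}.

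For the top row, \(\iota(\B)\) is a Fell ideal in~\(\E\) with quotient~\(\A\), so Proposition~\ref{pro:full_Cstar_exact} directly gives the extension \(\Cst(G,\B)\into\Cst(G,\E)\onto\Cst(G,\A)\), and \(\sigma_*\) splits it. The vertical map on the left is surjective, so the bottom-left corner is the image of an ideal.

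For the bottom row, injectivity of \(\iota_*\) comes from Proposition~\ref{pro:functorial}, since \(\iota\) is injective. Proposition~\ref{pro:Fell_ideal_inclusion_sections} shows that its image is the ideal of \(\Cred(G,\E)\) generated by \(\Cont_0(G^0,\B|_{G^0})\), and this ideal clearly lies in \(\ker\pi_*\). The main obstacle is the reverse inclusion \(\ker\pi_*\subseteq\iota_*(\Cred(G,\B))\), which may fail for general extensions; here the splitting must be used. I would use the standard split-exactness trick. Let \(x\in\Cred(G,\E)\) with \(\pi_*(x)=0\), and put \(y\defeq x-\sigma_*\pi_*(x)=x\). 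It suffices to show that every \(x\in\Cred(G,\E)\) satisfies \(x-\sigma_*\pi_*(x)\in\iota_*(\Cred(G,\B))\).

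To prove this, I would check it on the dense subspace \(\Contc(G,\E)\). For \(f\in\Contc(G,\E)\), the section \(f-\sigma\circ\pi\circ f\) is continuous, compactly supported and takes values in \(\iota(\B)\), because \(\pi(f(g)-\sigma\pi f(g))=0\). By Lemma~\ref{lem:subbundle_from_continuous_family} it is therefore a section in \(\Contc(G,\B)\). Hence \(\Id-\sigma_*\pi_*\) maps \(\Contc(G,\E)\) into \(\iota_*(\Contc(G,\B))\). By continuity it maps \(\Cred(G,\E)\) into the closure, which equals \(\iota_*(\Cred(G,\B))\) because \(\iota_*\) is isometric (injective) and so has closed range. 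This proves exactness in the middle of the bottom row, and the same argument reproves it for the top row, where \(\iota_*\) is isometric by Proposition~\ref{pro:Fell_ideal_inclusion_sections}.
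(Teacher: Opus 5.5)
Your proposal is correct and follows essentially the paper's route: the top row comes from Proposition~\ref{pro:full_Cstar_exact}, and for the bottom row you combine injectivity of \(\iota_*\) from Proposition~\ref{pro:functorial} with the splitting trick. That trick approximates \(x\in\ker\pi_*\) by elements \(f-\sigma\circ\pi\circ f\in\Contc(G,\B)\), so that \(x\) lies in the closed range of \(\iota_*\); closedness holds for any \Star{}homomorphism, so injectivity is not needed at that step.
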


\begin{proof}
  The diagram~\eqref{eq:split_extension_functorial} exists and
  commutes --~except for \(\sigma_* \circ \pi_* \neq
  \Id_{\Cst(G,\E)}\)~-- because the full and reduced section
  \(\Cst\)\nb-algebras and the canonical quotient map between them are
  functorial.
  By Proposition~\ref{pro:full_Cstar_exact}, the top row is exact and,
  in the bottom row, \(\iota_*\) is injective, \(\pi_*\) is
  surjective, and \(\pi_*\circ \iota_*=0\).
  To prove that the bottom row is exact, it only remains to prove that
  \(\ker \pi_* \subseteq \IM \iota_*\).
  Let \(x\in \ker(\pi)\).
  There is a sequence~\((x_n)_{n\in\N}\) in \(\Contc(G,\E)\) with
  \(\lim x_n = x\).
  Let \(\tilde x_n\defeq x_n - (\sigma\circ\pi)_*(x_n)\).
  Since \(\sigma\pi-\Id_{\A}\) projects onto~\(\I\), this sequence
  belongs to \(\Contc(G,\I) \subseteq \Cred(G,\I)\).
  And \(\lim \tilde x_n = x- (\sigma\circ\pi)_*(x) = x\).
  Thus \(x\in \IM(\iota_*)\) as desired.
\end{proof}

\begin{proposition}
  \label{pro:multiplier_homomorphism}
  Let \(\mor=(\mor_g)\colon \A\to\Mult(\B)\) be a multiplier
  homomorphism between Fell bundles over~\(G\).
  Then there are natural \Star{}homomorphisms \(\Cst(G,\mor)\) and
  \(\Cred(G,\mor)\) that make the following diagram commute:
  \[
    \begin{tikzcd}[column sep=large]
      \Cst(G,\A) \arrow{r}{\Cst(G,\mor)}
      \arrow[d,->>, "\mathrm{canonical}"'] &
      \Mult(\Cst(G,\B))
      \arrow[->]{d}{\mathrm{canonical}}\\
      \Cred(G,\A) \arrow{r}{\Cred(G,\mor)} &
      \Mult(\Cred(G,\B)).
    \end{tikzcd}
  \]
  If \(f\in\Contc(G,\A)\), \(\xi\in\Contc(G,\B)\) and \(g\in G\), then
  \begin{equation}
    \label{eq:multiplier-convolution}
    \bigl(\Cst(G,\mor)(f)\xi\bigr)(g)
    = \bigl(\Cred(G,\mor)(f)\xi\bigr)(g)
    = \int_G f(h)\cdot\xi(h^{-1}g)\,\diff\alpha^{\rg(g)}(h).
  \end{equation}
\end{proposition}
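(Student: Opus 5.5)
The plan is to use Proposition~\ref{pro:multiplier_hom_to_split_ext} to turn the multiplier homomorphism~\(\mor\) into a split Fell-bundle extension \(\B\into\E\onto\A\) with section~\(\sigma\), and then to transport the problem to section \(\Cst\)\nb-algebras using Proposition~\ref{pro:split_exact}. That proposition supplies split extensions \(\Cst(G,\B)\into\Cst(G,\E)\onto\Cst(G,\A)\) and \(\Cred(G,\B)\into\Cred(G,\E)\onto\Cred(G,\A)\), with sections~\(\sigma_*\) and compatible vertical quotient maps. Lemma~\ref{lem:multiplier_morphism_as_split_extension} then turns each row into a \Star{}homomorphism. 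We define \(\Cst(G,\mor)\defeq R\circ\sigma_*\colon\Cst(G,\A)\to\Mult(\Cst(G,\B))\), where~\(R\) is the restriction homomorphism coming from the ideal \(\iota_*\Cst(G,\B)\idealin\Cst(G,\E)\). We define \(\Cred(G,\mor)\) in the same way from the bottom row. Naturality of the construction is inherited from the functoriality results in Proposition~\ref{pro:functorial} and from the naturality of the bijections in Propositions \ref{pro:multiplier_hom_to_split_ext} and~\ref{lem:multiplier_morphism_as_split_extension}.

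Next I will verify the convolution formula~\eqref{eq:multiplier-convolution}. Take \(f\in\Contc(G,\A)\) and \(\xi\in\Contc(G,\B)\). Then \(\sigma_*(f)=\sigma\circ f\in\Contc(G,\E)\) and \(\iota_*(\xi)=\iota\circ\xi\). Multiplication in \(\Contc(G,\E)\) is convolution, so \(\sigma_*(f)*\iota_*(\xi)\) is the section \(g\mapsto\int\sigma(f(h))\cdot\iota(\xi(h^{-1}g))\,\diff\alpha^{\rg(g)}(h)\). By the proof of Proposition~\ref{pro:multiplier_hom_to_split_ext}, \(\sigma(a)\cdot\iota(b)=\iota(a\cdot b)\), where \(a\cdot b=\mor(a)(b)\). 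This product is \(\iota_*\) of the element \(f*\xi\in\Contc(G,\B)\), which is the right-hand side of~\eqref{eq:multiplier-convolution}. The formula therefore holds in \(\Cst(G,\E)\). Applying the quotient map \(\Cst(G,\E)\to\Cred(G,\E)\) gives the same formula in the reduced case. In particular \(\Cst(G,\mor)(f)\) and \(\Cred(G,\mor)(f)\) act on the dense subspace \(\Contc(G,\B)\) by the same convolution.

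Finally, I will check that the square commutes. The ``canonical'' map \(\Mult(\Cst(G,\B))\to\Mult(\Cred(G,\B))\) needs care, because a quotient map does not in general extend to multipliers unless it is surjective. Here \(\Cst(G,\B)\onto\Cred(G,\B)\) is surjective, hence nondegenerate, and so it extends to the multiplier algebras. Both composites send \(f\in\Contc(G,\A)\) to a multiplier of \(\Cred(G,\B)\). On the image of \(\Contc(G,\B)\) this multiplier acts by the convolution \(\xi\mapsto f*\xi\), using the formula above together with the commuting diagram~\eqref{eq:split_extension_functorial}. The image of \(\Contc(G,\B)\) is dense in \(\Cred(G,\B)\), and a multiplier is determined by its action on a dense subset of the algebra. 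The two composites therefore agree on \(\Contc(G,\A)\), and by continuity they agree on \(\Cst(G,\A)\).

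I expect the main obstacle to be the exactness in the reduced row, that is, identifying \(\Cred(G,\B)\) with an ideal of \(\Cred(G,\E)\) so that~\(R\) is defined. Proposition~\ref{pro:split_exact} supplies this identification, and its proof relies on~\(\iota_*\) being injective on reduced algebras by Proposition~\ref{pro:functorial}. Once that is in place, everything else follows from the formula on dense subspaces.
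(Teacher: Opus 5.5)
Your proposal is correct and takes the same route as the paper. Both pass to the split Fell-bundle extension of Proposition~\ref{pro:multiplier_hom_to_split_ext}, set \(\Cst(G,\mor)\defeq R\circ\sigma_*\) and \(\Cred(G,\mor)\defeq R_\red\circ\sigma_*\), and read off the convolution formula from \(\sigma(a)\cdot\iota(b)=\iota(a\cdot b)\); your density argument for commutativity of the square, using that the surjection \(\Cst(G,\B)\onto\Cred(G,\B)\) extends to multipliers, spells out what the paper states in one line.
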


\begin{proof}
  By Proposition~\ref{pro:multiplier_hom_to_split_ext}, \(\mor\)
  determines an extension of Fell bundles \(\B\hookrightarrow
  \E\twoheadrightarrow\A\) with a section \(\sigma\colon \A\to\E\).
  Since the full and reduced section algebras are functorial for
  morphisms of Fell bundles by Proposition~\ref{pro:split_exact}, there
  are \Star{}homomorphisms
  \[
    \Cst(G,\sigma)\colon \Cst(G,\A)\to\Cst(G,\E),
    \qquad
    \Cred(G,\sigma)\colon \Cred(G,\A)\to\Cred(G,\E).
  \]
  Since~\(\B\) embeds as a Fell ideal in~\(\E\), we identify
  \(\Cst(G,\B)\) and \(\Cred(G,\B)\) with ideals of the corresponding
  section algebras of~\(\E\).
  The canonical multiplier actions
  \[
    R\colon \Cst(G,\E)\to\Mult(\Cst(G,\B)),
    \qquad
    R_\red\colon \Cred(G,\E)\to\Mult(\Cred(G,\B)),
  \]
  are defined on compactly supported sections by left convolution:
  \[
    (R(\eta)\,\xi)(g)
    =\int_G \eta(h)\cdot\xi(h^{-1}g)\,\diff\alpha^{\rg(g)}(h),
    \qquad
    \eta\in C_c(G,\E),\ \xi\in C_c(G,\B).
  \]

  We define
  \[
    \Cst(G,\mor)\defeq R\circ\Cst(G,\sigma),
    \qquad
    \Cred(G,\mor)\defeq R_\red\circ\Cred(G,\sigma).
  \]
  These maps make the above square commute because both rows come from
  the same split extension.

  Let \(f\in\Contc(G,\A)\) and \(\xi\in\Contc(G,\B)\).
  Using \(\Cst(G,\sigma)(f)=\sigma\circ f\in C_c(G,\E)\) we compute
  \begin{align*}
    \bigl(\Cst(G,\mor)(f)\xi\bigr)(g)
    &= (R(\sigma\circ f)\xi)(g)\\
    &= \int_G (\sigma(f(h)))\cdot\xi(h^{-1}g)\,\diff\alpha^{\rg(g)}(h)\\
    &= \int_G \mor_h(f(h))\cdot\xi(h^{-1}g)\,\diff\alpha^{\rg(g)}(h)\\
    &= \int_G f(h)\cdot\xi(h^{-1}g)\,\diff\alpha^{\rg(g)}(h),
  \end{align*}
  since multiplication in~\(\E\) restricts to \((a,\mor(a))\cdot
  b=\mor(a)\cdot b\) on the ideal~\(\B\).
  This gives \eqref{eq:multiplier-convolution}.
  The reduced case is identical.
\end{proof}

Remark~\ref{rem:essential-split-ext} makes us wonder how essential
extensions behave under full and reduced crossed products.

\begin{proposition}
  \label{pro:Cred_essential_ideal}
  Let \(\FU[I]\idealin\FU\) be an invariant ideal that is essential.
  Then the induced ideal \(\Cred(G,\I)\) in \(\Cred(G,\A)\)
  is essential as well.
\end{proposition}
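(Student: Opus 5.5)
The plan is to work in the regular representation and use Remark~\ref{rem:induce_reps} together with the fibrewise structure of \(\Lt^2(G,\A,\s,\tilde\alpha)\). Write \(\SUF[I]=\Cont_0(G^0,\FU[I])\) for the invariant essential ideal and \(\I\) for the corresponding Fell ideal, so that \(\I_g=\FU[I]_{\rg(g)}\A_g=\A_g\FU[I]_{\s(g)}\) by Corollary~\ref{cor:Fell_ideal_family_ideals}. By Proposition~\ref{pro:Fell_ideal_inclusion_sections}, \(\Cred(G,\I)\) is the ideal generated by \(\SUF[I]\) in \(\Cred(G,\A)\). Essentialness of an ideal \(J\) in a \(\Cst\)-algebra \(C\) is equivalent to: \(x\in C\) and \(xJ=0\) imply \(x=0\). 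So I fix \(x\in\Cred(G,\A)\) with \(x\cdot\Cred(G,\I)=0\) and aim to show \(\Lambda(x)=0\) on \(\Lt^2(G,\A,\s,\tilde\alpha)\).

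The key observation: since \(\Contc(G,\I)\supseteq \Contc(G,\A)\ddagger\SUF[I]\), and convolution satisfies \(x*(f\ddagger a)=(x*f)\ddagger a\), the hypothesis gives \(\Lambda(x)\xi\cdot a=0\) for all \(\xi\) in the dense subspace \(\Contc(G,\A)\cdot \SUF[I]\) of the submodule \(\Lt^2(G,\A,\s,\tilde\alpha)\cdot\SUF[I]\), hence \(\Lambda(x)\) vanishes on the closed submodule \(\Lt^2\cdot\SUF[I]\). (Indeed \(\Lambda(x)(\xi a)=\Lambda(x*\xi')\ldots\); concretely, \(\Lambda(x)\) is right \(\SUF\)-linear and \(\Lambda(x)\Lambda(y)=0\) for \(y\in\Contc(G,\I)\) gives \(\Lambda(x)\) kills the closed span of \(\Lambda(\Contc(G,\I))\Lt^2\), which contains \(\Lt^2\cdot\SUF[I]\) since \(\Lambda(\Contc(G,\A))\Lt^2\) is dense and \(\Lambda(f)(\xi a)=(\Lambda(f)\xi)a\) with \(f*\xi a\) realisable as \(f'*\xi''\) with \(f'\in\Contc(G,\I)\) by factoring through the unit ideal; I would verify this using \(\Contc(G,\I)=\SUF[I]\dagger\Contc(G,\A)\), invariance, and an approximate unit argument.)

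Then it remains to show: an adjointable operator \(T\) on the Hilbert \(\SUF\)-module \(\E=\Lt^2(G,\A,\s,\tilde\alpha)\) with \(T|_{\E\SUF[I]}=0\) vanishes. For \(\xi\in\E\) and \(a\in\SUF[I]\) we get \(T(\xi)a=T(\xi a)=0\), so \(\braket{T\xi}{T\xi}\cdot a=0\) for all \(a\in\SUF[I]\). Since \(\SUF[I]\) is essential in \(\SUF\), the element \(\braket{T\xi}{T\xi}\in\SUF\) annihilating \(\SUF[I]\) must be zero, so \(T\xi=0\). Hence \(\Lambda(x)=0\) and \(x=0\).

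The main obstacle is the middle step: showing that the closed span of \(\Lambda(\Contc(G,\I))\E\) contains \(\E\cdot\SUF[I]\), i.e.\ transferring right multiplication by \(\SUF[I]\) into left convolution by \(\Contc(G,\I)\). I would handle it by noting \(\Lambda(f)(\xi\cdot a)=(f*\xi)\cdot a\) and \((f*\xi)(g)a(\s(g))\in \A_g\FU[I]_{\s(g)}=\I_g\); then write elements of \(\E\SUF[I]\) as limits of \(\Lambda(e_n)\eta\) with \(e_n\) an approximate unit built from \(\Contc(G,\I)\) (using that \(\Lambda\) restricted to \(\Contc(G,\I)\) acts nondegenerately on \(\Lt^2(G,\I,\s,\tilde\alpha)=\E\SUF[I]\), by Lemma~\ref{lem:Lf_nondegenerate} applied to the regular representation of \(\I\)).
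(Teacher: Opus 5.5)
Your proof is correct, but it is organised differently from the paper's.

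\textbf{The paper's route.} The paper chooses a faithful nondegenerate representation \(\varrho\) of \(\SUF[I]\) on a Hilbert space \(\Hils\). It uses essentialness once, to see that the extension \(\bar\varrho\) to \(\SUF\) is faithful. Remark~\ref{rem:induce_reps} and the identification \(\Lt^2(G,\A,\s,\tilde\alpha)\otimes_{\SUF}\Hils\cong\Lt^2(G,\I,\s,\tilde\alpha)\otimes_{\SUF[I]}\Hils\) then give a single Hilbert space on which \(\Cred(G,\A)\) acts faithfully and \(\Cred(G,\I)\) acts nondegenerately. Essentialness of \(\Cred(G,\I)\) follows from the elementary Hilbert-space criterion.

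\textbf{Your route.} You stay on the Hilbert \(\SUF\)-module \(\E=\Lt^2(G,\A,\s,\tilde\alpha)\). There \(\Cred(G,\A)\) acts faithfully by definition, but \(\Cred(G,\I)\) is nondegenerate only on the submodule \(\E\SUF[I]=\Lt^2(G,\I,\s,\tilde\alpha)\). Essentialness of \(\SUF[I]\) enters through \(\braket{T\xi}{T\xi}\cdot a=\braket{T\xi}{T(\xi a)}=0\) for \(a\in\SUF[I]\).

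\textbf{Comparison.} These are two faces of the same fact. Tensoring with a representation on which \(\SUF[I]\) acts nondegenerately erases the difference between \(\E\) and \(\E\SUF[I]\), and your lemma on adjointable operators vanishing on \(\E\SUF[I]\) encodes exactly this. Your version needs no auxiliary Hilbert space representation and no faithfulness-of-induction statement. The paper's version reuses Remark~\ref{rem:induce_reps} and reduces to a familiar criterion.

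\textbf{Tidying the middle step.} Use the argument from your last paragraph and drop the first attempt in your second paragraph.
\begin{itemize}
\item Deducing \(\Lambda(x)(\xi\cdot a)=0\) from \(x*(f\ddagger a)=(x*f)\ddagger a\) only makes sense for \(x\in\Contc(G,\A)\). For general \(x\in\Cred(G,\A)\) you would first need the identity on \(\Contc(G,\A)\) to extend to a map \(\Cred(G,\A)\to\E\), which you have not established.
\item The clean argument: for \(y\in\Contc(G,\I)\), \(xy=0\) gives \(\Lambda(x)\Lambda(y)=0\). The restriction of \(\Lambda(y)\) to \(\Lt^2(G,\I,\s,\tilde\alpha)\) is the regular representation of \(\I\), which is nondegenerate by Lemma~\ref{lem:Lf_nondegenerate} and Proposition~\ref{lem:integrated_regular} applied to \(\I\). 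Hence \(\Lambda(x)\) vanishes on \(\E\SUF[I]\), and no approximate unit is needed.
\item State why \(\E\SUF[I]=\Lt^2(G,\I,\s,\tilde\alpha)\). It follows from \(\I_g=\A_g\FU[I]_{\s(g)}\) and Lemma~\ref{lem:subfield_equality}, as in the proof of Proposition~\ref{pro:Fell_ideal_inclusion_sections}.
\end{itemize}
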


\begin{proof}
  Let \(\varrho\colon\SUF[I]\hookrightarrow\Bound(\Hils)\) be a faithful
  nondegenerate representation.
  Since~\(\SUF[I]\) is essential in~\(\SUF\),
  \(\varrho\) extends uniquely to a faithful nondegenerate representation
  \(\bar\varrho\colon \SUF\to\Bound(\Hils)\).

  Inducing \(\varrho\) yields a faithful representation of \(\Cred(G,\I)\) on
  \(\Lt^2(G,\I,\s,\tilde\alpha)\otimes_{\SUF[I]}\Hils\) by
  Remark~\ref{rem:induce_reps}.
  Similarly, inducing~\(\bar\varrho\) yields a faithful representation of
  \(\Cred(G,\A)\) on
  \(\Lt^2(G,\A,\s,\tilde\alpha)\otimes_{\SUF}\Hils\).

  Because \(\Hils=\SUF[I]\cdot\Hils\) and
  $\Lt^2(G,\A,\s,\tilde\alpha)\cdot\SUF[I]
  =\Lt^2(G,\I,\s,\tilde\alpha)$,
  we may identify the two induced Hilbert spaces:
  \[
    \Lt^2(G,\A,\s,\tilde\alpha)\otimes_{\SUF}\Hils
    \cong
    \Lt^2(G,\I,\s,\tilde\alpha)\otimes_{\SUF[I]}\Hils.
  \]
  Thus the faithful representation of \(\Cred(G,\A)\) restricts to a
  faithful nondegenerate representation of \(\Cred(G,\I)\)
  on the same Hilbert space, proving that
  \(\Cred(G,\I)\) is an essential ideal in \(\Cred(G,\A)\).
\end{proof}

\begin{remark}
  The analogue of Proposition~\ref{pro:Cred_essential_ideal} fails for full
  section algebras.
  This already happens for actions of groups on \(\Cst\)\nb-algebras.
  Let~\(G\) be a nonamenable locally compact group and let
  \(A=\Cont(G^+)\), the algebra of continuous functions on the one-point
  compactification \(G^+=G\sqcup\{\infty\}\), with the natural action of~\(G\)
  by translation on~\(G\) and fixing~\(\infty\).
  Let \(I=\Cont_0(G)\idealin A\), which is a \(G\)\nb-invariant essential ideal.
  Then \(I\rtimes_{\max}G = I\rtimes_{\red}G\),
  whereas \(A\rtimes_{\max}G\neq A\rtimes_{\red}G\)
  because~\(G\) is nonamenable.
  If \(I\rtimes_{\max}G\) were essential in \(A\rtimes_{\max}G\), the canonical
  map
  \[
    A\rtimes_{\max}G\to \Mult(I\rtimes_{\max}G)
  \]
  would be injective; but since
  \(I\rtimes_{\max}G=I\rtimes_{\red}G\), this would force
  \(A\rtimes_{\max}G=A\rtimes_{\red}G\), a contradiction.
  Thus full crossed products do not preserve essential ideals.
  This cannot get better for full section algebras of Fell bundles.
\end{remark}

Let us now study when multiplier morphisms induce faithful
\Star{}homomorphisms between Fell section \(\Cst\)\nb-algebras.

\begin{definition}
  A multiplier homomorphism \(\mor\colon \A \to\Mult(\B)\) is
  called \emph{faithful} if \(\mor_g\colon \A_g \to \Mult(\B)_g\)
  is isometric for all \(g\in G\).
\end{definition}

\begin{lemma}
  \label{lem:faithful_multiplier_morphism}
  A multiplier homomorphism \(\mor\colon \A \to\Mult(\B)\) is
  faithful if and only if the \Star{}homomorphisms
  \(\mor_{u(x)}\colon \FU_x \to \Mult(\FU[B]_x)\) are injective
  for all \(x\in G^0\).
\end{lemma}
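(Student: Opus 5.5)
The plan mirrors Remark~\ref{rem:tau_injective}, which proves the analogous statement for ordinary Fell-bundle homomorphisms: everything is reduced to the unit fibres through the \(\Cst\)\nb-identity. The tools are Lemma~\ref{lem:multiplier_bundle} (the \(\Cst\)\nb-identity in \(\Mult(\B)\)) and Remark~\ref{rem:multiplier_saturated} (the identification \(\Mult(\B)_{u(x)}\cong\Mult(\FU[B]_x)\)).

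One direction is immediate. If every~\(\mor_g\) is isometric, then in particular \(\mor_{u(x)}\colon \FU_x\to\Mult(\B)_{u(x)}\cong\Mult(\FU[B]_x)\) is isometric, hence injective.

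For the converse, assume each \(\mor_{u(x)}\) is an injective \Star{}homomorphism between \(\Cst\)\nb-algebras. Then it is isometric. To see that \(\Mult(\B)_{u(x)}\) carries the \(\Cst\)\nb-norm of \(\Mult(\FU[B]_x)\), use that the norm of Definition~\ref{def:multipliers_structure} is \(\norm{m}=\norm{m_{u(x)}}\), the operator norm of \(m_{u(x)}\) on~\(\FU[B]_x\). This is exactly the multiplier norm, and injective \Star{}homomorphisms between \(\Cst\)\nb-algebras are isometric.

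Now let \(g\in G\) and \(a\in\A_g\). The map~\(\mor\) preserves products and involutions, so \(\mor_g(a)^*\mor_g(a)=\mor_{u\s(g)}(a^*a)\). Applying the \(\Cst\)\nb-identity in \(\Mult(\B)\) from Lemma~\ref{lem:multiplier_bundle}, then isometry on the unit fibre, then Definition~\ref{def:Fell_bundle}\ref{def:Fell_bundle_5} in~\(\A\), gives
\[
  \norm{\mor_g(a)}^2=\norm{\mor_{u\s(g)}(a^*a)}=\norm{a^*a}=\norm{a}^2 .
\]
Hence \(\mor_g\) is isometric.

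The only point needing care is the identification of \(\Mult(\B)_{u(x)}\) with \(\Mult(\FU[B]_x)\) as \(\Cst\)\nb-algebras, including the norm. Remark~\ref{rem:multiplier_saturated} supplies the algebraic identification, and the norm agrees because Lemma~\ref{lem:multiplier_g} gives \(\norm{m_h}\le\norm{m_{u\s(h)}}\) for all~\(h\). This is also what guarantees that \(\mor_{u(x)}\) is a \Star{}homomorphism into a genuine \(\Cst\)\nb-algebra, so the automatic isometry of injective \Star{}homomorphisms applies.
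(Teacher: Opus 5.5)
Your proof is correct and follows essentially the same argument as the paper. The forward direction is immediate. For the converse, injective \Star{}homomorphisms on the unit fibres are isometric, and then \(\mor_g(a)^*\mor_g(a)=\mor_{u\s(g)}(a^*a)\) combined with the \(\Cst\)\nb-identity in \(\Mult(\B)_g\) shows that every~\(\mor_g\) is isometric. Your extra remark that the norm on \(\Mult(\B)_{u(x)}\) agrees with the multiplier norm of \(\Mult(\FU[B]_x)\) just spells out what the paper takes from Remark~\ref{rem:multiplier_saturated}.
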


\begin{proof}
  The isomorphism \(\Mult(\B)_{u(x)} \cong \Mult(\FU[B]_x)\) is
  observed in Remark~\ref{rem:multiplier_saturated}.
  It is trivial that~\(\mor_{u(x)}\) for \(x\in G^0\) must be injective
  if~\(\mor\) is faithful.
  Conversely, assume that~\(\mor_{u(x)}\) is injective for \(x\in
  G^0\).
  Since~\(\mor_{u(x)}\) are \Star{}homomorphisms, this makes them
  isometric.
  Then the \(\Cst\)\nb-condition \(\norm{a^* a} = \norm{a}^2\) for
  \(a\in\Mult(\B)_g\) implies that~\(\mor_g\) is isometric as well.
\end{proof}

\begin{proposition}
  \label{pro:faithful-multiplier-morphism}
  If a multiplier homomorphism from \(\A\) to~\(\B\) is injective,
  then so is the induced \Star{}homomorphism \(\Cred(G,\A) \to
  \Mult(\Cred(G,\B))\).
\end{proposition}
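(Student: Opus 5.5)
Let \(\mor\colon\A\to\Mult(\B)\) be injective, that is, faithful; by Lemma~\ref{lem:faithful_multiplier_morphism} this means that each \(\mor_{u(x)}\colon\FU_x\to\Mult(\FU[B]_x)\) is injective. The plan is to reduce to Remark~\ref{rem:induce_reps}. That remark says that inducing a faithful family of representations of~\(\SUF\) yields a faithful representation of \(\Cred(G,\A)\).

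First fix a faithful nondegenerate representation \(\varrho\colon\SUF[B]\to\Bound(\Hils)\). By Remark~\ref{rem:induce_reps}, the induced representation \(\tilde\varrho\) of \(\Cred(G,\B)\) on \(\Lt^2(G,\B,\s,\tilde\alpha)\otimes_{\SUF[B]}\Hils\) is faithful and nondegenerate. Therefore it extends to a faithful representation of \(\Mult(\Cred(G,\B))\) on the same space. Composing with \(\Cred(G,\mor)\) from Proposition~\ref{pro:multiplier_homomorphism}, it suffices to show that \(\tilde\varrho\circ\Cred(G,\mor)\) is faithful on \(\Cred(G,\A)\).

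To do this, identify \(\tilde\varrho\circ\Cred(G,\mor)\), or at least a subrepresentation of it, with an induced representation \(\Ind(\varrho')\) for a suitable representation~\(\varrho'\) of~\(\SUF\). The natural candidate is obtained as follows. Let~\(\mor^0\) be the restriction of~\(\mor\) to units, and let \(\bar\varrho\) be the extension of~\(\varrho\) to multipliers. Then
\[
  \varrho'\defeq\bar\varrho\circ\mor^0\colon\SUF\to\Bound(\Hils),
\]
restricted to its essential subspace \(\Hils'\defeq\varrho'(\SUF)\Hils\). Since~\(\bar\varrho\) is faithful on multipliers and each \(\mor_{u(x)}\) is injective, \(\varrho'\) is faithful; this still needs a pointwise argument over~\(G^0\) using the \(\Cont_0(G^0)\)-linearity of~\(\mor^0\). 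Then define
\[
  W\colon \Lt^2(G,\A,\s,\tilde\alpha)\otimes_{\SUF,\varrho'}\Hils'\to\Lt^2(G,\B,\s,\tilde\alpha)\otimes_{\SUF[B],\varrho}\Hils,
\]
on elementary tensors with \(f\in\Contc(G,\A)\), \(\xi\in\Contc(G,\B)\), \(\eta\in\Hils\) by
\[
  f\otimes\varrho(\xi|_{G^0})\eta\mapsto (f\cdot\xi|)\otimes\eta,
\]
more precisely \(W(f\otimes\bar\varrho(b)\eta)\defeq (f\cdot b)\otimes\eta\) for \(b\in\SUF[B]\). Here \(f\cdot b\) is the section \(g\mapsto f(g)\cdot b(\s(g))\), which lies in~\(\B_g\) by the right action of~\(\B\) on~\(\A\) in the split extension of Proposition~\ref{pro:multiplier_hom_to_split_ext}. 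To see that~\(W\) is isometric, compute \((f_1\cdot b_1)^*(f_2\cdot b_2)=b_1^*\,\mor(f_1^*f_2)\,b_2\) fibrewise and integrate over~\(\tilde\alpha_x\). Then check, by the convolution formula~\eqref{eq:multiplier-convolution}, that
\[
  W\circ\Ind\varrho'(f)=\tilde\varrho(\Cred(G,\mor)(f))\circ W.
\]
This exhibits \(\Ind\varrho'\) as a subrepresentation of \(\tilde\varrho\circ\Cred(G,\mor)\). Since \(\Ind\varrho'\) is faithful on \(\Cred(G,\A)\) by Remark~\ref{rem:induce_reps}, the injectivity claim follows.

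The main obstacle is making~\(W\) well defined and isometric when~\(\mor\) is degenerate and~\(\A\) is unsaturated. One has to make sure that the products \(f(g)\cdot b(\s(g))\) assemble into continuous sections of~\(\B\), and that the inner product identity uses only the associativity relations listed after Proposition~\ref{pro:multiplier_hom_to_split_ext}. If this direct approach becomes messy, a fallback is to work inside the split extension \(\B\into\E\onto\A\). There \(\Cred(G,\sigma)\) is injective by Proposition~\ref{pro:functorial}, because~\(\sigma\) is an injective Fell-bundle homomorphism. The claim then reduces to showing that the kernel of \(\Cred(G,\E)\to\Mult(\Cred(G,\B))\) meets \(\sigma_*(\Cred(G,\A))\) trivially. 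That in turn would follow from the same induced-representation computation applied to a faithful representation of \(\SUF[B]\), using that faithfulness of~\(\mor\) makes \(\FU[B]_x\) detect \(\FU_x\) inside~\(\E\) fibrewise.
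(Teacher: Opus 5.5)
Your proposal is correct and follows essentially the same route as the paper. You compose a faithful nondegenerate representation of \(\SUF[B]\), extended to multipliers, with the unit restriction of \(\mor\) to get a faithful representation of \(\SUF\), induce both, and use the isometry \(f\otimes\varrho(b)\eta\mapsto (f\cdot b)\otimes\eta\) (the paper's embedding \(\iota\)) to intertwine \(\Ind\varrho'\) with the induced representation of \(\Cred(G,\B)\) composed with \(\Cred(G,\mor)\). Your conclusion via a subrepresentation is, if anything, slightly more careful than the paper's phrasing, and the split-extension fallback is not needed.
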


\begin{proof}
  We adapt the proof from the case of ordinary homomorphisms as in
  Proposition~\ref{pro:functorial}.
  The injective multiplier homomorphism from~\(\A\) to~\(\B\)
  restricts to an injective multiplier homomorphism \(\psi\colon
  \SUF\to \Mult(\SUF[B])\).
  Let \(\rho\colon \SUF[B]\to \Bound(\Hils)\) be a faithful
  nondegenerate representation.
  This extends to a faithful representation~\(\bar\rho\) of the
  multiplier algebra of~\(\SUF[B]\) and hence also gives a faithful
  representation \(\pi\defeq \bar\rho\circ \psi\colon \SUF\to
  \Bound(\Hils)\).
  These induce faithful representations \(\Ind(\pi)\colon
  \Cred(G,\A)\to
  \Bound(\Lt^2(G,\A,\s,\tilde\alpha)\otimes_\pi\Hils)\) and
  \(\Ind(\rho)\colon \Cred(G,\B)\to
  \Bound(\Lt^2(G,\B,\s,\tilde\alpha)\otimes_\rho\Hils)\) (see
  Remark~\ref{rem:induce_reps}).
  As in the proof of Proposition~\ref{pro:functorial}, one checks that
  there is a canonical embedding of Hilbert spaces
  \[
    \iota\colon \Lt^2(G,\A,\s,\tilde\alpha)\otimes_\pi\Hils\injto
    \Lt^2(G,\B,\s,\tilde\alpha)\otimes_\rho\Hils,
  \]
  which is determined by
  \(\iota(\xi\otimes\rho(b)v)\defeq \Psi(\xi)\cdot b \otimes v\) for
  all \(\xi\in \Contc(G,\A)\), \(b\in \SUF[B]\) and \(v\in \Hils\).
  Here \(\Psi(\xi)\cdot b\) denotes the section in \(\Contc(G,\B)\)
  given by
  \[
    (\Psi(\xi)\cdot b)(g)\defeq \psi(\xi(g))\cdot b(\s(g)).
  \]
  We note in passing that~\(\Psi\) may also be viewed as a multiplier
  morphism of Hilbert modules:
  \(\Psi\colon \Lt^2(G,\A,\s,\tilde\alpha)\to
  \Mult(\Lt^2(G,\B,\s,\tilde\alpha))\defeq
  \Bound(B,\Lt^2(G,\B,\s,\tilde\alpha))\).
  This is compatible with \(\psi\colon \SUF\to \Mult(\SUF[B])\); in
  particular, \(\Psi\) is faithful, even isometric, because~\(\psi\) is.
  Now we get a canonical \(\Cst\)\nb-embedding
  \[
    \Bound(\Lt^2(G,\A,\s,\tilde\alpha)\otimes_\pi\Hils)\to
    \Bound(\Lt^2(G,\B,\s,\tilde\alpha)\otimes_\rho\Hils),\quad
    T\mapsto \tilde T\defeq \iota\circ T\circ\iota^*.
  \]
  This embedding sends \(\Ind(\pi)\) to \(\Ind(\rho)\)
  composed with the multiplier homomorphism
  \(\Cred(G,\A)\to \Mult(\Cred(G,\B))\).
  It follows that this is faithful as desired.
\end{proof}

The result above does not extend to induced \Star{}homomorphisms on
full section \(\Cst\)\nb-algebras; it already fails for ordinary morphisms
\(\A \to \B\), even when \(\A\) and~\(\B\) are Fell bundles arising
from group actions by automorphisms.
The issue is that a group~\(G\) is amenable if and only
if \(\Cst(G) = \Cred(G)\), while it is possible for a non-amenable
group to act amenably on a (say, unital) \(\Cst\)\nb-algebra~\(A\) (see
\cite{Anantharaman-Delaroche:Systemes}).
In this case, the obvious homomorphism \(\C \to A\), \(\lambda \mapsto
\lambda \cdot 1\), which induces a morphism of Fell bundles from the
trivial action of~\(G\) on~\(\C\) to the action of~\(G\) on~\(A\).
On the level of reduced crossed products, the induced map
\(\Cred(G) \to A \rtimes_\red G\) is injective.
However, since~\(G\) is not amenable, the canonical map
\(\Cst(G) \to A \rtimes G = A \rtimes_\red G\) fails to be
injective.
This shows that the full crossed product functor \(\A \mapsto \Cst(G,
\A)\) does not, in general, preserve injectivity of morphisms of Fell
bundles.

\subsection{Approximate units}
\label{sec:approx-units}

Approximate units for convolution \Star{}algebras associated to
groupoids play a fundamental rôle in questions of nondegeneracy,
representation theory, and functoriality.
We are going to carry the approximate units constructed in
Muhly--Renault--Williams~\cite{Muhly-Renault-Williams:Equivalence}
over to the general Fell bundle case.
For saturated and separable Fell bundles, the existence of such
approximate units was established by Muhly and Williams in
\cite{Muhly-Williams:Equivalence.FellBundles}*{Proposition~5.1}.
Extending this construction to arbitrary Fell bundles is useful, for
instance, to prove that multiplier morphisms are already nondegenerate
at the \(\Contc\)\nb-level or to apply the Cohen--Hewitt Factorisation
Theorem to representations of \(L^1_I(\A)\).
The existence of a two-sided approximate unit in \(\Contc(G)\) is asserted
in \cite{Muhly-Renault-Williams:Equivalence}*{Corollary~2.11}, but
only under the standing assumption that the unit space~\(G^0\) be
paracompact.
A closer inspection of the proof shows, however, that this assumption
is redundant:

\begin{lemma}
  \label{lem:MRW-approx-unit-CcG}
  Let~\(G\) be a Hausdorff, locally compact groupoid with Haar system.
  Then there is an \(I\)\nb-norm bounded, positive, two-sided approximate
  unit~\((e_i)_i\) in~\(\Contc(G)\), with respect to the inductive limit topology,
  such that each~\(e_i\) is of the form
  \[
    e_i = \sum_{k=1}^{n_i} \varphi_{k,i} * \varphi_{k,i}^*
  \]
  for finitely many \(\varphi_{k,i} \in \Contc(G)\).
\end{lemma}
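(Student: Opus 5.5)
I will follow the Muhly--Renault--Williams construction and check at each step that paracompactness of~\(G^0\) is never used; it only served to produce global partitions of unity. The index set is the directed set of pairs \((K,V)\), where \(K\subseteq G\) is compact and \(V\) is an open neighbourhood of~\(G^0\) in~\(G\), ordered by \(K\) increasing and \(V\) decreasing. For each such pair I construct \(e_{K,V}\) supported in a prescribed compact subset of~\(V\), with \(\norm{e_{K,V}}_I\le 4\) say. The target property is that \(e_{K,V}*f\) and \(f*e_{K,V}\) approximate~\(f\) uniformly with supports in a fixed compact set, for every \(f\in\Contc(G)\) supported in~\(K\); this is precisely convergence in the inductive limit topology.

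\textbf{Step 1: local construction.} Let \(L\defeq \rg(K)\cup\s(K)\subseteq G^0\), which is compact. Cover~\(L\) by finitely many open sets \(W_1,\dotsc,W_n\subseteq G^0\). Each \(W_k\) is chosen so small that there is an open \(V_k\subseteq V\) with \(W_k\subseteq V_k\), \(V_k\) relatively compact and \(V_k^{-1}V_k\subseteq V\). Choose a partition of unity \(\psi_1,\dotsc,\psi_n\in\Contc(G^0)\) on~\(L\) subordinate to the cover \((W_k)\). This uses only compactness of~\(L\) and local compactness, not paracompactness of~\(G^0\). Pick \(0\le h_k\in\Contc(V_k)\) with \(\alpha(h_k^2)>0\) on \(\supp\psi_k\). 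Then define
\[
  \varphi_k(g)\defeq \psi_k(\rg(g))^{1/2}\,h_k(g)\,\alpha(h_k^2)(\rg(g))^{-1/2},
\]
cut off suitably, and set \(e=\sum_k \varphi_k*\varphi_k^*\). Then \(e\ge0\) and \(e\) has the required form. Its support lies in \(\bigcup_k V_kV_k^{-1}\subseteq V\) after possibly shrinking.

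\textbf{Step 2: estimates.} By construction \(\alpha\bigl(\sum_k \abs{\varphi_k}^2\bigr)=\sum_k\psi_k\le1\), with equality on~\(L\). The inequality \(\norm{\varphi*\varphi^*}_I\le\norm{\varphi}_I^2\) and a computation as in the MRW paper bound the \(I\)\nb-norm of~\(e\) uniformly. The approximation comes from uniform continuity of \(f\in\Contc(G)\) along the units. For \(g\in K\) we have \(e*f(g)-f(g)=\int e(h)\bigl(f(h^{-1}g)-f(g)\bigr)\,\diff\alpha(h)\), up to the correction term \(1-\int e\,\diff\alpha^{\rg(g)}=0\) on~\(L\). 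This is small once \(V\) is small, because \(f(h^{-1}g)\approx f(g)\) uniformly for \(h\) in a small neighbourhood of~\(G^0\); this last fact follows from compactness of~\(K\). The term \(f*e\) is handled using \(e=e^*\) and a symmetric argument with~\(\tilde\alpha\).

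\textbf{Main obstacle.} I expect the delicate point to be the normalisation \(\int e\,\diff\alpha^x\approx1\) simultaneously with self-adjointness and the two-sided estimate. Near the boundary of~\(L\) the partition of unity does not sum to~\(1\), and I must ensure that this does not matter for \(f\) supported in~\(K\). I will handle this by taking \(L\) to contain a compact neighbourhood of \(\rg(K)\cup\s(K)\), and by verifying that only the values of the \(\psi_k\) on \(\rg(\supp f)\) enter the estimates. This is exactly the place where MRW invoke paracompactness to obtain global functions, and a finite cover of a compact set suffices instead.
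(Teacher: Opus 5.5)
\textbf{There is a genuine gap: your \(\varphi_k\) are normalised in \(L^2\), and Step~2 needs an \(L^1\)-type normalisation.} Your overall plan is the paper's: run the Muhly--Renault--Williams construction with a finite partition of unity on a compact subset of \(G^0\) instead of a global one. But for the functions you actually write down, Step~2 fails.

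The identity \(\alpha\bigl(\sum_k\abs{\varphi_k}^2\bigr)=\sum_k\psi_k\) only says that \(e=1\) on \(u(L)\), because \((\varphi*\varphi^*)(u(x))=\int_{G^x}\abs{\varphi}^2\,\diff\alpha^x\). What enters \(e*f-f\) is the fibre integral \(\alpha(e)\). By left invariance of~\(\alpha\),
\[
  \alpha(\varphi*\varphi^*)(x)
  = \int_{G^x} \varphi(h)\,\overline{\tilde\alpha(\varphi)(\s(h))}\,\diff\alpha^x(h),
\]
which is of \(L^1\)\nb-type in~\(\varphi\). With your \(L^2\)\nb-normalisation this tends to~\(0\) as the supports shrink. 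The two normalisations agree, for example, for étale~\(G\) with \(V_k\subseteq G^0\), but not in general.

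Already for \(G=\mathbb{R}\), take \(\varphi=(2\varepsilon)^{-1/2}\chi_{[-\varepsilon,\varepsilon]}\), or a continuous bump with \(\int\abs{\varphi}^2=1\). Then \(\varphi*\varphi^*\) is a tent of height~\(1\) and integral~\(2\varepsilon\), so \(\norm{e*f}_\infty\le2\varepsilon\norm{f}_\infty\to0\). So your correction term \(1-\int e\,\diff\alpha^{\rg(g)}\) does not vanish on~\(L\); it tends to~\(1\). The boundary of~\(L\) is not the real obstacle. The real one is that \(\alpha(e)\) involves \(\tilde\alpha(\varphi_k)\) as well as \(\alpha(\varphi_k)\).

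\textbf{The fix is the step the paper takes from MRW.} Choose \(\psi_i\ge0\) with \(\int\psi_i\,\diff\alpha^x=b_i(x)\). Then choose \(\varphi_{i,\delta}\) with \(\varphi_{i,\delta}(h)\int\varphi_{i,\delta}(g^{-1}h)\,\diff\alpha^{\rg(h)}(g)\approx\psi_i(h)\) uniformly; note that \(\int\varphi(g^{-1}h)\,\diff\alpha^{\rg(h)}(g)=\tilde\alpha(\varphi)(\s(h))\).

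In your notation this can be done explicitly. Let \(\psi_k'\defeq(\psi_k\circ\rg)\,h_k/(\alpha(h_k)\circ\rg)\), so that \(\alpha(\psi_k')=\psi_k\). Put \(t_k\defeq\tilde\alpha(\psi_k')\) and \(\varphi_k\defeq\psi_k'\cdot(\delta+t_k\circ\s)^{-1/2}\in\Contc(V_k)\). Then
\(\varphi_k(h)\,\tilde\alpha(\varphi_k)(\s(h))=\psi_k'(h)\,t_k(\s(h))/\bigl(\delta+t_k(\s(h))\bigr)\).
This increases to~\(\psi_k'\) as \(\delta\searrow0\), since full support of \(\tilde\alpha\) gives \(t_k\circ\s>0\) wherever \(\psi_k'>0\). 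The convergence is uniform by Dini's Theorem.

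Hence \(0\le\alpha(e)\le\sum_k\psi_k\le1\) everywhere, and \(\alpha(e)\to1\) uniformly on~\(L\). Since \(e\ge0\) and \(e=e^*\), you have \(\tilde\alpha(\abs{e})=\alpha(e)\), so \(\norm{e}_I\le1\) directly.

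With these~\(\varphi_k\), and \(\delta\) chosen small enough for each index as the paper does, the rest of your Step~2 goes through. That rest is the uniform-continuity estimate for \(e*f-f\), together with \(e=e^*\) for the right-hand side.
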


\begin{proof}
  The construction follows Proposition~2.10 and Corollary~2.11
  in~\cite{Muhly-Renault-Williams:Equivalence}, where such approximate
  units are obtained under the additional assumption that~\(G^0\) is
  paracompact.
  We explain how to remove this assumption.

  Recall that the inductive limit topology on \(\Contc(G)\) is the
  locally convex topology obtained as the inductive limit of the spaces
  \(\Contc(K)\) over compact subsets \(K \subseteq G\).
  Thus it suffices to show that for each compact \(K \subseteq G\),
  the convergence \(e_i * f \to f\) is uniform for all \(f \in
  \Contc(G)\) with \(\supp(f) \subseteq K\).

  Fix a compact subset \(K \subseteq G\).
  Let \(U_K \subseteq G\) be a compact neighbourhood of~\(\rg(K)\),
  and let \(U \subseteq G\) be an open neighbourhood of \(G^0\).
  For each \(x \in \rg(K)\) choose an open, relatively compact
  neighbourhood \(V_x \subseteq G\) such that \(V_x V_x^{-1} \subseteq
  U_K \cap U\).
  By compactness, there are finitely many such sets \(V_1,\dotsc,V_n\) with
  \(\rg(K) \subseteq \bigcup_{i=1}^n \s(V_i)\) and \(V_i V_i^{-1}
  \subseteq U_K \cap U\).

  Since~\(\rg(K)\) is compact Hausdorff, there is a partition of unity
  \((b_i)_{i=1}^n\) on~\(\rg(K)\) with \(b_i \in \Contc(G^0)\), \(\supp(b_i) \subseteq \s(V_i)\),
  and \(\sum_i b_i(x)=1\) for all \(x \in \rg(K)\).
  As in~\cite{Muhly-Renault-Williams:Equivalence}, there are functions
  \(\psi_i \in \Contc^+(V_i)\) with
  \[
    \int_{G^x} \psi_i(h) \,\diff\alpha^x(h) = b_i(x)
  \]
  for all \(x \in G^0\), and functions \(\varphi_{i,\delta} \in
  \Contc^+(V_i)\) for \(\delta>0\), such that
  \[
    \abs*{  \psi_i(h)
      - \varphi_{i,\delta}(h)
      \int \varphi_{i,\delta}(g^{-1}h) \,\diff\alpha^{\rg(h)}(g)}
    < \delta
  \]
  for all \(h \in V_i\).

  Set \(e_{\delta,K,U} \defeq \sum_{i=1}^n \varphi_{i,\delta} *
  \varphi_{i,\delta}^*\).
  Then \(e_{\delta,K,U} \in \Contc(G)\) is positive, supported in
  \(\bigcup_i V_i V_i^{-1} \subseteq U \cap U_K\), and satisfies
  \[
    \lim_{\delta \to 0}
    \int e_{\delta,K,U}(g) \,\diff\alpha^x(g) = 1
  \]
  for all \(x \in \rg(K)\).
  Using the estimates in Proposition~2.10 and Lemma~2.12
  of~\cite{Muhly-Renault-Williams:Equivalence}, it follows that
  \(\norm{e_{\delta,K,U} * f - f}_\infty \to 0\) uniformly for all
  \(f \in \Contc(G)\) with \(\supp(f) \subseteq K\).

  We now organise these elements into a net.
  Let~\(\mathcal{I}\) be the directed set of triples
  \((\varepsilon,K,U)\), where \(\varepsilon>0\), \(K \subseteq G\) is
  compact, and \(U \subseteq G\) is an open neighbourhood of \(G^0\),
  ordered by
  \[
    (\varepsilon_1,K_1,U_1) \le (\varepsilon_2,K_2,U_2)
    \iff \varepsilon_2 \le \varepsilon_1,\ 
    K_1 \subseteq K_2,\ 
    U_2 \subseteq U_1.
  \]
  For each \((\varepsilon,K,U)\) choose \(\delta>0\) sufficiently
  small so that \(\norm{e_{\delta,K,U} * f - f}_\infty < \varepsilon\)
  for all \(f \in \Contc(G)\) with \(\supp(f) \subseteq K\).
  Then \((e_{\varepsilon,K,U})\) is a net in \(\Contc(G)\) such that
  \(e_{\varepsilon,K,U} * f \to f\) for all \(f \in \Contc(G)\) in the
  inductive limit topology.
  Since \(e_{\varepsilon,K,U} = e_{\varepsilon,K,U}^*\), we also have
  \(f * e_{\varepsilon,K,U} \to f\), and the \(I\)\nb-norms remain
  uniformly bounded (in fact, converge to~\(1\)) by the estimates
  in~\cite{Muhly-Renault-Williams:Equivalence}.
\end{proof}

\begin{proposition}
  \label{pro:special-au-fell-bundle-groupoid}
  Let~\(G\) be a locally compact Hausdorff groupoid with Haar system
  \(\alpha=(\alpha^x)_{x\in G^0}\) and let~\(\A\) be a Fell bundle
  over~\(G\).
  There is an \(I\)\nb-norm bounded, two-sided approximate unit
  \((e_n)_{n}\) in \(\Contc(G,\A)\) consisting of elements of the form
  \(\sum_i \varphi_{i,n} * \varphi_{i,n}^*\) with finitely many elements
  \(\varphi_{i,n} \in \Contc(G,\A)\); in particular, \(e_n \ge0\) for
  all~\(n\).
\end{proposition}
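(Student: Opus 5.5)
The plan is to combine the scalar approximate unit of Lemma~\ref{lem:MRW-approx-unit-CcG} with an approximate unit of the \(\Cst\)\nb-algebra \(\SUF=\Cont_0(G^0,\FU)\). The elements will be of the form
\[
  e = \sum_{k,j} \varphi_{k}^{(j)} * (\varphi_{k}^{(j)})^*,
  \qquad \varphi_k^{(j)}(g) \defeq \varphi_k(g)\cdot u_j(\rg(g))
  \quad\text{or}\quad \varphi_k(g)\, a_j(g),
\]
where \(\varphi_k\in\Contc(G)\) are the scalar functions from Lemma~\ref{lem:MRW-approx-unit-CcG}. The \(u_j\) (respectively the \(a_j\)) are sections chosen so that the resulting sum approximates the fibrewise unit. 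Any element of this shape is automatically positive and of the required form.

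\emph{Step 1: construct the elements.} Fix a compact \(K\subseteq G\) and \(\varepsilon>0\). Since \(\A_g=\A_g\A_g^*\A_g\) (Lemma~\ref{lem:equality_of_products}), a section \(f\) supported in \(K\) is approximated by sums \(f\approx \sum_j a_j\cdot a_j^*\cdot f\) in the inductive limit topology. This uses an approximate unit of \(\Cont_0(G,\A\A^*)\) chosen inside \(\Contc(G,\A\A^*)\) as in Lemma~\ref{lem:Lf_well-defined}, written as a sum \(\sum_j a_j a_j^\dagger\) with \(a_j\in\Contc(G,\A)\) via Lemma~\ref{lem:factor_f} and a partition of unity. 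I expect it is simpler, however, to use units: pick \(u\in \Contc(G^0,\FU)\) with \(0\le u\le 1\) and \(\norm{u(\rg(g))f(g)-f(g)}<\varepsilon\) on \(K\). This is possible by nondegeneracy of \(\FU_{\rg(g)}\A_g=\A_g\) together with a compactness and partition-of-unity argument as in the proof of Lemma~\ref{lem:subfield_equality}.

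Then set \(\varphi_{k}(g)\defeq \varphi_{k,i}(g)\, u(\rg(g))^{1/2}\), regarded as an element of \(\A_g\) through the scalar bump \(\varphi_{k,i}\) supported near \(G^0\). The catch is that \(u(\rg(g))\) lies in \(\FU_{\rg(g)}=\A_{u\rg(g)}\) and not in \(\A_g\). To fix this, I would instead choose local sections \(c\in\Contc(G,\A)\) near units that approximate \(u^{1/2}\) along \(G^0\), which is possible by Definition~\ref{def:continuous_family_subspaces} applied near \(G^0\).

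\emph{Step 2: estimate.} Compute
\[
  (e*f)(g) = \sum_k \int\!\!\int \varphi_k(h)\varphi_k(l^{-1}h)^*\, f(\dots)\,\diff\alpha\,\diff\alpha .
\]
Because the supports shrink into the neighbourhood \(U\) of \(G^0\), continuity of the multiplication lets me replace \(\varphi_k(h)\varphi_k(\cdot)^*f(\cdot)\) by the scalar function times \(u(\rg(g))f(g)\), with a uniform error on \(K\). The MRW estimates from Lemma~\ref{lem:MRW-approx-unit-CcG} then give \(e*f\to u\cdot f\approx f\) uniformly, with supports in a fixed compact set. Right approximation follows by taking adjoints, since \(e^*=e\).

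The \(I\)\nb-norm bound follows from \(\norm{e}_I\le \norm{u}\cdot\norm{\text{scalar } e}_I\), which is bounded. The net is indexed by \((\varepsilon,K,U)\) exactly as in the lemma.

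\emph{Main obstacle.} The hardest part is the uniform continuity estimate in Step 2 near the unit space. It requires \(c(h)c(k)^*\approx u(\rg(h))\) uniformly for \(h,k\) close to units with \(\rg\) in a compact set, and then control of the product with \(f\). In an upper semicontinuous bundle the norm is only upper semicontinuous. I would handle this with the trick from Lemma~\ref{lem:subfield_equality}: the set where \(\norm{c(h)c(k)^*-u(\rg(h))}<\varepsilon\) is open (it is defined via continuous sections of a pulled-back bundle over \(G^2\)-like spaces) and contains the compact set of units. Hence it contains a neighbourhood, and I shrink \(U\) accordingly.
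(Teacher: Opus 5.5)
Your proposal is essentially the paper's proof. The paper likewise lifts an element of an approximate unit of \(\SUF\) to a bounded section of \(\A\) over \(G\) (your \(c\), lifting \(u^{1/2}\)), multiplies it by the scalar MRW bump functions supported near \(G^0\), forms \(\sum_i \varphi_i * \varphi_i^*\), and controls the error by upper semicontinuity near the units together with the scalar estimates.

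Two details need tidying.
\begin{enumerate}
\item Apply the openness argument to the \(\A_g\)-valued difference \(c(h)c(l^{-1}h)^*f(l^{-1}g)-u(\rg(g))f(g)\), exactly as your Step~2 does. The expression \(\norm{c(h)c(k)^*-u(\rg(h))}\) in your last paragraph compares elements of different fibres.
\item Since \(u\) is chosen depending on \(f\), index the net by finite subsets of \(\Contc(G,\A)\), as the paper does, rather than only by \((\varepsilon,K,U)\).
\end{enumerate}
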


\begin{proof}
  We build an approximate unit~\(e_{\varepsilon,K,U}\) in
  \(\Contc(G)\) as in the proof of
  Lemma~\ref{lem:MRW-approx-unit-CcG}, using certain subsets \(U \cap
  U_K\), \(V_{i,K,U}\), and auxiliary functions~\(b_k\),
  \(\psi_{\delta,K,U,i}\), and \(\varphi_{\delta,K,U,i}\).
  In addition, let~\((\xi'_j)\) be an approximate unit in the
  \(\Cst\)\nb-algebra \(A=\Cont_0(G^0,\FU)\); in particular, \(0\le \xi_j'\le 1\).
  Since~\(\A\) is an upper semicontinuous field of Banach spaces
  over~\(G\), the restriction map \(\Cont_0(G,\A) \to \Cont_0(G^0,\FU)\)
  is surjective.
  We may lift each~\(\xi'_j\) to some \(\xi_j \in \Cont_0(G,\A)\) with
  \(\norm{\xi_j(g)}\le 1\) for all \(g\in G\).
  Define
  \[
    \varphi_{j,\delta,K,U,i}(g) \defeq
    \xi_j(g) \cdot\varphi_{\delta,K,U,i}(g)
  \]
  for all \(g\in G\); this is now a section of~\(\A\) supported
  in~\(V_i\).
  We define
  \begin{align*}
    e_{j,\delta,K,U}
    &\defeq \sum_i \varphi_{j,\delta,K,U,i} *
    \varphi_{j,\delta,K,U,i}^*,\\
    e_{j,\delta,K,U}(g)
    &= \sum_i \int_{G^{\rg(g)}}   \xi_j(h) \xi_j(g^{-1} h)^*\cdot
    \varphi_{\delta,K,U,i}(h) \varphi_{\delta,K,U,i}(g^{-1} h)
    \,\diff\alpha^{\rg(g)}(h).
  \end{align*}
  This is positive in \(\Contc(G,\A)\) and supported in~\(U\) by
  construction.
  Now let \(F\subseteq \Contc(G,\A)\) be a finite subset.
  Let~\(K_0\) be a compact subset containing the supports of all
  \(f\in F\).
  Since \(A=\Cont_0(G^0,\FU)\) acts nondegenerately on \(\Cont_0(G,\A)\),
  there is~\(j_0\) so that \(\norm{\xi_j'(\rg(g))^2 f(g) -
    f(g)}<\varepsilon/2\) for all \(j\ge j_0\), \(f\in F\), and \(g\in
  G\).
  We fix such a~\(j\).
  Since~\(\xi_j\) is a continuous extension of~\(\xi_j'\) and~\(\xi_j'\) is
  selfadjoint, the section \((g,h,k)\mapsto \xi_j(h) \xi_j(g^{-1} h)^*
  f(g^{-1} k)\) is continuous on the space of \((g,h,k)\in G^3\) with
  \(\rg(g) = \rg(h) = \rg(k)\) and restricts to \(\xi_j'(\rg(k))^2 f(k)\)
  when \(g=h=u(\rg(k))\).
  By combining this with the uniform continuity of the elements \(f \in F\), 
  we deduce that for each fixed~\(j\) there is a neighbourhood~\(U_0\)
  of~\(G^0\) such that \(\norm{\xi_j(h)\xi_j(g^{-1} h)^* f(g^{-1} k) -
    f(g^{-1} k)}<\varepsilon\) whenever \(f\in F\), \(\rg(h)= \rg(g) =
  \rg(k)\), and \(h,g\in U_0\).
  Therefore, if \(\delta>0\), \(U_0 \subseteq U\), \(K_0 \subseteq K\),
  then we may bound \(\norm{(e_{j,\delta,K,U}*f - f)(k)}\) for
  \(f\in F\) by splitting the difference into two terms.
  The first term measures the error introduced by replacing the
  vector-valued term \(\xi_j(h)\xi_j(g^{-1} h)^* f(g^{-1} k)\) by the
  scalar-valued term~\(f(g^{-1} k)\) inside the convolution
  \[
    (e_{j,\delta,K,U}*f)(k)
    \defeq \int_{G^{\rg(k)}}
    e_{j,\delta,K,U}(g) f(g^{-1} k) \,\diff\alpha^{\rg(k)}(g).
  \]
  This replacement yields precisely the convolution of~\(f\) with the scalar 
  approximate unit \(e_{\delta,K,U} \in \Contc(G)\).
  The difference between this scalar convolution and~\(f(k)\) forms
  the second term, which goes  to~\(0\) uniformly on~\(K\) if
  \(\delta\searrow 0\) and \(U\searrow G^0\) by
  Lemma~\ref{lem:MRW-approx-unit-CcG}.
  The first term is bounded by the integral of the error:
  \[
    \sum_i \int_{G^{\rg(k)}} \int_{G^{\rg(k)}}   \varepsilon \,
    \varphi_{\delta,K,U,i}(h) \varphi_{\delta,K,U,i}(g^{-1} h)
    \,\diff\alpha^{\rg(k)}(h)
    \,\diff\alpha^{\rg(k)}(g).
  \]
  Since the double integral of the scalar functions is uniformly bounded 
  (it computes the bounded \(I\)\nb-norm of the scalar approximate unit), 
  this entire error can be made less than a constant multiple of~\(\varepsilon\) 
  if \(\delta>0\) is chosen small enough, independently of \(U\) and~\(K\).
  Thus there is a function~\(e_F\) of the form~\(e_{j,\delta,K,U}\)
  with \(\norm{e_F * f -  f}<\varepsilon\) for all \(f\in F\).
  This net~\((e_F)\) is a left approximate unit by design.
  It also inherits the property that each element is positive and has
  \(I\)\nb-norm at most~\(1\).
\end{proof}

Using the approximate identity built above, we may slightly refine the
statement that the integrated map
\[
  \Cst(G,\mor)\colon \Cst(G,\A)\to \Mult(\Cst(G,\B))
\]
of a nondegenerate multiplier morphism of Fell bundles is
nondegenerate:

\begin{proposition}
  \label{pro:nondegeneracy-Cc-level}
  Let~\(G\) be a locally compact Hausdorff groupoid with Haar
  system~\(\alpha\) and let \(\A,\B\) be Fell bundles over~\(G\).
  Let \(\mor\colon \A\to\Mult(\B)\) be a nondegenerate multiplier
  morphism, and let
  \[
    \Contc(G,\mor)\colon \Contc(G,\A)\to \Mult(\Contc(G,\B))
  \]
  be the \Star{}homomorphism given by the convolution formula:
  \[
    \bigl(\Contc(G,\mor)(f)\xi\bigr)(g)
    =
    \int_{G^{\rg(g)}} f(h)\cdot\xi(h^{-1}g)\,\diff\alpha^{\rg(g)}(h),
    \quad f\in\Contc(G,\A),\ \xi\in\Contc(G,\B).
  \]
  Then there is a positive, two-sided, \(I\)\nb-norm bounded
  approximate unit~\((e_n)\) in \(\Contc(G,\A)\) such that \(\lim
  \Contc(G,\mor)(e_n) \xi = \xi\) and \(\lim \xi \Contc(G,\mor)(e_n) =
  \xi\) in the inductive limit topology on \(\Contc(G,\B)\) for all
  \(\xi\in\Contc(G,\B)\).
\end{proposition}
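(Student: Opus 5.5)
The plan is to take the approximate unit \((e_n)\) from Proposition~\ref{pro:special-au-fell-bundle-groupoid}, built from the scalar approximate unit \(e_{\delta,K,U}\) of Lemma~\ref{lem:MRW-approx-unit-CcG} twisted by lifts \(\xi_j\) of an approximate unit \((\xi_j')\) of \(\SUF=\Cont_0(G^0,\FU)\). I then rerun its convergence argument with \(\xi\in\Contc(G,\B)\) in place of \(f\in\Contc(G,\A)\). The right-hand statement \(\lim \xi\,\Contc(G,\mor)(e_n)=\xi\) should follow from the left-hand one by taking adjoints. Since \(e_n=e_n^*\) and \(\Contc(G,\mor)\) is a \Star{}homomorphism into multipliers, \(\xi\,\Contc(G,\mor)(e_n) = (\Contc(G,\mor)(e_n)\xi^*)^*\), and the involution on \(\Contc(G,\B)\) is continuous in the inductive limit topology. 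So the work lies in the left convergence, uniformly on a fixed compact support.

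The key new input replacing nondegeneracy of \(\SUF\) on \(\Cont_0(G,\A)\) is the nondegeneracy of \(\mor\) on unit fibres. The homomorphisms \(\mor_x\colon\FU_x\to\Mult(\FU[B]_x)\) are nondegenerate, and \(\FU[B]_{\rg(g)}\B_g=\B_g\) by Lemma~\ref{lem:equality_of_products}. Hence \(\FU_{\rg(g)}\cdot\B_g\) is dense in \(\B_g\) for all \(g\), and Lemma~\ref{lem:subfield_equality} upgrades this to density of \(\SUF\cdot\Contc(G,\B)\) in the inductive limit topology. Consequently, for a finite set \(F\subseteq\Contc(G,\B)\) with supports in \(K_0\), the approximate unit \((\xi'_j)\) satisfies \(\norm{\xi'_j(\rg(g))^2\cdot\xi(g)-\xi(g)}<\varepsilon/2\) uniformly in \(g\) for large \(j\). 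To prove this, approximate each \(\xi\) uniformly by finite sums \(a\cdot\eta\) with \(a\in\SUF\), \(\eta\in\Contc(G,\B)\); then use \(\xi'_j a\to a\) in norm and the uniform bound \(\norm{\xi'_j}\le1\).

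With \(j\) fixed, the continuity of the multiplier action \(\A\times\B\to\B\) from Definition~\ref{def:multiplier_homomorphism} makes the section \((g,h,k)\mapsto \xi_j(h)\xi_j(g^{-1}h)^*\cdot\xi(g^{-1}k)\) continuous. At \(g=h=u(\rg(k))\) it restricts to \(\xi_j'(\rg(k))^2\xi(k)\). So there is a neighbourhood \(U_0\) of \(G^0\) on which it is within \(\varepsilon\) of \(\xi(g^{-1}k)\), uniformly over the compact relevant set. The same two-term splitting as before then applies. The first term, the error from replacing the \(\A\)-valued coefficient by the scalar kernel, is bounded by \(\varepsilon\) times the uniformly bounded double integral of the scalar functions \(\varphi_{\delta,K,U,i}\). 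The second term, \(e_{\delta,K,U}*\xi-\xi\) with a scalar approximate unit acting on a \(\B\)-valued section, tends to zero uniformly by the estimates behind Lemma~\ref{lem:MRW-approx-unit-CcG}, which are insensitive to the coefficient bundle. Supports stay inside \(UK_0\) for a fixed compact \(U\), so uniform convergence is convergence in the inductive limit topology.

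The main obstacle is organising a single net that works simultaneously for \(\Contc(G,\A)\) (approximate unit) and for \(\Contc(G,\B)\) (image acting). I would index by finite sets \(F=F_\A\sqcup F_\B\) and \(\varepsilon\). For each index, choose \(j\) large enough for both families; such \(j\) exist because the conditions are all monotone ("for all \(j\ge j_0\)"). Then choose \(U_0\), \(K\) and \(\delta\) as above for the finitely many sections. Positivity and the \(I\)\nb-norm bound are inherited as in Proposition~\ref{pro:special-au-fell-bundle-groupoid}.
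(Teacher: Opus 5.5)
Your route is the paper's: its proof just says to rerun Proposition~\ref{pro:special-au-fell-bundle-groupoid} with finite subsets of \(\Contc(G,\B)\). Your additions are fine: nondegeneracy of \(\SUF\) on \(\Contc(G,\B)\) from \(\FU_{\rg(g)}\FU[B]_{\rg(g)}\B_g=\B_g\), the adjoint trick for the right-hand limit, and one index set \((F_\A,F_\B,\varepsilon)\) serving both claims. The central estimate, however, does not work as written. The section \((g,h,k)\mapsto \xi_j(h)\xi_j(g^{-1}h)^*\cdot\xi(g^{-1}k)\) takes values in \(\B_k\), while \(\xi(g^{-1}k)\in\B_{g^{-1}k}\). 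So ``within~\(\varepsilon\) of \(\xi(g^{-1}k)\)'' compares vectors in different fibres. Also, \(e_{\delta,K,U}*\xi\) is undefined for a scalar \(e_{\delta,K,U}\in\Contc(G)\): \(\int e_{\delta,K,U}(g)\,\xi(g^{-1}k)\,\diff\alpha^{\rg(k)}(g)\) would integrate vectors lying in the varying fibres \(\B_{g^{-1}k}\). Likewise, the estimates behind Lemma~\ref{lem:MRW-approx-unit-CcG} are not insensitive to the coefficients. They use that \(\abs{f(g^{-1}k)-f(k)}\) is small for \(g\) near~\(G^0\), which is meaningless for a section of~\(\B\). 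The written proof of Proposition~\ref{pro:special-au-fell-bundle-groupoid} is phrased the same way, so you inherited this from there.

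The repair is to compare with \(\xi(k)\) throughout. Let \(\varphi_i\defeq\varphi_{\delta,K,U,i}\) and \(\Phi(g,h,k)\defeq \xi_j(h)\xi_j(g^{-1}h)^*\cdot\xi(g^{-1}k)-\xi(k)\in\B_k\). This is continuous, and at \(g=h=u(\rg(k))\) it equals \(\xi_j'(\rg(k))^2\xi(k)-\xi(k)\), of norm \(<\varepsilon/2\). Upper semicontinuity of the norm and compactness then give a neighbourhood \(U_0\) of~\(G^0\) with \(\norm{\Phi(g,h,k)}<\varepsilon\) for \(g,h\in U_0\) and \(k\) in the relevant compact set. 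Choosing \(V_i\) and \(V_iV_i^{-1}\) inside~\(U_0\), you get the exact identity
\begin{multline*}
  \bigl(\Contc(G,\mor)(e_{j,\delta,K,U})\xi\bigr)(k)-\xi(k)
  \\= \sum_i \iint \varphi_i(h)\varphi_i(g^{-1}h)\,\Phi(g,h,k)
  \,\diff\alpha^{\rg(k)}(h)\,\diff\alpha^{\rg(k)}(g)
  + \Bigl(\int e_{\delta,K,U}\,\diff\alpha^{\rg(k)} - 1\Bigr)\xi(k).
\end{multline*}
Its norm is at most \(\varepsilon\norm{e_{\delta,K,U}}_I + \abs[\big]{\int e_{\delta,K,U}\,\diff\alpha^{\rg(k)} - 1}\,\norm{\xi}_\infty\). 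So from the scalar construction you only need the \(I\)\nb-norm bound and \(\int e_{\delta,K,U}\,\diff\alpha^x\to1\) uniformly for \(x\in\rg(\supp\xi)\); no convolution estimate is involved. Finally, \(U\) is an open neighbourhood of \(G^0\), not a compact set. To keep supports in one compact set, restrict to \(U\subseteq \mathrm{int}(W)\cup(G\setminus\s^{-1}(\rg(K_0)))\) for a compact neighbourhood \(W\) of \(u(\rg(K_0))\). This gives \(\supp\bigl(\Contc(G,\mor)(e_{j,\delta,K,U})\xi\bigr)\subseteq W\cdot K_0\).
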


\begin{proof}
  The proof is the same as for
  Proposition~\ref{pro:special-au-fell-bundle-groupoid}, but now using
  a finite subset \(F \subseteq\Contc(G,\B)\).
\end{proof}

\section{Dual groupoid and quasi-orbit space}
\label{sec:dual_groupoid_quasi-orbit}

One important question about the section \(\Cst\)\nb-algebra
\(\Cst(G,\A)\) for a Fell bundle~\(\A\) over~\(G\) concerns its
ideal structure.
In this section, we prove some basic results about ideals in
\(\Cst(G,\A)\), using the theory developed
in~\cite{Kwasniewski-Meyer:Stone_duality}.
The results here are far from a complete answer.
We define the dual groupoid of a Fell bundle over~\(G\) and the
quasi-orbit space of this dual groupoid.
We prove that there is a continuous map from the primitive ideal
space of \(\Cst(G,\A)\) to the quasi-orbit space as in
\cite{Kwasniewski-Meyer:Stone_duality}*{Theorem~7.17}; the
difference to the results in~\cite{Kwasniewski-Meyer:Stone_duality}
is that we now allow possibly nonsaturated Fell bundles over
locally compact groupoids.
Fell bundles over locally compact groups are already considered
in~\cite{Kwasniewski-Meyer:Stone_duality}*{Section~6.2}.
There is only one missing ingredient in the proof of
\cite{Kwasniewski-Meyer:Stone_duality}*{Theorem~7.17}.
Namely, the proof of \cite{Kwasniewski-Meyer:Stone_duality}*{Proposition~7.16}
uses Renault's Disintegration Theorem.
We explain how our universal property may be used instead to complete
this proof.
Our discussion here is not selfcontained.
We only give basic definitions and explain how the information that is
missing in~\cite{Kwasniewski-Meyer:Stone_duality} follows from our
universal property.

As before, let \((G,\alpha)\) be a locally compact groupoid with
Haar system and let~\(\A\) be a Fell bundle over~\(G\).  Let~\(\FU\)
be the restriction of~\(\A\) to units, which is a field of
\(\Cst\)\nb-algebras over~\(G^0\).  Let
\(\SUF\defeq\Cont_0(G^0,\FU)\) be its \(\Cst\)\nb-algebra of
\(\Cont_0\)\nb-sections.

\subsection{The dual groupoid and the quasi-orbit space}
\label{sec:dual_groupoid}

Let~\(\dual{\SUF}\) denote the set of unitary equivalence classes of
irreducible representations of~\(\SUF\).  We equip~\(\dual{\SUF}\)
with the usual topology, whose open subsets are those of the
form~\(\dual{I}\) for ideals \(I\idealin \SUF\).  We are going to
define a partial action of~\(G\) on~\(\dual{\SUF}\).  This has a
transformation groupoid, which we call the \emph{dual groupoid} of
the Fell bundle.  The quasi-orbit space of this groupoid is called
the \emph{quasi-orbit space} of the Fell bundle.

For many purposes, we may replace~\(\dual{\SUF}\) by the primitive
ideal space of~\(\SUF\).  This gives the same quasi-orbit space.  It
makes a big difference, however, for the isotropy of the dual
groupoid.  And the version with~\(\dual{\SUF}\) gives finer
information about the ideal structure of the crossed product (see
\cites{Kwasniewski-Meyer:Aperiodicity,Kwasniewski-Meyer:Essential}).
This is why we work with~\(\dual{\SUF}\) here.

Since~\(\SUF\) is a \(\Cont_0(G^0)\)\nb-\(\Cst\)-algebra, there is a
canonical continuous map
\[
  p\colon \dual{\SUF} \to G^0,
\]
such that the preimage of \(x\in G^0\) is \(\dual{\FU_x}\).  This is
the anchor map of the partial \(G\)\nb-action on~\(\dual{\SUF}\).

Let \(\A\A^* \idealin \rg^*(\FU)\) and \(\A^*\A \idealin \s^*(\FU)\)
be the fields of ideals with fibres \(\A_g\A_g^*\)
and~\(\A_g^*\A_g\), respectively.  By construction, \(\A\) is a
field of equivalence bimodules between them.  Thus \(\Cont_0(G,\A)\)
is an equivalence bimodule between the \(\Cst\)\nb-algebras
\(B_\rg\defeq \Cont_0(G,\A\A^*)\) and
\(B_\s\defeq \Cont_0(G,\A^*\A)\).
By the Rieffel correspondence, \(\Cont_0(G,\A)\) induces a
homeomorphism between the dual spaces \(\dual{B_\rg}\)
and~\(\dual{B_\s}\).
We interpret this homeomorphism as a partial action of~\(G\)
on~\(\dual{\SUF}\).
Here \(g\in G\) acts by a partial homeomorphism~\(\dual{\A}_g\)
from~\(\dual{\FU_{\s(g)}}\) to~\(\dual{\FU_{\rg(g)}}\).
Namely, if \(\pi\colon \FU_{\s(g)} \to \Bound(\Hils)\) is an
irreducible representation, then left multiplication on \(\A_g
\otimes_\pi \Hils\) is either zero or an irreducible representation
of~\(\FU_{\rg(g)}\).
This representation is nonzero if and only if \([\pi] \in
\dual{\A_g^*\A_g}\), and then it is nondegenerate on \(\A_g\A_g^*\),
that is, it belongs to \(\dual{\A_g\A_g^*}\).
So \([\pi] \mapsto [\A_g \otimes_\pi \Hils]\) is a homeomorphism
\[
  \dual{\A_g}\colon \dual{\A_g^*\A_g}\to\dual{\A_g\A_g^*}.
\]
By assumption, \(u(x)\) for \(x\in G^0\) acts by the identity
Hilbert bimodule.
This induces the identity homeomorphism
on~\(\dual{\FU_x}\).
The existence of the involution in the Fell
bundle implies \(\dual{\A_{g^{-1}}} = \dual{\A_g}{}^{-1}\) for all
\(g\in G\).
The existence of the multiplication maps in the Fell
bundle implies that~\(\dual{\A_{g h}}\)
extends~\(\dual{\A_g} \dual{\A_h}\) for all \((g,h)\in G^2\).
So~\(\dual{\A}\) is a partial action of~\(G\) on~\(\dual{\SUF}\).

This partial action has a transformation groupoid
\(\dual{\SUF}\rtimes G\), which we call the \emph{dual groupoid} of
the Fell bundle~\(\A\) over~\(G\).  By definition, its object space
is
\[
  (\dual{\SUF}\rtimes G)^0 \defeq \dual{\SUF}.
\]
We identify its arrow space with
\[
  \dual{\SUF}\rtimes G \defeq \dual{B_\rg},
\]
the domain of the partial action.
Since~\(B_\rg\) is an ideal in~\(\rg^*\FU\), the dual
space~\(\dual{B_\rg}\) is an open subset of
\[
  \dual{\Cont_0(G,\rg^*\FU)} \cong \dual{\SUF} \times_{p,G^0,\rg} G.
\]
Elements of~\(\dual{B_\rg}\) are pairs \((\pi,g)\) with
\(g\in G\) and \(\pi\in \dual{\A_g\A^*_g} \subseteq \dual{\SUF}\)
(using standard results on \(\Cont_0(G)\)-\(\Cst\)-agebras,
see~\cite{Nilsen:Bundles}).
This links~\(\dual{B_\rg}\) to the transformation groupoid.
Since the range map \(\rg\colon G\to G^0\) is open, it follows that
the coordinate projection \(\dual{\SUF} \times_{\rg,G^0,p} G \to
\dual{\SUF}\) is open.
Hence it restricts to an open map on~\(\dual{B_\rg}\).
By definition, this restriction is the range map of
\(\dual{\SUF}\rtimes G\).
The source map is the composite map
\[
  \dual{B_\rg} \cong \dual{B_\s}
  \subseteq \dual{\Cont_0(G,\s^*\FU)}
  \cong \dual{\SUF}\times_{p,G^0,\s}G
  \xrightarrow{\pr_1}\dual{\SUF}.
\]
The factorisation above shows that it is open as well.  The
multiplication is defined simply by
\[
  (\pi,g)\cdot (\varrho,h) \defeq (\pi,g\cdot h)
  \qquad \text{if }\varrho = \s(\pi,g).
\]
This is clearly continuous and associative.  The inverse of
\((\pi,g)\) is \((g^{-1}\cdot\pi, g^{-1})\).  This depends continuously
on~\(g\) because of the homeomorphism
\(\dual{B_\rg} \cong \dual{B_\s}\).  Thus \(\dual{\SUF}\rtimes G\)
becomes a topological groupoid with open range and source maps.  Of
course, its object space is usually not Hausdorff, being the
spectrum of a \(\Cst\)\nb-algebra.

Let \(\pi_1,\pi_2\in \dual{\SUF}\).  We write \(\pi_1\sim\pi_2\) if
the orbits of them for the groupoid \(\dual{\SUF}\rtimes G\) have the
same closure.  Equivalently, there are nets \((g_n)\) and~\((h_n)\)
in~\(G\) such that \(\s(g_n) = p(\pi_1)\), \(\s(h_n) = p(\pi_2)\),
and \(\lim g_n\cdot \pi_1 = \pi_2\) and
\(\lim h_n\cdot \pi_2 = \pi_1\).  The relation~\(\sim\) is an
equivalence relation on~\(\dual{\SUF}\).  Let \(\dual{\SUF}/{\sim}\)
be its set of equivalence classes, equipped with the quotient
topology.  This quotient is called the \emph{quasi-orbit space} of
the Fell bundle~\(\A\) over~\(G\).

The following lemma relates the dual groupoid to invariant ideals
and Fell ideals:

\begin{lemma}
  \label{lem:invariant_ideal_in_dual}
  An ideal \(I\idealin \SUF\) is invariant if and only if the
  corresponding open subset~\(\dual{I}\) of~\(\dual{\SUF}\) is
  invariant for the dual groupoid \(\dual{\SUF}\rtimes G\).
\end{lemma}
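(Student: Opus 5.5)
The plan is to reduce both sides to statements about single arrows \(g\in G\) and then compare them fibrewise.

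First, invariance of \(I\) is fibrewise. By Remark~\ref{rem:continuous_family_of_ideals}, \(I\) corresponds to a continuous family of ideals \(\FU[I]_x\idealin\FU_x\). By Definition~\ref{def:invariant_ideal} together with Lemma~\ref{lem:invariant_ideal}, \(I\) is invariant if and only if
\[
  \A_g\FU[I]_{\s(g)}\A_g^*\subseteq \FU[I]_{\rg(g)}
  \qquad\text{for all }g\in G.
\]

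Second, invariance of \(\dual I\) is also fibrewise. By standard facts on \(\Cont_0(G^0)\)-\(\Cst\)-algebras (\cite{Nilsen:Bundles}), \(\dual I\cap p^{-1}(x)=\dual{\FU[I]_x}\). An open subset \(V\subseteq\dual{\SUF}\) is invariant under the dual groupoid exactly when, for every arrow \((\pi,g)\) with source in \(V\), the range is also in \(V\). Unwinding the definition of the partial action, this means: for all \(g\in G\) and all \([\pi]\in\dual{\A_g^*\A_g}\cap\dual{\FU[I]_{\s(g)}}\), we have \(\dual{\A_g}[\pi]\in\dual{\FU[I]_{\rg(g)}}\).

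The core is then a single Hilbert bimodule statement. Fix \(g\in G\), and let \(X=\A_g\) be an imprimitivity bimodule between \(P=\A_g\A_g^*\) and \(Q=\A_g^*\A_g\). The Rieffel correspondence sends the ideal \(J\cap Q\) of \(Q\), where \(J=\FU[I]_{\s(g)}\), to the ideal \(XJX^*\) of \(P\). On spectra it sends \(\dual{Q}\cap\dual{J}\) onto \(\dual{XJX^*}\). Concretely, \(\A_g\otimes_\pi\Hils\) is killed by \(XJX^*\) if and only if \(\pi(X^*XJX^*X)=\pi(Q J Q)=\pi(Q\cap J)\) vanishes. So the point condition becomes \(\dual{XJX^*}\subseteq\dual{\FU[I]_{\rg(g)}}\). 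Since open subsets of the spectrum correspond bijectively and order-preservingly to ideals, this is equivalent to \(XJX^*\subseteq\FU[I]_{\rg(g)}\cap P\). Because \(XJX^*\subseteq P\) automatically, this is exactly the inclusion \(\A_g\FU[I]_{\s(g)}\A_g^*\subseteq\FU[I]_{\rg(g)}\). Combining the two reductions proves both implications at once.

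The main obstacle I expect is in the second step: making sure that the transport of points along \(\dual{\A_g}\) really corresponds to the ideal-level Rieffel map, and that the fibrewise identification \(\dual I\cap p^{-1}(x)=\dual{\FU[I]_x}\) holds for upper semicontinuous fields. For the first point, I would compute kernels of induced representations directly, using \(\ker(\A_g\text{-}\Ind\pi)=\setgiven{a}{\A_g^*a\A_g\subseteq\ker\pi}\). For the second point, I would use that every irreducible representation of \(\SUF\) factors through evaluation at \(p(\pi)\).
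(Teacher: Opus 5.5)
Your proof is correct, but its key step differs from the paper's. The paper works directly with the defining equality \(\FU[I]_{\rg(g)}\A_g=\A_g\FU[I]_{\s(g)}\) and treats the two implications separately. For the forward direction it observes that the induced representation on \(\A_g\FU[I]_{\s(g)}\otimes\pi\) is nondegenerate on \(\FU[I]_{\rg(g)}\). For the converse it uses the symmetry \(g\leftrightarrow g^{-1}\) to arrange \(\A_g\FU[I]_{\s(g)}\not\subseteq\I_g\defeq\FU[I]_{\rg(g)}\A_g\). It then picks an irreducible representation of \(\FU[I]_{\s(g)}\) vanishing on \(\braket{\I_g}{\I_g}\) as an explicit witness that \(\dual{I}\) is not invariant.

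You instead pass through the one-sided criterion of Lemma~\ref{lem:invariant_ideal}. You compute \(\dual{\A_g}\bigl(\dual{\A_g^*\A_g}\cap\dual{\FU[I]_{\s(g)}}\bigr)=\dual{\A_g\FU[I]_{\s(g)}\A_g^*}\) via the Rieffel correspondence. The order isomorphism between open subsets of the spectrum and ideals then turns the spectral condition into the ideal inclusion in one stroke, so both implications come out together.

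Your route gives slightly more: an exact description of where \(g\) sends \(\dual{I}\). It also handles automatically, through \(\A_g^*\A_g\FU[I]_{\s(g)}\A_g^*\A_g=\A_g^*\A_g\cap\FU[I]_{\s(g)}\), the requirement that a witness must lie in the domain \(\dual{\A_g^*\A_g}\) of the partial homeomorphism. The paper's converse leaves that requirement implicit. The paper's route, in turn, is more hands-on: it only uses the pointwise definition of the partial action, not the Rieffel correspondence on ideal lattices.
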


\begin{proof}
  By Remark~\ref{rem:continuous_family_of_ideals}, an ideal
  \(I\idealin \SUF\) corresponds to a continuous family of ideals
  \(\FU[I]_x \subseteq \FU_x\).
  By definition, \(I\) is invariant if and only if
  \(\FU[I]_{\rg(g)}\cdot \A_g = \A_g\cdot \FU[I]_{\s(g)}\) for all
  \(g\in G\).
  If this is the case, then~\(I\) also corresponds to a Fell
  ideal~\(\I\) in~\(\A\) by Definition~\ref{def:invariant_ideal}.
  Let \(g\in G\).
  Assume first that~\(I\) is invariant.
  Let~\(\pi\) be an irreducible representation of \(\FU[I]_{\s(g)}
  \cap \A_g^*\A_g\).
  Since \(\FU[I]_{\rg(g)} \A_g \FU[I]_{\s(g)} =  \A_g
  \FU[I]_{\s(g)}\), the left multiplication action on \(\A_g\cdot
  \FU[I]_{\s(g)} \otimes_{\A_{\s(g)}} \pi\) is nondegenerate
  on~\(\FU[I]_{\rg(g)}\).
  Thus the dual action of~\(g\) maps~\(\pi\) to a point in
  \(\dual{\FU[I]_{\rg(g)}}\).
  If this holds for all \(g\in G\), then \(\dual{I}\subseteq
  \dual{\SUF}\) is invariant for the dual action of~\(G\) or,
  equivalently, the dual groupoid.
  Conversely, assume that \(\FU[I]_{\rg(g)}\cdot \A_g \neq \A_g\cdot
  \FU[I]_{\s(g)}\) for some \(g\in G\).
  Replacing~\(g\) by~\(g^{-1}\) if necessary, we may arrange that
  \(\A_g\cdot \FU[I]_{\s(g)}\) is not contained in \(\I_g \defeq
  \FU[I]_{\rg(g)}\cdot \A_g\).
  Then~\(\FU[I]_{\s(g)}\) is not contained in \(\braket{\I_g}{\I_g}\).
  Therefore, there is an irreducible representation~\(\pi\) of the
  ideal~\(\FU[I]_{\s(g)}\) in~\(\FU_{\s(g)}\) that vanishes on
  \(\braket{\I_g}{\I_g}\).
  It follows that the left multiplication representation on \(\A_g
  \otimes_{\FU_{\s(g)}} \pi\) vanishes on~\(\FU[I]_{\rg(g)}\).
  In other words, the dual action of~\(g\) maps \([\pi] \in
  \dual{\FU[I]_{\s(g)}}\) to a point outside
  \(\dual{\FU[I]_{\rg(g)}}\).
  Equivalently, \(\dual{I}\) is not invariant for the dual groupoid.
\end{proof}

The map \(I\mapsto \dual{I}\) identifies the lattice of ideals
\(\Ideals(\SUF)\) with the lattice of open subsets \(\Open(\dual{\SUF})\).
By Lemma~\ref{lem:invariant_ideal_in_dual}, this lattice isomorphism
restricts to a bijection between the lattices of \(\A\)\nb-invariant
ideals in~\(\SUF\) and \(G\)\nb-invariant, open subsets
of~\(\dual{\SUF}\).  The latter is, in turn, isomorphic to the
lattice of open subsets of the quasi-orbit
space~\(\dual{\SUF}/{\sim}\).

The quasi-orbit space does not change
when we replace \(\dual{\SUF}\) by \(\operatorname{Prim}(\SUF)\)
everywhere above.  This is because the lattices of open subsets of
\(\dual{\SUF}\) and \(\operatorname{Prim}(\SUF)\) are both isomorphic
to the lattice of ideals in~\(\SUF\).

\subsection{Induction and restriction of ideals}
\label{sec:ind_res_ideals}

Let \(A\) and~\(B\) be \(\Cst\)\nb-algebras.
We are mainly interested in the case \(\SUF = \Cont_0(G^0,\FU)\)
and \(\SUF[B] = \Cst(G,\A)\).
It is instructive, however, to work in greater generality for a while.

A homomorphism \(\iota\colon A\to \Mult(B)\) allows to induce ideals
from~\(A\) to~\(B\) and to restrict ideals from~\(B\) to~\(A\)
(see~\cite{Kwasniewski-Meyer:Stone_duality}).  Namely, the induced
ideal \(i(I)\) for an ideal \(I\idealin A\) is the closed linear
span \(B\cdot I\cdot B\) of \(b_1\cdot x\cdot b_2\) for
\(b_1,b_2\in B\), \(x\in I\).  And the restricted ideal \(r (J)\)
for an ideal \(J\idealin B\) is the kernel of the composite
homomorphism \(A \to \Mult(B) \to \Mult(B/J)\).  Equivalently,
\(a\in A\) belongs to~\(r(J)\) if and only if
\(a\cdot B\subseteq J\), if and only if \(B\cdot a\subseteq J\).
If \(\iota(A) \subseteq B\), then
\(r(J) = J\cap A\).  The two maps \(i\) and~\(r\) between the
lattices of ideals in \(A\) and~\(B\) form a Galois connection, that
is, \(I\subseteq r(J)\) if and only if \(i(I)\subseteq J\) (see
\cite{Kwasniewski-Meyer:Stone_duality}*{Lemma~2.8}).  Many
consequences of this property are listed in
\cite{Kwasniewski-Meyer:Stone_duality}*{Proposition~2.9}.

Let \(\Ideals(A)\) and \(\Ideals(B)\) denote the lattices of
ideals in \(A\) and~\(B\), respectively.  Let
\begin{align*}
  \Ideals^A(B)
  &\defeq \setgiven{i(I)}{I\in\Ideals(A)} \subseteq \Ideals(B),\\
  \Ideals^B(A)
  &\defeq \setgiven{r(J)}{J\in\Ideals(B)} \subseteq \Ideals(A)
\end{align*}
be the sublattices of induced ideals in~\(B\) and of restricted
ideals in~\(A\), respectively.  Then \(i\circ r\) is a retraction
from~\(\Ideals(B)\) onto~\(\Ideals^A(B)\) and \(r\circ i\) is a
retraction from~\(\Ideals(A)\) onto~\(\Ideals^{B}(A)\).  And
\(i\) and~\(r\) restrict to lattice isomorphisms between
\(\Ideals^A(B)\) and \(\Ideals^B(A)\) that are inverse to each
other.  Our results about the ideal structure of~\(B\) are really
results about these isomorphic lattices
\(\Ideals^A(B)\cong\Ideals^B(A)\).  That is, we only describe
the induced ideals in~\(B\), not all ideals.

\begin{proposition}
  \label{pro:Ker-rep-invariant}
  Let \((\varphi,U)\) be a representation of the Fell bundle
  \(\A=(\A_g)_{g\in G}\).  Then \(I=\ker(\varphi)\) is an invariant
  ideal of \(\SUF\).
\end{proposition}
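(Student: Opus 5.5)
Throughout, we write \(\FU[I]=(\FU[I]_x)_{x\in G^0}\) for the continuous family of ideals in~\(\FU\) that corresponds to \(I=\ker\varphi\) by Remark~\ref{rem:continuous_family_of_ideals}. By Lemma~\ref{lem:invariant_ideal}, it suffices to show that \(\A_g\FU[I]_{\s(g)}\A_{g^{-1}}\subseteq\FU[I]_{\rg(g)}\) for all \(g\in G\). We shall prove this at the level of sections. Concretely, we aim to show that for \(f\in I\) and \(\xi,\eta\in\Contc(G,\A)\), the section
\[
  x\mapsto\int_G \xi(g)\, f(\s(g))\, \eta(g)^*\,\diff\alpha^x(g),
\]
which lies in \(\SUF\) by Lemma~\ref{lem:continuity_integral}, belongs to~\(I\). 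It is then enough to prove the analogous statement for the ideal \(\Cont_0(G,\rg^*\FU[I])\), that is, that \(\xi\cdot(f\circ\s)\cdot\eta^\dagger\in\Cont_0(G,\rg^*\FU[I])\). We do this by working with the correspondences in the definition of a representation.

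The key tool is the unitary~\(U\). Since \(U\) intertwines the left actions of \(\Cont_0(G,\rg^*\FU)\) (Lemma~\ref{lem:nondegenerate_over_AA-star}), the kernels of these actions on \(\Lt^2(G,\A,\s,\tilde\alpha)\otimes_\varphi\F\) and on \(\Lt^2(G,\A\A^*,\rg,\alpha)\otimes_\varphi\F\) agree. First we compute the kernel of the action on \(\Lt^2(G,\A\A^*,\rg,\alpha)\otimes_\varphi\F\). This correspondence is built from \(\rg^*\FU\) acting on \(\A\A^*\), which contains the action of~\(\rg^*\FU\) on itself through the ideal~\(\A\A^*\). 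So the kernel of the left action of \(\Cont_0(G,\A\A^*)\) should equal \(\Cont_0(G,\A\A^*\cap\rg^*\FU[I])\). To check this we use Lemma~\ref{lem:fullness-Hilbert-modules-fields} together with fibrewise arguments: the module is a continuous field over~\(G\), and the induced representation of the fibre \(\A_g\A_g^*\otimes\F\) has kernel \(\A_g\A_g^*\cap\FU[I]_{\rg(g)}\). Next, on \(\Lt^2(G,\A,\s,\tilde\alpha)\otimes_\varphi\F\), the element \(\xi\cdot(f\circ\s)\cdot\eta^\dagger\in\Cont_0(G,\A\A^*)\) acts by zero. The reason is that it acts on \(\zeta\otimes v\) through \(\xi\cdot\bigl((f\circ\s)\,\eta^\dagger\zeta\bigr)\), and the factor \(\braket{\cdot}{\cdot}\)-type term \((f\circ\s)\eta^\dagger\zeta\) is a section of \(\s^*\FU[I]\). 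After integration against \(\tilde\alpha\), the terms \(\varphi(\text{something in } I)\) kill~\(\F\). Transporting this through~\(U\), the same element acts by zero on the \(\rg\)-side, and so it lies in \(\Cont_0(G,\rg^*\FU[I])\). Restricting to units then gives \(\A_g\FU[I]_{\s(g)}\A_g^*\subseteq\FU[I]_{\rg(g)}\).

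The main obstacle is making the kernel computations precise. On the \(\s\)-side, we need to show that operators in \(\Cont_0(G,\A\A^*)\) of the form \(\xi\,(f\circ\s)\,\eta^\dagger\) vanish after tensoring with~\(\F\). The plan is to factor them through an inner product with values in \(\s^*\FU[I]\), so that they become limits of sums of terms \(T_a\,\varphi(b)\,T_c^*\) with \(b\in I\). On the \(\rg\)-side, we need the converse: an element of \(\Cont_0(G,\rg^*\FU)\) that acts by zero must take values in~\(\FU[I]\). For this we pair it with creation operators \(T^\rg_{h}\) and use \((T^\rg_{h_1})^*M_k T^\rg_{h_2}=\varphi(\alpha(h_1^\dagger k h_2))\), which gives \(\alpha(h_1^\dagger k h_2)\in I\) for all \(h_1,h_2\). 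A localisation argument like the one in the proof of Lemma~\ref{lem:fullness-Hilbert-modules-fields}, using bump functions concentrating at a point~\(g\), then yields \(k(g)\in\FU[I]_{\rg(g)}\).
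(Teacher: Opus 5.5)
Your proof is correct, but it is organised differently from the paper's. Both arguments use the same mechanism. Because \(U\) intertwines the left actions of \(\Cont_0(G,\A\A^*)\), an element acts by zero on \(\Lt^2(G,\A,\s,\tilde\alpha)\otimes_\varphi\F\) if and only if it acts by zero on \(\Lt^2(G,\A\A^*,\rg,\alpha)\otimes_\varphi\F\). Acting by zero is detected by inner products landing in \(I=\ker\varphi\), and this is then localised at a point using full support of the Haar system.

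The paper computes this common kernel completely on both sides. It shows that \(a\) lies in the kernel iff \(a(g)\A_g\subseteq\A_g\FU[I]_{\s(g)}\) for all \(g\), iff \(a(g)\A_g\A_g^*\subseteq\A_g\A_g^*\FU[I]_{\rg(g)}\) for all \(g\). This needs both directions of each characterisation; the hard direction on the source side is a contradiction argument. The paper then uses \(\A_g\A_g^*\A_g=\A_g\) to extract the equality \(\A_g\A_g^*\FU[I]_{\rg(g)}=\A_g\FU[I]_{\s(g)}\A_g^*\), hence \(\FU[I]_{\rg(g)}\A_g=\A_g\FU[I]_{\s(g)}\).

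You instead use Lemma~\ref{lem:invariant_ideal} to reduce to the one-sided inclusion \(\A_g\FU[I]_{\s(g)}\A_g^*\subseteq\FU[I]_{\rg(g)}\) for all \(g\). For this you only need two things:
\begin{itemize}
\item the easy direction on the source side: for \(k=\xi\cdot(f\circ\s)\cdot\eta^\dagger\), the sections \(\braket{k\zeta_1}{\zeta_2}\) take values in \(\FU[I]_x\), hence lie in \(I\) and are killed by \(\varphi\);
\item the localisation direction on the range side, via \((T^\rg_{h_1})^*M_kT^\rg_{h_2}=\varphi(\braket{h_1}{kh_2})\) and norm continuity along \(G^{\rg(g)}\), where \(\rg^*\FU\) is a constant field.
\end{itemize}
Your route is shorter and avoids the contradiction argument. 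The paper's route gives more: an explicit description of the common kernel of the two actions.

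Two small corrections.
\begin{itemize}
\item In your last paragraph, the claim that any element of \(\Cont_0(G,\rg^*\FU)\) acting by zero on the range side takes values in \(\FU[I]\) is false in the nonsaturated case. Anything whose values annihilate \(\A_g\A_g^*\) acts by zero. Your localisation only yields \(c_1^*k(g)c_2\in\FU[I]_{\rg(g)}\) for \(c_1,c_2\in\A_g\A_g^*\). To conclude \(k(g)\in\FU[I]_{\rg(g)}\), you must add that \(k(g)=\xi(g)f(\s(g))\eta(g)^*\) lies in \(\A_g\A_g^*\) and use an approximate unit of \(\A_g\A_g^*\). This matches your own correct description of the kernel of the \(\Cont_0(G,\A\A^*)\)-action earlier.
\item The final step is evaluation at an arbitrary \(g\), not restriction to units. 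You extend given \(a,c\in\A_g\) and \(b\in\FU[I]_{\s(g)}\) to sections \(\xi,\eta\in\Contc(G,\A)\) and \(f\in I\). The integrated section in your first paragraph plays no role and can be dropped.
\end{itemize}
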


\begin{proof}
  The ideal~\(I\) corresponds to a continuous family of ideals
  \(\FU[I]_x \idealin \FU_x\) by
  Remark~\ref{rem:continuous_family_of_ideals}.
  We must prove that \(\FU[I]_{\rg(g)}\cdot \A_g=\A_g\cdot
  \FU[I]_{\s(g)}\) for all \(g\in G\).
  By definition, \(U\) is a unitary operator
  \[
    U\colon \Lt^2(G,\A,\s,\tilde\alpha)\otimes_\varphi\F
    \congto \Lt^2(G,\A\A^*,\rg,\alpha)\otimes_\varphi\F.
  \]
  The mere existence of an intertwiner~\(U\) of representations of
  \(\Cont_0(G,\A\A^*)\) implies that the kernels of the left actions of
  \(\Cont_0(G,\A\A^*)\) on the domain and codomain of~\(U\) are the
  same.
  Let~\(J\) be this common kernel.
  This ideal in \(\Cont_0(G,\A\A^*)\) corresponds to a continuous
  family of ideals~\(J_g\) in \(\A_g\A_g^*\idealin \FU_{\rg(g)}\) by
  Remark~\ref{rem:continuous_family_of_ideals}.

  We claim that~\(J\) is at the same time the largest ideal in
  \(\Cont_0(G,\A\A^*)\) with
  \(J\cdot \Lt^2(G,\A,\s,\tilde\alpha)\subseteq
  \Lt^2(G,\A,\s,\tilde\alpha)\cdot I\) and the largest ideal in
  \(\Cont_0(G,\A\A^*)\) with
  \(J\cdot \Lt^2(G,\A\A^*,\rg,\alpha)\subseteq
  \Lt^2(G,\A\A^*,\rg,\alpha)\cdot I\).  Both claims are proven in the
  same way, letting \(\E\defeq\Lt^2(G,\A,\s,\tilde\alpha)\) or
  \(\E\defeq\Lt^2(G,\A\A^*,\rg,\alpha)\) for the two cases.  An
  element~\(a\) of \(\Cont_0(G,\A\A^*)\) belongs to~\(J\) if and only
  if it acts by zero on the correspondence \(\E\otimes_\varphi \F\).
  Equivalently, it annihilates all elementary tensors
  \(\xi\otimes \eta\) with \(\xi\in\E\), \(\eta\in \F\).  This is
  equivalent to
  \[
    \braket{a\cdot\xi_1\otimes\eta_1}{\xi_2\otimes \eta_2}
    =
    \braket{\eta_1}{\varphi(\braket{a\cdot\xi_1}{\xi_2})\eta_2}
    = 0
  \]
  for all \(\xi_1,\xi_2\in \E\) and \(\eta_1,\eta_2\in \F\).  This is
  equivalent to \(\braket{a\cdot \E}{\E}\subseteq \ker(\varphi) = I\).
  And this is equivalent to \(a\cdot \E\subseteq \E\cdot I\).

  We claim that
  \(a\cdot \Lt^2(G,\A,\s,\tilde\alpha) \subseteq
  \Lt^2(G,\A,\s,\tilde\alpha)\cdot I\) holds if and only if
  \(a(g)\cdot \A_g \subseteq \A_g\FU[I]_{\s(g)}\) for all \(g\in G\)
  and that
  \(a\cdot \Lt^2(G,\A\A^*,\rg,\alpha) \subseteq
  \Lt^2(G,\A\A^*,\rg,\alpha)\cdot I\) holds if and only if
  \(a(g)\cdot \A_g\A_g^* \subseteq \A_g\A_g^* \FU[I]_{\rg(g)}\) for all
  \(g\in G\).  The proofs of both claims are so similar that one will
  suffice.  The canonical inclusion
  \(\Contc(G,\A)\cdot I \to \Contc(G,\A\cdot \s^*I)\) induces an isometry
  \(\Lt^2(G,\A,\s,\tilde\alpha)\cdot I \hookrightarrow \Lt^2(G,\A\cdot
  \s^* I,\s,\tilde\alpha)\).  It has dense range because the right hand
  side is a Hilbert module over~\(I\), making it nondegenerate as a
  right \(I\)\nb-module.  So
  \[
    \Lt^2(G,\A,\s,\tilde\alpha)\cdot I
    \cong \Lt^2(G,\A\cdot \s^* I,\s,\tilde\alpha).
  \]
  Then \(a(g)\cdot \A_g \subseteq \A_g\FU[I]_{\s(g)}\) for all
  \(g\in G\) implies
  \(a\cdot \Lt^2(G,\A,\s,\tilde\alpha) \subseteq
  \Lt^2(G,\A,\s,\tilde\alpha)\cdot I\).  It remains to prove the
  converse.  We assume that there is \(g\in G\) with
  \(a(g)\cdot \A_g \not\subseteq \A_g\FU[I]_{\s(g)}\).  So there is
  \(\eta(g)\in \A_g\) with
  \(a(g) \eta(g) \notin \A_g \FU[I]_{\s(g)}\).  This implies
  \(\eta(g)^* a(g)^* a(g)\eta(g) \notin \FU[I]_{\s(g)}\).  As our
  notation suggests, we embed \(\eta(g)\) in a
  \(\Contc\)\nb-section~\(\eta\) of~\(\A\).  So
  \(\eta \in \Lt^2(G,\A,\s,\tilde\alpha)\).  We are going to prove by
  contradiction that
  \(a\cdot \eta \notin\Lt^2(G,\A,\s,\tilde\alpha)\cdot I\), thereby
  finishing the proof of the claim.  Since
  \(\Contc(G,\A\cdot \s^* I)\) is dense in
  \(\Lt^2(G,\A\cdot \s^* I,\s,\tilde\alpha)\), we may assume that
  there is a sequence \((\xi_n)_{n\in\N}\) in
  \(\Contc(G,\A\cdot \s^* I)\) that converges to \(a\cdot \eta\) in
  \(\Lt^2(G,\A,\s,\tilde\alpha)\).  As a consequence, the following
  sequence converges to~\(0\) in norm:
  \begin{align*}
    &\phantom{{}={}}
      \int_{G_{\s(g)}} (a(h)\eta(h) - \xi_n(h))^*(a(h)\eta(h) - \xi_n(h))
    \,\diff \tilde\alpha_{\s(g)}(h)
    \\ &= \int_{G_{\s(g)}} \eta(h)^*a(h)^*a(h) \eta(h)
    \\ &\qquad\qquad- \xi_n(h)^*a(h)\eta(h) - \eta(h)^* a(h)^* \xi_n(h)
    + \xi_n(h)^* \xi_n(h) \,\diff \tilde\alpha_{\s(g)}(h).
  \end{align*}
  The first term on the right hand side has values in
  \(\A_h^* \A_h \subseteq \FU_{\s(g)}\), whereas the last three
  terms have values in~\(\FU[I]_{\s(g)}\) for all \(h\in G_{\s(g)}\).
  These summands are continuous sections of constant fields along the
  fibre~\(G_{\s(g)}\).
  By assumption, \(\varepsilon \defeq \norm{\eta(g)^*a(g)^*
    a(g)\eta(g)}_{\FU_{\s(g)}/\FU[I]_{\s(g)}}\) is positive.
  Since the integrand is a continuous section of a constant field,
  there is a neighbourhood of~\(g\) in~\(G_{\s(g)}\) where
  \(\eta(h)^*a(h)^* a(h)\eta(h)\) is \(\varepsilon/3\)-close to
  \(\eta(g)^*a(g)^* a(g)\eta(g)\).
  Since the measure family~\(\tilde\alpha_{\s(g)}\) has full support
  by the assumptions of a Haar system, this implies a nonzero lower
  bound on the integral above, even in the quotient
  \(\FU_{\s(g)}/\FU[I]_{\s(g)}\).
  This is a contradiction.
  This finishes the proof of the claim.

  Lemma~\ref{lem:equality_of_products} implies
  \(\A_g \A_g^* \A_g = \A_g\).  Therefore, the implications
  \begin{align*}
    a(g)\cdot \A_g \subseteq \A_g\FU[I]_{\s(g)}
    &\Rightarrow
    a(g)\cdot \A_g \A_g^* \subseteq \A_g\FU[I]_{\s(g)} \A_g^*
    \\&\Rightarrow
    a(g)\cdot \A_g \A_g^* \A_g
    \subseteq \A_g\FU[I]_{\s(g)} \A_g^* \A_g
    \subseteq \A_g \A_g^* \A_g \FU[I]_{\s(g)}
  \end{align*}
  show that \(a(g)\cdot \A_g \subseteq \A_g\FU[I]_{\s(g)}\) is
  equivalent to \(a(g)\cdot \A_g \A_g^* \subseteq \A_g\FU[I]_{\s(g)}
  \A_g^*\); here we also used that the two ideals \(\A_g^* \A_g\)
  and~\(\FU[I]_{\s(g)}\) commute with each other.
  Now our claims above show that \(a(g)\cdot \A_g \A_g^* \subseteq
  \A_g\FU[I]_{\s(g)} \A_g^*\) for all \(g\in G\) holds if and only if
  \(a(g)\cdot \A_g\A_g^* \subseteq \A_g\A_g^*\FU[I]_{\rg(g)}\) for all
  \(g\in G\); here \(a\in \Cont_0(G,\A \A^*)\) is fixed.
  The two right hand sides here are continuous families of ideals
  contained in~\(\A \A^*\).
  Therefore, any element in their fibres at~\(g\) is the value of a
  continuous section~\(a\).
  Therefore, the equivalence above shows that
  \(\A_g\A_g^*\FU[I]_{\rg(g)} = \A_g\FU[I]_{\s(g)}\A_g^*\) for all
  \(g\in G\).
  We may rewrite \(\A_g\A_g^*\FU[I]_{\rg(g)} =
  \FU[I]_{\rg(g)}\A_g\A_g^*\).
  An argument as above shows that \(\FU[I]_{\rg(g)}\A_g\A_g^* =
  \A_g\FU[I]_{\s(g)}\A_g^*\) is equivalent to
  \(\FU[I]_{\rg(g)}\A_g\A_g^*\A_g = \A_g\FU[I]_{\s(g)}\A_g^*\A_g\),  
  which simplifies to \(\FU[I]_{\rg(g)}\A_g = \A_g\FU[I]_{\s(g)}\).
  So~\(I\) is invariant.
\end{proof}

\begin{remark}
  The proof of the lemma above did not use all conditions on the
  representation \((\varphi,U)\).  We only needed that there is an
  isomorphism of correspondences
  \[
    \Lt^2(G,\A,\s,\tilde\alpha)\otimes_\varphi\F \congto
    \Lt^2(G,\A\A^*,\rg,\alpha)\otimes_\varphi\F.
  \]
\end{remark}

\begin{theorem}
  \label{the:induced_ideals_vs_quasi-orbit_space}
  Let~\(\A\) be a Fell bundle over a locally compact Hausdorff
  groupoid~\(G\) with Haar system~\(\alpha\).  Let
  \(\SUF \defeq \Cont_0(G^0,\FU)\) and \(B\defeq \Cst(G,\A)\).  An
  ideal in~\(\SUF\) is restricted from~\(B\) if and only if it is
  invariant.  An ideal in~\(B\) is induced from~\(\SUF\) if and only
  if it is of the form \(\Cst(G,\I)\) for a Fell ideal~\(\I\)
  in~\(\A\).  There is a continuous map
  \(\varrho\colon \dual{B} \to \dual{\SUF}/{\sim}\) so that the
  induced map on open subsets,
  \[
    \varrho^{-1}\colon \Open(\dual{\SUF}/{\sim}) \to \Open(\dual{B})
    \cong \Ideals(B),
  \]
  is a bijection onto the subset \(\Ideals^{\SUF}(B)\) of induced ideals.
\end{theorem}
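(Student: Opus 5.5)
The proof runs through the Galois connection between \(\Ideals(\SUF)\) and \(\Ideals(B)\) given by induction~\(i\) and restriction~\(r\) along the canonical \Star{}homomorphism \(\iota\colon \SUF\to\Mult(B)\) from Section~\ref{sec:universal_rep}.

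First I show that every restricted ideal is invariant. Let \(J\idealin B\). Then \(r(J)\) is the kernel of \(\SUF\to\Mult(B)\to\Mult(B/J)\). By the naturality in Theorem~\ref{the:universal_groupoid_Haus}, this composite is the first component~\(\varphi\) of the Fell-bundle representation \((\varphi,U)\) obtained by disintegrating the quotient map \(B\to B/J\), viewed as a representation on the Hilbert module~\(B/J\). Proposition~\ref{pro:Ker-rep-invariant} then says \(\ker\varphi=r(J)\) is invariant.

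Next I show that every invariant ideal is restricted, and compute the induced ideals. Let \(I\) be invariant. By Corollary~\ref{cor:Fell_ideal_family_ideals} it corresponds to a Fell ideal~\(\I\) with \(\I|_{G^0}\) the field of \(I\). Proposition~\ref{pro:Fell_ideal_inclusion_sections} identifies \(\Cst(G,\I)\) with the ideal of~\(B\) generated by~\(I\), so \(i(I)=\Cst(G,\I)\). I claim \(r(\Cst(G,\I))=I\). The inclusion ``\(\supseteq\)'' is automatic. For ``\(\subseteq\)'', use Proposition~\ref{pro:full_Cstar_exact}: \(B/\Cst(G,\I)\cong\Cst(G,\A/\I)\), and under this isomorphism the map \(\SUF\to\Mult(B/\Cst(G,\I))\) factors through \(\SUF/I\to\Mult(\Cst(G,\A/\I))\). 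That map is injective by Proposition~\ref{pro:coefficients_embed} applied to \(\A/\I\), whose unit field is \(\FU/\FU[I]\). Hence \(r(i(I))=I\), so invariant ideals are exactly the restricted ones. Since a general~\(I\) satisfies \(i(I)=i(r(i(I)))\) with \(r(i(I))\) invariant, every induced ideal has the form \(\Cst(G,\I)\). Conversely, each \(\Cst(G,\I)\) equals \(i(I)\) for \(I=\I|_{G^0}\).

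Finally I construct~\(\varrho\). By Lemma~\ref{lem:invariant_ideal_in_dual} and the discussion after it, invariant ideals of~\(\SUF\) correspond to invariant open subsets of \(\dual{\SUF}\), hence to \(\Open(\dual{\SUF}/{\sim})\). So the lattice isomorphism \(\Ideals^B(\SUF)\cong\Ideals^{\SUF}(B)\) becomes a map \(\Open(\dual{\SUF}/{\sim})\to\Open(\dual B)\) preserving arbitrary unions and finite intersections. Arbitrary unions are preserved because \(i\) is a left adjoint and so preserves joins. Finite intersections need a short check: \(i(I_1\cap I_2)=i(I_1)\cap i(I_2)\) for invariant \(I_1,I_2\). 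I would prove this through the Fell-ideal description, using \(\Contc(G,\I_1\cap\I_2)\) and an approximate-unit argument, or cite \cite{Kwasniewski-Meyer:Stone_duality}*{Proposition~2.9}.

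To turn this lattice map into a continuous point map I would follow \cite{Kwasniewski-Meyer:Stone_duality}*{Theorem~7.17}. The ingredient previously missing there is that restricted ideals are invariant, which is exactly the first step above, replacing Renault's Disintegration Theorem in \cite{Kwasniewski-Meyer:Stone_duality}*{Proposition~7.16}. For \([\pi]\in\dual B\), the ideal \(r(\ker\pi)\) is invariant and prime, because \(\ker\pi\) is prime and~\(r\) preserves intersections. Therefore its complement corresponds to a closed invariant set that is irreducible in \(\dual{\SUF}/{\sim}\). I expect the main obstacle to be here: showing that this irreducible closed set is the closure of a single quasi-orbit, i.e.\ that \(\dual{\SUF}/{\sim}\) is sober enough. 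That is the point-set part handled in \cite{Kwasniewski-Meyer:Stone_duality}, and I would import it verbatim.

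Once this holds, set \(\varrho([\pi])\) to be that quasi-orbit. Then \(\varrho^{-1}\) of an open set is \(\dual{i(I)}\), which gives continuity and the asserted bijection onto \(\Ideals^{\SUF}(B)\).
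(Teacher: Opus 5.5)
Your proposal follows the paper's proof. Restricted ideals are shown invariant by disintegrating the quotient map \(B\to B/J\) and applying Proposition~\ref{pro:Ker-rep-invariant}. Invariant ideals are shown restricted by passing to \(\Cst(G,\A/\I)\) and applying Proposition~\ref{pro:coefficients_embed} to \(\A/\I\). The construction of~\(\varrho\) is imported from \cite{Kwasniewski-Meyer:Stone_duality}, just as the paper does. Your additional use of Proposition~\ref{pro:full_Cstar_exact} gives \(r(i(I))=I\) and hence the explicit description of induced ideals; this is a harmless refinement of a point the paper leaves implicit.

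One small correction: primality of \(r(\ker\pi)\) among invariant ideals does not follow from \(r\) preserving intersections. It follows from \(i(I_1\cap I_2)=i(I_1)\cap i(I_2)\), which you noted, and which holds because \(\overline{IB}=\overline{BI}=\Cst(G,\I)\) for every invariant~\(I\).
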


\begin{proof}
  The machinery in~\cite{Kwasniewski-Meyer:Stone_duality} already
  shows most of this theorem.  Only one key result is missing,
  namely, that an ideal in~\(\SUF\) is restricted if and only if it
  is invariant.  As we will show below,
  Proposition~\ref{pro:Ker-rep-invariant} implies that restricted ideals
  are invariant.  And Proposition~\ref{pro:coefficients_embed}
  implies as in~\cite{Kwasniewski-Meyer:Stone_duality} that
  invariant ideals are restricted.

  First, we show that the ideal \(r(J)\) in~\(\SUF\) for an
  ideal~\(J\) in~\(B\) is invariant.  We interpret the quotient map
  \(\pi\colon B \to B/J\) as a representation of~\(B\) on the
  Hilbert module~\(B/J\) over~\(B/J\).  By the universal property
  of~\(B\), this is associated to a representation \((\varphi,U)\)
  of the Fell bundle~\(\A\).  The construction of the universal
  representation in Section~\ref{sec:universal_rep} shows that
  \(\varphi = \pi\circ\iota\) for the canonical inclusion map
  \(\iota\colon \SUF\to \Mult(B)\).  So \(r(J) = \ker \varphi\).
  This ideal in~\(\SUF\) is invariant by
  Proposition~\ref{pro:Ker-rep-invariant}.

  Second, we show that any invariant ideal~\(I\) is restricted
  from~\(B\).  By Corollary~\ref{cor:Fell_ideal_family_ideals}, \(I\)
  corresponds to a Fell ideal~\(\I\) in~\(\A\).  There is a quotient
  Fell bundle~\(\A/\I\).
  Its restriction to units is \(\FU/I\).
  So the map \(A/I \to \Mult(\Cst(G,\A/\I))\) is faithful by
  Proposition~\ref{pro:coefficients_embed}, applied to the Fell
  bundle~\(\A/\I\).
  Let~\(J\) be the kernel of the canonical quotient map \(\Cst(G,\A)
  \to \Cst(G,\A/\I)\).
  Then \(r(J) = I\) (compare the proof of
  \cite{Kwasniewski-Meyer:Stone_duality}*{Lemma~7.14}).
\end{proof}

\section{Representations on separable Hilbert spaces}
\label{sec:Renault}

Now we specialise to the case where \(D=\C\), so that~\(\F\) is a
Hilbert space.  As before, let \((G,\alpha)\) be a locally compact
groupoid with Haar system and let~\(\A\) be a Fell bundle
over~\(G\).  Let~\(\FU \defeq \A|_{G^0}\) be the restriction of~\(\A\) to units,
which is a field of \(\Cst\)\nb-algebras over~\(G^0\).  We are going
to define measurable Fell-bundle representations on measurable
fields of Hilbert spaces over~\(G^0\) and turn these into Fell-bundle
representations on Hilbert spaces as in
Definition~\ref{def:representation_Fell}.
Under separability assumptions, we are going to prove that,
conversely, any Fell-bundle representation comes from a measurable
Fell-bundle representation in this way, which is unique up to unitary
equivalence almost everywhere.
When combined with the universal property of \(\Cst(G,\A)\), this
gives Renault's Disintegration Theorem.

The equivalence between Fell-bundle representations and measurable
Fell-bundle representations as defined below relies mainly on the
well-known representation theory of commutative
\(\Cst\)\nb-algebras.  In addition, we use ideas of Ramsay
from~\cites{Ramsay:Virtual_groups, Ramsay:Topologies} to change the
exceptional null sets that occur in the construction.

Let~\(\nu\) be a measure on~\(G^0\) and let
\(\Hils = (\Hils_x)_{x\in G^0}\) be a \(\nu\)\nb-measurable field of
Hilbert spaces over~\(G^0\) (we use the definition in
\cite{Dixmier:Von_Neumann}*{Definition~1 on p.~164}, which is repeated
in \cite{Dixmier:Cstar-algebras}*{A69}).
In particular, all Hilbert spaces~\(\Hils_x\) are assumed separable.
This will be a standard assumption throughout this section.
We assume that \(\Hils_x \neq 0\) for \(\nu\)\nb-almost all \(x\in
G^0\).
If this fails, then we restrict~\(\nu\) to the set of points \(x\in
G^0\) with \(\Hils_x\neq0\).
This leaves the Hilbert space \(\Lt^2(G^0,\Hils,\nu)\) unchanged.

\begin{definition}
  \label{def:Fell_bundle_pre-rep}
  A \emph{measurable Fell-bundle pre-representation} of~\(\A\)
  on~\(\Hils\) consists of linear maps
  \(S_g\colon \A_g \to \Bound(\Hils_{\s(g)}, \Hils_{\rg(g)})\) for all
  \(g\in G\) with the following properties:
  \begin{enumerate}
  \item \label{def:Fell_bundle_Pre-rep_1}%
    if \((g,h)\in G^2\), \(a_1\in \A_g\), \(a_2\in \A_h\), then
    \(S_g(a_1) S_h(a_2) = S_{g h}(a_1\cdot a_2)\);
  \item \label{def:Fell_bundle_Pre-rep_2}%
    if \(g\in G\), \(a\in \A_g\), then
    \(S_g(a)^* = S_{g^{-1}}(a^*)\);
  \item \label{def:Fell_bundle_Pre-rep_3}%
    if \(x\in G^0\), the linear span of \(S_{u(x)}(a)\xi\) for
    \(a\in \FU_x\), \(\xi\in \Hils_x\) is dense in~\(\Hils_x\);
  \item \label{def:Fell_bundle_Pre-rep_4}%
    for each continuous section \(f\in \Contc(G,\A)\) and measurable
    section~\(\xi\) of~\(\s^*(\Hils)\), the map
    \(g\mapsto S_g(f(g))\xi(g)\) is a measurable section
    of~\(\rg^*(\Hils)\).
  \end{enumerate}
\end{definition}

Conditions
\ref{def:Fell_bundle_Pre-rep_1}--\ref{def:Fell_bundle_Pre-rep_3} in
Definition~\ref{def:Fell_bundle_pre-rep} imply
that~\(S_{u(x)}\) -- which we also denote by \(S_x\) -- is a nondegenerate
\Star{}representation of~\(\FU_x\) on~\(\Hils_x\) for all \(x\in G^0\).

In order to define Fell-bundle representations, we need a condition
on the measure~\(\nu\) that replaces quasi-invariance.  Let
\(\nu\circ\alpha\) denote the measure on~\(G\) whose integration map
is the composite of the integration maps for \(\nu\) and~\(\alpha\).
That is,
\[
  \int_G f(g) \,\diff(\nu\circ\alpha)(g) = \int_{G^0} \int_{G^x} f(g)
  \,\diff \alpha^x(g)\,\diff \nu(x)
\]
for all bounded Borel functions \(f\colon G\to\C\) with compact support.
Define \(\nu\circ\tilde\alpha\) similarly.

\begin{definition}
  \label{def:A-quasi-invariant}
  A measure~\(\mu\) on~\(G\) is called \emph{symmetric} if it is
  equivalent to \(\inv_*\mu\), its pushforward along the inversion
  map.  A measure~\(\nu\) on~\(G^0\) is called
  \emph{quasi-invariant} if \(\nu\circ\alpha\) is symmetric.
\end{definition}

The definition of~\(\tilde\alpha\) implies
\(\inv_*(\nu\circ\alpha) = \nu\circ\tilde\alpha\).  Thus~\(\nu\) is
quasi-invariant if and only if the measures \(\nu\circ\alpha\)
and~\(\nu\circ\tilde\alpha\) on~\(G\) are equivalent.

The disintegration of representations of groupoids without Fell
bundles involves only quasi-invariant measures
(see~\cite{Renault:Representations}).  This remains true for saturated
Fell bundles.  But it becomes wrong for Fell bundles that are not
saturated:

\begin{example}
  Let~\(G\) be a locally compact Hausdorff étale groupoid and
  let~\(\FU\) be an upper semicontinuous field of \(\Cst\)\nb-algebras
  over~\(G^0\).  Since~\(G\) is étale, there is an upper
  semicontinuous field of Banach spaces~\(\A\) over~\(G\) with
  \(u^*\A = \FU\) and \(\A|_{G\setminus u(G^0)} = 0\).  This becomes a
  Fell bundle over~\(G\) with the product and involution from~\(\FU\).
  A representation of this Fell bundle is the same as a representation
  of~\(\Cont_0(G^0,\FU)\).  When we disintegrate such a
  representation, then \emph{any} measure~\(\nu\) on~\(G^0\) may
  occur.  It need not be \(G\)\nb-quasi-invariant because the Fell
  bundle~\(\A\) only sees~\(G^0\) and not~\(G\).
\end{example}

This example suggests to consider the quasi-invariance of the measure
\(\nu\circ \alpha\) on an appropriate subset of~\(G\).
The following lemma helps to make this precise:

\begin{lemma}
  \label{lem:measurable-rep-U}
  Consider a measurable Fell-bundle representation and let \(g\in G\).
  There is a unitary operator
  \[
    U_g\colon \A_g \otimes_{\FU_{\s(g)}} \Hils_{\s(g)} \congto
    S_{u\rg(g)}(\A_g\A_g^*) \Hils_{\rg(g)},\qquad
    a\otimes \xi\mapsto S_g(a)(\xi).
  \]
  In particular, \(\A_g \otimes_{\FU_{\s(g)}} \Hils_{\s(g)}\neq0\) if
  and only if
  \(\A_g \A_g^* \otimes_{\FU_{\rg(g)}} \Hils_{\rg(g)}\neq0\).  The
  unitary operators~\(U_g\) for \(g\in G\) form an isomorphism
  \(\A\,\s^*\Hils \congto \A\A^*\,\rg^*\Hils\) of measurable fields of
  Hilbert spaces over~\(G\).
\end{lemma}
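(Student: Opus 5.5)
The plan is to first build \(U_g\) fibrewise on the algebraic tensor product and show it preserves inner products, then identify its range, and finally check measurability.

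\emph{Isometry.} For \(a_1,a_2\in\A_g\) and \(\xi_1,\xi_2\in\Hils_{\s(g)}\), conditions \ref{def:Fell_bundle_Pre-rep_1} and~\ref{def:Fell_bundle_Pre-rep_2} of Definition~\ref{def:Fell_bundle_pre-rep} give
\[
  \braket{S_g(a_1)\xi_1}{S_g(a_2)\xi_2}
  = \braket{\xi_1}{S_{g^{-1}}(a_1^*)S_g(a_2)\xi_2}
  = \braket{\xi_1}{S_{\s(g)}(a_1^*a_2)\xi_2}.
\]
This is exactly the inner product of \(a_1\otimes\xi_1\) and \(a_2\otimes\xi_2\) in \(\A_g\otimes_{\FU_{\s(g)}}\Hils_{\s(g)}\). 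Hence \(a\otimes\xi\mapsto S_g(a)\xi\) is well defined and isometric on the algebraic tensor product, so it extends to an isometry \(U_g\).

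\emph{Range.} By Lemma~\ref{lem:equality_of_products}, \(\A_g=\A_g\A_g^*\A_g\). Then \(S_g(\A_g)=S_{\rg(g)}(\A_g\A_g^*)S_g(\A_g)\), so the range lies in \(S_{\rg(g)}(\A_g\A_g^*)\Hils_{\rg(g)}\). Conversely, \(S_{\rg(g)}(a_1a_2^*)\eta=S_g(a_1)\bigl(S_{g^{-1}}(a_2^*)\eta\bigr)\) lies in the range. These products span a dense subspace of \(S_{\rg(g)}(\A_g\A_g^*)\Hils_{\rg(g)}\), so \(U_g\) is unitary onto that closed subspace. The claim about nonvanishing then follows from this unitary together with \(\A_g\A_g^*\otimes_{\FU_{\rg(g)}}\Hils_{\rg(g)}\cong S_{\rg(g)}(\A_g\A_g^*)\Hils_{\rg(g)}\). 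This last isomorphism is the case \(g=u(\rg(g))\) applied to the ideal \(\A_g\A_g^*\), using nondegeneracy, condition~\ref{def:Fell_bundle_Pre-rep_3}.

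\emph{Measurability.} This is the main obstacle: one must say what the measurable structures on the fields \(\A\,\s^*\Hils\) and \(\A\A^*\,\rg^*\Hils\) are. I would define them by declaring measurable the sections generated by \(g\mapsto f(g)\otimes\xi(\s(g))\) for \(f\in\Contc(G,\A)\) and \(\xi\) measurable, respectively \(g\mapsto S_{\rg(g)}(f(g))\xi(\rg(g))\) for \(f\in\Contc(G,\A\A^*)\). Inner products of such generators are measurable by condition~\ref{def:Fell_bundle_Pre-rep_4} and continuity of \(g\mapsto f_1(g)^*f_2(g)\). Totality follows from density, which holds fibrewise. Countability would come from a separability assumption, or alternatively from a countable dense set of measurable \(\xi\) and countably many \(f\) chosen fibrewise dense on a \(\sigma\)-compact set. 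Then \(U\) maps generators to \(g\mapsto S_g(f(g))\xi(\s(g))\), which is measurable in \(\rg^*\Hils\) by condition~\ref{def:Fell_bundle_Pre-rep_4} and lies in the subfield. Inner products with the generators of the target are measurable, so \(U\) is a measurable unitary field.
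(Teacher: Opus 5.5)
Your proposal is correct and follows essentially the same route as the paper. The isometry comes from \(S_g(a_1)^*S_g(a_2)=S_{\s(g)}(a_1^*a_2)\), and the range is identified using \(\A_g=\A_g\A_g^*\A_g\) together with \(S_{\rg(g)}(a_1a_2^*)=S_g(a_1)S_{g^{-1}}(a_2^*)\); the paper packages these as a cyclic chain of inclusions. Your measurability discussion is more explicit than the paper's one-line appeal to the measurability assumption on the \(S_g\). The appeal to nondegeneracy for \(\A_g\A_g^*\otimes_{\FU_{\rg(g)}}\Hils_{\rg(g)}\cong S_{\rg(g)}(\A_g\A_g^*)\Hils_{\rg(g)}\) is not actually needed, but it does no harm.
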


\begin{proof}
  Since \(\A_g\A_g^*\A_g=\A_g\) and the maps~\(S_g\) are compatible
  with the multiplication and involution, we get a cyclic chain of
  inclusions
  \begin{multline}
    \label{eq:measurable_rep_inclusions}
    S_g(\A_g) \Hils_{\s(g)}
    = S_g(\A_g \A_g^*\A_g) \Hils_{\s(g)}
    = S_{u\rg(g)}(\A_g\A_g^*) S_g(\A_g) \Hils_{\s(g)}
    \\\subseteq S_{u\rg(g)} (\A_g \A_g^*) \Hils_{\rg(g)}
    = S_g(\A_g)S_g(\A_g)^* \Hils_{\rg(g)}
    \subseteq S_g(\A_g) \Hils_{\s(g)}.
  \end{multline}
  It follows that the inclusions
  in~\eqref{eq:measurable_rep_inclusions} are equalities and that
  the range of~\(U_g\) is dense in
  \(S_{u\rg(g)}(\A_g\A_g^*) \Hils_{\rg(g)}\).  If
  \(a_1,a_2\in \A_g\), \(\xi_1,\xi_2\in \Hils_{\s(g)}\), then we
  compute
  \begin{multline*}
    \braket{U_g(a_1 \otimes \xi_1)}{U_g(a_2\otimes \xi_2)} =
    \braket{S_g(a_1)\xi_1}{S_g(a_2)\xi_2} =
    \braket{\xi_1}{S_g(a_1)^*S_g(a_2)\xi_2} \\=
    \braket{\xi_1}{S_{u\s(g)}(a_1^* \cdot a_2)\xi_2} = \braket{a_1
      \otimes \xi_1}{a_2\otimes \xi_2}
  \end{multline*}
  This implies that~\(U_g\) is a well defined isometry.  The family of
  unitaries~\(U_g\) is measurable because of the measurability
  assumption on the maps~\(S_g\).  Then the adjoint~\(U_g^*\) is also
  measurable.
\end{proof}

\begin{definition}
  \label{def:A-quasi-invariant_relative}
  Let \(\Hils= (\Hils_x)_{x\in G^0}\) be a \(\nu\)\nb-measurable field
  of Hilbert spaces over~\(G^0\) and let
  \(S_g\colon \A_g \to \Bound(\Hils_{\s(g)}, \Hils_{\rg(g)})\) for
  \(g\in G\) be a \(\nu\)\nb-measurable pre-representation of the Fell
  bundle~\(\A\) on~\(\Hils\).  We define the Borel set
  \[
    \supp \A\A^*\,\rg^*\Hils
    \defeq \setgiven{g\in G}
    {S_{u\rg(g)}(\A_g\A_g^*) \Hils_{\rg(g)} \neq0}
    = \setgiven{g\in G}{S_g(\A_g)\Hils_{\s(g)}\not=0}.
  \]
  The first description shows that the support depends only
  on~\(S_{u(x)}\) for \(x\in G^0\) and not on~\(S_g\) for \(g\notin
  u(G^0)\).
  Therefore, the following concept also uses
  only~\((S_{u\rg(g)})_{g\in G}\):
  the measure~\(\nu\) is called \emph{quasi-invariant relative
    to~\((S_{u\rg(g)})_{g\in G}\)} if the restriction
  of~\(\nu\circ\alpha\) to the Borel subset
  \(\supp \A\A^*\,\rg^*\Hils\) of~\(G\), defined by
  \(\nu\circ\alpha|_{\supp \A\A^*\,\rg^*\Hils}(T) \defeq
  \nu\circ\alpha(T \cap \supp \A\A^*\,\rg^*\Hils)\), is symmetric.
\end{definition}

\begin{remark}
  We are going to prove that a Fell-bundle representation on a
  separable Hilbert space is equivalent to a measurable Fell-bundle
  representation with a measure class that is quasi-invariant relative
  to this representation.  Let us point out why a rather obvious idea
  to find a counterexample to this does not work.  As our Fell bundle,
  we take the direct sum of a saturated Fell bundle, say, over a
  group, and a Fell bundle that is living on units.  Then a
  representation is just a pair of representations of the two Fell
  bundles.  The first representation gives a measure class that is
  quasi-invariant in the usual sense and a field of Hilbert spaces for
  which \(\supp \A\A^*\,\rg^*\Hils = G\) is possible, but the second
  representation may have an arbitrary measure class.  So why should
  the direct sum representation have a measure class that is
  relatively quasi-invariant?  What saves us is that the bundle of
  Hilbert spaces and the measure classes for a direct sum of two
  Hilbert space representations are not just the naive sums of
  the measurable representations and the measure classes.
  For instance, if the two measure classes are orthogonal, that is,
  supported on disjoint measurable subsets, then we should first
  restrict our fields of Hilbert spaces to these disjoint subsets
  before adding them.  This changes the supports of both fields of
  Hilbert spaces defined above, and it turns out on inspection that
  the direct sum representation does indeed come from a measurable
  Fell-bundle representation and a measure class that is
  quasi-invariant relative to it.
\end{remark}

\begin{definition}
  \label{def:Fell_bundle_rep_disintegrated}
  A \emph{measurable Fell-bundle representation} of~\(\A\) consists of
  a triple \((\nu, \Hils, S)\) where~\(\nu\) is a measure on the
  space~\(G^0\) of units, \(\Hils\) is a measurable field of Hilbert
  spaces, \(S=(S_g)_{g\in G}\) is a Fell-bundle pre-representation
  of~\(\A\) on~\(\Hils\) and the measure~\(\nu\) is quasi-invariant
  relative to \((S_{u\rg(g)})_{g\in G}\).
\end{definition}

\subsection{Integration of a measurable Fell-bundle representation}
\label{sec:original-draft}

Now we describe how a measurable Fell-bundle representation gives a
representation of the Fell bundle on a Hilbert space as in
Definition~\ref{def:representation_Fell}.  Let
\((\nu, \Hils, (S_g)_{g\in G})\) be a measurable representation of the
Fell bundle~\(\A\).  We are going to define a Fell-bundle
representation \((\varphi,U)\) of~\(\A\) on the Hilbert space
\[
  \Hils[L] \defeq \Lt^2(G^0,\Hils,\nu).
\]
The representation
\(\varphi\colon \Cont_0(G^0,\FU) \to \Bound(\Hils[L])\) is defined
by
\[
  (\varphi(f)\xi)(x) \defeq S_{u(x)}(f(x))(\xi(x))
\]
for all \(f\in \Cont_0(G^0,\FU)\), \(\xi\in \Lt^2(G^0,\Hils,\nu)\),
\(x\in G^0\).  This is well defined because of the measurability
assumption in Definition~\ref{def:Fell_bundle_pre-rep}, and it is a
representation because each~\(S_{u(x)}\) is representation
of~\(\FU_x\).

There is a unique measurable field of Hilbert spaces denoted
\(\A\,\s^*\Hils\) over~\(G\) that has the fibres
\(\A_g \otimes_{\FU_{\s(g)}} \Hils_{\s(g)}\) at \(g\in G\) and
that is such that the space of its measurable sections is generated
by sections of the form \(g\mapsto f_1(g) \otimes f_2(\s(g))\) for
\(f_1\in \Contc(G,\A)\) and \(f_2\in \Lt^2(G^0,\Hils,\nu)\); here
\cite{Dixmier:Von_Neumann}*{Proposition~4 on p.~167} provides the
unique measurable field structure.  These fibres are isomorphic (via Lemma~\ref{lem:measurable-rep-U}) to
\(S_g(\A_g) \Hils_{\s(g)}\subseteq \Hils_{\rg(g)}\), and these
isomorphisms identify \(\A\,\s^*\Hils\) with a subfield
of~\(\rg^*(\Hils)\).

Similarly, we define a measurable field of Hilbert spaces
\(\A\A^*\,\rg^*\Hils\) over~\(G\) with fibres
\(S_{u\rg(g)}(\A_g\A_g^*) \Hils_{\rg(g)} \cong \A_g\A_g^*
\otimes_{\FU_{\rg(g)}} \Hils_{\rg(g)}\) at \(g\in G\).  By
definition, the fibre of this measurable field at \(g\in G\) is
nontrivial if and only if \(g\in \supp \A\A^*\,\rg^*\Hils\).  The
representation~\((S_g)_{g\in G}\) makes the two fields above
isomorphic.

The same proof as for
Lemma~\ref{lem:compose_corr_from_measure-family} gives natural
unitary operators
\begin{align}
  \label{eq:unitary-rep-Hilb-space}
  \Lt^2(G,\A,\s,\tilde\alpha) \otimes_\varphi \Hils[L]
  &\congto \Lt^2(G,\A\,\s^*\Hils,\nu\circ\tilde\alpha),\\
  \Lt^2(G,\A\A^*,\rg,\alpha) \otimes_\varphi \Hils[L]
  &\congto \Lt^2(G,\A\A^*\,\rg^*\Hils,\nu\circ\alpha),
\end{align}
mapping \(f_1\otimes f_2\) to the sections
\(g\mapsto f_1(g)\cdot f_2(\s(g))\) or
\(g\mapsto f_1(g)\cdot f_2(\rg(g))\), respectively.  We replace
\(\nu\circ\tilde\alpha\) and \(\nu\circ\alpha\) by their restrictions
to the support of the two isomorphic fields of
Hilbert spaces in which we take sections.  This does not change the
space of \(\Lt^2\)\nb-sections.

These restrictions of \(\nu\circ\tilde\alpha\) and \(\nu\circ\alpha\)
are equivalent measures if and only if the measure~\(\nu\) is
quasi-invariant relative to~\((S_{u\rg(g)})_{g\in G}\); this follows
from~\eqref{eq:measurable_rep_inclusions}.
As a result, the restrictions of \(\nu\circ\tilde\alpha\) and
\(\nu\circ\alpha\) to the supports of the relevant fields of Hilbert spaces are
equivalent.
The resulting Radon--Nikodym derivative
\begin{equation}
  \label{eq:modular_def}
  \Delta \defeq
  \frac{\diff \nu\circ\alpha}
  {\diff \nu\circ\tilde\alpha} \biggr|_{\supp \A\A^*\,\rg^*\Hils}
  \colon \supp \A\A^*\,\rg^*\Hils \to (0,\infty),
\end{equation}
generalises the modular function of a genuinely quasi-invariant
measure.
As in the classical case, the following defines a unitary operator:
\begin{align}
  \label{eq:Unitary-Tensor}
  U\colon \Lt^2(G,\A\,\s^*\Hils,\nu\circ\tilde\alpha)
  &\congto
    \Lt^2(G,\A\A^*\,\rg^*\Hils,\nu\circ\alpha),\\ \notag
  (U f)(g)
  &\defeq U_g(f(g)) \cdot \Delta(g)^{\frac{1}{2}}.
\end{align}
Using the canonical isomorphisms above, we turn~\(U\) into a unitary
operator
\[
  \Lt^2(G,\A,\s,\tilde\alpha) \otimes_\varphi \Hils[L] \congto
  \Lt^2(G,\A\A^*,\rg,\alpha) \otimes_\varphi \Hils[L].
\]

\begin{lemma}
  \label{lem:disintegrated_Fell_bundle_rep}
  The data \((\varphi,U)\) defined above is a Fell-bundle
  representation.
\end{lemma}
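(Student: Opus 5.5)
The plan is to verify the three conditions in Definition~\ref{def:representation_Fell} one at a time, after transporting everything into spaces of \(\Lt^2\)\nb-sections of measurable fields of Hilbert spaces.

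\textbf{Step 1: \(\varphi\) and the intertwining property of \(U\).}
First, \(\varphi\) is a \Star{}homomorphism by construction. It is nondegenerate because each \(S_{u(x)}\) is nondegenerate by condition~\ref{def:Fell_bundle_Pre-rep_3}. To turn this fibrewise statement into a global one, we use an approximate unit of \(\Cont_0(G^0,\FU)\) together with dominated convergence on \(\Lt^2(G^0,\Hils,\nu)\).

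Next, \(U\) is unitary. It is fibrewise unitary by Lemma~\ref{lem:measurable-rep-U}, and the factor \(\Delta^{1/2}\) corrects exactly for the change of measure from \(\nu\circ\tilde\alpha\) to \(\nu\circ\alpha\) on the common support. The pointwise formula shows that \(U\) commutes with multiplication by sections \(\psi\in\Cont_0(G,\A\A^*)\): the operator \(U_g\) is multiplication in the Fell bundle followed by \(S_g\), and
\[
  S_{u\rg(g)}(\psi(g))\,S_g(a)=S_g(\psi(g)a).
\]
So \(U\) is an isomorphism of correspondences.

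\textbf{Step 2: the coherence condition.}
This is the core of the argument, and I expect it to be the main obstacle. The plan is to compute all three isomorphisms \(d_j^*(U)\) concretely.

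Arguing as in Lemma~\ref{lem:compose_corr_from_measure-family}, we identify
\[
  \Lt^2(G^2,\E_k,v_k,\mu_k)\otimes_\varphi\Hils[L]
\]
with the \(\Lt^2\)\nb-space of the measurable field over \(G^2\) with fibres \((\E_k)_{(g,h)}\otimes\Hils_{v_k(g,h)}\), with respect to the measure \(\nu\circ\mu_k\). Unwinding the pasting diagrams of Figure~\ref{fig:U_coherence} under these identifications should give pointwise formulas of the following form:
\begin{itemize}
  \item \(d_0^*(U)\) acts at \((g,h)\) by \(\Id\otimes U_h\) together with \(\Delta(h)^{1/2}\);
  \item \(d_2^*(U)\) acts by \(U_g\) together with \(\Delta(g)^{1/2}\);
  \item \(d_1^*(U)\) acts by \(U_{gh}\), restricted through the multiplication isomorphism \(\mu\), together with \(\Delta(gh)^{1/2}\).
\end{itemize}
Two ingredients are needed to match these.

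The first is the operator identity \(U_g(\Id\otimes U_h)=U_{gh}\circ\mu\) on \(\A_g\otimes\A_h\otimes\Hils_{\s(h)}\). It follows from multiplicativity, \(S_g(a)S_h(b)=S_{gh}(ab)\) (condition~\ref{def:Fell_bundle_Pre-rep_1}).

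The second is the cocycle identity \(\Delta(g)\Delta(h)=\Delta(gh)\), valid \(\nu\circ\mu_0\)\nb-almost everywhere on the relevant support in \(G^2\). To obtain it, I would first compute the Radon--Nikodym derivatives of the measures \(\nu\circ\mu_j\), restricted to the supports of the fields \(\E_j\otimes v_j^*\Hils\), with respect to one another. These supports coincide by Lemma~\ref{lem:measurable-rep-U}, applied to \(g\), \(h\) and \(gh\). The identities \(\mu_0=\alpha\circ\lambda_1=\alpha\circ\lambda_2\) and their analogues in \eqref{eq:compose_integration_maps0}--\eqref{eq:compose_integration_maps2}, together with the definition \eqref{eq:modular_def}, should then express these derivatives as \(\Delta\circ d_j\). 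Comparing the resulting expressions gives the cocycle identity. This mirrors the classical argument for quasi-invariant measures: there the modular function is a homomorphism almost everywhere. Here the computation must be restricted to the support sets, so the care goes into handling null sets and supports.

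\textbf{Step 3: conclusion.}
Given the fibrewise identities from Step 2, the two unitaries \(d_2^*(U)d_0^*(U)\) and \(d_1^*(U)\) agree on the dense set of sections of the form \(\eta\otimes\xi\), and hence everywhere. The remaining checks are measurability and the identification of the canonical isomorphisms \(\gamma\); these are routine.
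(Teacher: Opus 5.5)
\textbf{Verdict.} Your proposal is correct. Apart from one step it follows the paper. The paper also identifies the tensor products with \(\Lt^2\)-spaces of measurable fields over \(G^2\), and finds that \(d_0^*(U)\), \(d_2^*(U)\), \(d_1^*(U)\) act fibrewise by \(\Id\otimes U_h\), \(U_g\), \(U_{gh}\), times \(\Delta(h)^{1/2}\), \(\Delta(g)^{1/2}\), \(\Delta(gh)^{1/2}\) respectively. It likewise gets \(U_g(\Id\otimes U_h)=U_{gh}\circ\mu\) from the multiplicativity of~\(S\).

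\textbf{Where the routes differ: the cocycle identity for \(\Delta\).} The paper never proves it beforehand. It writes \(D_2^*(U)D_0^*(U)=M_\xi\circ D_1^*(U)\) with \(\xi(g,h)=\Delta(g)^{1/2}\Delta(h)^{1/2}\Delta(gh)^{-1/2}\). Then \(M_\xi\) is unitary, being a composite of unitaries, and positive, since \(\xi>0\); hence \(M_\xi=1\). The multiplicativity of \(\Delta\) and the equivalence of the restricted measures (Proposition~\ref{def:Delta_multiplicative}) come out as by-products.

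You instead derive the cocycle identity directly from the Radon--Nikodym chain rule, and this works. Put \(Y\defeq\supp\A\A^*\,\rg^*\Hils\).
\begin{enumerate}
\item The identities \(\nu\circ\mu_1=(\nu\circ\alpha)\circ\lambda_0\) and \(\nu\circ\mu_2=(\nu\circ\tilde\alpha)\circ\lambda_0\) show these two measures are equivalent on \(d_0^{-1}(Y)\), with derivative \(\Delta\circ d_0\).
\item Likewise, \(\lambda_2\) gives \(\Delta\circ d_2\) for \(\nu\circ\mu_0\) versus \(\nu\circ\mu_1\) on \(d_2^{-1}(Y)\).
\item Likewise, \(\lambda_1\) gives \(\Delta\circ d_1\) for \(\nu\circ\mu_0\) versus \(\nu\circ\mu_2\) on \(d_1^{-1}(Y)\).
\end{enumerate}
The one thing you should make explicit is the inclusion \(\Sigma\subseteq d_0^{-1}(Y)\cap d_1^{-1}(Y)\cap d_2^{-1}(Y)\). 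It holds because \(S_g(\A_g)S_h(\A_h)\Hils_{\s(h)}\neq0\) forces \(g\), \(h\) and \(gh\) into~\(Y\).

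\textbf{What each route buys.} Yours is the classical argument for quasi-invariant measures, and it shows the cocycle property is purely measure-theoretic. The paper's positivity trick is shorter. It reuses unitarity that is already available and so avoids all bookkeeping of supports and null sets.

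\textbf{A technical point in Step~1.} Dominated convergence does not hold along arbitrary nets. You therefore need a sequential approximate unit of \(\Cont_0(G^0,\FU)\), which exists for instance under the hypotheses of Theorem~\ref{the:disintegrate_Hilbert_space}. Some countability assumption is genuinely needed here; the paper's proof, which only checks the coherence condition, passes over this. Here is an example showing it:
\begin{itemize}
\item \(G=G^0=[0,1]\) with Lebesgue measure;
\item the constant field with fibre \(c_0(D)\), where \(D\) is the set \([0,1]\) with the discrete topology;
\item \(\Hils_x=\C\), and \(S_x\) is evaluation at the point~\(x\).
\end{itemize}
Each \(S_x\) is nondegenerate. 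Yet \(\varphi=0\), because every continuous section takes values in \(c_0(D_0)\) for some countable \(D_0\subseteq D\).
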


\begin{proof}
  All we need to check is that \(d_2^*(U)d_0^*(U)=d_1^*(U)\).  Let
  \(j=1,2,3\).  First, we identify the tensor product
  \(\Lt^2(G^2,\E_j, v_j, \mu_j) \otimes_\varphi \Lt\) in
  Definition~\ref{def:representation_Fell} with the Hilbert space of
  sections of a measurable bundle over~\(G^2\).  In fact, an
  argument similar to that used for getting
  Equation~\eqref{eq:unitary-rep-Hilb-space} gives us a unitary
  isomorphism
  \begin{equation}
    \label{eq:measurable-rep-integration}
    \Lt^2(G^2,\E_j, v_j, \mu_j) \otimes_\varphi \Lt \congto
    \Lt^2(G^2,\E_jv_j^*\Hils, \nu\circ \mu_j),
  \end{equation}
  which maps a section \(f_1\otimes f_2\) to the section
  \((g,h)\mapsto f_1(g,h) \cdot f_2(v_j(g,h))\); here \(v_j\)
  and~\(\mu_j\) are as defined in
  Definition~\ref{def:representation_Fell} and \(\E_j v_j^*\Hils\)
  denotes the \(\nu\circ \mu_j\)\nb-measurable field of Hilbert
  spaces over~\(G^2\) in which the fibre at \((g,h)\in G^2\) is
  \((\E_j)_{(g,h)}\otimes_{\FU_{v_j(g,h)}} \Hils_{v_j(g,h)}\).
  We are going to show that the unitaries
  \begin{align*}
    D_0^*(U)
    &\colon \Lt^2(G^2,\E_2v_2^*\Hils, \nu\circ \mu_2) \to
      \Lt^2(G^2,\E_1v_1^*\Hils, \nu\circ \mu_1),\\
    D_2^*(U)
    &\colon \Lt^2(G^2,\E_1v_1^*\Hils, \nu\circ \mu_1) \to
      \Lt^2(G^2,\E_0v_0^*\Hils, \nu\circ \mu_0)\\
    D_1^*(U)
    &\colon \Lt^2(G^2,\E_2v_2^*\Hils, \nu\circ \mu_2) \to
      \Lt^2(G^2,\E_0v_0^*\Hils, \nu\circ \mu_0)
  \end{align*}
  induced by~\(U\) satisfy \(D_2^*(U)D_0^*(U)=D_1^*(U)\).  The
  isomorphisms in~\eqref{eq:measurable-rep-integration} turn this
  into the desired relation \(d_2^*(U)d_0^*(U)=d_1^*(U)\).

  Since the isomorphisms in~\eqref{eq:measurable-rep-integration}
  and the unitaries \(d_j^*(U)\) commute with the left action of
  \(\Cont_0(G^2)\), so does~\(D_j^*(U)\).  Therefore, \(D_j^*(U)\)
  is a ``decomposable'' operator by
  \cite{Dixmier:Algebres-Operateurs}*{Theorem~1, Section~II.2}.  Up
  to a Radon--Nikodym derivative, the operator~\(D_j^*(U)\) is
  induced by a family of unitaries
  \[
    D_j^*(U)_{(g,h)}\colon
    \E_i v_i^* \Hils_{(g,h)} \to \E_k v_k^* \Hils_{(g,h)}
  \]
  between the fibres of the appropriate measurable fields of Hilbert
  spaces; here \(\{i,j,k\} = \{0,1,2\}\) and \(i>k\).  Recall the
  definition of the bundles~\(\E_j\) in~\eqref{eq:E_012}.

  Since \(d_0(g,h)=h\), the operator \(D_0^*(U)_{(g,h)}\) acts
  by~\(U_h\) combined with canonical isomorphisms:
  \begin{multline*}
    \E_2 v_2^*\Hils_{(g,h)}
    = \A_g\A_h\A_h^*\A_g^*\A_{g h} \otimes_{\FU_{\s(h)}} \Hils_{\s(h)}
    \cong \A_g\A_h\A_h^*\A_g^*\A_g\A_h \otimes_{\FU_{\s(h)}} \Hils_{\s(h)}
    \\\xrightarrow[\sim]{U_h}
    \A_g\A_h\A_h^*\A_g^*\A_g \A_h\A_h^* \otimes_{\FU_{\rg(h)}} \Hils_{\rg(h)}
    \cong \A_g \A_h\A_h^* \A_g^*\A_g \otimes_{\FU_{\rg(h)}}
    \Hils_{\rg(h)}
    \\= \E_1 v_1^*\Hils_{(g,h)},
  \end{multline*}
  where the isomorphism marked by~\(U_h\) is really
  \(\Id \otimes U_h\) and the isomorphisms that have no labels use
  Lemma~\ref{lem:equality_of_products} and that the ideals
  \(\A_g^*\A_g\) and \(\A_h\A_h^*\) in \(\FU_{\s(g)} = \FU_{\rg(h)}\)
  commute.  Similarly, \(D_2^*(U)_{(g,h)}\) is the composite
  \begin{multline*}
    \E_1 v_1^*\Hils_{(g,h)}
    = \A_g\A_h\A_h^*\A_g^*\A_g \otimes_{\FU_{\s(g)}} \Hils_{\s(g)}
    \xrightarrow[\sim]{U_g}
    \A_g\A_h\A_h^*\A_g^*\A_g\A_g^* \otimes_{\FU_{\rg(g)}} \Hils_{\rg(g)}
    \\\cong \A_g\A_h\A_h^*\A_g^* \otimes_{\FU_{\rg(g)}} \Hils_{\rg(g)}
    = \E_0 v_0^*\Hils_{(g,h)}
  \end{multline*}
  and \(D_1^*(U)_{(g,h)}\) is the composite
  \begin{multline*}
    \E_2 v_2^*\Hils_{(g,h)}
    = \A_g\A_h\A_h^*\A_g^*\A_{gh} \otimes_{\FU_{\s(h)}} \Hils_{\s(h)}
    \xrightarrow[\sim]{U_{g h}}
    \A_g\A_h\A_h^*\A_g^*\A_{gh} \A_{gh}^* \otimes_{\FU_{\rg(g)}} \Hils_{\rg(g)}
    \\\cong \A_g\A_h\A_h^*\A_g^* \otimes_{\FU_{\rg(g)}} \Hils_{\rg(g)}
    = \E_0 v_0^*\Hils_{(g,h)}.
  \end{multline*}
  Let \(a_1\in\A_g\), \(a_2\in \A_h\), \(\xi \in \Hils_{\s(h)}\).
  Then the assumption \(S_g(a_1) S_h(a_2) = S_{g h}(a_1 a_2)\)
  translates into
  \[
    U_{g h}(a_1 \cdot a_2 \otimes\xi)
    = S_{g h}(a_1 a_2)(\xi)
    = (S_g(a_1) \otimes \Id) (U_h(a_2 \otimes \xi))
    = U_g(a_1 \otimes U_h(a_2 \otimes \xi)).
  \]
  This is equivalent to
  \(D_2^*(U)_{(g,h)} D_0^*(U)_{(g,h)} = D_1^*(U)_{(g,h)}\).

  The unitary \(D_j^*(U)\) acts by \((D_j^*(U) f)(g,h) =
  \Delta(d_j(g,h))^{1/2} (D_j^*(U)_{(g,h)} f)(g,h)\) with~\(\Delta\)
  as in~\eqref{eq:modular_def}.
  Since the operators~\(D_j^*(U)\) commute with pointwise
  multiplication operators, it follows that
  \begin{align*}
    (D_2^*(U)D_0^*(U) f)(g,h)
    &= \Delta(g)^{1/2}\Delta(h)^{1/2} D_2^*(U)_{g,h} D_0^*(U)_{g,h}
      f(g,h),\\
    (D_1^*(U) f)(g,h)
    &= \Delta(g h)^{1/2} D_1^*(U)_{g,h} f(g,h).
  \end{align*}
  We have already proven \(D_2^*(U)_{(g,h)} D_0^*(U)_{(g,h)} =
  D_1^*(U)_{(g,h)}\).
  So \(D_2^*(U)D_0^*(U) = M_\xi \circ D_1^*(U)\), where~\(M_\xi\)
  denotes the pointwise multiplication operator by the function
  \(\xi(g,h) \defeq \Delta(g)^{1/2}\Delta(h)^{1/2} \Delta(g
  h)^{-1/2}\).
  On the one hand, this forces~\(M_\xi\) to be unitary.
  On the other hand, \(M_\xi\) is positive because~\(\xi\) has values
  in~\((0,\infty)\).
  So \(M_\xi\) is the identity operator and \(D_2^*(U)D_0^*(U) =
  D_1^*(U)\) as desired.
\end{proof}

Along the way, the proof above that \(\xi(g,h)=1\) shows
that~\(\Delta\) is multiplicative:

\begin{proposition}
  \label{def:Delta_multiplicative}
  Let \(\Sigma \defeq \setgiven{(g,h)\in G^2}{S_{g h}(\A_g \A_h
    \Hils_{\s(h)}) \neq 0}\).
  The restrictions of the measures \(\nu\circ \mu_j\) for \(j=0,1,2\)
  to~\(\Sigma\) are all equivalent, and
  \[
    \Delta(g h) = \Delta(g) \Delta(h)
  \]
  holds for \(\nu\circ \mu_j\)-almost  all \((g,h)\in\Sigma\).
\end{proposition}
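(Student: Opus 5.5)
The plan is to extract both claims from the proof of Lemma~\ref{lem:disintegrated_Fell_bundle_rep}, keeping careful track of supports. First I would identify the supports of the three measurable fields \(\E_j v_j^*\Hils\) over~\(G^2\) with~\(\Sigma\). By Lemma~\ref{lem:measurable-rep-U} and Lemma~\ref{lem:equality_of_products}, the fibre of \(\E_2 v_2^*\Hils\) at \((g,h)\) is isomorphic to \(\A_g\A_h\otimes\Hils_{\s(h)}\cong S_{gh}(\A_g\A_h)\Hils_{\s(h)}\). The fibrewise unitaries \(D_0^*(U)_{(g,h)}\), \(D_2^*(U)_{(g,h)}\) and \(D_1^*(U)_{(g,h)}\), constructed from \(U_h\), \(U_g\), \(U_{gh}\) as in that proof, identify this fibre with the fibres of \(\E_1 v_1^*\Hils\) and \(\E_0 v_0^*\Hils\). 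So all three fields are nonzero exactly on~\(\Sigma\).

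Next I would prove the equivalence of measures. The unitary \(d_0^*(U)\) intertwines the \(\Cont_0(G^2)\)-actions on \(\Lt^2(G^2,\E_2v_2^*\Hils,\nu\circ\mu_2)\) and \(\Lt^2(G^2,\E_1v_1^*\Hils,\nu\circ\mu_1)\); similarly for \(d_2^*(U)\) and \(d_1^*(U)\). On each of these Hilbert spaces, \(\Cont_0(G^2)\) extends to a normal representation of \(L^\infty\). The kernel of this representation consists of the functions vanishing almost everywhere on the support \(\Sigma\) for the respective measure. Unitary equivalence of these representations of \(\Cont_0(G^2)\) forces equal kernels of their normal extensions. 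For this I would use that a multiplication representation determines its measure class through its spectral measure, which is the \(\Cont_0(G^2)\)-weak closure. Hence \(\nu\circ\mu_j|_\Sigma\) all have the same null sets. An alternative is to argue directly as in the paper: the measures \(\nu\circ\mu_j\) are obtained by composing \(\nu\circ\alpha\) or \(\nu\circ\tilde\alpha\) with the families \(\lambda_i\) via~\eqref{eq:compose_integration_maps0}--\eqref{eq:compose_integration_maps2}. Relative quasi-invariance on the supports then transfers the equivalence; this is essentially how \(\Delta\) enters the \(D_j^*(U)\).

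Finally, multiplicativity. In the proof of Lemma~\ref{lem:disintegrated_Fell_bundle_rep}, \(D_2^*(U)D_0^*(U)=M_\xi D_1^*(U)\) with \(\xi(g,h)=\Delta(g)^{1/2}\Delta(h)^{1/2}\Delta(gh)^{-1/2}\), defined on~\(\Sigma\). I must check that \(g,h,gh\) lie in \(\supp\A\A^*\rg^*\Hils\) when \((g,h)\in\Sigma\); this follows from the chain \eqref{eq:measurable_rep_inclusions}. Since \(M_\xi\) is a positive unitary on \(\Lt^2(G^2,\E_0v_0^*\Hils,\nu\circ\mu_0)\), it is the identity. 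Hence \(\xi=1\) almost everywhere on the support~\(\Sigma\), and by the first part, almost everywhere for every \(\nu\circ\mu_j\).

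I expect the main obstacle to be the bookkeeping of Radon--Nikodym derivatives. The factor \(\Delta(d_j(g,h))^{1/2}\) in \(D_j^*(U)\) must be exactly the derivative between \(\nu\circ\mu_k\) and \(\nu\circ\mu_l\) restricted to~\(\Sigma\). To verify this, I would first do the computation for \(d_0\) using \(\mu_2=\tilde\alpha\circ\lambda_0\) and \(\mu_1=\alpha\circ\lambda_0\).
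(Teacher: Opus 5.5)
Your proposal is correct and is essentially the paper's own argument: the paper obtains this proposition as a by-product of the proof of Lemma~\ref{lem:disintegrated_Fell_bundle_rep}. There, the fields $\E_j v_j^*\Hils$ all have support $\Sigma$, the intertwining unitaries $D_j^*(U)$ force the restricted measure classes $\nu\circ\mu_j|_\Sigma$ to agree, and $D_2^*(U)D_0^*(U)=M_\xi D_1^*(U)$ with $M_\xi$ a positive unitary forces $\xi=1$ almost everywhere on~$\Sigma$.

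The Radon--Nikodym bookkeeping you single out as the main obstacle is not actually needed, since the formula $(D_j^*(U)f)(g,h)=\Delta(d_j(g,h))^{1/2}(D_j^*(U)_{(g,h)}f)(g,h)$ comes straight from the definition of $U$ and unitarity of $D_j^*(U)$ is automatic. If you do carry it out via \eqref{eq:compose_integration_maps0}--\eqref{eq:compose_integration_maps2}, it identifies $\Delta\circ d_j$ as the derivative between the relevant pair of measures $\nu\circ\mu_k|_\Sigma$, and the chain rule then gives $\Delta(gh)=\Delta(g)\Delta(h)$ directly.
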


\begin{remark}
  For a saturated Fell bundle, the support \(\supp
  \A\A^*\,\rg^*\Hils\) is conull in~\(G\) and~\(\Sigma\) is conull
  in~\(G^2\).
  So the function~\(\Delta\) is almost everywhere defined and almost
  everywhere multiplicative.
  This is what is called a weak homomorphism
  in~\cite{Ramsay:Topologies}.
  One of the main results in~\cite{Ramsay:Topologies}, Theorem~3.2,
  says that any such weak homomorphism is almost everywhere equal to a
  strict homomorphism.
  Since~\(\Delta\) is only well defined up to equality almost
  everywhere, we may assume that it is already chosen strict.
  This gives a choice of~\(\Delta\) that is defined for all \(g\in G\)
  and satisfies \(\Delta(g h) = \Delta(g) \Delta(h)\) for all \((g,h)\in
  G^2\).
  If the bundle is not saturated, such an improvement of~\(\Delta\)
  seems unlikely to exist.
  To begin with, the subset \(\supp \A\A^*\,\rg^*\Hils\) on
  which~\(\Delta\) is defined is no longer conull and need not be a
  subgroupoid.
\end{remark}

A genuine groupoid homomorphism \(G\to(0,\infty)\) always satifies
\(\Delta(g)^{-1} = \Delta(g^{-1})\).
This equation still works for the function~\(\Delta\) in the
non-saturated case:

\begin{lemma}
  Let \((g,h)\in G^2\).
  Then \((g,h) \in \Sigma\) if and only if \((g h,h^{-1}) \in
  \Sigma\).
  If \(g\in\supp \A\A^*\,\rg^*\Hils\), then \(g^{-1} \in
  \supp \A\A^*\,\rg^*\Hils\) and almost surely \(\Delta(g^{-1}) =
  \Delta(g)^{-1}\).
\end{lemma}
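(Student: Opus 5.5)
The lemma has two set-theoretic claims and one almost-everywhere identity, and I would prove them in that order. Throughout I use the description of \(\Sigma\) through the nonzero subspace \(S_{gh}(\A_g\A_h)\Hils_{\s(h)} = S_g(\A_g)S_h(\A_h)\Hils_{\s(h)}\), together with the cyclic chain of equalities~\eqref{eq:measurable_rep_inclusions} from the proof of Lemma~\ref{lem:measurable-rep-U}.

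For the first claim, note that \((gh,h^{-1})\in G^2\) with product \(g\). By Lemma~\ref{lem:equality_of_products}, \(\A_{gh}\A_{h^{-1}} = \A_g\A_h\A_h^*\). Hence
\[
  S_g(\A_{gh}\A_{h^{-1}})\Hils_{\rg(h)} = S_g(\A_g)S_h(\A_h)S_h(\A_h)^*\Hils_{\rg(h)}.
\]
By~\eqref{eq:measurable_rep_inclusions} applied to~\(h\), the closure of \(S_h(\A_h)^*\Hils_{\rg(h)}\) equals the closure of \(S_{u\s(h)}(\A_h^*\A_h)\Hils_{\s(h)}\). Consequently the closure of \(S_h(\A_h)S_h(\A_h)^*\Hils_{\rg(h)}\) equals the closure of \(S_h(\A_h)\Hils_{\s(h)}\). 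Applying \(S_g(\A_g)\) to both sides, one side is nonzero if and only if the other is, which proves \((g,h)\in\Sigma\iff(gh,h^{-1})\in\Sigma\).

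For the second claim, suppose \(g\in\supp\A\A^*\,\rg^*\Hils\), that is, \(S_g(\A_g)\Hils_{\s(g)}\neq0\). Then \(S_g(\A_g)^*\neq0\) on \(\Hils_{\rg(g)}\), since an operator and its adjoint vanish together. As \(S_g(\A_g)^* = S_{g^{-1}}(\A_{g^{-1}})\), this gives \(g^{-1}\in\supp\A\A^*\,\rg^*\Hils\). Equivalently, \(g\in\supp\) if and only if \((g,g^{-1})\in\Sigma\), and this matches the first claim with \(h = g^{-1}\).

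For the a.e. identity I would apply Proposition~\ref{def:Delta_multiplicative} to pairs \((g,g^{-1})\). This gives \(\Delta(u\rg(g)) = \Delta(g)\Delta(g^{-1})\), so it remains to show \(\Delta\equiv1\) on units. The obstacle is that \(\{(g,g^{-1})\}\) is a null set in \(G^2\), so the a.e. statement cannot be restricted to it. I would instead work directly from the definition~\eqref{eq:modular_def}, in three steps.
\begin{enumerate}
\item The inversion map sends \(\supp\A\A^*\,\rg^*\Hils\) to itself, by the second claim.
\item Inversion exchanges \(\nu\circ\alpha\) and \(\nu\circ\tilde\alpha\), since \(\inv_*(\nu\circ\alpha) = \nu\circ\tilde\alpha\).
\item Pushing the equality \(\diff(\nu\circ\alpha) = \Delta\,\diff(\nu\circ\tilde\alpha)\) on the support forward along inversion yields \(\diff(\nu\circ\tilde\alpha) = (\Delta\circ\inv)\,\diff(\nu\circ\alpha)\). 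Uniqueness of Radon--Nikodym derivatives then gives \(\Delta\circ\inv = \Delta^{-1}\) almost everywhere on the support.
\end{enumerate}
The almost-everywhere qualifier refers to \(\nu\circ\alpha\), which is equivalent to \(\nu\circ\tilde\alpha\) on the support. This argument bypasses the multiplicativity route entirely.
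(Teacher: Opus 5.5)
Your proof is correct. The first two claims follow the paper's argument. The equivalence \((g,h)\in\Sigma\iff(gh,h^{-1})\in\Sigma\) comes from \(\A_{gh}\A_{h^{-1}}=\A_g\A_h\A_h^*\) together with a cyclic chain showing that \(S_g(\A_g)S_h(\A_h)S_h(\A_h)^*\Hils_{\rg(h)}\) and \(S_g(\A_g)S_h(\A_h)\Hils_{\s(h)}\) have the same closure. The inversion-invariance of the support comes from \(S_g(a)^*=S_{g^{-1}}(a^*)\).

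For \(\Delta(g^{-1})=\Delta(g)^{-1}\), however, you take a genuinely different route. The paper uses the first claim to feed the pair \((gh,h^{-1})\) into the almost-everywhere multiplicativity of Proposition~\ref{def:Delta_multiplicative}. This gives \(\Delta(gh)=\Delta(g)\Delta(h)\) and \(\Delta(g)=\Delta(gh)\Delta(h^{-1})\) for almost all \((g,h)\in\Sigma\), hence \(\Delta(h)\Delta(h^{-1})=1\) there. The paper then passes to almost all \(h\) by Fubini. This sidesteps the null set \(\{(g,g^{-1})\}\) that you correctly identified as the obstacle.

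Your argument instead stays at the level of measures. Inversion preserves \(\supp\A\A^*\,\rg^*\Hils\) and swaps \(\nu\circ\alpha\) and \(\nu\circ\tilde\alpha\). Pushing \(\diff(\nu\circ\alpha)=\Delta\,\diff(\nu\circ\tilde\alpha)\) forward therefore yields \((\Delta\circ\inv)\cdot\Delta=1\) almost everywhere on the support, by uniqueness of Radon--Nikodym derivatives for the \(\sigma\)-finite measures involved.

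This is more elementary and more general. It needs only relative quasi-invariance, not the coherence condition \(d_2^*(U)d_0^*(U)=d_1^*(U)\) behind Proposition~\ref{def:Delta_multiplicative}. It also avoids two facts the paper's Fubini step uses without comment. One is that \((g,h)\mapsto(gh,h^{-1})\) preserves the relevant measure class on \(\Sigma\). The other is that for almost every \(h\) in the support the slice \(\{g:(g,h)\in\Sigma\}\) is non-null. Both are true, but they require a check.

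The paper's route fits its picture of \(\Delta\) as an almost-everywhere homomorphism on \(\Sigma\), and it explains why the statement about \(\Sigma\) belongs in this lemma at all. In your proof, that first claim becomes logically independent of the identity for \(\Delta\).
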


\begin{proof}
  Since~\(S_g\) is compatible with involutions, \(S_g(\A_g)
  \Hils_{\s(g)}\neq0\) if and only if \(S_{g^{-1}}(\A_{g^{-1}})
  \Hils_{\s(g^{-1})}\neq0\).
  That is, \(g\in\supp \A\A^*\,\rg^*\Hils\) if and only if \(g^{-1}
  \in \supp \A\A^*\,\rg^*\Hils\).
  Next, Lemma~\ref{lem:equality_of_products} and the multiplicativity
  of~\(S_g\) imply
  \begin{multline*}
    S_g(\A_g)S_h(\A_h)\Hils_{s(h)}
    = S_g(\A_g)S_h(\A_h) S_{h^{-1}}(\A_h^*) S_h(\A_h) \Hils_{s(h)}
    \\ \subseteq  S_{gh}(\A_g\A_h) S_{h^{-1}}(\A_h^*) \Hils_{\rg(h)}
    \subseteq  S_g(\A_g)S_h(\A_h)\Hils_{\s(h)}.
  \end{multline*}
  It follows that the inclusions here are equalities, so that
  \(S_g(\A_g)S_h(\A_h)\Hils_{s(h)} = S_{gh}(\A_g\A_h)
  S_{h^{-1}}(\A_h^*) \Hils_{\rg(h)}\).
  In particular, one of these spaces is nonzero if and only if the
  other is.
  This says that \((g,h) \in \Sigma\) if and only if \((g h, h^{-1})
  \in \Sigma\).
  If \(g,h\in \Sigma\), then the multiplicativity of~\(\Delta\)
  implies
  \[
    \Delta(g h) = \Delta(g) \Delta(h),\qquad
    \Delta(g) = \Delta(g h) \Delta(h^{-1})
  \]
  for almost all \((g,h)\in\Sigma\).
  Both equations together imply \(\Delta(h)^{-1} = \Delta(h^{-1})\)
  for almost all \((g,h)\in\Sigma\).
  By Fubini's Theorem, this implies \(\Delta(h)^{-1} =
  \Delta(h^{-1})\) for almost all \(h\in \supp \A\A^*\,\rg^*\Hils\).
\end{proof}

\subsection{Formulation of the main result}
\label{sec:main_theorem}

Now we are going to turn a Fell-bundle representation on a Hilbert
space into a measurable Fell-bundle representation.
This corresponds to Renault's Disintegration Theorem.
We first formulate the theorem that we are going to prove eventually:

\begin{definition}
  Let~\(Y\) be a locally compact space and~\(\nu\) a measure
  on~\(Y\).  A subset \(X\subseteq Y\) is called
  \emph{\(\nu\)\nb-conull} if it is \(\nu\)\nb-measurable and satisfies
  \(\nu(Y\setminus X) = 0\).
\end{definition}

Our universal property in Theorem~\ref{the:universal_groupoid_Haus}
together with the following theorem generalises Renault's
Disintegration Theorem from~\cite{Renault:Representations} to
nonsaturated separable Fell bundles over second countable locally
compact Hausdorff groupoids:

\begin{theorem}
  \label{the:disintegrate_Hilbert_space}
  Let~\(G\) be a second countable, locally compact, Hausdorff
  groupoid.  Let~\(\alpha\) be a Haar system on~\(G\) and let~\(\A\)
  be a Fell bundle over~\(G\) such that~\(\FU_x\) is separable for
  all \(x\in G^0\).
  \begin{enumerate}
  \item \label{the:disintegrate_Hilbert_space_1}%
    Any Fell-bundle representation of \((G,\A,\alpha)\) on a
    separable Hilbert space may be obtained from a
    measurable Fell-bundle representation.
    That is, it is the representation on \(\Lt^2(G^0,\Hils,\nu)\)
    defined by a measure~\(\nu\) on~\(G^0\), a \(\nu\)\nb-measurable
    field of Hilbert spaces~\(\Hils\) with \(\Hils_x\neq0\) for
    \(\nu\)\nb-almost all \(x\in G^0\), and a measurable
    pre-representation \(S = (S_g)_{g\in G}\) of~\(\A\) on~\(\Hils\),
    such that~\(\nu\) is quasi-invariant relative to~\(S\).
  \item \label{the:disintegrate_Hilbert_space_2}%
    Let \(\nu_j\), \(\Hils_j\), \(S_j\) for \(j=1,2\) be as
    in~\ref{the:disintegrate_Hilbert_space_1}.
    The corresponding Fell-bundle representations on
    \(\Lt^2(G^0,\Hils_j,\nu_j)\) are isomorphic if and only if
    \(\nu_1\) and~\(\nu_2\) are equivalent and there are a
    \([\nu_1]\)\nb-conull subset
    \(X\subseteq G^0\) and a measurable family of unitaries
    \(V_x\colon \Hils_{1,x} \congto \Hils_{2,x}\) for \(x\in X\)
    such that \(S_{2,g}(a) V_{\s(g)}(\xi) = V_{\rg(g)} S_{1,g}(a)\xi\)
    for all \(g\in G\) with \(\s(g),\rg(g)\in X\), \(a\in \A_g\),
    \(\xi\in\Hils_{1,\s(g)}\); here \([\nu_1]\) denotes the measure
    class of~\(\nu_1\).  Even more, any isomorphism of Fell-bundle
    representations is of the form
    \begin{equation}
      \label{eq:intertwiner_formula}
      (V \xi)(x) \defeq V_x(\xi(x)) \cdot \left(\frac{\diff
          \nu_1}{\diff\nu_2}\right)^{1/2}(x)
    \end{equation}
    for such an~\(X\) and a measurable family~\((V_x)_{x\in X}\) of
    unitary intertwiners.
  \item \label{the:disintegrate_Hilbert_space_3}%
    Two parallel isomorphisms \((V_x)_{x\in X}\) and
    \((V'_x)_{x\in X'}\) as in~\ref{the:disintegrate_Hilbert_space_2}
    are equal if and only if there is a
    \([\nu_1]\)\nb-conull subset \(X_0\subseteq X\cap X'\) with
    \(V_x = V'_x\) for all \(x\in X_0\).
  \end{enumerate}
  If the Fell bundle is saturated, then the conull subsets~\(X\)
  in~\ref{the:disintegrate_Hilbert_space_2} and~\(X_0\)
  in~\ref{the:disintegrate_Hilbert_space_3} may be assumed to be
  \(G\)\nb-invariant.
\end{theorem}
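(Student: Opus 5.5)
The plan is to reduce everything to the representation theory of commutative \(\Cst\)\nb-algebras, via the central subalgebras \(\Cont_0(G^0)\), \(\Cont_0(G)\) and \(\Cont_0(G^2)\) that act on all Hilbert spaces in Definition~\ref{def:representation_Fell}.

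\textbf{Step 1: disintegrating \(\varphi\).}
Let \((\varphi,U)\) be a Fell-bundle representation on a separable Hilbert space~\(\Hils[L]\). Restrict \(\varphi\) to the central subalgebra \(\Cont_0(G^0)\subseteq \Mult(\SUF)\). Since \(G^0\) is second countable and \(\Hils[L]\) is separable, the spectral theorem (as in \cite{Dixmier:Von_Neumann}) gives:
\begin{itemize}
\item a measure~\(\nu\) on~\(G^0\), unique up to equivalence;
\item a \(\nu\)\nb-measurable field~\(\Hils\) with \(\Hils_x\neq0\) almost everywhere;
\item a unitary \(\Hils[L]\cong\Lt^2(G^0,\Hils,\nu)\) under which \(\Cont_0(G^0)\) acts by multiplication.
\end{itemize}
Since \(\SUF\) is separable, \(\varphi\) commutes with \(\Cont_0(G^0)\), and \(\varphi(\SUF)\) is decomposable. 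Choosing a countable dense subset of~\(\SUF\), we obtain nondegenerate representations \(S_x\) of \(\FU_x\) on~\(\Hils_x\), defined almost everywhere. These factor through \(\FU_x\) because \(\Cont_0(G^0\setminus\{x\})\SUF\) acts by zero on the fibre.

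\textbf{Step 2: decomposing~\(U\).}
Next, identify both sides of~\(U\) with \(\Lt^2(G,\A\,\s^*\Hils,\nu\circ\tilde\alpha)\) and \(\Lt^2(G,\A\A^*\,\rg^*\Hils,\nu\circ\alpha)\), as in~\eqref{eq:unitary-rep-Hilb-space}. Since \(U\) intertwines \(\Cont_0(G,\A\A^*)\), and in particular \(\Cont_0(G)\) by Lemma~\ref{lem:nondegenerate_over_AA-star}, it is a unitary between direct integrals over~\(G\) commuting with the diagonal algebra. The key consequence is that the two measures \(\nu\circ\tilde\alpha\) and \(\nu\circ\alpha\), restricted to the supports of the two fields, must be equivalent. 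These supports coincide, because \(U\) intertwines the left \(\Cont_0(G,\A\A^*)\)\nb-actions, whose fibrewise kernels determine the supports. This is exactly relative quasi-invariance. So \(U\) is decomposable: \(U f(g)=U_g(f(g))\Delta(g)^{1/2}\) with a measurable field of unitaries
\[
U_g\colon \A_g\otimes\Hils_{\s(g)}\to\A_g\A_g^*\otimes\Hils_{\rg(g)},
\]
defined for almost every \(g\). The \(\A_g\A_g^*\)\nb-linearity of~\(U_g\) then yields maps \(S_g(a)\xi\defeq U_g(a\otimes\xi)\in\Hils_{\rg(g)}\), which satisfy \(S_g(a)^*S_g(b)=S_{\s(g)}(a^*b)\) almost everywhere.

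\textbf{Step 3: multiplicativity and the main obstacle.}
Decomposing \(d_j^*(U)\) over \(G^2\) as in the proof of Lemma~\ref{lem:disintegrated_Fell_bundle_rep} and reading the argument backwards, the relation \(d_2^*(U)d_0^*(U)=d_1^*(U)\) gives \(S_g(a)S_h(b)=S_{gh}(ab)\) for \(\nu\circ\mu_0\)\nb-almost all \((g,h)\). The positive function \(\Delta(g)^{1/2}\Delta(h)^{1/2}\Delta(gh)^{-1/2}\) is again forced to equal~\(1\). The involution condition follows from unitarity of~\(U_g\).

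The main obstacle is that all of this holds only almost everywhere, whereas Definition~\ref{def:Fell_bundle_pre-rep} demands identities for every \(g\) and every composable pair. I would follow Ramsay \cite{Ramsay:Virtual_groups, Ramsay:Topologies} to repair the null sets:
\begin{enumerate}
\item Find a conull Borel set \(X\subseteq G^0\) such that the identities hold for all \(g,h\) with endpoints in~\(X\), with the composable-pair condition holding almost everywhere in each fibre.
\item Redefine \(S_g\) on all of \(G|_X\) by the averaging formula \(S_g(a)\defeq S_{gh}(a b)S_h(b')^*\), integrated over~\(h\) against~\(\alpha\) and made independent of the choice via Fubini.
\item Set \(S_g=0\) when an endpoint lies outside~\(X\), adjusting \(\Hils_x=0\) there.
\end{enumerate}
In the saturated case, Ramsay's theorem lets one take \(X\) invariant. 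In general, I expect this step to require the most care, because \(\supp\A\A^*\,\rg^*\Hils\) need not be a subgroupoid.

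\textbf{Uniqueness.}
For parts~(2) and~(3), an isomorphism of Fell-bundle representations intertwines the \(\varphi\)'s, hence \(\Cont_0(G^0)\). Therefore \(\nu_1\sim\nu_2\), and the isomorphism is decomposable of the form~\eqref{eq:intertwiner_formula}. Intertwining~\(U\) translates fibrewise into \(S_{2,g}V_{\s(g)}=V_{\rg(g)}S_{1,g}\) almost everywhere, and the same null-set repair upgrades this to all \(g\) over a conull~\(X\). Part~(3) is the uniqueness of decomposable operators up to null sets.
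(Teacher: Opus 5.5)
Your Steps 1--2 and the almost-everywhere multiplicativity in Step 3 coincide with the paper's first stage. For saturated bundles your appeal to Ramsay also roughly matches the paper. Even there, though, the paper does not just restrict to an invariant set: it uses Ramsay's Lemma~3.1 to transport \(\Hils\) and \(S\) along a Borel section \(\eta\) onto an invariant conull set.

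\textbf{Main gap: part~(1) for nonsaturated \(\A\).} A measurable pre-representation must be defined and multiplicative on all of \(G\) and \(G^2\). Your item~(3), which puts \(\Hils_x=0\) and \(S_g=0\) off the conull set \(X\), is consistent only if \(X\) is \(G\)\nb-invariant. Take \((g,h)\in G^2\) with \(\rg(g),\s(h)\in X\) but \(\s(g)=\rg(h)\notin X\). Then \(S_g(a)S_h(b)=0\), whereas
\[
S_{gh}(ab)^*S_{gh}(ab)=\varphi_{\s(h)}(b^*a^*ab),
\]
and this is nonzero as soon as \(\varphi_{\s(h)}\) does not vanish on \(\A_h^*\A_g^*\A_g\A_h\). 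Without saturation, \(\nu\) is only quasi-invariant relative to \(S\). So the Ramsay arguments that produce invariant conull sets do not apply, and your proposal gives nothing in their place. Note also that the original fibres \(\Hils_x\) on the null set \(G\cdot X\setminus X\) carry no information and cannot simply be reused.

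The missing idea is to enlarge the field rather than shrink the base. Put \(G_X\defeq\setgiven{g\in G}{\s(g)\in X}\). For \(x\) in the invariant set \(\rg(G_X)\), the paper defines a new fibre \(\Hils'_x\) as the Hausdorff completion of \(\bigoplus_{g\in G_X,\,\rg(g)=x}\A_g\otimes_{\FU_{\s(g)}}\Hils_{\s(g)}\) for the form
\[
\braket{f_1\otimes\xi_1}{f_2\otimes\xi_2}\defeq\braket{\xi_1}{S_{g^{-1}h}(f_1^*f_2)\xi_2},
\]
which only uses \(S\) on \(G|_X\). Separability and measurability come from countably many Borel sections \(\eta_n\) of \(\rg\) over basic open sets (Federer--Morse), with \(\eta_n=u\) on \(X\). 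Then \(\Hils'=\Hils\) on \(X\), so \(\Lt^2(G^0,\Hils,\nu)\) is unchanged. The formula \(S_k(f_1)\,j_g(f_2\otimes\xi)\defeq j_{kg}(f_1f_2\otimes\xi)\) gives an everywhere defined measurable pre-representation. Setting \(\Hils'_x=0\) off \(\rg(G_X)\) is now harmless because that set is invariant.

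\textbf{Second gap: producing \(X\) when \(\A\) is not saturated.} Ramsay's regularisation uses that the set of good arrows is closed under products. Here the multiplication maps \(\A_h\otimes_{\FU_{\s(h)}}\A_k\to\A_{hk}\) are only isometries, so good \(h,k\) control \(S_{hk}\) only on \(\A_h\A_k\). For the same reason your averaging formula is not well defined as stated. The product \(S_{gh}(ab)S_h(b')^*\) only sees \(S_g\) on \(\A_g\A_h\A_h^*\), a subspace depending on \(h\), so integrating over \(h\) does not reconstruct an operator on all of \(\A_g\).

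The paper handles this as follows:
\begin{enumerate}
\item It takes \(\bar U_h\) to be the essentially constant value of \(g\mapsto U_g^*U_{gh}\) on the varying domains \(\A_g^*\A_g\A_h\Hils_{\s(h)}\).
\item It proves uniqueness and unitarity of \(\bar U_h\) by approximating along sequences \(g_n\to u\rg(h)\), using full support of \(\tilde\alpha\) and continuous sections of \(\A\).
\item It replaces closure under products by closure under such limits (Lemmas~\ref{lem:doubleprime} and~\ref{lem:Ramsay_1}).
\end{enumerate}
The intertwiners in parts~(2) and~(3) need the same limit argument.

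\textbf{Smaller point: the involution identity.} It does not follow from unitarity of \(U_g\) at the almost-everywhere stage. It needs multiplicativity at \((g^{-1},g)\), and these pairs in general form a null subset of \(G^2\). So the identity is available only after the repair (compare Lemma~\ref{lem:phi_from_S}).
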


We will prove the theorem by analysing a Fell-bundle
representation~\((\varphi,U)\) of~\(\A\) on a separable Hilbert
space~\(\Hils[L]\).  First, we discuss the separability assumptions.

\begin{lemma}
  If~\(\FU_x\) is separable for all \(x\in G^0\), then~\(\A_g\) is
  separable for all \(g\in G\).
\end{lemma}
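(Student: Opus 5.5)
The idea is to embed \(\A_g\) isometrically into a separable Hilbert module built from the unit fibres. Fix \(g\in G\) and put \(x\defeq \s(g)\). As explained after Definition~\ref{def:Fell_bundle}, \(\A_g\) is a right Hilbert \(\FU_x\)\nb-module with inner product \(\braket{a}{b} = a^*b\in\FU_x\), and its norm satisfies \(\norm{a}^2 = \norm{a^*a}\). The plan is to show that \(\A_g\) is countably generated in a way that forces norm separability.

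The natural attempt is to factor \(\A_g = \A_g\FU_x\) (Lemma~\ref{lem:equality_of_products}) and use a countable dense subset of~\(\FU_x\). This only works if we can pick countably many \(a_n\in\A_g\) with \(\sum a_n\FU_x\) dense, which is essentially what we want to prove. So I would use \(\A_g\A_g^*\A_g=\A_g\) instead. The point is that \(\A_g\) is an imprimitivity bimodule between \(\A_g\A_g^*\idealin\FU_{\rg(g)}\) and \(\A_g^*\A_g\idealin\FU_x\), and both ideals are separable by hypothesis.

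The cleanest route is through the linking algebra. Let \(L\) be the linking \(\Cst\)\nb-algebra of the imprimitivity bimodule \(\A_g\). Its corners are \(\A_g\A_g^*\), \(\A_g^*\A_g\), \(\A_g\) and \(\A_g^*\). I would use the Brown--Green--Rieffel fact that an imprimitivity bimodule whose two coefficient algebras are \(\sigma\)\nb-unital is countably generated. Separable \(\Cst\)\nb-algebras are \(\sigma\)\nb-unital, so this applies.

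Concretely, let \(e\) be a strictly positive element of \(\A_g\A_g^*\). Then \(e\A_g\) is dense in \(\A_g\). Writing \(e\) as a norm limit of finite sums \(\sum_i a_ib_i^*\) with \(a_i,b_i\in\A_g\), we get countably many \(a_i\) such that \(\overline{\operatorname{span}}\{a_i\cdot \FU_x\}\supseteq e\A_g\), because \(b_i^*\A_g\subseteq\FU_x\). Hence the countably many \(a_i\) generate \(\A_g\) as a Hilbert \(\FU_x\)\nb-module.

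Now let \(D\subseteq\FU_x\) be a countable dense set. The rational linear span of the elements \(a_id\) with \(d\in D\) is countable. It is dense in \(\sum_i a_i\FU_x\) because \(\norm{a_i(c-d)}\le\norm{a_i}\norm{c-d}\), and therefore it is dense in~\(\A_g\).

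The main obstacle is to produce the countable generating set \((a_i)\) rigorously. This needs a strictly positive element of the separable ideal \(\A_g\A_g^*\) and the density of \(e\A_g\) in~\(\A_g\). The density follows from \(\A_g=\A_g\A_g^*\A_g\) and the fact that \(e\,\A_g\A_g^*\) is dense in \(\A_g\A_g^*\) whenever \(e\) is strictly positive. Everything else is routine norm estimates.
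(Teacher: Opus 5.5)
Your argument is correct and is essentially the paper's proof. The paper cites Lance's criterion that a Hilbert module \(\E\) is countably generated if and only if \(\Comp(\E)\) is \(\sigma\)\nb-unital. Here \(\Comp(\A_g)\cong\A_g\A_g^*\) is an ideal in the separable algebra \(\FU_{\rg(g)}\), and the paper then concludes separability from that of \(\FU_{\s(g)}\). Your strictly positive element argument simply unpacks the relevant direction of Lance's criterion.
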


\begin{proof}
  A Hilbert module~\(\E\) over a \(\Cst\)\nb-algebra~\(B\) is
  countably generated if and only if the \(\Cst\)\nb-algebra
  \(\Comp(\E)\) is \(\sigma\)\nb-unital
  (see~\cite{Lance:Hilbert_modules}).
  This implies that any Hilbert bimodule between two separable
  \(\Cst\)\nb-algebras is countably generated as a right Hilbert module
  and hence separable.
\end{proof}

If~\(G\) is second countable and~\(\A_g\) is separable for all
\(g\in G\), then \(\Cont_0(G,\A)\) is separable.  Then
\(\Lt^2(G,\A,\s,\tilde\alpha)\) and \(\Lt^2(G,\A\A^*,\rg,\alpha)\)
are separable as well.  Similarly, \(\Cont_0(G^2,d_j^*\A)\) and
\(\Cont_0(G^2,v_j^*\FU)\) are separable, and so are the
correspondences involving~\(G^2\).  Summing up, all the Banach
spaces that occur in the definition of a Fell-bundle representation
of~\(\A\) on~\(\Hils[L]\) are separable.

\subsection{Representations of fields of
  \texorpdfstring{$\Cst$}{C*}-algebras}
\label{sec:rep_fields}

Next, we recall basic results about representations of commutative
\(\Cst\)\nb-algebras.

\begin{theorem}
  \label{the:disintegration_commutative}
  Let~\(X\) be a second countable, locally compact space and
  let~\(\Hils[L]\) be a separable Hilbert space.
  Let \(\varrho\colon \Cont_0(X) \to \Bound(\Hils[L])\) be a
  nondegenerate representation.
  Then there are a measure~\(\nu\) on~\(X\) and a
  \(\nu\)\nb-measurable field of Hilbert spaces \(\Hils =
  (\Hils_x)_{x\in X}\) over~\(X\) such that \(\Hils_x \neq0\) for
  \(\nu\)\nb-almost all \(x\in X\) and~\(\varrho\) is unitarily
  equivalent to the representation of \(\Cont_0(X)\) on the Hilbert
  space \(\Lt^2(X,\Hils,\nu)\) of \(\nu\)-square-integrable sections
  of the field~\(\Hils\) by pointwise multiplication.
\end{theorem}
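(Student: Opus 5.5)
This is the classical spectral theorem in its direct-integral form, and I would prove it in three steps: decompose into cyclic pieces, identify each cyclic piece with an \(\Lt^2\)-space of a measure, and assemble the pieces into a measurable field of Hilbert spaces.

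First, I reduce to cyclic representations. Since \(\Hils[L]\) is separable and \(\varrho\) is nondegenerate, Zorn's Lemma gives an orthogonal decomposition \(\Hils[L] = \bigoplus_{n\in N} \Hils[L]_n\) into at most countably many cyclic subrepresentations, with unit cyclic vectors~\(\xi_n\).

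Second, for each~\(n\), the functional \(f\mapsto \braket{\xi_n}{\varrho(f)\xi_n}\) is a positive functional on \(\Cont_0(X)\). By the Riesz Representation Theorem it is integration against a finite Radon measure~\(\nu_n\). The map \(f\mapsto \varrho(f)\xi_n\) extends to a unitary \(\Lt^2(X,\nu_n)\congto \Hils[L]_n\) that intertwines pointwise multiplication with~\(\varrho\).

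Third, I glue the pieces together. Put \(\nu \defeq \sum_n 2^{-n}\nu_n\), which is a finite measure; second countability of~\(X\) makes it a regular Borel measure. Each~\(\nu_n\) is absolutely continuous with respect to~\(\nu\), so the Radon--Nikodym Theorem gives densities \(h_n\ge0\) with \(\nu_n = h_n\nu\). The map \(\zeta\mapsto \zeta\, h_n^{1/2}\) is then a unitary \(\Lt^2(X,\nu_n)\congto \Lt^2(X_n,\nu)\) intertwining multiplication, where \(X_n \defeq \{h_n>0\}\).

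I then define the measurable field by \(\Hils_x \defeq \ell^2\bigl(\setgiven{n\in N}{x\in X_n}\bigr)\), with the canonical basis vectors as a fundamental sequence of measurable sections. The family of standard basis vectors \(e_n\mathbf 1_{X_n}\) satisfies the measurability requirements in \cite{Dixmier:Von_Neumann}*{Definition~1 on p.~164}. The \(\Lt^2\)-sections of this field are precisely \(\bigoplus_n \Lt^2(X_n,\nu)\), so composing the unitaries above gives a unitary \(\Lt^2(X,\Hils,\nu)\congto\Hils[L]\) that intertwines pointwise multiplication with~\(\varrho\).

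Finally, \(\nu\bigl(X\setminus\bigcup_n X_n\bigr) = \sum_n 2^{-n}\nu_n\bigl(X\setminus\bigcup_m X_m\bigr) = 0\), because \(\nu_n\) is concentrated on~\(X_n\). Hence \(\Hils_x\neq0\) for \(\nu\)\nb-almost all \(x\in X\).

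The only point requiring care is the measurability bookkeeping in the third step. It becomes routine once the~\(h_n\) are chosen as Borel functions, which is possible because~\(X\) is second countable. Alternatively, the whole statement can be quoted directly from \cite{Dixmier:Cstar-algebras}, where this result appears as a standard consequence of the spectral theorem.
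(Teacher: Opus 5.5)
Your proof is correct, but it takes a different route from the paper's. The paper constructs nothing by hand. It first reduces to a faithful representation: if \(\ker\varrho=\Cont_0(U)\) for an open \(U\subseteq X\), it replaces \(X\) by the closed subset \(X\setminus U\). It then notes that nondegeneracy puts \(\Id_{\Hils[L]}\) in the weak closure of \(\varrho(\Cont_0(X))\), and quotes two results from \cite{Dixmier:Algebres-Operateurs}: the existence of a basic measure for such a representation on a separable Hilbert space, and the direct-integral decomposition theorem. You instead produce the measure and the field explicitly, via a countable cyclic decomposition, the Riesz Representation Theorem, the finite measure \(\nu=\sum_n 2^{-n}\nu_n\), Radon--Nikodym densities, and the concrete field \(\Hils_x=\ell^2\bigl(\setgiven{n}{x\in X_n}\bigr)\) generated by the sections \(e_n\mathbf{1}_{X_n}\). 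Each step checks out, including \(\Lt^2(X,\Hils,\nu)\cong\bigoplus_n\Lt^2(X_n,\nu)\) and \(\nu\bigl(X\setminus\bigcup_n X_n\bigr)=0\). Your version is elementary and self-contained. It needs no reduction to the faithful case, since \(\nu\) simply lacks full support when \(\varrho\) has a kernel. It also does not really use second countability of \(X\): the two places where you invoke it are unnecessary, because a countable combination \(\sum_n 2^{-n}\nu_n\) of finite Radon measures with summable masses is again Radon, and Radon--Nikodym derivatives are Borel anyway. What your construction does not give is canonicity. Your field depends on the chosen cyclic vectors, so uniqueness up to almost-everywhere isomorphism would need a separate argument; the paper needs exactly this next, in Theorem~\ref{the:disintegration_commutative_unique}. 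The paper gets both existence and uniqueness from the same structure theory in \cite{Dixmier:Algebres-Operateurs}.
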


\begin{proof}
  This follows from results in~\cite{Dixmier:Algebres-Operateurs}.
  First, we may arrange that the representation~\(\varrho\) is
  faithful: otherwise, its kernel is \(\Cont_0(U)\) for an open
  subset \(U\subseteq X\) and then we may replace~\(X\) by the
  closed subspace~\(X\setminus U\), which corresponds to the
  quotient \(\Cont_0(X)/\Cont_0(U)\).  Thus~\(\varrho\) identifies
  \(\Cont_0(X)\) with an Abelian \(\Cst\)\nb-subalgebra of
  \(\Bound(\Hils[L])\).  The nondegeneracy of~\(\varrho\) says
  that~\(\Id_{\Hils[L]}\) is in the weak closure of
  \(\varrho(\Cont_0(X))\).  Since~\(\Hils[L]\) is separable, this
  representation has a ``basic'' measure by
  \cite{Dixmier:Algebres-Operateurs}*{Proposition~4 in Section~I.7
    on p.~116}.  And then
  \cite{Dixmier:Algebres-Operateurs}*{Theorem~1 in Section~II.6 on
    p.~208} implies all the claims.
\end{proof}

We also need information about the uniqueness of this disintegration:

\begin{theorem}
  \label{the:disintegration_commutative_unique}
  Let~\(X\) be a second countable, locally compact space.  For
  \(j=1,2\), let~\(\nu_j\) be measures on~\(X\) and let
  \(\Hils_j = (\Hils_{j,x})_{x\in X}\) be \(\nu_j\)\nb-measurable
  fields of Hilbert spaces over~\(X\) such that \(\Hils_{j,x}\neq0\)
  \(\nu_j\)\nb-almost everywhere for \(j=1,2\).  There is a unitary
  intertwiner~\(V\) between the pointwise multiplication
  representations of~\(\Cont_0(X)\) on \(\Lt^2(X,\Hils_j,\nu_j)\) for
  \(j=1,2\) if and only if \([\nu_1]=[\nu_2]\) and the measurable
  fields of Hilbert spaces \(\Hils_1\) and~\(\Hils_2\) are isomorphic
  almost everywhere with respect to this common measure class.  If
  \[
    T\colon \Lt^2(X,\Hils_1,\nu_1) \congto \Lt^2(X,\Hils_2,\nu_2)
  \]
  is a unitary intertwiner, then there are unitary operators
  \(T_x\colon \Hils_{1,x} \congto \Hils_{2,x}\) for
  \([\nu_1]\)\nb-almost all \(x\in X\) such that
  \begin{equation}
    \label{eq:integrate_fiberwise_intertwiners}
    T(\xi)(x) = T_x(\xi(x)) \left(\frac{\diff
        \nu_1}{\diff\nu_2}\right)^{1/2}(x)
  \end{equation}
  for \([\nu_1]\)-almost all \(x\in X\) and all
  \(\xi\in \Lt^2(X,\Hils_1,\nu_1)\).
\end{theorem}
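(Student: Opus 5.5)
The plan is to reduce everything to standard direct integral theory, in the form of Dixmier's results already used in the proof of Theorem~\ref{the:disintegration_commutative}. First I treat the ``if'' direction. Suppose \([\nu_1]=[\nu_2]\) and there is a measurable family of unitaries \(T_x\colon \Hils_{1,x}\congto\Hils_{2,x}\) defined on a conull set. Then formula~\eqref{eq:integrate_fiberwise_intertwiners} defines a map \(T\). A direct computation, using \(\int \norm{T_x\xi(x)}^2 \frac{\diff\nu_1}{\diff\nu_2}\,\diff\nu_2=\int\norm{\xi(x)}^2\,\diff\nu_1\), shows that \(T\) is isometric. Its inverse is given by the same formula with \(T_x^*\) and \(\diff\nu_2/\diff\nu_1\), so \(T\) is unitary. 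It obviously commutes with pointwise multiplication by functions in \(\Cont_0(X)\).

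For the converse, let \(T\) be a unitary intertwiner.
\begin{enumerate}
\item \textbf{Equality of measure classes.} I compare the projection-valued measures. The representation of \(\Cont_0(X)\) on \(\Lt^2(X,\Hils_j,\nu_j)\) extends to bounded Borel functions, with the indicator \(\chi_E\) of a Borel set \(E\) acting by multiplication. Since \(T\) intertwines the \(\Cont_0(X)\)-actions, it also intertwines these weak closures, because \(T\) is normal by unitarity; so it intertwines the multiplication by~\(\chi_E\). Now \(\chi_E\) acts by zero on \(\Lt^2(X,\Hils_j,\nu_j)\) if and only if \(\nu_j(E\cap\{\Hils_{j,x}\neq0\})=0\). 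Since \(\Hils_{j,x}\neq0\) almost everywhere, this condition is just \(\nu_j(E)=0\). Hence \(\nu_1\) and \(\nu_2\) have the same null sets.
\item \textbf{Normalising the measure.} Replace \(\Lt^2(X,\Hils_1,\nu_1)\) by \(\Lt^2(X,\Hils_1,\nu_2)\) through the unitary \(\xi\mapsto\xi\cdot(\diff\nu_1/\diff\nu_2)^{1/2}\). This unitary also intertwines the actions. Composing with it, \(T\) becomes a unitary \(T'\) between two direct integrals over the same measure~\(\nu_2\).
\item \textbf{Decomposing \(T'\).} The operator \(T'\) commutes with the diagonal algebra \(L^\infty(X,\nu_2)\). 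By \cite{Dixmier:Algebres-Operateurs}*{Theorem~1, Section~II.2}, the same result used in Lemma~\ref{lem:disintegrated_Fell_bundle_rep}, it is decomposable: there is a measurable bounded field \(x\mapsto T_x\) with \((T'\xi)(x)=T_x\xi(x)\) almost everywhere. Decomposability is usually stated for a single field, so I apply it to the direct sum field \(\Hils_1\oplus\Hils_2\) and the operator \(\begin{pmatrix}0&T'^*\\T'&0\end{pmatrix}\), which is self-adjoint and commutes with the diagonal algebra.
\item \textbf{Unitarity of the fibres.} \(T'^*T'=1\) and \(T'T'^*=1\) are decomposable with fibres \(T_x^*T_x\) and \(T_xT_x^*\). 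Uniqueness of decompositions up to null sets gives \(T_x^*T_x=1\) and \(T_xT_x^*=1\) almost everywhere. So \(T_x\) is unitary off a null set, and undoing step~2 yields~\eqref{eq:integrate_fiberwise_intertwiners}. In particular \(\Hils_1\cong\Hils_2\) almost everywhere.
\end{enumerate}

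The main obstacle is step~3, because Dixmier's decomposability theorem needs separability of the fields, which holds by our standing assumptions, and it needs the commutant identification \(L^\infty(X,\nu)'=\) the decomposable operators. I would quote that rather than reprove it, taking care to pass from \(\Cont_0(X)\) to its weak closure \(L^\infty\) via the normality argument in step~1.
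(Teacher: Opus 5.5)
Your argument is correct, but it is organised differently from the paper's proof. The paper does almost nothing by hand. It modifies \(\Hils_j\) on a null set so that all fibres become nonzero, as Dixmier requires. It notes that unitarily equivalent representations have the same kernel, and replaces \(X\) by the closed subset on which both representations of \(\Cont_0(X)\) are faithful, so that \(\nu_1\) and \(\nu_2\) get full support. It then quotes Dixmier's uniqueness theorem for diagonalisations (Theorem~3 in Section~II.6 of \cite{Dixmier:Algebres-Operateurs}). That theorem gives at once the equality of measure classes, the almost-everywhere isomorphism of the fields, and the formula~\eqref{eq:integrate_fiberwise_intertwiners}.

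You proceed in more elementary steps:
\begin{itemize}
\item You derive \([\nu_1]=[\nu_2]\) directly from the spectral projections \(M_{\chi_E}\). This shows exactly where the hypothesis \(\Hils_{j,x}\neq0\) almost everywhere is used, and it removes the need to pass to the support.
\item You normalise by the Radon--Nikodym derivative, so both fields live over the same measure.
\item You then need only the more basic decomposability theorem (Theorem~1 in Section~II.2, which the paper uses elsewhere). Your \(2\times2\) matrix trick handles operators between two different fields, and uniqueness of decompositions gives unitarity of the \(T_x\).
\end{itemize}
The paper's route is shorter. Yours is more self-contained and makes the role of each hypothesis visible.

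One step should be justified more carefully. In step~1, weak continuity of \(\Ad_T\) does not by itself give \(T M^1_{\chi_E} = M^2_{\chi_E} T\). A net in \(\Cont_0(X)\) converging weakly to \(M_{\chi_E}\) on \(\Lt^2(X,\Hils_1,\nu_1)\) need not converge to \(M_{\chi_E}\) on \(\Lt^2(X,\Hils_2,\nu_2)\) before you know the measure classes agree. Use a monotone class argument instead. Let \(\mathcal{S}\) be the set of bounded Borel functions \(f\) with \(T M^1_f = M^2_f T\).
\begin{itemize}
\item \(\mathcal{S}\) contains \(\Cont_0(X)\).
\item \(\mathcal{S}\) is closed under bounded pointwise limits of sequences, by dominated convergence in both \(\Lt^2\)-spaces.
\item Hence \(\mathcal{S}\) contains all bounded Borel functions, since Baire and Borel sets coincide on the second countable space \(X\).
\end{itemize}
Alternatively, approximate jointly with respect to \(\nu_1+\nu_2\) in the direct sum. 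The same argument supplies what step~3 needs, namely that \(T'\) commutes with multiplication by every element of \(L^\infty(X,\nu_2)\), not only by \(\Cont_0(X)\).
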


\begin{proof}
  This follows from results in~\cite{Dixmier:Algebres-Operateurs}.
  Since \(\Hils_{j,x}\neq0\) \(\nu_j\)\nb-almost everywhere, we may
  modify the field of Hilbert spaces~\(\Hils_j\) so that its fibres
  become nonzero everywhere without changing the Hilbert space
  \(\Lt^2(X,\Hils_j,\nu_j)\).  (This nonvanishing is assumed
  in~\cite{Dixmier:Algebres-Operateurs}.)  If two representations
  are unitarily
  equivalent, then they have the same kernel.  Replacing~\(X\) by a
  closed subspace, we may arrange that both representations of
  \(\Cont_0(X)\) are faithful.  Equivalently, the measures \(\nu_1\)
  and~\(\nu_2\) have full support.  Now
  \cite{Dixmier:Algebres-Operateurs}*{Theorem~3 in Section~II.6 on
    p.~211} implies all our claims.
\end{proof}

\begin{remark}
  The need to assume \(\Hils_x \neq0\) for \(\nu\)\nb-almost all
  \(x\in X\) is overlooked in~\cite{Buss-Holkar-Meyer:Universal}.
  This assumption ensures uniqueness of the measure class of~\(\nu\)
  by Theorem~\ref{the:disintegration_commutative}.
  If \(\nu\) and~\(\Hils\) do not satisfy this, then we may
  replace~\(\nu\) by its restriction to the \(\nu\)\nb-measurable subset
  \(\setgiven{x\in X}{\Hils_x \neq0}\) without changing the Hilbert
  space \(\Lt^2(X,\Hils,\nu)\).
  Then the important claim \([\nu_1]=[\nu_2]\) in
  Theorem~\ref{the:disintegration_commutative_unique} breaks down.
\end{remark}

The following theorem adds a coefficient algebra to our disintegration
theorems:

\begin{theorem}
  \label{the:disintegrate_rep_of_bundle}
  Let~\(X\) be a second countable, locally compact space and let
  \(\FU[B] = (\FU[B]_x)\) be a semicontinuous field of
  separable \(\Cst\)\nb-algebras over~\(X\).
  Let~\(\Hils[L]\) be a separable Hilbert space and let \(\varphi\colon
  \Cont_0(X,\FU[B]) \to \Bound(\Hils[L])\) be a nondegenerate
  representation.
  Then there are a measure~\(\nu\) on~\(X\), a \(\nu\)\nb-measurable field of
  Hilbert spaces \(\Hils = (\Hils_x)_{x\in X}\), a unitary
  \(V\colon \Hils[L] \congto \Lt^2(X,\Hils,\nu)\), and a family of nondegenerate
  representations \(\varphi_x\colon \FU[B]_x \to \Bound(\Hils_x)\) for
  all \(x\in X\) such that \(\Hils_x \neq0\) \(\nu\)\nb-almost
  everywhere, and
  \[
    (V\varphi(b) \xi)(x) = \varphi_x(b(x)) (V \xi)(x)
  \]
  for all \(b\in \Cont_0(X,\FU[B])\), \(\xi\in \Hils[L]\) and
  \(\nu\)\nb-almost all \(x\in X\).

  Here the measure~\(\nu\) is unique up to equivalence, and the measurable
  field of Hilbert spaces~\(\Hils\) and the
  representations~\(\varphi_x\) are unique up to unitary equivalence
  \(\nu\)\nb-almost everywhere.
  More generally, let \(T\colon \Hils[L] \to \Hils[L]'\) be a unitary
  intertwiner between two nondegenerate representations \(\varphi\colon
  \Cont_0(X,\FU[B]) \to \Bound(\Hils[L])\) and  \(\varphi'\colon
  \Cont_0(X,\FU[B]) \to \Bound(\Hils[L]')\) on separable Hilbert
  spaces, and disintegrate the two representations into
  \((\nu,\Hils,\varphi_x)\) and
  \((\nu',\Hils',\varphi_x')\) as above.
  Then \([\nu]=[\nu']\) and there are a conull subset \(X_0\subseteq
  X\) and a measurable family of unitary operators \(T_x\colon \Hils_x
  \congto \Hils_x'\) for \(x\in X_0\) that intertwine \(\varphi_x\)
  and~\(\varphi'_x\) for all \(x\in X_0\) and that
  satisfy~\eqref{eq:integrate_fiberwise_intertwiners}.
\end{theorem}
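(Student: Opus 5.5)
The plan is to reduce everything to the commutative case, Theorems~\ref{the:disintegration_commutative} and~\ref{the:disintegration_commutative_unique}, applied to the central subalgebra \(\Cont_0(X)\). This subalgebra acts on \(\Hils[L]\) through the extension of~\(\varphi\) to multipliers: \(\Cont_0(X)\) acts by central multipliers on \(\Cont_0(X,\FU[B])\), and~\(\varphi\) is nondegenerate, so its restriction to \(\Cont_0(X)\) is nondegenerate. Theorem~\ref{the:disintegration_commutative} then gives a measure~\(\nu\), a \(\nu\)\nb-measurable field~\(\Hils\) with \(\Hils_x\neq0\) almost everywhere, and a unitary \(V\colon \Hils[L]\congto\Lt^2(X,\Hils,\nu)\) intertwining \(\Cont_0(X)\) with pointwise multiplication. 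Conjugating by~\(V\), every \(\varphi(b)\) for \(b\in \Cont_0(X,\FU[B])\) commutes with the diagonal algebra. By \cite{Dixmier:Algebres-Operateurs}*{Theorem~1, Section~II.2}, it is therefore decomposable, \(V\varphi(b)V^* = \int^\oplus \varphi_x(b)\,\diff\nu(x)\), where \(x\mapsto\varphi_x(b)\in\Bound(\Hils_x)\) is measurable and defined up to a null set depending on~\(b\).

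Next I would make the fibrewise operators into representations of the fibres. Since \(\Cont_0(X,\FU[B])\) is separable (as \(X\) is second countable and the fibres are separable), I fix a countable dense \(\mathbb{Q}[i]\)\nb-\Star{}subalgebra \(\mathcal D\). Off a single null set \(N\), the map \(b\mapsto\varphi_x(b)\) is \(\mathbb{Q}[i]\)\nb-linear, multiplicative and \Star{}preserving on~\(\mathcal D\). The key point is then to show that \(\varphi_x(b)\) depends only on \(b(x)\), with \(\norm{\varphi_x(b)}\le\norm{b(x)}\).

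This is the step I expect to be the main obstacle, because of upper semicontinuity. For \(h\in\Cont_0(X)\), decomposability gives \(\varphi_x(hb)=h(x)\varphi_x(b)\) almost everywhere. Hence \(\varphi_x\) kills \(\Cont_0(X\setminus\{x\})\cdot\mathcal D\) approximately. To turn this into an estimate, I would use the norm formula \(\norm{b(x)}=\inf_{h}\norm{(1-h)b+hb}\)-type: \(\norm{b(x)}=\inf\{\norm{hb}:h\in\Cont_c(X),\ 0\le h\le1,\ h(x)=1\}\) (see~\cite{Nilsen:Bundles}). Choosing a countable family of such~\(h\) from a countable base, I get \(\norm{\varphi_x(b)}=\norm{\varphi_x(h b)}\le\norm{hb}\) almost everywhere. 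Taking the infimum over the countable family yields \(\norm{\varphi_x(b)}\le\norm{b(x)}\) for \(x\notin N\), after enlarging~\(N\) by a null set. Thus \(\varphi_x\) factors through the dense subalgebra \(\setgiven{b(x)}{b\in\mathcal D}\subseteq\FU[B]_x\) and extends to a \Star{}homomorphism \(\varphi_x\colon\FU[B]_x\to\Bound(\Hils_x)\). By continuity, the formula for general~\(b\) then holds almost everywhere.

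To get nondegeneracy, let \(P_x\) be the projection onto the essential subspace of~\(\varphi_x\). This is a measurable field, obtained as a strong limit along the images of an approximate unit taken from~\(\mathcal D\). The operator \(\int^\oplus(1-P_x)\) is annihilated by \(\varphi(\Cont_0(X,\FU[B]))\), so it is zero by nondegeneracy of~\(\varphi\). Hence \(P_x=1\) almost everywhere. On the null set~\(N\), I redefine \(\Hils_x\defeq0\), or choose any nondegenerate representation, so that \(\varphi_x\) is defined for all~\(x\).

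For uniqueness, let \(T\) be a unitary intertwiner of \(\varphi\) and~\(\varphi'\). It also intertwines the induced representations of \(\Cont_0(X)\), so Theorem~\ref{the:disintegration_commutative_unique} yields \([\nu]=[\nu']\) and fibrewise unitaries~\(T_x\) satisfying~\eqref{eq:integrate_fiberwise_intertwiners}. The relation \(T\varphi(b)=\varphi'(b)T\) for the countably many \(b\in\mathcal D\) gives \(T_x\varphi_x(b(x))=\varphi'_x(b(x))T_x\) off a null set. By density and continuity, this extends to all of \(\FU[B]_x\), which produces the required conull set~\(X_0\). The uniqueness statement of the first part is the special case \(T=\Id_{\Hils[L]}\).
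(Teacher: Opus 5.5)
Your argument, like the paper's, needs \(\Cont_0(X,\FU[B])\) to be separable so that the countable dense \(\mathcal D\) exists. Your justification for this is false: for an \emph{upper semicontinuous} field, a second countable base and separable fibres do not make the section algebra separable. Without separability, the uniqueness assertion actually fails.

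Here is a counterexample. Let \(Y\defeq[0,1]\times\{0,1\}\) carry the order topology of the lexicographic order (a double arrow space), and let \(p(t,i)\defeq t\). Then \(\Cont(Y)\) is a \(\Cont([0,1])\)-algebra all of whose fibres are \(\C^2\). But \(Y\) is compact and not metrisable, so \(\Cont(Y)\) is not separable. Each \(f\in\Cont(Y)\) satisfies \(f(t,0)=f(t,1)\) for all but countably many~\(t\). Hence multiplication by the Borel function \(t\mapsto f(t,0)\) on \(\Lt^2([0,1])\) is a nondegenerate representation. Both \(\varphi_t\defeq\ev_{(t,0)}\) and \(\varphi'_t\defeq\ev_{(t,1)}\) disintegrate it with respect to Lebesgue measure, \(\Hils_t=\C\) and \(V=\Id\). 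Yet \(\varphi_t\) and \(\varphi'_t\) are inequivalent for every~\(t\).

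So separability of \(\Cont_0(X,\FU[B])\) must be added as a hypothesis. This is not a defect of your route relative to the paper. The paper applies Dixmier's Lemma~8.3.1 to \(B=\Cont_0(X,\FU[B])+\Cont_0(X)\) and uses the almost-everywhere uniqueness of the resulting family, and both steps need \(B\) to be separable in exactly the same way. Section~\ref{sec:Renault} even asserts the same false implication for \(\Cont_0(G,\A)\). Under that hypothesis your proof is correct.

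Apart from this, your proof follows the paper's: restriction to \(\Cont_0(X)\), Theorem~\ref{the:disintegration_commutative}, decomposability, nondegeneracy almost everywhere, and intertwiners via Theorem~\ref{the:disintegration_commutative_unique}, with uniqueness as the case \(T=\Id\). The one difference is the factorisation through the fibres.

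The paper disintegrates the larger algebra \(B\supseteq\Cont_0(X)\) and arranges \(\bar\varphi_x|_{\Cont_0(X)}=\ev_x\) off a null set. Then \(\bar\varphi_x(hb)=h(x)\bar\varphi_x(b)\) holds for all \(h\) at once. So \(\bar\varphi_x\) kills \(\Cont_0(X\setminus\{x\})\cdot\Cont_0(X,\FU[B])\) and factors through \(\FU[B]_x\) purely algebraically.

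You represent only \(\mathcal D\subseteq\Cont_0(X,\FU[B])\) fibrewise, so you replace this by the estimate \(\norm{\varphi_x(b)}\le\norm{b(x)}\), derived from \(\norm{b(x)}=\inf\norm{hb}\). That works, but only because the countable family of bump functions can be chosen fine enough at \emph{every} point simultaneously. For instance, take \(h_{n,m}\in\Contc(W_n)\) with \(0\le h_{n,m}\le1\) and \(h_{n,m}=1\) on \(\overline{W_m}\subseteq W_n\) for a countable base~\((W_n)\). You should say this explicitly.

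Your version is more self-contained, since it reproves the needed part of Dixmier's lemma by hand. The paper's version is shorter and needs no norm formula on the fibres.
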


\begin{proof}
  The representation \(\varphi\colon \Cont_0(X,\FU[B]) \to \Bound(\Hils[L])\)
  is nondegenerate and hence extends to a unital representation
  \(\Mult(\Cont_0(X,\FU[B]))\to\Bound(\Hils[L])\).
  Composing with the canonical nondegenerate \Star{}homomorphism
  \(\Cont_0(X)\to\Mult(\Cont_0(X,\FU[B]))\) gives a nondegenerate
  representation \(\varphi_0\colon \Cont_0(X)\to \Bound(\Hils[L])\).
  Since~\(X\) is second countable and~\(\Hils[L]\) separable,
  Theorem~\ref{the:disintegration_commutative} applies
  to~\(\varphi_0\).
  It gives a measure class~\(\nu\) on~\(X\), a
  measurable field of Hilbert spaces~\(\Hils\) and a unitary
  \(V\colon \Hils[L] \congto \Lt^2(X,\Hils,\nu)\) such that
  \(\Hils_x \neq0\) \(\nu\)\nb-almost everywhere and~\(V\)
  intertwines~\(\varphi_0\) and the representation of \(\Cont_0(X)\)
  on \(\Lt^2(X,\Hils,\nu)\) by pointwise multiplication.

  Let \(B \subseteq \Mult(\Cont_0(X,\FU[B]))\) be the sum of the ideal
  \(\Cont_0(X,\FU[B])\) and the image of \(\Cont_0(X)\).
  Then~\(\varphi\) extends to a nondegenerate representation of~\(B\).
  If \(b\in B\), then \(V \varphi(b) V^*\)
  commutes with the pointwise multiplication action of
  \(\Cont_0(X)\) on \(\Lt^2(X,\Hils,\nu)\).
  Then \(V \varphi(b) V^*\) is a ``decomposable'' operator by
  \cite{Dixmier:Algebres-Operateurs}*{Theorem~1 in Section~II.2 on
    p.~164}.
  This verifies the assumption of
  \cite{Dixmier:Cstar-algebres}*{Lemma~8.3.1 on p.~147}.
  The latter lemma gives representations \(\bar\varphi_x\colon B
  \to\Bound(\Hils_x)\) for all \(x\in X\) such that for all \(b\in
  B\) and \(\xi\in \Lt^2(X,\Hils,\nu)\), the element \(V\varphi(b)
  V^* \xi\) of \(\Lt^2(X,\Hils,\nu)\) is equal \(\nu\)\nb-almost
  everywhere to the section \(x\mapsto \bar\varphi_x(b) (\xi(x))\).
  In particular, the latter functions are \(\nu\)\nb-measurable for
  all \(b,\xi\).
  This makes~\((\bar\varphi_x)\) a measurable family of representations
  of~\(B\).
  Ideas from the proof of \cite{Dixmier:Cstar-algebres}*{Lemma~8.3.1}
  show that the family of representations~\((\bar\varphi_x)_{x\in X}\) is
  unique up to equality outside a \(\nu\)\nb-null set.

  If \(b\in \Cont_0(X)\), \(\xi\in \Lt^2(X,\Hils,\nu)\), then
  \((V \varphi(b) V^* \xi)(x) \in \Lt^2(X,\Hils,\nu)\) is also
  represented by \(b(x)\cdot \xi(x)\) because~\(V\) intertwines the
  representations of~\(B\).
  The uniqueness result just asserted shows that there is a
  \(\nu\)\nb-null set \(N\subseteq X\) with \(\bar\varphi_x(b) = b(x)\)
  for all \(b\in \Cont_0(X)\), \(x\in X\setminus N\).
  Replace~\(\Hils_x\) by~\(0\) in~\(N\).
  This does not change the Hilbert space \(\Lt^2(X,\Hils,\nu)\), and
  it arranges that~\(\bar\varphi_x|_{\Cont_0(X)}\) is evaluation at~\(x\)
  for all \(x\in X\).
  Since~\(\bar\varphi_x\) is a \Star{}homomorphism on~\(B\), it follows
  that there are \Star{}homomorphisms \(\varphi_x\colon \FU[B]_x \to
  \Bound(\Hils_x)\) for \(x\in X\) such that \(\bar\varphi_x(b) =
  \varphi_x(b(x))\) for all \(b\in \Cont_0(X,\FU[B])\), \(x\in X\).

  The representation~\(\varphi\) is nondegenerate by assumption.
  Then the representation~\(\varphi_x\) is nondegenerate for
  \(\nu\)\nb-almost all \(x\in X\) because of
  \cite{Dixmier:Cstar-algebras}*{Remark 8.1.5}.
  Setting \(\Hils_x=0\) on a null set, we may arrange
  for~\(\varphi_x\) to be nondegenerate for all \(x\in X\).

  The uniqueness claim for \(\nu,\Hils,\varphi\) follows from the
  claim about intertwiners, applied to the identity intertwiner.
  So it remains to prove the disintegration of an intertwiner~\(T\).
  Theorem~\ref{the:disintegration_commutative_unique} says that
  \([\nu]=[\nu']\) and that there are a conull subset \(X_1\subseteq
  X\) and unitary operators \(T_x \colon \Hils_x \congto \Hils_x'\)
  for \(x\in X_1\) such
  that~\eqref{eq:integrate_fiberwise_intertwiners} holds.
  Since~\(T\) intertwines \(\varphi\) and~\(\varphi'\), the families
  of fibrewise representations \(b\mapsto \varphi_x(b)\) and \(b\mapsto
  T_x \varphi'_x(b) T_x^*\) both disintegrate the same
  representation~\(\varphi\).
  The uniqueness result used above implies that the set~\(X_0\) of all
  \(x\in X_1\) with \(\varphi_x(b) = T_x \varphi'_x(b) T_x^*\) is still
  conull.
  Restricting~\(T_x\) to \(x\in X_0\) then gives a family of
  intertwiners with the required properties.
\end{proof}

\subsection{First stage of the proof: almost-everywhere disintegration}
\label{sec:first_stage_proof}

In this section, we apply Theorem~\ref{the:disintegrate_rep_of_bundle}
to Fell-bundle representations \((\varphi,U)\) and intertwiners
between two such representations \((\varphi,U)\) and
\((\varphi',U')\).
This is the first step towards proving
Theorem~\ref{the:disintegrate_Hilbert_space}.

Theorem~\ref{the:disintegrate_rep_of_bundle} applied to the
nondegenerate representation \(\varphi\colon \Cont_0(G^0,\FU) \to
\Bound(\Hils[L])\) produces a measure class~\(\nu\) on~\(G^0\), a
measurable field of Hilbert spaces \(\Hils = (\Hils_x)_{x\in G^0}\),
and a measurable field of nondegenerate representations
\(\varphi_x\colon \FU_x \to \Bound(\Hils_x)\) such that \(\Hils[L] =
\Lt^2(G^0,\Hils,\nu)\) and \((\varphi(a)\xi)(x) =
\varphi_x(a)(\xi(x))\) for all \(a\in \Cont_0(G^0,\FU)\),
\(\xi\in\Lt^2(G^0,\Hils,\nu)\), and \(x\in G^0\).
Here we have replaced the intertwiner \(\Hils[L] \cong
\Lt^2(G^0,\Hils,\nu)\) by an identity to simplify notation.

Next, let \(T\colon \Hils[L] \to \Hils[L]'\) be a unitary
intertwiner between two Fell-bundle representations \((\varphi,U)\)
and \((\varphi',U')\).
Let \((\nu,\Hils,\varphi_x)\)
and  \((\nu',\Hils',\varphi'_x)\) be the resulting measure, measurable
field of Hilbert spaces and measurable family of representations for
\(\varphi\) and~\(\varphi'\).
Since~\(T\) intertwines the representations \(\varphi\)
and~\(\varphi'\), Theorem~\ref{the:disintegrate_rep_of_bundle} shows
that the measure classes \([\nu]\) and~\([\nu']\) are equal and that
there are a conull set \(X_0\subseteq G^0\) and a family of unitary
intertwiners \(T_x\colon \Hils_x \to \Hils'_x\) from~\(\varphi_x\)
to~\(\varphi'_x\), such that~\(T\) is given
by~\eqref{eq:integrate_fiberwise_intertwiners}.
Let \(T,T'\colon \Hils[L] \to \Hils[L]'\) be two intertwiners
from \((\varphi,U)\) and \((\varphi',U')\), and let \(T_x,T_x'\colon
\Hils_x \to \Hils'_x\) be the resulting operators on fibres; these are
defined on certain conull sets \(X_0\), \(X_0'\).
It is clear that \(T=T'\) if and only if \(T_x = T'_x\) for
\(\nu\)\nb-almost all \(x\in X_0 \cap X_0'\).

Next, we consider the ingredient~\(U\) in a representation.
We may compute the tensor products \(\Lt^2(G,\A,\s,\tilde\alpha)
\otimes_\varphi \F\) and \(\Lt^2(G,\A\A^*,\rg,\alpha) \otimes_\varphi
\F\) as in Lemma~\ref{lem:compose_corr_from_measure-family}:
\begin{align*}
  \Lt^2(G,\A,\s,\tilde\alpha) \otimes_\varphi \F
  &\cong \Lt^2(G,\A\,\s^*\Hils,\nu\circ\tilde\alpha),\\
  \Lt^2(G,\A\A^*,\rg,\alpha) \otimes_\varphi \F
  &\cong \Lt^2(G,\A\A^*\, \rg^*\Hils,\nu\circ\alpha).
\end{align*}
Here \(\A\, \s^*\Hils\) and \(\A\A^*\,\rg^*\Hils\) are measurable
fields of Hilbert spaces over~\(G\) with the fibres
\(\A_g \otimes_{\FU_{\s(g)}} \Hils_{\s(g)}\) and
\(\varphi_{\rg(g)}(\A_g\A_g^*)\Hils_{\rg(g)} \cong \A_g\A_g^*
\otimes_{\FU_{\rg(g)}} \Hils_{\rg(g)}\), respectively;
and \(\nu\circ\tilde\alpha\) and \(\nu\circ\alpha\) denote the
measures on~\(G\) specified by the integration maps
\(\nu\circ\tilde\alpha\) and~\(\nu\circ\alpha\).
The representation of \(\Cont_0(G,\A\A^*)\) or \(\Cont_0(G,\rg^*\FU)\)
on these Hilbert spaces is the canonical one by pointwise
multiplication.

By  Theorem~\ref{the:disintegrate_rep_of_bundle}, the mere existence
of the intertwiner~\(U\) between these representations already implies
that the restriction of the measure \(\nu\circ\tilde\alpha\) to the
subset \(\supp \A\A^*\,\rg^*\Hils\) is equivalent to the restriction
of the measure \(\nu\circ\alpha\) to the subset \(\supp
\A\,\s^*\Hils\), which is defined as the set of all \(g\in G\) with
\(\A_g \otimes_{\FU_{\s(g)}} \Hils_{\s(g)} \neq0\).
A computation as in~\eqref{eq:measurable_rep_inclusions} shows that
\(\A_g \otimes_{\FU_{\s(g)}} \Hils_{\s(g)} \neq0\) if and only if
\(\varphi_{\s(g)}(\A_g^*\A_g) \Hils_{\s(g)} \neq0\).
Therefore, \(g\in G\) belongs to \(\supp \A\,\s^*\Hils\) if and only if
\(g^{-1} \in \supp \A\A^*\,\rg^*\Hils\).
Hence the restriction of~\(\nu\circ\alpha\) to \(\supp
\A\A^*\,\rg^*\Hils\) is a symmetric measure on~\(G\).
This shows that~\(\nu\) is quasi-invariant with respect to the
measurable representation of the Fell bundle that we are going to
build.
This notion of quasi-invariance depends only on the representation of
\(\FU = \A|_{G^0}\), which we have already constructed.

Theorem~\ref{the:disintegrate_rep_of_bundle} also disintegrates the
intertwiner~\(U\) between the two representations of
\(\Cont_0(G,\rg^*\FU)\), giving a subset \(X_1 \subseteq G\) and
unitary operators
\[
  U_g\colon \A_g \otimes_{\FU_{\s(g)}} \Hils_{\s(g)} \congto
  \A_g\A_g^* \otimes_{\FU_{\rg(g)}} \Hils_{\rg(g)}
\]
for all \(g\in X_1\) such that~\(X_1\) is conull for both measures
\(\nu\circ\alpha\) and \(\nu\circ\tilde\alpha\), the map~\(U\)
intertwines the left action of \(\Cont_0(G,\rg^*\FU)\), and
\[
  (U\xi)(g) = U_g(\xi(g))
  \biggl(\frac{\diff \nu\circ\alpha}
    {\diff \nu\circ\tilde\alpha} \biggr|_{\supp \A\A^*\,\rg^*\Hils} \biggr)^{1/2}
\]
for all \(\xi \in \Lt^2(G,\A\,\s^*\Hils,\nu\circ\tilde\alpha)\) and
almost all \(g\in X_1\).
It is useful to define~\(U_g\) also if both \(\A_g
\otimes_{\FU_{\s(g)}} \Hils_{\s(g)}=0\) and \(\A_g\A_g^*
\otimes_{\FU_{\rg(g)}} \Hils_{\rg(g)}=0\).
Then the set of \(g\in G\) for which~\(U_g\) is defined is conull for
both measures \(\nu\circ\alpha\) and \(\nu\circ\tilde\alpha\), even if
\(\supp \A_g\A_g^*\rg^*\Hils\) is small.

Let \(g\in X_1\).
The left action~\(\varphi_{\rg(g)}\) defines an
isomorphism from the codomain
\(\A_g\A_g^* \otimes_{\FU_{\rg(g)}} \Hils_{\rg(g)}\) of~\(U_g\) onto
\(\varphi_{\rg(g)}(\A_g \A_g^*) \Hils_{\rg(g)} \subseteq
\Hils_{\rg(g)}\).
Thus~\(U_g\) becomes an isometry
\(U_g'\colon \A_g\otimes_{\FU_{\s(g)}} \Hils_{\s(g)} \hookrightarrow
\Hils_{\rg(g)}\).
This produces a bounded linear map
\begin{equation}
  \label{eq:Sg_from_Ug}
  S_g\colon \A_g \to \Bound(\Hils_{\s(g)}, \Hils_{\rg(g)}),\qquad
  a\mapsto \bigl(\xi\mapsto U_g'(a\otimes \xi)\bigr)
  \quad\text{for }g\in X_1.
\end{equation}
Since~\(U_g'\) is a well defined isometry of right Hilbert modules and
intertwines the left actions of~\(\FU_{\rg(g)}\), the map~\(S_g\)
satisfies
\[
  S_g(c\cdot a\cdot b) = \varphi_{\rg(g)}(c) S_g(a) \varphi_{\s(g)}(b)
\]
for \(c\in \FU_{\rg(g)}\), \(a\in \A_g\), \(b\in \FU_{\s(g)}\) and
\(S_g(a_1)^* S_g(a_2) = \varphi_{\s(g)}(\braket{a_1}{a_2})\) for
\(a_1,a_2\in \A_g\).  And the linear span of~\(S_g(a)\xi\) for
\(a\in \A_g\), \(\xi\in\Hils_{\s(g)}\) is
\(\varphi_{\rg(g)}(\A_g \A_g^*) \Hils_{\rg(g)}\).  Conversely, any
linear map~\(S_g\) with these properties comes from a unique
unitary~\(U_g\) as above.  Namely,
\(U'_g(a\otimes \xi) \defeq S_g(a)(\xi)\) defines an isometry from
\(\A_g\otimes_{\FU_{\s(g)}} \Hils_{\s(g)}\) onto
\(\varphi_{\rg(g)}(\A_g \A_g^*) \Hils_{\rg(g)}\).  Then we
identify its range with
\(\A_g\A_g^* \otimes_{\FU_{\rg(g)}} \Hils_{\rg(g)}\) to get~\(U_g\).

So far, we have analysed the data of a Fell-bundle representation.
In addition, there is the condition \(d_2^*(U) d_0^*(U) = d_1^*(U)\)
that defines representations.  The unitaries \(d_j^*(U)\) intertwine
the left multiplication actions of \(\Cont_0(G^2,\Bun[I])\) on the
Hilbert spaces
\(\Lt^2(G^2,\E_k,v_k,\mu_k) \otimes_{\Cont_0(G^0,\FU)}
\Lt^2(G^0,\Hils,\nu)\) for \(k=0,1,2\).  We compute these tensor
products as above:
\[
  \Lt^2(G^2,\E_k,v_k,\mu_k) \otimes_{\Cont_0(G^0,\FU)}
  \Lt^2(G^0,\Hils,\nu) \cong \Lt^2(G^2,\E_k \otimes_{\FU}
  v_k^*\Hils, \nu\circ\mu_k)
\]
Here \(\E_k \otimes_{\FU} v_k^*\Hils\) for \(k=0,1,2\) is a
measurable field of
Hilbert spaces over~\(G^2\), whose fibre at \((g,h) \in G^2\) is
isomorphic to
\(\varphi_{\rg(g)}(\Bun[I]_{g,h}) \Hils_{\rg(g)}\),
\(\Bun[I]_{g,h} \cdot \A_g \otimes_{\FU_{\s(g)}} \Hils_{\s(g)}\), or
\[
  \Bun[I]_{g,h} \cdot \A_{g h} \otimes_{\FU_{\s(g)}} \Hils_{\s(h)}
  \cong \Bun[I]_{g,h} \cdot \A_g \otimes_{\FU_{\s(g)}} \A_h
  \otimes_{\FU_{\s(g)}} \Hils_{\s(h)} \cong \A_g \otimes_{\FU_{\s(g)}}
  \A_h \otimes_{\FU_{\s(g)}} \Hils_{\s(h)},
\]
respectively.  To reduce the size of formulas, we will omit the tensor
products in the following and briefly write these as
\(\Bun[I]_{g,h} \Hils_{\rg(g)}\),
\(\Bun[I]_{g,h} \A_g \Hils_{\s(g)}\), and
\(\Bun[I]_{g,h} \A_{g h} \Hils_{\s(h)} \cong \A_g \A_h
\Hils_{\s(h)}\).  The existence of the unitaries \(d_j^*(U)\) implies
that the measure classes
\(\nu\circ\mu_k|_{\supp(\E_k \otimes_{\FU} v_k^*\Hils)}\) for
\(k=0,1,2\) are equivalent.  And the transfer of~\(d_j^*(U)\) to a
unitary between the Hilbert spaces
\(\Lt^2(G^2,\E_k \otimes_{\FU} v_k^*\Hils, \nu\circ\mu_k)\) for
\(k\in \{0,1,2\}\setminus \{j\}\) gives the operator on
\(\Lt^2\)\nb-sections associated to almost everywhere isomorphisms
\(d_j^*(U)_{g,h}\) between the underlying measurable fields.
Namely,
\begin{alignat*}{2}
  d_0^*(U)_{g,h} &= \Id_{\A_g} \otimes U_h \colon&\quad
  \A_g \A_h \Hils_{\s(h)} &\to \Bun[I]_{g,h} \A_g \Hils_{\rg(h)},\\
  d_1^*(U)_{g,h} &= U_{g h}\colon&\quad
  \Bun[I]_{g,h} \A_{g h} \Hils_{\s(h)} &\to \Bun[I]_{g,h} \Hils_{\rg(g)},\\
  d_2^*(U)_{g,h} &= U_g\colon&\quad \Bun[I]_{g,h} \A_g \Hils_{\s(g)}
  &\to \Bun[I]_{g,h} \Hils_{\rg(g)}.
\end{alignat*}
Let \(m_{g,h}\colon \A_g \A_h \hookrightarrow \A_{g h}\) denote the
product in the Fell bundle.
The condition \(d_2^*(U) d_0^*(U) = d_1^*(U)\) is equivalent to
\begin{equation}
  \label{eq:representation_through_Ugh}
  U_g(\Id_{\A_g} \otimes U_h)
  = U_{g h} (m_{g,h}\otimes \Id_{\Hils_{\s(h)}})
  \quad \text{in }
  \Bound(\Bun[I]_{g,h} \A_g \A_h \Hils_{\s(h)},
  \Bun[I]_{g,h} \Hils_{\rg(g)})
\end{equation}
for \(\nu\circ \mu_0\)\nb-almost all \((g,h)\in G^2\).  This makes
sense because~\(U_g\) is defined for \(g\in X_1\) and~\(X_1\) is
conull for \(\nu\circ\alpha\) and \(\nu\circ\tilde\alpha\).  The
condition~\eqref{eq:representation_through_Ugh} is empty if
\(\Bun[I]_{g,h} \Hils_{\rg(g)} = 0\) or
\(\Bun[I]_{g,h}\A_g \A_h \Hils_{\s(h)}=0\), and the restrictions of
the measures \(\nu\circ \mu_j\) for \(j=0,1,2\) to the set where these
Hilbert spaces are nonzero are equivalent measures.  This is why it
does not matter which of these three measures we use and why we do not
have to restrict our statements to the supports of the relevant fields
of Hilbert spaces over~\(G^2\).

The condition~\eqref{eq:representation_through_Ugh} holds if and only
if both unitaries agree on \(a_1 \otimes a_2 \otimes \xi\) for all
\(a_1\in \A_g\), \(a_2\in \A_h\), \(\xi\in \Hils_{\s(h)}\).  By
definition,
\begin{align*}
  U_g(\Id_{\A_g} \otimes U_h) (a_1 \otimes a_2 \otimes \xi)
  &= U_g(a_1 \otimes S_h(a_2)\xi)
    = S_g(a_1) S_h(a_2)\xi,\\
  U_{g h} (m_{g,h}\otimes \Id_{\Hils_{\s(h)}})
  (a_1 \otimes a_2 \otimes \xi)
  &= U_{g h}(a_1\cdot a_2 \otimes \xi)
    = S_{g h}(a_1\cdot a_2)\xi.
\end{align*}
So~\eqref{eq:representation_through_Ugh} is true if and only if
\begin{equation}
  \label{eq:S_g-multiplicative1}
  S_g(a_1) S_h(a_2)= S_{g h}(a_1 \cdot a_2)
  \quad \text{in } \Bound(\Hils_{\s(h)}, \Hils_{\rg(g)})
\end{equation}
for all \(a_1\in \A_g\), \(a_2\in \A_h\).  Our computations above
show the following:

\begin{lemma}
  \label{lem:representation_almost_everywhere}
  If~\(S_g\) comes from a representation \((\varphi,U)\),
  then~\eqref{eq:S_g-multiplicative1} holds for all \((g,h)\in X_2\)
  for a subset \(X_2\subseteq G^2\) that is conull for the
  measures \(\nu\circ \mu_j\) for \(j=0,1,2\).
\end{lemma}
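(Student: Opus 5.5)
The statement follows by assembling the identifications made just before it, so the plan is mainly to order them carefully and to track the exceptional null sets. Transport the identity \(d_2^*(U)\,d_0^*(U)=d_1^*(U)\) through the unitary identifications
\[
  \Lt^2(G^2,\E_k,v_k,\mu_k)\otimes_{\Cont_0(G^0,\FU)}\Lt^2(G^0,\Hils,\nu)\cong\Lt^2(G^2,\E_k\otimes_{\FU}v_k^*\Hils,\nu\circ\mu_k)
\]
for \(k=0,1,2\), which are obtained as in Lemma~\ref{lem:compose_corr_from_measure-family}. Under these identifications each \(d_j^*(U)\) becomes a unitary that commutes with pointwise multiplication by \(\Cont_0(G^2,v_0^*\FU)\). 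In particular it commutes with \(\Cont_0(G^2)\), so it is decomposable by Theorem~\ref{the:disintegration_commutative_unique}, in the same way as in the proof of Lemma~\ref{lem:disintegrated_Fell_bundle_rep}.

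Next, identify the fibrewise operators. Unwinding the pasting diagrams in Figure~\ref{fig:U_coherence}, and using that \(U\) disintegrates into the \(U_g\) for \(g\in X_1\), show that \(d_0^*(U)\), \(d_2^*(U)\) and \(d_1^*(U)\) are given almost everywhere by the fibre operators \(\Id_{\A_g}\otimes U_h\), \(U_g\) and \(U_{gh}\) composed with \(m_{g,h}\), each multiplied by the square root of \(\Delta(h)\), \(\Delta(g)\) and \(\Delta(gh)\) respectively. For this to make sense, the sets \(d_j^{-1}(X_1)\) must be conull for \(\nu\circ\mu_k\). That holds because \(\mu_k\) factors through \(\lambda_j\) followed by \(\alpha\) or \(\tilde\alpha\) (equations \eqref{eq:compose_integration_maps0}--\eqref{eq:compose_integration_maps2}), and \(X_1\) is conull for both \(\nu\circ\alpha\) and \(\nu\circ\tilde\alpha\). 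By Fubini, the intersection of these three preimages is then conull.

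The uniqueness part of Theorem~\ref{the:disintegration_commutative_unique}, applied to the two decompositions of the same operator \(d_1^*(U)=d_2^*(U)d_0^*(U)\), gives equality of the fibre operators up to a positive scalar function. Unitarity forces that scalar to be \(1\) almost everywhere, exactly as in the \(M_\xi\) argument of Lemma~\ref{lem:disintegrated_Fell_bundle_rep}. The result is \eqref{eq:representation_through_Ugh} for \(\nu\circ\mu_0\)-almost all \((g,h)\), and hence for \(\nu\circ\mu_j\)-almost all \((g,h)\) for every \(j\), on the set where the fibres are nonzero; there the restricted measures are equivalent.

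Finally, let \(X_2\) be this conull set together with the points where both sides of \eqref{eq:S_g-multiplicative1} act on zero spaces. Evaluating \eqref{eq:representation_through_Ugh} on elementary tensors \(a_1\otimes a_2\otimes\xi\) gives \eqref{eq:S_g-multiplicative1} on the range \(\Bun[I]_{g,h}\Hils\). Off that range both sides vanish: \(S_g(a_1)S_h(a_2)\xi\) lies in \(\varphi(\A_g\A_h\A_h^*\A_g^*)\Hils_{\rg(g)}\) by the inclusions \eqref{eq:measurable_rep_inclusions}, and so does \(S_{gh}(a_1a_2)\xi\). The main obstacle is the measure-theoretic bookkeeping. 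Only countably many conditions may be imposed before choosing the null set, so separability of \(\A_g\) and second countability are needed to pass from a countable dense set of \(a_1,a_2,\xi\) to all of them by continuity.
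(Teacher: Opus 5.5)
Your proposal is correct and follows the paper's argument. You transport $d_2^*(U)\,d_0^*(U)=d_1^*(U)$ to $\Lt^2$-sections of measurable fields over $G^2$, read off the fibre operators $\Id_{\A_g}\otimes U_h$, $U_g$ and $U_{gh}\circ(m_{g,h}\otimes\Id)$, deduce \eqref{eq:representation_through_Ugh} almost everywhere, and evaluate on $a_1\otimes a_2\otimes\xi$. You are more explicit than the paper about the Radon--Nikodym factors $\Delta^{1/2}\circ d_j$ and about why the sets $d_j^{-1}(X_1)$ are conull. Your extra step about vectors off $\Bun[I]_{g,h}\Hils_{\rg(g)}$ is harmless but unnecessary: the domain $\Bun[I]_{g,h}\A_g\A_h\Hils_{\s(h)}=\A_g\A_h\Hils_{\s(h)}$ already contains every elementary tensor $a_1\otimes a_2\otimes\xi$.
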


Next, let \(T\colon \Hils[L] \to \Hils[L]'\) be a unitary intertwiner
between two Fell-bundle representations \((\varphi,U)\) and
\((\varphi',U')\).
We describe the representations by the data \((\nu,\Hils_x,U_g,S_g)\) and
\((\nu',\Hils'_x,U'_g,S'_g)\) as above.
We have already seen that it follows that \([\nu]=[\nu']\) and that
there are a \(\nu\)-conull subset \(X_0^T\subseteq G^0\) and unitary
intertwiners \(T_x\colon \Hils_x \to \Hils'_x\) from~\(\varphi_x\)
to~\(\varphi'_x\) for \(x\in X_0^T\)
such that~\(T\) is given
by~\eqref{eq:integrate_fiberwise_intertwiners}.
Since~\(T\) is an intertwiner, we know that the operators
\((\Id\otimes T)U\) and \(U'(\Id\otimes T)\) from
\(\Lt^2(G,\A,\s,\tilde\alpha) \otimes_{\varphi} \Lt^2(G^0,\Hils,\nu)\)
to \(\Lt^2(G,\A\A^*,\rg,\alpha) \otimes_{\varphi'}
\Lt^2(G^0,\Hils',\nu')\) are equal.
When we rewrite these tensor products as \(\Lt^2\)\nb-section spaces
of fields of Hilbert spaces over~\(G\), then we may apply
Theorem~\ref{the:disintegrate_rep_of_bundle}.
The equality of the two intertwiners becomes equivalent to
\begin{equation}
  \label{eq:T_intertwines_U}
  (\Id_{\A_g\A_g^*} \otimes T_{\rg(g)}) U_g
  = U'_g (\Id_{\A_g} \otimes T_{\s(g)})
\end{equation}
for \(\nu\circ \alpha\)-almost all \(g\in G\).
Equivalently, the set \(X_1\subseteq G\) of all \(g\in G\) for which
this holds is \(\nu\circ \alpha\)-conull; here it does not matter
whether we use \(\nu\circ \alpha\) or \(\nu'\circ \alpha\) because the
above equality is empty outside \(\supp \A\A^*\rg^*\Hils\), where these
two measures are equivalent.
When we express \(U_g\) and~\(U'_g\) through \(S_g\) and~\(S'_g\),
then~\eqref{eq:T_intertwines_U} becomes equivalent to
\begin{equation}
  \label{eq:T_intertwines_S}
  T_{\rg(g)}S_g(a) = S'_g(a) T_{\s(g)}
  \qquad \text{for all }a\in \A_g.
\end{equation}
So this equality holds for all \(g\in X_1\).

Finally, given two parallel intertwiners \(T,T'\), we have already
seen that \(T=T'\) holds if and only if \(T_x = T'_x\) for almost all
\(x\in G^0\).

So far, we have disintegrated Fell-bundle representations on a Hilbert
space and intertwiners between them.
We have, however, not quite achieved what
Theorem~\ref{the:disintegrate_Hilbert_space} claims.
It remains to address several issues.
So far, the operators~\(S_g\) are only defined for \emph{almost} all
\(g\in G\), and~\eqref{eq:S_g-multiplicative1} only holds for
\emph{almost}  all \((g,h)\in G^2\).
In a measurable Fell-bundle representation, however, we must
define~\(S_g\) for all \(g\in G\) and
verify~\eqref{eq:S_g-multiplicative1} on all of~\(G^2\).
In addition, we must arrange that \(\varphi_x = S_{u(x)}\) for all
\(x\in G^0\), and replace the condition
\(S_g(a_1)^* S_g(a_2) = \varphi_{u\s(g)}(a_1^* \cdot a_2)\)
by the simpler relation \(S_g(a)^* = S_{g^{-1}}(a^*)\).

Similarly, for the disintegration of an intertwiner, we have only
arranged for the equation \(T_{\rg(g)}S_g(a) = S'_g(a) T_{\s(g)}\) to
hold for \emph{almost} all \(g\in G\).
We would like there to be a conull subset \(X\subseteq G\) such that
this holds for all \(g\in G\) with \(\s(g),\rg(g) \in X\).
In addition, we would like to make the exceptional sets in~\(G^0\) in
our three statements \(G\)\nb-invariant, at least if the Fell bundle
is saturated.

\subsection{Second stage of the proof: reducing exceptional null sets
  to~\texorpdfstring{$G^0$}{G⁰}}
\label{sec:improve_null}

In this subsection, we will improve the conull sets in \(G\)
and~\(G^2\) where our statements work, so that they all come from
suitable conull subsets \(X\subseteq G^0\).
That is, for the representation~\(S_g\), we arrange that it is defined
whenever \(\s(g),\rg(g)\in X\) and
satisfies~\eqref{eq:S_g-multiplicative1} whenever \((g,h)\in G^2\)
are such that \(\rg(g),\s(g) = \rg(h), \s(h) \in X\).
For an intertwiner, we arrange for \(T_{\rg(g)}S_g(a) = S'_g(a)
T_{\s(g)}\) to hold for all \(g\in G\) with \(\s(g),\rg(g) \in X\) for
some conull subset \(X\subseteq G^0\) depending on~\(T\).
The arguments in this subsection are based on ideas of the proof of
\cite{Ramsay:Virtual_groups}*{Theorem~5.1} by Ramsay.

Since the argument becomes significantly simpler for saturated Fell
bundles, we restrict to this case until further notice.
Then the support of \(\A_g\A_g^*\,\rg^*\Hils\) consists of all \(g\in
G\) with \(\Hils_{\rg(g)}\neq0\), which is conull in~\(G\).
Hence, the measure~\(\nu\) is genuinely quasi-invariant, that is,
\(\nu\circ\alpha\) and \(\nu\circ\tilde\alpha\) are equivalent.
Then the measures
\(\nu\circ\mu_j\) on~\(G^2\) for \(j=0,1,2\) are equivalent as well.
In the following, whenever we say that a subset of \(G^0\), \(G\),
or~\(G^2\) is conull, or that a statement holds almost everywhere,
this is with respect to the measure classes~\([\nu]\),
\([\nu\circ\alpha]=[\nu\circ\tilde\alpha]\), and \([\nu\circ\mu_j]\),
respectively.
In the saturated case, \(\Bun[I]_{g,h} = \FU_{\rg(g)}\) for all
\((g,h)\in G^2\).
So we may omit this field of ideals in all our computations.
Since~\(\A\) is saturated, we have \(\A_g^*\A_g = \FU_{\s(g)}\).
So~\eqref{eq:representation_through_Ugh} holds if and only if~\(U_h\)
is equal to the composite unitary
\begin{multline}
  \label{eq:representation_through_Ugh_2}
  \A_h \Hils_{\s(h)}
  \cong \A_g^* \A_g \A_h \Hils_{\s(h)}
  \cong \A_g^* \A_{g h} \Hils_{\s(h)}
  \xrightarrow{\Id_{\A_g^*} \otimes_{\FU_{\rg(g)}} U_{g h}}
  \A_g^* \A_{g h} \A_{g h}^* \Hils_{\rg(g)}
  \\\cong \A_g^* \A_g \A_g^* \Hils_{\rg(g)}
  \xrightarrow{\Id_{\A_g^*} \otimes_{\FU_{\rg(g)}} U_g^*}
  \A_g^* \A_g \Hils_{\rg(h)}
  \cong \A_h \A_h^*\Hils_{\rg(h)}.
\end{multline}
Here the various unlabeled isomorphisms follow from
Lemma~\ref{lem:equality_of_products} because the Fell bundle is
saturated.

We know that the unitary in~\eqref{eq:representation_through_Ugh_2}
is~\(U_h\) for almost all \((g,h)\in G^2\).
By Fubini's Theorem, this says that there is a conull subset
\(X'_1\subseteq X_1\) such that for each \(h\in X'_1\), the unitary
in~\eqref{eq:representation_through_Ugh_2} is equal to~\(U_h\) for
almost all \(g\in G_{\rg(h)}\).
In particular, if \(h\in X_1'\), then the function~\(U'_h(g)\) that
maps \(g\in G_{\rg(h)}\) to the unitary
in~\eqref{eq:representation_through_Ugh_2} is constant
almost everywhere with value~\(U_h\).
Let \(X''_1\subseteq G\) be the set of all \(h\in G\) such that this
function~\(U'_h\) is constant \(\tilde\alpha_{\rg(h)}\)-almost
everywhere, and let~\(\bar{U}_h\) be its almost constant value.
The subset~\(X''_1\) contains~\(X'_1\).
So it is conull and, \emph{a fortiori}, measurable.
Since \(\bar{U}_h = U_h\) for \(h\in X'_1\), the
function~\(\bar{U}_h\) is equal to~\(U_h\)
\(\tilde\alpha_{\rg(h)}\)-almost everywhere.
This makes it measurable.

\begin{lemma}
  \label{lem:doubleprime_saturated}
  Assume the Fell bundle to be saturated.  Let \((h,k)\in G^2\).  If
  \(h,k\in X''_1\), then \(h\cdot k \in X''_1\) and
  \[
    \bar{U}_{h\cdot k} (m_{h,k}\otimes \Id_{\Hils_{\s(k)}}) =
    \bar{U}_h (\Id_{\A_h}\otimes\bar{U}_k).
  \]
\end{lemma}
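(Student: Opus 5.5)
The proof will be Ramsay's translation argument: a pointwise cocycle identity for the unitaries in~\eqref{eq:representation_through_Ugh_2}, combined with the right invariance of~\(\tilde\alpha\).

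For \(h\in G\) and \(g\in G_{\rg(h)}\) with \(g,gh\in X_1\), let \(\Phi(g,h)\defeq U'_h(g)\) be the composite unitary in~\eqref{eq:representation_through_Ugh_2}. Informally, \(\Phi(g,h)\) is ``\(U_g^{-1}U_{gh}\)'' transported to \(\A_h\Hils_{\s(h)}\to\A_h\A_h^*\Hils_{\rg(h)}\) by the canonical isomorphisms of Lemma~\ref{lem:equality_of_products}. I read membership \(h\in X''_1\) as including that \(\Phi(\cdot,h)\) is defined \(\tilde\alpha_{\rg(h)}\)-almost everywhere, that is, \(g,gh\in X_1\) for almost all \(g\in G_{\rg(h)}\); this is implicit in saying the function is almost constant.

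The first step is a purely algebraic, pointwise identity. Let \((h,k)\in G^2\) and \(g\in G_{\rg(h)}\) with \(g,gh,ghk\in X_1\). Then
\[
  \Phi(g,hk)\,(m_{h,k}\otimes\Id_{\Hils_{\s(k)}})
  = \Phi(g,h)\,\bigl(\Id_{\A_h}\otimes\Phi(gh,k)\bigr)
\]
as unitaries \(\A_h\A_k\Hils_{\s(k)}\to\A_h\A_k\A_k^*\A_h^*\Hils_{\rg(h)}\). To prove it, I would write both sides as chains of the maps \(\Id\otimes U_g^*\), \(\Id\otimes U_{gh}\), \(\Id\otimes U_{ghk}\) interleaved with multiplication isomorphisms. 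On the right-hand side, the factor \(U_{gh}\) coming from \(\Phi(gh,k)\) cancels against \(U_{gh}^*\) coming from \(\Phi(g,h)\), because the canonical isomorphisms are natural with respect to \(\Id\otimes(\text{operator on the last factor})\). What remains is the left-hand side, once associativity of the Fell-bundle multiplication identifies \(\A_g^*\A_g\A_h\A_k\cong\A_g^*\A_{ghk}\) in the two possible ways. Saturation is what makes all these multiplication maps unitary. This bookkeeping is where I expect the main effort: one has to make sure the identifications in~\eqref{eq:representation_through_Ugh_2} really compose associatively. I would check it on elementary tensors \(a_1^*\otimes a_2\otimes a_3\otimes\xi\), where everything reduces to associativity in~\(\A\).

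The second step is the measure-theoretic argument. Let \(h,k\in X''_1\) with \(\s(h)=\rg(k)\). By assumption, \(g,gh\in X_1\) and \(\Phi(g,h)=\bar U_h\) for \(\tilde\alpha_{\rg(h)}\)-almost all \(g\in G_{\rg(h)}\). Right multiplication by~\(h\) is a bijection \(G_{\rg(h)}\to G_{\s(h)}=G_{\rg(k)}\) that carries \(\tilde\alpha_{\rg(h)}\) to \(\tilde\alpha_{\s(h)}\), by right invariance of~\(\tilde\alpha\). Since \(k\in X''_1\), it follows that \(gh,ghk\in X_1\) and \(\Phi(gh,k)=\bar U_k\) for almost all \(g\in G_{\rg(h)}\). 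Intersecting these two conull sets, the pointwise identity of the first step holds for almost all \(g\in G_{\rg(hk)}=G_{\rg(h)}\), and gives
\[
  \Phi(g,hk)=\bar U_h\,(\Id_{\A_h}\otimes\bar U_k)\,(m_{h,k}\otimes\Id)^{-1}
\]
for almost all such~\(g\). Here \(m_{h,k}\otimes\Id\) is invertible by saturation. Hence \(\Phi(\cdot,hk)\) is defined and essentially constant, so \(hk\in X''_1\). Moreover its essential value \(\bar U_{hk}\) satisfies \(\bar U_{hk}(m_{h,k}\otimes\Id)=\bar U_h(\Id\otimes\bar U_k)\), which is the claim.
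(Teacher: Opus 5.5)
Your proof is correct and follows the paper's argument: your pointwise identity is exactly the commutativity of the diagram in Figure~\ref{fig:lem_doubleprime}, including the cancellation of \(U_{gh}\) against \(U_{gh}^*\), and you obtain the conull set of \(g\) by the same right-invariance translation of \(\tilde\alpha\) that the paper uses. One labelling slip: \(U_{gh}^*\) comes from \(\Phi(gh,k)\) and \(U_{gh}\) comes from \(\Phi(g,h)\), not the other way round, which does not affect the cancellation.
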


This lemma is a key step in the improvement of~\(U\) to a measurable
Fell-bundle representation.  The rough idea of the proof is that the
active ingredients in \(U'_h(g)\) and \(U'_k(g h)\), for
\(g\in G\) with \(\s(g) = \rg(h)\), are \(U_g^*, U_{g h}\) and
\(U_{g h}^*, U_{g h k}\), respectively.  These are composed with
various natural unitaries produced from the Fell-bundle structure as
in Lemma~\ref{lem:equality_of_products}.  When we compose, the factors
\(U_{g h}\) and~\(U_{g h}^*\) cancel, leaving the active ingredients
\(U_g^*, U_{g h k}\) of~\(U'_{h k}(g)\).  We do not make the proof
more precise because, anyway, we need to generalise the result to the
nonsaturated case.  And we prefer to extend the statement first so as
to need only one technical proof.

So we return to the general (nonsaturated) case.
It is more complicated because some of the maps
in~\eqref{eq:representation_through_Ugh_2} are no longer unitary.
Therefore, the domains and codomains of~\(U'_h(g)\) now depend
on~\(g\), and we have to clarify what the ``almost constant value''
\(\bar{U}_h\) of this function of~\(g\) could mean.
This is done in Figure~\ref{fig:def_barUh},
\begin{figure}[htbp]
  \[
    \begin{tikzcd}
      \A_g^* \A_g \A_h \Hils_{\s(h)}
      \ar[r, hookrightarrow] \ar[d, "\cong"']
      \ar[dddd, start anchor=west, end anchor=west, bend right, dashed, "U'_h(g)"'] &
      \A_h \Hils_{\s(h)}
      \ar[dddd, "\cong"', dashed, "\bar{U}_h"]\\
      \A_g^* \A_{g h} \Hils_{\s(h)}
      \ar[d, "\cong"', "\Id_{\A_g^*} \otimes U_{g h}"] & \\
      \A_g^* \A_{g h} \A_{g h}^* \Hils_{\rg(g)}
      \ar[d, "\cong"'] & \\
      \A_h \A_h^* \A_g^* \A_g \A_g^* \Hils_{\rg(g)}
      \ar[d, "\cong"', "\Id_{\A_h \A_h^* \A_g^*}\otimes U_g^*"] & \\
      \A_h \A_h^* \A_g^* \A_g \Hils_{\rg(h)}
      \ar[r, hookrightarrow] &
      \A_h \A_h^* \Hils_{\rg(h)}
    \end{tikzcd}
  \]
  \caption{The first column defines a unitary \(U'_h(g)\) for all
    \((g,h)\in G^2\) with \(gh,g\in X_1\).  Here the unlabelled
    unitaries are built from
    the isomorphisms in Lemma~\ref{lem:equality_of_products} and the
    equality \(I\cdot J = J\cdot I\) for two ideals in a
    \(\Cst\)\nb-algebra.  The two horizontal isometries are induced
    by the multiplication in the Fell bundle.  The
    unitary~\(\bar{U}_h\) is required to make the whole diagram
    commute for \(\tilde\alpha_{\rg(h)}\)-almost all
    \(g \in G_{\rg(h)}\).  This condition uniquely determines
\(\bar{U}_h\) whenever it exists.}
  \label{fig:def_barUh}
\end{figure}%
which defines unitaries
\[
  U'_h(g)\colon \A_g^* \A_g \A_h \Hils_{\s(h)}
  \congto \A_h \A_h^* \A_g^* \A_g \Hils_{\rg(h)}
\]
for almost all \((g,h)\in G^2\) and a unitary
\[
  \bar{U}_h\colon \A_h \Hils_{\s(h)} \to \A_h \A_h^* \Hils_{\rg(h)}
\]
for certain \(h\in G\).  More precisely, the first vertical
isomorphism in Figure~\ref{fig:def_barUh} comes directly from
Lemma~\ref{lem:equality_of_products} and the third one is the
composite of the chain of isomorphisms
\begin{multline*}
  \A_g^* \A_{g h} \A_{g h}^*
  = \A_{g^{-1}} \A_{g h} \A_{(g h)^{-1}}
  \cong \A_{g^{-1}} \A_g \A_h \A_{h^{-1} g^{-1}}
  \\\cong \A_{g^{-1}} \A_g \A_h \A_{h^{-1}} \A_{g^{-1}}
  \cong \A_h \A_{h^{-1}}  \A_{g^{-1}} \A_g \A_{g^{-1}}
  = \A_h \A_h^* \A_g^* \A_g \A_g^*,
\end{multline*}
which all come from Lemma~\ref{lem:equality_of_products}.  The
following lemma shows that~\(\bar{U}_h\) is well defined:

\begin{lemma}
  \label{lem:bar-U-welldefined}
  There is at most one bounded operator~\(\bar{U}_h\) that makes the
  diagram in Figure~\textup{\ref{fig:def_barUh}} commute for almost all
  \(g\in G_{\rg(h)}\).  This operator is unitary if it exists.  It
  exists and is equal to~\(U_h\) for almost all \(h\in G\).  The
  function~\(h\mapsto \bar{U}_h\) is measurable as a family of
operators between the measurable fields \(\A\,\s^*\Hils\) and \(\A\A^*\,\rg^*\Hils\).
\end{lemma}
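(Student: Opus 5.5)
Uniqueness comes first. The top horizontal isometry in Figure~\ref{fig:def_barUh} is multiplication \(\A_g^*\A_g\A_h\Hils_{\s(h)}\hookrightarrow \A_h\Hils_{\s(h)}\), and its image is \(\varphi_{\rg(h)}\)-restricted to the ideal \(\A_g^*\A_g\idealin\FU_{\rg(h)}\). Commutativity for almost all \(g\in G_{\rg(h)}\) therefore fixes \(\bar U_h\) on the closed linear span of these images over a conull set of~\(g\). I plan to show that this span is all of \(\A_h\Hils_{\s(h)}\). Here I use that \(\A_h\Hils_{\s(h)}\cong \A_h\A_h^*\Hils_{\rg(h)}\) is a nondegenerate \(\A_h\A_h^*\)-module, and I want the ideals \(\A_g^*\A_g\) for \(\tilde\alpha_{\rg(h)}\)-almost all \(g\) to act nondegenerately on \(\A_h\A_h^*\Hils_{\rg(h)}\). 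The model case is \(g\) near the unit \(u(\rg(h))\), where \(\A_g^*\A_g\) is large. For a single \(g\) this can fail, so instead I would use that the closed span \(J\) of \(\A_g^*\A_g\) over a conull set is an ideal, and argue exactly as in the proof of Proposition~\ref{pro:Ker-rep-invariant}. There, continuity of sections and the full support of \(\tilde\alpha_{\rg(h)}\) show that a section nonvanishing at the unit stays outside a fixed ideal on a neighbourhood of positive measure. The same argument shows that an element of \(\FU_{\rg(h)}\) killed by all \(\A_g^*\A_g\), \(g\) in a conull set, acts by zero on \(\A_h\A_h^*\Hils_{\rg(h)}\). With this, \(\bar U_h\) is unique, and it is isometric, since each \(U'_h(g)\) is unitary and the horizontal maps are isometric. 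Applying the same density argument to the bottom isometry gives dense range, so \(\bar U_h\) is unitary.

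Existence almost everywhere comes from Lemma~\ref{lem:representation_almost_everywhere}, that is, equation~\eqref{eq:representation_through_Ugh}, read as in the saturated discussion. For \(\nu\circ\mu_0\)-almost all \((g,h)\) we have \(U_g(\Id\otimes U_h)=U_{gh}(m_{g,h}\otimes\Id)\) on \(\Bun[I]_{g,h}\A_g\A_h\Hils_{\s(h)}\). Multiplying on the left by \(\A_g^*\) and unwinding the isomorphisms of Lemma~\ref{lem:equality_of_products} in the first column, this says that the diagram commutes with \(U_h\) in place of \(\bar U_h\). The reduction works because \(\A_g^*\Bun[I]_{g,h}\A_g=\A_g^*\A_g\A_h\A_h^*\) and the ideals commute. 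Fubini, using the decomposition \(\mu_0\)-versus-\(\tilde\alpha\circ\lambda_0\) from~\eqref{eq:compose_integration_maps1}, then shows that for \(\nu\circ\tilde\alpha\)-almost all \(h\) the commutation holds for \(\tilde\alpha_{\rg(h)}\)-almost all~\(g\). So \(\bar U_h\) exists and equals \(U_h\) on a conull set. Since the restrictions of \(\nu\circ\alpha\) and \(\nu\circ\tilde\alpha\) to the relevant supports are equivalent, this set is conull for both measures.

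For measurability, let \(X''_1\) be the set of \(h\) where \(\bar U_h\) exists. Measurability of \(X''_1\) follows because it contains a conull set, so it is measurable for the completed measure. On \(X''_1\), the function \(h\mapsto\bar U_h\) agrees almost everywhere with the measurable family \(U_h\) from Theorem~\ref{the:disintegrate_rep_of_bundle}. I expect the density step in the first paragraph to be the main obstacle, namely nondegeneracy of the almost-everywhere family of ideals \(\A_g^*\A_g\) on \(\A_h\A_h^*\Hils_{\rg(h)}\) for nonsaturated bundles. If the upper-semicontinuity argument near the unit does not suffice, my fallback is to choose countable dense sets of sections, available by separability, and to apply the Haar-system support property to each of them.
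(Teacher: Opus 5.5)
Your existence argument and your measurability argument are the paper's. For existence, you tensor \eqref{eq:representation_through_Ugh} on the left with \(\Id_{\A_g^*}\), use \(\A_g^*\Bun[I]_{g,h}\A_g=\A_g^*\A_g\A_h\A_h^*\), and then apply Fubini. The gap is in the uniqueness and isometry step, and it has two parts.

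First, you plan to show that an element of \(\FU_{\rg(h)}\) annihilating every \(\A_g^*\A_g\) (for \(g\) in the conull set) acts by zero on the module. This is weaker than nondegeneracy of the closed span \(J\) on \(\A_h\Hils_{\s(h)}\), because it only concerns the annihilator of~\(J\). For example, \(\Cont_0((0,1])\) is an essential ideal in \(\Cont([0,1])\), so its annihilator is zero, yet it acts by zero in the representation \(f\mapsto f(0)\) on~\(\C\).

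Second, suppose the subspaces \(P_g\defeq\A_g^*\A_g\A_h\Hils_{\s(h)}\) did have dense span. A bounded operator that is isometric on each \(P_g\) still need not be isometric on their closed span, because the cross terms between different \(P_g\) are not controlled. So ``each \(U'_h(g)\) is unitary and the horizontal maps are isometric'' does not show that \(\bar{U}_h\) is isometric.

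The missing idea is to approximate each single vector by vectors that each lie in one \(P_g\). This is how your heuristic that ``\(\A_g^*\A_g\) is large near the unit'' has to be used, and it is what the paper does.
\begin{itemize}
\item Full support of \(\tilde\alpha_{\rg(h)}\) and second countability give a sequence \(g_n\to u\rg(h)\) inside the conull set.
\item Write \(\xi=a_1a_2\xi'\) with \(a_1,a_2\in\FU_{\rg(h)}\) by Cohen--Hewitt, and pick continuous sections \(b_1,b_2\) of~\(\A\) with \(b_j(u\rg(h))=a_j\).
\item Then \(c_n\defeq b_1(g_n^{-1})b_2(g_n)\in\A_{g_n}^*\A_{g_n}\) lies in the fixed fibre \(\FU_{\rg(h)}\). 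It converges to \(a_1a_2\) in norm, by continuity of the multiplication and upper semicontinuity of the norm.
\item Hence \(\xi_n\defeq c_n\xi'\in P_{g_n}\) converges to~\(\xi\), and \(\bar{U}_h\xi=\lim U'_h(g_n)\xi_n\).
\end{itemize}
This gives uniqueness and \(\norm{\bar{U}_h\xi}=\lim\norm{\xi_n}=\norm{\xi}\) at the same time. The analogous approximation in the codomain gives dense range, hence unitarity. It also yields \(J=\FU_{\rg(h)}\), which is the correct form of your density claim.

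You could instead rescue the span approach by observing that each \(U'_h(g)\) is a left \(\FU_{\rg(h)}\)-module map. Then \(\bar{U}_h\) maps the orthogonal pieces cut out by the commuting support projections of finitely many ideals \(\A_g^*\A_g\) to orthogonal subspaces. That argument is not in your proposal, though.
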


\begin{proof}
  Fix \(h\in G\) and~\(\bar{U}_h\) such that the diagram in
  Figure~\ref{fig:def_barUh} commutes for almost all
  \(g\in G_{\rg(h)}\).  Take \(\xi\in \A_h\Hils_{\s(h)}\).  By the
  Cohen--Hewitt Factorisation Theorem, we may write~\(\xi\) as
  \(a_1\cdot a_2\cdot \xi'\) with \(a_1,a_2\in \FU_{\rg(h)}\),
  \(\xi'\in \A_h\Hils_{\s(h)}\).  There are continuous sections
  \(b_1,b_2\) of~\(\A\) with \(b_j(u\rg(h)) = a_j\) for \(j=1,2\).
  Then
  \[
    b_1(g^{-1})\cdot b_2(g) \cdot \xi'
    \in \A_{g^{-1}} \A_g \A_h \Hils_{\s(h)}
    = \A_g^* \A_g \A_h \Hils_{\s(h)}.
  \]
  This converges to \(a_1\cdot a_2\cdot \xi'\) if~\(g\) converges
  to~\(u\rg(h)\) in~\(G_{\rg(h)}\).  Since the
  measure~\(\tilde\alpha_{\rg(h)}\) has full support, any conull
  subset of~\(G_{\rg(h)}\) has~\(u\rg(h)\) as an accumulation point.
  Since~\(G\) is second countable, there is a sequence~\((g_n)\)
  in~\(G_{\rg(h)}\) that converges to~\(u\rg(h)\) and such that the
  diagram in Figure~\ref{fig:def_barUh} commutes for \(g=g_n\) for
  all~\(n\).  Since~\(\bar{U}_h\) is bounded, it follows that
  \(\bar{U}_h(\xi) = \lim U'_h(g_n)(b_1(g_n^{-1})\cdot b_2(g_n) \cdot
  \xi')\).  This formula shows that~\(\bar{U}_h\) is unique.  Since
  all~\(U'_h(g_h)\) are unitary, it also follows that~\(\bar{U}_h\)
  is isometric.  The range of~\(\bar{U}_h\) contains
  \(\A_h \A_h^* \A_{g_n}^* \A_{g_n} \Hils_{\s(h)}\) for all
  \(n\in\N\).  An argument as above, starting with
  \(\Hils_{\s(h)} = \FU_{\s(h)} \Hils_{\s(h)}\), shows that the
  union of these subspaces is dense in
  \(\A_h \A_h^* \Hils_{\s(h)}\).  Thus~\(\bar{U}_h\) is unitary if it
  exists.

  We know that~\eqref{eq:representation_through_Ugh} holds for
  almost all \((g,h)\in G^2\).  By Fubini's Theorem, this says that,
  for almost all \(h\in G\), \eqref{eq:representation_through_Ugh}
  holds for almost all \(g\in G_{\rg(h)}\).  We claim that
  \(\bar{U}_h = U_h\) works for all such~\(h\).  To see this, we
  rewrite~\eqref{eq:representation_through_Ugh} as
  \(\Id_{\A_g} \otimes U_h = U_g^* U_{g h} (m_{g,h}\otimes
  \Id_{\Hils_{\s(h)}})\).  Then tensor this on the left
  with~\(\Id_{\A_g^*}\).  This gives an equality of operators on
  \(\A_g^* \Bun[I]_{g,h} \A_g \A_h\Hils_{\s(h)}\).  Here
  \(\Bun[I]_{g,h} \A_g \A_h \defeq \A_g \A_h \A_h^* \A_g^* \A_g \A_h
  = \A_g \A_h\).  And the equality is exactly the commuting diagram
  in Figure~\ref{fig:def_barUh}; most of the unlabelled isomorphisms
  in Figure~\ref{fig:def_barUh} are only implicit
  in~\eqref{eq:representation_through_Ugh}.

  Since~\(h\mapsto U_h\) is a measurable family of operators, so is~\(h\mapsto \bar{U}_h\) as
  they are almost everywhere equal.
\end{proof}

\begin{lemma}
  Let \((h,k)\in G^2\).  If \(h,k,h\cdot k\in X''_1\), then
  \[
    \bar{U}_{h\cdot k} (m_{h,k}\otimes \Id_{\Hils_{\s(k)}}) =
    \bar{U}_h (\Id_{\A_h}\otimes\bar{U}_k).
  \]
  Let \((h_n)_{n\in\N}\) be a sequence in~\(G_{\rg(k)}\) with
  \(\lim h_n = u\rg(k)\) and \(h_n^{-1}, h_n k\in X''_1\) for all
  \(n\in\N\).  Then \(k \in X''_1\).
  \label{lem:doubleprime}
\end{lemma}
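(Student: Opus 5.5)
The plan is to compare the defining diagrams in Figure~\ref{fig:def_barUh} for \(h\), \(k\) and \(h k\), evaluated at suitable points \(g\in G_{\rg(h)}\). The key observation is that \(U'_k(gh)\) has active ingredients \(U_{ghk}\) and \(U_{gh}^*\), and \(U'_h(g)\) has active ingredients \(U_{gh}\) and \(U_g^*\). So \(\bigl(\Id\otimes U'_h(g)\bigr)\) composed with \(U'_k(gh)\), after the canonical isomorphisms from Lemma~\ref{lem:equality_of_products}, should give \(U'_{hk}(g)\), whose ingredients are \(U_{ghk}\) and \(U_g^*\). The factors \(U_{gh}\) and \(U_{gh}^*\) cancel.

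For the first claim, assume \(h,k,hk\in X''_1\). By definition of \(X''_1\), the diagram for \(h\) commutes for \(\tilde\alpha_{\rg(h)}\)-almost all \(g\). The diagram for \(hk\) commutes for almost all \(g\in G_{\rg(h)}\) as well, since \(\rg(hk)=\rg(h)\). The diagram for \(k\) commutes at \(g'=gh\) for almost all \(g'\in G_{\rg(k)}\). Right translation \(g\mapsto gh\) carries \(G_{\rg(h)}\) to \(G_{\s(h)}=G_{\rg(k)}\) and preserves null sets of the right Haar system, so the \(k\)-diagram also holds at \(gh\) for almost all \(g\). Intersecting these three conull sets, I pick a sequence \(g_n\to u\rg(h)\) at which all three diagrams commute and \(g_n,g_nh,g_nhk\in X_1\), which is possible because \(\tilde\alpha_{\rg(h)}\) has full support. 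At each \(g_n\) I would check the cancellation identity
\[
  U'_{hk}(g_n)\circ(\text{mult})
  = \bigl(\Id\otimes U'_h(g_n)\bigr)\circ\bigl(\Id\otimes U'_k(g_nh)\bigr)
\]
on the subspace \(\A_{g_n}^*\A_{g_n}\A_h\A_k\Hils_{\s(k)}\). The check is purely algebraic: unwind the unlabelled isomorphisms and use that \(U_{g_nh}U_{g_nh}^*=1\) on the relevant subspaces. Then I pass to the limit exactly as in the proof of Lemma~\ref{lem:bar-U-welldefined}: factor vectors by Cohen--Hewitt, approximate with \(b_1(g_n^{-1})b_2(g_n)\xi'\), and use boundedness to obtain \(\bar U_{hk}(m_{h,k}\otimes\Id)=\bar U_h(\Id\otimes\bar U_k)\).

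For the second claim, I would define a candidate \(\bar U_k\) by reversing the relation. For \(g\in G_{\rg(k)}\) with \(g^{-1},gk\in X''_1\), set
\[
  \bar U_k \defeq \bigl(\Id\otimes\bar U_{g^{-1}}\bigr)\circ\bar U_{gk},
\]
intertwined with multiplication isomorphisms on \(\A_g^*\A_g\A_k\Hils_{\s(k)}\). This is the relation one expects from the first claim applied to the pair \((g^{-1},gk)\). I would then show that these partial definitions are compatible for different \(h_n\) and that they satisfy the Figure~\ref{fig:def_barUh} diagram for almost all \(g\). To do this I unfold \(\bar U_{h_n^{-1}}\) and \(\bar U_{h_nk}\) through their own defining diagrams at almost every auxiliary point \(g'\) and cancel as above. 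The resulting operators are defined on the subspaces \(\A_{h_n}^*\A_{h_n}\A_k\Hils_{\s(k)}\), whose union is dense as \(h_n\to u\rg(k)\), again by the density argument in Lemma~\ref{lem:bar-U-welldefined}. So they glue to an isometry on all of \(\A_k\Hils_{\s(k)}\), and the almost-everywhere diagram identity shows \(k\in X''_1\).

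I expect the main obstacle to be the bookkeeping in the nonsaturated case. The domains \(\A_g^*\A_g\A_h\Hils\) shrink with \(g\), so each cancellation must be checked on the right subspace, using the commutation of ideals \(IJ=JI\). The Fubini and right-translation null-set arguments, which ensure that a single sequence makes all the needed diagrams commute simultaneously, also need care.
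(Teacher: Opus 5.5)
Your proposal is correct and follows essentially the paper's proof. For the first claim, the shared steps are: right invariance of \(\tilde\alpha\), so that the diagrams for \((g,h)\), \((gh,k)\) and \((g,hk)\) commute for almost all \(g\); cancellation of \(U_{gh}\) against \(U_{gh}^*\) on \(\A_g^*\A_g\A_h\A_k\Hils_{\s(k)}\); and the density argument of Lemma~\ref{lem:bar-U-welldefined}. For the second claim, the paper phrases the same identity as saying that the unitaries \(U'_k(g)\) are independent of \(g\) up to their domains, whereas you glue the isometries \(\bar U_{h_n^{-1}}(\Id_{\A_{h_n^{-1}}}\otimes\bar U_{h_nk})\) over \(n\). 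These two readings are equivalent; note that this is the correct order of composition, rather than \((\Id\otimes\bar U_{g^{-1}})\circ\bar U_{gk}\) as you wrote it.
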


\begin{proof}[Proof of Lemmas
  \textup{\ref{lem:doubleprime_saturated}}
  and~\textup{\ref{lem:doubleprime}}]
  Let \(h,k\in X_1''\).  By assumption, the diagram in
  Figure~\ref{fig:def_barUh} commutes for \((g,h)\) for a conull set
  of \(g\in G_{\rg(h)}\) and for \((x,k)\) for a conull set of
  \(x\in G_{\rg(k)}\).  Since the measure family~\(\tilde\alpha\) is
  right invariant, the set of \(g\in G_{\rg(h)}\) such that the
  diagram in Figure~\ref{fig:def_barUh} commutes for \(g\cdot h,k\) is
  also conull.  Thus the set of~\(g\) such that the diagram in
  Figure~\ref{fig:def_barUh} commutes for \(g,h\) and \(g h,k\) is
  conull.  Next, we show that the diagram in
  Figure~\ref{fig:lem_doubleprime} commutes for such \((g,h,k)\).
  \begin{figure}[htbp]
    \[
      \begin{tikzcd}[column sep=small]
        \A_{h k} \Hils_x \ar[r, leftarrow]
        \ar[ddddddd, rightarrow, dashed, "\bar{U}_{h k}"] &
        \A_g^* \A_g \A_{h k} \Hils_x \ar[r, leftarrow]
        \ar[d, "\cong"'] &
        \A_g^* \A_g \A_h\A_k \Hils_x \ar[r, rightarrow]
        \ar[d, "\cong"'] &
        \A_h\A_k \Hils_x
        \ar[dddd, rightarrow, "\bar{U}_k"] \\
        & \A_g^* \A_{g h k} \Hils_x \ar[r, leftarrow]
        \ar[d, "\cong"', "U_{g h k}"] &
        \A_h \A_{g h}^* \A_{g h k} \Hils_x
        \ar[d, "\cong"', "U_{g h k}"]  \ar[ru]&\\
        & \A_g^* \A_{g h k} \A_{g h k}^* \Hils_w
        \ar[r, leftarrow] \ar[d, "\cong"'] &
        \A_h \A_{g h}^* \A_{g h k} \A_{g h k}^* \Hils_w
        \ar[d, "\cong"'] \\
        & \A_{h k} \A_{h k}^* \A_g^* \Hils_w \ar[r, leftarrow]
        \ar[ddd, "\cong"'] &
        \A_h \A_k \A_k^* \A_h^* \A_g^* \A_{g h} \A_{g h}^* \Hils_w
        \ar[d, "U_{g h}^*", "\cong"'] \\
        & &
        \A_h \A_k \A_k^* \A_h^* \A_g^* \A_{g h} \Hils_y
        \ar[r, rightarrow] \ar[d, "\cong"', "U_{g h}"] &
        \A_h \Hils_y \ar[ddd, rightarrow, "\bar{U}_h"]\\
        & &
        \A_h \A_k \A_k^* \A_h^* \A_g^* \A_{g h} \A_{g h}^* \Hils_w
        \ar[d, "\cong"'] \\
        & \A_{h k} \A_{h k}^* \A_g^* \A_g \A_g^* \Hils_w
        \ar[r, leftarrow] \ar[d, "\cong"', "U_g^*"] &
        \A_h \A_k \A_k^* \A_h^* \A_g^* \A_g \A_g^* \Hils_w
        \ar[d, "U_g^*", "\cong"'] \\
        \Hils_z \ar[r, leftarrow] &
        \A_{h k} \A_{h k}^* \A_g^* \A_g \Hils_z \ar[r, leftarrow]&
        \A_h \A_k \A_k^* \A_h^* \A_g^* \A_g \Hils_z \ar[r, rightarrow] &
        \Hils_z
      \end{tikzcd}
    \]
    \caption{The gist of the proof of Lemma~\ref{lem:doubleprime}.
      The data is a chain of three composable arrows
      \(x\xrightarrow{k} y\xrightarrow{h} z\xrightarrow{g} w\).  The
      unlabelled arrows in the diagram are induced by the
      multiplication in the Fell bundle, tensored with the identity on
      the appropriate fibre of~\(\Hils\).  When marked
      with~``\(\cong\)'', then they are invertible by a repeated
      application of Lemma~\ref{lem:equality_of_products}.  Otherwise,
      they are only isometries.  The arrows marked with \(U_l\)
      or~\(U_l^*\) for an arrow~\(l\) in~\(G\) act by a tensor product
      of an identity map with the arrow \(U_l\) or~\(U_l^*\) acting on
      the last two or three tensor factors.}
    \label{fig:lem_doubleprime}
  \end{figure}
  The unlabelled arrows become identities when we realise any product
  of fibres \(\A_{g_1}\dotsm \A_{g_m}\) in the Fell bundle as a
  subspace of~\(\A_{g_1\dotsm g_m}\) as in
  Remark~\ref{rem:multiply_fibres}.  This is why all squares in
  Figure~\ref{fig:lem_doubleprime} that involve only these maps
  commute.  The rectangle involving \(U_{g h}\) and~\(U_{g h}^*\)
  commutes for the same reason.  The squares involving \(U_{g h k}\)
  and~\(U_g^*\) commute because the multiplication maps that give the
  horizontal arrows in these squares only act on the first factors
  where \(U_{g h k}\) and~\(U_g^*\) are identities.  This explains why
  the diagram formed by the middle two columns commutes.  The diagram
  formed by the right two columns commutes because of the choice of
  \(g,h,k\).

  In the saturated case, the horizontal maps are invertible as well
  because they are made from the multiplication in the Fell bundle.
  Then there is a unique unitary~\(\bar{U}_{h k}\) making the whole
  diagram commute, and we only need to assume \(h,k \in X''_1\) to get
  this.
  This finishes the proof of Lemma~\ref{lem:doubleprime_saturated}.

  In the nonsaturated case, we assume \(h\cdot k\in X''_1\) to ensure
  that there is~\(\bar{U}_{h k}\) making the whole diagram commute.
  The diagram in Figure~\ref{fig:def_barUh} shows that the isometries
  \(\bar{U}_{h\cdot k} (m_{h,k}\otimes \Id_{\Hils_{\s(k)}})\) and
  \(\bar{U}_h (\Id_{\A_h}\otimes\bar{U}_k)\) restrict to the same
  operator on \(\A_g^* \A_g \A_h \A_k \Hils_x\) for all~\(g\) as
  above.  Then an argument as in the proof of
  Lemma~\ref{lem:bar-U-welldefined} shows that
  \(\bar{U}_{h\cdot k} (m_{h,k}\otimes \Id_{\Hils_{\s(k)}})\) and
  \(\bar{U}_h (\Id_{\A_h}\otimes\bar{U}_k)\) are equal on
  \(\A_h \A_k \Hils_x\).

  Now we turn to the second statement in Lemma~\ref{lem:doubleprime}.
  It addresses the problem that the argument above requires
  \(h\cdot k\in X''_1\) as an assumption.  Let \(g,k,h_n\in G\) with
  \(\s(g) = \s(h_n) = \rg(k)\) for \(n\in\N\) and assume
  \(h_n^{-1}\in X''_1\) and \(h_n\cdot k \in X''_1\) for all
  \(n\in\N\).  When we substite \((g,h_n^{-1}, h_n k)\in G^2\) with
  \(\s(g) = \s(h_n) = \rg(k)\) for \((g,h,k)\), then the last three
  columns in Figure~\ref{fig:lem_doubleprime} form a commuting diagram
  for a conull set of \(g\in G_{\rg(k)}\).  Since~\(\N\) is countable,
  the set of \(g\in G_{\rg(k)}\) for which this happens for all
  \(n\in\N\) is still conull.
  The second column in Figure~\ref{fig:lem_doubleprime} depends only
  on \(g\) and \(h_n^{-1} \cdot h_n k = k\), but not on \(n\in\N\).
  The fourth column depends only on \(h_n k\) and~\(h_n^{-1}\), but
  not on~\(g\).
  If \(n\to\infty\), then \(h_n \to u(\rg(k))\).  So the domain
  \(\A_{h_n^{-1}} \A_{h_n k} \Hils_{\s(k)}\) ``converges'' towards
  \(\A_k \Hils_{\s(k)}\).
  More precisely, any element in \(\A_g^* \A_g \A_k\Hils_{\s(k)}\),
  the domain of the unitary in the second column, is a limit of a
  sequence \((a_n)\) with~\(a_n\) in the domain \(\A_{h_n^{-1}}
  \A_{h_n k} \Hils_{\s(k)}\) of the isometry in the third column.
  Since the diagram commutes, it follows that the vertical arrow in
  the second column does not depend on~\(g\), except for its domain.
  And this says that the unitary~\(\bar{U}_k\) exists, that is, \(k\in X_1''\).
\end{proof}

\begin{lemma}
  \label{lem:X0_exists}
  There is a \(\nu\)\nb-conull subset \(X_0\subseteq G^0\) with
  \[
    G|_{X_0}
    \defeq \setgiven{g\in G}{\rg(g),\s(g)\in X_0}
    \subseteq X''_1 \cap \inv(X''_1)
    = \setgiven{g\in G}{g,g^{-1} \in X''_1}.
  \]
\end{lemma}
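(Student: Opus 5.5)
The plan is a Ramsay-type argument: a conull set in \(G\) is upgraded to a conull set in \(G^0\), using Fubini's theorem and the closure property of~\(X''_1\) from Lemma~\ref{lem:doubleprime}.

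First, replace \(X''_1\) by the symmetric set \(Y\defeq X''_1\cap \inv(X''_1)\). The set~\(X''_1\) is conull for \(\nu\circ\tilde\alpha\), and hence for \(\nu\circ\alpha\) on the relevant support. Quasi-invariance makes \(\inv_*\) preserve null sets, at least after restricting to \(\supp \A\A^*\,\rg^*\Hils\). Outside that support the fibres vanish, and we may declare \(\bar U_h\) to be the zero map between zero spaces, so those \(h\) lie in~\(X''_1\) trivially. Hence \(Y\) is conull.

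By Fubini's theorem applied to \(\nu\circ\tilde\alpha=\int \tilde\alpha_x\,\diff\nu(x)\), the set
\[
  X_0' \defeq \setgiven{x\in G^0}{\tilde\alpha_x(G_x\setminus Y)=0}
\]
is \(\nu\)-conull and measurable. By symmetry of~\(Y\), for \(x\in X_0'\) also \(\alpha^x\)-almost every \(g\in G^x\) lies in~\(Y\).

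Now fix \(k\in G\) with \(\s(k),\rg(k)\in X_0'\). We want to apply the second statement of Lemma~\ref{lem:doubleprime}, which needs a sequence \(h_n\in G_{\rg(k)}\) with \(h_n\to u\rg(k)\), \(h_n^{-1}\in X''_1\) and \(h_nk\in X''_1\). Since \(\rg(k)\in X_0'\), \(\tilde\alpha_{\rg(k)}\)-almost every \(h\in G_{\rg(k)}\) satisfies \(h\in Y\), so \(h^{-1}\in X''_1\). By right invariance of~\(\tilde\alpha\), the map \(h\mapsto hk\) carries \(\tilde\alpha_{\rg(k)}\) to \(\tilde\alpha_{\s(k)}\). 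Since \(\s(k)\in X_0'\), almost every \(h\) also satisfies \(hk\in Y\subseteq X''_1\). The set of good~\(h\) is therefore conull in \(G_{\rg(k)}\). Because the Haar measure has full support and \(G\) is second countable, \(u\rg(k)\) is a limit of such~\(h_n\), and Lemma~\ref{lem:doubleprime} gives \(k\in X''_1\). The same argument applied to \(k^{-1}\), whose source and range also lie in~\(X_0'\), gives \(k^{-1}\in X''_1\). Thus \(X_0\defeq X_0'\) works.

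The main obstacle is the nonsaturated case. There \(X''_1\) is only known to be conull relative to measures restricted to supports, so the passage from \(\nu\circ\tilde\alpha\)-conull to \(\nu\circ\alpha\)-conull, and hence the symmetry of~\(Y\), must be checked carefully. I would handle this by the zero-fibre convention: \(h\notin\supp\A\A^*\,\rg^*\Hils\) means both fibres vanish, so \(\bar U_h=0\) trivially makes Figure~\ref{fig:def_barUh} commute, and on the support the two measures are equivalent by relative quasi-invariance. If needed, shrink \(X_0\) further by a null set using measurability of \(h\mapsto\bar U_h\) from Lemma~\ref{lem:bar-U-welldefined} to ensure \(X''_1\) is measurable.
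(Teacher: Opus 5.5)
Your proposal is correct and follows essentially the paper's route: the zero-fibre observation together with relative quasi-invariance makes \(X''_1\cap\inv(X''_1)\) conull, and then the Fubini, right-invariance and full-support argument of Lemma~\ref{lem:Ramsay_1}, combined with the closure property in Lemma~\ref{lem:doubleprime}, gives \(G|_{X_0}\subseteq X''_1\); the \(\inv(X''_1)\) part follows because \(G|_{X_0}\) is closed under inversion. The only cosmetic difference is that you define \(X_0\) through \(\tilde\alpha_x\) instead of \(\alpha^x\), which is equivalent by the symmetry of \(X''_1\cap\inv(X''_1)\).
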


\begin{proof}
  To explain the idea of the proof, we again assume for a moment
  that our Fell bundle is saturated.
  Then~\(\nu\) is quasi-invariant.
  Therefore, \(\inv(X''_1) \subseteq G\) inherits the property of
  being conull from~\(X''_1\).
  Hence \(X''_1 \cap \inv(X''_1)\) is conull in~\(G\).
  Since~\(X_1''\) is closed under multiplication by
  Lemma~\ref{lem:doubleprime_saturated}, the statement now follows
  from \cite{Ramsay:Virtual_groups}*{Lemma~5.2}.
  The following argument generalises Ramsay's argument, using only the
  weaker statement in Lemma~\ref{lem:doubleprime}.

  If \(\A_h \Hils_{\s(h)} = 0\), then the domain of~\(\bar{U}_h\) is
  the zero Hilbert space, so that~\(\bar{U}_h\) automatically
  exists.
  Thus the complement of the support of \(\supp \A\,\s^*\Hils\) always
  belongs to~\(X_1''\).
  Since~\(\nu\) is symmetric relative to our representation, it
  follows that \(X''_1 \cap \inv(X''_1)\) is still conull.
  Now the following lemma finishes the proof, using the conclusion of
  Lemma~\ref{lem:doubleprime}.
  We formulated it separately because to make it easier to use the
  same idea again later.
\end{proof}

\begin{lemma}
  \label{lem:Ramsay_1}
  Let \(X_1''\subseteq G\) be a subset such that
  \(X''_1 \cap \inv(X''_1)\) is still conull.
  Let~\(X_0\) be the set of all \(x\in G^0\) such that \(g\in X''_1
  \cap \inv(X''_1)\) for \(\alpha^x\)\nb-almost all \(g\in G^x\).
  This set is conull in~\(G^0\).
  Assume that an element \(k\in G\) belongs to~\(X_1''\) once there is
  a sequence~\((h_n)\) in \(G^{\rg(k)}\) with \(h_n,h_n^{-1} k\in X_1''\).
  Then \(G|_{X_0} \subseteq X_1''\).
\end{lemma}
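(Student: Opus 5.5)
Write \(Y\defeq X''_1\cap\inv(X''_1)\), which is conull in~\(G\) by assumption. The plan has two parts: first show that~\(X_0\) is conull, then show \(G|_{X_0}\subseteq X''_1\) using the closure hypothesis. The argument follows Ramsay's proof of \cite{Ramsay:Virtual_groups}*{Lemma~5.2}, adapted to use only the one-sided closure property.

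\emph{Step 1: \(X_0\) is conull.} Since~\(Y\) is conull for \(\nu\circ\alpha\), Fubini's Theorem for \(\nu\circ\alpha=\int\alpha^x\,\diff\nu(x)\) shows that \(\alpha^x(G^x\setminus Y)=0\) for \(\nu\)\nb-almost all \(x\in G^0\). This is exactly the defining condition for \(x\in X_0\). Measurability of~\(X_0\) comes from the measurability of \(x\mapsto\alpha^x(G^x\setminus Y)\), which holds for Borel~\(Y\); if needed, first shrink~\(Y\) to a Borel conull subset.

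\emph{Step 2: \(G|_{X_0}\subseteq X''_1\).} Let \(k\in G\) with \(\rg(k),\s(k)\in X_0\), and put \(x\defeq\rg(k)\) and \(y\defeq\s(k)\). We look for a sequence \(h_n\in G^x\) with \(h_n\in X''_1\) and \(h_n^{-1}k\in X''_1\). The first condition holds for \(\alpha^x\)\nb-almost all \(h\in G^x\), since \(x\in X_0\).

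For the second condition, the map \(h\mapsto h^{-1}k\) sends~\(G^x\) bijectively onto~\(G_y\). Left invariance of~\(\alpha\) says that the pushforward of~\(\alpha^x\) under \(h\mapsto k^{-1}h\) is~\(\alpha^y\). Composing with the inversion, the pushforward of~\(\alpha^x\) under \(h\mapsto h^{-1}k\) is \(\inv_*\alpha^y=\tilde\alpha_y\). Since \(y\in X_0\), almost every \(l\in G^y\) lies in~\(Y\), so \(l^{-1}\in X''_1\). Hence \(\tilde\alpha_y\)\nb-almost every element of~\(G_y\) lies in~\(X''_1\), and so \(h^{-1}k\in X''_1\) for \(\alpha^x\)\nb-almost all \(h\in G^x\). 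This is the one place where we need~\(Y\), rather than~\(X''_1\) alone, inside the definition of~\(X_0\).

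Combining the two conditions, the set of good~\(h\) is \(\alpha^x\)\nb-conull in~\(G^x\), and in particular nonempty. The hypothesis only requires the existence of a sequence, so the closure hypothesis gives \(k\in X''_1\). If one wants \(h_n\to u(x)\), as in Lemma~\ref{lem:doubleprime}, use that~\(\alpha^x\) has full support and~\(G\) is second countable. Then every neighbourhood of~\(u(x)\) in~\(G^x\) has positive measure and so meets the conull good set, and choosing a countable neighbourhood basis yields such a sequence.

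The main obstacle is the measure-transport in Step 2: one must check carefully that \(h\mapsto h^{-1}k\) carries~\(\alpha^x\) to~\(\tilde\alpha_y\), and that the invariance condition on~\(y\) concerns~\(G^y\) rather than~\(G_y\). This is exactly why the inverse-closed set~\(Y\) is used in the definition of~\(X_0\).
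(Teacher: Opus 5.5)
Your proof is correct and follows the paper's argument. You use Fubini to get that \(X_0\) is conull, then invariance of the Haar system to see that the good elements form a conull subset of the fibre over \(\rg(k)\), which is dense by full support. The only difference is cosmetic: you work in \(G^{\rg(k)}\) with left invariance of \(\alpha\) composed with inversion, whereas the paper works in \(G_{\rg(k)}\) with right invariance of \(\tilde\alpha\), so the paper's \(h_n\) is your \(h_n^{-1}\). You also rightly note that the application in Lemma~\ref{lem:doubleprime} actually needs a sequence with \(h_n\to u(\rg(k))\), not just one good element.
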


\begin{proof}
  Since~\(X_1''\) is conull for \(\nu\circ\alpha\) and
  \(\nu\circ\tilde\alpha\), Fubini's Theorem implies that the sets of
  \(x\in G^0\) with \(g\in X_1''\) for almost all \(g\in G^x\) and
  with \(g\in X_1''\) for almost all \(g\in G_x\) are both \(\nu\)\nb-conull.
 Thus~\(X_0\) is conull.

  Let \(k\in G|_{X_0}\), that is, \(\rg(k),\s(k)\in X_0\).
  Then \(X''_1 \cap G_{\s(k)}\) is \(\tilde\alpha_{\s(k)}\)\nb-conull.
  Since the family of measures~\(\tilde\alpha\) is invariant on the
  right, the set of \(h\in G_{\rg(k)}\) with \(h k\in X''_1\) is
  \(\tilde\alpha_{\rg(k)}\)\nb-conull as well.
  So is the set of \(h\in G_{\rg(k)}\) with \(h^{-1}\in X''_1\)
  because \(\rg(k) \in X_0\).
  The intersection of these two sets remains
  \(\tilde\alpha_{\rg(k)}\)\nb-conull.
  Therefore, it is dense.
  So there is a sequence \((h_n)_{n\in\N}\) in~\(G_{\rg(k)}\) with
  \(h_n^{-1}, h_n k \in X''_1\) and \(\lim h_n = u\rg(k)\).
  Now the assumption implies \(k\in X''_1\).
\end{proof}

Equation~\eqref{eq:representation_through_Ugh_2} holds for the
unitaries~\(\bar{U}_g\) instead of~\(U_g\) for all \(g,h\in X''_1\).
As in~\eqref{eq:Sg_from_Ug}, we turn the unitaries~\(\bar{U}_g\)
into maps
\(S_g \colon \A_g \to \Bound(\Hils_{\s(g)}, \Hils_{\rg(g)})\).
These maps satisfy~\eqref{eq:S_g-multiplicative1} for all
\(g,h\in X''_1\) and hence for all \(g,h \in G|_{X_0}\) by
Lemma~\ref{lem:X0_exists}.

\begin{lemma}
  \label{lem:phi_from_S}
  If \(x\in X_0\), then \(\varphi_x = S_{u(x)}\).  If
  \(g\in G|_{X_0}\), then \(S_g(a)^* = S_{g^{-1}}(a^*)\).
\end{lemma}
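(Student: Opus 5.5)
Both claims should follow from the multiplicativity~\eqref{eq:S_g-multiplicative1} on \(G|_{X_0}\), combined with the inner-product relation \(S_g(a_1)^*S_g(a_2)=\varphi_{\s(g)}(a_1^*a_2)\) and the module property \(S_g(c\cdot a\cdot b)=\varphi_{\rg(g)}(c)S_g(a)\varphi_{\s(g)}(b)\). These two relations come from the construction~\eqref{eq:Sg_from_Ug} via the unitaries~\(\bar{U}_g\). They hold for all \(g\in X''_1\), because~\(\bar{U}_g\) is a unitary onto \(\A_g\A_g^*\Hils_{\rg(g)}\) that intertwines the left and right module structures, exactly as for~\(U_g\). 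So I first record them for \(g\in G|_{X_0}\subseteq X''_1\).

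For the first claim, let \(x\in X_0\). Then \(u(x)\in G|_{X_0}\), so~\eqref{eq:S_g-multiplicative1} holds for the pair \((u(x),u(x))\). The module property for \(g=u(x)\) gives \(S_{u(x)}(c\cdot a)=\varphi_x(c)S_{u(x)}(a)\) for \(c,a\in\FU_x\). Applying the inner-product relation with \(a_1=c^*\), \(a_2=a\) gives \(S_{u(x)}(c^*)^*S_{u(x)}(a)=\varphi_x(ca)\). I would then show that \(S_{u(x)}\) is a \(*\)\nb-representation equal to~\(\varphi_x\). Write \(a=a_1a_2\) by Cohen--Hewitt. Then \(S_{u(x)}(a)=\varphi_x(a_1)S_{u(x)}(a_2)\). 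Since \(\bar{U}_{u(x)}\colon\FU_x\otimes_{\FU_x}\Hils_x\to\FU_x\Hils_x=\Hils_x\) intertwines the left \(\FU_x\)\nb-actions, it is~\(\varphi_x\)-equivariant, and every \(\varphi_x\)-equivariant unitary \(\FU_x\otimes\Hils_x\cong\Hils_x\to\Hils_x\) is a unitary~\(W\) in the commutant of~\(\varphi_x\). Hence \(S_{u(x)}(a)=W\varphi_x(a)\). Multiplicativity at \((u(x),u(x))\) then gives \(W\varphi_x(ab)=W\varphi_x(a)W\varphi_x(b)=W^2\varphi_x(ab)\). Nondegeneracy of~\(\varphi_x\) forces \(W^2=W\), hence \(W=1\) because~\(W\) is unitary. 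This is the step needing care: one must identify the canonical isomorphism \(\FU_x\otimes_{\FU_x}\Hils_x\cong\Hils_x\) used in~\eqref{eq:Sg_from_Ug} correctly, so that the defect is exactly a commutant unitary.

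For the second claim, let \(g\in G|_{X_0}\). Then \(g^{-1}\in G|_{X_0}\) as well, and the pairs \((g^{-1},g)\) and \((g,g^{-1})\) lie in~\(G^2\) with all relevant vertices in~\(X_0\). Fix \(a\in\A_g\), \(b\in\A_{g^{-1}}\) and \(\xi\in\Hils_{\s(g)}\). Multiplicativity and the first claim give \(S_{g^{-1}}(b)S_g(a)=\varphi_{\s(g)}(ba)\), while the inner-product relation gives \(S_g(b^*)^*S_g(a)=\varphi_{\s(g)}(ba)\). So \(S_{g^{-1}}(b)\) and \(S_g(b^*)^*\) agree on \(S_g(\A_g)\Hils_{\s(g)}=\A_g\A_g^*\Hils_{\rg(g)}\).

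It remains to show that both operators vanish on the orthogonal complement of this subspace. For \(S_g(b^*)^*\), this holds because its adjoint \(S_g(b^*)\) has range in \(S_g(\A_g)\Hils_{\s(g)}\). For \(S_{g^{-1}}(b)\), I write \(b=b'c\) with \(c\in\A_g\A_g^*=\A_{g^{-1}}^*\A_{g^{-1}}\) by Lemma~\ref{lem:equality_of_products}. The module property then gives \(S_{g^{-1}}(b)=S_{g^{-1}}(b')\varphi_{\rg(g)}(c)\), and \(\varphi_{\rg(g)}(c)\) kills the complement of \(\A_g\A_g^*\Hils_{\rg(g)}\). Hence \(S_{g^{-1}}(b)=S_g(b^*)^*\), that is, \(S_g(a)^*=S_{g^{-1}}(a^*)\).
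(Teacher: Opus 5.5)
Your proof is correct and is essentially the paper's argument. The second claim is handled as in the paper: agreement on the dense span of \(S_g(\A_g)\Hils_{\s(g)}\) via multiplicativity at \((g^{-1},g)\) and the inner-product relation, plus vanishing on the orthogonal complement, which you justify in more detail than the paper does. For the first claim you detour through a commutant unitary \(W\) with \(W^2=W\), whereas the paper directly compares \(\varphi_x(a_1)S_{u(x)}(a_2)=S_{u(x)}(a_1a_2)=S_{u(x)}(a_1)S_{u(x)}(a_2)\) on the dense span of the ranges of \(S_{u(x)}\). Both routes use the same ingredients, namely left-module intertwining and surjectivity of \(\bar{U}_{u(x)}\).
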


\begin{proof}
  The unitaries~\(U_g\) intertwine the left actions
  of~\(\FU_{\rg(g)}\) because they are isomorphisms of
  correspondences and by Lemma~\ref{lem:nondegenerate_over_AA-star}.
  The unitaries~\(\bar{U}_g\) inherit this property.
  Then
  \[
    \varphi_{\rg(g)}(a_1) \cdot S_g(a_2) = S_g(a_1 \cdot a_2) =
    S_{u\rg(g)}(a_1) S_g(a_2)
  \]
  for all \(a_1\in \FU_{\rg(g)}\), \(a_2\in \A_g\).  If \(g=u(x)\),
  then the closed linear span of \(S_g(a_2)\Hils_x\) is equal
  to~\(\Hils_x\) because~\(\bar{U}_{u(x)}\) is unitary onto
  \(\varphi_x(\A_{u(x)}\A_{u(x)}^*)\Hils_x = \Hils_x\).  Therefore,
  the above equality implies \(\varphi_x(a_1) = S_{u(x)}(a_1)\).
  Since~\(\bar{U}_g\) is an isometry from
  \(\A_g \otimes_{\FU_{\s(g)}} \Hils_{\s(g)}\)
  into~\(\Hils_{\rg(g)}\), the map~\(S_g\) built from it satisfies
  \[
    S_g(a_1)^* S_g(a_2) = \varphi_{\s(g)}(a_1^* a_2) = S_{g^{-1}
      g}(a_1^* \cdot a_2) = S_{g^{-1}}(a_1^*) S_g(a_2)
  \]
  for all \(a_1,a_2\in \A_g\).  That is, \(S_g(a_1)^*\) and
  \(S_{g^{-1}}(a_1^*)\) are equal on the closed linear span of
  \(S_g(a_2)\Hils_{\s(g)}\).  The latter subspace is the image
  of~\(\bar{U}_g\), which is
  \(\varphi_{\s(g)}(\A_g\A_g^*)\Hils_{\s(g)}\).  Both \(S_g(a_1)^*\)
  and \(S_{g^{-1}}(a_1^*)\) vanish on the orthogonal complement of the
  latter.  Hence they are equal on all of~\(\Hils_{\s(g)}\).
\end{proof}

Now let \((\nu,\Hils_x,S_g)\) and \((\nu',\Hils'_x,S'_g)\) be as
above; in particular, we only assume that~\(S_g\) is defined for all \(g\in
G|_{X_0}\) and that~\eqref{eq:S_g-multiplicative1}
holds for all \(g,h\in G|_{X_0}\) for some conull subset
\(X_0\subseteq G\), because this is exactly what we have managed to
prove so far.
Let \(T\colon \Lt^2(G^0,\Hils,\nu) \to \Lt^2(G^0,\Hils',\nu')\) be
a unitary intertwiner between the resulting Fell-bundle
representations of~\(\A\).
We have already shown in Section~\ref{sec:first_stage_proof} that
\([\nu]=[\nu']\), and we have disintegrated~\(T\) into a family of
unitaries \(T_x\colon \Hils_x \to \Hils'_x\) for almost all \(x\in
G^0\) that intertwine the representations \(\varphi_x = S_{u\rg(x)}\)
and \(\varphi'_x = S'_{u\rg(x)}\) of~\(\FU_x\) for all \(x\in X_0\)
(if needed, we shrink the conull subset \(X_0\subseteq G^0\) here) and
such that the set \(X_1\subseteq G|_{X_0}\) of all \(g\in X_1\) for
which the intertwining condition \(T_{\rg(g)}S_g(a) = S'_g(a)
T_{\s(g)}\) holds for all \(a\in \A_g\) is conull.
We are going to shrink the subset \(X_0\subseteq G^0\) further so that
the intertwining condition holds for all \(g\in G|_{X_0}\).

Let \(g,h\in X_1\) and \(a_1\in \A_g\), \(a_2\in \A_h\).
Then
\begin{multline*}
  S'_{g h}(a_1\cdot a_2) T_{\s(h)}
  = S'_g(a_1)S'_h(a_2) T_{\s(h)}
  \\= S'_g(a_1) T_{\s(g)} S_h(a_2)
  = T_{\rg(g)} S_g(a_1)  S_h(a_2)
  = T_{\rg(g)} S_{g h}(a_1 a_2).
\end{multline*}
If the Fell bundle were saturated, this would already imply \(g h \in
X_1\).
In general, we still get the following, as in the proof of
Lemma~\ref{lem:doubleprime}: if \((h_n)_{n\in\N}\) is a sequence
in~\(G_{\rg(k)}\) converging towards \(u\rg(k)\) and \(h_n^{-1},h_n k
\in X_1\) for all \(n\in\N\), then \(k\in X_1\).
In addition, since \(S_g(a)^* = S_{g^{-1}}(a^*)\) for all \(a\in
\A_g\), we get \(g^{-1}\in X_1\) for all \(g\in X_1\).

Now Lemma~\ref{lem:Ramsay_1} shows that there
is \(X_0'\subseteq X_0\) with \(G|_{X_0'} \subseteq X_1\).
That is, the intertwining condition holds for all \(g\in G|_{X_0'}\).

\subsection{Third stage of the proof: invariant null sets
  in the saturated case}
\label{sec:third_stage}

So far, we have found a conull subset \(X_0\subseteq G^0\) and changed
the maps~\(S_g\) on a null set so that~\(S_g\) is a measurable
representation of
\(G|_{X_0} = \setgiven{g\in G}{\rg(g),\s(g)\in X_0}\).
We have also disintegrated a unitary intertwiner and shown
that the intertwining condition holds for all \(g\in G|_{X_0}\).
Of course, two unitary intertwiners are equal if and only if their
fibreweise operators are equal on a conull subset \(x\in X_0\subseteq
G^0\).

What is still missing is to replace the exceptional set~\(X_0\) by a
\(G\)\nb-invariant one.
In this subsection, we will achieve this under the assumption that the
Fell bundle is saturated.
Having done that, we may redefine \(\Hils_x=0\) outside~\(X_0\) and so
get rid of the exceptional set for~\(S\) altogether (but the
exceptional sets do not go away for intertwiners or their equality).
The key idea in this section goes back to~\cite{Ramsay:Topologies},
another article by Ramsay.
Note that he calls \(G\)\nb-invariant subsets ``saturated''.
Since~\(G\) is second countable, it is \(\sigma\)\nb-compact.
Then \cite{Ramsay:Topologies}*{Lemma~3.1}
provides a \emph{\(G\)\nb-invariant} \(\nu\)\nb-conull subset
\(Z\subseteq G^0\) and a measurable map \(\eta\colon Z\to G\) with
\(\s\circ \eta = \Id_Z\), \(\rg\circ \eta(Z) \subseteq X_0\cap Z\),
and \(\eta(x) = u(x)\) for all \(x\in X_0\cap Z\).
The maps
\begin{align*}
  x &\mapsto \rg\circ\eta(x)
  && \text {for } x\in Z, \\
  g &\mapsto \eta(\rg(g))\cdot g\cdot \eta(\s(g))^{-1}
  && \text{for } g\in G|_{Z}
\end{align*}
define a measurable groupoid homomorphism
\(\psi\colon G|_{Z} \to G|_{X_0\cap Z} \), which is the identity map
on \(G|_{X_0\cap Z}\).
Let \(\Hils'_x \defeq \Hils_{\psi(x)}\) for \(x\in Z\) and
\(\Hils'_x\defeq0\) for \(x\in G^0\setminus Z\).
Let \(S'_g \defeq S_{\psi(g)}\) for \(g\in G|_Z\) and \(S'_g=0\) for
\(g\in G\setminus G|_Z\).
Then \(((\Hils'_x)_{x\in G^0}, (S'_g)_{g\in G})\) is equal to
\((\Hils,S)\) on the \(\nu\)\nb-conull subset \(X_0\cap Z\).
It satisfies all the algebraic conditions for a measurable Fell-bundle
representation on~\(Z\).
It also satisfies them for trivial reasons on \(G^0\setminus Z\)
because~\(Z\) is \(G\)\nb-invariant.
This proves all statements in Theorem
\ref{the:disintegrate_Hilbert_space}.\ref{the:disintegrate_Hilbert_space_1}
for saturated Fell bundles.

Next, we turn to intertwiners.
Let \((\nu,\Hils_x,U_g,S_g)\) and \((\nu',\Hils'_x,U'_g,S'_g)\) be the
data for measurable Fell-bundle representations of~\(G\) (without any
exceptional null sets), with \(S'_g\) related to~\(U'_g\) as usual.
Let \(X_0\subseteq G^0\) be a conull subset and let \(T_x\colon
\Hils_x \to \Hils_x'\) be a family of unitaries for \(x\in X_0\) that
satisfy~\eqref{eq:T_intertwines_S} for all \(g\in G|_{X_0}\).
We have achieved this situation for intertwiners in
Section~\ref{sec:improve_null}.

Since we assume the Fell bundle to be saturated, we may view \(U_g\)
and~\(U'_g\) as unitaries \(\A_g\otimes_{\FU_{\s(g)}}\Hils_{\s(g)} \congto
\Hils_{\rg(g)}\) and  \(\A_g\otimes_{\FU_{\s(g)}}\Hils'_{\s(g)} \congto
\Hils'_{\rg(g)}\).
Now we rewrite~\eqref{eq:T_intertwines_S} as
\begin{equation}
  \label{eq:T_intertwines_U_2}
  T_{\rg(g)}
  = U'_g (\Id_{\A_g} \otimes T_{\s(g)}) U_g^*.
\end{equation}
This is true for all \(g,h \in G|_{X_0}\), \(a\in\A_g\), \(b\in
\A_h\).
As above, we now choose a \(G\)\nb-invariant, \(\sigma\)\nb-compact,
\(\nu\)\nb-conull subset \(X\subseteq G\) and a measurable map
\(\eta\colon X\to G\) with \(\rg\circ \eta = \Id_X\), \(\s\circ
\eta(X) \subseteq X_0\cap X\), and \(\eta(x) = u(x)\) for all \(x\in
X_0\cap X\).
(Compared to the argument above, we switched
\(\rg\) and~\(\s\), which amounts to taking inverses.)
Let~\(T'_x\) for \(x\in X\) be the composite unitary
\[
  \begin{tikzcd}[column sep=large]
    \Hils_x = \A_{\eta(x)}\A_{\eta(x)}^* \Hils_x
    \ar[r, "U_\eta(x)^*", "\cong"'] &
    \A_{\eta(x)} \Hils_{\s(\eta(x))}
    \ar[d, "\cong"', "\Id\otimes T_{\s(\eta(x))}"]\\
    \Hils'_x =
    \A_{\eta(x)}\A_{\eta(x)}^* \Hils'_x &
    \A_{\eta(x)} \Hils'_{\s(\eta(x))}
    \ar[l, "\cong","U'_{\eta(x)}"']
  \end{tikzcd}
\]
Then \(T'_x = T_x\) for all \(x\in X_0\cap X\), which is
conull; so the family of operators~\(T'_x\) is still
measurable and also induces the same unitary operator on
\(\Lt^2\)\nb-sections.
In addition, \eqref{eq:T_intertwines_U_2} continues to hold for~\(T'\)
for all arrows \(g\in G|_{X_0 \cap X}\) because this holds for~\(T\).
By construction, \(T'\) satisfies \eqref{eq:T_intertwines_U_2} or,
equivalently, \eqref{eq:T_intertwines_S} also for the arrows
\(\eta(x)\) for all \(x\in X\).
The set of arrows for which~\eqref{eq:T_intertwines_U} holds is
multiplicative because the Fell bundle is saturated.
So~\eqref{eq:T_intertwines_S} holds for all arrows in~\(G|_X\).

Next, let \((T_x)_{x\in X}\) and \((T'_x)_{x\in X'}\) be two
families of unitaries that both disintegrate the same intertwiner on
\(\Lt^2\)\nb-spaces.
Here \(X,X'\subseteq G^0\) are \(G\)\nb-invariant conull subsets
of~\(G^0\).
We assume that the intertwining condition~\eqref{eq:T_intertwines_S}
holds for all \(g\in G|_X\) or \(g\in G|_{X'}\), respectively.
We also assume that \(T_x = T'_x\) for almost all \(x\in G^0\); we
want to show that the set of \(x\in G^0\) for which \(T_x = T'_x\)
holds is also \(G\)\nb-invariant.

Since~\eqref{eq:T_intertwines_S} is equivalent
to~\eqref{eq:T_intertwines_U_2}, the latter holds for all arrows
\(g\in G\) with \(\s(g)\in X\cap X'\).
Thus \(T_{\s(g)} = T'_{\s(g)}\) implies \(T_{\rg(g)} = T'_{\rg(g)}\).
This says that the set of \(x\in X\cap X'\) with \(T_x = T'_x\) is
\(G\)\nb-invariant as asserted.
By now, we have proven all statements in
Theorem~\ref{the:disintegrate_Hilbert_space}
in the saturated case.

\subsection{Fourth stage of the proof: removing the null set in
  general}
\label{sec:fourth_stage}

From now on, we again allow Fell bundles that are not saturated.
We want to make the conull set \(X_0\subseteq G^0\) on which things
work \(G\)\nb-invariant in this case as well.
The key idea in order to generalise the argument in
Section~\ref{sec:third_stage} is to use a sequence
of local sections of the range map that together cover a fibrewise
dense subset of~\(G\).
To build these local sections, we first choose a countable basis
\((W_n)_{n\in\N_{\ge1}}\) for the topology of~\(G\).
This is possible because~\(G\) is assumed second countable.

\begin{lemma}[compare \cite{Ramsay:Topologies}*{Lemma~A}]
  \label{lem:sigma-compact_conull_exists}
  There is a \(\sigma\)\nb-compact \(\nu\)\nb-conull subset contained
  in~\(X\).
\end{lemma}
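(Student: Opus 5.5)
The statement is a standard inner-regularity fact, and I would prove it with Radon measure theory only, without touching the Fell bundle. Here \(X\subseteq G^0\) is the \(\nu\)\nb-conull subset from the preceding discussion. The space \(G^0\) is second countable and locally compact, hence \(\sigma\)\nb-compact. The measure~\(\nu\) is a Radon measure on~\(G^0\), possibly only \(\sigma\)\nb-finite; in any case it is finite on compact sets. First, I write \(G^0=\bigcup_{m\in\N} C_m\) with an increasing sequence of compact subsets~\(C_m\). Each set \(X\cap C_m\) is \(\nu\)\nb-measurable and has finite measure.

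Next, I use inner regularity of~\(\nu\) on measurable sets of finite measure. For each \(m\) and each \(k\in\N_{\ge1}\), I choose a compact subset \(K_{m,k}\subseteq X\cap C_m\) with
\[
  \nu\bigl((X\cap C_m)\setminus K_{m,k}\bigr) < 1/k .
\]
On a second countable locally compact Hausdorff space, every locally finite Borel measure is regular, so this is available. If~\(\nu\) is only given through its completion, I apply the same argument to a Borel set differing from \(X\cap C_m\) by a null set, and intersect with a Borel subset of \(X\cap C_m\) of full measure.

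Then I let \(Y\defeq\bigcup_{m,k} K_{m,k}\). This is a countable union of compact sets, hence \(\sigma\)\nb-compact, and it is contained in~\(X\). For each~\(m\), the set \((X\cap C_m)\setminus Y\) has measure less than~\(1/k\) for every~\(k\), so it is null. Therefore
\[
  G^0\setminus Y \subseteq (G^0\setminus X)\cup \bigcup_{m} \bigl((X\cap C_m)\setminus Y\bigr)
\]
is a countable union of null sets. So \(Y\) is \(\nu\)\nb-conull, and being \(\sigma\)\nb-compact it is Borel and hence measurable.

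The only point needing care is the justification of inner regularity by compact sets for the given measure class. This holds because~\(\nu\) may be taken Radon, as produced by Theorem~\ref{the:disintegration_commutative}, and \(G^0\) is second countable. I do not expect a real obstacle here. If one wants the conclusion as in Ramsay's Lemma~A, the same proof works verbatim with any measure equivalent to~\(\nu\), since conull sets depend only on the measure class.
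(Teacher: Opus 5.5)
Your proposal is correct and follows essentially the same argument as the paper: write \(G^0\) as a countable union of compacts, use regularity of \(\nu\) to exhaust each \(X\cap K_n\) from inside by compacts up to measure zero, and take the countable union. Your extra step of first passing to a Borel subset when \(X\) is only measurable for the completed measure is a harmless refinement that the paper leaves implicit.
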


\begin{proof}
  Since~\(G^0\) is second countable, \(G^0 = \bigcup K_n\) with
  compact subsets~\(K_n\).  Since the measure~\(\nu\) is regular, for
  each \(n\in\N\) there is an increasing sequence of compact subsets
  \(L_{n,k}\subseteq K_n \cap X\) with
  \(\nu(K_n \cap X\setminus L_{n,k}) \to 0\).  Then
  \(Z \defeq \bigcup \bigcup_{n,k} L_{n,k}\) is \(\sigma\)\nb-compact,
  contained in~\(X\), and satisfies \(\nu(X \setminus Z) = 0\).
\end{proof}

In the arguments above, we found that the statements we want hold on
\(X\) or on~\(G|_X\) for some \(\nu\)\nb-conull subset
\(X\subseteq G^0\).  We replace~\(X\) by a \(\sigma\)\nb-compact
\(\nu\)\nb-conull subset as in
Lemma~\ref{lem:sigma-compact_conull_exists} to arrange for~\(X\)
itself to be \(\sigma\)\nb-compact.  Let
\(G_X \defeq \setgiven{g\in G}{\s(g)\in X}\).  This subset of~\(G\) is
\(\sigma\)\nb-compact as well because~\(G\) is countable at infinity
and~\(X\) is \(\sigma\)\nb-compact.  For each \(n\in\N\), let
\(R_n \defeq \rg(G_X\cap W_n) \subseteq G^0\); this is
\(\sigma\)\nb-compact as well.  The restriction of the range map to
\(G_X \cap W_n\) has a Borel section
\[
  \eta_n\colon R_n \to G_X \cap W_n
\]
by the Federer--Morse Lemma
\cite{Federer-Morse:Measurable}*{Theorem~5.1}.
The subset \(R_n\cap X\) is \(\sigma\)\nb-compact as well, and we may
assume without loss of generality that \(\eta_n(x) = u(x)\) for all
\(x\in R_n \cap X\), by changing~\(\eta_n\) accordingly on \(R_n \cap
X\).
It is convenient to let \(\eta_0(x) \defeq u(x)\colon W_0 \to G^0\) be
the unit section \(X\to G_X\).

Above, we have already disintegrated a Fell-bundle representation
\((\varphi,U)\) on a separable Hilbert space into a measurable
Fell-bundle representation \((\nu,\Hils,S)\) of~\(G|_X\) for a
\(\nu\)\nb-conull subset \(X\subseteq G^0\), which we may assume to be
\(\sigma\)\nb-compact as well.  For \(n\in\N\), let
\(\Hils^{(n)}_x \defeq 0\) for \(x\notin R_n\) and
\[
  \Hils^{(n)}_x \defeq
  \A_{\eta_n(x)} \otimes_{\FU_{\s(\eta_n(x))}} \Hils_{{\s(\eta_n(x))}}
\]
for \(x\in R_n\).  This is a measurable field of Hilbert spaces by
construction.  If our representation~\(S\) were defined on all
of~\(G\), then~\(S_{\eta_n(x)}\) would provide an isometry
\(\Hils^{(n)}_x \congto \A_{\eta_n(x)} \A_{\eta_n}^* \Hils_x \subseteq
\Hils_x\).  We are now going to define~\(\Hils_x\) so that it is
spanned by~\(\Hils_x^{(n)}\), but taking into account the appropriate
overlaps.  Each~\(\Hils_x^{(n)}\) carries a nondegenerate
representation of~\(\FU_x\) from the Fell-bundle structure.  For
\(m\in\N\), the family of ideals
\(\A_{\eta_m(x)} \A_{\eta_m(x)}^* \idealin \FU_x\) is measurable in
the sense that there is a set of measurable sections whose values are
dense in the fibres
\(\A_{\eta_m(x)} \A_{\eta_m(x)}^* \idealin \FU_x\); this is so because
\((\A_g\A_g^*)_{g\in G}\) is a continuous family of ideals
and~\(\eta_m\) is measurable.  Therefore, the subfield of Hilbert
spaces
\(\A_{\eta_m(x)} \A_{\eta_m(x)}^* \Hils_x^{(n)} \subseteq
\Hils_x^{(n)}\) is measurable, and then so is the field formed by the
orthogonal complements.  Let
\(P_m^{(n)}(x) \in \Bound(\Hils_x^{(n)})\) be the orthogonal
projection onto \(\A_{\eta_m(x)} \A_{\eta_m(x)}^* \Hils_x^{(n)}\) and
let \(Q_m^{(n)}(x) = 1 - P_m^{(n)}(x)\) be the complementary
projection.  All these projections~\(P_m^{(n)}\) are measurable, and
they commute with each other because they are the support projections
of ideals in~\(\FU_x\).  Let
\[
  \Hils'_x \defeq \bigoplus_{n=0}^\infty \prod_{m=0}^{n-1} Q_m^{(n)} \Hils_x^{(n)}
\]
for all \(x\in G^0\).  If \(x\in X\), then \(\Hils'_x= \Hils_x\)
because \(\Hils_x^{(0)} = \Hils_x\) and \(Q_0^{(n)}=0\) for
\(n\ge1\).  If \(x\notin \rg(G_X)\), then \(\Hils'_x = 0\).

We are going to extend the obvious inclusions of
\(\prod_{m=0}^{n-1} Q_m^{(n)} \Hils_x^{(n)} \subseteq \Hils_x^{(n)}\)
to canonical isometries \(j_n\colon \Hils_x^{(n)} \to \Hils'_x\) for
all \(n\in\N\).  We write
\begin{multline}
  \label{eq:Hils_x_n_decompositon}
  \Hils_x^{(n)} = \bigoplus_{k=0}^n \prod_{m=0}^{k-1} Q_m^{(n)} P_k^{(n)} \Hils_x^{(n)}
  \\= \bigoplus_{k=0}^n \prod_{m=0}^{k-1} Q_m^{(n)} P_k^{(n)} \A_{\eta_n(x)}
  \otimes_{\FU_{\s(\eta_n(x))}} \Hils_{{\s(\eta_n(x))}}.
\end{multline}
We look at the \(k\)th summand in~\(\Hils_x^{(n)}\) and abbreviate
\(Q'\defeq \prod_{m=0}^{k-1} Q_m^{(n)}\).  A vector in the \(k\)th
summand may be approximated by linear combinations of
\[
  Q'( S_{\eta_k(x)}(f_1) S_{\eta_k(x)}(f_2^*)) \xi,
  \qquad \text{with } f_1,f_2\in \A_{\eta_k(x)},
  \xi\in \A_{\eta_n(x)} \otimes_{\FU_{\s(\eta_n(x))}}
  \Hils_{{\s(\eta_n(x))}}.
\]
Using the representation of~\(G|_X\), we identify
\(S_{\eta_k(x)}(f_2^*) \xi\) with a vector in
\[
  \A_{\eta_k(x)^{-1}} \A_{\eta_n(x)} \Hils_{{\s(\eta_n(x))}} \subseteq
  \Hils_{{\s(\eta_k(x))}}
\]
and so
\(S_{\eta_k(x)}(f_1) S_{\eta_k(x)}(f_2^*) \xi \in \A_{\eta_k(x)}
\otimes_{\FU_{\s(\eta_k(x))}} \Hils_{{\s(\eta_k(x))}} =
\Hils_x^{(k)}\).  Thus we get an isometry from the \(k\)th summand
in~\(\Hils_x^{(n)}\) to \(Q'\Hils_x^{(k)}\), which is the \(k\)th
summand in~\(\Hils'_x\).

We claim that
\begin{equation}
  \label{eq:jn_scalar_products}
  \braket{j_n(f_1 \otimes \xi_1)}{j_m(f_2 \otimes \xi_2)}
  =  \braket{\xi_1}{S_{\eta_n(x)^{-1}\eta_m(x)}(f_1^* f_2)\xi_2}
\end{equation}
for all \(f_1\in \A_{\eta_n(x)}\), \(\xi_1\in\Hils_{\s(\eta_n(x))}\),
\(f_2\in \A_{\eta_m(x)}\), \(\xi_2\in\Hils_{\s(\eta_m(x))}\) for some
\(n,m\in\N\).
There is an ideal \(I\idealin \FU_{\s(g)}\) with \(\A_{\eta_m(x)}
\A_{\eta_m(x)}^* \A_g = \A_g I\).
This implies that there is a certain projection~\(Q''\)
on~\(\Hils_{\s(\eta_m)(x)}\) so that
\(\prod_{m=0}^{k-1} Q_m^{(n)} P_k^{(n)} (f\otimes \xi) = f \otimes
Q''(\xi)\) for all \(f\in \A_{\eta_m(x)}\), \(\xi\in
\Hils_{\s(\eta_m(x))}\).
Therefore, we may decompose
\(f_1\otimes \xi_1 = \sum_{k=0}^n f_1 \otimes \xi_{1,k}\) with
\(\xi_{1,k}\in\Hils_{\s(\eta_n(x))}\), so that
\(f_1 \otimes \xi_{1,k}\) belongs to the \(k\)th summand
in~\eqref{eq:Hils_x_n_decompositon}.
We may do the same to \(f_2 \otimes \xi_2\) and so arrange that both
belong to one of these summands.
After this simplification, \eqref{eq:jn_scalar_products} follows
quickly from the definition of the isometries~\(j_n\).

Thus we may also define~\(\Hils'_x\) as the Hausdorff completion of
the sum \(\bigoplus \Hils_x^{(n)}\) for the scalar product defined
by~\eqref{eq:jn_scalar_products}.

\begin{lemma}
  \label{lem:jg_lemma}
  Let \(g\in G_X\) and \(x \defeq \rg(g)\).  There is a unique map
  \[
    j_g\colon \A_g \otimes_{\FU_{\s(g)}} \Hils_{\s(g)} \to \Hils'_x
  \]
  such that
  \[
    \braket{j_g(f_1 \otimes \xi_1)}{j_n(f_2 \otimes \xi_2)}
    = \braket{\xi_1}{S_{g^{-1}\eta_n(x)}(f_1^* f_2) \xi_2}
  \]
  for all
  \(n\in\N\), \(f_1\in \A_g\), \(\xi_1\in \Hils_{\s(g)}\),
  \(f_2\in \A_{\eta_n(x)}\), \(\xi_1\in \Hils_{\s(\eta_n(x))}\).
  If \(g,h\in G_X\) satisfy \(\rg(g) = \rg(h) = x\), then
  \[
    \braket{j_g(f_1 \otimes \xi_1)}{j_h(f_2 \otimes \xi_2)}
    = \braket{\xi_1}{S_{g^{-1}h}(f_1^* f_2) \xi_2}
  \]
  for all \(f_1\in \A_g\), \(\xi_1\in \Hils_{\s(g)}\), \(f_2\in
  \A_h\), \(\xi_1\in \Hils_{\s(h)}\).
\end{lemma}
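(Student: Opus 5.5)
The plan is to construct \(j_g\) by approximation through the isometries \(j_n\), using that the sets \(W_n\) form a basis and the sections \(\eta_n\) hit points near~\(g\). Since \(\Hils'_x\) is the Hausdorff completion of \(\bigoplus_n \Hils_x^{(n)}\) for the inner product~\eqref{eq:jn_scalar_products}, the vectors \(j_n(f_2\otimes\xi_2)\) span a dense subspace of~\(\Hils'_x\). Uniqueness of~\(j_g\) is therefore immediate from the first displayed formula. For existence, we fix \(f_1\otimes\xi_1\) and define a linear functional on this dense subspace by
\[
  j_n(f_2\otimes\xi_2)\mapsto \braket{S_{\eta_n(x)^{-1}g}(f_2^* f_1)\xi_1}{\xi_2}
  \quad\text{(conjugated appropriately)}.
\]
Here \(\eta_n(x)^{-1}g\) is an arrow between \(\s(\eta_n(x))\in X\) and \(\s(g)\in X\), so it lies in~\(G|_X\), where~\(S\) is an honest representation. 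We will show that this functional is well defined and bounded by \(\norm{f_1\otimes\xi_1}\). The Riesz representation theorem then produces \(j_g(f_1\otimes\xi_1)\).

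Boundedness and well-definedness come from a Gram matrix computation. Take a finite family of vectors \(j_{n_i}(f_{2,i}\otimes\xi_{2,i})\) together with \(f_1\otimes\xi_1\). Their formal Gram matrix has entries \(\braket{\cdot}{S_{k_i^{-1}k_j}(\cdot)\,\cdot}\), with \(k_0=g\) and \(k_i=\eta_{n_i}(x)\). All the arrows \(k_i^{-1}k_j\) lie in \(G|_X\). Using the multiplicativity \(S_{k_i^{-1}k_j}(a^*b)=S_{k_i}(a)^*\dots\), there is a catch: \(S_{k_i}\) itself need not be defined, because \(\rg(k_i)=x\) may lie outside~\(X\). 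Instead we realise the whole matrix at the point \(y=\s(g)\in X\). Write \(k_i^{-1}k_j=(g^{-1}k_i)^{-1}(g^{-1}k_j)\), where \(g^{-1}k_i\in G|_X\). Using \(S(a^*b)=S(a)^*S(b)\) on \(G|_X\), together with Cohen--Hewitt factorisations \(f=f'f''\) as in the proof of Lemma~\ref{lem:bar-U-welldefined}, the matrix becomes the Gram matrix of the genuine vectors \(S_{g^{-1}k_i}(\cdots)\xi_i\in\Hils_y\). Hence it is positive semidefinite. This gives the Cauchy--Schwarz bound, and in particular that the functional vanishes on null combinations. It also shows \(\norm{j_g(f_1\otimes\xi_1)}^2=\braket{\xi_1}{S_{\s(g)}(f_1^*f_1)\xi_1}\). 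So \(j_g\) is isometric on elementary tensors, and extends linearly and isometrically to \(\A_g\otimes_{\FU_{\s(g)}}\Hils_{\s(g)}\).

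The step I expect to be the main obstacle is exactly this positivity/realisation step. The pieces \(f_1^*f_2\) live in \(\A_g^*\A_{\eta_n(x)}\subseteq\A_{g^{-1}\eta_n(x)}\), and factorising them as products of elements over \(G|_X\) requires a detour through the unit \(x\notin X\). I would handle it by replacing \(f_1\) and \(f_2\) by \(f\,e\) with \(e\) in \(\FU_{\s(\cdot)}\) (approximate units, harmless by continuity), and by writing \(f_1^*f_2=\lim f_1^*ff^*f_2\) with \(f\in\A_g\). The term \(f_1^*f\) lies in \(\FU_y\) and \(f^*f_2\in\A_{g^{-1}\eta_n(x)}\), which involves only arrows of \(G|_X\).

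The second formula, for \(j_g\) and \(j_h\), then follows by density. Expanding \(j_h(f_2\otimes\xi_2)\) as a limit of combinations of \(j_n\)-vectors reduces the claim to the first formula. Multiplicativity \(S_{g^{-1}\eta_n(x)}(\cdot)\,S_{\eta_n(x)^{-1}h}(\cdot)=S_{g^{-1}h}(\cdot)\) holds on \(G|_X\) and identifies the limit. Alternatively, the same Gram matrix argument with both \(g\) and \(h\) included gives it directly.
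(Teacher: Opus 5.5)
There is a genuine gap, located exactly where the lemma goes beyond the first formula.

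Adjoin the formal vector \(f_1\otimes\xi_1\) to the span of the \(j_n\)-vectors, using the inner products the lemma prescribes. Once this form is positive, your Riesz vector \(j_g(f_1\otimes\xi_1)\) is just the orthogonal projection~\(P\) of the formal vector onto the closed subspace~\(\Hils'_x\). So Cauchy--Schwarz only gives \(\norm{j_g(f_1\otimes\xi_1)}^2\le\braket{\xi_1}{\varphi_{\s(g)}(f_1^*f_1)\xi_1}\), not equality. Equality, and the second formula (its case \(h=g\) is exactly this isometry, which Lemma~\ref{lem:Sk_from_jg} needs), hold only if the formal vector already lies in the closed span of the \(j_n\)-vectors. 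Your argument never shows this: you announce that the \(\eta_n\) hit points near~\(g\), but never use it.

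Your density step for~\(j_h\) has the same defect. Approximating \(j_h(f_2\otimes\xi_2)\) by arbitrary combinations~\(\zeta_m\) of \(j_n\)-vectors yields only \(\lim\braket{j_g(f_1\otimes\xi_1)}{\zeta_m}=\langle P(f_1\otimes\xi_1),P(f_2\otimes\xi_2)\rangle\). Multiplicativity of~\(S\) cannot turn this into \(\braket{\xi_1}{S_{g^{-1}h}(f_1^*f_2)\xi_2}\), and putting \(g\) and~\(h\) into one Gram matrix does not help either.

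The missing step is the approximation the paper uses.
Pick \(n_j\) with \(g\in W_{n_j}\subseteq U_j\) for a countable neighbourhood basis~\((U_j)\) of~\(g\). Then \(x\in R_{n_j}\), so \(\eta_{n_j}(x)\to g\), and \(k_j\defeq g^{-1}\eta_{n_j}(x)\in G|_X\) tends to \(u(\s(g))\).
Write \(\xi_1=\varphi_{\s(g)}(cc^*)\xi_3\) by Cohen--Hewitt, and extend~\(c\) to a continuous section~\(f_3\) of~\(\A\).
Set \(\tau_j\defeq j_{n_j}\bigl(f_1f_3(k_j)\otimes S_{k_j^{-1}}(f_3(k_j)^*)\xi_3\bigr)\) and \(\zeta_j\defeq\varphi_{\s(g)}(f_3(k_j)f_3(k_j)^*)\xi_3\).
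Multiplicativity on~\(G|_X\) gives \(\braket{\tau_i}{\tau_j}=\braket{\zeta_i}{\varphi_{\s(g)}(f_1^*f_1)\zeta_j}\) and \(\braket{\tau_j}{j_n(f_2\otimes\xi_2)}=\braket{\zeta_j}{S_{g^{-1}\eta_n(x)}(f_1^*f_2)\xi_2}\).
Moreover \(\zeta_j\to\xi_1\) in norm, by continuity of~\(f_3\) and of the Fell-bundle operations.
So \((\tau_j)\) is Cauchy, and its limit is \(j_g(f_1\otimes\xi_1)\) with the correct norm. Repeating this for~\(h\) gives the second formula.

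This also makes your positivity step unnecessary, which helps because that step is flawed as written for nonsaturated~\(\A\). The ideal \(\A_g\A_g^*\) need not act nondegenerately on~\(\A_{\eta_n(x)}\), so the entries between two \(j_n\)-vectors cannot be realised as inner products in~\(\Hils_{\s(g)}\). Positivity of that block comes from~\eqref{eq:jn_scalar_products} instead.
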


\begin{proof}
  Since the subspaces \(j_n(\Hils_x^{(n)})\) span~\(\Hils'_x\), the
  first formula in the lemma determines \(j_g(f_1 \otimes \xi_1)\)
  uniquely if it exists, and then~\(j_g\) is determined uniquely.
  So there is at most one map with all these properties.
  Let \(f_1 \in \A_g\), \(\xi_1\in \Hils_{\s(g)}\).
  We may write \(\xi_1 = f_3(u(\s(g))) f_3(u(\s(g)))^* \xi_3\) with
  \(f_3(u(\s(g)))\in \FU_{\s(g)}\), \(\xi_3\in \Hils_{\s(g)}\) by the
  Cohen--Hewitt Factorisation Theorem.
  Choose a continuous section~\(f_3\) of the bundle~\(\A\) with the
  value \(f_3(u\s(g))\) at \(u\s(g)\).
  So \(f_3(k_k) f_3(k_n^{-1})^*\xi_3\) converges to~\(\xi_1\) in norm
  for any sequence~\((k_n)\) converging to~\(u(\s(g))\).
  We claim that there is a sequence of \(n_j\in\N\) with \(\lim
  \eta_{n_j}(x) = g\).
  To choose~\(n_j\), we start with a countable neighbourhood
  basis~\((U_j)\) of~\(g\).
  By assumption, \(U_j\) must be a union of the basis
  neighbourhoods~\(W_n\) chosen above.
  So there is \(n_j\in \N\) with \(g\in W_{n_j} \subseteq U_j\).
  Now \(x = \rg(g)\in \rg(G_X\cap W_{n,j})\) and so \(\eta_{n_j}(x)\in
  W_{n_j} \in U_j\) by construction.
  Thus \(\lim \eta_{n_j}(x) = g\) as desired.
  Then
  \[
    f_1\cdot f_3(g^{-1} \eta_{n_j}(x)) \otimes
    S_{g^{-1} \eta_{n_j}(x)} f_3(g^{-1} \eta_{n_j}(x))^* \xi_1
    \in \A_{\eta_{n_j}(x)} \otimes \Hils_{\s(\eta_{n_j})}
    =  \Hils_x^{(n_j)}
  \]
  has an image \(\tau_j\in \Hils'_x\).
  Now~\eqref{eq:jn_scalar_products} allows to compute
  \(\braket{\tau_j - \tau_i}{\tau_j-\tau_i}\) and check that the
  sequence~\(\tau_j\) is a Cauchy sequence.  We let
  \(j_g(f_1\otimes \xi_1)\) be the limit of this Cauchy sequence.
  Equation~\eqref{eq:jn_scalar_products} also allows to compute
  \(\braket{\tau_j}{f_2 \otimes \xi_2}\) for
  \(f_2\otimes \xi_2\in \Hils_x^{(n)}\) for some \(n\in\N\), and this
  shows that \(j_g(f_1\otimes \xi_1)\) satisfies
  \(\braket{j_g(f_1 \otimes \xi_1)}{j_n(f_2 \otimes \xi_2)} =
  \braket{\xi_1}{S_{g^{-1}\eta_n(x)}(f_1^* f_2) \xi_2}\) as required.
  Next, doing the same also for~\(h\) instead of~\(g\), we may check
  \(\braket{j_g(f_1 \otimes \xi_1)}{j_h(f_2 \otimes \xi_2)} =
  \braket{\xi_1}{S_{g^{-1}h}(f_1^* f_2) \xi_2}\).  When \(g=h\),
  this implies that~\(j_g\) extends to an isometry on the linear span
  of elementary tensors.
\end{proof}

\begin{remark}
  The Hilbert space~\(\Hils'_x\) can also be realised as the Hausdorff
  completion of the algebraic sum
  \[
    \bigoplus_{g\in G_X^x} \A_g \otimes \Hils_{\s(g)},
  \]
  endowed with the indefinite sesquilinear form described in
  Lemma~\ref{lem:jg_lemma}; here
  \(
  G_X^x \defeq \setgiven{g\in G}{\rg(g)=x,\ \s(g)\in X}.
  \)
  This description does not depend on any further choices and is in this
  sense canonical.  However, when viewed in isolation, it is not
  a priori clear why this construction should yield a separable Hilbert
  space.  Separability follows only after identifying this completion
  with the Hilbert space~\(\Hils'_x\) constructed above, which is built
  as a countable direct sum of separable Hilbert spaces.
\end{remark}

Now we may extend the measurable Fell-bundle
representation~\((S_g)_{g\in G|_X}\) to all of~\(G\) on the measurable
field of Hilbert spaces~\(\Hils'\):

\begin{lemma}
  \label{lem:Sk_from_jg}
  Let \(k\in G\).  There is a unique isometry
  \[
    U_k\colon \A_k \otimes_{\FU_{\s(k)}} \Hils'_{\s(k)}
    \to \Hils'_{\rg(k)}
  \]
  such that
  \(
    U_k\bigl(f_1 \otimes j_g(f_2 \otimes \xi)\bigr)
    = j_{k g}(f_1 f_2 \otimes \xi)
  \)  for all \(g\in G_X\), \(f_1\in \A_k\), \(f_2\in \A_g\),
  \(\xi\in \Hils_{\s(g)}\) with \(\s(k)=\rg(g)\).
  Let \(S_g(a)(\xi) \defeq U_g(a\otimes \xi)\) for \(a\in \A_g\),
  \(g\in G\).
  The family \((S_k)_{k\in G}\) is a measurable Fell-bundle
  representation.
\end{lemma}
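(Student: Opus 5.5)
The plan is to define \(U_k\) on a dense subspace by the prescribed formula, verify that it preserves inner products using Lemma~\ref{lem:jg_lemma}, and then extend by continuity. Fix \(k\in G\) and put \(y\defeq\s(k)\), \(x\defeq\rg(k)\). By Lemma~\ref{lem:jg_lemma}, vectors \(j_g(f_2\otimes\xi)\) with \(g\in G_X^y\) span a dense subspace of \(\Hils'_y\); indeed already the \(g=\eta_n(y)\) suffice. Since \(\A_k\) is a right \(\FU_y\)-module and \(\Hils'_y\) carries the left \(\FU_y\)-action, elementary tensors \(f_1\otimes j_g(f_2\otimes\xi)\) span a dense subspace of \(\A_k\otimes_{\FU_y}\Hils'_y\).

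Note that \(kg\in G_X\), because \(\s(kg)=\s(g)\in X\). So \(j_{kg}(f_1f_2\otimes\xi)\in\Hils'_x\) makes sense. It suffices to check
\[
  \braket{j_{kg}(f_1f_2\otimes\xi)}{j_{kh}(f_3f_4\otimes\zeta)}
  =\braket{f_1\otimes j_g(f_2\otimes\xi)}{f_3\otimes j_h(f_4\otimes\zeta)} .
\]
By Lemma~\ref{lem:jg_lemma}, the left side is \(\braket{\xi}{S_{g^{-1}h}(f_2^*f_1^*f_3f_4)\zeta}\), since \((kg)^{-1}kh=g^{-1}h\) lies in \(G|_X\). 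The right side is \(\braket{j_g(f_2\otimes\xi)}{(f_1^*f_3)\cdot j_h(f_4\otimes\zeta)}\).

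The key auxiliary claim is that the left \(\FU_y\)-action on \(\Hils'_y\) satisfies \(a\cdot j_h(f\otimes\zeta)=j_h(af\otimes\zeta)\). I would check this by pairing both sides with all \(j_n(\dots)\) and using the defining formula in Lemma~\ref{lem:jg_lemma}, together with the multiplicativity of \(S\) on \(G|_X\); here \(a\) is absorbed into the Fell-bundle product. Granting the claim, the right side also becomes \(\braket{\xi}{S_{g^{-1}h}(f_2^*f_1^*f_3f_4)\zeta}\). Hence the formula defines a well-defined isometry on the span, and it extends uniquely to \(U_k\). Uniqueness of \(U_k\) is immediate from density.

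Next come the algebraic properties of \(S_k(a)\xi\defeq U_k(a\otimes\xi)\). Multiplicativity \(S_k(a)S_l(b)=S_{kl}(ab)\) holds on generators \(j_g(f\otimes\xi)\), since both sides give \(j_{klg}(abf\otimes\xi)\); it then holds everywhere by boundedness. For the adjoint relation \(S_k(a)^*=S_{k^{-1}}(a^*)\), I would compute
\[
  \braket{j_h(f\otimes\zeta)}{S_k(a)j_g(f_2\otimes\xi)}
  =\braket{\zeta}{S_{h^{-1}kg}(f^*af_2)\xi}
\]
for \(h\in G^x_X\), \(g\in G^y_X\), and compare it with \(\braket{S_{k^{-1}}(a^*)j_h(f\otimes\zeta)}{j_g(f_2\otimes\xi)}\). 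For \(k=u(x)\), nondegeneracy of \(S_{u(x)}\) follows because \(\FU_x\A_g=\A_g\).

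Measurability is where I expect the main obstacle. I would test \(g\mapsto S_g(f(g))\xi(g)\) against the countable measurable generating sections \(x\mapsto j_n(\cdot)\) built from \(\eta_n\). By Lemma~\ref{lem:jg_lemma}, these inner products reduce to expressions \(\braket{\cdot}{S_{\eta_m(\rg(g))^{-1}g\eta_n(\s(g))}(\dots)\cdot}\). These are measurable because the \(\eta_n\) are Borel and \(S\) is measurable on \(G|_X\). One must handle carefully that the generators of \(\Hils'\) involve the projections \(Q^{(n)}_m\), but these are measurable and commute with everything in sight. Finally, relative quasi-invariance depends only on \(S_{u(x)}\). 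Since \(\Hils'_x=\Hils_x\) on the conull set \(X\), the measure \(\nu\) is unchanged up to null sets, so the condition carries over from the first-stage disintegration.
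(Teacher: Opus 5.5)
Your proposal is correct and follows the paper's proof essentially step by step. Both define \(U_k\) on the dense span of elementary tensors \(f_1\otimes j_g(f_2\otimes\xi)\), use Lemma~\ref{lem:jg_lemma} to reduce both sides of the inner-product identity to \(\braket{\xi}{S_{g^{-1}h}(f_2^*f_1^*f_3f_4)\zeta}\), extend by continuity, and verify multiplicativity on the generators \(j_g(f\otimes\xi)\). You also make explicit several points the paper uses tacitly or merely asserts: the module property \(a\cdot j_h(f\otimes\zeta)=j_h(af\otimes\zeta)\), the fact that \(kg\in G_X\), the involution axiom \(S_k(a)^*=S_{k^{-1}}(a^*)\), and the persistence of relative quasi-invariance. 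Your measurability sketch is at the same level of detail as the paper's one-line justification.
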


\begin{proof}
  Fix \(k\in G\) and set \(x\defeq \rg(k)\), \(y\defeq \s(k)\).
  Consider the subspace \(D_y\subseteq \Hils'_y\) spanned by vectors
  of the form \(j_g(f\otimes \xi)\) with \(g\in G_X\cap G^y\),
  \(f\in \A_g\), \(\xi\in \Hils_{\s(g)}\).
  By construction of~\(\Hils'_y\), this subspace is dense.  Define a
  linear map on elementary tensors by
  \[
    T_k\colon \A_k \odot D_y \to \Hils'_x,\qquad
    T_k\bigl(f_1 \otimes j_g(f_2\otimes \xi)\bigr)
    \defeq j_{k g}(f_1 f_2 \otimes \xi),
  \]
  where \(g\in G_X\) satisfies \(\rg(g)=y\).
  On the one hand, Lemma~\ref{lem:jg_lemma} implies
  \begin{align*}
    &\braket{T_k(f_1 \otimes j_{g_1}(f_2\otimes \xi_1))}
           {T_k(f_3 \otimes j_{g_2}(f_4\otimes \xi_2))} \\
    &\qquad =
    \braket{j_{k g_1}(f_1 f_2 \otimes \xi_1)}
           {j_{k g_2}(f_3 f_4 \otimes \xi_2)} \\
    &\qquad =
    \braket{\xi_1}
    {S_{(k g_1)^{-1}(k g_2)}\bigl((f_1 f_2)^*(f_3 f_4)\bigr)\xi_2} \\
    &\qquad =
    \braket{\xi_1}
    {S_{g_1^{-1}g_2}\bigl(f_2^* f_1^* f_3 f_4\bigr)\xi_2}.
  \end{align*}
  On the other hand,
  \begin{align*}
    &\braket{f_1 \otimes j_{g_1}(f_2\otimes \xi_1)}
            {f_3 \otimes j_{g_2}(f_4\otimes \xi_2)} \\
    &\qquad =
    \braket{j_{g_1}(f_2\otimes \xi_1)}
           {S_{k^{-1}k}(f_1^* f_3)\, j_{g_2}(f_4\otimes \xi_2)} \\
    &\qquad =
    \braket{j_{g_1}(f_2\otimes \xi_1)}
           {j_{g_2}(f_1^* f_3 f_4\otimes \xi_2)} \\
    &\qquad =
    \braket{\xi_1}
    {S_{g_1^{-1}g_2}\bigl(f_2^*(f_1^* f_3)f_4\bigr)\xi_2}.
  \end{align*}
  These expressions agree, showing that~\(T_k\) preserves inner
  products on elementary tensors.
  Hence~\(T_k\) extends uniquely to an isometry
  \[
    U_k\colon \A_k \otimes_{\FU_y} \Hils'_y \to \Hils'_x.
  \]
  Uniqueness follows because the vectors
  \(j_g(f\otimes \xi)\) span a dense subspace of~\(\Hils'_y\).

  Recall that the maps \(S_k\) and~\(U_k\) are related by \(S_k(a)\xi
  = U_k(a\otimes\xi)\).
  Since~\(U_k\) is well defined and isometric on the balanced tensor
  product and a left module map, \(S_k\) satisfies
  \[
    S_k(c\cdot a \cdot b)
    = \varphi_x(c)\, S_k(a)\, \varphi_y(b),
    \qquad
    S_k(a)^* S_k(b)=\varphi_y(a^*b),
  \]
  The multiplicativity relation \(S_k(a_1)S_h(a_2)=S_{k h}(a_1 a_2)\)
  holds on elementary tensors by construction and hence everywhere.

  Finally, the family \((S_k)_{k\in G}\) is measurable because the
  maps \(k\mapsto j_{k g}\) are measurable families of operators
  between measurable fields, and all constructions above are algebraic
  and compatible with the measurable structure.

  Therefore, \((S_k)_{k\in G}\) is a measurable Fell-bundle
  representation.
\end{proof}

This completes the construction of a measurable Fell-bundle
representation associated to a given Fell-bundle representation on a
separable Hilbert space.
This finishes the proof of
Theorem~\ref{the:disintegrate_Hilbert_space}.\ref{the:disintegrate_Hilbert_space_1}.
The proofs of Theorem
\ref{the:disintegrate_Hilbert_space}.\ref{the:disintegrate_Hilbert_space_2}
and
\ref{the:disintegrate_Hilbert_space}.\ref{the:disintegrate_Hilbert_space_3}
were already finished in Section~\ref{sec:improve_null} because these
statements do not assert that we may make the conull subset in~\(G^0\)
\(G\)\nb-invariant in general.
The theorem only asserts that this may be done in the saturated case,
and this has also been shown in Section~\ref{sec:third_stage}.
So we have finished the proof of
Theorem~\ref{the:disintegrate_Hilbert_space}.
We do not know whether the conull sets in~\(G^0\) in Theorem
\ref{the:disintegrate_Hilbert_space}.\ref{the:disintegrate_Hilbert_space_2}
and
\ref{the:disintegrate_Hilbert_space}.\ref{the:disintegrate_Hilbert_space_3}
can always be chosen \(G\)\nb-invariant, also for nonsaturated Fell
bundles.

\section{Twisted partial actions}
\label{sec:twisted_partial_special}

In this section, we consider a Fell bundle~\(\A\) that is
associated to a twisted partial action.
The latter consists of a continuous family of ideals
\(\Bun[D]_g\subseteq \FU_{\rg(g)}\), a continuous family of
\Star{}isomorphisms \(\act_g\colon \Bun[D]_{g^{-1}} \congto
\Bun[D]_g\) for \(g\in G\), and a continuous family of unitary
multipliers~\(w(g,h)\) of \(\act_h(\Bun[D]_g \cap \Bun[D]_{h^{-1}})\)
for \((g,h)\in G^2\), which satisfy the conditions in
Definition~\ref{def:twisted-partial-action-groupoid}.
The resulting Fell bundle is built in
Definition~\ref{def:Fell_twisted_partial_action}.
We first identify measurable Hilbert space representations of~\(\A\)
with measurable covariant representations.
Second, we prove an analogous result for representations on Hilbert
modules, which does not need separability.
This only works, however, if the action is global, not partial, and
the twist is scalar-valued.

\begin{definition}
  \label{def:measurable_covariant_representation}
  A \emph{measurable covariant representation} of a twisted partial
  action has the data in
  \ref{def:measurable_covariant_representation_d1}--\ref{def:measurable_covariant_representation_d4}
  subject to the conditions
  \ref{def:measurable_covariant_representation_1}--\ref{def:measurable_covariant_representation_4}:
  \begin{enumerate}
  \item \label{def:measurable_covariant_representation_d1}%
    \(\nu\) is a measure class on~\(G^0\);
  \item \label{def:measurable_covariant_representation_d2}%
    \(\Hils = (\Hils_x)_{x\in G^0}\) is a measurable field of Hilbert
    spaces with \(\Hils_x\neq0\) for \(\nu\)\nb-almost all
    \(x\in G^0\);
  \item \label{def:measurable_covariant_representation_d3}%
    \(\varphi_x\colon \FU_x \to \Bound(\Hils_x)\) is a measurable
    family of nondegenerate representations for all \(x\in G^0\);
  \item \label{def:measurable_covariant_representation_d4}%
    \(U_g\colon \varphi_{\s(g)}(\Bun[D]_{g^{-1}}) \Hils_{\s(g)}
    \congto \varphi_{\rg(g)}(\Bun[D]_g) \Hils_{\rg(g)}\) is a unitary
    operator for all \(g\in G\);
  \item \label{def:measurable_covariant_representation_1}%
    \(\nu\)~is quasi-invariant relative to the family of
    representations~\((\varphi_x)_{x\in G^0}\);
  \item \label{def:measurable_covariant_representation_2}%
    \(U_g \varphi_{\s(g)}(a) U_g^* = \varphi_{\rg(g)} (\act_g(a))\)
    for all \(g\in G\) and \(a\in\Bun[D]_{g^{-1}}\);
  \item \label{def:measurable_covariant_representation_3}%
    if \((g,h)\in G^2\), then~\(U_g U_h\) is a restriction of
    \(\varphi_{\rg(g)}(w(g,h)) U_{g h}\);
  \item \label{def:measurable_covariant_representation_4}%
    \((\varphi_x)_{x\in G^0}\) and \((U_g)_{g\in G}\) are measurable
    in the sense that pointwise application of \(\varphi_x(a_x)\)
    for \(a\in \SUF\) and of~\(U_g\) map measurable sections to
    measurable sections.
  \end{enumerate}
\end{definition}

\begin{remark}
  The domain of~\(U_g U_h\) is equal to
  \(\varphi_{\s(h)}(\Bun[D]_h^{-1} \cap \Bun[D]_{g h}^{-1})
  \Hils_{\s(h)}\), the intersection of the domains of \(U_{g h}\)
  and~\(U_h\).
\end{remark}

\begin{remark}
  In a measurable covariant representation, \(U_{u(x)}=1\) for all
  \(x\in G^0\).  First, \(w(u(x),u(x))=1\) implies
  \(U_{u(x)} U_{u(x)} = U_{u(x)}\).  Since~\(U_{u(x)}\) is unitary,
  this implies \(U_{u(x)}=1\).
\end{remark}

\begin{theorem}
  \label{the:covariant_rep_vs_measurable_Fell_rep}
  Measurable covariant representations of a twisted partial action
  are equivalent to measurable Fell-bundle representations of the
  associated Fell bundle.
\end{theorem}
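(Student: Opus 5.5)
The plan is to construct mutually inverse assignments between measurable covariant representations $(\nu,\Hils,\varphi,U)$ and measurable Fell-bundle representations $(\nu,\Hils,S)$ of the Fell bundle $\A$ from Definition~\ref{def:Fell_twisted_partial_action}. Both kinds of data share the measure $\nu$ and the field $\Hils$. We set $\varphi_x \defeq S_{u(x)}$. The quasi-invariance condition in Definition~\ref{def:A-quasi-invariant_relative} depends only on $(S_{u\rg(g)})$, and the support satisfies
\[
  \supp \A\A^*\,\rg^*\Hils = \setgiven{g}{\varphi_{\rg(g)}(\Bun[D]_g)\Hils_{\rg(g)}\neq0},
\]
because $\A_g\A_g^*=\Bun[D]_g$. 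Hence condition~\ref{def:measurable_covariant_representation_1} matches the quasi-invariance required in Definition~\ref{def:Fell_bundle_rep_disintegrated} verbatim. Only the translation between $U_g$ and $S_g$ then remains.

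Given $S$, we use Lemma~\ref{lem:measurable-rep-U}. It gives a unitary from $\A_g\otimes_{\FU_{\s(g)}}\Hils_{\s(g)}$ onto $\varphi_{\rg(g)}(\Bun[D]_g)\Hils_{\rg(g)}$. We identify the domain with $\varphi_{\s(g)}(\Bun[D]_{g^{-1}})\Hils_{\s(g)}$ via the following observation. As a right Hilbert $\FU_{\s(g)}$-module, $\A_g=\Bun[D]_g$ is isomorphic to the ideal $\Bun[D]_{g^{-1}}$ through $a\delta_g\mapsto \act_g^{-1}(a)$: indeed $(a\delta_g)^*(b\delta_g)=\act_g^{-1}(a^*b)$ by the computation in Proposition~\ref{pro:FellBundle-twisted-partial}. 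Then $\Bun[D]_{g^{-1}}\otimes_{\FU_{\s(g)}}\Hils_{\s(g)}\cong\varphi_{\s(g)}(\Bun[D]_{g^{-1}})\Hils_{\s(g)}$. Concretely, we define
\[
  U_g\bigl(\varphi_{\s(g)}(b)\xi\bigr)\defeq S_g(\act_g(b)\delta_g)\xi
\]
for $b\in\Bun[D]_{g^{-1}}$. Conversely, given $U$, we define
\[
  S_g(a\delta_g)\defeq \varphi_{\rg(g)}(a)\,U_g\,P_g,
\]
where $P_g$ is the projection onto $\varphi_{\s(g)}(\Bun[D]_{g^{-1}})\Hils_{\s(g)}$. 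Equivalently, $S_g(a\delta_g)\varphi_{\s(g)}(b)=U_g\varphi_{\s(g)}(\act_g^{-1}(a)b)$, and this is consistent with the covariance~\ref{def:measurable_covariant_representation_2}. The measurability conditions translate into each other directly, since all formulas are pointwise in continuous sections.

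Next we check the algebraic axioms, keeping in mind that $S_g(a\delta_g)=U_g\varphi_{\s(g)}(\act_g^{-1}(a))$ on the relevant range. The involution axiom $S_g(a)^*=S_{g^{-1}}(a^*)$ corresponds to $U_g^*=\varphi(w(g^{-1},g)^*)U_{g^{-1}}$ on the appropriate subspace, and this follows from~\ref{def:measurable_covariant_representation_3} with $h=g^{-1}$ together with $U_{u(x)}=1$ and~\eqref{eq:act_inverse_with_w}. Nondegeneracy~\ref{def:Fell_bundle_Pre-rep_3} of Definition~\ref{def:Fell_bundle_pre-rep} is the nondegeneracy of $\varphi_x$.

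The main obstacle is the equivalence between multiplicativity, $S_g(a)S_h(b)=S_{gh}(a\cdot b)$ for the product~\eqref{eq:Fell_twisted_partial_action_multiply}, and condition~\ref{def:measurable_covariant_representation_3}. The plan is to reduce, as in the proof of Proposition~\ref{pro:FellBundle-twisted-partial}, to the case $a\in\Bun[D]_g\cap\Bun[D]_{gh}$ and $b\in\Bun[D]_{g^{-1}}\cap\Bun[D]_h$, using Cohen--Hewitt factorisations. In that case $a\cdot b=a\act_g(b)w(g,h)\delta_{gh}$, and both sides evaluated on $\varphi(c)\xi$ can be expanded using covariance. We then compare $U_gU_h$ with $\varphi(w(g,h))U_{gh}$ on the subspace $\varphi_{\s(h)}(\Bun[D]_{h^{-1}}\cap\Bun[D]_{(gh)^{-1}})\Hils_{\s(h)}$, which by Lemma~\ref{lem:partial_action_domains} is exactly the domain of $U_gU_h$. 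The delicate point is bookkeeping: one must show that every vector in the domain of $U_gU_h$ arises from such products, so that the implication also runs in the converse direction. Finally we verify that the two constructions are inverse to each other and respect unitary intertwiners, which makes the correspondence an equivalence.
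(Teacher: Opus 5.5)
Your proposal is correct and follows essentially the paper's proof. It uses the same two constructions, $S_g(a)=\varphi_{\rg(g)}(a)\bar U_g$ (with $\bar U_g$ the partial isometry extending $U_g$) and $U_g(\varphi_{\s(g)}(b)\xi)=S_g(\act_g(b))\xi$, gets the involution from the cocycle condition for the pair $(g^{-1},g)$, and compares multiplicativity with $U_gU_h\subseteq\varphi_{\rg(g)}(w(g,h))U_{gh}$ on $\varphi_{\s(h)}(\Bun[D]_{h^{-1}}\cap\Bun[D]_{(gh)^{-1}})\Hils_{\s(h)}$.

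The one difference is cosmetic: your Cohen--Hewitt reduction of $a,b$ to special elements is harmless but unnecessary. The paper writes $S_g(a)=\bar U_g\,\varphi_{\s(g)}(\act_g^{-1}(a))$ for arbitrary $a$ and notes that $S_g(a)S_h(b)$ and $S_{gh}(a\cdot b)$ both vanish on the orthogonal complement of that subspace.
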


\begin{proof}
  Let \(\nu\), \(\Hils = (\Hils_x)_{x\in G^0}\),
  \(\varphi_x\colon \FU_x \to \Bound(\Hils_x)\) for \(x\in G^0\) and
  \[
    U_g\colon \varphi_{\s(g)}(\Bun[D]_{g^{-1}}) \Hils_{\s(g)} \to
    \varphi_{\rg(g)}(\Bun[D]_g) \Hils_{\rg(g)}
  \]
  for \(g\in G\) be a measurable covariant representation of the
  twisted partial action.
  Extend each~\(U_g\) to a partial isometry
  \(\bar{U}_g\colon\Hils_{\s(g)} \to \Hils_{\rg(g)}\).
  If \(a\in \A_g \defeq \Bun[D]_g\), then let
  \begin{equation}
    \label{eq:Sga_definition}
    S_g(a) \defeq \varphi_{\rg(g)}(a) \bar{U}_g
    \colon \Hils_{\s(g)} \to \Hils_{\rg(g)}.
  \end{equation}
  The maps~\(S_g(a)\) are bounded with \(\norm{S_g(a)} \le \norm{a}\)
  because~\(\bar{U}_g\) is a partial isometry and~\(\varphi_{\rg(g)}\)
  is a \Star{}homomorphism.
  In addition, the map~\(S_g\) is linear.
  We claim that \(\nu\),
  \(\Hils = (\Hils_x)_{x\in G^0}\), and
  \(S_g\colon \A_g \to \Bound(\Hils_{\s(g)}, \Hils_{\rg(g)})\) is a
  measurable Fell-bundle representation.
  First, we compute
  \begin{equation}
    \label{eq:Sga_alternative}
    S_g(a) = \varphi_{\rg(g)}(a) \bar{U}_g
    = \bar{U}_g \bar{U}_g^* \varphi_{\rg(g)}(a) \bar{U}_g
    = \bar{U}_g \varphi_{\s(g)}(\act_g^{-1}(a))
  \end{equation}
  for all \(a\in \A_g\) because these formulas hold on the subspace
  \(\varphi_{\s(g)}(\Bun[D]_{g^{-1}})(\Hils_{\s(g)})\) and all three operators
  vanish on its orthogonal complement.  Since
  \(\bar{U}_{u(x)}=\Id_{\Hils_x}\), we get
  \(S_{u(x)} = \varphi_{u(x)}\).  We claim that
  \(S_g(a)S_h(b) = S_{g h}(a\cdot b)\) for all \((g,h)\in G^2\),
  \(a\in \A_g\), \(b\in \A_h\).  We use~\eqref{eq:Sga_alternative} and
  its analogues for \(S_h(b)\) and \(S_{g h}(a\cdot b)\).
  Equation~\eqref{eq:Fell_twisted_partial_action_multiply} implies
  that
  \[
    \act_{g h}^{-1}(a\cdot b)
    \defeq
    \act_{g h}^{-1}\bigl(\act_g(\act_{g^{-1}}(a) b) w(g,h)\bigr).
  \]
  This belongs to
  \[
    \act_{g h}^{-1}(\Bun[D]_g\cap \Bun[D]_{gh})
    = \Bun[D]_{(gh)^{-1}} \cap \Bun[D]_{h^{-1}}
    \subseteq \Bun[D]_{(g h)^{-1}}.
  \]
  Then
  \(S_{g h}(a\cdot b) = U_{g h} \varphi_{\s(h)}(\act_{g h}^{-1}
  (\act_g(\act_{g^{-1}}(a) b) w(g,h)))\).  This vanishes on the
  orthogonal complement of
  \(\Hils[K] \defeq \varphi_{\s(h)}(\Bun[D]_{(gh)^{-1}} \cap
  \Bun[D]_{h^{-1}})\), and so does the composite \(S_g(a) S_h(b)\).
  Therefore, we may restrict attention to~\(\Hils[K]\).  This is
  contained in the domain of \(U_h\) and~\(U_{g h}\).  We compute
  \begin{multline*}
    S_{g h}(a\cdot b)|_{\Hils[K]}
    = \varphi_{\rg(g)}(\act_g(\act_{g^{-1}}(a) b) w(g,h))
    U_{g h}|_{\Hils[K]}
    = \varphi_{\rg(g)}(\act_g(\act_{g^{-1}}(a) b) U_g U_h|_{\Hils[K]}\\
    = U_g \varphi_{\s(g)}(\act_{g^{-1}}(a) b) U_h|_{\Hils[K]}
    = U_g \varphi_{\s(g)}(\act_{g^{-1}}(a)) \varphi_{\rg(h)}(b)
    U_h|_{\Hils[K]}
    = S_g(a) S_h(b)|_{\Hils[K]}.
  \end{multline*}
  This finishes the proof that
  \(S_g(a) S_h(b) = S_{g h}(a b)\) for all \(a\in\A_g\),
  \(b\in\A_h\).

  Next, we check \(S_{g^{-1}}(a^*) = S_g(a)^*\) for \(a\in\A_g\),
  \(g\in G\).
  Since~\(\bar{U}_g\) is a partial unitary and
  \(\bar{U}_{g^{-1}} \bar{U}_g\) is a restriction of
  \(\varphi_{\s(g)}(w(g^{-1},g)) \bar{U}_{g^{-1} g} =
  \varphi_{\s(g)}(w(g^{-1},g))\cdot \Id_{\Hils_{\s(g)}}\), it follows
  that
  \(\bar{U}_g^* = \varphi_{\s(g)}(w(g^{-1},g))^* \bar{U}_{g^{-1}}\).
  Equation~\eqref{eq:Sga_alternative} implies
  \[
    S_g(a)^*
    = \varphi_{\s(g)}(\act_g^{-1}(a))^* \bar{U}_g^*
    = \varphi_{\rg(g^{-1})}(\act_g^{-1}(a^*) w(g^{-1},g)^*)
    \bar{U}_{g^{-1}}.
  \]
  This is equal to \(S_{g^{-1}}(a^*)\) because \(a^* \in \A_{g^{-1}}\)
  is equal to \(\act_g^{-1}(a^*)\cdot w(g^{-1},g)^*\)
  by~\eqref{eq:Fell_twisted_partial_action_involution}.

  The nondegeneracy of the representations~\(\varphi_x\) and
  Condition~\ref{def:measurable_covariant_representation_4} in
  Definition~\ref{def:measurable_covariant_representation} imply
  Conditions \ref{def:Fell_bundle_Pre-rep_3} and
  \ref{def:Fell_bundle_Pre-rep_4} in
  Definition~\ref{def:Fell_bundle_pre-rep}.  So a
  measurable covariant representation induces a measurable Fell-bundle
  representation.

  Conversely, take a measurable Fell-bundle representation
  \((\nu,\Hils,\varphi,S)\) of the Fell bundle associated to the twisted
  partial action.  Fix \(g\in G\).  Consider the linear map
  \begin{equation}
    \label{eq:def_Ug_from_S}
    \Bun[D]_{g^{-1}} \odot \Hils_{\s(g)} \to
    \varphi_{\rg(g)}(\Bun[D]_g)\Hils_{\rg(g)},\qquad
    a\otimes \xi \mapsto S_g(\act_g(a))\,\xi.
  \end{equation}
  We claim that this map preserves the inner product on
  \(\Bun[D]_{g^{-1}} \odot \Hils_{\s(g)}\) and therefore extends to an
  isometry
  \[
    U_g\colon
    \Bun[D]_{g^{-1}} \otimes_{\varphi_{\s(g)}} \Hils_{\s(g)}
    \to \varphi_{\rg(g)}(\Bun[D]_g)\Hils_{\rg(g)}.
  \]

  Let \(a_1,a_2\in \Bun[D]_{g^{-1}}\) and
  \(\xi_1,\xi_2\in \Hils_{\s(g)}\).  Using the defining properties of
  a Fell-bundle representation, we compute
  \begin{align*}
    \braket{S_g(\act_g(a_1))\xi_1}{S_g(\act_g(a_2))\xi_2}
    &= \braket{\xi_1}{
        S_g(\act_g(a_1))^*\,S_g(\act_g(a_2))\,\xi_2} \\
    &= \braket{\xi_1}{
        S_{u(\s(g))}\bigl(\act_g(a_1)^*\cdot \act_g(a_2)\bigr)\,\xi_2}.
  \end{align*}
  In the Fell bundle associated to a twisted partial action, the product
  \(\act_g(a_1)^*\cdot \act_g(a_2)\in \A_{g^{-1}g}=\FU_{\s(g)}\) is
  precisely \(a_1^*a_2\); this is the \(\FU_{\s(g)}\)-valued inner
  product on the imprimitivity bimodule \(\A_g=\Bun[D]_g\).
  Hence
  \[
    \braket{S_g(\act_g(a_1))\xi_1}{S_g(\act_g(a_2))\xi_2}
    = \braket{\xi_1}{\varphi_{\s(g)}(a_1^*a_2)\,\xi_2},
  \]
  which is exactly the inner product defining the Hilbert space
  \(\Bun[D]_{g^{-1}} \otimes_{\varphi_{\s(g)}} \Hils_{\s(g)}\).

  Therefore, \eqref{eq:def_Ug_from_S} induces an isometry \(U_g\) as
  claimed.  Its range is dense in
  \(\varphi_{\rg(g)}(\Bun[D]_g)\Hils_{\rg(g)}\) by nondegeneracy, and
  hence \(U_g\) is unitary.

  By construction, \(U_g \varphi_{\s(g)}(\act_g^{-1}(a)) = S_g(a)\)
  for all \(a\in \Bun[D]_g\).  Then
  \begin{align*}
    \varphi_{\rg(g)}(a_1) U_g(\varphi_{\s(g)}(a_2)\xi)
    &= \varphi_{\rg(g)}(a_1) S_g(\act_g(a_2))\xi
    = S_g(a_1 \act_g(a_2))\xi
      \\&= U_g(\varphi_{\s(g)}(\act_g^{-1}(a_1)) \varphi_{\s(g)}(a_2)\xi)
  \end{align*}
  for all \(a_1\in \Bun[D]_{g}\), \(a_2\in \Bun[D]_{g^{-1}}\),
  \(\xi\in \Hils_{\s(g)}\).
  Let~\(a_2\) run through an approximate unit
  for~\(\Bun[D]_{g^{-1}}\).  Then we get
  \(\varphi_{\rg(g)}(a) U_g = U_g\varphi_{\s(g)}(\act_g^{-1}(a))\) for
  all \(a\in \Bun[D]_g\), as operators
  \(\varphi_{\s(g)}(\Bun[D]_{g^{-1}}) \Hils_{\s(g)} \to
  \varphi_{\rg(g)}(\Bun[D]_g) \Hils_{\rg(g)}\).  This is equivalent to
  \ref{def:measurable_covariant_representation}.\ref{def:measurable_covariant_representation_2}.
  The measurability of~\(U_g\) is equivalent to the measurability of
  the family~\(S_g\).  Condition
  \ref{def:measurable_covariant_representation}.\ref{def:measurable_covariant_representation_3}
  follows from the multiplicativity property of the maps~\(S_g\).  So
  \((\nu,\Hils,\varphi,U)\) is a measurable covariant representation
  of the Fell bundle.
\end{proof}

\begin{proposition}
  \label{pro:measurable_covariant_representation}
  Assume that our Fell bundle comes from a global twisted action.
  Then a measurable covariant representation has the following data:
  \begin{enumerate}
  \item a measure class~\(\nu\) on~\(G^0\);
  \item a measurable field of Hilbert spaces
    \(\Hils = (\Hils_x)_{x\in G^0}\) with \(\Hils_x\neq0\) for
    \(\nu\)\nb-almost all \(x\in G^0\);
  \item a measurable family of nondegenerate representations
    \(\varphi_x\colon \FU_x \to \Bound(\Hils_x)\) for all \(x\in G^0\);
  \item unitary operators
    \(U_g\colon \Hils_{\s(g)} \congto \Hils_{\rg(g)}\) for all
    \(g\in G\);
  \end{enumerate}
  this is subject to the following conditions:
  \begin{enumerate}[resume]
  \item \label{pro:measurable_covariant_representation_1}%
    \(\nu\)~is quasi-invariant;
  \item \label{pro:measurable_covariant_representation_2}%
    \(U_g \varphi_{\s(g)}(a) U_g^* = \varphi_{\rg(g)} (\act_g(a))\)
    for all \(g\in G\);
  \item \label{pro:measurable_covariant_representation_3}%
    if \((g,h)\in G^2\), then
    \(U_g U_h= \varphi_{\rg(g)}(w(g,h)) U_{g h}\);
  \item \label{pro:measurable_covariant_representation_4}%
    \((\varphi_x)_{x\in G^0}\) and \((U_g)_{g\in G}\) are measurable
    in the sense that pointwise application of \(\varphi_x(a_x)\)
    for \(a\in \SUF\) and of~\(U_g\) map measurable sections to
    measurable sections.
  \end{enumerate}
\end{proposition}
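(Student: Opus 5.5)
This is a specialisation of Definition~\ref{def:measurable_covariant_representation} to the case \(\Bun[D]_g = \FU_{\rg(g)}\) for all \(g\in G\). I would go through the four data items and four conditions one at a time and simplify each using this hypothesis.

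\emph{Data.} Each \(\varphi_x\) is nondegenerate, so
\[
  \varphi_{\s(g)}(\Bun[D]_{g^{-1}})\Hils_{\s(g)} = \varphi_{\s(g)}(\FU_{\s(g)})\Hils_{\s(g)} = \Hils_{\s(g)},
\]
and similarly \(\varphi_{\rg(g)}(\Bun[D]_g)\Hils_{\rg(g)}=\Hils_{\rg(g)}\). Hence the domain and codomain of~\(U_g\) in Definition~\ref{def:measurable_covariant_representation} are the full fibres. So item~(d4) becomes a unitary \(\Hils_{\s(g)}\congto\Hils_{\rg(g)}\), and items (d1)--(d3) are unchanged.

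\emph{Quasi-invariance.} Condition~\ref{def:measurable_covariant_representation_1} should reduce to genuine quasi-invariance.
\begin{itemize}
\item The twisted-action Fell bundle is saturated: \(\A_g\A_g^* = \Bun[D]_g = \FU_{\rg(g)}\), and Lemma~\ref{lem:equality_of_products} applies.
\item Therefore \(\supp \A\A^*\,\rg^*\Hils = \setgiven{g}{\Hils_{\rg(g)}\neq0}\). Its complement is \(\rg^{-1}(N)\) with~\(N\) a \(\nu\)\nb-null set, so it is \(\nu\circ\alpha\)-null.
\item Restricting \(\nu\circ\alpha\) to this support thus does not change its measure class. So relative quasi-invariance in Definition~\ref{def:A-quasi-invariant_relative} is equivalent to symmetry of \(\nu\circ\alpha\), which is quasi-invariance in the sense of Definition~\ref{def:A-quasi-invariant}.
\end{itemize}

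\emph{Covariance and measurability.} Condition~\ref{def:measurable_covariant_representation_2} is literally unchanged, now holding for all \(a\in\FU_{\s(g)}\). Condition~\ref{def:measurable_covariant_representation_4} is also unchanged.

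\emph{Cocycle condition.} Condition~\ref{def:measurable_covariant_representation_3} says that \(U_gU_h\) is a restriction of \(\varphi_{\rg(g)}(w(g,h))U_{gh}\). Here \(w(g,h)\) is a unitary multiplier of \(\FU_{\rg(g)}\), and \(\varphi_{\rg(g)}\) extends to the multiplier algebra by nondegeneracy. The domain of \(U_gU_h\) is
\[
  \varphi_{\s(h)}(\Bun[D]_{h^{-1}}\cap\Bun[D]_{(gh)^{-1}})\Hils_{\s(h)} = \Hils_{\s(h)},
\]
which is the whole space. A restriction with full domain is an equality, so the condition becomes \(U_gU_h=\varphi_{\rg(g)}(w(g,h))U_{gh}\).

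The main subtlety is the quasi-invariance step: one must use saturation to see that the support in Definition~\ref{def:A-quasi-invariant_relative} is \(\nu\circ\alpha\)-conull, rather than merely assuming it. Once that is checked, the remaining steps are straightforward bookkeeping.
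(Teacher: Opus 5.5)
Your proposal is correct and follows the same route as the paper. Global means \(\Bun[D]_g=\FU_{\rg(g)}\), so the bundle is saturated and relative quasi-invariance reduces to ordinary quasi-invariance; nondegeneracy of the \(\varphi_x\) makes the domains and codomains of the \(U_g\) the full fibres. You also supply details the paper leaves implicit: the complement of \(\supp\A\A^*\,\rg^*\Hils\) is the \(\nu\circ\alpha\)-null set \(\rg^{-1}(N)\), and the ``restriction'' in the cocycle condition becomes an equality because \(U_gU_h\) is defined on all of \(\Hils_{\s(h)}\).
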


\begin{proof}
  The assumption that the twisted action is global means that
  \(\Bun[D]_g = \FU_{\rg(g)}\) for all \(g\in G\) and therefore the
  corresponding Fell bundle is saturated.  Therefore, the
  measure~\(\nu\) is quasi-invariant in the usual, absolute sense.  In
  addition, the unitaries~\(U_g\) in a covariant representation are
  unitaries \(\Hils_{\s(g)}\congto \Hils_{\rg(g)}\) because
  \(\varphi_{\rg(x)}(\Bun[D]_g)\Hils_{\rg(g)} = \Hils_{\rg(g)}\) for
  all \(g\in G\).  This simplifies the data in
  Definition~\ref{def:measurable_covariant_representation} to the data
  listed in this proposition.
\end{proof}

If the twist is trivial,
then~\ref{pro:measurable_covariant_representation_3} in
Proposition~\ref{pro:measurable_covariant_representation} simplifies
to \(U_g U_h= U_{g h}\), so that \(g\mapsto U_g\) becomes a groupoid
representation.

In general, we do not know how to define ``covariant representations''
of a twisted partial action on a Hilbert module because we cannot
disintegrate~\(U\) to a family of fibrewise unitaries~\(U_g\).  We can
only define covariant representations if the action is global, that
is, \(\Bun[D]_g = \FU_{\rg(g)}\) for all \(g\in G\), and the twist is
scalar-valued, that is,
\(w(g,h) \in \U(\C) \subseteq \U(\FU_{\rg(g)})\) for all
\((g,h)\in G^2\).  In this case, the action satisfies
\(\act_g \act_h = \act_{g h}\) for all \((g,h)\in G^2\), and the twist
together with the trivial action defines a twisted action on
\(\Cont_0(G^0)\).  We turn this into a Fell line bundle~\(L\)
over~\(G\).

\begin{definition}
  Let~\(\FU\) be an upper semicontinuous field of \(\Cst\)\nb-algebras
  over~\(G^0\), let \(\act\colon \s^* \FU \to \rg^*\FU\) be a
  continuous action of~\(G\) on~\(\FU\), and let
  \(w\colon G^2 \to \U(\C)\) be a scalar-valued twist.  Let~\(\A\) be
  the Fell bundle over~\(G\) defined by \((\FU,\act,w)\) and let~\(L\)
  be the Fell line bundle defined by \(\Cont_0(G^0)\) with the
  canonical action of~\(G\) and the twist~\(w\).  Let~\(\F\) be a
  Hilbert module over some \(\Cst\)\nb-algebra~\(D\).  Let
  \(\varphi\colon \Cont_0(G^0, \FU) \to \Bound(\F)\) be a
  nondegenerate \Star{}homomorphism and let
  \(\varphi'\colon \Cont_0(G^0) \to \Bound(\F)\) be the restriction to
  scalars of the extension of~\(\varphi\) to the multiplier algebra of
  \(\Cont_0(G^0,\FU)\).  Represent
  \(\Cont_0(G) \odot \Cont_0(G^0,\FU) \ni f\otimes a\) on
  \(\Lt^2(G,\s,\tilde\alpha) \otimes_{\varphi'} \F\) by
  \(M_f \otimes \varphi(a)\); this extends uniquely to a nondegenerate
  \Star{}homomorphism
  \(\varphi_{\s}\colon \Cont_0(G, \s^*\FU) \to
  \Bound(\Lt^2(G,\s,\tilde\alpha) \otimes_{\varphi'} \F)\).  Define
  another nondegenerate \Star{}homomorphism
  \(\varphi_{\rg}\colon \Cont_0(G, \rg^*\FU) \to
  \Bound(\Lt^2(G,\rg,\alpha) \otimes_{\varphi'} \F)\) in a similar
  way.  Let
  \(U\colon \Lt^2(G,\s,\tilde\alpha) \otimes_{\varphi'} \F \to
  \Lt^2(G,\rg,\alpha) \otimes_{\varphi'} \F\) be a unitary.  The pair
  \((\varphi,U)\) is called \emph{covariant representation} of
  \((\FU,\act,w)\) on~\(\F\) if \((\varphi',U)\) is a Fell-bundle
  representation of~\(L\) and
  \[
    U \varphi_{\s}(a) U^* = \varphi_{\rg}(\act(a))
  \]
  for all \(a\in \Cont_0(G,\s^*\FU)\).
\end{definition}

\begin{theorem}
  There is a natural bijection between Fell-bundle representations
  of~\(\A\) on~\(\F\) and covariant pairs of representations of
  \((\FU,\act,w)\) on~\(\F\).
\end{theorem}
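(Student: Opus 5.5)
The plan is to compare the correspondences on both sides by concrete identifications. Because the twist is scalar, the Fell bundle has the form \(\A = \rg^*\FU\). Its multiplication is \((a\delta_g)(b\delta_h) = a\,\act_g(b)\,w(g,h)\delta_{gh}\), and \(\A\A^* = \rg^*\FU\). Similarly \(L = G\times\C\) with multiplication \(w(g,h)\), and \(LL^* = G\times\C\).

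\emph{Identifying the domain and codomain of \(U\).} I first produce canonical unitaries
\[
  \Lt^2(G,\A,\s,\tilde\alpha)\otimes_\varphi\F \cong \Lt^2(G,\s,\tilde\alpha)\otimes_{\varphi'}\F,
  \qquad
  \Lt^2(G,\A\A^*,\rg,\alpha)\otimes_\varphi\F \cong \Lt^2(G,\rg,\alpha)\otimes_{\varphi'}\F .
\]
For the second, \(\A\A^* = \rg^*\FU\), and sections \(f\otimes a\circ\rg\) tensored with \(\xi\) go to \(f\otimes\varphi(a)\xi\). This is isometric by the inner product formulas and has dense range by nondegeneracy. For the first, the fibre \(\A_g\) as a right \(\FU_{\s(g)}\)-module is \(\FU_{\s(g)}\) via \(a\delta_g\mapsto \act_g^{-1}(a)\). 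This gives an isomorphism of fields \(\A\cong\s^*\FU\) of right Hilbert modules, continuous because \(\act\) is, and then the same argument applies. Under these identifications the left action of \(\Cont_0(G,\rg^*\FU)\) on the \(\s\)-side becomes \(\varphi_\s\circ\act^{-1}\), and on the \(\rg\)-side it becomes \(\varphi_\rg\). Consequently a unitary \(U\) intertwines the \(\Cont_0(G,\A\A^*)\)-actions, as required in Definition~\ref{def:representation_Fell}, exactly when \(U\varphi_\s(a)U^*=\varphi_\rg(\act(a))\). Lemma~\ref{lem:nondegenerate_over_AA-star} lets me use \(\rg^*\FU\) instead of \(\A\A^*\). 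On the other hand, intertwining \(\Cont_0(G)\) is precisely what the \(L\)-condition demands, and that is implied by the \(\FU\)-covariance.

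\emph{Matching the cocycle conditions.} It then remains to show that the coherence \(d_2^*(U)d_0^*(U)=d_1^*(U)\) for \(\A\) corresponds to the same coherence for \(L\). Since \(\A\) is saturated, \(\Bun[I]=v_0^*\FU\). The identifications above extend to the three \(G^2\)-correspondences: \(\E_k(\A)\otimes_\varphi\F\) is identified with \(\E_k(L)\otimes_{\varphi'}\F\), using \(\act\) on the \(\A\)-factors. I would check that the multiplication isomorphism \(\mu\colon d_2^*\A\otimes d_0^*\A\to d_1^*\A\) in Figure~\ref{fig:U_coherence} corresponds to multiplication by \(w\) in \(L\) tensored with identity. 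This holds because \(\act_g\act_h=\act_{gh}\) and the twist is central and scalar. With that, the pasting diagrams for \(\A\) and for \(L\) agree under the identifications, so one condition holds if and only if the other does. This step is the main obstacle: the canonical isomorphisms \(\gamma\) of Lemma~\ref{lem:compose_corr_from_measure-family} have to be tracked through all the vertices. I would verify it on elementary sections \(f\otimes a\otimes\xi\) with \(f\in\Contc(G^2)\), where both sides reduce to the same formula involving \(w(g,h)\) and \(\act_g\).

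\emph{Naturality.} Under the identifications, the correspondence between Fell-bundle representations and covariant pairs is \((\varphi,U)\leftrightarrow(\varphi,U)\). It is therefore bijective, and it is natural for isometric intertwiners and for induction along correspondences \(\F\mapsto\F\otimes_D\E\), since all the identifications used are natural in \(\F\).
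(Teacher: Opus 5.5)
Your proposal is correct and follows essentially the same route as the paper. You identify \(\A\) with \(\s^*\FU\) as a field of right Hilbert modules, with \(\rg^*\FU\) acting through \(\act^{-1}\), so that the domain and codomain of \(U\) match those for the line bundle \(L\). You then observe that intertwining the \(\Cont_0(G,\A\A^*)\)-actions is exactly the covariance condition. Finally, you match the coherence conditions because the Fell-bundle multiplication becomes multiplication by the same scalar twist \(w\). Your version is also slightly more explicit than the paper's in two places: you note that covariance already yields the \(\Cont_0(G)\)-intertwining needed for \(L\), and you check via \(\act_g\act_h=\act_{gh}\) that the multiplication map \(\mu\) corresponds to multiplication by \(w(g,h)\).
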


\begin{proof}
  The relevant Fell bundle~\(\A\) is isomorphic to~\(\s^*\FU\) as a
  right Hilbert \(\s^*\FU\)-module, with the left action
  of~\(\rg^*\FU\) through the isomorphism
  \(\act^{-1}\colon \rg^*\FU \congto \s^*\FU\).  Therefore,
  \[
    \Lt^2(G,\s,\A,\tilde\alpha)
    \cong \Lt^2(G,\s,\s^*\FU,\tilde\alpha)
    \cong \Lt^2(G,\s,\tilde\alpha) \otimes_{\Cont_0(G^0)} \Cont_0(G^0,\FU)
  \]
  by Lemma~\ref{lem:compose_corr_from_measure-family}.  Similarly,
  \[
    \Lt^2(G,\rg,\A\A^*,\alpha)
    \cong \Lt^2(G,\rg,\rg^*\FU,\alpha)
    \cong \Lt^2(G,\rg,\alpha) \otimes_{\Cont_0(G^0)} \Cont_0(G^0,\FU).
  \]
  Therefore, there are canonical isomorphisms
  \begin{align*}
    \Lt^2(G,\s,\A,\tilde\alpha)\otimes_{\Cont_0(G^0,\FU)} \F
    &\cong \Lt^2(G,\s,\tilde\alpha)\otimes_{\Cont_0(G^0)} \F,\\
    \Lt^2(G,\rg,\A\A^*,\alpha)\otimes_{\Cont_0(G^0,\FU)} \F
    &\cong \Lt^2(G,\rg,\alpha)\otimes_{\Cont_0(G^0)} \F.
  \end{align*}
  The two Hilbert modules on the left are the domain and codomain of a
  Fell-bundle representation of~\(\A\), whereas those on the right are
  the domain and codomain for a representation of the Fell line
  bundle~\(L\).  For~\(U\) to give a Fell-bundle representation
  of~\(\A\), it must intertwine the canonical left actions of
  \(\Cont_0(G,\A\A^*) = \Cont_0(G,\rg^* \FU)\) on the two left hand
  sides.  A quick inspection shows that these representations of
  \(\Cont_0(G,\rg^* \FU)\) correspond to pointwise multiplication on
  \(\Lt^2(G,\rg,\alpha) \otimes_{\Cont_0(G^0)} \Cont_0(G^0,\FU)\) and
  pointwise multiplication composed with~\(\act^{-1}\) on
  \(\Lt^2(G,\s,\tilde\alpha) \otimes_{\Cont_0(G^0)}
  \Cont_0(G^0,\FU)\).  The covariance condition for~\(U\) in the
  definition of a covariant representation says exactly that~\(U\)
  intertwines these two left actions of \(\Cont_0(G^0,\FU)\).  To be a
  representation, we also need the equation
  \(d_1^*(U) = d_2^*(U) d_0^*(U)\).  Once again, the Hilbert modules
  on which these equations hold for representations of \(\A\)
  and~\(L\) are identified in a canonical way.  Here we use that the
  twist is scalar-valued and is the same for \(\A\) and~\(L\).  This
  is why the coherence conditions \(d_1^*(U) = d_2^*(U) d_0^*(U)\) for
  the two types of Fell-bundle representations are also equivalent to
  each other.
\end{proof}

\section{Fell line bundles as twisted groupoids}
\label{sec:Fell_line}

Next, we consider representations of a twisted groupoid
\(\Cst\)\nb-algebra.  We describe a twist of a groupoid~\(G\)
through a Fell line bundle \(L=(L_g)_{g\in G}\) over~\(G\).  This is
equivalent to another description through a circle extension of~\(G\).
Namely, let \(\Sigma\subseteq L\) be the subset of unitaries in the
Fell line bundle.  If \(y\in L_g\) for some \(g\in G\), then the
conditions \(y^* y = 1\) in \(L_{\s(g)} \cong \C\) and \(y y^* = 1\)
in \(L_{\rg(g)} \cong \C\) are equivalent.  Therefore,
\[
  \Sigma = \setgiven[\bigg]{y\in \bigsqcup_{g\in G} L_g}
  {y^* y = 1 \in L_{\s(g)} \cong \C}.
\]
The subset~\(\Sigma\) with the multiplication and the topology of the
Fell bundle and the range and source maps \(\s(y) = \s(g)\) and
\(\rg(y) = \rg(g)\) for \(y\in L_g\) becomes a topological groupoid.
The circle group~\(\T\) acts freely on~\(\Sigma\) through the scalar
multiplication in the Fell bundle, such that \(\Sigma/\T \cong G\).

\begin{theorem}
  \label{the:measurable_rep_Fell_line}
  Let~\(L\) be a Fell line bundle over~\(G\).  There is a bijection
  between measurable representations of \(L\) and \(\T\)\nb-equivariant
  measurable representations of~\(\Sigma\).
\end{theorem}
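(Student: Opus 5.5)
I would first fix precisely what the two sides of the bijection are. A measurable representation of~\(L\) is a measurable Fell-bundle representation \((\nu,\Hils,S)\) in the sense of Definition~\ref{def:Fell_bundle_rep_disintegrated}. A \(\T\)\nb-equivariant measurable representation of~\(\Sigma\) is a triple \((\nu,\Hils,V)\) consisting of a measure class~\(\nu\) on \(G^0=\Sigma^0\), a measurable field of Hilbert spaces~\(\Hils\) over~\(G^0\), and unitaries \(V_y\colon \Hils_{\s(y)}\to\Hils_{\rg(y)}\) for \(y\in\Sigma\). These unitaries must satisfy \(V_yV_z=V_{yz}\), \(V_{y^*}=V_y^*\) and \(V_{\lambda y}=\lambda V_y\) for \(\lambda\in\T\), together with measurability. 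The measure~\(\nu\) should be quasi-invariant for~\(G\), equivalently for~\(\Sigma\) with its Haar system lifted along the \(\T\)\nb-bundle.

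Since \(L\) is a line bundle, each \(L_g\) is a one-dimensional Hilbert \(\C\)-\(\C\)-bimodule with \(L_gL_g^*=\C=L_{\rg(g)}\). So \(L\) is saturated, and by Lemma~\ref{lem:equality_of_products} and the convention \(\Hils_x\neq0\) almost everywhere, the support \(\supp L L^*\,\rg^*\Hils\) is conull. Relative quasi-invariance in Definition~\ref{def:A-quasi-invariant_relative} therefore reduces to ordinary quasi-invariance of~\(\nu\). Also \(S_{u(x)}\colon\C\to\Bound(\Hils_x)\) is forced to be \(\lambda\mapsto\lambda\Id\) by nondegeneracy, Definition~\ref{def:Fell_bundle_pre-rep}\ref{def:Fell_bundle_Pre-rep_3}.

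The construction in both directions is then algebraic. Given~\(S\), I would set \(V_y\defeq S_g(y)\) for \(y\in\Sigma\cap L_g\). Multiplicativity and \(S_g(a)^*=S_{g^{-1}}(a^*)\) give \(V_y^*V_y=S_{u\s(g)}(y^*y)=\Id\) and \(V_yV_y^*=\Id\), so \(V_y\) is unitary. The same two properties give that \(V\) is a homomorphism compatible with adjoints, and linearity of \(S_g\) gives \(\T\)\nb-equivariance. Conversely, given~\(V\), every \(a\in L_g\) is \(a=\lambda y\) with \(y\in\Sigma\cap L_g\) and \(\lambda\in\C\), and \(y\) is unique up to~\(\T\). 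I would define \(S_g(\lambda y)\defeq\lambda V_y\). This is well defined by equivariance, linear because \(L_g\) is one-dimensional, and multiplicative and \(*\)\nb-compatible by the corresponding properties of~\(V\). The two constructions are clearly inverse to each other.

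The step I expect to be the main obstacle is matching the measurability conditions. Definition~\ref{def:Fell_bundle_pre-rep}\ref{def:Fell_bundle_Pre-rep_4} is phrased with continuous sections \(f\in\Contc(G,L)\), whereas measurability of~\(V\) is phrased over~\(\Sigma\). I would first use local continuous sections: since \(\Sigma\to G\) is a principal \(\T\)-bundle, there are local continuous sections \(\sigma\colon W\to\Sigma\) over open sets \(W\) covering~\(G\), with \(\sigma(g)\in\Sigma\cap L_g\), and countably many suffice by second countability. Every \(f\in\Contc(G,L)\) restricted to~\(W\) has the form \(f(g)=c(g)\sigma(g)\) with \(c\) continuous. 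Hence \(g\mapsto S_g(f(g))\xi(g)\) is measurable if and only if \(g\mapsto V_{\sigma(g)}\xi(g)\) is measurable on each~\(W\). The latter is equivalent to measurability of \(y\mapsto V_y\xi(\s(y))\) on~\(\Sigma\), using \(V_{\lambda\sigma(g)}=\lambda V_{\sigma(g)}\) and the local trivialisation \(W\times\T\cong\Sigma|_W\). Finally I would check that the induced Hilbert space representations agree. Integrating over~\(\Sigma\) with the lifted Haar system, the \(\T\)-equivariance kills all non-homogeneous components, so it reproduces the integrated form of Section~\ref{sec:original-draft}, with the same modular function~\(\Delta\) pulled back along \(\Sigma\to G\).
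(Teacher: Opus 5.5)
Your proposal is correct and follows the same route as the paper: put \(V_y \defeq S_g(y)\) for unitary \(y\in\Sigma\cap L_g\), get unitarity and \(\T\)-equivariance of \(V\) from multiplicativity, the involution and linearity of \(S_g\), and go back by extending linearly along the one-dimensional fibres. You also spell out points the paper only asserts (saturation reducing relative quasi-invariance to ordinary quasi-invariance, well-definedness of the inverse map, and matching the measurability conditions via local sections of \(\Sigma\to G\), which exist in the intended setting of a twist), while your final comparison of integrated forms is not needed for this statement.
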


\begin{proof}
  Consider a measurable representation
  \((\nu, (\Hils_x)_{x\in G^0}, (S_g)_{g\in G})\) of the Fell line
  bundle~\(L\).  By definition, each~\(S_g\) is a bounded linear map
  \(L_g \to \Bound(\Hils_{\s(g)},\Hils_{\rg(g)})\).  Since
  \(L_g \cong \C\), such a map is determined by its restriction to the
  unitary elements in~\(L_g\).  Define \(U_y\defeq S_g(y)\) for
  \(y\in \Sigma \cap L_g\), \(g\in G\).  Since~\(y\) is unitary,
  \(S_g(y) S_g(y)^* = S_g(y)S_{g^{-1}}(y^*) = S_{\rg(g)}(y y^*) =
  \Id_{\Hils_{\rg(g)}}\) and, similarly,
  \(S_g(y)^* S_g(y) = \Id_{\Hils_{\s(g)}}\).  As a result, we get
  unitaries \(U_y\colon \Hils_{\s(y)} \to \Hils_{\rg(y)}\) for all
  \(y\in\Sigma\).  These unitaries, together with the same \(\nu\)
  and~\(\Hils\), form a measurable representation of the
  groupoid~\(\Sigma\); it is easy to see that~\(\nu\) is
  \(G\)\nb-quasi-invariant as a measure class on~\(G^0\) if and only
  if it is \(\Sigma\)\nb-quasi-invariant.  In addition, the
  unitaries~\(U_y\) satisfy the \(\T\)\nb-equivariance condition
  \(U_{z\cdot y} = z\cdot U_y\) for all \(z\in\T\), \(y\in\Sigma\).
  Conversely, a measurable representation of~\(\Sigma\) that is
  \(\T\)\nb-equivariant in this sense must come from a measurable
  representation of the Fell line bundle~\(L\).
\end{proof}

We are going to improve this statement to an isomorphism of
\(\Cst\)\nb-algebras.  This uses a circle action on \(\Cst(\Sigma)\).
For \(z\in\T\), define a linear map on the \Star{}algebra
\(\Contc(\Sigma)\) by \((\mu_z f)(x) \defeq f(z^{-1} x)\).

\begin{lemma}
  \label{lem:muz_on_Sigma}
  Each~\(\mu_z\) is a central unitary multiplier
  of~\(\Contc(\Sigma)\) and extends to a central unitary
  multiplier of~\(\Cst(\Sigma)\).  The map \(z\mapsto\mu_z\) is
  strictly continuous.
\end{lemma}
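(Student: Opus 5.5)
I would first verify the claims on \(\Contc(\Sigma)\) by direct computation with the convolution and involution on the groupoid \(\Sigma\). Its Haar system is induced from~\(\alpha\) by integrating over the free \(\T\)-orbits with normalised Haar measure on~\(\T\), so it is \(\T\)-invariant.

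I would show that \(\mu_z(f*h) = (\mu_z f)*h = f*(\mu_z h)\) and \((\mu_z f)^* = \mu_{\bar z} f^*\). The key points are that \(\T\) acts centrally, so \(z^{-1}(xy) = (z^{-1}x)y = x(z^{-1}y)\) in~\(\Sigma\), that \((zx)^{-1} = \bar z x^{-1}\), and that the Haar system is \(\T\)-invariant. Thus in \((f*h)(z^{-1}y) = \int f(x)h(x^{-1}z^{-1}y)\), the substitution \(x\mapsto z^{-1}x\) moves the~\(z\) onto either factor. Together with \(\mu_z\mu_w = \mu_{zw}\) and \(\mu_1 = \Id\), this makes \(\mu_z\) a multiplier \((\mu_z,\mu_z)\) of the \Star{}algebra \(\Contc(\Sigma)\) that commutes with left and right multiplications. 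Its adjoint is \(\mu_{\bar z} = \mu_z^{-1}\), so it is unitary and central.

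To extend to \(\Cst(\Sigma)\), I would note that \(\mu_z\) is isometric for the \(I\)-norm, because the Haar system is \(\T\)-invariant. It is also continuous in the inductive limit topology. Hence, by the universal property (Corollary~\ref{cor:continuity-rep} applied to \(\Sigma\) with the trivial line bundle), the automorphism-like map \(f\mapsto\mu_z f\) is bounded for the full norm. Since \(\mu_z\) commutes with left multiplication, \(\norm{\mu_z(f)}\le\norm{f}\) for the full norm, because \(f\mapsto \mu_z f\) is a \(\Contc(\Sigma)\)-module map isometric for the \(\Contc(\Sigma)\)-valued inner product: \((\mu_z f)^**(\mu_z f) = f^* * \mu_{\bar z}\mu_z f = f^**f\). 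Therefore \(\mu_z\) extends to an isometric right module map on \(\Cst(\Sigma)\) with adjoint \(\mu_{\bar z}\), that is, to a unitary multiplier. Centrality passes to the closure by density.

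For strict continuity, it suffices to show that \(z\mapsto \mu_z f\) is norm continuous for \(f\in\Contc(\Sigma)\), because the \(\mu_z\) are uniformly bounded and \(\Contc(\Sigma)\) is dense. For fixed~\(f\), the functions \(\mu_z f\) have supports in the compact set \(\T\cdot\supp f\), and \(\mu_z f\to\mu_{z_0}f\) uniformly by uniform continuity of~\(f\) and continuity of the \(\T\)-action. So they converge in the inductive limit topology, hence in \(I\)-norm, hence in \(\Cst(\Sigma)\). Continuity of \(z\mapsto \mu_z b\) and \(b\mapsto b\mu_z\) then follows by a standard \(\varepsilon/3\) argument.

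The main obstacle is the bookkeeping for the Haar system on~\(\Sigma\) and its \(\T\)-invariance. Once that is checked, everything reduces to the substitutions above.
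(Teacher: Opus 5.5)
Your proposal is correct and follows the paper's proof. Centrality comes from \(z^{-1}(xy)=(z^{-1}x)y=x(z^{-1}y)\), isometry in every \(\Cst\)\nb-norm from \((\mu_z f)^**(\mu_z f)=f^**f\), and strict continuity from continuity of \(z\mapsto\mu_z f\) in the inductive limit topology.

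Two small remarks. Your appeal to Corollary~\ref{cor:continuity-rep} does not apply, because \(\mu_z\) is not a \Star{}homomorphism; it is redundant, though, since the \(\Cst\)\nb-identity argument that follows already gives the extension. Also, the \(\T\)\nb-invariance of the Haar system that you flag as the main obstacle is automatic for any left Haar system on \(\Sigma\). The reason is that \(z\cdot x=(z\cdot 1_{\rg(x)})\cdot x\) is left multiplication by an arrow of \(\Sigma\) in the isotropy at \(\rg(x)\).
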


\begin{proof}
  We compute
  \[
    \mu_z (f_1 * f_2) = \mu_z(f_1) * f_2 = f_1 * \mu_z(f_2)
  \]
  because
  \(z^{-1} \cdot (x\cdot y) = (z^{-1}\cdot x)\cdot y = x\cdot
  (z^{-1} \cdot y)\) for all \(z\in\T\), \(x,y\in \Sigma\) with
  \(\s(x) = \rg(y)\).  This says that~\(\mu_z\) is a central unitary
  multiplier of~\(\Contc(\Sigma)\).  The map
  \(\mu_z\colon \Contc(\Sigma) \to \Contc(\Sigma)\) is bounded in
  the \(I\)\nb-norm and continuous in the inductive limit topology.
  If \(\norm{\blank}\colon \Contc(\Sigma) \to [0,\infty)\) is any
  \(\Cst\)\nb-norm that is bounded in the \(I\)\nb-norm, then
  \(\norm{\mu_z(f)} = \norm{\mu_z(f)^* \mu_z(f)}^{1/2} = \norm{f^*
    f}^{1/2} = \norm{f}\) for all \(f\in\Contc(\Sigma)\).  This
  implies that~\(\mu_z\) extends to the groupoid
  \(\Cst\)\nb-algebra, and the extension remains a central unitary
  multiplier.  If \(f\in\Contc(\Sigma)\), then the map
  \(\T\to\Contc(\Sigma)\), \(z\mapsto \mu_z(f)\), is continuous even
  in the inductive limit topology on~\(\Contc(\Sigma)\).  The same
  is true for \(\mu_z^*(f) = \mu_{z^{-1}}(f)\).  This implies that
  the map \(z\mapsto \mu_z\) is strictly continuous.
\end{proof}

The group homomorphism~\(\mu\) from~\(\T\) to the group of central
unitary multipliers of~\(\Cst(\Sigma)\) integrates to a
\Star{}homomorphism from \(\Cont_0(\Z) \cong \Cst(\T)\) to the
centre of the multiplier algebra of~\(\Cst(\Sigma)\).  As a result,
\(\Cst(\Sigma)\) decomposes as a \(\Cont_0\)\nb-direct sum of the
\(\Cst\)\nb-subalgebras
\[
  \Cst(\Sigma)_n \defeq \setgiven{f\in \Cst(\Sigma)}
  {\mu_z(f) = z^n f \text{ for all }z\in\T}
\]
for \(n\in\Z\).  Define \(\Contc(\Sigma)_n\) for \(n\in\Z\) similarly.

\begin{proposition}
  \label{pro:Cst_Sigma_n}
  If \(n\in\Z\), then \(\Cst(\Sigma)_n\) is the completion of
  \(\Contc(\Sigma)_n\) in the maximal \(\Cst\)\nb-seminorm that is
  bounded in the restriction of the \(I\)\nb-norm.
\end{proposition}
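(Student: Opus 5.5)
The plan is to use the spectral projections of the circle action. For \(n\in\Z\), define
\[
  P_n(f) \defeq \int_\T z^{-n}\mu_z(f)\,\diff z .
\]
By Lemma~\ref{lem:muz_on_Sigma}, \(z\mapsto\mu_z\) is strictly continuous and takes values in the central unitary multipliers. Hence \(P_n\) is multiplication by a central projection \(p_n\in\Mult(\Cst(\Sigma))\), namely the image of the \(n\)th minimal projection of \(\Cont_0(\Z)\cong\Cst(\T)\). The range of \(P_n\) is exactly \(\Cst(\Sigma)_n\). On \(\Contc(\Sigma)\) the same integral can be computed pointwise:
\[
  (P_n f)(x) = \int_\T z^{-n} f(z^{-1}x)\,\diff z .
\]
This formula shows that \(P_n\) maps \(\Contc(\Sigma)\) onto \(\Contc(\Sigma)_n\), with the support contained in the \(\T\)-saturation of \(\supp f\). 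It also shows \(\norm{P_n f}_I\le \norm{f}_I\), because the Haar system on \(\Sigma\) is \(\T\)-invariant (\(\T\) acts by left translations in the fibres).

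From the previous paragraph, \(\Contc(\Sigma)_n = p_n\Contc(\Sigma)\) is dense in \(p_n\Cst(\Sigma)=\Cst(\Sigma)_n\). So \(\Cst(\Sigma)_n\) is the completion of \(\Contc(\Sigma)_n\) in the restriction of the full norm of \(\Cst(\Sigma)\). That restricted norm is a \(\Cst\)-norm bounded by the \(I\)-norm, so it is dominated by the maximal \(I\)-bounded \(\Cst\)-seminorm \(\norm{\cdot}_{\max,n}\) on \(\Contc(\Sigma)_n\). It remains to prove the reverse inequality.

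For the reverse inequality, take any \(I\)-bounded \Star{}representation \(\pi\) of \(\Contc(\Sigma)_n\) on a Hilbert space (or Hilbert module). Extend it to \(\Contc(\Sigma)\) by \(\tilde\pi(f)\defeq\pi(P_n f)\). Since \(p_n\) is a central idempotent, \(P_n\) is a \Star{}homomorphism \(\Contc(\Sigma)\to\Contc(\Sigma)_n\); this can be checked directly from the convolution formula using centrality of \(\mu_z\) and orthogonality of characters on \(\T\). Therefore \(\tilde\pi\) is a \Star{}representation. It is \(I\)-bounded because \(P_n\) is \(I\)-contractive, so \(\tilde\pi\) extends to \(\Cst(\Sigma)\). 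Consequently \(\norm{\pi(f)}\le\norm{f}_{\Cst(\Sigma)}\) for \(f\in\Contc(\Sigma)_n\). Taking the supremum over \(\pi\) gives \(\norm{\cdot}_{\max,n}\le\norm{\cdot}_{\Cst(\Sigma)}\) on \(\Contc(\Sigma)_n\), which completes the argument.

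I expect the main obstacle to be that the maximal \(I\)-bounded seminorm on \(\Contc(\Sigma)_n\) may be degenerate or nonunital, so that the extension \(\tilde\pi\) is not nondegenerate. This is harmless for norm comparison, since one can restrict to the essential subspace. A related point is checking that \(P_n\) is genuinely multiplicative on \(\Contc(\Sigma)\), and not merely on the completion. If the direct convolution check becomes messy, I would instead deduce multiplicativity from the identity \(P_n(f)=p_n f\) in \(\Cst(\Sigma)\), using that \(\Contc(\Sigma)\to\Cst(\Sigma)\) is injective (Proposition~\ref{pro:section_algebra_has_faithful_rep}).
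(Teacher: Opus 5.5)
Your proposal is correct and follows the paper's proof essentially verbatim. The paper also uses the projection \(E_n = P_n\) to show that \(\Contc(\Sigma)_n\) is dense in \(\Cst(\Sigma)_n\), and then compares any \(I\)-bounded \(\Cst\)-seminorm on \(\Contc(\Sigma)_n\) with the norm of \(\Cst(\Sigma)\) by composing it with the \(I\)-bounded \Star{}homomorphism \(E_n\); working with representations instead of seminorms, as you do, changes nothing, and the nondegeneracy concern you raise is indeed harmless.
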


\begin{proof}
  The canonical projection
  \(E_n\colon \Cst(\Sigma) \to \Cst(\Sigma)_n\),
  \(f\mapsto \int_\T \mu_z(f) z^{-n} \,\diff z\), maps
  \(\Contc(\Sigma)\) into \(\Contc(\Sigma)_n\).  This implies that
  \(\Contc(\Sigma)_n\) is a dense \Star{}subalgebra
  of~\(\Cst(\Sigma)_n\).  So \(\Cst(\Sigma)_n\) is the completion of
  \(\Contc(\Sigma)_n\) in the restriction of the norm of
  \(\Cst(\Sigma)\).  This restriction is a \(\Cst\)\nb-seminorm that
  is bounded in the restricted \(I\)\nb-norm.  It remains to show
  that it is the largest such seminorm.  Since the map~\(E_n\) is a
  \Star{}homomorphism and bounded for the \(I\)\nb-norm, any
  \(\Cst\)\nb-seminorm on~\(\Contc(\Sigma)_n\) that is bounded in the
  restricted \(I\)\nb-norm extends to a \(\Cst\)\nb-seminorm
  on~\(\Contc(\Sigma)\) that is still bounded in the \(I\)\nb-norm,
  and hence it is bounded by the norm on \(\Cst(\Sigma)\).
\end{proof}

A section \(f\in\Contc(G,L)\) defines a scalar-valued map
\(\tilde{f}\colon \Sigma \to \C\) by
\(\tilde{f}(x) = x^* \cdot f(g)\) for all \(g\in G\),
\(x\in L_g \cap \Sigma\).  The map~\(\tilde{f}\) is continuous and
has compact support, and it belongs to~\(\Cst(\Sigma)_1\) because
\[
  (\mu_z\tilde{f})(x)
  = \tilde{f}(z^{-1}\cdot x)
  = (z^{-1}\cdot x)^* \cdot f(g)
  = z\cdot x^* \cdot f(g)
  = z\cdot \tilde{f}(x).
\]
Thus we get a canonical map \(\Contc(G,L) \to \Contc(\Sigma)_1\).

\begin{theorem}
  \label{the:rep_line_bundle}
  The map \(\Contc(G,L) \to \Contc(\Sigma)_1\) is a
  \Star{}isomorphism.  It extends to a \Star{}isomorphism
  \(\Cst(G,L) \cong \Cst(\Sigma)_1\).
\end{theorem}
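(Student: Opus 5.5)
The plan is to first prove the algebraic statement at the \(\Contc\)-level and then transfer norms using the two universal descriptions: \(\Cst(G,L)\) as the completion of \(\Contc(G,L)\) in the maximal \(I\)\nb-bounded \(\Cst\)\nb-seminorm (Definition~\ref{def:full_section_Cstar-algebra}), and \(\Cst(\Sigma)_1\) as the completion of \(\Contc(\Sigma)_1\) in the maximal \(\Cst\)\nb-seminorm bounded by the restricted \(I\)\nb-norm (Proposition~\ref{pro:Cst_Sigma_n}). It then suffices to show that the map \(f\mapsto\tilde f\) is a bijective \Star{}homomorphism \(\Contc(G,L)\to\Contc(\Sigma)_1\) that is isometric for the two \(I\)\nb-norms, up to the normalisation of the Haar system on~\(\Sigma\). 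The maximal seminorms then correspond, and the isomorphism extends.

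For bijectivity, define the inverse by sending \(\varphi\in\Contc(\Sigma)_1\) to the section \(f(g)\defeq x\cdot\varphi(x)\) for any \(x\in\Sigma\cap L_g\). Equivariance \(\varphi(zx)=z^{-1}\varphi(x)\) makes this independent of~\(x\), since \((zx)\varphi(zx)=x\varphi(x)\); here I must double-check the sign convention so that it matches \(\mu_z\tilde f=z\tilde f\). Continuity and compact support follow from local continuous sections of \(\Sigma\to G\), which exist because \(\Sigma\) is an open subset of the unitaries in the line bundle~\(L\): near any point, normalise a nonvanishing continuous section of~\(L\).

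For the algebra, I would fix the Haar system on~\(\Sigma\) as \(\int_\Sigma F\,\diff\beta^x=\int_G\int_\T F(z\,\sigma(g))\,\diff z\,\diff\alpha^x(g)\), where \(\sigma\) is a local lift. This is the standard choice, and I take it to be the one implicit in the definition of \(\Cst(\Sigma)\). For \(\tilde f_1*\tilde f_2\), the integrand \(\tilde f_1(y)\tilde f_2(y^{-1}x)\) is \(\T\)-invariant in~\(y\), so the \(\T\)-integral drops out. Writing \(y^*f_1(h)\,(y^{-1}x)^*f_2(h^{-1}g)=x^*f_1(h)f_2(h^{-1}g)\), which uses \(y y^*=1\) and commutativity of the one-dimensional fibres, gives \(\widetilde{f_1*f_2}\). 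The involution check is similar: \(\tilde f^*(x)=\overline{\tilde f(x^{-1})}=(x\,f(g^{-1}))^{*}=x^*f(g^{-1})^*\).

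The \(I\)\nb-norm comparison uses \(\abs{\tilde f(x)}=\norm{f(g)}\), so the integrals over \(\Sigma^x\) and \(G^x\) agree. I expect the main obstacle to be the bookkeeping around the Haar system on~\(\Sigma\) and the precise statement of Proposition~\ref{pro:Cst_Sigma_n}, namely that \(I\)\nb-bounded \(\Cst\)\nb-seminorms on \(\Contc(\Sigma)_1\) are exactly the restrictions of those on \(\Contc(\Sigma)\). With that in hand, the two maximal seminorms match under the isometric \Star{}isomorphism, and the extension to \(\Cst(G,L)\cong\Cst(\Sigma)_1\) follows.
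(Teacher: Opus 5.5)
Your proposal is correct and follows the paper's proof. The paper checks that \(f\mapsto\tilde f\) is a \Star{}homomorphism, inverts it by \(f(g)\defeq x\cdot\tilde f(x)\), and then invokes Proposition~\ref{pro:Cst_Sigma_n}. You additionally make explicit the lifted Haar system on~\(\Sigma\) and the equality of the \(I\)\nb-norms, both of which the paper leaves implicit. There is one cosmetic slip in your involution check: \((x\,f(g^{-1}))^*=f(g^{-1})^*x^*\) lies in \(L_{\rg(g)}\), and it equals \(x^*f(g^{-1})^*\in L_{\s(g)}\) only as a scalar (write \(f(g^{-1})^*=c\,x\) with \(c\in\C\)), but that is all your argument needs.
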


\begin{proof}
  It is easy to check that the map
  \(\Contc(G,L) \to \Contc(\Sigma)_1\) is a \Star{}homomorphism.  To
  see that it is bijective, take a function
  \(\tilde{f}\in\Contc(\Sigma)_1\).  Let \(g\in G\).  Then
  \(z\cdot x\cdot \tilde{f}(z\cdot x) = x\cdot \tilde{f}(x)\) for all
  \(x\in L_g \cap \Sigma\), that is,
  \(f(g) \defeq x\cdot \tilde{f}(x) \in L_g\) is well defined.  This
  defines an element \(f\in \Contc(G,L)\) that maps to~\(\tilde{f}\)
  under the map above.  The two constructions are clearly inverse to
  each other, that is, we have a \Star{}isomorphism
  \(\Contc(G,L) \to \Contc(\Sigma)_1\).  Now
  Proposition~\ref{pro:Cst_Sigma_n} implies the \Star{}isomorphism
  \(\Cst(G,L) \to \Cst(\Sigma)_1\).
\end{proof}

\begin{remark}
  The same argument shows
  \(\Cst(\Sigma)_n \cong \Cst(G,L^{\otimes n})\).
\end{remark}

\begin{remark}
  Let~\(\F\) be a Hilbert module over some \(\Cst\)\nb-algebra with a
  nondegenerate \Star{}homomorphism
  \(\varphi\colon \Cont_0(\Sigma^0) = \Cont_0(G^0) \to \Bound(\F)\).
  A representation of the groupoid~\(\Sigma\) is a unitary operator
  \(U_\sigma\colon \Lt^2(\Sigma, \s,\tilde\alpha)
  \otimes_{\Cont_0(\Sigma^0)} \F \to \Lt^2(\Sigma, \rg,\alpha)
  \otimes_{\Cont_0(\Sigma^0)} \F\) satisfying the extra condition
  \(d_2^*(U) d_0^*(U) = d_1^*(U)\).  Define unitary representations
  \(\mu\colon \T\to \U(\Lt^2(\Sigma, \s, \tilde\alpha))\) and
  \(\mu\colon \T\to \U(\Lt^2(\Sigma, \rg, \alpha))\) by
  \(\mu_z(h)(x) \defeq h(z^{-1}\cdot x) \) for \(z\in \T\),
  \(x\in \Sigma\) and a suitable \(\Lt^2\)-function~\(h\).  We call a
  representation of~\(\Sigma\) \emph{homogeneous of degree~\(n\)} if
  \(\mu_z U \mu_z^* = z^n U\) for all \(z\in \T\).  A computation
  shows that a representation of \(\Cst(\Sigma)\) factors through
  \(\Cst(\Sigma)_n\) if and only if the corresponding groupoid
  representation \((\varphi,U)\) of~\(\Sigma\) is homogeneous of
  degree~\(n\).  Together with Theorem~\ref{the:rep_line_bundle}, it
  then follows that Fell-bundle representations of~\(L\) on Hilbert
  modules are equivalent to homogeneous representations of the
  groupoid~\(\Sigma\) of degree~\(1\).  The same statement is easy to
  check for measurable Fell-bundle representations on Hilbert spaces.
\end{remark}

\section{Fell bundles over groups}
\label{sec:Fell_over_groups}

This section studies Fell bundles and their section \(\Cst\)\nb-algebras
over groups.  Let~\(G\) be a locally compact group and let~\(\A\) be
a Fell bundle over~\(G\).  The Haar measure on~\(G\) is unique up to
a scalar factor.  So the section \(\Cst\)\nb-algebra of~\(\A\) does
not depend on it, and we denote it by \(\Cst(G,\A)\).  The usual
definition of \(\Cst(G,\A)\) for a group~\(G\)
in~\cite{Doran-Fell:Representations_2} differs
in two ways from ours.  First, it uses the modular function
\(\Delta\colon G\to (0,\infty)\) to define the adjoint.  We
define~\(\Delta\) so that
\(\diff (g^{-1}) = \Delta(g^{-1}) \,\diff g\).  The usual formula
for the involution on \(\Contc(G,\A)\) is
\(f^\times(g) \defeq f(g^{-1})^* \Delta(g)^{-1}\) instead of our
formula \(f^*(g) \defeq f(g^{-1})^*\).  The map
\[
  \mu\colon \Contc(G,\A) \to \Contc(G,\A),\qquad
  f\mapsto f\cdot \Delta^{1/2},
\]
is a \Star{}algebra isomorphism from \((\Contc(G,\A),*,^\times)\) to
\((\Contc(G,\A),*,^*)\), that is,
\[
  \mu(f_1* f_2) = \mu(f_1) *\mu(f_2),\qquad
  (\mu f)^* = \mu(f^\times)
\]
for all \(f_1,f_2,f\in \Contc(G,\A)\).
The second difference regards the \(L^1\)-norm that we use to bound
representations on group \(\Cst\)\nb-algebras, or more generally group
crossed products or Fell bundles.
The usual \(L^1\)\nb-norm and the \(I\)\nb-norm are only related
through an inequality \(\norm{f}_1 \le \norm{\mu(f)}_I\), which
follows from the Cauchy--Schwarz inequality:
\begin{align*}
  \norm{f}_1
  &= \int_G\norm{f(g)}\,\diff g
    = \int_G
    (\norm{f(g)}^{1/2}\Delta(g)^{1/4})(\norm{f(g)}^{1/2}\Delta(g)^{-1/4}) \,\diff g\\
  &\le \left(\int_G\norm{f(g)}\Delta(g)^{1/2} \,\diff g\right)^{1/2}
    \left(\int_G\norm{f(g)}\Delta(g)^{-1/2}\,\diff g\right)^{1/2}\\
  &\leq \norm{\mu(f)}_I^{1/2} \norm{\mu(f)}_I^{1/2}
    = \norm{\mu(f)}_I.
\end{align*}
Nevertheless, our definition of \(\Cst(G,\A)\) gives the usual Fell
bundle section \(\Cst\)\nb-algebra:

\begin{proposition}
  \label{pro:bounded_by_L1-norm}
  Let~\(D\) be a \(\Cst\)\nb-algebra and~\(\F\) a Hilbert
  \(D\)\nb-module.  Equip~\(\F\) with a pre-representation
  \(L\colon \Contc(G,\A)\times \F_0 \to \F\) of\/ \(\Contc(G,\A)\) for
  a vector space~\(\F_0\) and a linear map \(\iota\colon \F_0\to\F\)
  with dense image \textup{(}see
  Definition~\textup{\ref{def:pre-representation})}.  Then
  \(L\circ \mu\) extends to an ordinary \Star{}representation of\/
  \((\Contc(G,\A),*,^{\times})\) on~\(\F\) that is bounded with
  respect to the usual \(L^1\)\nb-norm
  \(\norm{f}_1 \defeq \int_G \norm{f(g)} \,\diff g\).
\end{proposition}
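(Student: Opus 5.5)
First I would apply Corollary~\ref{cor:prerep} to the pre-representation \((L,\F_0,\iota)\). This gives a \Star{}representation \(L'\colon (\Contc(G,\A),*,^*)\to\Bound(\F)\) with \(\norm{L'(f)}\le\norm{f}_I\) and \(L'(f)\iota(\xi)=L(f)\xi\) for \(f\in\Contc(G,\A)\), \(\xi\in\F_0\). The extension of \(L\circ\mu\) will be \(L'\circ\mu\). Since \(\mu\) is a \Star{}isomorphism from \((\Contc(G,\A),*,^\times)\) onto \((\Contc(G,\A),*,^*)\), the composite \(L'\circ\mu\) is a \Star{}representation of \((\Contc(G,\A),*,^\times)\) by adjointable operators. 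By construction it satisfies \((L'\circ\mu)(f)\iota(\xi)=L(\mu f)\xi\), so it extends \(L\circ\mu\). It is also nondegenerate, because the vectors \(L(f)\xi\) span a dense subspace of~\(\F\) and \(\mu\) is bijective.

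The main step is the bound by the usual \(L^1\)-norm. The estimate \(\norm{f}_1\le\norm{\mu f}_I\) recorded above points the wrong way, so it cannot be used directly. Instead I would use the \(\Cst\)-identity together with the \(I\)-norm bound for \(L'\). Setting \(\pi\defeq L'\circ\mu\), this gives
\[
  \norm{\pi(f)}^{2^{n}} = \norm{\pi\bigl((f^\times*f)^{*2^{n-1}}\bigr)}
  \le \norm{\mu\bigl((f^\times*f)^{*2^{n-1}}\bigr)}_I,
\]
where the power is the \(2^{n-1}\)-fold convolution power.

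So what remains is to control \(\norm{\mu(h^{*k})}_I^{1/k}\) for the \(^\times\)-selfadjoint element \(h=f^\times*f\), using the \(L^1\)-norm. Write \(\norm{\phi}_{1,t}\defeq\int\norm{\phi(g)}\Delta(g)^t\,\diff g\). Then \(\norm{\mu\phi}_I=\max\{\norm{\phi}_{1,1/2},\norm{\phi}_{1,-1/2}\}\), once we check that \(\tilde\alpha\) is the measure \(\Delta(g^{-1})\,\diff g\). Because \(\Delta\) is a homomorphism, each weighted norm \(\norm{\cdot}_{1,t}\) is submultiplicative for convolution. This gives the estimate
\[
  \norm{\mu(h^{*k})}_I^{1/k}\le \max_{t=\pm1/2}\norm{h}_{1,t}.
\]
That is still not the plain \(L^1\)-norm, and this is the step I expect to be the real obstacle.

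To remove the weight, I would first reduce to compact supports. Let \(h\) be supported in a compact set~\(K\), and put \(c_K\defeq\sup_K\Delta^{1/2}\) and \(c_K'\defeq\sup_K\Delta^{-1/2}\). Then \(h^{*k}\) is supported in \(K^k\), and the weighted norm of \(h^{*k}\) is bounded by \(\sup_{K^k}\Delta^{\pm1/2}\cdot\norm{h}_1^k\le (c_Kc_K')^{k}\norm{h}_1^k\). This alone only gives a constant factor.

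The constant goes away through the spectral radius formula in the Banach \Star{}algebra \(L^1(G,\A)\). Write \(\norm{h^{*k}}_{1,\pm1/2}\le \max_{g\in\supp h^{*k}}\Delta(g)^{\pm1/2}\norm{h^{*k}}_1\). Since \(\supp h^{*k}\subseteq K^k\), we have \(\Delta^{\pm1/2}\le C^k\) there for some \(C\ge1\). To eliminate this, I would replace~\(h\) by the approximation \(h_\varepsilon\defeq h\cdot\chi_{\{\abs{\log\Delta}<\varepsilon\}}\). A cleaner route may be Gelfand's formula: if \(\lim_k\norm{h^{*k}}_1^{1/k}\) is the \(L^1\)-spectral radius, the weight contributes at most a factor \(e^{\varepsilon}\) when~\(h\) is concentrated near \(\ker\Delta\), and a density argument would then finish. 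This is the argument I would attempt first.

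A cleaner fallback is the standard one: by Corollary~\ref{cor:continuity-rep}-type arguments, \(\pi\) comes from a Fell-bundle representation. For groups, this should be a strictly continuous unitary-type representation \(g\mapsto S_g\) with \(\norm{S_g(a)}\le\norm{a}\), and then \(\pi(f)=\int S_g(f(g))\,\diff g\) gives \(\norm{\pi(f)}\le\norm{f}_1\) at once.
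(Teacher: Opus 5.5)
Your reduction to \(L'\circ\mu\) via Corollary~\ref{cor:prerep} is fine. The \(L^1\)-bound, however, has a genuine gap exactly at the step you flag, and neither proposed remedy closes it.

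The truncation \(h_\varepsilon=h\cdot\chi_{\{\abs{\log\Delta}<\varepsilon\}}\) is not of the form \(x^\times*x\). So \(\norm{\pi(h_\varepsilon)}\) tells you nothing about \(\norm{\pi(f)}\), and \(h=f^\times*f\) is in general not concentrated near \(\ker\Delta\). Worse, iterating the \(\Cst\)-identity on \(h\) can never reach \(\norm{f}_1\). Take the trivial line bundle and a scalar \(f\ge0\). The map \(\phi\mapsto\int\phi\,\Delta^{1/2}\,\diff g\) is multiplicative and bounded by \(\norm{\phi}_{1,1/2}\), so \(\norm{\mu(h^{*k})}_I^{1/k}\ge\int h\,\Delta^{1/2}\,\diff g=\norm{f}_{1,1/2}\norm{f}_{1,-1/2}\) for all \(k\). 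By Cauchy--Schwarz this is strictly larger than \(\norm{f}_1^2\) whenever \(\Delta\) is not constant on \(\supp f\).

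The missing idea is to localise \(f\) rather than \(h\):
\begin{itemize}
\item Take \(\chi_1,\dotsc,\chi_n\ge0\) with \(\sum_i\chi_i=1\) on \(\supp f\) and supports \(W_i\) so small that \(\Delta^{\pm1/2}\le e^{\varepsilon}\) on \(\overline{W_i}^{-1}\overline{W_i}\).
\item Then \(h_i\defeq(\chi_if)^\times*(\chi_if)\) is supported in that set. One application of the \(\Cst\)-identity gives \(\norm{\pi(\chi_if)}^2\le\norm{\mu(h_i)}_I\le e^{\varepsilon}\norm{h_i}_1\le e^{\varepsilon}\norm{\chi_if}_1^2\).
\item Since \(\sum_i\norm{\chi_if}_1=\norm{f}_1\), the triangle inequality gives \(\norm{\pi(f)}\le e^{\varepsilon/2}\norm{f}_1\) for every \(\varepsilon>0\).
\end{itemize}

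Your fallback is an assertion rather than an argument. A Fell-bundle representation on a Hilbert \(D\)-module does not come with fibrewise operators \(S_g\); the only disintegration available, in Section~\ref{sec:Renault}, needs a separable Hilbert space. So a contractive, strictly continuous family integrating to \(\pi\) has to be built. It can be taken to be \(\bar\sigma\circ m\) with \(m\) as in~\eqref{eq:mult-Fell-bundle-univ}. But then you need \(\norm{m_a}\le\norm{a}\), strict continuity, and \(\int m_{f(g)}\,\diff g=\mu(f)\) as multipliers, which is Lemma~\ref{lem:universal_continuous_rep_group}. Your sketch supplies none of this.

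The paper's proof is much shorter and needs none of it. By Lemma~\ref{lem:Lf_well-defined}, \(L'(f)=(T^\rg_{f_1})^*UT^\s_{f_2}\) does not depend on the factorisation \(f=f_1^\dagger\cdot f_2\). So one may rebalance the factorisation of Lemma~\ref{lem:factor_f} as \(f_1'\defeq f_1\Delta^{-1/4}\) and \(f_2'\defeq f_2\Delta^{1/4}\). Since \(\tilde\alpha=\Delta^{-1}\,\diff g\), both \(\norm{f_1'}^2_{\Lt^2(G,\A\A^*,\rg,\alpha)}\) and \(\norm{f_2'}^2_{\Lt^2(G,\A,\s,\tilde\alpha)}\) are at most \(\int\norm{f(g)}\Delta(g)^{-1/2}\,\diff g=\norm{\mu^{-1}(f)}_1\). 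This gives \(\norm{L'(\mu(f))}\le\norm{f}_1\) at once.
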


\begin{proof}
  Corollary~\ref{cor:prerep} shows that~\(L\) extends to a
  representation~\(L'\) of~\(\Contc(G,\A)\) that is bounded in the
  \(I\)\nb-norm.  Composing with the \Star{}isomorphism~\(\mu\)
  gives a \Star{}representation of
  \((\Contc(G,\A),*,^\times)\).  It remains to improve the norm
  bound.  To do this, we
  change the factorisation \(f = f_1 \cdot f_2\) produced by
  Lemma~\ref{lem:factor_f} and use
  \(f = f_1'\cdot f_2'\) with \(f_1'\defeq f_1\Delta^{-1/4}\) and
  \(f_2'\defeq f_2\Delta^{+1/4}\) instead.
  As in the proof of Lemma~\ref{lem:Lf_well-defined} we get the norm bound
  \(\norm{L'(f)}\le \norm{f_1'}_{\Lt^2(G,\A\A^*,\rg,\alpha)} \cdot
  \norm{f_2'}_{\Lt^2(G,\A,\s,\tilde\alpha)}\).  Computations as above
  show that
  \(\norm{f_1'}_{\Lt^2(G,\A\A^*,\rg,\alpha)} \le
  \norm{\mu^{-1}(f)}_1^{1/2}\) and
  \(\norm{f_2'}_{\Lt^2(G,\A,\s,\tilde\alpha)} \le
  \norm{\mu^{-1}(f)}_1^{1/2}\).  So
  \(\norm{L'(f)} \le \norm{\mu^{-1}(f)}_1\).
\end{proof}

We also learn from the previous result that any representation of
\((\Contc(G,\A),*,^\times)\) that is continuous in the inductive
limit topology is already bounded for the usual \(L^1\)\nb-norm.
This is an analogue of Corollary~\ref{cor:continuity-rep} using the
\(L^1\)\nb-norm instead of the \(I\)\nb-norm.  For ordinary crossed
products by locally compact group actions, this was proved by Green
in \cite{Green:Local_twisted}*{Corollary to Proposition~3 on
  p.~203}.  This is extended to Fell bundles over locally compact
groups in \cite{Doran-Fell:Representations_2}*{Theorem~VIII.13.8}.

In the case of groups, there is an alternative definition of
representations of Fell bundles using maps \(\A\to \Bound(\F)\) that
respect product and involution and are continuous for the strict
topology.  More precisely, this is defined as follows:

\begin{definition}[Fell]
  \label{def:continuous_rep_Fell_group}
  A \emph{continuous representation} of~\(\A\) on a Hilbert
  module~\(\F\) is a map \(\pi\colon \A\to \Bound(\F)\) which is linear
  on the fibres of~\(\A\) and satisfies
  \begin{enumerate}
  \item \label{def:continuous_rep_Fell_group_1}%
    \(\pi(a\cdot b)=\pi(a)\pi(b)\) and \(\pi(a^*)=\pi(a)^*\) for all
    \(a,b\in \A\);
  \item \label{def:continuous_rep_Fell_group_1b}%
    the representation~\(\pi|_{\A_1}\) of~\(\A_1\) is nondegenerate;
  \item \label{def:continuous_rep_Fell_group_2}%
    the map \(a\mapsto \pi(a)\) is continuous from~\(\A\) to
    \(\Bound(\F)=\Mult(\Comp(\F))\) with respect to the strict topology.
  \end{enumerate}
\end{definition}

Recall that the strict topology on \(\Bound(\F)\) is equivalent to the
\Star{}strong topology on bounded sets.  Using the compatibility with
the involution in~\ref{def:continuous_rep_Fell_group_1} it follows
that the continuity of \(\pi\) in~\ref{def:continuous_rep_Fell_group_2}
is equivalent to norm continuity of the map \(a\mapsto \pi(a)\xi\) for
all \(\xi\in \F\), that is, strong continuity of~\(\pi\).

Continuous representations of Fell bundles were extensively studied by
Fell, see for instance~\cite{Doran-Fell:Representations_2} and
references there in.
Usually, only Hilbert space
representations are considered in the literature, but similar
techniques apply to Hilbert module representations as above.
One of
the main results in this direction states that continuous
representations correspond bijectively to representations of
\(\Cst(G,\A)\).
Combined with our results, it follows that the
continuous representations in the sense of Fell must be equivalent to
our representations.
We are going to prove this directly and deduce an automatic continuity
result for measurable representations on Hilbert spaces.

The key idea is that there is a universal continuous representation
\[m\colon \A\to\Mult(\Cst(G,\A))\] of the Fell bundle by multipliers
of~\(\Cst(G,\A)\).  For \(g\in G\) and \(a\in \A_g\), we define
\(m_a\colon \Contc(G,\A)\to \Contc(G,\A)\) by
\begin{equation}
  \label{eq:mult-Fell-bundle-univ}
  m_a(f)(x) = \Delta(g)^{1/2} a f(g^{-1}x)
\end{equation}
for \(f\in \Contc(G,\A)\) and \(x\in G\); here~\(\Delta\) is the
modular function of~\(G\).  The isomorphism~\(\mu\) defined above
transfers this by \(m_a f = \mu(\tilde{m}_a \mu^{-1}(f))\) to the more
obvious multiplier \((\tilde{m}_a f)(x) = a f(g^{-1}x)\) of
\((\Contc(G,\A),*,^\times)\), which already appears in
\cite{Doran-Fell:Representations_2}*{Section~VIII.12.3}.

\begin{lemma}
  \label{lem:universal_continuous_rep_group}
  The maps \(a\mapsto m_a\) above form a continuous Fell-bundle
  representation of~\(\A\) on \(\Cst(G,\A)\).
\end{lemma}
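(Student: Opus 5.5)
I plan to verify the three conditions of Definition~\ref{def:continuous_rep_Fell_group} directly on the dense \Star{}subalgebra \(\Contc(G,\A)\subseteq\Cst(G,\A)\), viewing \(\Cst(G,\A)\) as a Hilbert module over itself so that \(\Bound(\Cst(G,\A))=\Mult(\Cst(G,\A))\).

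First, I would show that each \(m_a\) extends to a multiplier. A short computation with left invariance of Haar measure gives the left-multiplier identity \(m_a(f_1*f_2)=m_a(f_1)*f_2\). I would then define \(m_a^*\defeq m_{a^*}\) with \(a^*\in\A_{g^{-1}}\) and check \(f_1^* * m_a(f_2) = (m_{a^*}f_1)^* * f_2\). This uses our involution \(f^*(x)=f(x^{-1})^*\), the substitution \(x\mapsto gx\), and the relation \(\Delta(g)^{1/2}\Delta(g^{-1})^{1/2}=1\); the factor \(\Delta^{1/2}\) in~\eqref{eq:mult-Fell-bundle-univ} is precisely what makes the two sides agree. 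For boundedness, I would avoid estimating \(\Cst\)-norms by hand. Instead, the map \(\Contc(G,\A)\ni f\mapsto m_a f\) is continuous in the inductive limit topology, so for any representation \(L\) of \(\Cst(G,\A)\) the map \((f,\xi)\mapsto L(m_af)\xi\) behaves like a pre-representation. The \(C^*\)-trick is then available: \(\norm{m_af}^2=\norm{(m_af)^*(m_af)}=\norm{f^* * m_{a^*a}f}\), and since \(a^*a\in\A_1\) acts as a genuine multiplier of norm at most \(\norm{a}^2\) (by the same argument as Lemma~\ref{lem:extension-multiplier}, i.e.\ through \(\iota\)), we get \(\norm{m_a}\le\norm a\). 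Hence \(m_a\in\Mult(\Cst(G,\A))\).

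Second, I would check the algebraic identities \(m_{ab}=m_am_b\), which follows from \(\Delta(gh)^{1/2}=\Delta(g)^{1/2}\Delta(h)^{1/2}\) and associativity in \(\A\), and \(m_{a^*}=m_a^*\), which was already established above. Fibrewise linearity is clear. For \(a\in\A_1\), \(m_a\) coincides with \(\iota\) from Section~\ref{sec:universal_rep}, since \(\Delta(1)=1\). Nondegeneracy then follows from nondegeneracy of \(\iota\), via Lemma~\ref{lem:extension-multiplier} or Proposition~\ref{pro:coefficients_embed}.

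Third, I would prove strict continuity. Because \(\norm{m_a}\le\norm a\) and the norm on \(\A\) is upper semicontinuous, the operators are locally uniformly bounded, so it suffices to prove continuity of \(a\mapsto m_af\) for \(f\in\Contc(G,\A)\) (and of \(a\mapsto m_{a}^*f=m_{a^*}f\), which follows from continuity of the involution). I expect this step to be the main obstacle. Given a net \(a_i\to a\) in \(\A\), with \(a_i\in\A_{g_i}\) and \(g_i\to g\), I would show \(m_{a_i}f\to m_af\) in the inductive limit topology, since that topology dominates the \(\Cst\)-norm via the \(I\)-norm. The function \((b,x)\mapsto\Delta(p(b))^{1/2}\,b\,f(p(b)^{-1}x)\) is continuous into \(\A\) by continuity of multiplication. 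Supports stay in a compact set. What remains is uniform convergence in \(x\). I would handle this with the standard compactness argument from Lemma~\ref{lem:subfield_equality}: choose a continuous section \(\sigma\) with \(\sigma(g)=a\), note that \(\norm{a_i-\sigma(g_i)}\to0\) by the definition of the topology on \(\A\), and observe that \(x\mapsto \sigma(h)f(h^{-1}x)\) depends on \(h\) continuously in supremum norm, by uniform continuity of compactly supported sections. Combining these estimates gives uniform convergence, and hence strict continuity.
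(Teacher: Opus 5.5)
Your proposal is correct and follows essentially the same route as the paper. Both arguments establish the identities \(m_am_b=m_{ab}\) and \(f_1^* * m_a(f_2)=(m_{a^*}f_1)^* * f_2\) on \(\Contc(G,\A)\), then the bound \(\norm{m_a}\le\norm{a}\), and then strict continuity from continuity of \(a\mapsto m_af\) in the inductive limit topology. For the norm bound you use the \(\Cst\)-identity \(\norm{m_af}^2=\norm{f^* * \iota(a^*a)f}\) together with contractivity of \(\iota\) on \(\A_1\), whereas the paper reruns the trick of Lemma~\ref{lem:extension-multiplier} directly. You also supply details the paper leaves implicit: local boundedness, the right-multiplication side of the strict topology, and nondegeneracy via \(\iota|_{\A_1}\).

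The aside that \((f,\xi)\mapsto L(m_af)\xi\) ``behaves like a pre-representation'' is inaccurate, since \(f\mapsto m_af\) is not a \Star{}homomorphism. It is also unused, so drop it.
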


\begin{proof}
  It is easy to see that \(m_a (m_b(f)) = m_{a b}(f)\) for all
  \(a,b\in\A\) , \(f\in\Contc(G,\A)\).  We also compute
  \begin{align*}
    f_1^* * (m_a(f_2)) (x)
    &= \int_G f_1(h^{-1})^* a f_2(g^{-1} h^{-1} x) \Delta(g)^{1/2} \,\diff h\\
    &= \int_G f_1(gh^{-1})^* a f_2(h^{-1} x) \Delta(g)^{-1/2} \,\diff h\\
    &= \int_G (a^* f_1(g h^{-1}))^* f_2(h^{-1} x) \Delta(g)^{-1/2} \,\diff h
      = (m_{a^*}(f_1)^* * f_2) (x).
  \end{align*}
  In the second equality above, we made the change of variables
  \(h\mapsto hg^{-1}\) with
  \(\diff (hg^{-1}) = \Delta(g)^{-1}\diff h\).  Using a trick as in
  Lemma~\ref{lem:extension-multiplier}, we show that~\(m_a\) extends
  to a bounded multiplier of \(\Cst(G,\A)\) with
  \(\norm{m_a} \le \norm{a}\).  For \(f\in \Contc(G,\A)\), the map
  \(a \mapsto m_a(f)\) is continuous for the inductive limit topology
  on \(\Contc(G,\A)\).  Then the map
  \(m\colon \A \to \Mult(\Cst(G,\A))\) is continuous for the strict
  topology on \(\Mult(\Cst(G,\A))\).
\end{proof}

\begin{theorem}
  \label{the:Fell_over_group_continuous_rep}
  Let~\(G\) be a locally compact group and let~\(\A\) be a Fell bundle
  over~\(G\).  Let~\(D\) be a \(\Cst\)\nb-algebra and let~\(\F\) be a
  Hilbert \(D\)\nb-module.  Then there is a natural bijection between
  Fell-bundle representations of~\(\A\) on~\(\F\) and continuous
  representations of~\(\A\) on~\(\F\).
\end{theorem}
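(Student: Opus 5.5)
I would route both directions through the universal property in Theorem~\ref{the:universal_groupoid_Haus} and the universal continuous representation \(m\colon \A\to\Mult(\Cst(G,\A))\) of Lemma~\ref{lem:universal_continuous_rep_group}. Given a Fell-bundle representation \((\varphi,U)\) of~\(\A\) on~\(\F\), integrate it to a nondegenerate representation \(L\colon \Cst(G,\A)\to\Bound(\F)\). Extend \(L\) to multipliers and set \(\pi\defeq \bar L\circ m\). Then \(\pi\) is fibrewise linear and respects product and involution, because \(m\) does. It is strictly continuous, because \(m\) is strictly continuous and \(\bar L\) is strictly continuous on bounded sets; note that \(\norm{m_a}\le\norm{a}\). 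It is nondegenerate on \(\A_1\), because \(m|_{\A_1}\) is the canonical nondegenerate map \(\SUF\to\Mult(\Cst(G,\A))\) and \(L\) is nondegenerate. So \(\pi\) is a continuous representation.

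Conversely, let \(\pi\) be a continuous representation. First I would integrate it, Fell-style, to
\[
L_\pi(f)\xi\defeq\int_G \Delta(g)^{1/2}\pi(f(g))\xi\,\diff g .
\]
The factor \(\Delta^{1/2}\) makes this compatible with our involution \(f^*(g)=f(g^{-1})^*\), via the isomorphism~\(\mu\). Strong continuity makes the integral a norm-convergent Bochner integral on compact supports. A direct change-of-variables computation shows that \(L_\pi\) is a \Star{}homomorphism, and it is clearly continuous in the inductive limit topology. For nondegeneracy I would combine nondegeneracy on \(\A_1\) with an approximate unit in \(\Contc(G)\) concentrating at~\(1\), multiplied by sections taking values in an approximate unit of~\(\A_1\). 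By Corollary~\ref{cor:continuity-rep}, or Corollary~\ref{cor:prerep} applied to the pre-representation with \(\F_0=\F\), \(L_\pi\) extends to \(\Cst(G,\A)\). Theorem~\ref{the:universal_groupoid_Haus} then yields a Fell-bundle representation \((\varphi,U)\).

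It remains to show that these two assignments are mutually inverse. For one direction, check \(\bar L_\pi(m_a)=\pi(a)\). It suffices to compare both sides on vectors \(L_\pi(f)\xi\), which requires \(L_\pi(m_a f)=\pi(a)L_\pi(f)\). This is a direct computation using \(\pi(a)\pi(f(h))=\pi(af(h))\) and the substitution \(h\mapsto g^{-1}h\), whose modular factors match the \(\Delta(g)^{1/2}\) in~\eqref{eq:mult-Fell-bundle-univ}. For the other direction, start from \(L\) and show \(L_{\bar L\circ m}=L\) on \(\Contc(G,\A)\). Formally, \(\int\Delta(g)^{1/2}m_{f(g)}\,\diff g\) equals \(f\) as a multiplier, i.e.\ \(\int \Delta(g)^{1/2}m_{f(g)}(h)\,\diff g=f*h\), which again follows from the definition of convolution. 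Pushing \(\bar L\) inside the integral is justified by strict continuity and the norm bound. Naturality follows because every construction commutes with isometric intertwiners and with induction along correspondences.

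The main obstacle is the analysis in the direction \(\pi\mapsto L_\pi\). One must make sense of the multiplier-valued integral \(\int\Delta^{1/2}m_{f(g)}\,\diff g\) strictly, and prove nondegeneracy of \(L_\pi\) on a general Hilbert module, where no Hilbert-space measure theory is available. I would handle both with the approximate units of Proposition~\ref{pro:special-au-fell-bundle-groupoid} and uniform strict continuity of~\(\pi\) on compact subsets of~\(\A\) restricted to norm-bounded sets.
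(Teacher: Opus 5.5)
Your architecture is the paper's: the universal property of Theorem~\ref{the:universal_groupoid_Haus}, the universal continuous representation \(m\) of Lemma~\ref{lem:universal_continuous_rep_group} for the direction from Fell-bundle representations to continuous ones, and integration of \(\pi\) for the converse. The paper simply cites Doran--Fell for integrating \(\pi\) to the \(L^1\)-cross-sectional algebra and its enveloping \(\Cst\)\nb-algebra, and sketches \((\varphi,U)\) directly from~\(\pi\). It does not spell out that the two assignments are mutually inverse. Your version is more self-contained: boundedness comes from Corollary~\ref{cor:continuity-rep}, and you check both composites explicitly.

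There is, however, a concrete error: the modular weight has the wrong sign. It must be
\[
  L_\pi(f)\xi = \int_G \Delta(g)^{-1/2}\,\pi(f(g))\xi\,\diff g ,
\]
that is, \(L_\pi=\tilde L_\pi\circ\mu^{-1}\) (not \(\circ\mu\)), where \(\tilde L_\pi(f)=\int_G\pi(f(g))\,\diff g\) is Fell's integrated form on \((\Contc(G,\A),*,{}^\times)\). To see this, take a weight \(w\) and the form \(f\mapsto\int_G w(g)\pi(f(g))\,\diff g\). It is multiplicative if and only if \(w\) is a homomorphism; substitute \(g=hk\). It preserves the involution \(f^*(g)=f(g^{-1})^*\) if and only if \(w(g)=w(g^{-1})\Delta(g^{-1})\). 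For \(w=\Delta^s\) this forces \(s=-1/2\).

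With your \(\Delta^{+1/2}\), \(L_\pi\) is not a \Star{}homomorphism unless \(G\) is unimodular. Both of your inverse checks fail as well. For \(a\in\A_g\) you get \(L_\pi(m_af)=\Delta(g)\,\pi(a)L_\pi(f)\), so the modular factors do not match. And \(\int\Delta(g)^{1/2}m_{f(g)}\,\diff g\) is left convolution by \(\Delta\cdot f\), not by~\(f\). With \(\Delta^{-1/2}\), the factor \(\Delta(g)^{1/2}\) in~\eqref{eq:mult-Fell-bundle-univ} cancels exactly, and everything you claim goes through.

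The obstacle you flag at the end is milder than you suggest. To show \(L_{\bar L\circ m}=L\), evaluate both sides on vectors \(L(h)\xi\). The map \(g\mapsto m_{f(g)}h\) is continuous into \(\Contc(G,\A)\) with uniformly compact supports, and \(L\) is continuous in the inductive limit topology. So you can pull \(L\) out of the integral, and no strictly convergent multiplier-valued integral is needed.
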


\begin{proof}
  Theorem~\ref{the:universal_groupoid_Haus} gives a natural bijection
  between Fell-bundle representations of~\(\A\) on~\(\F\) and
  nondegenerate representations of \(\Cst(G,\A)\).
  Any nondegenerate representations of \(\Cst(G,\A)\) extends to
  multipliers, and composing with the universal continuous
  representation \(m\colon \A\to\Mult(\Cst(G,\A))\) of~\(\A\), it
  induces a continuous representation of~\(\A\) on~\(\F\).
  Conversely, let \(\pi\colon \A \to \Bound(\F)\) be a continuous
  representation.
  Then it integrates to a representation of the \(L^1\)-cross-section
  \Star{}algebra and hence also to its enveloping \(\Cst\)\nb-algebra
  \(\Cst(G,\A)\) (see~\cite{Doran-Fell:Representations_2}).
  The latter corresponds to a Fell-bundle representation of~\(\A\).
  In fact, we may directly describe the latter using~\(\pi\).
  Let \(\varphi\defeq \pi|_{\A_1}\) and define \(U\colon \Contc(G,\A)\odot \Cont(G,\F) \to\Contc(G,\F)\) by
  \[
    (U (a\otimes f))(g) \defeq \pi(a(g)) f(g)
  \]
  for \(a\in \Contc(G,\A)\), \(f\in \Contc(G,\F)\), \(g\in G\).  We
  compute that~\(U\) is isometric and thus extends to an isometry
  \(U\colon \Lt^2(G,\A) \otimes_{\A_1} \F \to\F\).  This operator
  intertwines the canonical actions of~\(\A_1\).  Therefore, its image
  is contained in \(\Lt^2(G,\A\A^*)\otimes_{\A_1} \F\).  The
  multiplicativity of~\(U\) implies by an easy computation that~\(U\)
  is a Fell-bundle representation.
\end{proof}

Now we turn to measurable Fell-bundle representations on a Hilbert
space.  Here the theory in Section~\ref{sec:Renault} simplifies
somewhat.  The space~\(G^0\) has just one point and so disintegrations
over~\(G^0\) become empty.  In particular, the measure class~\(\nu\)
lives on a one-point space, so that it becomes irrelevant, and the
measurable field of Hilbert spaces~\(\Hils\) becomes just the
underlying Hilbert space of our representation.  So a measurable
representation on~\(\Hils\) becomes just a map
\(S\colon \A \to \Bound(\Hils)\) that preserves the multiplication and
the involution and is measurable in a suitable sense.

\begin{theorem}
  \label{the:automatic_continuity_group}
  Let~\(G\) be a second countable, locally compact group and
  let~\(\A\) be a separable Fell bundle over~\(G\).
  Let~\(\Hils\) be a separable Hilbert space.
  Any measurable Fell-bundle representation of~\(\A\) on~\(\Hils\) is
  continuous.
\end{theorem}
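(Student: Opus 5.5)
The plan is to pass from the measurable representation to a genuine \(\Cst\)\nb-algebra representation, and then return to a continuous one through the universal multiplier representation~\(m\).

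\textbf{Step 1: integrate.} Let \(S\colon \A\to\Bound(\Hils)\) be a measurable Fell-bundle representation. Here \(G^0\) is a point, so the measure class~\(\nu\) is trivial. Quasi-invariance relative to~\(S\) is automatic, because Haar measure and its inverse image are equivalent. By Lemma~\ref{lem:disintegrated_Fell_bundle_rep}, \(S\) gives a Fell-bundle representation \((\varphi,U)\) on~\(\Hils\). By Section~\ref{sec:integrate} this integrates to a nondegenerate representation \(L\) of \(\Cst(G,\A)\). Unwinding the definitions, I expect the formula
\[
  L(f)\xi=\int_G S_g(f(g))\,\xi\,\Delta(g)^{1/2}\,\diff g
\]
for \(f\in\Contc(G,\A)\), \(\xi\in\Hils\). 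The factor \(\Delta^{1/2}\) comes from the Radon--Nikodym derivative in~\eqref{eq:Unitary-Tensor}, and one checks it directly from \((T^\rg_{f_1})^*UT^\s_{f_2}\).

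\textbf{Step 2: build the continuous representation.} Extend \(L\) to multipliers and compose with \(m\) from Lemma~\ref{lem:universal_continuous_rep_group}. This gives \(\pi\defeq \bar L\circ m\colon \A\to\Bound(\Hils)\), which is a continuous representation. Concretely,
\[
  \pi(a)L(f)\xi=L(m_af)\xi.
\]

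\textbf{Step 3: compare \(\pi\) with \(S\) — the main obstacle.} Substituting into the formula from Step~1 gives
\[
  L(m_af)\xi=\int_G S_{gh}(a\,f(h))\,\xi\,\Delta(h)^{1/2}\,\diff h .
\]
The multiplicativity \(S_{gh}(af(h))=S_g(a)S_h(f(h))\) then yields \(\pi(a)L(f)\xi=S_g(a)L(f)\xi\), where the \(\Delta\) factors cancel. Since \(L\) is nondegenerate, \(\pi(a)=S_g(a)\) for all \(a\in\A_g\).

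This seems to make \(S\) continuous outright, which is suspicious. Multiplicativity, however, holds everywhere for a measurable representation by Definition~\ref{def:Fell_bundle_pre-rep}, so this is legitimate. The delicate points to verify carefully are two.

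First, the integral formula for \(L\) must hold with the exact modular weights. Equivalently, I would check that \(S\) and \(\pi\) have the same disintegrated data: Theorem~\ref{the:disintegrate_Hilbert_space} gives uniqueness almost everywhere, and Step~3 upgrades this to everywhere equality.

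Second, nondegeneracy of \(S_1=\varphi\) must pass to \(L\). This is Lemma~\ref{lem:Lf_nondegenerate}.

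Once \(\pi=S\) is established, strict, equivalently strong, continuity of \(S\) follows from that of~\(\pi\).
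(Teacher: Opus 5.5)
Your route differs from the paper's, and it works once one exponent is corrected.

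\textbf{Comparison with the paper.} The paper forms \(S'\defeq\bar\sigma\circ m\) and asserts that \(S\) and \(S'\) integrate to the same representation \(\sigma\) of \(\Cst(G,\A)\). It then invokes the uniqueness statement, Theorem~\ref{the:disintegrate_Hilbert_space}.\ref{the:disintegrate_Hilbert_space_2}. Since \(G^0\) is a point, the conull set there is all of \(G^0\), so the identity intertwiner forces \(S_g=S'_g\) for \emph{every} \(g\).

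You instead compare \(S\) with \(\pi=\bar L\circ m\) directly on the dense span of the vectors \(L(f)\xi\). For this you use only:
\begin{enumerate}
\item the integration direction, Lemma~\ref{lem:disintegrated_Fell_bundle_rep};
\item left invariance of Haar measure;
\item multiplicativity of \(S\), which holds everywhere;
\item nondegeneracy of \(L\).
\end{enumerate}
This bypasses the uniqueness half of Theorem~\ref{the:disintegrate_Hilbert_space} and the Ramsay-type null-set arguments of Section~\ref{sec:improve_null} behind it. Your computation is also essentially the verification, which the paper only asserts, that \(\bar\sigma\circ m\) integrates back to \(\sigma\). What the paper's route buys is that it needs no modular-weight bookkeeping, and that is exactly where your write-up slips.

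\textbf{The slip in Step~1.} The weight must be \(\Delta(g)^{-1/2}\), not \(\Delta(g)^{1/2}\). Here is why.
\begin{itemize}
\item \(L\) is a \Star{}homomorphism for \(f^*(g)=f(g^{-1})^*\). The map \(f\mapsto\int_G S_g(f(g))\Delta(g)^{c}\,\diff g\) is multiplicative for every \(c\) but \Star{}preserving only for \(c=-1/2\). Equivalently, \(L=\tilde\pi\circ\mu^{-1}\) with Fell's integrated form \(\tilde\pi(f)=\int_G S_g(f(g))\,\diff g\).
\item Unitarity of \(U\) from \(\Lt^2(\nu\circ\tilde\alpha)\) to \(\Lt^2(\nu\circ\alpha)\) forces the factor \(\bigl(\diff(\nu\circ\tilde\alpha)/\diff(\nu\circ\alpha)\bigr)^{1/2}=\Delta^{-1/2}\); compare \eqref{eq:integrate_fiberwise_intertwiners}. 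Read literally, \eqref{eq:Unitary-Tensor} has this Radon--Nikodym derivative inverted, which is presumably where your sign came from.
\end{itemize}

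With your exponent, substituting \(x=gh\) in \(L(m_af)\xi\) produces \(\Delta(g)^{1/2}\Delta(gh)^{1/2}=\Delta(g)\Delta(h)^{1/2}\). So you would get \(\pi(a)=\Delta(g)S_g(a)\); your displayed intermediate line silently drops the factor \(\Delta(g)\). That outcome is incompatible with \(\pi(a^*)=\pi(a)^*\) unless \(G\) is unimodular. With the correct exponent the factors genuinely cancel:
\[
  L(m_af)\xi=\Delta(g)^{1/2}\int_G S_{gh}(a f(h))\xi\,\Delta(gh)^{-1/2}\,\diff h
  = S_g(a)\int_G S_h(f(h))\xi\,\Delta(h)^{-1/2}\,\diff h = S_g(a)L(f)\xi .
\]
The rest of your argument then goes through unchanged.

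\textbf{Minor points.}
\begin{itemize}
\item Automatic relative quasi-invariance also needs \(\supp\A\A^*\,\rg^*\Hils\) to be inversion-invariant, which the paper shows.
\item Your aside that Theorem~\ref{the:disintegrate_Hilbert_space} gives only almost-everywhere uniqueness is inaccurate for groups. It is also unnecessary for your argument.
\end{itemize}
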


\begin{proof}
  By Theorem~\ref{the:disintegrate_Hilbert_space}, a measurable
  Fell-bundle representation \(S=(S_g)_{g\in G}\) of~\(\A\)
  on~\(\Hils\) is equivalent to a Fell-bundle representation as in
  Definition~\ref{def:representation_Fell}.
  And this is equivalent to a representation \(\sigma\colon \Cst(G,\A)
  \to \Bound(\Hils)\).
  Let \(\bar\sigma\colon \Mult(\Cst(G,\A)) \to \Bound(\Hils)\) be the
  unique extension of~\(\sigma\) to multipliers.
  Composing this with the universal continuous representation
  \(m\colon \A\to\Mult(\Cst(G,\A))\) defined
  in~\eqref{eq:mult-Fell-bundle-univ} gives a continuous
  representation \(S'\defeq \bar\sigma \circ m\colon \A \to
  \Bound(\Hils)\) of~\(\A\) on~\(\Hils\).
  When we integrate this continuous representation, we get~\(\sigma\).
  That is,
  \[
    \int_G S_g(a(g)) \,\diff g
    = \sigma(a)
    = \int_G \bar\sigma (m_{a(g)}) \,\diff g
  \]
  for all \(a\in \Contc(G,\A)\).
  Both \(S\) and~\(S'\) are measurable Fell-bundle representations,
  and the identity map on~\(\Hils\) is a unitary intertwiner between
  the representations of~\(\Cst(G,\A)\) that they induce.
  Then it is also an intertwiner between the Fell-bundle
  representations induced by \(S\) and~\(S'\) by
  Theorem~\ref{the:universal_groupoid_Haus}.
  Therefore, the identity map must be as in Theorem
  \ref{the:disintegrate_Hilbert_space}.\ref{the:disintegrate_Hilbert_space_2}.
  This says that \(S_g = S'_g\) holds for all \(g\in G\) and not just
  for almost all~\(g\).
\end{proof}

\section{Transformation groupoids}
\label{sec:trafo_groupoids}

Let a group or groupoid~\(G\) act (partially) on a locally compact
space~\(X\).  Let \(G\ltimes X\) be the resulting transformation
groupoid.  As we shall see, a Fell bundle over \(G\ltimes X\) yields a
Fell bundle over~\(G\), and these two Fell bundles have naturally
isomorphic representations and hence isomorphic section
\(\Cst\)\nb-algebras.  In particular, this reproves that the groupoid
\(\Cst\)\nb-algebra \(\Cst(G\ltimes X)\) is isomorphic to the crossed
product \(G\ltimes \Cont_0(X)\).

The transformation groupoid \(G\ltimes X\) has object space~\(X\), and
its arrow space is the space of triples \((x,g,y)\) with \(x,y\in X\),
\(g\in G\) such that \(\s(g) = \rg(y)\) and \(x=g\cdot y\); in
particular, \(y\) is in the domain of the action of~\(g\).
This is an extra condition in the case of a partial action, which is
allowed in this section.
We do not give names to the domains, however, and write ``\(g\cdot x\)
is defined'' instead.
The multiplication is defined by \((x,g,y) \cdot (y,h,z) = (x,g\cdot
h,z)\).
Let~\(\alpha\) be a Haar system on~\(G\).
This induces a Haar system~\(\beta\) on \(G\ltimes X\) by
\[
  \int_{(G\ltimes X)^x} f(x,g,g^{-1}\cdot x) \,\diff
  \beta^x(x,g,g^{-1}\cdot x)
  \defeq
  \int f(x,g,g^{-1}\cdot x) \,\diff \alpha^{\rg(x)}(g),
\]
where the second integral is over the (open) set \(\setgiven{g\in
  G^{\rg(x)}}{g^{-1}\cdot x \text{ is defined}}\).

Let~\(\B\) be a Fell bundle over \(G\ltimes X\).
It induces a Fell bundle~\(\A\) over~\(G\) as follows.
Let
\[
  X_g \defeq \setgiven{(x,g,g^{-1} x)\in G\ltimes X}{x \in X,\ \rg(x)
    = \rg(g),\ g^{-1} x \text{ is defined}}.
\]
The fibre of~\(\A\) at \(g\in G\) is \(\A_g \defeq \Cont_0(X_g,\B|_{X_g})\),
and the multiplication and involution on~\(\A\) are defined by
applying the multiplication and involution on~\(\B\) pointwise.
The topology is defined so that the \(\Cont_0\)\nb-sections of~\(\A\)
are equivalent to \(\Cont_0\)\nb-sections of~\(\B\).
More precisely, if~\(f\) is a \(\Cont_0\)\nb-section of~\(\B\), then
the corresponding continuous section of~\(\A\) maps \(g\in G\) to the
restriction of~\(f\) to~\(X_g\).
This defines a Fell bundle over~\(G\).

\begin{proposition}
  \label{pro:represent_trafo_groupoid}
  Let~\(G\) be a locally compact groupoid with a Haar
  system~\(\alpha\).  Let~\(X\) be a locally compact space with a
  \textup{(}partial\textup{)} action of~\(G\).  Let~\(\B\) be a Fell
  bundle over~\(G\ltimes X\).  Define a Haar measure~\(\beta\)
  on~\(G\ltimes X\) and a Fell bundle~\(\A\) over~\(G\) as above.
  Then
  \[
    \Cst(G,\A) \cong \Cst(G\ltimes X,\B,\beta).
  \]
\end{proposition}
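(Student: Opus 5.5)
The most direct route is through the convolution algebras themselves. We never need to compare Fell-bundle representations directly; we only need that both full norms are characterised as maximal \(\Cst\)-norms continuous in the inductive limit topology (Corollary~\ref{cor:continuity-rep}).

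\emph{Step 1: the \Star{}isomorphism.} Define \(\Theta\colon \Contc(G\ltimes X,\B)\to \Contc(G,\A)\) by \(\Theta(f)(g)\defeq f|_{X_g}\). Since \(f\) has compact support, each \(f|_{X_g}\) lies in \(\Cont_0(X_g,\B|_{X_g})\), and \(\Theta(f)\) vanishes for \(g\) outside the compact projection of \(\supp f\) to~\(G\). By the definition of the topology on~\(\A\), \(\Theta(f)\) is a continuous section. The map is injective, because \(G\ltimes X\) is the disjoint union of the sets~\(X_g\).

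\emph{Step 2: compatibility with the algebra structure.} The definition of~\(\beta\) gives, for \(\gamma=(x,g,y)\),
\begin{multline*}
  (f_1*f_2)(x,g,y)=\int f_1(x,h,h^{-1}x)\, f_2(h^{-1}x,h^{-1}g,y)\,\diff\alpha^{\rg(g)}(h)\\
  =\bigl(\Theta(f_1)*\Theta(f_2)\bigr)(g)(x).
\end{multline*}
Here the last expression is the convolution in \(\Contc(G,\A)\), with the product \(\A_h\times\A_{h^{-1}g}\to\A_g\) computed pointwise in~\(\B\). The integral of a \(\Cont_0(X_g)\)-valued function is evaluated pointwise, which is justified by the continuity in Lemma~\ref{lem:continuity_integral}. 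Compatibility with the involution is immediate.

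\emph{Step 3: surjectivity onto a dense subalgebra.} The image of~\(\Theta\) is exactly the set of compactly supported sections of~\(\A\) whose union of supports is compact in \(G\ltimes X\). This image is dense in \(\Contc(G,\A)\) for the inductive limit topology. To see this, multiply a general section \(\xi\in\Contc(G,\A)\) by \(\varphi(\rg(\cdot))\) for \(\varphi\in\Contc(X)\) forming an approximate unit. Uniform convergence then follows from a Dini-type compactness argument on \(\supp\xi\), using upper semicontinuity of the norm as in Lemma~\ref{lem:subfield_equality}.

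\emph{Step 4: continuity of~\(\Theta\).} \(\Theta\) is continuous for the inductive limit topologies, since it is norm-decreasing on each \(\Cont_0(K\setminus\partial K,\B)\). I expect the main obstacle to be the converse direction, namely comparing the two maximal \(\Cst\)-norms. The clean approach is to show that each side's \(I\)-norm dominates a bound for the other:
\[
  \norm{\Theta(f)}_I\le\norm{f}_I.
\]
This holds because \(\norm{\Theta(f)(g)}=\sup_x\norm{f(x,g,\cdot)}\le\) the relevant integrand bound, after taking suprema over~\(x\) before integrating. Consequently every \(\Cst\)-norm on \(\Contc(G,\A)\) bounded by the \(I\)-norm pulls back to one on \(\Contc(G\ltimes X,\B)\) bounded by the \(I\)-norm, which gives \(\norm{\Theta f}_{\Cst(G,\A)}\le\norm{f}_{\Cst(G\ltimes X,\B)}\).

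\emph{Step 5: the reverse inequality.} Take a nondegenerate representation \(L\) of \(\Contc(G\ltimes X,\B)\) that is continuous in the inductive limit topology. Transport it via \(\Theta^{-1}\) to the dense image, with \(\F_0\defeq\F\). This is a pre-representation in the sense of Definition~\ref{def:pre-representation}, provided we check inductive-limit continuity on \(\Cont_0(K\setminus\partial K,\A)\cap\IM\Theta\). That check reduces to showing that sections of~\(\A\) supported in~\(K\) and in the image of~\(\Theta\) have preimages with supports controlled uniformly, and this is the delicate point.

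If that fails, the fallback is to verify that \(L\) extends to the multiplier action of \(\Cont_0(X)\) via Lemma~\ref{lem:extension-multiplier}. One can then define \(L'(\xi)\defeq\lim L(\Theta^{-1}(\varphi_n\xi))\), where \(\varphi_n\xi\) means multiplication by an approximate unit \(\varphi_n\) of \(\Cont_0(X)\), and check that this limit is Cauchy by the \(\Cst\)-identity. With that in place, Corollary~\ref{cor:prerep} yields equality of the norms, and the isomorphism follows.
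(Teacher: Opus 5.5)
\textbf{There is a genuine gap: the \(I\)-norm inequality in Step~4 runs the wrong way, and Step~5 (the hard direction) is not proved. Using the correct inequality repairs both.}

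\emph{The Step~4 inequality is backwards.} Since \(\norm{\Theta(f)(h)}=\sup_{\gamma\in X_h}\norm{f(\gamma)}\), for each \(x\in X\) you get
\[
  \int\norm{f(x,h,h^{-1}x)}\,\diff\alpha^{p(x)}(h)\le\int\norm{\Theta(f)(h)}\,\diff\alpha^{p(x)}(h),
\]
and likewise for \(\tilde\alpha\). Hence \(\norm{f}_I\le\norm{\Theta(f)}_I\): a supremum of integrals is bounded by the integral of the supremum, not conversely. Your inequality \(\norm{\Theta(f)}_I\le\norm{f}_I\) is false in general. Let \(\mathbb{R}\) act on itself by translation, and take \(f(x,g,y)=\chi(g)\eta(x-2g)\), where \(0\le\chi\le1\) is supported in \([0,1]\) and \(\eta\) is a bump of height~\(1\) and width~\(\varepsilon\). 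Then \(\norm{f}_I\le\varepsilon\), while \(\norm{\Theta(f)}_I=\int\chi\). So Step~4 does not prove \(\norm{\Theta f}_{\Cst(G,\A)}\le\norm{f}_{\Cst(G\ltimes X,\B)}\). That inequality is nevertheless true, by something you already have. \(\Theta\) is continuous for the inductive limit topologies, so composing it with \(\Contc(G,\A)\to\Cst(G,\A)\) gives a \Star{}homomorphism on \(\Contc(G\ltimes X,\B)\) that is continuous in the inductive limit topology. That composite is \(I\)-norm bounded by Corollary~\ref{cor:continuity-rep}.

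\emph{Step~5 has no valid argument.} The reduction you propose there cannot work. \(\Theta^{-1}\) maps sections supported in a compact \(K\subseteq G\) to sections supported in the preimage of~\(K\) in \(G\ltimes X\). That preimage is not compact unless \(X\to G^0\) is proper, so no uniform support control exists. In any case, a pre-representation must be defined on all of \(\Contc(G,\A)\). The fallback ``Cauchy by the \(\Cst\)-identity'' names no estimate that would tend to zero.

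\emph{The missing idea is the correct inequality \(\norm{f}_I\le\norm{\Theta(f)}_I\).} Let \(L\) be any \(I\)-norm bounded representation of \(\Contc(G\ltimes X,\B)\). Then \(\norm{L(\Theta^{-1}\eta)}\le\norm{\eta}_I\) for all \(\eta\in\IM\Theta\). Since \(\IM\Theta\) is dense in \(\Contc(G,\A)\) for the inductive limit topology, it is also dense for \(\norm{\cdot}_I\). So \(L\circ\Theta^{-1}\) extends to \(L^1_I(G,\A)\) and hence to \(\Cst(G,\A)\). This gives \(\norm{f}_{\Cst(G\ltimes X,\B)}\le\norm{\Theta f}_{\Cst(G,\A)}\).

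\emph{A smaller fix in Step~3.} For a partial action, \((\varphi\circ\rg)\xi\) need not have compact support in \(G\ltimes X\). Instead, apply cut-offs \(\psi\in\Contc(G\ltimes X)\) to the \(\Cont_0\)-section of~\(\B\) corresponding to~\(\xi\).

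With these repairs, your comparison of norms at the \(\Contc\)-level works. It is a genuinely different route from the paper, which never compares the two convolution algebras. The paper instead identifies \(\Cont_0(G,\A)\cong\Cont_0(G\ltimes X,\B)\) together with all the \(\Lt^2\)-correspondences in Definition~\ref{def:representation_Fell}. Fell-bundle representations of \(\A\) and~\(\B\) on any Hilbert module then coincide, and the paper invokes the universal property.
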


\begin{proof}
  By definition, there is a canonical isomorphism
  \(\Cont_0(G,\A) \cong \Cont_0(G\ltimes X,\B)\).  It restricts to a
  canonical isomorphism between \(\SUF \defeq \Cont_0(G^0,\FU)\)
  and \(\SUF[B]\defeq \Cont_0(X,\FU[B])\).  It also follows that
  \(\Cont_0(G,\A\A^*) \cong\Cont_0(G\ltimes X,\B\B^*)\), and similarly
  for the spaces of \(\Cont_0\)\nb-sections on \(G^2\) and
  \((G\ltimes X)^2\) that we need.  The Haar measures are related in
  such a way that
  \(\Lt^2(G, \A,\s,\tilde\alpha) \cong \Lt^2(G\ltimes X,
  \B,\s,\tilde\beta)\) and
  \(\Lt^2(G, \A\A^*,\rg,\alpha) \cong \Lt^2(G\ltimes X,
  \B\B^*,\rg,\beta)\) as correspondences.  And the correspondences
  \(\Lt^2(G^2, \E_k,v_k,\mu_k)\) for \(k=0,1,2\) that occur in the
  representation conditions over \(G\) and~\(G\ltimes X\) are
  canonically isomorphic as well.  Using these isomorphisms, we
  identify the data and the conditions for Fell-bundle representations
  of \(\A\) and~\(\B\) on any Hilbert module~\(\F\).  This induces a
  natural isomorphism between \(\Cst(G,\A)\) and
  \(\Cst(G\ltimes X,\B,\beta)\) because of their universal properties.
\end{proof}

\begin{example}
  When~\(\B\) is the trivial Fell bundle, then
  \(\Cst(G\ltimes X,\B,\beta) = \Cst(G\ltimes X)\) is the groupoid
  \(\Cst\)\nb-algebra of \(G\ltimes X\) for our choice of Haar system.
  Proposition~\ref{pro:represent_trafo_groupoid} identifies this with
  the section \(\Cst\)\nb-algebra of a Fell bundle over~\(G\).  In
  fact, this Fell bundle comes from the (partial) action of~\(G\) on
  \(\Cont_0(X)\) induced by its (partial) action on~\(X\) as explained
  after Definition~\ref{def:twisted-partial-action-groupoid}.  So it
  is the (partial) crossed product \(\Cont_0(X)\rtimes G\).
\end{example}

\begin{remark}
  Without our universal property,
  Proposition~\ref{pro:represent_trafo_groupoid} is not completely
  trivial.  While the \(\Cont_0\)\nb-sections of the two bundles are
  canonically isomorphic, the \(\Contc\)\nb-sections of the Fell
  bundles \(\A\) and~\(\B\) are different unless the map \(X\to G^0\)
  is proper.  This is visible already for the trivial Fell bundle and
  when~\(G\) is a group acting on a noncompact space~\(X\).  The
  \(\Cst\)\nb-algebra \(\Cst(G\ltimes X,\B,\beta)\) is defined as a
  completion of \(\Contc(G\ltimes X)\), whereas \(\Cst(G,\A)\)
  is defined as a completion of \(\Contc(G,\Cont_0(X))\).  So we
  partially completed the function space in the direction of~\(X\).
  Proposition~\ref{pro:represent_trafo_groupoid} implies that any
  representation of the \Star{}algebra \(\Contc(G\ltimes X)\) that is
  continuous in the inductive limit topology over compact subsets of
  \(G\ltimes X\) extends uniquely to a representation of
  \(\Contc(G,\Cont_0(X))\) that is continuous in the inductive limit
  topology over compact subsets of~\(G\).  If \(G=\{1\}\), we have to
  extend representations of \(\Contc(X)\) to \(\Cont_0(X)\).  This is
  done in Lemma~\ref{lem:extension-multiplier}.
\end{remark}

\begin{remark}
  Results closely related to
  Proposition~\ref{pro:represent_trafo_groupoid} have appeared earlier
  in several special cases.
  In particular, decomposition results for crossed products by transformation groupoids are discussed in detail, together with many further examples and references, in~\cite{Buss-Meyer:Groupoid_fibrations}.

  The general framework developed there is that of \emph{groupoid
  fibrations} and actions of groupoids on groupoids by equivalences,
  which in particular covers global actions and leads to saturated
  Fell bundles.
  From this point of view, transformation groupoids arising from
  global actions fit naturally into this framework.

  In the present section, however, we also allow \emph{partial}
  actions, and the associated transformation groupoids do not, in
  general, define groupoid fibrations as
  in~\cite{Buss-Meyer:Groupoid_fibrations}.
  Proposition~\ref{pro:represent_trafo_groupoid} may therefore be
  viewed as an analogue of the decomposition results for groupoid
  fibrations, extended to the setting of (possibly) partial actions,
  even though no general notion of a ``partial groupoid fibration'' is
  developed here.
\end{remark}

\end{document}